\newtheorem{theorem}{Theorem}[chapter]
\newtheorem{corollary}[theorem]{Corollary}
\newtheorem{lemma}[theorem]{Lemma}
\newtheorem{proposition}[theorem]{Proposition}
\newtheorem{definition-proposition}[theorem]{Definition-Proposition}
\newtheorem{problem}[theorem]{Problem}
\theoremstyle{definition}
\newtheorem{definition}[theorem]{Definition}
\newtheorem{setting}[theorem]{Setting}
\newtheorem{remark}[theorem]{Remark}
\newtheorem{example}[theorem]{Example}
\newtheorem{observation}[theorem]{Observation}
\newcommand{\mm}{{\mathfrak{m}}}
\newcommand{\nn}{{\mathfrak{n}}}
\newcommand{\pp}{{\mathfrak{p}}}
\newcommand{\qq}{{\mathfrak{q}}}
\renewcommand{\AA}{\mathcal{A}}
\newcommand{\BB}{\mathcal{B}}
\newcommand{\CC}{\mathcal{C}}
\newcommand{\CCC}{\mathsf{C}}
\newcommand{\DDD}{\mathsf{D}}
\newcommand{\KKK}{\mathsf{K}}
\newcommand{\OO}{{\mathcal O}}
\newcommand{\PP}{\mathcal{P}}
\newcommand{\TT}{\mathcal{T}}
\newcommand{\UU}{\mathcal{U}}
\newcommand{\VV}{\mathcal{V}}
\newcommand{\XX}{\mathcal{X}}
\newcommand{\YY}{\mathcal{Y}}
\renewcommand{\a}{\vec{a}}
\renewcommand{\b}{\vec{b}}
\renewcommand{\c}{\vec{c}}
\newcommand{\de}{\vec{\delta}}
\renewcommand{\l}{\vec{\ell}}
\newcommand{\s}{\vec{s}}
\newcommand{\ttt}{\vec{t}}
\renewcommand{\v}{\vec{v}}
\newcommand{\x}{\vec{x}}
\newcommand{\y}{\vec{y}}
\newcommand{\z}{\vec{z}}
\newcommand{\w}{\vec{\omega}}
\newcommand{\A}{\mathbb{A}}
\newcommand{\D}{\mathbb{D}}
\newcommand{\E}{\mathbb{E}}
\renewcommand{\L}{\mathbb{L}}
\newcommand{\Z}{\mathbb{Z}}
\newcommand{\Q}{\mathbb{Q}}
\renewcommand{\P}{\mathbb{P}}
\newcommand{\R}{{\mathbf R}}
\newcommand{\X}{\mathbb{X}}
\newcommand{\ca}{\operatorname{ca}\nolimits}
\newcommand{\bo}{\operatorname{b}\nolimits}
\newcommand{\sg}{\operatorname{sg}\nolimits}
\newcommand{\rank}{\operatorname{rank}\nolimits}
\newcommand{\depth}{\operatorname{depth}\nolimits}
\newcommand{\pd}{\mathop{{\rm proj.dim}}\nolimits}
\newcommand{\soc}{\operatorname{soc}\nolimits}
\newcommand{\Ext}{\operatorname{Ext}\nolimits}
\newcommand{\Tor}{\operatorname{Tor}\nolimits}
\newcommand{\Hom}{\operatorname{Hom}\nolimits}
\newcommand{\rad}{\operatorname{rad}\nolimits}
\newcommand{\End}{\operatorname{End}\nolimits}
\newcommand{\gl}{\mathop{{\rm gl.dim\,}}\nolimits}
\newcommand{\op}{\operatorname{op}\nolimits}
\newcommand{\RHom}{\mathbf{R}\strut\kern-.2em\operatorname{Hom}\nolimits}
\newcommand{\RshHom}{\mathbf{R}\strut\kern-.2em\mathscr{H}\strut\kern-.3em\operatorname{om}\nolimits}
\newcommand{\shHom}{\mathscr{H}\strut\kern-.3em\operatorname{om}\nolimits}
\newcommand{\Lotimes}{\mathop{\stackrel{\mathbf{L}}{\otimes}}\nolimits}
\newcommand{\Image}{\operatorname{Im}\nolimits}
\newcommand{\Kernel}{\operatorname{Ker}\nolimits}
\newcommand{\Cokernel}{\operatorname{Cok}\nolimits}
\newcommand{\Spec}{\operatorname{Spec}\nolimits}
\newcommand{\Supp}{\operatorname{Supp}\nolimits}
\newcommand{\bu}{\bullet}
\newcommand{\ci}{0}
\DeclareMathOperator{\moduleCategory}{\mathsf{mod}} \renewcommand{\mod}{\moduleCategory}
\DeclareMathOperator{\Mod}{\mathsf{Mod}}
\DeclareMathOperator{\proj}{\mathsf{proj}}
\DeclareMathOperator{\thick}{\mathsf{thick}}
\DeclareMathOperator{\tri}{\mathsf{tri}}
\DeclareMathOperator{\coh}{\mathsf{coh}}
\DeclareMathOperator{\vect}{\mathsf{vect}}
\DeclareMathOperator{\lb}{\mathsf{line}}
\DeclareMathOperator{\qgr}{\mathsf{qgr}}
\DeclareMathOperator{\CM}{\mathsf{CM}}
\DeclareMathOperator{\MF}{\mathsf{MF}}
\DeclareMathOperator{\add}{\mathsf{add}}
\newcommand{\cut}{\ar@{.}}
\newcommand{\rel}{\ar@{.}}
\numberwithin{equation}{chapter}
\numberwithin{section}{chapter}
\newcommand{\DD}{\mathcal{D}}
\renewcommand{\SS}{\mathcal{S}}
\newcommand{\ZZ}{\mathcal{Z}}
\DeclareMathOperator{\Qcoh}{\mathsf{Qcoh}}
\newcommand{\mdots}{\raisebox{4.5pt}{\rotatebox{84}{$\ddots$}}}
\begin{document}

\frontmatter

\title[Geigle-Lenzing complete intersections]{Representation theory of Geigle-Lenzing complete intersections}


\author[Herschend]{Martin Herschend}
\address{M. Herschend: Uppsala University, Department of Mathematics, Box 480, 751 06 Uppsala, Sweden / Graduate School of Mathematics, Nagoya University, Chikusa-ku, Nagoya, 464-8602, Japan}
\email{martin.herschend@math.uu.se / martinh@math.nagoya-u.ac.jp}
\thanks{The first author was partially supported by JSPS Grant-in-Aid for Young Scientists (B) 24740010.}

\author[Iyama]{Osamu Iyama}
\address{O. Iyama: Graduate School of Mathematics, Nagoya University, Chikusa-ku, Nagoya, 464-8602, Japan}
\email{iyama@math.nagoya-u.ac.jp}
\urladdr{http://www.math.nagoya-u.ac.jp/~iyama/}
\thanks{The second author was partially supported by JSPS Grant-in-Aid for Scientific Research (B) 24340004, (B) 16H03923, (C) 23540045, (S) 22224001 and (S) 15H05738.}

\author[Minamoto]{Hiroyuki Minamoto}
\address{H. Minamoto:Department of Mathematics and Information Sciences,
Faculty of Science,1-1 Gakuen-cho, Nakaku, Sakai, Osaka 599-8531, Japan }
\email{minamoto@mi.s.osakafu-u.ac.jp}
\thanks{The third author was partially supported by JSPS Grant-in-Aid for Challenging 
Exploratory Research 50527885.}

\author[Oppermann]{Steffen Oppermann}
\address{S. Oppermann: Institutt for matematiske fag, NTNU, 7491 Trondheim, Norway}
\email{steffen.oppermann@ntnu.no}
\thanks{The final author was partially supported by NFR Grant 250056.}

\date{}

\subjclass[2010]{16E35, 18E30, 16G50, 13C14, 14F05, 16G60, 16E10}

\keywords{Auslander-Reiten theory, weighted projective line, canonical algebra, Geigle-Lenzing projective space,
derived category, tilting theory, Cohen-Macaulay module, $d$-representation infinite algebra, Fano algebra}

\dedicatory{Dedicated to Helmut Lenzing on the occasion of his seventy-fifth birthday}

\begin{abstract}
Weighted projective lines, introduced by Geigle and Lenzing in 1987, are important objects in representation theory. They have tilting bundles, whose endomorphism algebras are the canonical algebras introduced by Ringel. The aim of this paper is to study their higher dimensional analogs.
First, we introduce a certain class of commutative Gorenstein rings $R$ graded by abelian groups $\mathbb{L}$ of rank $1$, which we call Geigle-Lenzing complete intersections. We study the stable category $\underline{\mathsf{CM}}^{\mathbb{L}}R$ of Cohen-Macaulay representations, which coincides with the singularity category $\mathsf{D}^{\mathbb{L}}_{\rm sg}(R)$. We show that $\underline{\mathsf{CM}}^{\mathbb{L}}R$ is triangle equivalent to $\mathsf{D}^{\rm b}(\mathsf{mod} A^{\rm CM})$ for a finite dimensional algebra $A^{\rm CM}$, which we call the CM-canonical algebra. As an application, we classify the $(R,\mathbb{L})$ that are Cohen-Macaulay finite. We also give sufficient conditions for $(R,\mathbb{L})$ to be $d$-Cohen-Macaulay finite in the sense of higher Auslander-Reiten theory.
Secondly, we study a new class of non-commutative projective schemes in the sense of Artin-Zhang, i.e.\ the category $\mathsf{coh}\mathbb{X}=\mathsf{mod}^{\mathbb{L}}R/\mathsf{mod}^{\mathbb{L}}_0R$ of coherent sheaves on the Geigle-Lenzing projective space $\mathbb{X}$. Geometrically this is the quotient stack $\mathbb{X}=[X/G]$ for $X={\rm Spec}\,R\setminus\{R_+\}$ and $G={\rm Spec}\,k[\mathbb{L}]$. We show that $\mathsf{D}^{\rm b}(\mathsf{coh}\mathbb{X})$ is triangle equivalent to $\mathsf{D}^{\rm b}(\mathsf{mod} A^{\ca})$ for a finite dimensional algebra $A^{\rm ca}$, which we call a $d$-canonical algebra. We study when $\mathbb{X}$ is $d$-vector bundle finite, and when $\mathbb{X}$ is derived equivalent to a $d$-representation infinite algebra in the sense of higher Auslander-Reiten theory.
Our $d$-canonical algebras provide a rich source of  $d$-Fano and $d$-anti-Fano algebras in non-commutative algebraic geometry. We also observe Orlov-type semiorthogonal decompositions of $\mathsf{D}_{\rm sg}^{\mathbb{L}}(R)$ and $\mathsf{D}^{\bo}(\mathsf{coh}\mathbb{X})$.
\end{abstract}

\maketitle

\tableofcontents

%

\chapter*{Introduction}

\section{Geigle-Lenzing complete intersections and projective spaces}

Weighted projective lines, introduced by Geigle-Lenzing in 1987 \cite{GL}, are important objects in representation theory, see e.g.\ \cite{CK,K,L1,L,Me}.
They are a certain class of complete intersection rings of dimension two, and the ones with the easiest representation theory (called `domestic') give rise to simple surface singularities, which are the most basic class in Cohen-Macaulay representation theory \cite{He,A2,Es}.
They have connections with various branches of mathematics,
e.g.\ preprojective algebras in representation theory, McKay correspondence in algebraic geometry, Brieskorn links in differential geometry, Kontsevich's homological mirror symmetry conjecture \cite{KST1,KST2}.

One of the important properties of weighted projective lines is that
they have tilting bundles, whose endomorphism algebras are the canonical algebras
introduced by Ringel \cite{Rin}, and hence we have a derived equivalence between
weighted projective lines and canonical algebras.
They have been widely studied in representation theory. For example, it is known that
any hereditary abelian category with a tilting object is derived equivalent to either a weighted projective line or a path algebra of an acyclic quiver \cite{H2}.

The aim of this paper is to introduce a higher dimensional analog of the weighted projective
lines of Geigle-Lenzing and the canonical algebras of Ringel, and to study them systematically by using the standard tools in representation theory of finite dimensional algebras \cite{ARS,ASS,H1} and Cohen-Macaulay rings \cite{Y,LW,DG} (see also \cite{DR,J,GK,He,A2,Es,Kn,BGS,EH}).
Below we explain the new objects which will be introduced in this paper, and postpone stating the precise results to the next sections.
We introduce
\emph{Geigle-Lenzing} (\emph{GL}) \emph{complete intersections}
\begin{equation}\label{show R}
R=k[T_0, \ldots, T_d, X_1, \ldots, X_n ]/(X_i^{p_i}-\ell_i(T_0,\ldots,T_d)\mid 1\le i\le n),
\end{equation}
where $p_1,\ldots,p_n$ are positive integers and $\ell_1,\ldots,\ell_n$ are linear forms in $k[T_0,\ldots,T_d]$ in general position.
This is a complete intersection of dimension $d+1$, and it is graded by an abelian group
\begin{equation}\label{show L}
\L = \langle \x_1,\ldots,\x_n, \c \rangle / \langle p_i \x_i - \c \mid 1\le i \le n\rangle
\end{equation}
of rank one, which may contain torsion elements, as follows:
\[\deg T_j:= \c\ \mbox{ and }\ \deg X_i:=\x_i\ \mbox{ for any $i$ and $j$.}\]
This provides us with the category $\CM^{\L}R$ of $\L$-graded maximal Cohen-Macaulay $R$-modules, which is Frobenius and satisfies Auslander-Reiten-Serre duality.
Its projective-injective objects are precisely the $\L$-graded projective $R$-modules, and the stable category $\underline{\CM}^{\L}R$ is canonically triangle equivalent to the singularity category $\DDD_{\sg}^{\L}(R)$.
We give a triangle equivalence
\[\underline{\CM}^{\L}R\simeq\DDD^{\bo}(\mod A^{\rm CM})\]
with a finite dimensional $k$-algebra $A^{\rm CM}$, which we call the \emph{CM-canonical algebra}. This equivalence is new even in the classical case $d=1$.
As an application, we classify GL complete intersections $(R,\L)$ that are Cohen-Macaulay finite.
Also we study properties of $A^{\rm CM}$ in detail.

On the other hand, our GL complete intersection $(R,\L)$ provides us with
a non-commutative projective scheme $\X$ in the sense of Artin-Zhang \cite{AZ}.
The category of coherent sheaves on a \emph{Geigle-Lenzing} (\emph{GL})
\emph{projective space} $\X$ is defined as the quotient category
\begin{equation}\label{show coh X}
\coh\X=\mod^{\L}R/\mod^{\L}_0R,
\end{equation}
of the abelian category $\mod^{\L}R$ of finitely generated $\L$-graded $R$-modules
by its Serre subcategory $\mod^{\L}_0R$ consisting of finite dimensional modules.
Geometrically $\X$ is the quotient stack
\begin{equation}\label{show X}
\X=[X/G]
\end{equation}
for the scheme $X=\Spec R\setminus\{R_+\}$ and the group scheme $G=\Spec k[\L]$
acting on $X$.

The category $\coh\X$ is an abelian category of global dimension $d$ satisfying
Auslander-Reiten-Serre duality.
In the case $d=1$, these $\X$ are precisely the weighted projective lines of Geigle-Lenzing.
We give a triangle equivalence
\[\DDD^{\bo}(\coh\X)\simeq\DDD^{\bo}(\mod A^{\ca})\]
with a finite dimensional $k$-algebra $A^{\ca}$, which we call a \emph{$d$-canonical algebra}.
In the case $d=1$, these are precisely the canonical algebras of Ringel.
We classify GL projective spaces $\X$ that are vector bundle finite. Also we study properties of $A^{\ca}$ in detail.

Our GL complete intersections are divided into three disjoint classes depending on the
sign of their $a$-invariants: \emph{Fano}, \emph{Calabi-Yau} and \emph{anti-Fano}.
In the case $d=1$, these correspond to the famous trichotomy of weighted projective lines:
\emph{domestic}, \emph{tubular} and \emph{wild}.
It is well-known that the following conditions are equivalent for $d=1$.
\begin{itemize}
\item $(R,\L)$ is domestic (or equivalently, Fano).
\item $(R,\L)$ is Cohen-Macaulay finite.
\item $\X$ is vector bundle finite.
\item $\underline{\CM}^{\L}R$  is triangle equivalent to $\DDD^{\bo}(\mod kQ)$ for a Dynkin quiver $Q$.
\item $\DDD^{\bo}(\coh\X)$ is triangle equivalent to $\DDD^{\bo}(\mod k\widetilde{Q})$ for an extended Dynkin quiver $\widetilde{Q}$.
\end{itemize}
In this case, Veronese subring $R^{(\w)}$ of $R$ is a simple surface singularity,
whose stable Auslander algebra and the Auslander algebra are isomorphic to
the preprojective algebras of $kQ$ and $k\widetilde{Q}$, respectively.

In this paper, we will study the corresponding statements for arbitrary $d$.
One of the main problems is when $\underline{\CM}^{\L}R$ or $\DDD^{\bo}(\coh\X)$ is triangle equivalent to $\DDD^{\bo}(\mod A)$ for a finite dimensional $k$-algebra with $\gl A\le d$, or equivalently, when it has a \emph{$d$-tilting object} (that is, a tilting object $T$ whose
endomorphism algebra has global dimension at most $d$).
This property is important in the context of higher dimensional Auslander-Reiten theory \cite{I1,I2}.
In fact, if $\underline{\CM}^{\L}R$ has a $d$-tilting object, then $(R,\L)$ is \emph{$d$-Cohen-Macaulay finite}, that is, the category $\CM^{\L}R$ has a nice $d$-cluster tilting subcategory.
Similarly, if $\DDD^{\bo}(\coh\X)$ has a $d$-tilting bundle, then $\X$ is \emph{$d$-vector bundle finite}, that is, the category $\vect\X$ has a nice $d$-cluster tilting subcategory.
Moreover, the endomorphism algebra of a $d$-tilting sheaf in $\DDD^{\bo}(\coh\X)$ is a \emph{$d$-representation infinite algebra} in the sense of \cite{HIO}.
Such algebras naturally appear in algebraic geometry as observed recently by Buchweitz-Hille \cite{BuH}.
For instance, if $n\le d+1$, then the $d$-canonical algebra $A^{\ca}$ is $d$-representation infinite.
We show that the existence of $d$-tilting objects/bundles implies that $(R,\L)$ is Fano, and construct $d$-tilting objects/bundles in many cases when $(R,\L)$ is Fano.

We also study GL projective spaces in the context of non-commutative algebraic geometry, where Nakayama functors of finite dimensional algebras play the role of canonical bundles over algebraic varieties.
Recently the third author introduced the classes of $d$-Fano algebras and $d$-anti-Fano algebras \cite{M,MM}. 
We show that $d$-canonical algebras are $d$-Fano algebras (respectively, $d$-anti-Fano algebras) if and only if $(R,\L)$ is Fano (respectively, anti-Fano).
We introduce the notion of Cohen-Macaulay sheaves and vector bundles on $\X$
as a new approach to the study of non-commutative projective schemes.

\section{Our Results on Geigle-Lenzing complete intersections}

Let $(R,\L)$ be a GL complete intersection as in \eqref{show R} and \eqref{show L}.
Then $R$ is in fact a complete intersection of Krull dimension
$d+1$. Without loss of generality, we may assume $p_i\ge2$
for each $i$ (Observation~\ref{Weights 1}).
The dualizing element of $R$, given by
\[\w:=(n-d-1)\c-\sum_{i=1}^n\x_i\in\L,\]
plays a key role in this paper since it gives the $a$-invariant (also known as Gorenstein parameter) of $R$.
Using the degree map $\delta:\L\to\Q$ given by $\delta(\x_i)=\frac{1}{p_i}$ and $\delta(\c)=1$, GL complete intersections are divided into the following 3 classes
depending on the sign of the degree $\delta(\w)=n-d-1-\sum_{i=1}^n\frac{1}{p_i}$ of $\w$:
\[\begin{array}{|c||c|c|c|}\hline
\delta(\w)&<0&=0&>0\\ \hline
(R,\L)&\mbox{Fano}&\mbox{Calabi-Yau}&\mbox{anti-Fano}\\ \hline
d=1&\mbox{domestic}&\mbox{tubular}&\mbox{wild}\\ \hline
\end{array}\]
In the well-studied case of $d=1$, these 3 classes are called domestic, tubular and wild
from a point of view of their representation type.

The first aim of this paper is to study the category $\CM^{\L}R$ of $\L$-graded (maximal) Cohen-Macaulay $R$-modules for the GL complete intersection $(R,\L)$.
Since $R$ is Gorenstein, $\CM^{\L}R$ forms a Frobenius
category and the stable category $\underline{\CM}^{\L}R$ forms a triangulated
category. By a classical result due to Buchweitz \cite{Bu}, we have the following
basic property (see Theorem~\ref{Buchweitz Eisenbud}):
\begin{itemize}
\item There exists a triangle equivalence
$\DDD^{\L}_{\sg}(R)\simeq\underline{\CM}^{\L}R$,
\end{itemize}
where $\DDD^{\L}_{\sg}(R):=\DDD^{\bo}(\mod^{\L}R)/\KKK^{\bo}(\proj^{\L}R)$
is the singularity category of $R$ \cite{O}.
On the other hand, we show the following ring theoretic properties of $R$ as an $\L$-graded
ring (see Defintion~\ref{define L-isolated singularity}). 

\begin{theorem}[Theorems~\ref{L-factorial L-domain} and \ref{L-isolated singularity}]
Any GL complete intersection $R$ is an $\L$-factorial $\L$-domain and has
$\L$-isolated singularities. 
\end{theorem}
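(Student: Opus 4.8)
The plan is to treat the two assertions largely independently, both building on the presentation of $R$ as a complete intersection $R=S[X_1,\dots,X_n]/(X_1^{p_1}-\ell_1,\dots,X_n^{p_n}-\ell_n)$, where $S=k[X_0,\dots,X_d]$, $\ell_i\in S$ is a linear form cutting out $H_i$, $\deg X_i=\x_i$ and $\deg X_j=\c$ for $0\le j\le d$. After a linear change of coordinates on $\P^d$ one may take $\ell_i=X_{i-1}$ for $1\le i\le\min\{n,d+1\}$, so that eliminating $X_0,\dots,X_d$ realises $R$ as $k[X_1,\dots,X_n]/(X_i^{p_i}-\ell_i(X_1^{p_1},\dots,X_{d+1}^{p_{d+1}})\mid d+2\le i\le n)$; in particular $R$ is a polynomial ring when $n\le d+1$. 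I use the general position hypothesis in the form: any $\le d+1$ of the $\ell_i$ are linearly independent, and each $H_i$ meets each hypersurface $\ne H_i$ properly. Since $\L$-domain just means $R$ has no homogeneous zero-divisors, for the first part it is enough to show $R$ is a domain, and I would do this by displaying $R$ as the top of a tower of cyclic covers $S=R_0\subseteq R_1\subseteq\dots\subseteq R_n=R$, $R_j=R_{j-1}[X_j]/(X_j^{p_j}-\ell_j)$, and inducting: general position ensures $H_j$ is not a component of the branch locus $H_1\cup\dots\cup H_{j-1}$ of $R_{j-1}$ over $S$, so $\operatorname{div}_{R_{j-1}}(\ell_j)$ is reduced; as $R_{j-1}$ is normal, $\ell_j$ is then not a $q$-th power in its fraction field for any prime $q\mid p_j$ (nor is $-4\ell_j$ a fourth power if $4\mid p_j$), so the classical irreducibility criterion makes $X_j^{p_j}-\ell_j$ irreducible and $R_j$ a domain; normality of $R_j$ follows from $(S_2)$ (complete intersection) together with $(R_1)$, proved next.

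I would deduce the $\L$-isolated singularity statement --- which also supplies $(R_1)$, hence the normality used above --- from the Jacobian criterion applied to the displayed presentation $R=S[X_1,\dots,X_n]/(X_i^{p_i}-\ell_i)$. At a homogeneous prime $\pp$ put $I=\{i\mid X_i\in\pp\}$; note $X_i\in\pp\iff\ell_i\in\pp$ because $X_i^{p_i}=\ell_i$. In the Jacobian matrix the rows indexed by $i\notin I$ are independent, and independent of the rest, thanks to the entry $p_iX_i^{p_i-1}\notin\pp$; the rows indexed by $I$ form the constant matrix of coefficient vectors of $\{\ell_i\mid i\in I\}$, which has rank $|I|$ whenever $|I|\le d+1$ by general position. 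Hence $R_\pp$ is singular only if $|I|\ge d+1$; but then $\pp$ contains $d+1$ linearly independent linear forms, so $\pp\supseteq(X_0,\dots,X_d)$, whence $X_j^{p_j}=\ell_j\in\pp$ forces $X_j\in\pp$ for all $j$ and $\pp=\mm$. Thus $\Spec R$ is regular away from $\mm$ (regular everywhere if $n\le d+1$), which is exactly the $\L$-isolated singularity; and since $\operatorname{ht}\mm=d+1\ge2$ this also gives $(R_1)$.

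For $\L$-factoriality I would show that every $\L$-homogeneous height-one prime $\mathfrak P$ of the normal domain $R$ is generated by an $\L$-homogeneous element, arguing by induction on $d$, with the base case $d=1$ the classical fact that homogeneous coordinate rings of weighted projective lines are $\L$-factorial. Given $\mathfrak P$, its contraction $\mathfrak P\cap S$ is $(g)$ for some standard-homogeneous irreducible $g\in S$. If $g=\ell_i$ (up to scalar) then $X_i\in\mathfrak P$, so $\mathfrak P$ is a minimal prime of $(X_i)R$; but $R/(X_i)R$ is again a Geigle--Lenzing complete intersection, now over $\P^{d-1}$ (general position), so the inductive hypothesis describes its $\L$-homogeneous minimal primes, and each lifts to a principal prime of $R$ given by an $\L$-homogeneous factor of $X_i$. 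If $g\ne\ell_i$ for all $i$, then $R\otimes_S\operatorname{Frac}(S/g)$ is étale over $\operatorname{Frac}(S/g)$, so $(g)R$ is a reduced ideal, and I would control its $\L$-homogeneous minimal primes by running the same analysis up the cyclic cover tower: each step is a cyclic cover branched along the reduced divisor $\operatorname{div}_{R_{j-1}}(\ell_j)$, and the only new $\L$-homogeneous prime divisor classes arise over $\bigcup_i V(\ell_i)$, where they are principal, cut out by the $X_i$. Pushing this to the top yields $\operatorname{Cl}^{\L}(R)=0$, i.e.\ $\L$-factoriality.

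The delicate point --- and the main obstacle --- is this last step: one must rule out an unexpected $\L$-homogeneous height-one prime that is not principal, for instance one lying over a hypersurface $V(g)\subseteq\P^d$ whose preimage in $\Proj^{\L}R$ is reducible (which genuinely occurs, e.g.\ for $g=\ell_i-\lambda\ell_j$ when $p_i=p_j$). The resolution should be that the components of such a preimage are never $\L$-homogeneous --- equivalently, the $\L$-homogeneous function $X_i^{p_i}-\lambda X_j^{p_j}$ of degree $\c$, which cuts out the entire preimage, does not factor into $\L$-homogeneous pieces --- so no non-principal $\L$-homogeneous divisor is produced; keeping all such phenomena aligned is precisely what the general position hypothesis on $H_1,\dots,H_n$ buys, and once the $\L$-homogeneous height-one primes have been enumerated and shown principal, the theorem follows.
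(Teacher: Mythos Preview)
Your argument has a genuine gap: it only works when $\operatorname{char} k$ does not divide any $p_i$. The paper states the theorem over an arbitrary field, and your use of the Jacobian criterion fails precisely here --- the entry $p_iX_i^{p_i-1}$ vanishes identically when $\operatorname{char} k\mid p_i$. More seriously, your reduction of ``$\L$-domain'' to ``domain'' is false in general: the paper itself gives the example $R=k[X_1,X_2,X_3]/(X_1^p+X_2^p+X_3^p)$ in characteristic $p$, which is not even reduced, so $R$ is certainly not a domain and your cyclic-cover irreducibility argument cannot succeed. The whole chain of reasoning (domain $\Rightarrow$ normal via $(R_1)$ from Jacobian $\Rightarrow$ irreducibility at the next step) collapses in bad characteristic. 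Likewise, the paper notes explicitly that $R$ need not have ordinary isolated singularities in positive characteristic, so your Jacobian computation cannot yield the $\L$-isolated singularity statement over arbitrary $k$; the paper proves your Jacobian argument separately as Proposition~\ref{isolated singularity}, but only in characteristic zero.

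The paper's proofs are quite different from yours and avoid these problems. For $\L$-factorial $\L$-domain it uses an elementary root-construction induction (Proposition~\ref{L-factorial induction}): if $A$ is a $G$-factorial $G$-domain and $\ell$ is $G$-prime, then $A[X]/(X^p-\ell)$ is a $G'$-factorial $G'$-domain, with an explicit description of the $G'$-prime elements. This is a short direct computation using only the graded notions, works over any field, and does not even need the hyperplanes to be in general position. For $\L$-isolated singularities the paper passes to the covering algebra $R^{[\Z\c]}$ and its localizations $T(j)=(R_{T_j})^{[\Z\c]}$, obtaining an equivalence $\mod^{\L}R_{T_j}\simeq\mod T(j)_0$; it then identifies $T(j)_0$ with a tensor product $\bigotimes_i \mathrm{T}_{p_i}((C_{T_j})_0,\ell_i/T_j)$ of ``triangular'' orders and shows this has global dimension $d$ using general position. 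This approach is insensitive to the characteristic. Your induction on $d$ for $\L$-factoriality is substantially more complicated than the paper's root-construction step, and the ``delicate point'' you flag about non-$\L$-homogeneous components is exactly what the root-construction argument sidesteps by never leaving the graded world.
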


As an application, we have the following basic property (see Theorem~\ref{AR duality}):
\begin{itemize}
\item (\emph{Auslander-Reiten-Serre duality}) There exists a functorial isomorphism
$\Hom_{\underline{\CM}^{\L}R}(X,Y)\simeq D\Hom_{\underline{\CM}^{\L}R}(Y,X(\w)[d])$
for any $X,Y\in\CM^{\L}R$.
\end{itemize}

One of the powerful approaches to study a given triangulated category $\TT$ (e.g.\ $\underline{\CM}^{\L}R$)
is to construct a triangle equivalence with the bounded derived category $\DDD^{\bo}(\mod A)$
of some finite dimensional $k$-algebra $A$, which one can study by using the well-studied
methods in representation theory. This is equivalent to finding a tilting object in $\TT$ when
$\TT$ is algebraic (e.g.\ $\underline{\CM}^{\L}R$).

Our first main result in this paper shows that this is always possible for the case of
the stable category $\underline{\CM}^{\L}R$:

\begin{theorem}[Theorem~\ref{CM tilting}]\label{CM tilting in introduction}
For any GL complete intersection $(R,\L)$, there is a finite dimensional $k$-algebra $A^{\rm CM}$ and a triangle equivalence
\[\underline{\CM}^{\L}R\simeq\DDD^{\bo}(\mod A^{\rm CM}).\]
In fact, $\underline{\CM}^{\L}R$ has a certain tilting object $T^{\rm CM}$ satisfying $\underline{\End}^{\L}_R(T^{\rm CM})=A^{\rm CM}$.
\end{theorem}

This is new even in the classical case $d=1$, while this is known for
the hypersurface case $n=d+2$ by Futaki-Ueda \cite{FU} and Kussin-Lenzing-Meltzer \cite{KLM} ($d=1$).

We call the algebra $A^{\rm CM}$ the \emph{CM-canonical algebra}.
We give the following explicit formula for global dimension of $A^{\rm CM}$
by using Tate's DG algebra resolutions for complete intersections \cite{Tat}.

\begin{theorem}[Theorem~\ref{gl.dim of A^CM}]\label{gl.dim of A^CM in introduction}
Let $(R,\L)$ be a GL complete intersection of dimension $d+1$ with weights $p_1,\ldots,p_n$. Assume $p_i\ge2$ for all $i$.
\begin{itemize}
\item[(a)] $A^{\rm CM}=0$ if and only if $n\le d+1$.
\item[(b)] If $n\ge d+2$, then $\gl A^{\rm CM}=2(n-d-2)+\#\{i\mid p_i\ge3\}$.
\end{itemize}
\end{theorem}

We also give an explicit presentation of $A^{\rm CM}$
in terms of a quiver with relations (see Theorem~\ref{endomorphism convex}).
In particular, for the hypersurface case $n=d+2$, $A^{\rm CM}$ has the following
simple description (see Corollary~\ref{presentation of CM canonical}):
\begin{itemize}
\item If $n=d+2$, then $A^{\rm CM}\simeq\bigotimes_{i=1}^nk\A_{p_i-1}$,
where $k\A_{p_i-1}$ is the path algebra of the equioriented quiver of type $\A_{p_i-1}$.
\end{itemize}
This gives immediately a version of Kn\"orrer periodicity (see Corollary~\ref{presentation of CM canonical}).

\medskip
We say that a tilting object $U$ in $\underline{\CM}^{\L}R$ is \emph{$d$-tilting}
if the endomorphism algebra $\underline{\End}^{\L}_R(U)$ has global dimension at most
$d$. We will study the following problem:

\begin{itemize}
\item When does $\underline{\CM}^{\L}R$ have a $d$-tilting object?
\end{itemize}

We give the following necessary condition.

\begin{theorem}[Theorem~\ref{d-tilting imply Fano}]\label{d-tilting imply Fano in introduction}
Assume $d\ge1$. If $\underline{\CM}^{\L}R$ has a $d$-tilting object, then $(R,\L)$ is Fano.
\end{theorem}

Observe however, that by Example~\ref{Fano no d-tilting} the convese of Theorem~\ref{d-tilting imply Fano in introduction} is not true.

\medskip
Now we study when $(R,\L)$ is \emph{Cohen-Macaulay finite}
(=\emph{CM finite}) in the sense that there are only finitely many isomorphism 
classes of indecomposable objects in $\CM^{\L}R$ up to degree shift.
In the classical case $d=1$, it is well-known that $(R,\L)$ is CM finite
if and only if $(R,\L)$ is domestic.
In this paper, we give the following `Buchweitz-Greuel-Schreyer-type' classification of GL complete intersections
that are CM finite as an application of Theorem~\ref{CM tilting in introduction} above.

\begin{theorem}[Theorem~\ref{characterize RF}]\label{characterize RF in introduction}
Let $(R,\L)$ be a GL complete intersection of dimension $d+1$ with weights $p_1,\ldots,p_n$.
Assume $p_i\ge2$ for all $i$. Then $(R,\L)$ is CM finite if and only if one of the 
following conditions hold.  
\begin{itemize}
\item $n\le d+1$.
\item $n=d+2$, and $(p_1,\ldots,p_n)=(2,\ldots,2,p_n)$, $(2,\ldots,2,3,3)$, $(2,\ldots,2,3,4)$
or $(2,\ldots,2,3,5)$ up to permutation.
\end{itemize}
\end{theorem}

In these cases, we describe the Auslander-Reiten quiver of $\CM^{\L}R$ explicitly (see Theorem~\ref{AR quiver for CM finite}).

Theorem~\ref{characterize RF in introduction} tells us that CM finiteness is a very strong restriction, and in that case, $\underline{\CM}^{\L}R$ is triangle equivalent to $\underline{\CM}^{\L'}R'$ for
some domestic GL complete intersection $(R',\L')$ of dimension two ($d=1$) by Kn\"orrer periodicity.

\medskip
Higher dimensional Auslander-Reiten theory provides a possible approach
to study the category $\CM^{\L}R$ even when $(R,\L)$ is not CM finite (even in the case
when it is CM wild).
A key notion is `$d$-CM finiteness' obtained by replacing the category $\CM^{\L}R$
by a $d$-cluster tilting subcategory:
A full subcategory $\CC$ of $\CM^{\L}R$ is called \emph{$d$-cluster tilting}
if $\CC$ is a functorially finite subcategory of $\CM^{\L}R$ such that
\begin{eqnarray*}
\CC&=&\{X\in\CM^{\L}R\mid\forall i\in\{1,2,\ldots,d-1\}\ \Ext_{\mod^{\L}R}^i(\CC,X)=0\}\\
&=&\{X\in\CM^{\L}R\mid\forall i\in\{1,2,\ldots,d-1\}\ \Ext_{\mod^{\L}R}^i(X,\CC)=0\}
\end{eqnarray*}
(see Section~\ref{section: preliminaries 2} for details). In this case $\CC$ satisfies
$\CC(\w)=\CC$.
We say that $(R,\L)$ is \emph{$d$-Cohen-Macaulay finite} (=\emph{$d$-CM finite}) if 
there exists a $d$-cluster tilting subcategory $\CC$ of $\CM^{\L}R$ such that there
are only finitely many isomorphism classes of indecomposable objects in $\CC$ up to degree shift.
In the classical case $d=1$, $1$-CM finiteness coincides with classical
CM finiteness since $\CM^{\L}R$ is the unique $1$-cluster tilting subcategory
of $\CM^{\L}R$. 
We will study the following problem:

\begin{itemize}
\item When is $(R,\L)$ $d$-CM finite?
\end{itemize}

We show that $d$-tilting objects are closely related to $d$-cluster tilting subcategories, as their names suggest. In fact,
we give the following sufficient condition for $(R,\L)$ to be $d$-CM finite.

\begin{theorem}[Theorem~\ref{construct dCT}]\label{construct dCT in introduction}
If $\underline{\CM}^{\L}R$ has a $d$-tilting object, then $(R,\L)$ is $d$-CM finite.
\end{theorem}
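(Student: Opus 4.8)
The plan is to deduce both conclusions from the structure of a $d$-tilting object $U$ in $\underline{\CM}^{\L}R$ together with the Auslander-Reiten-Serre duality recorded above. Write $B:=\underline{\End}^{\L}_R(U)$, which by hypothesis has $\gl B\le d$, and recall the triangle equivalence $F\colon\underline{\CM}^{\L}R\xrightarrow{\sim}\DDD^{\bo}(\mod B)$ sending $U$ to $B$. The first step is to transport the AR-Serre duality through $F$: by Theorem \ref{AR duality} the category $\underline{\CM}^{\L}R$ has a Serre functor $(\w)[d]$, so $\DDD^{\bo}(\mod B)$ has a Serre functor as well, hence $B$ has finite global dimension (already assumed) and its Serre functor is $-\Lotimes_B DB[?]$; comparing the shift $[d]$ with the construction of the Serre functor on $\DDD^{\bo}(\mod B)$ forces the Nakayama-twisted object $\nu^{-1}(B)=\RHom_B(DB,B)$ to be concentrated in a single degree, namely degree $-d$ shifted appropriately. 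Combined with $\gl B\le d$ this says exactly that $B$ is \emph{$d$-representation infinite} (or $d$-representation finite) in the sense of \cite{HIO}; since $\underline{\CM}^{\L}R$ is $\Hom$-finite but not zero and the grading group $\L$ acts, one checks it cannot be $d$-representation finite, so $B$ is $d$-representation infinite. Now the key point: a $d$-representation infinite algebra gives rise, via its $(d+1)$-preprojective algebra and the results of \cite{HIO,MM}, to a $d$-cluster tilting subcategory — but here I want the Fano conclusion first, so let me reorganize.

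First I would prove the Fano claim. Suppose for contradiction $\delta(\w)\ge0$. If $\delta(\w)=0$, then $(\w)$ is (a power of) an autoequivalence of finite order composed with something acting trivially on degrees, and AR-Serre duality becomes $\Hom(X,Y)\simeq D\Hom(Y,X[d])$ up to a finite twist, which by a standard argument (the existence of a Serre functor that is essentially a shift, i.e. a Calabi-Yau-type structure) is incompatible with $\gl\underline{\End}(U)=d<\infty$ for the tilting object $U$: indeed a fractionally Calabi-Yau triangulated category with a tilting object whose endomorphism algebra has global dimension $d$ would force that algebra to be self-injective of finite global dimension, hence semisimple, hence $\underline{\CM}^{\L}R$ would be a finite direct sum of shifted copies of a simple object — contradicting that $R$ has an isolated but nontrivial singularity (so $\underline{\CM}^{\L}R\ne0$ and is not semisimple). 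If $\delta(\w)>0$ (anti-Fano), then the inverse AR-translation $(\w)[d]$ has "positive degree" and one shows the global dimension of $\underml{\End}^{\L}_R(U)$ would be forced to be strictly greater than $d$ by tracking $\Ext$-groups between shifts of $U$ under the action of the positive-degree autoequivalence $(\w)$ — concretely, $\Ext^{>d}(U,U(\ell\w))\ne0$ for suitable $\ell$, contradicting $\gl B\le d$. Either way we get a contradiction, so $\delta(\w)<0$ and $(R,\L)$ is Fano.

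For the $d$-CM finiteness, the strategy is to build an explicit $d$-cluster tilting subcategory of $\CM^{\L}R$ from $U$. Since $B=\underline{\End}^{\L}_R(U)$ is $d$-representation infinite (by the first paragraph), its $(d+1)$-preprojective algebra $\Pi:=\Pi_{d+1}(B)$ is a graded bimodule $(d+1)$-Calabi-Yau algebra of Gorenstein parameter one, and by \cite{HIO} the additive subcategory $\add\{\nu_d^{-i}B\mid i\ge0\}$ — equivalently the $\L$-orbit closure under the Serre shift — is a $d$-cluster tilting subcategory of $\DDD^{\bo}(\mod B)$ lying inside $\underline{\CM}^{\L}R$. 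The condition $\CC(\w)=\CC$ holds automatically because the Serre functor is $(\w)[d]$. Finiteness up to degree shift comes from the fact that, in the Fano case, the Serre functor eventually "wraps around": the degree of $\w$ being negative means the $\L$-orbit of $B$ under repeated application of $\nu_d=-\Lotimes_B\RHom_B(DB,B)[-d]$, once one quotients by the degree-shift action of $\L$, is finite — this is where the sign of $\delta(\w)$ is used decisively, paralleling the $d=1$ picture where domesticity forces finitely many indecomposables in the tube-free part. The main obstacle is precisely this last point: verifying that Fano-ness (negative dualizing degree) translates into finiteness of the orbit category, which requires controlling how the autoequivalence $(\w)$ interacts with the cohomological degrees coming from $\gl B=d$, and ruling out infinite families of non-isomorphic indecomposables in $\CC/\text{(shift)}$. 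I expect this to be handled by a careful bookkeeping argument on the Grothendieck group together with the finiteness already available on the algebra side via \cite{HIO}, but it is the step that genuinely uses the Fano hypothesis rather than just formal properties of tilting objects.
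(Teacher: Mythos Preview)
Your central claim --- that $B=\underline{\End}^{\L}_R(U)$ is $d$-representation infinite --- is false, and the argument you give for it does not work. The existence of a Serre functor on $\DDD^{\bo}(\mod B)$ is automatic once $B$ has finite global dimension; it places no constraint whatsoever on the cohomological amplitude of $\nu^{-1}(B)$. In fact the paper proves the \emph{opposite} conclusion: $B$ is $\tau_d$-finite (Definition~\ref{define d-RI}(a)), i.e.\ $\tau_d^\ell(DB)=0$ for some $\ell>0$, equivalently $\dim_k\Pi(B)<\infty$. Already for $d=1$ this is visible: a $1$-tilting object in $\underline{\CM}^{\L}R$ for a domestic weighted projective line has endomorphism ring a path algebra of Dynkin type, which is representation finite, not representation infinite. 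You appear to have imported the behaviour of $d$-tilting bundles on $\X$ (Proposition~\ref{preliminaries for end}(e)) into the stable Cohen--Macaulay setting, where it does not hold.

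This misidentification is what makes the $d$-CM finiteness argument collapse. The paper's route is: show $B$ is $\tau_d$-finite by computing $H^0(\nu_d^{-\ell}(DB))\simeq\Ext^d_{\mod^{\L}R}(U,U)_{(1-\ell)\w}$ and using that $\Ext^d_R(U,U)\in\mod^{\L}_0R$ (Proposition~\ref{CM is locally free}); then invoke Theorem~\ref{tau_d finite has d-CT} to obtain the $d$-cluster tilting subcategory $\UU_B=\add\{\nu_d^i(B)\mid i\in\Z\}$ of $\DDD^{\bo}(\mod B)$. Finiteness up to degree shift is then automatic from $\tau_d$-finiteness --- there is no ``main obstacle'' left to overcome. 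By contrast, the $d$-representation infinite machinery of Theorem~\ref{d-RI has d-CT} only produces a $d$-cluster tilting subcategory inside the proper subcategory $\VV_B$, not in all of $\DDD^{\bo}(\mod B)$, and gives no finiteness.

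Your Fano argument also has gaps. The Calabi--Yau case is handled in the paper via Corollary~\ref{CM CY} and Proposition~\ref{fractional CY and global dimension}(c): if $(R,\L)$ is Calabi--Yau then $\underline{\CM}^{\L}R$ is $\frac{dp}{p}$-Calabi--Yau, forcing $\gl B>d$. Your sketch that this forces $B$ semisimple is not quite right (fractionally Calabi--Yau of dimension $d$ does not mean the Serre functor equals $[d]$ on the nose). For the anti-Fano case, your claim that $\Ext^{>d}(U,U(\ell\w))\neq0$ goes in the wrong direction; the paper instead combines $\nu_d^{-\ell}(B)\in\DDD^{\le0}(\mod B)$ (from $\gl B\le d$) with the elementary vanishing Lemma~\ref{CM vanishing} (which uses $\delta(\w)>0$ to make $-\ell\w$ arbitrarily small) to conclude $\nu_d^{-\ell}(B)=0$ for $\ell\gg0$, contradicting that $\nu_d$ is an autoequivalence.
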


An important class of \emph{higher preprojective algebras} (see
Definition \ref{define preprojective algebra}) appear naturally in this context.
When $U$ is a $d$-tilting object in $\underline{\CM}^{\L}R$, then we have an isomorphism
\begin{equation*}
\underline{\End}_R^{\L/\Z\w}(U)\simeq\Pi_{d+1}(\underline{\End}_R^{\L}(U))
\end{equation*}
for the $(d+1)$-preprojective algebra $\Pi_{d+1}(\underline{\End}_R^{\L}(U))$ of $\underline{\End}_R^{\L}(U)$ (Theorem~\ref{d-tilting imply Fano}).
This is a generalization of the well-known fact that the stable Auslander algebras of simple surface
singularities are isomorphic to the preprojective algebras of Dynkin type.
We will give a similar isomorphism \eqref{Auslander=preprojective} below for $d$-tilting bundles on
GL projective spaces $\X$.

Moreover $d$-CM finiteness is closely related to existence of non-commutative crepant resolutions 
(=NCCRs) in the sense of Van den Bergh \cite{V}. 
When $(R, \L)$ is not Calabi-Yau and $d\ge1$, we consider a $\Z$-graded ring defined as the Veronese subring
\[R^{(\w)}=\bigoplus_{\ell\in\Z}R_{\ell\w}\]
of $R$. In the classical case $d=1$, $R^{(\w)}$ is a simple surface singularity if $(R,\L)$ is domestic \cite[Proposition 8.4]{GL2}.
It is well-known that simple surface singularities are Cohen-Macaulay finite, and hence has NCCRs.
We give the following natural higher dimensional analog of these results.

\begin{theorem}[Theorem~\ref{construct NCCR}]\label{construct NCCR in introduction}
Assume that $(R,\L)$ is not Calabi-Yau and $d\ge1$. If $(R,\L)$ is $d$-CM finite, then $R^{(\w)}$ has an NCCR.
\end{theorem}

Investigating the derived equivalence class of the algebras $A^{\rm CM}$ in
Theorem~\ref{CM tilting in introduction}, we have
the following list of GL complete intersections that have $d$-tilting objects
(though $T^{\rm CM}$ itself is not necessarily $d$-tilting).

\begin{theorem}[Theorem~\ref{Main1}]\label{Main1 in introduction}
If one of the following conditions are satisfied, then $\underline{\CM}^{\L}R$ has a $d$-tilting object, $(R,\L)$ is $d$-CM finite and $R^{(\w)}$ has an NCCR.
\begin{itemize}
\item $n\le d+1$.
\item $n=d+2\ge2$ and $(p_1,p_2)=(2,2)$.
\item $n=d+2\ge3$ and $(p_1,p_2,p_3)=(2,3,3)$, $(2,3,4)$ or $(2,3,5)$.
\item $n=d+2\ge4$ and $(p_1,p_2,p_3,p_4)=(3,3,p_3,p_4)$ with $p_3,p_4\in\{3,4,5\}$.
\item $\#\{i\mid p_i=2\}\ge3(n-d)-4$.
\end{itemize}
\end{theorem}

For examples of $d$-cluster tilting subcategories, see Examples~\ref{semisimpleAR2} and \ref{A2*A3 CT subcategory}.

For GL hypersurfaces (i.e.\ $n=d+2$), we use matrix factorizations to study their Cohen-Macaulay 
representations. In particular, for GL hypersurfaces $(R^j=S^j/(f_j),\L_j)$ with $j=1,2$ and 
$(R=(S^1\otimes_kS^2)/(f_1+f_2),\L)$, the tensor products of matrix factorizations
give a bifunctor
\[-\otimes_{\rm MF}-\colon\underline{\CM}^{\L_1}R^1\times\underline{\CM}^{\L_2}R^2\to
\underline{\CM}^{\L}R.\]
This is a shadow of the ordinary tensor product $\otimes_k$ in the following sense.

\begin{theorem}[Theorem~\ref{tensor of MF is tensor}]
We have the following commutative diagram up to natural isomorphism.
\[\xymatrix@R=1em{
\DDD^{\bo}(\mod^{\L_1}R^1)\times\DDD^{\bo}(\mod^{\L_2}R^2)\ar[r]\ar[d]^{-\otimes_k-}&\DDD_{\sg}^{\L_1}(R^1)\times\DDD_{\sg}^{\L_2}(R^2)\ar@{-}[r]^\sim\ar[d]^{-\otimes_k-}&\underline{\CM}^{\L_1}R^1\times\underline{\CM}^{\L_2}R^2\ar[d]^{-\otimes_{\rm MF}-}\\
\DDD^{\bo}(\mod^{\L}R)\ar[r]&\DDD_{\sg}^{\L}(R)\ar@{-}[r]^\sim&\underline{\CM}^{\L}R
}\]
\end{theorem}

To prove this, we show that the tensor product $\otimes_{\rm MF}$ is nothing but
the Cohen-Macaulay approximation of the ordinary tensor product $\otimes_k$
(Proposition~\ref{tensor product is CM approximation}).

\section{Our Results on Geigle-Lenzing projective spaces}
Let $\X$ be a GL projective space of dimension $d$ and $\coh\X$ the category of coherent
sheaves on $\X$ as given in \eqref{show X} and \eqref{show coh X}
(see Chapter~\ref{section: GL space} for details).
Note that, for the case $n=0$, $\X$ is the projective space $\P^d$, and for the case $d=1$, $\X$ is a weighted projective line of Geigle-Lenzing.

The following are some basic properties of $\coh\X$ (see Theorem~\ref{Serre} for details).
\begin{itemize}
\item $\coh\X$ is a Noetherian abelian category with global dimension $d$.
\item (\emph{Auslander-Reiten-Serre duality}) There exists a functorial isomorphism
$\Ext^d_{\X}(X,Y)\simeq D\Hom_{\X}(Y,X(\w))$ for any $X,Y\in\coh\X$.
\end{itemize}

The trichotomy (Fano, Calabi-Yau, anti-Fano) of GL complete intersections $(R,\L)$
is characterized by ampleness of the automorphisms $(-\w)$ and $(\w)$ of
$\coh\X$ in the sense of Artin-Zhang \cite{AZ} (see Definition~\ref{define ample},
Theorem~\ref{AZ ampleness}).

We will introduce two important full subcategories of $\coh\X$:
One is the category $\vect\X$ of vector bundles on $\X$, and the other is its full
subcategory $\lb\X$ of direct sums of line bundles (see Section~\ref{section: vector bundles} for details).
Our categorical approach enables us to study the relationship between Cohen-Macaulay
representations of GL complete intersections and coherent sheaves on GL projective spaces: We have a fully faithful functor $\pi\colon\CM^{\L}R\to\vect\X$, which induces an equivalence
$\pi:\proj^{\L}R\simeq\lb\X$. The first functor is also an equivalence in the classical case $d=1$,
but $\vect\X$ is much bigger than $\pi(\CM^{\L}R)$ for $d>1$. In fact we have the following description of $\pi(\CM^{\L}R)$ inside $\vect\X$ (Proposition~\ref{arith CM}):
\begin{eqnarray}\label{show pi(CM)}
\pi(\CM^{\L}R)&=&\{X\in\vect\X\mid\forall i\in\{1,2,\ldots,d-1\}\ \Ext_{\X}^i(\lb\X,X)=0\}\\ 
&=&\{X\in\vect\X\mid\forall i\in\{1,2,\ldots,d-1\}\ \Ext_{\X}^i(X,\lb\X)=0\}.\notag
\end{eqnarray}
In the context of projective geometry, the objects in $\pi(\CM^{\L}R)$ are called \emph{arithmetically Cohen-Macaulay bundles} (e.g.\ \cite{CH,CMP}).

We say that a GL projective space $\X$ is \emph{vector bundle finite} (=\emph{VB finite}) if 
there are only finitely many isomorphism classes of indecomposable objects in $\vect\X$ up to degree shift.

\begin{theorem}[Theorem~\ref{VB finite}]
A GL projective space $\X$ is VB finite if and only if $d=1$ and $\X$ is Fano (or equivalently, domestic).
\end{theorem}

Similarly to $d$-CM finiteness of GL complete intersections, we say that a GL projective space $\X$ is \emph{$d$-vector bundle finite} (=\emph{$d$-VB finite}) if there exists a 
$d$-cluster
tilting subcategory $\CC$ of $\vect\X$ (see Section~\ref{section: preliminaries 2} for details)
such that there are only finitely many isomorphism classes of indecomposable
objects in $\CC$ up to degree shift. We will study the following problem:

\begin{itemize}
\item When is $\X$ $d$-VB finite?
\end{itemize}

Thanks to the similarity between the equality \eqref{show pi(CM)} and our definition of $d$-cluster tilting subcategories, we obtain the following observation.

\begin{theorem}[Theorem~\ref{CT subcategory}]\label{CT subcategory in introduction}
Let $\X$ be the GL projective space corresponding to $(R,\L)$.
Then $d$-cluster tilting subcategories of $\CM^{\L}R$ are precisely
$d$-cluster tilting subcategories of $\vect\X$ containing $\lb\X$.
In particular, if $(R,\L)$ is $d$-CM finite, then $\X$ is $d$-VB finite.
\end{theorem}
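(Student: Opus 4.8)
The plan is to exploit the two displayed descriptions of $\CM^{\L}R$ inside $\vect\X$ recalled just before the statement, together with the fully faithful embedding $\CM^{\L}R\hookrightarrow\vect\X$, to match up $d$-cluster tilting subcategories on the two sides. Throughout I identify $\CM^{\L}R$ with its essential image in $\vect\X$, so that $\lb\X\subseteq\CM^{\L}R\subseteq\vect\X$; note that $\Ext^i$ groups computed in $\mod^{\L}R$ agree with those computed in $\coh\X$ for objects of $\CM^{\L}R$ in the relevant range of degrees, since taking the quotient by $\mod_0^{\L}R$ only affects cohomology in degrees outside $\{1,\dots,d-1\}$ on Cohen-Macaulay modules; this compatibility is what makes the orthogonality conditions transport.

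First I would prove the inclusion ``$\subseteq$''. Let $\CC\subseteq\CM^{\L}R$ be a $d$-cluster tilting subcategory. Since $R$ itself (and all its twists $R(\vec x)$, hence all of $\lb\X$) lies in $\CM^{\L}R$ and $\Ext^i_{\mod^{\L}R}(\lb\X,\CC)=0=\Ext^i_{\mod^{\L}R}(\CC,\lb\X)$ for $1\le i\le d-1$ by the defining $\Ext$-orthogonality of a $d$-cluster tilting subcategory, the subcategory $\add\lb\X$ is contained in $\CC$ (it consists of objects $\Ext$-orthogonal to $\CC$ on both sides and lies in $\CM^{\L}R$, and $\CC$ is maximal with this property). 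So $\lb\X\subseteq\CC$. Next I must check $\CC$ is $d$-cluster tilting \emph{in $\vect\X$}. One inclusion of each defining equality is the $\Ext$-vanishing within $\CC$, which we already have. For the reverse inclusions, take $X\in\vect\X$ with $\Ext^i_{\X}(\CC,X)=0$ for all $1\le i\le d-1$; since $\lb\X\subseteq\CC$, in particular $\Ext^i_{\X}(\lb\X,X)=0$ for those $i$, so by Proposition~\ref{arith CM} we get $X\in\CM^{\L}R$, and then $X\in\CC$ because $\CC$ is $d$-cluster tilting in $\CM^{\L}R$; the other side is symmetric using the second equality of Proposition~\ref{arith CM}. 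Finally functorial finiteness of $\CC$ in $\vect\X$ should follow from functorial finiteness in $\CM^{\L}R$ combined with the fact that $\CM^{\L}R$ is itself functorially finite (indeed, by Proposition~\ref{arith CM}, reflective/coreflective) in $\vect\X$: compose the approximations.

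For the converse ``$\supseteq$'', let $\CC\subseteq\vect\X$ be a $d$-cluster tilting subcategory with $\lb\X\subseteq\CC$. From $\lb\X\subseteq\CC$ and the defining $\Ext$-orthogonality of $\CC$ we get, for every $X\in\CC$, that $\Ext^i_{\X}(\lb\X,X)=0$ and $\Ext^i_{\X}(X,\lb\X)=0$ for $1\le i\le d-1$, hence $X\in\CM^{\L}R$ by Proposition~\ref{arith CM}; thus $\CC\subseteq\CM^{\L}R$. It remains to see $\CC$ is $d$-cluster tilting in $\CM^{\L}R$: the $\Ext$-orthogonality defining equalities for $\CC$ inside $\CM^{\L}R$ follow from those inside $\vect\X$ by intersecting both sides with $\CM^{\L}R$ (the left-hand sides only shrink, the right-hand sides are computed by the same $\Ext$ groups), and functorial finiteness of $\CC$ in $\CM^{\L}R$ is inherited from functorial finiteness in $\vect\X$ by restricting the approximations, which land in $\CC\subseteq\CM^{\L}R$ automatically. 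This establishes the bijection asserted in the first sentence. The ``in particular'' statement is then immediate: if $(R,\L)$ is $d$-CM finite, pick a $d$-cluster tilting $\CC\subseteq\CM^{\L}R$ with finitely many indecomposables up to degree shift; by the bijection $\CC$ is also $d$-cluster tilting in $\vect\X$, and its indecomposables — and their number up to degree shift — are unchanged, so $\X$ is $d$-VB finite.

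The main obstacle I expect is the careful bookkeeping in the functorial finiteness arguments: one must verify that left and right $\CC$-approximations computed in one ambient category remain approximations in the other, which in the ``$\subseteq$'' direction requires knowing $\CM^{\L}R$ is functorially finite in $\vect\X$ — and the cleanest way to see this is precisely the reflective description furnished by Proposition~\ref{arith CM}, together with the fact that $\CM^{\L}R$ is closed under the syzygies/cosyzygies used to build approximations. The $\Ext$-comparison between $\mod^{\L}R$ and $\coh\X$ (the Serre quotient) in degrees $1,\dots,d-1$ for Cohen-Macaulay modules is standard local cohomology, and I would cite the properties of $\coh\X$ in Theorem~\ref{Serre} and the characterization in Proposition~\ref{arith CM} rather than redo it.
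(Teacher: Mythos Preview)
Your approach is essentially the same as the paper's: both use Proposition~\ref{arith CM}(b)(c) to show that the $\Ext$-orthogonality conditions transfer between $\CM^{\L}R$ and $\vect\X$, yielding the bijection of $d$-cluster tilting subcategories.

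The one point where you are slightly imprecise is the functorial finiteness step. You write that $\CM^{\L}R$ is ``reflective/coreflective'' in $\vect\X$ by Proposition~\ref{arith CM}, but that proposition only gives full faithfulness, the $\Ext$-characterization, and the $\Ext$-comparison; it does not directly produce approximations. The paper handles this with a separate Proposition (labelled ``CM approximation'', Proposition~\ref{CM approximation}): functorial finiteness of $\CM^{\L}R$ in $\mod^{\L}R$ comes from Auslander--Buchweitz theory (Theorem~\ref{AR duality}(c)), and this transfers to $\vect\X$ via the inclusions $\CM^{\L}R\subset(\mod^{\L}_0R)^{\perp_{0,1}}$ and $\vect\X\subset\pi((\mod^{\L}_0R)^{\perp_{0,1}})$ together with Lemma~\ref{depth 2}. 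Once you know $\CM^{\L}R$ is functorially finite in $\vect\X$, your composition-of-approximations argument goes through exactly as you describe, and matches the paper.
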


Therefore $\X$ is $d$-VB finite for the cases listed in Theorem~\ref{Main1 in introduction}.
Another immediate consequence of Theorem~\ref{CT subcategory in introduction}
is the following result, where the `if' part generalizes Horrocks' splitting criterion
for vector bundles on the projective space $\P^d$ \cite[2.3.1]{OSS}.

\begin{corollary}[Corollary~\ref{line is CT subcategory}]
Let $\X$ be a GL projective space of dimension $d$ with weights $p_1,\ldots,p_n$.
Assume $p_i\ge 2$ for all $i$.
Then $\lb\X$ is a $d$-cluster tilting subcategory of $\vect\X$ if and only if $n\le d+1$.
\end{corollary}

One of the important properties of weighted projective lines is the existence of
tilting bundles, whose endomorphism algebras are Ringel's canonical algebras.
We generalize this result by showing that any GL projective space $\X$
has a tilting bundle.

\begin{theorem}[Theorem~\ref{tilting}]
For any GL projective space $\X$, there is a finite dimensional $k$-algebra $A^{\ca}$ and a triangle equivalence
\[\DDD^{\bo}(\coh\X)\simeq\DDD^{\bo}(\mod A^{\ca}).\]
In fact, $\DDD^{\bo}(\coh\X)$ has a tilting bundle $T^{\ca}=\bigoplus_{\x\in[0,d\c]}\OO(\x)\in\lb\X$ satisfying $\End_{\X}(T^{\ca})=A^{\ca}$.
\end{theorem}

This was known by Geigle-Lenzing \cite{GL} for $d=1$, by Beilinson \cite{Be} for $n=0$,
by Baer \cite{Ba} for $n\le d+1$, and by Ishii-Ueda \cite{IU} for $n=d+2$.
The whole statement has been shown independently by Lerner and the second author \cite{IL}
in the context of `GL orders' on $\P^d$.

We call the algebra $A^{\ca}$ a \emph{$d$-canonical algebra}.
We give the following explicit formula of global dimension of $A^{\ca}$.

\begin{theorem}[Theorem~\ref{global dimension}]\label{global dimension in introduction}
Let $\X$ be a GL projective space of dimension $d$ with weights $p_1,\ldots,p_n$.
Assume $p_i\ge 2$ for all $i$. Then we have
\[\gl A^{\ca}=\left\{\begin{array}{cc}
d&n \le d+1,\\
2d&n>d+1.
\end{array}\right.\]
Moreover, in the first case, $A^{\ca}$ is a $d$-representation infinite algebra.
\end{theorem}

We give an explicit presentation of $A^{\ca}$ in terms of a quiver
with relations (see Theorem~\ref{endomorphism convex} and
Section~\ref{section: basic properties of canonical}). In particular, we
show that, if $n \le d+1$, then $A^{\ca}$ is a $d$-representation infinite
algebra of type $\widetilde{\A}$ as introduced in \cite{HIO} (see Theorem
\ref{Atilde theorem}).
We use the tilting bundle $T^{\ca}$ to describe the Coxeter polynomial of $\coh\X$ explicitly (see Theorem~\ref{thm.coxeter_polynomial}).

More generally, we study the endomorphism algebras $\End_{\X}(V)$ of arbitrary
tilting bundles $V$ on $\X$. For any such $V$ there is a strong 
relationship between the GL projective space $\X$ and the endomorphism algebra $\End_{\X}(V)$. 
For example, we will show the following result, where the notions of 
$d$-Fano and $d$-anti-Fano algebras (see Definition~\ref{define Fano algebra}) 
were recently introduced by the third author \cite{M,MM} in non-commutative 
algebraic geometry.

\begin{theorem}[Theorem~\ref{trichotomy}]
Let $V$ be a tilting bundle on a GL projective space $\X$. Then 
$\X$ is Fano (respectively, anti-Fano) if and only if $\End_{\X}(V)$
is a $d$-Fano (respectively, $d$-anti-Fano) algebra.
\end{theorem}

One of our main problems to study is the following: 
\begin{itemize} 
\item When is $\coh\X$ derived equivalent to a $d$-representation infinite algebra? 
\end{itemize} 
For example, this is the case if $n\le d+1$ by Theorem~\ref{global dimension in introduction}.
Again, we say that a tilting object $V$ in $\DDD^{\bo}(\coh\X)$ is \emph{$d$-tilting}
if the endomorphism algebra $\End_{\DDD^{\bo}(\coh\X)}(V)$ has global dimension at most $d$.
In this case, by Proposition~\ref{preliminaries for end}, it is precisely $d$.
Then $d$-tilting sheaves give rise to $d$-representation infinite algebras thanks to the following result of Buchweitz-Hille \cite{BuH}.

\begin{proposition}[Proposition~\ref{d-tilting imply Fano2}]\label{d-tilting imply Fano2 in introduction}
If $V$ is a $d$-tilting sheaf on $\X$, then $\End_{\X}(V)$ is $d$-representation infinite.
\end{proposition}

Therefore our question simplifies to the following more accessible one: 
\begin{itemize} 
\item When does $\X$ have a $d$-tilting sheaf/bundle? 
\end{itemize} 
We give the following necessary condition as in Theorem~\ref{d-tilting imply Fano in introduction}.

\begin{proposition}[Proposition~\ref{d-tilting imply Fano2}]\label{d-tilting imply Fano2 in introduction2}
If $\DDD^{\bo}(\coh\X)$ has a $d$-tilting object, then $\X$ is Fano.
\end{proposition}

Moreover we give the following sufficient condition for $\X$ to be $d$-VB finite
in terms of tilting theory, as in Theorem~\ref{construct dCT in introduction}.

\begin{theorem}[Theorem~\ref{construct dCT2}]\label{construct dCT2 in introduction}
If $\X$ has a $d$-tilting bundle $V$, then $\X$ is $d$-VB finite.
\end{theorem}

Now we explain a more explicit version of Theorem~\ref{construct dCT2 in introduction}.
A key role is played by the notion of a \emph{slice} in a $d$-cluster tilting 
subcategory $\UU$ of $\vect\X$, which is an object $V$ in $\UU$ such that 
$\UU=\add\{V(\ell\w)\mid\ell\in\Z\}$ and $\Hom_\UU(V,V(\ell\w))=0$ holds for all $\ell>0$
(see Definition~\ref{define slice} for details). 
We prove the following relationship between $d$-tilting bundles and $d$-cluster tilting subcategories.

\begin{theorem}[Theorem~\ref{tilting-cluster tilting})\ (tilting-cluster tilting correspondence]\label{tilting-cluster tilting in introduction}
Let $\X$ be a GL projective space. Then $d$-tilting bundles on $\X$ are precisely slices in $d$-cluster tilting  subcategories of $\vect\X$.
\end{theorem}

To prove this theorem, for a $d$-representation infinite algebra $\Lambda$,
we introduce an extension-closed subcategory $\VV_\Lambda$ of $\DDD^{\bo}(\mod\Lambda)$,
which plays a role of the category $\vect\X$.
We prove that $\VV_\Lambda$ has an explicitly described $d$-cluster tilting subcategory
if the $(d+1)$-preprojective algebra of $\Lambda$ is left graded coherent (Theorem~\ref{d-RI has d-CT}). 


We give the following characterizations of $d$-tilting bundles on $\X$
contained in the subcategory $\CM^{\L}R$ of $\vect\X$ by applying Theorem~\ref{construct NCCR in introduction}.

\begin{corollary}[Theorem~\ref{tilting-cluster tilting 2}]\label{tilting-cluster tilting 2 in introduction}
Let $\X$ be a GL projective space corresponding to $(R,\L)$.
For $V\in\CM^{\L}R$, the following conditions are equivalent.
\begin{itemize}
\item[(a)] $V$ is a $d$-tilting bundle on $\X$.
\item[(b)] $V$ is a slice in a $d$-cluster tilting subcategories of $\CM^{\L}R$.
\item[(c)] $V^{(\w)}$ gives an NCCR of $R^{(\w)}$ such that $\End_{R^{(\w)}}(V^{(\w)})_i=0$ for all $i<0$.
\end{itemize}
In this case, there are isomorphisms
\begin{equation}\label{Auslander=preprojective}
\End_{R^{(\w)}}(V^{(\w)})\simeq\Pi
\end{equation}
of $\Z$-graded algebras, where $\Pi$ is the $(d+1)$-preprojective algebra of $\End_{\X}(V)$.
\end{corollary}

The isomorphism \eqref{Auslander=preprojective} is a generalization of the well-known isomorphism
between the Auslander algebras of simple surface singularities
and the preprojective algebras of extended Dynkin type.
A similar picture already appeared in \cite{AIR} in a different setting.

We are expecting that $d$-tilting objects in $\underline{\CM}^{\L}R$
always lift to $d$-tilting bundles on $\X$. In fact, by
using Theorems~\ref{tilting-cluster tilting in introduction} and
\ref{Main1 in introduction}, we give the following result.

\begin{theorem}[Corollary~\ref{example of d-tilting bundle}]\label{Main2 in introduction}
Let $\X$ be a GL projective space with weights $p_1,\ldots,p_{d+2}$.
If one of the following conditions are satisfied, then $\X$ has a $d$-tilting bundle, and 
therefore $\X$ is $d$-VB finite (if $d\ge1$) and derived equivalent to a $d$-representation infinite algebra.
\begin{itemize}
\item $d \geq 0$ and $(p_1, p_2) = (2,2)$.
\item $d\ge1$ and $(p_1,p_2,p_3)$ is one of $(2,3,3)$, $(2,3,4)$ or $(2,3,5)$.
\item $d\ge2$ and $(p_1,p_2,p_3,p_4)=(3, 3, p_3, p_4)$ with $p_3,p_4\in\{3,4,5\}$.
\end{itemize}
\end{theorem}

Some of our results can be summarized as follows.
\[\xymatrix@C=4em@R=2em{
{\begin{array}{c}\mbox{$R^{(\w)}$ has}\\ \mbox{an NCCR}\end{array}}\\
{\begin{array}{c}\mbox{$(R,\L)$ is}\\ \mbox{$d$-CM finite}\end{array}}\ar@{=>}[d]_{\rm Thm.\ref{CT subcategory in introduction}}\ar@{=>}[u]^{\rm Thm.\ref{construct NCCR in introduction}}&
{\begin{array}{c}\mbox{$\underline{\CM}^{\L}R$ has a}\\ \mbox{$d$-tilting object}\end{array}}\ar@{=>}[l]_{\rm Thm.\ref{construct dCT in introduction}}
\ar@{=>}[r]^{\rm Thm.\ref{d-tilting imply Fano in introduction}}&\mbox{Fano}\\
{\begin{array}{c}\mbox{$\X$ is}\\ \mbox{$d$-VB finite}\end{array}}&
{\begin{array}{c}\mbox{$\X$ has a}\\ \mbox{$d$-tilting bundle}\end{array}}
\ar@{=>}[l]_(.55){\rm Thm.\ref{construct dCT2 in introduction}}
\ar@{=>}[r]^(.4){\rm Prop.\ref{d-tilting imply Fano2 in introduction}}
\ar@{=>}[ru]^{\rm Prop.\ref{d-tilting imply Fano2 in introduction2}}
&{\begin{array}{c}\mbox{$\X$ is derived equivalent}\\
\mbox{to a $d$-representation}\\
\mbox{infinite algebra}
\end{array}}
}\]
We end this chapter by the following question.

\begin{problem}\label{strongest hope 2}
How are the conditions in the above diagram related to each other?
\end{problem}

It is well-known that, in the classical case $d=1$, all the conditions in the above diagram are equivalent.
The upper right implication is strict for $d=2$ as we remarked above.
We suspect that many of these conditions are still equivalent,
but we do not have a precise conjecture.

\medskip
We record the following formulas for the group $\L$.

\begin{center}
\fbox{ \begin{minipage}{12cm}
\noindent
{\bf Generators and relations.} $\x_i$ (for $1 \leq i \leq n$), $\c$; $p_i \x_i = \c$.

\medskip
\noindent
{\bf Dualizing element.} $\w = - (d+1) \c + \sum_{i = 1}^n (p_i - 1) \x_i$ \\
\hspace*{1cm} regular case $n = d+1$: $\w = - \sum_{i=1}^{d+1} \x_i$ \\
\hspace*{1cm} hypersurface case $n = d+2$: $\w = \c - \sum_{i=1}^{d+2} \x_i$

\medskip
\noindent
{\bf Dominant element.} $\de = d \c + 2 \w = (n-d-2) \c + \sum_{i=1}^n (p_i - 2) \x_i$ \\
\hspace*{1cm} regular case $n = d+1$: $\de = - \c + \sum_{i=1}^{d+1} (p_i - 2) \x_i$ \\
\hspace*{1cm} hypersurface case $n = d+2$: $\de = \sum_{i=1}^{d+2} (p_i - 2) \x_i$
\end{minipage} }
\end{center}

\medskip\noindent
{\bf Acknowledgements }
The authors thank Helmut Lenzing for a number of valuable comments
on our work.
They thank Ragnar-Olaf Buchweitz and Lutz Hille for kindly explaining their work \cite{BuH} and stimulating discussions.
They thank Mitsuyasu Hashimoto, Akira Ishii, Boris Lerner, Atsushi Takahashi, Ryo Takahashi, Masataka Tomari, Kazushi Ueda, Yuji Yoshino and Keiichi Watanabe for stimulating discussions and valuable comments.



\mainmatter
%
%
%


\chapter{Preliminaries}\label{section: preliminaries}

In this chapter, we introduce basic notions which will be used throughout the paper.
For general background, we refer to \cite{ASS,ARS,Rin} on representation theory of
finite dimensional algebras,
to \cite{AHK,H1} on tilting theory, to \cite{Y,LW} on Cohen-Macaulay representation theory,
and to \cite{BH} on commutative ring theory.

Throughout this paper, we denote by $k$ an arbitrary field, and by $D$ the $k$-dual, that is $D(-)=\Hom_k(-,k)$. All modules are left modules.
The composition of $f:X\to Y$ and $g:Y\to Z$ is denote as $fg:X\to Z$.
For a ring $\Lambda$, we denote by $\Mod\Lambda$ the category of $\Lambda$-modules, by $\mod\Lambda$ the category of finitely generated $\Lambda$-modules,
and by $\proj\Lambda$ the category of finitely generated projective $\Lambda$-modules.
For an abelian group $G$ and a $G$-graded $k$-algebra $\Lambda$, we denote by $\Mod^G\Lambda$, $\mod^G\Lambda$, and $\proj^G\Lambda$ the $G$-graded versions.

For a class $\XX$ of objects in an additive category $\CC$, we denote by $\add_{\CC}\XX$
or $\add\XX$ the full subcategory of $\CC$ consisting of direct summands of finite
direct sums of objects in $\XX$.

For full subcategories $\XX$ and $\YY$ of a triangulated category $\TT$, we denote by $\XX*\YY$
the full subcategory of $\TT$ consisting of objects $Z$ such that there 
exists a triangle $X\to Z\to Y\to X[1]$ with $X\in\XX$ and $Y\in\YY$.

We denote by $\CCC(-)$, $\KKK(-)$, and $\DDD(-)$ the category of complexes, the homotopy
category, and the derived category, respectively. By $\CCC^{\rm b}(-)$,
$\KKK^{\rm b}(-)$ and $\DDD^{\rm b}(-)$ we mean the bounded versions.
We denote by $(\DDD^{\le0}(-),\DDD^{\ge0}(-))$ the standard t-structure in the derived category.


\section{Triangulated categories and tilting theory}\label{section: triangulated category}
Let us start with recalling some basic notions in triangulated categories.
Throughout this chapter, let $\TT$ be a triangulated category with a suspension
functor $[1]$.

We call a full subcategory $\UU$ of $\TT$ \emph{triangulated} if it
is closed under cones and $[\pm1]$. If $\UU$ is also closed under direct
summands, we call it \emph{thick}.
For a subcategory $\CC$ of $\TT$, we denote by $\thick\CC$
or $\thick_{\TT}\CC$ (respectively, $\tri\CC$ or $\tri_{\TT}\CC$) the smallest thick 
(respectively, triangulated) subcategory of $\TT$ containing $\CC$.

The following observation can be checked easily.

\begin{observation}\label{thick lemma} 
We have $\thick\CC=\add(\tri\CC)$.
\end{observation}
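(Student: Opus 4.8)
The plan is to prove the two inclusions separately, the nontrivial direction being $\thick\CC \subseteq \add(\tri\CC)$. First, the inclusion $\add(\tri\CC) \subseteq \thick\CC$ is immediate: $\tri\CC \subseteq \thick\CC$ by definition (the smallest thick subcategory is in particular triangulated and contains $\CC$), and $\thick\CC$ is closed under direct summands, so $\add(\tri\CC) \subseteq \thick\CC$ as well. For the reverse inclusion, since $\thick\CC$ is by definition the \emph{smallest} thick subcategory containing $\CC$, it suffices to show that $\add(\tri\CC)$ is itself a thick subcategory of $\TT$ containing $\CC$. Containment of $\CC$ is clear since $\CC \subseteq \tri\CC \subseteq \add(\tri\CC)$. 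Closure under direct summands is also automatic: $\add$ of anything is closed under direct summands (and under finite direct sums). So the only real content is showing that $\add(\tri\CC)$ is a triangulated subcategory, i.e. closed under $[\pm1]$ and under cones.

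Closure under $[\pm 1]$ is easy: $\tri\CC$ is closed under $[\pm 1]$, and the shift functors are additive equivalences, so $\add(\tri\CC)$ is closed under $[\pm 1]$ too. The one step requiring an argument is closure under cones. So take a triangle $X \to Y \to Z \to X[1]$ with $X, Y \in \add(\tri\CC)$; we must show $Z \in \add(\tri\CC)$. The key trick is to pass to direct sums with zero objects to promote $X$ and $Y$ into genuine objects of $\tri\CC$. Concretely, choose $X', Y' \in \add(\tri\CC)$ with $X \oplus X' \in \tri\CC$ and $Y \oplus Y' \in \tri\CC$ (possible by definition of $\add$). Then form the triangle obtained as the direct sum of $X \to Y \to Z \to X[1]$ with the split triangles $X' \arr{1} X' \to 0 \to X'[1]$ and $0 \to Y' \arr{1} Y' \to 0$; this yields a triangle
\[ X \oplus X' \longrightarrow Y \oplus Y' \oplus X' \longrightarrow Z \oplus Y' \longrightarrow (X \oplus X')[1]. \]
Now $X \oplus X' \in \tri\CC$ and $Y \oplus Y' \oplus X' \in \tri\CC$ (the latter since $Y\oplus Y' \in \tri\CC$, $X' \in \add(\tri\CC)$ — one should arrange $X'$ itself to lie in $\tri\CC$, which can be done by instead choosing $X'$ so that $X\oplus X'$ is a finite direct sum of shifts of objects of $\tri\CC$; alternatively absorb $X'$ into a further direct sum), so its cone $Z \oplus Y'$ lies in $\tri\CC$. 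Hence $Z \in \add(\tri\CC)$, as a direct summand of an object of $\tri\CC$.

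The main (and only) obstacle is the bookkeeping in the cone step: making sure that the auxiliary summands $X', Y'$ one adjoins can themselves be taken inside $\tri\CC$ rather than merely inside $\add(\tri\CC)$, so that the direct-sum triangle has its first two terms honestly in $\tri\CC$. This is handled by the elementary observation that if $W \in \add(\tri\CC)$ then there is $W'$ with $W \oplus W' \in \tri\CC$ and moreover $W'$ itself can be chosen in $\add(\tri\CC)$; iterating or combining the choices for $X$ and $Y$ simultaneously (replace the pair by a single complement for $X \oplus Y$) removes the circularity. Everything else is formal manipulation with triangles, so the whole argument is short — as the paper says, it "can be checked easily."
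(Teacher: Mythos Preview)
Your proof is correct, and since the paper gives no argument beyond ``can be checked easily,'' there is nothing to compare against. The one place where you hesitate---the bookkeeping in the cone step---can be streamlined so that no circularity ever arises: first add the split triangle $X'\arr{1}X'\to 0$ to obtain a triangle $X\oplus X'\to Y\oplus X'\to Z\to$ whose first term lies in $\tri\CC$; then choose $W$ with $(Y\oplus X')\oplus W\in\tri\CC$ and add the split triangle $0\to W\arr{1}W$ to obtain $X\oplus X'\to Y\oplus X'\oplus W\to Z\oplus W\to$, both of whose first two terms now lie in $\tri\CC$; hence $Z\oplus W\in\tri\CC$ and $Z\in\add(\tri\CC)$. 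Doing the two absorptions sequentially rather than simultaneously removes the need to worry about whether the individual complements lie in $\tri\CC$.
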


\begin{definition}[Tilting object]
We say that an object $U\in\TT$ is \emph{tilting} (respectively, \emph{silting}) if
$\Hom_{\TT}(U,U[i])=0$ for all $i\neq0$ (respectively, $i>0$) and $\TT=\thick U$.
\end{definition}

For example, for any ring $A$, the bounded homotopy category $\KKK^{\bo}(\proj A)$
of finitely generated projective $A$-modules has a tilting object $A$.
Moreover a converse of this statement holds under reasonable 
assumptions: We call a triangulated category \emph{algebraic} if it is 
triangle equivalent to the stable category of a Frobenius category,
and \emph{idempotent-complete} if any idempotent endomorphism 
corresponds to a direct summand.
We say that a fully faithful triangle functor $F:\TT\to\TT'$ is an
\emph{equivalence up to direct summands} if, for any object $X\in\TT'$, 
there exists an object $Y\in\TT$ such that $X$ is a direct summand
of $F(Y)$.

\begin{proposition}\cite[4.3]{Ke}\cite[2.14]{Ki}\label{tilting theorem}
Let $\TT$ be an algebraic triangulated category with a tilting
object $U$. Then there exists a triangle equivalence
$F:\TT\to\KKK^{\bo}(\proj\End_{\TT}(U))$ up to direct summands.
In particular, if $\TT$ is idempotent complete, then $F$ is a triangle equivalence.
\end{proposition}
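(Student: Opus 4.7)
The plan is to use Keller's DG-enhancement machinery. Since $\TT$ is algebraic, by definition it is triangle equivalent to the stable category $\underline{\EE}$ of some Frobenius category $\EE$. This presentation naturally provides a DG enhancement: the DG category $\mathcal{A}$ of acyclic complexes of projective-injective objects (or equivalently, Keller's construction via $\CCC^{\bo}(\EE)$-enhancement of $\KKK^{\bo}(\EE)$ and passage to the Verdier quotient) satisfies $H^0(\mathcal{A}) \simeq \TT$. First I would lift the tilting object $U$ to a distinguished object $\widetilde U$ of $\mathcal{A}$, and consider the DG endomorphism algebra $B := \mathcal{A}(\widetilde U,\widetilde U)$.

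The key computation is that $H^i(B) = \Hom_{\TT}(U,U[i])$, which vanishes for $i\neq 0$ by the tilting hypothesis. Hence $B$ is formal, quasi-isomorphic to the ordinary algebra $A := \End_{\TT}(U)$ concentrated in degree $0$. The standard derived-Hom functor $F := \RHom_{\mathcal{A}}(\widetilde U,-) : \TT \to \DDD(B) \simeq \DDD(A)$ then sends $U$ to $B \simeq A$ and satisfies $\Hom_{\DDD(A)}(F(U),F(U)[i]) = H^i(B) = \Hom_{\TT}(U,U[i])$, so $F$ is fully faithful on $\add\{U[i]\mid i\in\Z\}$. A standard dévissage (using the five lemma along triangles, and the fact that $F$ commutes with $[1]$ and cones) extends full faithfulness to all of $\tri_{\TT}U$, and then to $\thick_{\TT}U = \TT$ by Observation \ref{thick lemma} using that both categories have split idempotents for summands of objects in the generated triangulated subcategory.

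Next I would identify the essential image. Since $F(U) \simeq A$, the image contains $A$ and is triangulated and closed under summands-of-images, so it is contained in $\thick_{\DDD(A)}A = \KKK^{\bo}(\proj A)$. Conversely every object of $\KKK^{\bo}(\proj A)$ is a summand of an object in $\tri A$, so after corestricting, $F : \TT \to \KKK^{\bo}(\proj A)$ is a fully faithful triangle functor whose image generates the target as a thick subcategory; this is exactly the statement that $F$ is an equivalence up to direct summands.

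Finally, under the additional assumption that $\TT$ is idempotent-complete, the essential image of $F$ is a thick (in particular summand-closed) subcategory of $\KKK^{\bo}(\proj A)$ containing $A$, hence equal to $\KKK^{\bo}(\proj A)$, so $F$ is a triangle equivalence. The main obstacle is the first step: producing the DG enhancement and the formality argument making $B$ quasi-isomorphic to $A$. Once that is in place, the rest is essentially formal dévissage along the thick-closure of $U$, together with the well-known description $\thick_{\DDD(A)}A = \KKK^{\bo}(\proj A)$.
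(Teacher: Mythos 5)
The paper does not prove this proposition itself; it is cited to Keller \cite{Ke} with no argument supplied. Your sketch reconstructs exactly the Keller argument that the citation points to: produce a DG enhancement $\mathcal{A}$ of the algebraic triangulated category, lift $U$ to $\widetilde U$, observe that the DG endomorphism algebra $B=\mathcal{A}(\widetilde U,\widetilde U)$ has cohomology concentrated in degree zero by the Ext-vanishing hypothesis on $U$, hence is quasi-isomorphic to $A=\End_{\TT}(U)$, form $\RHom_{\mathcal{A}}(\widetilde U,-):\TT\to\DDD(A)$, and then do d\'evissage from $\add\{U[i]\}$ through $\tri U$ to $\thick U=\TT$, landing in $\thick_{\DDD(A)}A=\KKK^{\bo}(\proj A)$. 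The final two steps are also clean: the image of a fully faithful triangle functor sending $U$ to $A$ contains $\tri_{\DDD(A)}A$, so every perfect complex is a summand of something in the image (``equivalence up to direct summands''), and idempotent-completeness of $\TT$ upgrades this to an honest equivalence. Your argument is correct and is essentially the one that Keller's paper supplies.
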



We say that two finite dimensional algebras $\Lambda$ and $\Gamma$ 
are \emph{derived equivalent} if one of the following equivalent conditions hold:
\begin{itemize}
\item There exists a triangle equivalence
$\KKK^{\bo}(\proj\Lambda)\simeq\KKK^{\bo}(\proj\Gamma)$.
\item There exists a triangle equivalence
$\DDD^{\bo}(\mod\Lambda)\simeq\DDD^{\bo}(\mod\Gamma)$.
\item There exists a triangle equivalence 
$\DDD(\Mod\Lambda)\simeq\DDD(\Mod\Gamma)$.
\item There exists a tilting object $U$ in $\KKK^{\bo}(\proj\Lambda)$ such that  $\End_{\KKK^{\bo}(\proj\Lambda)}(U)\simeq\Gamma$.
\end{itemize}
The following observations are basic.

\begin{proposition}\label{derived category and global dimension}
\begin{itemize}
\item[(a)] A finite dimensional $k$-algebra $\Lambda$ has finite global 
dimension if and only if the natural functor
$\KKK^{\bo}(\proj\Lambda)\to\DDD^{\bo}(\mod\Lambda)$ is an 
equivalence.
\item[(b)] Assume that finite dimensional $k$-algebras $\Lambda$ and $\Gamma$ are derived equivalent.
Then $\Lambda$ has finite global dimension if and only if so does $\Gamma$.
\end{itemize}
\end{proposition}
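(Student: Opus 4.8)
The plan is to prove part (a) directly and then deduce (b) from it. Throughout one uses that the natural functor $\KKK^{\bo}(\proj\Lambda)\to\DDD^{\bo}(\mod\Lambda)$ is fully faithful, because a bounded complex of projective modules is homotopy-projective, so that this functor is an equivalence precisely when it is dense; we may thus regard $\KKK^{\bo}(\proj\Lambda)$ as a full triangulated subcategory of $\DDD^{\bo}(\mod\Lambda)$.

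For (a), the ``if'' direction is the easy one: if $\gl\Lambda<\infty$, then every $M\in\mod\Lambda$ has a finite projective resolution, so the stalk complex $M$ lies in $\KKK^{\bo}(\proj\Lambda)$; since every object of $\DDD^{\bo}(\mod\Lambda)$ lies in $\tri$ of the stalk complexes of its cohomology modules, and the (fully faithful) essential image is a triangulated subcategory, the functor is dense. For the converse, suppose it is an equivalence. Then every simple $\Lambda$-module $S$, viewed as a stalk complex, is isomorphic in $\DDD^{\bo}(\mod\Lambda)$ to some $P^\bullet\in\KKK^{\bo}(\proj\Lambda)$; computing $\RHom$ via $P^\bullet$, which is concentrated in a bounded range of degrees, yields $\Ext^i_\Lambda(S,-)=0$ for all $i\gg0$, so $\pd S<\infty$. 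Since $\gl\Lambda=\sup\{\pd S\mid S\text{ simple}\}$ for a finite dimensional algebra, $\gl\Lambda<\infty$.

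For (b), the crux is to characterise $\KKK^{\bo}(\proj\Lambda)$ inside $\DDD^{\bo}(\mod\Lambda)$ intrinsically: an object $X$ lies in it (up to isomorphism) if and only if $\Hom_{\DDD^{\bo}(\mod\Lambda)}(X,Y[i])=0$ for all $i\gg0$ and all $Y\in\DDD^{\bo}(\mod\Lambda)$ (equivalently, for all simple $Y$, since every object is a finite iterated extension of shifted simples). One implication is again the bounded range of $P^\bullet$; for the other, choose a minimal bounded-above complex $P^\bullet$ of finitely generated projectives with $P^\bullet\simeq X$ (bounded above since $X$ has bounded cohomology), observe that $\Hom_{\DDD^{\bo}(\mod\Lambda)}(X,S[i])\cong\Hom_\Lambda(P^{-i},S)$ for simple $S$ by minimality of $P^\bullet$, and conclude that the vanishing forces $P^{-i}=0$ for $i\gg0$, so $P^\bullet$ is bounded and $X\in\KKK^{\bo}(\proj\Lambda)$. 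Now a derived equivalence provides a triangle equivalence $\DDD^{\bo}(\mod\Lambda)\simeq\DDD^{\bo}(\mod\Gamma)$, and by the above characterisation it restricts to an equivalence $\KKK^{\bo}(\proj\Lambda)\simeq\KKK^{\bo}(\proj\Gamma)$ compatible with the natural functors. Hence the natural functor for $\Lambda$ is an equivalence if and only if the one for $\Gamma$ is, and (b) follows from (a).

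I expect the main obstacle to be this intrinsic recognition of $\KKK^{\bo}(\proj\Lambda)$ inside $\DDD^{\bo}(\mod\Lambda)$ that is needed in (b) — equivalently, the compatibility of a derived equivalence with passage to bounded complexes of projectives. An alternative would be to invoke Rickard's theorem that every derived equivalence between rings is standard, hence automatically restricts to an equivalence of the subcategories $\per\Lambda=\KKK^{\bo}(\proj\Lambda)$ of perfect complexes; but the minimal-resolution argument above keeps everything self-contained, and the remaining steps are routine homological algebra.
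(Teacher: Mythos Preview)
The paper does not actually prove this proposition; it is introduced with ``The following observations are basic'' and stated without proof, the surrounding text already taking for granted (as part of the definition of derived equivalence) Rickard's theorem that $\KKK^{\bo}(\proj\Lambda)\simeq\KKK^{\bo}(\proj\Gamma)$ and $\DDD^{\bo}(\mod\Lambda)\simeq\DDD^{\bo}(\mod\Gamma)$ are equivalent conditions. Your argument is correct and is the standard self-contained justification: part (a) is routine, and for (b) your intrinsic characterisation of $\KKK^{\bo}(\proj\Lambda)$ inside $\DDD^{\bo}(\mod\Lambda)$ as the subcategory of homologically finite objects, verified via minimal projective resolutions, is exactly the right tool to show that any triangle equivalence of bounded derived categories restricts to one on perfect complexes.
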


Let us recall the notion of Serre functors in triangulated categories.

\begin{definition}[Serre functor]
Let $\TT$ be a $k$-linear and Hom-finite triangulated category.
A \emph{Serre functor} of $\TT$ is an autoequivalence $S:\TT\to\TT$ such
that there exists a functorial isomorphism $\Hom_{\TT}(X,Y)\simeq D\Hom_{\TT}(Y,SX)$ for $X,Y\in\TT$.
\end{definition}

It is easy to show that Serre functors of $\TT$ are unique up to natural isomorphism.

For example, if $X$ is a smooth projective variety of dimension 
$d$ with a canonical sheaf $\omega$, then $\DDD^{\bo}(\coh X)$ has 
a Serre functor $\omega[d]\otimes_X-$.
The following basic result by Happel gives another typical example of Serre functors.

\begin{proposition}\label{Nakayama Serre}\cite{H1}
Let $\Lambda$ be a finite dimensional $k$-algebra of finite global dimension.
Then the \emph{Nakayama functor}
\[\nu:=(D\Lambda)\Lotimes_\Lambda-:\DDD^{\bo}(\mod\Lambda)\to\DDD^{\bo}(\mod\Lambda)\]
gives a Serre functor of $\DDD^{\bo}(\mod\Lambda)$.
\end{proposition}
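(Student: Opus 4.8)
The final statement to prove is Proposition~\ref{Nakayama Serre}, which is attributed to Happel: for a finite-dimensional $k$-algebra $\Lambda$ of finite global dimension, the Nakayama functor $\nu = (D\Lambda)\Lotimes_\Lambda -$ is a Serre functor on $\DDD^{\bo}(\mod\Lambda)$.

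\textbf{Plan of proof.} The plan is to verify the two requirements of a Serre functor: that $\nu$ is an autoequivalence of $\DDD^{\bo}(\mod\Lambda)$, and that there is a functorial isomorphism $\Hom_{\DDD^{\bo}(\mod\Lambda)}(X,Y) \simeq D\Hom_{\DDD^{\bo}(\mod\Lambda)}(Y,\nu X)$. First I would observe that since $\Lambda$ has finite global dimension, by Proposition~\ref{derived category and global dimension}(a) the natural functor $\KKK^{\bo}(\proj\Lambda) \to \DDD^{\bo}(\mod\Lambda)$ is a triangle equivalence; dually, $\KKK^{\bo}(\inj\Lambda) \to \DDD^{\bo}(\mod\Lambda)$ is an equivalence as well, since $D\Lambda$ has finite injective dimension (equivalently, $\Lambda$ has finite global dimension on the right). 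The classical Nakayama functor $\nu = D\Hom_\Lambda(-,\Lambda) \colon \proj\Lambda \to \inj\Lambda$ is an equivalence of additive categories with quasi-inverse $\nu^{-1} = \Hom_\Lambda(D\Lambda, -)$, so it induces a triangle equivalence $\KKK^{\bo}(\proj\Lambda) \xrightarrow{\ \sim\ } \KKK^{\bo}(\inj\Lambda)$. Composing with the two equivalences above gives an autoequivalence of $\DDD^{\bo}(\mod\Lambda)$, and one checks this agrees with the derived functor $(D\Lambda)\Lotimes_\Lambda -$ by computing on the generator $\Lambda$, where $(D\Lambda)\Lotimes_\Lambda \Lambda = D\Lambda = \nu(\Lambda)$.

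\textbf{The Serre duality isomorphism.} For the functorial isomorphism, I would reduce to the case where $X$ is a finitely generated projective module, using that $\KKK^{\bo}(\proj\Lambda) \simeq \DDD^{\bo}(\mod\Lambda)$: both sides of the desired isomorphism are cohomological (or homological) in $X$, so it suffices to treat $X = P$ projective, and then by additivity $X = \Lambda$. For $Y \in \DDD^{\bo}(\mod\Lambda)$ we have the standard adjunction-type computation $\Hom_{\DDD^{\bo}(\mod\Lambda)}(\Lambda, Y) = H^0(Y)$, while $\nu(\Lambda) = D\Lambda$ and $\Hom_{\DDD^{\bo}(\mod\Lambda)}(Y, D\Lambda) = \Hom_{\DDD^{\bo}(\mod\Lambda)}(Y, D\Lambda) \simeq D(H^0(Y))$, using that $D\Lambda$ is injective as a $\Lambda$-module together with the isomorphism $\Hom_\Lambda(M, D\Lambda) \simeq D(M)$ natural in $M$. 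Comparing the two gives $\Hom_{\DDD^{\bo}(\mod\Lambda)}(\Lambda, Y) \simeq D\Hom_{\DDD^{\bo}(\mod\Lambda)}(Y, \nu\Lambda)$, functorially in $Y$; one then checks functoriality in $X$ is respected by the reduction, and finally extends from $X = \Lambda$ to arbitrary $X$ by the five lemma along triangles.

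\textbf{Main obstacle.} The routine ingredients (finite global dimension giving the perfect = bounded equivalence, $\nu$ being an equivalence on the level of projectives/injectives) are standard. The step requiring the most care is the naturality of the isomorphism $\Hom_{\DDD^{\bo}(\mod\Lambda)}(X,Y) \simeq D\Hom_{\DDD^{\bo}(\mod\Lambda)}(Y,\nu X)$ simultaneously in both variables, and in particular checking that the isomorphism constructed for $X = \Lambda$ is compatible with the $\End_\Lambda(\Lambda) = \Lambda^{\op}$-action so that it genuinely propagates to all perfect complexes via triangles rather than merely holding objectwise. In practice this is handled by identifying the candidate natural transformation explicitly — e.g.\ via the trace/evaluation map $\Lambda \Lotimes_\Lambda D\Lambda \to D\Lambda$ and the tensor-hom adjunction $\RHom_\Lambda(X, (D\Lambda)\Lotimes_\Lambda Y) \simeq \RHom_k(\RHom_\Lambda(Y,X), k)$ — and invoking that a natural transformation of cohomological functors which is an isomorphism on a generating set is an isomorphism everywhere. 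Alternatively, one can simply cite \cite{H1} for this classical fact, as the paper already does.
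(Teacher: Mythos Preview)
The paper does not give a proof of this proposition at all: it is stated with the citation \cite{H1} and used as a black box, exactly as you note in your final sentence. Your sketch is a correct outline of the standard argument (equivalence $\KKK^{\bo}(\proj\Lambda)\simeq\DDD^{\bo}(\mod\Lambda)$ under finite global dimension, $\nu$ an equivalence $\proj\Lambda\to\inj\Lambda$, and the natural isomorphism $\Hom_\Lambda(-,D\Lambda)\simeq D(-)$ propagated along triangles), so there is nothing to compare beyond observing that you have supplied what the paper deliberately omits.
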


The following elementary observation is useful to calculate the global dimension.

\begin{observation}\label{gl dim}
Let $\Lambda$ be a finite dimensional $k$-algebra of finite global 
dimension. Then
\begin{eqnarray*}
\gl\Lambda&=&\sup\{i\in\Z\mid\Ext_\Lambda^i(D\Lambda,\Lambda)\neq0\}\\
&=&\sup\{i\in\Z\mid\Hom_{\DDD^{\bo}(\mod\Lambda)}(\Lambda,\nu^{-1}(\Lambda)[i])\neq0\}.
\end{eqnarray*}
\end{observation}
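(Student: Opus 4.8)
The plan is to combine the classical characterization of global dimension via $\Ext$ from the injective cogenerator with the description of the Nakayama functor as a Serre functor (Proposition \ref{Nakayama Serre}). First I would recall the standard fact that for a finite dimensional $k$-algebra $\Lambda$ of finite global dimension, one has $\gl\Lambda=\pd_\Lambda(D\Lambda)$; this is because $D\Lambda$ is an injective cogenerator of $\mod\Lambda$, so $\operatorname{id}_\Lambda M$ can be read off from $\Ext^i_\Lambda(M,D\Lambda)$ for all $M$, hence $\gl\Lambda=\sup\{\operatorname{id}_\Lambda M\}$ is computed by testing against projectives $\Lambda$, giving $\gl\Lambda=\sup\{i\mid\Ext^i_\Lambda(M,D\Lambda)\neq0,\ M\in\mod\Lambda\}$. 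Dually, using that $\Lambda$ detects injective dimension and that projective dimension of $D\Lambda$ controls everything, one gets $\gl\Lambda=\pd_\Lambda D\Lambda=\sup\{i\mid\Ext^i_\Lambda(D\Lambda,\Lambda)\neq0\}$. Since $\gl\Lambda<\infty$ by assumption, this supremum is finite and the first equality in the statement follows.

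For the second equality, I would pass to the bounded derived category. By Proposition \ref{derived category and global dimension}(a), finite global dimension gives $\KKK^{\bo}(\proj\Lambda)\simeq\DDD^{\bo}(\mod\Lambda)$, so $\Lambda$ is a compact generator and $\Ext^i_\Lambda(X,Y)=\Hom_{\DDD^{\bo}(\mod\Lambda)}(X,Y[i])$ for $X,Y\in\mod\Lambda$. Thus $\Ext^i_\Lambda(D\Lambda,\Lambda)=\Hom_{\DDD^{\bo}(\mod\Lambda)}(D\Lambda,\Lambda[i])$. Now apply the autoequivalence $\nu^{-1}$: since $\nu=(D\Lambda)\Lotimes_\Lambda-$ sends $\Lambda\mapsto D\Lambda$, we have $\nu^{-1}(D\Lambda)\simeq\Lambda$, and $\nu^{-1}(\Lambda)$ is the object appearing on the right. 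Hence
\[\Hom_{\DDD^{\bo}(\mod\Lambda)}(D\Lambda,\Lambda[i])\simeq\Hom_{\DDD^{\bo}(\mod\Lambda)}(\nu^{-1}(D\Lambda),\nu^{-1}(\Lambda)[i])\simeq\Hom_{\DDD^{\bo}(\mod\Lambda)}(\Lambda,\nu^{-1}(\Lambda)[i]),\]
which gives the second equality and completes the proof.

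The only mildly delicate point — and the step I would be most careful about — is the very first reduction $\gl\Lambda=\pd_\Lambda D\Lambda$, together with confirming that, once finiteness is assumed, the supremum over all of $\mod\Lambda$ coincides with the value obtained by plugging in $\Lambda$ itself; this uses that $\Ext^i_\Lambda(-,D\Lambda)$ detects projective dimension and that $\Ext^i$ commutes with finite direct sums so a single finitely generated test module (a progenerator, e.g.\ $\Lambda$) suffices. Everything after that is a formal manipulation in the derived category using the Serre/Nakayama functor, so no further obstacle is expected.
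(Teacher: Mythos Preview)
The paper states this as an elementary observation without proof, so there is no argument in the paper to compare against. Your derivation of the second equality is clean and correct: since $\nu(\Lambda)\simeq D\Lambda$, applying the autoequivalence $\nu^{-1}$ to $\Hom_{\DDD^{\bo}(\mod\Lambda)}(D\Lambda,\Lambda[i])$ yields $\Hom_{\DDD^{\bo}(\mod\Lambda)}(\Lambda,\nu^{-1}(\Lambda)[i])$. (Even more directly: $\nu^{-1}(\Lambda)=\RHom_\Lambda(D\Lambda,\Lambda)$, so $\Hom_{\DDD^{\bo}(\mod\Lambda)}(\Lambda,\nu^{-1}(\Lambda)[i])=H^i(\nu^{-1}(\Lambda))=\Ext^i_\Lambda(D\Lambda,\Lambda)$.)

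Your justification of the first equality, however, is muddled. You assert that ``$\Ext^i_\Lambda(-,D\Lambda)$ detects projective dimension'' and display $\gl\Lambda=\sup\{i\mid\Ext^i_\Lambda(M,D\Lambda)\neq0\}$. But $D\Lambda$ is injective, so $\Ext^i_\Lambda(M,D\Lambda)=0$ for every $i>0$ and every $M$; these groups detect nothing. Likewise ``$\Lambda$ detects injective dimension'' is false if read as a statement about $\Ext^i_\Lambda(\Lambda,-)$, which vanishes for $i>0$ since $\Lambda$ is projective. Your \emph{conclusions} $\gl\Lambda=\pd_\Lambda D\Lambda$ and $\pd_\Lambda D\Lambda=\sup\{i\mid\Ext^i_\Lambda(D\Lambda,\Lambda)\neq0\}$ are correct, but the reasoning you give does not establish them. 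A clean argument: set $n=\gl\Lambda$ and choose simples $S,S'$ with $\Ext^n_\Lambda(S,S')\neq0$. The surjection $\Lambda\twoheadrightarrow S'$ together with $\Ext^{n+1}_\Lambda(S,-)=0$ forces $\Ext^n_\Lambda(S,\Lambda)\neq0$; then the inclusion $S\hookrightarrow D\Lambda$ together with $\Ext^{n+1}_\Lambda(-,\Lambda)=0$ forces $\Ext^n_\Lambda(D\Lambda,\Lambda)\neq0$. The reverse inequality is trivial.
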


\begin{definition}[Calabi-Yau triangulated categories]\label{define CY}
Let $\TT$ be a triangulated category with a Serre functor $S$.
We say that $\TT$ is \emph{fractionally Calabi-Yau of dimension $\frac{m}{\ell}$}
(or simply \emph{$\frac{m}{\ell}$-Calabi-Yau}) for integers $\ell>0$ and $m$ if there exists an isomorphism $S^\ell\simeq[m]$ of functors $\TT\to\TT$.
Observe that $\frac{m}{\ell}$-Calabi-Yau implies $\frac{mi}{\ell i}$-Calabi-Yau for all positive integers $i$, but the converse is not true in general.

We say that a finite dimensional $k$-algebra $\Lambda$ with finite 
global dimension is  \emph{fractionally Calabi-Yau of dimension $\frac{m}{\ell}$} (or \emph{$\frac{m}{\ell}$-Calabi-Yau})
if $\DDD^{\bo}(\mod\Lambda)$ is.
\end{definition}

We give a few examples.

\begin{example}\label{Dynkin is CY}
\begin{itemize}
\item[(a)] \cite{MY} Let $kQ$ be a path algebra of Dynkin quiver. Then $kQ$ is
$\frac{h-2}{h}$-Calabi-Yau for the Coxeter number $h$ of $Q$:
\[\begin{array}{|c|c|c|c|c|}\hline
\A_n&\D_n&\E_6&\E_7&\E_8\\ \hline
n+1&2(n-1)&12&18&30\\ \hline
\end{array}\]
\item[(b)] \cite{HI1} Assume that $\Lambda_i$ is $\frac{m_i}{\ell_i}$-Calabi-Yau for $i=1,2$. If $\Lambda_1\otimes_k\Lambda_2$ has finite 
global dimension, then $\Lambda_1\otimes_k\Lambda_2$ is $\frac{m}{\ell}$-Calabi-Yau for $\ell:={\rm l.c.m.}(\ell_1,\ell_2)$ and $m:=\frac{\ell m_1}{\ell_1}+\frac{\ell m_2}{\ell_2}$.
\end{itemize}
\end{example}

The following observations are easy.

\begin{proposition}\label{fractional CY and global dimension}
Let $\Lambda$ be a finite dimensional $k$-algebra of finite global dimension $n$.
\begin{itemize}
\item[(a)]  $\nu^{-1}(\DDD^{\ge0}(\mod\Lambda))\subset\DDD^{\ge0}(\mod\Lambda)$ and
$\nu(\DDD^{\le0}(\mod\Lambda))\subset\DDD^{\le0}(\mod\Lambda)$ hold.
\item[(b)] $\nu^{-1}(\DDD^{\le0}(\mod\Lambda))\subset\DDD^{\le n}(\mod\Lambda)$ and
$\nu(\DDD^{\ge0}(\mod\Lambda))\subset\DDD^{\ge-n}(\mod\Lambda)$ hold.
\item[(c)] If $n\neq0$ and $\DDD^{\bo}(\mod\Lambda)$ is $\frac{m}{\ell}$-Calabi-Yau, then $\ell>1$ and $\frac{m}{\ell-1}\le n$ holds.
\end{itemize}
\end{proposition}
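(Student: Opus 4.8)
The plan is to deduce everything from the standard properties of the Nakayama functor $\nu = (D\Lambda)\Lotimes_\Lambda - $ together with the homological characterization of global dimension. Recall that $\nu$ has quasi-inverse $\nu^{-1} = \RHom_\Lambda(D\Lambda, -)$, that $\nu$ restricts to an equivalence $\proj\Lambda \simeq \inj\Lambda$ on the level of modules, and — the key input — that for a module $M$ the object $\nu(M)$ is concentrated in homological degrees $[-n,0]$ when $\gl\Lambda = n$, since $M$ has a projective resolution of length at most $n$ and applying $\nu$ to it gives a complex of injectives of the same length; dually $\nu^{-1}(M)$ is concentrated in degrees $[0,n]$. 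For part (a), take $X \in \DDD^{\ge 0}(\mod\Lambda)$. Using the standard t-structure, $X$ is obtained from its cohomology modules $H^i(X)$ ($i \ge 0$) by iterated triangles; since $\nu^{-1}$ is exact and triangulated it suffices to check $\nu^{-1}(M[-i]) \in \DDD^{\ge 0}$ for a module $M$ and $i \ge 0$, which is immediate from $\nu^{-1}(M) \in \DDD^{\ge 0}$ (even $\nu^{-1}(M)$ has zero cohomology in negative degrees because $\RHom_\Lambda(D\Lambda, M) = \Ext_\Lambda^{\ge 0}$). The statement $\nu(\DDD^{\le 0}) \subset \DDD^{\le 0}$ is the dual assertion, obtained the same way from $\nu(M) \in \DDD^{\le 0}$.

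For part (b), the argument is parallel but now I use the length bound coming from $n = \gl\Lambda$. If $X \in \DDD^{\le 0}(\mod\Lambda)$, again reduce to the case of a single module $M$ placed in degree $\le 0$; then $\nu^{-1}(M) = \RHom_\Lambda(D\Lambda,M)$ has cohomology $\Ext^j_\Lambda(D\Lambda, M)$ which vanishes for $j > n$ because $\mathrm{proj.dim}\, D\Lambda \le n$ (this is where finite global dimension $n$ enters quantitatively). Hence $\nu^{-1}(M) \in \DDD^{[0,n]} \subset \DDD^{\le n}$, and shifting $M$ into nonpositive degrees only moves things further left, so $\nu^{-1}(X) \in \DDD^{\le n}$. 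The second containment $\nu(\DDD^{\ge 0}) \subset \DDD^{\ge -n}$ is again the $D$-dual.

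For part (c), suppose $\DDD^{\bo}(\mod\Lambda)$ is $\frac{m}{\ell}$-Calabi--Yau, so $S^\ell = \nu^\ell \simeq [m]$ as functors, with $\ell \ge 1$ without loss of generality (replace $(m,\ell)$ by $(-m,-\ell)$ if needed). Apply (a) iterated $\ell$ times: $\nu^{-\ell}(\DDD^{\ge 0}) \subset \DDD^{\ge 0}$, i.e. $[-m](\DDD^{\ge 0}) \subset \DDD^{\ge 0}$, which forces $-m \le 0$, that is $m \ge 0$, hence $\frac{m}{\ell} \ge 0$. Apply (b) iterated $\ell$ times: $\nu^{-\ell}(\DDD^{\le 0}) \subset \DDD^{\le n\ell}$, i.e. $[-m](\DDD^{\le 0}) \subset \DDD^{\le n\ell}$, giving $-m \le n\ell$... this only yields $m \ge -n\ell$, which is too weak; instead I use $\nu^{-\ell}(\DDD^{\le 0}) = [-m](\DDD^{\le 0}) = \DDD^{\le m}$ and compare: a single iteration of (b) gives $\nu^{-1}(\DDD^{\le 0}) \subset \DDD^{\le n}$, and composing, $\nu^{-\ell}(\DDD^{\le 0}) \subset \DDD^{\le n\ell}$ is not sharp, so instead I argue via $\DDD^{\le 0}$: since $\Lambda \in \DDD^{\le 0}$ and $\nu^{-1}\Lambda \in \DDD^{[0,n]}$, iterating gives $\nu^{-\ell}\Lambda \in \DDD^{[0,n\ell]}$, hence $[-m]\Lambda = \Lambda[-m] \in \DDD^{[0,n\ell]}$, forcing $0 \le m \le n\ell$, i.e. $0 \le \frac{m}{\ell} \le n$. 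For the strictness when $n \ne 0$: if $\frac m\ell = 0$ then $\nu^\ell \simeq \mathrm{id}$, so $\nu$ is periodic; but then $\Lambda$ and $D\Lambda$ generate the same thick subcategory under $\nu$ in a bounded range, contradicting $\mathrm{proj.dim}\,D\Lambda = n > 0$ (more concretely, $\nu^{-\ell}\Lambda = \Lambda$ together with $\nu^{-1}\Lambda \in \DDD^{\ge 0}$ concentrated in degree $0$ iff $\Lambda$ is projective-injective, which by $\gl\Lambda = n$ would force $n = 0$). Dually $\frac m\ell = n$ is excluded by applying the same reasoning to $\Lambda^{\op}$, whose global dimension is also $n$ and whose Nakayama functor is the quasi-inverse transported under $D$.

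The main obstacle is the strictness claim in (c): the two inclusions in (a), (b) are soft and formal, but ruling out the boundary values $\frac m\ell \in \{0, n\}$ requires actually pinning down when $\nu$ (or a power) can fix $\Lambda$ on the nose, which forces $\Lambda$ to be self-injective and hence of global dimension $0$. I would phrase this cleanly by noting $\Hom_{\DDD^{\bo}}(\Lambda, \nu^{-1}\Lambda[n]) = \Ext^n_\Lambda(D\Lambda,\Lambda) \ne 0$ (Observation \ref{gl dim}) when $n = \gl\Lambda$, and tracking how the Calabi--Yau relation $\nu^\ell \simeq [m]$ constrains in which degrees such Hom-spaces can be nonzero.
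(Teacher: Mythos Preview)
Your treatment of (a) and (b) is correct and matches the paper's (which simply calls them elementary), and your derivation of the non-strict bounds $0\le\frac{m}{\ell}\le n$ in (c) is fine.

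The gaps are in the strictness arguments. For the upper bound, your proposed duality via $\Lambda^{\op}$ does not work: under $D$ the Serre functor transports so that $\Lambda^{\op}$ is again $\frac{m}{\ell}$-Calabi--Yau with the \emph{same} $m,\ell$, so you only recover the same inequality, not a complementary one. The paper instead sharpens your estimate $m\le n\ell$ to $m\le(\ell-1)n$ by a one-line trick you missed: since $\nu(\Lambda)=D\Lambda$ is already a module, one writes
\[
\Lambda[m]=\nu^{\ell}(\Lambda)=\nu^{\ell-1}(D\Lambda)\in\DDD^{\ge-(\ell-1)n}(\mod\Lambda)
\]
using (b) only $\ell-1$ times. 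This gives $\frac{m}{\ell}\le\frac{m}{\ell-1}\le n$ (for $\ell>1$), and equality $\frac{m}{\ell}=n$ forces $\frac{m}{\ell}=\frac{m}{\ell-1}$, hence $m=0$ and $n=0$.

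For the lower bound, you correctly identify that $m=0$ should force $\nu^{-1}(\Lambda)$ to be concentrated in degree $0$ (whence $n=0$ by Observation~\ref{gl dim}), but your argument does not establish this: from $\nu^{-\ell}(\Lambda)=\Lambda$ and $\nu^{-1}(\Lambda)\in\DDD^{\ge0}$ alone you cannot bound the top cohomology of $\nu^{-1}(\Lambda)$. The paper supplies the missing step: when $m=0$, (a) gives a chain
\[
\DDD^{\le0}=\nu^{\ell}(\DDD^{\le0})\subset\cdots\subset\nu(\DDD^{\le0})\subset\DDD^{\le0}
\]
which must consist entirely of equalities; in particular $\nu^{-1}$ preserves $\DDD^{\le0}$, so $\nu^{-1}(\Lambda)\in\DDD^{\le0}\cap\DDD^{\ge0}=\mod\Lambda$, and then Observation~\ref{gl dim} gives $n=0$. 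Your closing remark about tracking $\Ext^n_\Lambda(D\Lambda,\Lambda)\neq0$ is pointing at the right invariant, but the chain-of-equalities step is what actually pins down the degree.
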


\begin{proof}
(a)(b) Both are elementary (see \cite[5.4(a)]{I3} for (b)).

(c) If $\ell=1$, then $D\Lambda=\nu(\Lambda)=\Lambda[m]$ holds. Thus $m=0$ holds, and $\Lambda$ is selfinjective. Since $\Lambda$ has finite global dimension $n$, we have $n=0$, a contradiction to our assumption. Thus $\ell>1$ holds.
Now we have
\[\Lambda[m]=\nu^{\ell}(\Lambda)=\nu^{\ell-1}(D\Lambda)\in\nu^{\ell-1}(\DDD^{\ge0}(\mod\Lambda))\subset\DDD^{\ge -(\ell-1)n}(\mod\Lambda)\]
by (b). Hence $-m\ge-(\ell-1)n$ holds, and the assertion follows.
\end{proof}

Later we use the following general observation.

\begin{proposition}\label{FPT lemma}
Let $\TT$ be a triangulated category, $\SS$ a thick subcategory of $\TT$ and $\pi:\TT\to\TT/\SS$ the natural functor.
\begin{itemize}
\item[(a)] We have a bijection between thick subcategories of $\TT$ containing $\SS$ and thick subcategories of $\TT/\SS$ given by $\UU\mapsto\add\pi(\UU)$.
\item[(b)] For a thick subcategory $\UU$ of $\TT$, we have $\thick_{\TT}\{ \SS,\UU \} = \TT$ if and only if $\thick_{\TT/\SS}\UU=\TT/\SS$.
\end{itemize}
\end{proposition}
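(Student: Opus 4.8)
The plan is to prove (a) first and then deduce (b) as a formal consequence. For (a), recall that objects of the Verdier quotient $\TT/\SS$ are the same as objects of $\TT$, and the localization functor $\pi$ is the identity on objects. The map in question sends a thick subcategory $\UU\supseteq\SS$ of $\TT$ to $\add\pi(\UU)$; note $\add\pi(\UU)$ really is thick in $\TT/\SS$ because $\pi$ is a triangle functor, so $\pi(\UU)$ is closed under cones and shifts, and we have closed it under summands by hand (this is exactly the situation of Observation \ref{thick lemma}). In the reverse direction, send a thick subcategory $\VV$ of $\TT/\SS$ to $\pi^{-1}(\VV):=\{X\in\TT\mid\pi(X)\in\VV\}$; since $\SS=\pi^{-1}(0)\subseteq\pi^{-1}(\VV)$ and $\pi$ is a triangle functor, $\pi^{-1}(\VV)$ is a thick subcategory of $\TT$ containing $\SS$. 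It then suffices to check the two composites are the identity. For a thick $\VV\subseteq\TT/\SS$ we clearly have $\add\pi(\pi^{-1}(\VV))\subseteq\VV$, and conversely any object of $\VV$ lies in the image of $\pi$ (again $\pi$ is identity on objects), so equality holds. For a thick $\UU\supseteq\SS$ in $\TT$, the inclusion $\UU\subseteq\pi^{-1}(\add\pi(\UU))$ is immediate; for the other inclusion, suppose $\pi(X)$ is a direct summand of $\pi(U)$ for some $U\in\UU$. The key input here is the standard description of morphisms in a Verdier quotient by roofs: the splitting idempotent $\pi(U)\to\pi(U)$ through $\pi(X)$ lifts, after replacing $U$ by an isomorphic-in-$\TT/\SS$ object, to an actual morphism in $\TT$ whose mapping cone lies in $\SS$; using that $\UU\supseteq\SS$ is thick and closed under the relevant extensions and summands, one concludes $X\in\UU$. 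This last lifting argument is the main obstacle — it is the one genuinely non-formal point — but it is a well-known fact about Verdier localizations (it is, for instance, implicit in the statement that $\pi$ induces an equivalence $\TT/\SS\to(\TT/\SS)$ and that thick subcategories correspond; one can cite or reprove it via roofs).

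Given (a), part (b) is purely formal. Apply the bijection of (a) to the thick subcategory $\thick_\TT\{\SS,\UU\}$ of $\TT$: under $\UU'\mapsto\add\pi(\UU')$ it goes to $\add\pi(\thick_\TT\{\SS,\UU\})$, which equals $\thick_{\TT/\SS}(\add\pi\{\SS,\UU\})=\thick_{\TT/\SS}\pi(\UU)=\thick_{\TT/\SS}\UU$ since $\pi(\SS)=0$ and $\pi$ commutes with forming thick closures (it is a triangle functor, identity on objects). Because the correspondence in (a) is a bijection, $\thick_\TT\{\SS,\UU\}=\TT$ if and only if its image $\thick_{\TT/\SS}\UU$ equals the image of $\TT$, which is $\add\pi(\TT)=\TT/\SS$. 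This gives exactly the asserted equivalence.

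In summary: the whole content is the bijection in (a), whose forward and reverse maps $\UU\mapsto\add\pi(\UU)$ and $\VV\mapsto\pi^{-1}(\VV)$ are easily seen to be well-defined and mutually inverse once one knows the roof-lifting fact about Verdier quotients; part (b) is then obtained by transporting the equality ``$=\TT$'' across this bijection.
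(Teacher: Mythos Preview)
Your proposal is correct and follows the same approach as the paper: define the inverse map $\VV\mapsto\pi^{-1}(\VV)$, check the two composites are identities, and deduce (b) formally from (a). The paper's proof is much terser, dispatching the verification that the maps are mutually inverse as ``easy to check'' and (b) as ``immediate from (a)''.

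One small expository slip: in your roof argument for $\pi^{-1}(\add\pi(\UU))\subseteq\UU$, it is not the idempotent $\pi(U)\to\pi(U)$ whose cone lies in $\SS$. The clean version is: if $\pi(X)$ is a summand of $\pi(U)$, write $\pi(X)\oplus\pi(Y)\cong\pi(U)$, i.e.\ $\pi(X\oplus Y)\cong\pi(U)$; an isomorphism in $\TT/\SS$ is represented by a roof $X\oplus Y\leftarrow Z\to U$ with both legs having cone in $\SS$, so $Z\in\UU$ (from $U\in\UU$ and $\SS\subseteq\UU$) and then $X\oplus Y\in\UU$, whence $X\in\UU$ by thickness. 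This is surely what you had in mind.
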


\begin{proof}
(a) It is easy to check that $\pi(\UU)$ is a triangulated subcategory of $\TT/\SS$.
By Observation~\ref{thick lemma}, we have that $\add\pi(\UU)$ is a thick subcategory of $\TT/\SS$.

For a thick subcategory $\VV$ of $\TT/\SS$, it is clear that
$\pi^{-1}(\VV):=\{X\in\TT\mid \pi(X)\in\VV\}$ is a thick subcategory 
of $\TT$ containing $\SS$. It is easy to check that these 
correspondences are mutually inverse.

(b) This is immediate from (a).
\end{proof}

\section{Higher dimensional Auslander-Reiten theory}\label{section: preliminaries 2}

Let us start with recalling the following basic notion.

\begin{definition}[Functorially finiteness]\cite{AS}
Let $\AA$ be an additive category and $\CC$ a full subcategory of $\AA$.
For an object $A\in\AA$, we say that a morphism $f:C\to A$ is a
\emph{right $\CC$-approximation} of $A$ if $C\in\CC$ and the map
$f:\Hom_{\AA}(C',C)\to\Hom_{\AA}(C',A)$ is surjective for any $C'\in\CC$.
If any object in $\AA$ has a right $\CC$-approximation, then we say that
$\CC$ is a \emph{contravariantly finite} subcategory of $\AA$.
Dually we define a \emph{left $\CC$-approximation} and a 
\emph{covariantly finite subcategory}.
We say that $\CC$ is \emph{functorially finite} if it is contravariantly
finite and covariantly finite.
\end{definition}

The notion of a $d$-cluster tilting subcategory is central in higher dimensional
Auslander-Reiten theory. Note that it is also called a
\emph{maximal $(d-1)$-orthogonal} subcategory.

\begin{definition}[$d$-cluster tilting subcategories]\cite{I1,I2,IY}
Let $\AA$ be an abelian category, $\BB$ a full extension-closed
subcategory of $\AA$ and $\CC$ a full subcategory of $\BB$.
We say that $\CC$ \emph{generates} (respectively, \emph{cogenerates}) $\BB$ if any object in $\BB$ is a factor object (respectively, subobject) of some object in $\CC$.
We say that $\CC$ is a \emph{$d$-cluster tilting} subcategory of $\BB$
if $\CC$ is a functorially finite subcategory of $\BB$ that generates and cogenerates $\BB$ such that
\begin{eqnarray*}
\CC&=&\{X\in\BB\mid\forall i\in\{1,2,\ldots,d-1\}\ \Ext_{\AA}^i(\CC,X)=0\}\\
&=&\{X\in\BB\mid\forall i\in\{1,2,\ldots,d-1\}\ \Ext_{\AA}^i(X,\CC)=0\}.
\end{eqnarray*}
Notice that the condition that $\CC$ generates and cogenerates $\BB$ is not imposed in previous references since it is automatic if $\BB$ has enough projectives and enough injectives.

Similarly, we define a \emph{$d$-cluster tilting subcategory} of a triangulated category
$\TT$ as a functorially finite subcategory $\CC$ satisfying 
\begin{eqnarray*}
\CC&=&\{X\in\TT\mid\forall i\in\{1,2,\ldots,d-1\}\ \Hom_{\TT}(\CC,X[i])=0\}\\
&=&\{X\in\TT\mid\forall i\in\{1,2,\ldots,d-1\}\ \Hom_{\TT}(X,\CC[i])=0\}.
\end{eqnarray*}
We say that an object $C$ is \emph{$d$-cluster tilting} if $\add C$ is a $d$-cluster tilting subcategory.
\end{definition}

In this paper, we apply these definitions for $\BB$ in the following settings:
\begin{itemize}
\item $\BB:=\CM^{\L}R$ (considered as a subcategory of $\AA:=\mod^{\L}R$) or $\TT:=\underline{\CM}^{\L}R$ for a Geigle-Lenzing complete intersection $(R,\L)$ (see Section~\ref{section: CMF and dCMF}).
\item $\BB:=\vect\X$ (considered as a subcategory of $\AA:=\coh\X$) for a Geigle-Lenzing projective space $\X$ (see Section~\ref{section: vector bundles}).
\item $\TT:=\DDD^{\bo}(\mod\Lambda)$ for a finite dimensional $k$-algebra $\Lambda$.
\end{itemize}

\medskip
In the rest of this section, let $\Lambda$ be a finite dimensional $k$-algebra of finite global dimension.
For an integer $d$, we define the \emph{$d$-shifted Nakayama functor} by
\[\nu_d:=(D\Lambda)[-d]\Lotimes_{\Lambda}-:\DDD^{\bo}(\mod \Lambda)\to\DDD^{\bo}(\mod \Lambda).\]
The following classes of finite dimensional algebras are central in this paper.

\begin{definition}\label{define d-RI}\cite{I3,IO1,HIO}
Let $\Lambda$ be a finite dimensional $k$-algebra of global dimension of finite global dimension.
\begin{itemize}
\item[(a)] We call $\Lambda$ \emph{$\nu_d$-finite} if $\nu_d^i(\DDD^{\ge0}(\mod\Lambda))\subset\DDD^{\ge1}(\mod\Lambda)$ holds for some $i>0$.
\item[(b)] We call $\Lambda$ \emph{$d$-representation infinite} if
$\nu_d^{-i}(\Lambda)\in\mod \Lambda$ for all $i\ge0$.
This is equivalent to that $\nu_d^i(D\Lambda)\in\mod\Lambda$ for all
$i\ge0$ by \cite[2.9]{HIO}.
\end{itemize}
\end{definition}

Clearly any algebra with global dimension at most $d-1$ is $\nu_d$-finite.
Also it is easy to see that $d$-representation infinite algebras have global dimension $d$.

The notion of higher preprojective algebras plays an important role.

\medskip\noindent
\begin{definition}[Higher preprojective algebra]\label{define preprojective algebra}
Let $\Lambda$ be a finite dimensional $k$-algebra of finite global dimension.
The \emph{$(d+1)$-preprojective algebra} of $\Lambda$ is defined as the $\Z$-graded $k$-algebra
\[\Pi=\Pi_{d+1}(\Lambda):=\bigoplus_{\ell\in\Z}\Hom_{\DDD^{\bo}(\mod \Lambda)}(\Lambda,\nu_d^\ell(\Lambda)),\]
where the multiplication is given by
\[f\cdot g:=f\nu_d^\ell(g)\in\Hom_{\DDD^{\bo}(\mod \Lambda)}(\Lambda,\nu_d^{\ell+m}(\Lambda))\]
for any $f\in\Hom_{\DDD^{\bo}(\mod \Lambda)}(\Lambda,\nu_d^\ell(\Lambda))$ and $g\in\Hom_{\DDD^{\bo}(\mod \Lambda)}(\Lambda,\nu_d^m(\Lambda))$.

A $d$-representation infinite algebra $\Lambda$ is called
\emph{$d$-representation tame} if the center $Z$ of $\Pi$ is
a Noetherian ring and $\Pi$ is a finitely generated $Z$-module.
\end{definition}

Clearly an algebra $\Lambda$ with global dimension at most $d$ is $\nu_d$-finite if and only if $\dim_k\Pi$ is finite.
In this case, there is a systematic construction of $d$-cluster tilting subcategories
of $\DDD^{\bo}(\mod\Lambda)$:

\begin{theorem}\label{tau_d finite has d-CT}\cite[1.23]{I3}
Let $\Lambda$ be a finite dimensional $k$-algebra with global dimension at most $d$ that is
$\nu_d$-finite. Then $\DDD^{\bo}(\mod\Lambda)$ has the $d$-cluster tilting subcategory
\[\UU_\Lambda:=\add\{\nu_d^i(\Lambda)\mid i\in\Z\}.\]
\end{theorem}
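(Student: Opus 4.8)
The plan is to verify the two defining properties of a $d$-cluster tilting subcategory of $\TT:=\DDD^{\bo}(\mod\Lambda)$ for $\UU:=\UU_\Lambda=\add\{\nu_d^i(\Lambda)\mid i\in\Z\}$: functorial finiteness, and the two orthogonality equalities. Since $\gl\Lambda<\infty$, the canonical functor $\KKK^{\bo}(\proj\Lambda)\to\TT$ is an equivalence (Proposition~\ref{derived category and global dimension}), so $\TT$ is Hom-finite with Serre functor $\nu=(D\Lambda)\Lotimes_\Lambda-$ (Proposition~\ref{Nakayama Serre}); note $\nu=\nu_d[d]$ and that $\nu_d$ is an autoequivalence fixing $\UU$. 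First I would observe that the two orthogonality conditions coincide: by Serre duality $\Hom_\TT(U,X[i])\cong D\Hom_\TT(X,\nu_d(U)[d-i])$, and since $\nu_d$ permutes $\UU$ while $i\mapsto d-i$ permutes $\{1,\dots,d-1\}$, the condition $\Hom_\TT(\UU,X[i])=0$ for all such $i$ is equivalent to $\Hom_\TT(X,\UU[i])=0$ for all such $i$. Rigidity $\Hom_\TT(\UU,\UU[i])=0$ ($i\in\{1,\dots,d-1\}$) is then easy and uses only $\gl\Lambda\le d$: applying $\nu_d^{-a}$ reduces it to $H^i(\nu_d^\ell(\Lambda))=0$; for $\ell\le0$ the t-exactness estimates of Proposition~\ref{fractional CY and global dimension} give $\nu_d^\ell(\Lambda)\in\DDD^{\le0}$, so $H^{>0}=0$; for $\ell\ge1$ the Serre-duality isomorphism above turns it into $D\,H^{d-i}(\nu_d^{1-\ell}(\Lambda))$, which vanishes since $1-\ell\le0$ and $d-i>0$. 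So it remains to prove functorial finiteness of $\UU$ and the equality $\UU=\{X\mid\Hom_\TT(\UU,X[i])=0,\ i\in\{1,\dots,d-1\}\}$.

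The substance of the theorem — and the step I expect to be the main obstacle — is the module-level structure of $\tau_d$-finite algebras: for such $\Lambda$, $\nu_d^{-i}(\Lambda)\cong\tau_d^{-i}(\Lambda)$ lies in $\mod\Lambda$ for all $i\ge0$ and vanishes for $i\gg0$, and $M:=\bigoplus_{i\ge0}\tau_d^{-i}(\Lambda)\cong\bigoplus_{i\ge0}\tau_d^{i}(D\Lambda)$ is a basic $d$-cluster tilting module in $\mod\Lambda$. I would establish this by induction along $\tau_d$-orbits via iterated $d$-APR tilting, the finiteness of the orbits coming from $\dim_k\Pi(\Lambda)<\infty$ (which is equivalent to $\tau_d$-finiteness), as in \cite{IO1}. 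Granting it, $\Lambda$ is a direct summand of $M$ and $M\in\UU$, so $\UU=\add\{\nu_d^i(M)\mid i\in\Z\}$; moreover $\dim_k\Pi(\Lambda)<\infty$ also yields $\Hom_\TT(\nu_d^a(\Lambda),\nu_d^b(\Lambda))=0$ for $|a-b|\gg0$, so the additive category $\UU$ is locally bounded.

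Finally I would transfer the $d$-cluster tilting property from $\mod\Lambda$ to $\TT$. For functorial finiteness, a bounded complex $X$ has only finitely many nonzero cohomology modules; using that $\add M$ is functorially finite in $\mod\Lambda$ and that local boundedness makes only finitely many $\nu_d^i(\Lambda)$ relevant for $X$, one assembles right and left $\UU$-approximations of $X$ from $\add M$-approximations of the $H^j(X)$ along the truncation triangles. For maximality, suppose $\Hom_\TT(\UU,X[i])=0$ for $i\in\{1,\dots,d-1\}$; then $\Hom_\TT(X,\UU[i])=0$ as well, and testing against $\Lambda\in\UU$ forces $H^i(X)=0$ for $i\in\{1,\dots,d-1\}$. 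Testing against the summands of $M$ and of $M[-d]$ identifies any single-cohomology $X$ with an object of $\add M\cup\add M[-d]\subset\UU$; for general $X$ one runs the standard argument — replace $X$ by the cocone of a right $\UU$-approximation, check that the vanishing conditions persist, and conclude $X\in\UU$ after at most $d$ steps, using rigidity and the module-level $d$-cluster tilting. This shows that $\UU_\Lambda$ is a $d$-cluster tilting subcategory of $\DDD^{\bo}(\mod\Lambda)$.
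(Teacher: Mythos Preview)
The paper does not prove this theorem at all; it is quoted verbatim from \cite[1.23]{I3}, so there is no ``paper's proof'' to compare against beyond that citation.

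Your proposal, however, has a genuine gap in its central step. You claim that for a $\tau_d$-finite algebra one has $\nu_d^{-i}(\Lambda)\cong\tau_d^{-i}(\Lambda)\in\mod\Lambda$ for all $i\ge0$, and that $M:=\bigoplus_{i\ge0}\tau_d^{-i}(\Lambda)$ is a $d$-cluster tilting \emph{module} in $\mod\Lambda$. This conflates $\tau_d$-finite with $d$-representation finite; the former is strictly weaker. Take $\Lambda=k\A_2$ with $d=2$: since $\gl\Lambda=1<2$ we have $\tau_2=D\Ext^2_\Lambda(-,\Lambda)=0$, so $\Lambda$ is trivially $\tau_2$-finite. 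But $\nu_2^{-1}(\Lambda)=\nu^{-1}(\Lambda)[2]$ has indecomposable summands $S_2[1]$ and $P_1[2]$, hence is not concentrated in degree $0$; there is no $2$-cluster tilting module in $\mod k\A_2$ (indeed $\add\Lambda$ would have to equal $\mod\Lambda$, forcing $\Lambda$ semisimple). The $d$-APR tilting machinery from \cite{IO1} moves between $d$-representation finite algebras; it does not manufacture a $d$-cluster tilting module from a merely $\tau_d$-finite one. Your transfer step at the end therefore has nothing to transfer from.

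The argument in \cite{I3} works directly in $\DDD^{\bo}(\mod\Lambda)$ without passing through a module-level cluster tilting object. The rigidity you prove is correct and is one ingredient. The point you are missing is that $\dim_k\Pi(\Lambda)<\infty$ forces $\nu_d^{-\ell}(\Lambda)\in\DDD^{\le-1}(\mod\Lambda)$ for $\ell\gg0$ (and dually $\nu_d^{\ell}(D\Lambda)\in\DDD^{\ge1}$), so the $\nu_d$-orbit of $\Lambda$ drifts through cohomological degrees. One then shows that $\DDD^{\bo}(\mod\Lambda)=\UU_\Lambda*\UU_\Lambda[1]*\cdots*\UU_\Lambda[d-1]$ by a direct induction on the amplitude of a complex, building approximations from the projectives $\nu_d^i(\Lambda)$ and the injectives $\nu_d^i(D\Lambda)$; this together with rigidity yields both functorial finiteness and maximality. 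The proof of Theorem~\ref{d-RI has d-CT} in the present paper (via Proposition~\ref{describe coh_d}) is an adaptation of the same mechanism to the $d$-representation infinite setting, and reading it will show you the correct shape of the argument.
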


Now we consider a $d$-representation infinite 
algebra $\Lambda$ with $\Pi=\Pi_{d+1}(\Lambda)$. We assume that $\Pi$ is 
\emph{left graded coherent}, that is, finitely presented $\Z$-graded 
$\Pi$-modules are closed under kernels.
Then the category $\mod^{\Z}\Pi$ of finitely presented $\Z$-graded 
$\Pi$-modules is abelian, and the category $\mod^{\Z}_0\Pi$ of finite 
dimensional $\Z$-graded $\Pi$-modules is a Serre subcategory of $\mod^{\Z}\Pi$.
Let
\[\qgr^{\Z}\Pi:=\mod^{\Z}\Pi/\mod^{\Z}_0\Pi\]
be the quotient category. This is a non-commutative projective scheme in the sense of
Artin-Zhang \cite{AZ}. Let us recall the following result of the third author.

\begin{proposition}\cite[3.7]{M} 
Let $\Lambda$ be a $d$-representation infinite algebra such that $\Pi=\Pi_{d+1}(\Lambda)$ is left graded coherent.
Then there is a triangle equivalence $\DDD^{\bo}(\mod\Lambda)\simeq\DDD^{\bo}(\qgr^{\Z}\Pi)$ which sends $\Lambda$  to $\Pi$ and induces an equivalence
\[\{X\in\DDD^{\bo}(\mod\Lambda)\mid\forall\ell\gg0\ \nu_d^{-\ell}(X)\in\mod\Lambda\}\simeq\qgr^{\Z}\Pi.\]
\end{proposition}

Now we define an extension-closed subcategory of
$\qgr^{\Z}\Pi\subset\DDD^{\bo}(\mod\Lambda)$ by
\begin{eqnarray*}
\VV_\Lambda:=\{X\in\DDD^{\bo}(\mod\Lambda)\mid\forall\ell\gg0\ 
\nu_d^{-\ell}(X)\in\mod\Lambda,\ \nu_d^\ell(X)\in(\mod\Lambda)[-d]\}.
\end{eqnarray*}
This category should be considered as the category of vector bundles on the non-commutative projective scheme $\qgr^{\Z}\Pi$.

We have the following main result in this section.

\begin{theorem}\label{d-RI has d-CT} 
Let $\Lambda$ be a $d$-representation infinite algebra such that $\Pi=\Pi_{d+1}(\Lambda)$
is left graded coherent. Then $\VV_\Lambda$ has the $d$-cluster tilting subcategory
\[\UU_\Lambda:=\add\{\nu_d^i(\Lambda)\mid i\in\Z\}.\]
\end{theorem}

To prove Theorem~\ref{d-RI has d-CT}, the following observation is 
crucial, where $*$ denotes the extension of categories (see Section~\ref{section: triangulated category}).

\begin{proposition}\label{describe coh_d}
We have $\VV_\Lambda=
(\UU_\Lambda*\UU_\Lambda[1]*\cdots*\UU_\Lambda[d-1])\cap
(\UU_\Lambda[1-d]*\cdots*\UU_\Lambda[-1]*\UU_\Lambda)$.
\end{proposition}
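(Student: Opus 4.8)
The plan is to prove the two inclusions separately, working throughout inside $\DDD^{\bo}(\mod\Lambda)$ and exploiting the autoequivalence $\nu_d$ together with the definitions $\CC_\Lambda=\{X\mid\nu_d^{-\ell}(X)\in\mod\Lambda\text{ for }\ell\gg0\}$ and $\VV_\Lambda=\{X\in\CC_\Lambda\mid\nu_d^{\ell}(X)\in(\mod\Lambda)[-d]\text{ for }\ell\gg0\}$. The key algebraic input is that for the $d$-representation infinite algebra $\Lambda$ one has $\nu_d^{-i}(\Lambda)\in\mod\Lambda$ and $\nu_d^{i}(D\Lambda)\in\mod\Lambda$ for all $i\ge0$ (Definition \ref{define d-RI}(b)), so that $\UU_\Lambda=\add\{\nu_d^i(\Lambda)\mid i\in\Z\}$ consists of modules, each $\nu_d^i(\Lambda)$ being $\nu_d^{i}$ of the tilting module $\Lambda$ (for $i\le 0$) or $\nu_d^{i-1}$ of $D\Lambda$ (for $i\ge 1$). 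Note that $\nu_d(\UU_\Lambda)=\UU_\Lambda$, hence $\nu_d$ permutes the classes $\UU_\Lambda*\UU_\Lambda[1]*\cdots*\UU_\Lambda[d-1]$ and $\UU_\Lambda[1-d]*\cdots*\UU_\Lambda$ as well; I will use this to reduce statements about ``$\ell\gg0$'' to statements valid for all $\ell$.

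For the inclusion $\supseteq$: suppose $X$ lies in $\PP^+:=\UU_\Lambda*\UU_\Lambda[1]*\cdots*\UU_\Lambda[d-1]$. I want to show $\nu_d^{-\ell}(X)\in\mod\Lambda$ for $\ell\gg 0$, which by $\nu_d$-invariance of $\PP^+$ reduces to showing $\nu_d^{-\ell}(\PP^+)\subseteq\mod\Lambda$ for $\ell\gg0$. Since $\PP^+$ is built by at most $d$ extensions from the objects $\nu_d^i(\Lambda)[j]$ with $0\le j\le d-1$, and $\mod\Lambda$ is extension-closed in $\DDD^{\bo}(\mod\Lambda)$, it suffices to handle each generator: one must check $\nu_d^{-\ell}(\nu_d^i(\Lambda)[j])=\nu_d^{i-\ell}(\Lambda)[j]\in\mod\Lambda$ for $\ell\gg0$, uniformly over $0\le j\le d-1$ and over the finitely many $i$ that can occur; for $i-\ell$ sufficiently negative $\nu_d^{i-\ell}(\Lambda)$ is a module in degree $0$, and we need the shift $[j]$ not to push it out of $\mod\Lambda$ — here I would use Proposition \ref{fractional CY and global dimension}(a)(b)-type bounds, or more directly the fact that $\nu_d^{-m}(\Lambda)$ is concentrated in degree $0$ and $\nu_d^{-m}(\Lambda)[j]$ is concentrated in degree $-j$, so one argues via the description $\CC_\Lambda\simeq\qgr\Pi$ rather than naively. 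This is the technical heart: I would pass through the equivalence $\CC_\Lambda\simeq\qgr\Pi$ of \cite[3.7]{M}, under which $\nu_d^i(\Lambda)$ corresponds to the degree-shifted modules $\Pi(i)$, and then $\PP^+$ becomes an explicit subcategory of $\qgr\Pi$ expressible via ${\rm H}^{\ge0}$-type vanishing, which visibly lands in $\CC_\Lambda$; the shifts $\UU_\Lambda[j]$ correspond to objects of $\DDD^{\bo}(\qgr\Pi)$ with cohomology in bounded degrees, and applying $\nu_d^{-\ell}$ eventually concentrates everything in degree $0$. Similarly $X\in\PP^-:=\UU_\Lambda[1-d]*\cdots*\UU_\Lambda$ forces $\nu_d^{\ell}(X)\in(\mod\Lambda)[-d]$ for $\ell\gg0$ by the dual argument (now using $\nu_d^i(D\Lambda)\in\mod\Lambda$ for $i\ge0$ and that $\nu_d^{m}$ of a module eventually sits in degree $-d$). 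Intersecting gives $\PP^+\cap\PP^-\subseteq\VV_\Lambda$.

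For the inclusion $\subseteq$: let $X\in\VV_\Lambda$. By definition there is $\ell_0$ with $\nu_d^{-\ell}(X)\in\mod\Lambda$ and $\nu_d^{\ell}(X)\in(\mod\Lambda)[-d]$ for all $\ell\ge\ell_0$. Fix such an $\ell\ge\ell_0$ and set $M:=\nu_d^{-\ell}(X)\in\mod\Lambda$. Take a finite projective resolution of $M$; applying $\nu_d$ and using that $\nu_d$ sends $\proj\Lambda$ into $\inj\Lambda=\add D\Lambda\subseteq\UU_\Lambda$, one realizes $\nu_d(M)$ as an object obtained by $\le d$ iterated cones from objects of $\UU_\Lambda$ placed in cohomological degrees $-d,\dots,0$, hence $\nu_d(M)\in\UU_\Lambda*\UU_\Lambda[1]*\cdots*\UU_\Lambda[d]$; more carefully, the standard ``$\nu_d$ of a module via its projective resolution'' computation shows $\nu_d(\mod\Lambda)\subseteq\UU_\Lambda*\cdots*\UU_\Lambda[d]$, and iterating, $\nu_d^{\ell}(M)=X\in\PP^+$ once one also feeds in that consecutive applications of $\nu_d$ to the resolution stabilize because $\nu_d^{-i}(\Lambda)\in\mod\Lambda$; the precise bookkeeping, using $d$-representation-infiniteness to control cohomological amplitude, yields $X\in\UU_\Lambda*\UU_\Lambda[1]*\cdots*\UU_\Lambda[d-1]$. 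The symmetric computation applied to $\nu_d^{\ell}(X)\in(\mod\Lambda)[-d]$, using injective coresolutions and $\nu_d^{-1}(\inj\Lambda)\subseteq\proj\Lambda$, places $X$ in $\UU_\Lambda[1-d]*\cdots*\UU_\Lambda[-1]*\UU_\Lambda$. Hence $X\in\PP^+\cap\PP^-$, completing the proof.

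I expect the main obstacle to be the uniformity and amplitude control in the ``$\ell\gg0$'' arguments: showing that a \emph{single} threshold $\ell$ simultaneously concentrates $\nu_d^{-\ell}$ of every generator of $\PP^+$ in degree $0$ (and dually for $\PP^-$), and conversely that $\nu_d^{\ell}$ of an arbitrary module, built from its finite projective resolution, does not spread cohomology beyond the window $[-d,0]$ but exactly into the $\UU_\Lambda[j]$-filtration with $0\le j\le d-1$. This is precisely where one must invoke the equivalence $\CC_\Lambda\simeq\qgr\Pi$ and the left graded coherence hypothesis (so that $\mod^{\Z}\Pi$ is abelian and truncations behave), rather than attempting a bare-hands filtration argument in $\DDD^{\bo}(\mod\Lambda)$; packaging the whole thing through $\qgr\Pi$ is likely the cleanest route and I would organize the final write-up that way.
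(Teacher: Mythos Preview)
Your $\supseteq$ argument contains a real problem. You try to show that each generator $\nu_d^{i-\ell}(\Lambda)[j]$ of $\PP^+$ lands in $\mod\Lambda$ for $\ell\gg0$, but this is simply false: for $j>0$ the object $\nu_d^{i-\ell}(\Lambda)[j]$ is a nonzero module shifted to degree $-j$, never a module. You notice this and retreat to $\qgr\Pi$, but no recourse to $\qgr\Pi$ will make individual terms lie in $\mod\Lambda$. The paper's point is that you must use \emph{both} filtrations at once: for $X\in\PP^+\cap\PP^-$ one has, for $\ell\gg0$,
\[
\nu_d^{-\ell}(X)\in\bigl((\mod\Lambda)*\cdots*(\mod\Lambda)[d-1]\bigr)\cap\bigl((\mod\Lambda)[1-d]*\cdots*(\mod\Lambda)\bigr)=\mod\Lambda,
\]
since the first factor has cohomology in degrees $[1-d,0]$ and the second in $[0,d-1]$. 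The intersection is what forces concentration in degree $0$; you cannot get this by treating $\PP^+$ alone.

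Your $\subseteq$ argument also has a gap of one. A projective resolution of $M=\nu_d^{-\ell}(X)$ has $d+1$ terms, so the naive filtration places $X$ in $\UU_\Lambda*\UU_\Lambda[1]*\cdots*\UU_\Lambda[d]$, not $\UU_\Lambda*\cdots*\UU_\Lambda[d-1]$. Your ``precise bookkeeping'' does not explain how to lose a term, and iterating $\nu_d$ does not help. The paper's route is: set $Y:=\nu_d^{\ell}(X)[d]\in\mod\Lambda$ and $Z:=\nu_d^{-\ell}(X)\in\mod\Lambda$; take an injective resolution $0\to Y\to I^0\to\cdots\to I^d\to0$, apply $\nu_d^{-2\ell}[-d]$ to obtain $Z\in P^d[-d]*\cdots*P^0$ with $P^i\in\PP_\Lambda:=\add\{\nu_d^{-i}(\Lambda)\mid i\ge0\}$, and use a small lemma to convert this into an honest exact sequence $0\to Z\to P^0\to\cdots\to P^d\to0$ of modules. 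The crucial nontrivial input is then \cite[4.28]{HIO}: a $\Lambda$-module embedding into a preprojective module admits a $\PP_\Lambda$-resolution of length $d-1$, i.e.\ with only $d$ terms. This is exactly what shortens the filtration from $d+1$ to $d$, and it is the step your proposal is missing.
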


We start by preparing the following easy observation.

\begin{lemma}\label{long exact}
Let $X$ and $C^i$ be $\Lambda$-modules.
If $X\in C^n[-n]*\cdots*C^1[-1]*C^0$, then there exists an exact 
sequence $0\to X\to C^0\to\cdots\to C^n\to0$ of $\Lambda$-modules.
\end{lemma}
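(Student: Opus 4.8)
The statement to prove is Lemma~\ref{long exact}: if $X$ and the $C^i$ are $\Lambda$-modules with $X \in C^n[-n] * \cdots * C^1[-1] * C^0$, then there is an exact sequence $0 \to X \to C^0 \to \cdots \to C^n \to 0$ of $\Lambda$-modules. The plan is to proceed by induction on $n$, unwinding the definition of $*$ one triangle at a time and using the long exact sequence of cohomology of $\Lambda$-modules to control where each term sits.

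First I would treat the base case $n = 0$, where $X \in C^0$ simply means $X \cong C^0$, so $0 \to X \to C^0 \to 0$ is the desired sequence. For the inductive step, suppose $X \in C^n[-n] * \cdots * C^1[-1] * C^0$. By definition of $*$, there is a triangle $Y \to X \to C^0 \to Y[1]$ with $Y \in C^n[-n] * \cdots * C^1[-1]$, and shifting, $Y[1] \in C^n[1-n] * \cdots * C^1$. Rewriting, $Y[1] \in (C^n[-1])[2-n] * \cdots * (C^1[-1])[1] * C^1$; relabeling $D^i := C^{i+1}$ for $i = 0, \ldots, n-1$, we get $Y[1] \in D^{n-1}[1-n] * \cdots * D^1[-1] * D^0$, to which the inductive hypothesis applies, yielding an exact sequence $0 \to Y[1] \to D^0 \to \cdots \to D^{n-1} \to 0$ of $\Lambda$-modules, i.e. $0 \to Y[1] \to C^1 \to \cdots \to C^n \to 0$. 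The key auxiliary point is that $Y[1]$ is concentrated in degree $0$: taking cohomology of the triangle $Y \to X \to C^0 \to Y[1]$ and using that $X$ and $C^0$ are modules, one finds $H^i(Y[1]) = 0$ for $i \neq 0$, so $Y[1]$ is (isomorphic to) a $\Lambda$-module, and then $Y = (Y[1])[-1]$ is a module in degree $1$, so the triangle $Y \to X \to C^0 \to Y[1]$ is the triangle associated to a short exact sequence $0 \to X \to C^0 \to H^0(Y[1]) \to 0$ of $\Lambda$-modules.

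Finally I would splice the two pieces together: from $0 \to X \to C^0 \to H^0(Y[1]) \to 0$ and $0 \to H^0(Y[1]) \to C^1 \to \cdots \to C^n \to 0$ (identifying $Y[1]$ with $H^0(Y[1])$), concatenating along the common term $H^0(Y[1])$ gives the exact sequence $0 \to X \to C^0 \to C^1 \to \cdots \to C^n \to 0$, completing the induction.

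The main obstacle — really the only subtle point — is verifying that at each stage the ``tail'' object $Y[1]$ genuinely lands in $\mod\Lambda$ rather than merely in $\DDD^{\bo}(\mod\Lambda)$; this needs a careful bookkeeping of cohomological amplitudes. One should note that each $C^i$ sits in cohomological degree $0$, so $C^i[-i]$ sits in degree $i$; the extension $C^n[-n] * \cdots * C^0$ thus consists of complexes with cohomology spread over degrees $0$ through $n$, and an inductive vanishing argument using the long exact cohomology sequences of the defining triangles pins $Y[1]$ down to degree $0$. Everything else is a routine manipulation of octahedral data and does not require spelling out.
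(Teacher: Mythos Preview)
Your proof is correct and follows essentially the same route as the paper: induct on $n$, peel off the $C^0$ factor via a triangle, verify the cone $Y[1]$ is a module by bounding its cohomology from both sides (from the triangle and from the iterated extension), then splice. One expository point: the sentence ``taking cohomology of the triangle \ldots one finds $H^i(Y[1])=0$ for $i\neq 0$'' overstates what the triangle alone gives (it only kills $H^i$ for $i\notin\{-1,0\}$); the other bound comes from $Y[1]\in C^n[1-n]*\cdots*C^1$ as your final paragraph notes, and this should be established \emph{before} you invoke the inductive hypothesis on $Y[1]$, since the hypothesis requires $Y[1]$ to be a module.
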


\begin{proof}
We use the induction on $n$.
Since $X\in C^n[-n]*\cdots*C^1[-1]*C^0$, there exists a triangle
\begin{equation}\label{smaller case}
Y[-1]\to X\to C^0\to Y
\end{equation}
in $\DDD^{\bo}(\mod\Lambda)$ with $Y\in C^n[1-n]*\cdots*C^1$.
Then we have
\[Y\in(C^n[1-n]*\cdots*C^1)\cap (C^0*X[1])\subset\mod\Lambda\]
by looking at cohomologies. Applying $H^0$ to the triangle
\eqref{smaller case}, we have an exact sequence
$0\to X\to C^0\to Y\to0$. On the other hand, by the induction assumption, there exists an exact sequence
$0\to Y\to C^1\to\cdots\to C^n\to0$. Combining these sequences,
we have the assertion.
\end{proof}

Now we are ready to prove Proposition~\ref{describe coh_d}.

\begin{proof}[Proof of Proposition~\ref{describe coh_d}.]
(i) We prove ``$\supset$''.

Fix $X$ in the right hand side. For any $U_i,U^i\in\UU_\Lambda$, we have for $\ell\gg0$
\begin{eqnarray*}
\nu_d^{-\ell}(U_0*U_1[1]*\cdots*U_{d-1}[d-1])
&=&(\nu_d^{-\ell}(U_0)*\nu_d^{-\ell}(U_1)[1]*\cdots*\nu_d^{-\ell}(U_{d-1})[d-1])\\
&\subset&(\mod\Lambda)*(\mod\Lambda)[1]*\cdots*(\mod\Lambda)[d-1],\\
\nu_d^{-\ell}(U^{1-d}[1-d]*\cdots*U^{-1}[-1]*U^0)
&=&(\nu_d^{-\ell}(U^{1-d})[1-d]*\cdots*\nu_d^{-\ell}(U^{-1})[-1]*\nu_d^{-\ell}(U^0))\\
&\subset&(\mod\Lambda)[1-d]*\cdots*(\mod\Lambda)[-1]*(\mod\Lambda).
\end{eqnarray*}
Therefore we have
\begin{eqnarray*}
\nu_d^{-\ell}(X)&\in&
((\mod\Lambda)*\cdots*(\mod\Lambda)[d-1])
\cap((\mod\Lambda)[1-d]*\cdots*(\mod\Lambda))\\
&=&\mod\Lambda.
\end{eqnarray*}
for $\ell\gg0$. By a similar argument, we have
$\nu_d^{\ell}(X)\in(\mod\Lambda)[-d]$ for $\ell\gg0$. Therefore 
$X\in\VV_\Lambda$.

(ii) We prove ``$\subset$''.
We only prove $\VV_\Lambda\subset\UU_\Lambda*\UU_\Lambda[1]*\cdots*\UU_\Lambda[d-1]$ since one can show $\VV_\Lambda\subset\UU_\Lambda[1-d]*\cdots*\UU_\Lambda[-1]*\UU_\Lambda$ dually.

Let $X\in\VV_\Lambda$. Then $Y:=\nu_d^{\ell}(X)[d]$ and $Z:=\nu_d^{-\ell}(X)$ belong to $\mod\Lambda$ for $\ell\gg0$. Since $\Lambda$ has global dimension $d$, we can take an injective resolution $0\to Y\to I^0\to\cdots\to I^d\to0$ of $Y$. Then
\[Y\in I^d[-d]*\cdots*I^1[-1]*I^0\]
holds. Now let
\[\PP_\Lambda:=\add\{\nu_d^{-i}(\Lambda)\mid i\ge0\}\subset\UU_\Lambda\cap\mod\Lambda\]
be the category of $d$-preprojective $\Lambda$-modules.
Then $P^i:=\nu_d^{-2\ell}(I^i)[-d]$ belongs $\PP_\Lambda$, and we have
\[Z=\nu_d^{-2\ell}(Y)[-d]\in P^d[-d]*\cdots* P^1[-1]*P^0.\]
By Lemma~\ref{long exact}, we have an exact sequence
$0\to Z\to P^0\to\cdots\to P^d\to0$ of $\Lambda$-modules,
and in particular, $Z$ is a submodule of $P^0\in\PP_\Lambda$.
By applying \cite[4.28]{HIO}, we have an exact sequence
\[0\to P_{d-1}\to\cdots\to P_0\to Z\to0\]
of $\Lambda$-modules with $P_i\in\PP_\Lambda$. Therefore
\[Z\in\PP_\Lambda*\PP_\Lambda[1]*\cdots*\PP_\Lambda[d-1]\]
holds, and hence $X=\nu_d^{\ell}(Z)\in\UU_\Lambda*\UU_\Lambda[1]*\cdots*\UU_\Lambda[d-1]$.
\end{proof}

Now we are ready to prove Theorem~\ref{d-RI has d-CT}.

\begin{proof}[Proof of Theorem~\ref{d-RI has d-CT}.]
(i) It was shown in \cite[4.2]{HIO} that $\Hom_{\DDD^{\bo}(\mod\Lambda)}(\UU_\Lambda,\UU_\Lambda[i])=0$ holds for all $i$ with $1\le i\le d-1$.

(ii) For any $X\in\VV_\Lambda$, Proposition~\ref{describe coh_d} shows that there exists a triangle
\begin{equation}\label{approximation}
U\xrightarrow{g} X\xrightarrow{f}Y\to U[1]
\end{equation}
with $U\in\UU_\Lambda$ and $Y\in\UU_\Lambda[1]*\cdots*\UU_\Lambda[d-1]$.
Since $\Hom_{\DDD^{\bo}(\mod\Lambda)}(\UU_\Lambda,\UU_\Lambda[1]*\cdots*\UU_\Lambda[d-1])=0$, we have that $g$ is a right $\UU_\Lambda$-approximation of $X$.
Thus $\UU_\Lambda$ is a contravariantly finite subcategory of $\VV_\Lambda$.

Dually one can show that $\UU_\Lambda$ is a covariantly finite subcategory of $\VV_\Lambda$.

(iii) Assume that $X\in\VV_\Lambda$ satisfies $\Hom_{\DDD^{\bo}(\mod\Lambda)}(X,\UU_\Lambda[i])=0$ for all $i$ with $1\le i\le d-1$.
Then $f=0$ holds in the triangle \eqref{approximation}. Thus $g$ is a split epimorphism, and we have $X\in\UU_\Lambda$.

Similarly one can show that if $X\in\VV_\Lambda$ satisfies $\Hom_{\DDD^{\bo}(\mod\Lambda)}(\UU_\Lambda,X[i])=0$ for all $i$ with $1\le i\le d-1$, then $X\in\UU_\Lambda$.

(iv) Fix any $X\in\VV_\Lambda$. By Proposition~\ref{describe coh_d}, there exist exact sequences
\[0\to U_{d-1}\to\cdots\to U_0\to X\to0\ \mbox{ and }\ 0\to X\to U^0\to\cdots\to U^{d-1}\to0\]
with $U_i,U^i\in\UU_\Lambda$ in the abelian category $\qgr^{\Z}\Pi$.
Therefore $\UU_\Lambda$ generates and cogenerates $\VV_\Lambda$.
\end{proof}

At the end of this section, we include the following observation on a generalization of $d$-representation infinite algebras, which will be applied
to our higher canonical algebras (see Theorem~\ref{global dimension}).

\begin{definition}\label{define almost d-RI}
Let $\Lambda$ be a finite dimensional $k$-algebra of finite global dimension.
We call $\Lambda$ \emph{almost $d$-representation infinite} if
$H^j(\nu_d^{-i}(\Lambda))=0$ holds for all $i\in\Z$ and all $j\in\Z\setminus\{0,d\}$.
\end{definition}

Clearly any $d$-representation infinite algebra is almost $d$-representation infinite. Moreover we have the following easy observations.

\begin{proposition}\label{d or 2d}
\begin{itemize}
\item[(a)] $d$-representation infinite algebras are precisely
almost $d$-representation infinite algebras of global dimension $d$
\item[(b)] An almost $d$-representation infinite algebra has global dimension $d$ or $2d$.
\end{itemize}
\end{proposition}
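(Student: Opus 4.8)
The plan is to analyze the cohomology of $\nu_d^{-i}(\Lambda)$ for an almost $d$-representation infinite algebra $\Lambda$ of global dimension $n$, and extract the two constraints that force $n\in\{d,2d\}$.

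\textbf{Part (a).} One direction is Definition~\ref{define almost d-RI} combined with the fact that $d$-representation infinite algebras have global dimension $d$ by definition. For the converse, suppose $\Lambda$ is almost $d$-representation infinite of global dimension $d$. By Proposition~\ref{fractional CY and global dimension}(a) applied with $n=d$, we have $\nu_d^{-i}(\DDD^{\ge0})\subset\DDD^{\ge0}$ for all $i\ge0$ (since $\nu_d=\nu[-d]$ and $\nu^{-1}(\DDD^{\ge0})\subset\DDD^{\ge0}$, shifting by $d$ accounts for the $[-d]$). Hence $\nu_d^{-i}(\Lambda)\in\DDD^{\ge0}(\mod\Lambda)$ for all $i\ge0$. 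Combined with the hypothesis $H^j(\nu_d^{-i}(\Lambda))=0$ for $j\notin\{0,d\}$, we get $\nu_d^{-i}(\Lambda)\in\DDD^{[0,d]}$, so its only possible nonzero cohomologies are in degrees $0$ and $d$. I would then argue the degree-$d$ part vanishes: $H^d(\nu_d^{-i}(\Lambda))=H^d(\nu^{-i}(\Lambda)[di])=H^{d+di}(\nu^{-i}(\Lambda))$, but $\nu^{-i}(\DDD^{\le0})\subset\DDD^{\le id}$ by Proposition~\ref{fractional CY and global dimension}(b), so $\nu^{-i}(\Lambda)\in\DDD^{\le id}$, forcing $H^{d+di}(\nu^{-i}(\Lambda))=0$ when $i\ge1$; for $i=0$ it is $H^d(\Lambda)=0$ since $\Lambda$ is concentrated in degree $0$. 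Thus $\nu_d^{-i}(\Lambda)\in\mod\Lambda$ for all $i\ge0$, which is exactly $d$-representation infiniteness.

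\textbf{Part (b).} Let $n=\gl\Lambda<\infty$. Since $\nu_d=\nu[-d]$ and $\nu^{\pm1}$ are triangle autoequivalences, $\nu_d^{-i}(\Lambda)=\nu^{-i}(\Lambda)[di]$, and $\nu^{-i}(\Lambda)$ lives in $\DDD^{[0,in]}(\mod\Lambda)$ by iterating Proposition~\ref{fractional CY and global dimension}(a),(b). Therefore $\nu_d^{-i}(\Lambda)\in\DDD^{[di, di+in]}$. The hypothesis says all cohomology of $\nu_d^{-i}(\Lambda)$ is concentrated in degrees $\{0,d\}$; intersecting, we see that for every $i\ge0$ the support $\{di,\dots,di+in\}\cap\{j:H^j\ne0\}\subseteq\{0,d\}$. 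For large $i$ the interval $[di,di+in]$ lies well to the right of $d$, so $\nu_d^{-i}(\Lambda)$ would have to vanish, contradicting that $\nu$ is an autoequivalence; hence the interval must in fact touch $\{0,d\}$, which forces $di\le d$, i.e. $i\le 1$. So the real content is at $i=1$: $\nu_d^{-1}(\Lambda)\in\DDD^{[d,d+n]}$ and all its cohomology sits in $\{0,d\}$, forcing it to be concentrated in degree $d$ alone, i.e. $\nu_d^{-1}(\Lambda)\in(\mod\Lambda)[-d]$, equivalently $\nu^{-1}(\Lambda)\in(\mod\Lambda)[n]$ with $H^n(\nu^{-1}(\Lambda))=\Ext^n_\Lambda(D\Lambda,\Lambda)$. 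Now I use Observation~\ref{gl dim}: $n=\sup\{i\mid\Ext^i_\Lambda(D\Lambda,\Lambda)\ne0\}=\sup\{i\mid\Hom(\Lambda,\nu^{-1}(\Lambda)[i])\ne0\}$. Since $\nu^{-1}(\Lambda)$ is concentrated in a single degree, say degree $m$, this sup equals $m$, so $\nu^{-1}(\Lambda)\in(\mod\Lambda)[n]$, i.e. $\nu_d^{-1}(\Lambda)\in(\mod\Lambda)[n-d]$. But we also showed $\nu_d^{-1}(\Lambda)\in(\mod\Lambda)[-d]$, so $n-d=-d$?? — this needs care: the cleaner route is that $\nu_d^{-1}(\Lambda)$ is concentrated in degree $d-n$ from one computation (via $\nu^{-1}(\Lambda)\in\DDD^{[0,n]}$ plus $H^n\ne0$ giving degree exactly $n$, shifted: $\nu_d^{-1}(\Lambda)=\nu^{-1}(\Lambda)[d]$ concentrated in degree $n-d$) and the hypothesis forces $n-d\in\{0,d\}$, i.e. $n\in\{d,2d\}$.

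\textbf{Main obstacle.} The delicate point is pinning down that $\nu_d^{-1}(\Lambda)$ (equivalently $\nu^{-1}(\Lambda)$) is \emph{concentrated in a single cohomological degree}, and identifying that degree as exactly $n$ (not merely $\le n$). This requires combining the one-sided bound $\nu^{-1}(\Lambda)\in\DDD^{[0,n]}$ from Proposition~\ref{fractional CY and global dimension} with the non-vanishing $\Ext^n_\Lambda(D\Lambda,\Lambda)\ne0$ from Observation~\ref{gl dim} (valid since $n=\gl\Lambda$), and then invoking the almost-$d$-representation-infinite hypothesis to rule out intermediate cohomology. Once $\nu^{-1}(\Lambda)$ is known to be $M[-n]$ for a module $M$, the equation $n-d\in\{0,d\}$ is immediate. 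A symmetric argument using $\nu_d^{i}(\Lambda)$ for $i\gg0$ being in $(\mod\Lambda)[-d]$ (cf. the definition of $\VV_\Lambda$) could serve as a cross-check but is not needed.
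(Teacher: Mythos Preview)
Your argument contains several errors in handling shifts, and what survives is essentially the paper's (much shorter) proof obscured by unnecessary detours.

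\textbf{Part (a).} Your claim $\nu_d^{-i}(\DDD^{\ge0})\subset\DDD^{\ge0}$ is false: since $\nu_d^{-1}=\nu^{-1}[d]$ and $H^j(X[d])=H^{j+d}(X)$, one has $\nu_d^{-1}(\DDD^{\ge0})\subset\DDD^{\ge -d}$, not $\DDD^{\ge0}$. Fortunately you never use this. Your actual argument---that $\nu^{-i}(\Lambda)\in\DDD^{\le id}$ forces $H^d(\nu_d^{-i}(\Lambda))=H^{d+id}(\nu^{-i}(\Lambda))=0$---is correct and is exactly the paper's one-line proof, stated there as $\nu_d^{-i}(\Lambda)\in\DDD^{\le0}(\mod\Lambda)$.

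\textbf{Part (b).} Your interval $\nu_d^{-i}(\Lambda)\in\DDD^{[di,di+in]}$ has the shift backwards: it should be $\DDD^{[-di,\,in-di]}$. This invalidates the ``large $i$'' paragraph, which is in any case irrelevant. More importantly, your stated ``main obstacle''---showing $\nu_d^{-1}(\Lambda)$ is concentrated in a \emph{single} cohomological degree---is a red herring. The paper's proof needs no such thing. It simply observes
\[
\Ext^i_\Lambda(D\Lambda,\Lambda)=\Hom_{\DDD(\Lambda)}(\Lambda,\nu_d^{-1}(\Lambda)[i-d])=H^{i-d}(\nu_d^{-1}(\Lambda)),
\]
which by the almost-$d$-representation-infinite hypothesis vanishes unless $i-d\in\{0,d\}$, i.e.\ $i\in\{d,2d\}$. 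Since $\gl\Lambda=\sup\{i:\Ext^i_\Lambda(D\Lambda,\Lambda)\ne0\}$ by Observation~\ref{gl dim}, this sup lies in $\{d,2d\}$ and you are done. Your ``cleaner route'' at the end arrives at exactly this, but only after the confused passage with the shift errors; the single-degree concentration claim is never needed.
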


\begin{proof}
(a) We only have to show that any almost $d$-representation infinite algebra
$\Lambda$ of global dimension $d$ is $d$-representation infinite.
Since $\nu_d^{-i}(\Lambda)\in\DDD^{\le0}(\mod\Lambda)$ holds for any
$i\ge0$ by Proposition~\ref{fractional CY and global dimension}(b),
we have $\nu_d^{-i}(\Lambda)\in\mod\Lambda$. Thus the assertion follows.

(b) Since $\Lambda$ has finite global dimension,
$\gl \Lambda=\max\{i\ge0\mid\Ext^i_\Lambda(D\Lambda,\Lambda)\neq0\}$
holds by Observation~\ref{gl dim}. Since
\[\Ext^i_\Lambda(D\Lambda,\Lambda)=
\Hom_{\DDD(\Lambda)}(\Lambda,\nu_d^{-1}(\Lambda)[i-d])=
H^{i-d}(\nu_d^{-1}(\Lambda))\]
vanishes except $i=d$ or $2d$, we have the assertion.
\end{proof}

%
%
%

\part{Geigle-Lenzing complete intersections}

\chapter{Geigle-Lenzing complete intersections}\label{section: GL CI}

Throughout this paper we fix an arbitrary base field $k$ and an integer $d\ge-1$. 
(We assume neither $k$ to have characteristic zero nor $k$ to be algebraically closed.)

\section{The definition and basic properties}\label{section: GL CI 1}
We start with the polynomial algebra
\[C:=k[T_0, \ldots, T_d]\]
in $d+1$ variables. For $n\ge0$, we choose $n$ linear forms 
\[ \ell_i =\ell_i(T_0,\ldots,T_d) = \sum_{j=0}^d \lambda_{ij} T_j\in C,\]
with $\lambda_{ij}\in k$.
We also fix an $n$-tuple $(p_1,\ldots,p_n)$ of positive integers called \emph{weights}. Let
\[ S := C[X_1,\ldots, X_n] = k[T_0, \ldots, T_d, X_1, \ldots, X_n ] \]
be the polynomial algebra in $d+n+1$ variables and
\[ h_i := X_i^{p_i}- \ell_i\in S. \]
Now we consider the factor $k$-algebra
\[R := S / (h_i \mid 1 \leq i \leq n).\]
In the case $d=-1$, we let $C=k$ and $\ell_1=\cdots=\ell_n=0$. Thus
\begin{equation}\label{R for d=-1}
R=k[X_1,\ldots,X_n]/ (X_i^{p_i} \mid 1 \leq i \leq n).
\end{equation}

\medskip\noindent
{\bf The grading group.}
Let $\L$ be the abelian group generated by the symbols $\x_1,\ldots,\x_n,\c$, 
modulo relations $p_i \x_i = \c$ for any $1 \leq i \leq n$:
\begin{eqnarray*}
\L := \langle \x_1,\ldots,\x_n, \c \rangle / \langle p_i \x_i - \c \mid 1\le i \le n\rangle.
\end{eqnarray*}
We regard $S$ as an $\L$-graded $k$-algebra by
\[\deg T_j:= \c\ \mbox{ and }\ \deg X_i:=\x_i\]
for any $i$ and $j$. Since $h_i$ is homogeneous (of degree $\c$) for all $i$, we can regard $R$ as an $\L$-graded $k$-algebra.

\medskip\noindent
{\bf Geigle-Lenzing complete intersection.}
We call the pair $(R,\L)$ a \emph{weak Geigle-Lenzing} (\emph{GL})
\emph{complete intersection} associated with $\ell_1,\ldots,\ell_n$ and
$p_1,\ldots,p_n$.
It is in fact a complete intersection of dimension $d+1$
as we will see in Proposition~\ref{R is CI} below.

We call $R$ \emph{Geigle-Lenzing} (\emph{GL}) \emph{complete intersection} if our
linear forms $\ell_1,\ldots,\ell_n$ are in general position in the following sense:
\begin{itemize}
\item Any set of at most $d+1$ of the polynomials $\ell_i$ is linearly independent.
\end{itemize}
In the rest we assume that $R$ is a weak GL complete intersection.

\medskip
Let $\L_+$ be the submonoid of $\L$ generated by all $\x_i$'s and $\c$.
We equip $\L$ with the structure of a partially ordered set:
$\x\ge\y$ if and only if $\x-\y\in\L_+$. Then $\L_+$ consists of all
elements $\x\in\L$ satisfying $\x\ge0$. We denote intervals in $\L$ by
\[[\x,\y] := \{\z \in\L \mid \x \le \z \le \y\}.\]
We collect some basic observations.

\begin{observation}\label{basic results on L}
\begin{itemize}
\item[(a)] Any element $\x\in\L$ can be written uniquely as
\[\x=\sum_{i=1}^na_i\x_i+a\c\]
with $0\le a_i<p_i$ and $a\in\Z$. We call this presentation the \emph{normal form} of $\x$.
\item[(b)] $\L$ is an abelian group of rank one. It is torsion free
if and only if $p_1,\ldots,p_n$ are pairwise coprime.
\item[(c)] We have $R_{\x}\neq0$ if and only if $\x\in\L_+$ if and only if $a\ge0$
in the normal form in (a). Therefore $R$ is positively graded in this sense.
\end{itemize}
\end{observation}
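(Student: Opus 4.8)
The plan is to analyze the group $\L$ by choosing a convenient free presentation and computing the relevant Smith/Hermite-type normal form, then read off each of the three statements as a corollary of the resulting combinatorial description. First I would set up the surjection $\pi\colon\Z^{n+1}\to\L$ sending the standard basis vector $e_i$ to $\x_i$ for $1\le i\le n$ and $e_{n+1}$ to $\c$; by definition the kernel is generated by the vectors $r_i:=p_ie_i-e_{n+1}$ for $1\le i\le n$. To prove (a), I would show that every coset in $\Z^{n+1}/\langle r_1,\dots,r_n\rangle$ has a unique representative of the form $\sum_{i=1}^n a_ie_i+ae_{n+1}$ with $0\le a_i<p_i$ and $a\in\Z$: for existence, given an arbitrary representative, repeatedly subtract multiples of the $r_i$ to reduce each of the first $n$ coordinates modulo $p_i$ (this changes the last coordinate but leaves it an integer); for uniqueness, suppose $\sum a_i\x_i+a\c=\sum a_i'\x_i+a'\c$ with both in normal form, so $\sum(a_i-a_i')e_i+(a-a')e_{n+1}\in\langle r_1,\dots,r_n\rangle$, write it as $\sum_i c_ir_i=\sum_i c_ip_ie_i-(\sum_i c_i)e_{n+1}$, and match coordinates: the $i$-th coordinate gives $a_i-a_i'=c_ip_i$, which together with $|a_i-a_i'|<p_i$ forces $c_i=0$ for all $i$, whence $a=a'$ as well.

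For (b), the rank-one claim follows immediately: tensoring the presentation with $\Q$ kills the $n$ relations $r_1,\dots,r_n$ (which are $\Q$-linearly independent, since their leading coordinates involve distinct basis vectors), so $\L\otimes_\Z\Q$ is one-dimensional. For the torsion statement, I would observe that the torsion subgroup of $\L$ is detected by the elementary divisors of the $(n+1)\times n$ relation matrix whose columns are the $r_i$. Using the normal form from (a), an element $\x=\sum a_i\x_i+a\c$ is torsion if and only if some positive multiple lies in $\Z\c$ modulo the relations, i.e.\ (after clearing) $m\x\in\langle r_1,\dots,r_n,\, \text{something}\rangle$; concretely one computes that the torsion part is $\bigl(\bigoplus_i\Z/p_i\Z\bigr)$ modulo the diagonal copy of the relation $p_i\x_i=\c$, and a direct check shows this quotient is trivial precisely when the $p_i$ are pairwise coprime — the point being that the class of $\c$ has order $\operatorname{lcm}(p_1,\dots,p_i)$ coming from each generator $\x_i$, and a nonzero torsion element survives exactly when two of the $p_i$ share a common factor. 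I would make this precise by exhibiting, when $\gcd(p_i,p_j)=e>1$, the element $\tfrac{p_i}{e}\x_i-\tfrac{p_j}{e}\x_j$ and checking via the normal form that it is nonzero but $e$-torsion, and conversely showing that pairwise coprimality makes the normal-form representative of any torsion element equal to $0$.

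For (c), recall $R=S/(h_i)$ with $S=k[\mathbf T,\mathbf X]$ graded by $\deg T_j=\c$, $\deg X_i=\x_i$, and $h_i=X_i^{p_i}-\ell_i(\mathbf T)$ homogeneous of degree $\c$. The monomials $T_0^{b_0}\cdots T_d^{b_d}X_1^{c_1}\cdots X_n^{c_n}$ span $S$, and modulo the relations $h_i$ we may assume $0\le c_i<p_i$ (replacing $X_i^{p_i}$ by $\ell_i(\mathbf T)$); a standard Gröbner/regular-sequence argument — using that $h_1,\dots,h_n$ form a regular sequence, which follows from Proposition~\ref{R is CI} — shows these reduced monomials form a $k$-basis of $R$. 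Such a monomial has degree $(\sum_j b_j+\sum_i?)\c+\sum_i c_i\x_i$; more precisely its degree is $\sum_i c_i\x_i+(\sum_{j}b_j)\c$, which is already in normal form with $a=\sum_j b_j\ge0$. Hence $R_\x\neq0$ iff $\x$ admits such a monomial iff the integer $a$ in its normal form is $\ge0$ iff $\x\in\L_+$ (using that $\L_+$, being generated by the $\x_i$ and $\c$, consists exactly of the elements whose normal form has $a\ge0$, which itself needs the uniqueness in (a) plus the relation $\x_i\ge0$, $p_i\x_i=\c$).

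The main obstacle I anticipate is the torsion-freeness criterion in (b): one must carefully track how the single relation $p_i\x_i=\c$ interacts across different $i$, and the cleanest route is probably to compute the Smith normal form of the relation matrix explicitly (its invariant factors) rather than argue ad hoc; the coprimality condition should drop out as the statement that all invariant factors equal $1$ except the last, which is $0$. Everything else is bookkeeping with the normal form, and the regular-sequence input for (c) is quoted from Proposition~\ref{R is CI}.
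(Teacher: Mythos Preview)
The paper does not give a proof of this Observation; it is stated as a collection of elementary facts and left to the reader, with part (c) being justified implicitly a few paragraphs later by Proposition~\ref{Zc Veronese} (which shows via the root construction of Observation~\ref{root construction} that $R$ is free over $C$ with basis $\{X_1^{a_1}\cdots X_n^{a_n}:0\le a_i<p_i\}$). Your proofs of (a) and (b) are correct and essentially what any reader would supply; the explicit torsion element $\tfrac{p_i}{e}\x_i-\tfrac{p_j}{e}\x_j$ and the converse via reduction modulo each $p_i$ are exactly the right moves, though the paragraph preceding them (``the torsion part is $(\bigoplus_i\Z/p_i)$ modulo the diagonal\ldots'') is vague and you should drop it in favor of the concrete argument you give afterward.

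For (c) your approach via a monomial basis is fine, but two remarks. First, the regular-sequence fact you need is Lemma~\ref{prop.regularseq1}(a), not Proposition~\ref{R is CI}; citing the latter risks circularity since it appears after the Observation, whereas the lemma is proved independently. Second, the paper's own route to the monomial basis (Proposition~\ref{Zc Veronese}, via iterated root construction) is slightly cleaner: it avoids any Gr\"obner or regular-sequence machinery and makes the freeness over $C$ transparent. Either way works, but if you want to match the paper's internal logic you would appeal to the root construction rather than the regular sequence.
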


\begin{observation}[Weights 1]\label{Weights 1}
Adding a linear form $\ell_{n+1}$ with weight $p_{n+1} = 1$ changes neither $\L$ nor $R$, since the new variable $X_{n+1}$ is expressed as a linear
combination of $T_j$'s by the relation $X_{n+1} = \ell_{n+1}(T_0,\ldots,T_d)$.
Thus we may freely add or remove hyperplanes with weights $1$.

Therefore we can assume that
\begin{itemize}
\item $p_i\ge2$ for all $i$ with $1\le i\le n$
\end{itemize}
without loss of generality by removing all hyperplanes with weights $1$.
\end{observation}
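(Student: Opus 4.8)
The plan is to show that a weak GL complete intersection carrying a hyperplane of weight $1$ is isomorphic, as an $\L$-graded $k$-algebra, to the one obtained by deleting that hyperplane, and that this isomorphism simultaneously identifies the two grading groups. Granting this, one deletes all hyperplanes with weight $1$ one after another and is left with a weak GL complete intersection all of whose weights are $\ge 2$; hence throughout the paper one may assume $p_i\ge 2$ for all $i$ without loss of generality. There are two things to verify: the effect on $\L$ and the effect on $R$.

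For the grading group, write $\L' := \langle\x_1,\dots,\x_{n+1},\c\rangle/\langle p_i\x_i-\c\mid 1\le i\le n+1\rangle$ for the group attached to the list enlarged by a hyperplane of weight $p_{n+1}=1$. The new relation $p_{n+1}\x_{n+1}-\c = \x_{n+1}-\c$ forces $\x_{n+1}=\c$ in $\L'$, so the homomorphism $\langle\x_1,\dots,\x_{n+1},\c\rangle\to\L$ sending $\x_{n+1}\mapsto\c$ and fixing the other generators annihilates all the defining relations and factors through an isomorphism $\L'\xrightarrow{\sim}\L$, inverse to the obvious map $\L\to\L'$. Under this identification $X_{n+1}$ has degree $\x_{n+1}=\c$.

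For the ring, put $S:=k[\mathbf T,X_1,\dots,X_n]$ and $S':=S[X_{n+1}]$, each made $\L$-graded by $\deg T_j=\c$, $\deg X_i=\x_i$ (and $\deg X_{n+1}=\c$ on $S'$). Because $p_{n+1}=1$ we have $h_{n+1}=X_{n+1}-\ell_{n+1}(\mathbf T)$, which is homogeneous of degree $\c$ since each $T_j$ has degree $\c$ and $\ell_{n+1}$ is a linear form. Killing a variable minus an expression in the remaining variables is an isomorphism: the substitution $X_{n+1}\mapsto\ell_{n+1}(\mathbf T)$ induces an $\L$-graded ring isomorphism $S'/(h_{n+1})\xrightarrow{\sim}S$, with inverse induced by the inclusion $S\hookrightarrow S'$. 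This isomorphism sends $h_1,\dots,h_n\in S'$ to $h_1,\dots,h_n\in S$, so it descends to an $\L$-graded isomorphism
\[ R' := S'/(h_1,\dots,h_{n+1}) \xrightarrow{\ \sim\ } S/(h_1,\dots,h_n) = R. \]

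This shows that adding — hence also removing — a weight-$1$ hyperplane changes neither $\L$ nor $R$. Iterating the deletion removes all weight-$1$ hyperplanes at once and yields a weak GL complete intersection with every weight $\ge 2$; moreover, since any subfamily of size $\le d+1$ of the linear forms $\ell_i$ is again linearly independent, the general position hypothesis is preserved and the same reduction applies to (strong) GL complete intersections. The only step requiring any care is to carry the $\L$-grading correctly through the elimination of $X_{n+1}$, i.e.\ to confirm that $\deg\ell_{n+1}(\mathbf T)=\c$ agrees with the degree $\x_{n+1}$ that the relation $p_{n+1}\x_{n+1}=\c$ with $p_{n+1}=1$ assigns to $X_{n+1}$; this is exactly what makes the substitution grading-preserving, and everything else is routine.
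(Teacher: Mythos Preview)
Your proof is correct and follows exactly the idea the paper sketches in one line: the relation $h_{n+1}=X_{n+1}-\ell_{n+1}(\mathbf T)$ makes $X_{n+1}$ superfluous and forces $\x_{n+1}=\c$ in the grading group. You have simply spelled out carefully (the identification of $\L'$ with $\L$, the grading-preserving substitution, and the preservation of general position) what the paper leaves as an immediate observation without a separate proof.
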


\begin{observation}[Normalization]\label{Normalization}
Now we assume that $(R,\L)$ is a GL complete intersection.
Then the group ${\rm GL}(d+1, k)$ acts on $S$ by acting on the linear span of the variables $T_i$. Transforming coordinates in this way we may assume
\[\ell_i(T_0,\ldots,T_d) = \begin{cases} T_{i-1}& \text{ if }\ 1 \leq i \leq \min \{d+1, n\}, \\
\sum_{j=0}^d\lambda_{ij}T_j & \text{ if }\ \min \{d+1, n\} < i \leq n. \end{cases}.\]
Then we obtain the relations $h_i =  X_i^{p_i} - T_{i-1}$ for
$1 \leq i \leq \min\{d+1,n\}$. Therefore the variables $T_i$ with
$0 \leq i \leq \min\{d,n-1\}$ are superfluous in the presentation of $R$, and we may write
\begin{equation}\label{normalized form}
R = \begin{cases} k[X_1, \ldots, X_n, T_n, \ldots, T_d] & \text{ if } n \leq d+1, \\
k[X_1, \ldots, X_n] / (X_i^{p_i} - \sum_{j=1}^{d+1}\lambda_{i,j-1}X_j^{p_j} \mid d+2 \leq i \leq n) & \text{ if } n \geq d+2. \end{cases}
\end{equation}
\end{observation}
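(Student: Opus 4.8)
The plan is to prove the two assertions of the observation separately; throughout set $m:=\min\{d+1,n\}$.

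\textbf{Normalizing the linear forms.} Since $(R,\L)$ is a GL complete intersection, any at most $d+1$ of the $\ell_i$ are linearly independent; in particular $\ell_1,\ldots,\ell_m$ are linearly independent in the linear span $V:=\langle T_0,\ldots,T_d\rangle$. Extend them to a basis $\ell_1,\ldots,\ell_{d+1}$ of $V$, and let $\phi$ be the $k$-algebra automorphism of $S$ that fixes each $X_i$ and acts on $V$ by $\ell_j\mapsto T_{j-1}$ for $1\le j\le d+1$ (this is exactly the action on $S$ of the corresponding element of $\GL(d+1,k)$ mentioned in the statement). Since every $T_j$ has degree $\c$, the map $\phi$ is an automorphism of $\L$-graded $k$-algebras, so it carries $(h_1,\ldots,h_n)$ onto $(\phi(h_1),\ldots,\phi(h_n))$ and induces an $\L$-graded isomorphism $R\cong S/(\phi(h_i)\mid 1\le i\le n)$. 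Here $\phi(h_i)=X_i^{p_i}-\phi(\ell_i)$ equals $X_i^{p_i}-T_{i-1}$ for $1\le i\le m$ and equals $X_i^{p_i}-\sum_{j=0}^d\lambda_{ij}T_j$ for $m<i\le n$ with new scalars, again denoted $\lambda_{ij}$; moreover general position is preserved, since a subfamily of $\{\phi(\ell_i)\}_i$ is linearly independent precisely when the corresponding subfamily of $\{\ell_i\}_i$ is. Replacing $(R,\L)$ by this isomorphic copy, we may thus assume $\ell_i=T_{i-1}$, hence $h_i=X_i^{p_i}-T_{i-1}$, for $1\le i\le m$.

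\textbf{Eliminating the superfluous variables.} Let $B$ be the polynomial algebra on the surviving variables: $B:=k[X_1,\ldots,X_n,T_n,\ldots,T_d]$ if $n\le d+1$, and $B:=k[X_1,\ldots,X_n]$ if $n\ge d+2$. Let $\rho:S\to B$ be the $k$-algebra homomorphism fixing $X_1,\ldots,X_n$ and $T_n,\ldots,T_d$ and sending $T_{i-1}\mapsto X_i^{p_i}$ for $1\le i\le m$; it is well-defined and $\L$-graded because $\deg T_{i-1}=\c=p_i\x_i=\deg X_i^{p_i}$ in $\L$. By the standard elimination of variables, $\ker\rho=(T_{i-1}-X_i^{p_i}\mid 1\le i\le m)$, which is contained in $(h_1,\ldots,h_n)$ since $h_i=X_i^{p_i}-T_{i-1}$ for $i\le m$; hence $R=S/(h_i\mid 1\le i\le n)\cong B/\rho((h_i\mid 1\le i\le n))$ as $\L$-graded rings. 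As $\rho(h_i)=0$ for $1\le i\le m$, the ideal $\rho((h_i\mid 1\le i\le n))$ of $B$ is generated by $\rho(h_i)$ for $m<i\le n$. If $n\le d+1$ then $m=n$, this ideal is zero, and $R\cong k[X_1,\ldots,X_n,T_n,\ldots,T_d]$. If $n\ge d+2$ then $m=d+1$ and $\rho(h_i)=X_i^{p_i}-\sum_{j=0}^d\lambda_{ij}X_{j+1}^{p_{j+1}}=X_i^{p_i}-\sum_{j=1}^{d+1}\lambda_{i,j-1}X_j^{p_j}$ for $d+2\le i\le n$, giving exactly the presentation of $R$ displayed in the statement.

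\textbf{Main difficulty.} There is no genuine obstacle here; the argument is bookkeeping. The one point demanding attention is to keep the $\L$-grading under control at every step — that $\phi$ is a \emph{graded} automorphism and that eliminating the $T_{i-1}$ is legitimate for graded rings — which in both cases reduces to the identity $\deg T_{i-1}=\c=p_i\x_i$ in $\L$. In the case $n\ge d+2$ one must also perform the substitution into the surviving relations with the correct index shift, since $T_0,\ldots,T_d$ are being replaced by $X_1^{p_1},\ldots,X_{d+1}^{p_{d+1}}$.
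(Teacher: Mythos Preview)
Your proof is correct and follows precisely the approach the paper indicates: the paper treats this as an observation and merely says that $\GL(d+1,k)$ acts on the linear span of the $T_i$ so one may take $\ell_i=T_{i-1}$ for $i\le\min\{d+1,n\}$, after which the relations $h_i=X_i^{p_i}-T_{i-1}$ show that the corresponding $T_i$ are superfluous. You have simply supplied the details the paper leaves implicit, including the check that both the coordinate change and the elimination map are $\L$-graded.
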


In the second case of \eqref{normalized form}, our assumption that $\ell_1,\ldots,\ell_n$ are in general
position is equivalent to that all minors (including non-maximal ones) of
the $(n-d-1)\times(d+1)$ matrix
\[[\lambda_{i,j-1}]_{d+2\le i\le n,\ 1\le j\le d+1}\]
have non-zero determinants. In the case $d=1$, this means that the $n$ points
\[(1:0),\ (0:1),\ (\lambda_{30}:\lambda_{31}),\ldots,(\lambda_{n0}:\lambda_{n1})\]
in $\P^1$ are mutually distinct. If $k$ is algebraically closed, then 
we can normalize the relation in \eqref{normalized form} for $i=d+2$ as
\[X_{d+2}^{p_{d+2}}=X_1^{p_1}+X_2^{p_2}+\cdots+X_{d+1}^{p_{d+1}}.\]
This presentation is widely used for $d=1$.

Let us observe that our weak GL complete intersections can be obtained by
the following elementary construction,
for which the name `root construction' was used for stacks \cite{AGV}.
It also appeared in the context of weighted projective varieties in \cite{LO}.

\begin{observation}[Root construction]\label{root construction}
Let $G$ be an abelian group, and $A$ a commutative $G$-graded ring.
For a non-zero homogeneous element $\ell\in A$ of degree $g\in G$
and a positive integer $p$, let
\[G(\ell,p):=(G\oplus\Z)/\langle(g,-p)\rangle\ \mbox{ and }\
A(\ell,p):=A[X]/(X^p-\ell).\]
Then $A(\ell,p)$ is a $G(\ell,p)$-graded ring by $\deg X:=(0,1)$ and
$\deg a:=(g',0)$ for any $a\in A_{g'}$ with $g'\in G$.
We call this process to construct $(A(\ell,p),G(\ell,p))$ from $(A,G)$
a \emph{root construction}. Clearly,
\begin{itemize}
\item $A(\ell,p)$ is a free $A$-module with a basis $\{X^i\mid0\le i<p\}$.
\end{itemize}
Our weak GL complete intersection $(R,\L)$ can be obtained
by applying root construction iteratively to the polynomial ring
$C=k[T_0,\ldots,T_d]$ with the standard $\Z$-grading.
In fact, let
\begin{eqnarray*}
(R^0,\L^0)&:=&(C,\Z),\\
(R^i,\L^i)&:=&(R^{i-1}(\ell_i,p_i),\L^{i-1}(\ell_i,p_i))
\end{eqnarray*}
for $1\le i\le n$. Then one can easily check that $(R,\L)=(R^n,\L^n)$ holds.
\end{observation}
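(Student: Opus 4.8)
The plan is to treat Observation~\ref{root construction} as an unwinding of definitions combined with one standard fact about polynomial rings. Concretely, I would verify three things in turn: (i) that $X^p-\ell$ is homogeneous for the grading group $G(\ell,p)$, so that $A(\ell,p)$ really carries a $G(\ell,p)$-grading; (ii) that $\{X^i\mid 0\le i<p\}$ is an $A$-basis of $A(\ell,p)$; and (iii) that iterating the construction starting from $(C,\Z)$ reproduces $(R,\L)$.

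For (i), I would equip $A[X]$ with the $(G\oplus\Z)$-grading determined by $\deg X=(0,1)$ and $\deg a=(g',0)$ for $a\in A_{g'}$, and then regard it as $G(\ell,p)$-graded via the projection $G\oplus\Z\twoheadrightarrow G(\ell,p)$. In $G(\ell,p)$ the term $X^p$ has degree $p\cdot(0,1)=(0,p)$ and $\ell$ has degree $(g,0)$, and $(0,p)-(g,0)=-(g,-p)$ is zero by the defining relation of $G(\ell,p)$; hence $X^p-\ell$ is $G(\ell,p)$-homogeneous, the ideal it generates is homogeneous, and $A(\ell,p)$ inherits a $G(\ell,p)$-grading with the stated degrees. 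For (ii), the point is simply that $X^p-\ell$ is a \emph{monic} polynomial of degree $p$ over the commutative ring $A$: Euclidean division by a monic polynomial (valid over any commutative ring, with no domain hypothesis) shows that each element of $A[X]$ is congruent modulo $(X^p-\ell)$ to a unique polynomial of degree $<p$, which is exactly the claim that $\{X^i\mid 0\le i<p\}$ is an $A$-basis. In particular $A(\ell,p)$ is free, hence faithfully flat, over $A$, so every stage of the iterated construction is injective; thus $C$ embeds in each $R^i$ and each $\ell_i\in C$ is a nonzero homogeneous element of $R^{i-1}$, which is what is needed for the $i$-th root construction to be defined.

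For (iii), I would induct on $i$. On the ring level, $R^i=R^{i-1}[X_i]/(X_i^{p_i}-\ell_i)$; assuming inductively $R^{i-1}=C[X_1,\ldots,X_{i-1}]/(h_1,\ldots,h_{i-1})$ with $h_j=X_j^{p_j}-\ell_j$, the polynomials $h_1,\ldots,h_{i-1}$ do not involve $X_i$, so $R^{i-1}[X_i]=C[X_1,\ldots,X_i]/(h_1,\ldots,h_{i-1})$ and then $R^i=C[X_1,\ldots,X_i]/(h_1,\ldots,h_i)$. For $i=n$ this is $S/(h_i\mid 1\le i\le n)=R$. On the grading level, $\L^i=(\L^{i-1}\oplus\Z)/\langle(\c,-p_i)\rangle$ with $\deg X_i=(0,1)=:\x_i$ (note $\ell_i$ has degree $\c$ in every $\L^{i-1}$, since it is a linear form in the $T_j$ and $\deg T_j=\c$); an easy induction identifies $\L^i$ with $\langle\c,\x_1,\ldots,\x_i\rangle/\langle p_j\x_j-\c\mid 1\le j\le i\rangle$, and for $i=n$ this is precisely the presentation of $\L$. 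Comparing the degrees of the algebra generators in both pictures ($\deg T_j=\c$, $\deg X_i=\x_i$) then shows that the $\L$-grading on $R^n$ agrees with that on $R$.

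I do not expect a genuine obstacle here: this is an observation whose content is exactly the Euclidean-division fact in (ii) together with careful bookkeeping of presentations of rings and of abelian groups in (iii). The one place deserving a moment's attention is confirming that the hypotheses of the root construction are met at each step --- that $\ell_i$ remains nonzero and homogeneous in $R^{i-1}$ --- which is immediate once the freeness in (ii) gives the injectivity $C\hookrightarrow R^{i-1}$.
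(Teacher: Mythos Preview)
Your proposal is correct and is exactly the kind of routine verification the paper has in mind: the Observation is stated without proof, with the claims flagged as ``Clearly'' and ``one can easily check,'' and your parts (i)--(iii) simply supply those details in the obvious way. The only point you add beyond the bare unwinding---using freeness/faithful flatness to ensure $\ell_i$ stays nonzero in $R^{i-1}$---is a nice touch but not strictly needed here, since $\ell_i\in C$ and $C$ sits as the degree-zero (in the new $\Z$-coordinate) part of each $R^{i-1}$ by the explicit basis description.
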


The following simple observation is quite useful.

\begin{proposition}\label{Zc Veronese}
\begin{itemize}
\item[(a)] $C$ is the $(\Z\c)$-Veronese subalgebra of $R$, that is,
$C=\bigoplus_{a\in\Z}R_{a\c}$.
\item[(b)] $R$ is a free $C$-module of rank $p_1p_2\cdots p_n$
with a basis $\{X_1^{a_1}X_2^{a_2}\cdots X_n^{a_n}\mid 0\le a_i<p_i\}$.
\item[(c)] Let $\x=\sum_{i=1}^na_i\x_i+a\c$ be a normal form of $\x\in\L$.
Then the multiplication map $X_1^{a_1}X_2^{a_2}\cdots X_n^{a_n}:C_{a\c}\to R_{\x}$
is bijective.
\end{itemize}
\end{proposition}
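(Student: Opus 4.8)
The plan is to establish (b) first by a direct free-basis argument via the root construction of Observation \ref{root construction}, then deduce (a) by extracting the $(\Z\c)$-graded pieces, and finally refine to (c) by a graded-piece bookkeeping argument. For (b), I would argue by induction on the number $n$ of root constructions applied to $C=k[T_0,\dots,T_d]$. Observation \ref{root construction} already records that each single root construction $A\mapsto A(\ell,p)$ produces a free $A$-module with basis $\{X^i\mid 0\le i<p\}$; iterating this $n$ times and using transitivity of freeness (if $M$ is free over $A'$ with a given basis and $A'$ is free over $A$ with a given basis, then $M$ is free over $A$ with the product basis) yields that $R=R^n$ is free over $R^0=C$ with basis $\{X_1^{a_1}\cdots X_n^{a_n}\mid 0\le a_i<p_i\}$, hence of rank $p_1\cdots p_n$. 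One small point to check is that at the $i$-th stage the element $\ell_i$ remains a nonzero homogeneous element of the appropriate ring $R^{i-1}$, which is clear since $R^{i-1}$ is a domain (or simply because $\ell_i\in C\subset R^{i-1}$ is a nonzero linear form and $R^{i-1}$ is free, hence torsion-free, over $C$).

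For (a), I would observe that the free $C$-basis $\{X_1^{a_1}\cdots X_n^{a_n}\}$ of $R$ from (b) is a \emph{homogeneous} basis with respect to the $\L$-grading: the monomial $X_1^{a_1}\cdots X_n^{a_n}$ has degree $\sum_i a_i\x_i$, and since $C=\bigoplus_a R_{a\c}$ carries only degrees in $\Z\c$, the degree of $c\cdot X_1^{a_1}\cdots X_n^{a_n}$ for $c\in C_{a\c}$ is $\sum_i a_i\x_i + a\c$, which is precisely a normal form as in Observation \ref{basic results on L}(a) when $0\le a_i<p_i$. Hence the graded component $R_{\x}$ for $\x=\sum a_i\x_i+a\c$ in normal form is exactly $X_1^{a_1}\cdots X_n^{a_n}\cdot C_{a\c}$; in particular $R_{a\c}=C_{a\c}$ (taking all $a_i=0$), which is statement (a). This same identity $R_{\x}=X_1^{a_1}\cdots X_n^{a_n}\cdot C_{a\c}$ combined with the freeness from (b) shows the multiplication map $C_{a\c}\to R_{\x}$, $c\mapsto X_1^{a_1}\cdots X_n^{a_n}c$, is bijective, which is (c). Thus (c) in fact implies both (a) and (b), and all three follow from the single computation that the product monomials form a homogeneous free $C$-basis of $R$ with degrees running bijectively over normal forms.

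The only genuine content — and the step I would be most careful about — is verifying that the map in (c) is \emph{injective}, equivalently that there is no further collapsing among degrees: distinct triples $(a_1,\dots,a_n,a)$ with $0\le a_i<p_i$ give distinct elements of $\L$ (this is Observation \ref{basic results on L}(a), uniqueness of normal form) and, more to the point, that within a fixed degree $\x$ the only basis monomial contributing is $X_1^{a_1}\cdots X_n^{a_n}$ for the normal-form exponents. This is where the explicit presentation of $\L$ via the relations $p_i\x_i=\c$ is used: any other product monomial $X_1^{b_1}\cdots X_n^{b_n}$ with $0\le b_i<p_i$ has a different degree unless $b_i=a_i$ for all $i$, by uniqueness of normal forms. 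Granting this, injectivity of multiplication by $X_1^{a_1}\cdots X_n^{a_n}$ on $C$ is immediate from $R$ being free (hence torsion-free) over the domain $C$. Surjectivity onto $R_{\x}$ follows because any homogeneous element of degree $\x$ is a $C$-combination of basis monomials, and only the one of degree $\x$ survives.
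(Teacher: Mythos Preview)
Your proof is correct and follows essentially the same route as the paper: establish (b) via iterated root construction (Observation \ref{root construction}), then read off (a) and (c) from the fact that the resulting free $C$-basis is $\L$-homogeneous with degrees in bijection with normal forms. The paper's own proof is extremely terse (``(b) clear from root construction; (a)(c) immediate from (b)''), and your write-up simply unpacks the steps that the paper leaves implicit.
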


\begin{proof}
(b) Since $R$ is obtained form $C$ by applying root construction 
iteratively, the assertion is clear.
 
(a)(c) Immediate from (b).
\end{proof}

\begin{definition}[Regular sequences]
Let $(a_1,\ldots,a_\ell)$ be a sequence of homogeneous elements in
$S$ whose degrees are in $\L_+\setminus\{0\}$.
For $M\in\mod^{\L}S$,
we say that $(a_1,\ldots,a_\ell)$ is an \emph{$M$-regular sequence} 
\cite{BH} if the multiplication map
\[a_i:M/M(a_1,\ldots,a_{i-1})\to M/M(a_1,\ldots,a_{i-1})\]
is injective for any $1\le i\le\ell$.
\end{definition}

Any permutation of an $M$-regular sequence is again an $M$-regular sequence. For any positive integers $q_1,\ldots,q_\ell$, the sequence $(a_1,\ldots,a_\ell)$ is $M$-regular if and only if $(a_1^{q_1},\ldots,a_\ell^{q_\ell})$ is $M$-regular.

We prepare the following easy observations.

\begin{lemma} \label{prop.regularseq1}
\begin{itemize}
\item[(a)] $(h_1,\ldots,h_n)$ is an $S$-regular sequence.
\item[(b)] Let $f_0,\ldots,f_d$ be linearly independent linear forms in $C$.
Then $(h_1, \ldots, h_n, f_0, \ldots, f_d)$ is an $S$-regular sequence, and 
$(f_0, \ldots, f_d)$ is an $R$-regular sequence.
\item[(c)] Let $i_0, \ldots, i_s \in \{1, \ldots, n\}$ and
$f_{s+1}, \ldots f_d \in C$ be linear forms. If $\ell_{i_0}, \ldots, \ell_{i_s}$,
$f_{s+1}, \ldots, f_d$ are linearly independent in $C$, then
$(X_{i_0}, \ldots, X_{i_s}, f_{s+1}, \ldots, f_d)$ is an $R$-regular sequence.
\end{itemize}
\end{lemma}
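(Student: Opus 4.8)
The plan is to treat the three statements in increasing order of generality, reducing (b) and (c) to (a) as much as possible and using the free-module structures from Proposition~\ref{Zc Veronese} and Observation~\ref{root construction}. For (a), I would argue by induction on $n$ using the root construction: $S = S^{n-1}[X_n]/(X_n^{p_n} - \ell_n)$, so $h_n = X_n^{p_n} - \ell_n$ acts on $S$, which is a free $S^{n-1}[X_n]/(\ldots)$-hmm, more directly: the standard fact is that $h_i = X_i^{p_i} - \ell_i(\mathbf{T})$ is a nonzerodivisor on $S/(h_1,\ldots,h_{i-1})$ because, after noting that $S/(h_1,\ldots,h_{i-1})$ is a free module over $k[T_0,\ldots,T_d,X_i,X_{i+1},\ldots,X_n]$ (the variables $X_1,\ldots,X_{i-1}$ having been ``eliminated'' up to a finite free extension), multiplication by $X_i^{p_i} - \ell_i$ is injective since it is injective on the polynomial subring $k[\mathbf{T},X_i]$ (as $h_i$ is a nonconstant polynomial in a polynomial ring, hence a nonzerodivisor) and the quotient is free over that subring. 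Concretely I would invoke Proposition~\ref{Zc Veronese}(b): $S/(h_1,\ldots,h_n) = R$ is free over $C$, and more generally each partial quotient is free over an appropriate polynomial ring, so regularity is inherited.

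For (b): since $(f_0,\ldots,f_d)$ are linearly independent linear forms in $C$, after a $\mathrm{GL}(d+1,k)$ change of coordinates we may assume $f_j = T_j$. Then $S/(h_1,\ldots,h_n,T_0,\ldots,T_d) = k[X_1,\ldots,X_n]/(X_i^{p_i} \mid 1\le i\le n)$, which is nonzero, and I would check regularity of the extended sequence by the same freeness argument: $R = S/(h_i)$ is free over $C = k[T_0,\ldots,T_d]$ by Proposition~\ref{Zc Veronese}(b), and $(T_0,\ldots,T_d)$ is obviously a regular sequence on the polynomial ring $C$; a regular sequence on the base of a free module is a regular sequence on the module, so $(T_0,\ldots,T_d)$ — equivalently $(f_0,\ldots,f_d)$ — is $R$-regular. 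Combining with (a) (and the fact, stated in the excerpt, that concatenations behave well and permutations of regular sequences are regular after checking the quotient is nonzero) gives that $(h_1,\ldots,h_n,f_0,\ldots,f_d)$ is $S$-regular.

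For (c): I want to replace the variables $X_{i_0},\ldots,X_{i_s}$ by the linear forms $\ell_{i_0},\ldots,\ell_{i_s}$. Using the relations $h_{i_j} = X_{i_j}^{p_{i_j}} - \ell_{i_j}$ in $R$, modulo $X_{i_j}$ we have $\ell_{i_j} \equiv 0$, and conversely modulo $\ell_{i_j}$ we have $X_{i_j}^{p_{i_j}} \equiv 0$. So $R/(X_{i_0},\ldots,X_{i_s})$ and $R/(\ell_{i_0},\ldots,\ell_{i_s})$ are closely related: in fact $R/(X_{i_0},\ldots,X_{i_s}) \cong \big(S/(X_{i_0},\ldots,X_{i_s})\big)/(h_i)$, and after killing those $X_{i_j}$ the relations $h_{i_j}$ become $-\ell_{i_j}$, so this ring is a weak GL complete intersection (with fewer hyperplanes/variables) modulo the linear forms $\ell_{i_j}$ — and since $\ell_{i_0},\ldots,\ell_{i_s},f_{s+1},\ldots,f_d$ are linearly independent, I can extend them to a basis of linear forms and apply part (b) to that smaller weak GL complete intersection. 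The cleanest route: first show $(X_{i_0},\ldots,X_{i_s})$ is $R$-regular by the freeness of $R$ over $C[X_{i_0},\ldots,X_{i_s}]$-type subrings coming from the root construction (each $X_{i_j}$ is a polynomial variable up to finite free extension), yielding $R/(X_{i_0},\ldots,X_{i_s})$ which is again (a finite free extension of) a GL-type complete intersection in which $\ell_{i_0},\ldots,\ell_{i_s}$ become zero; then observe $f_{s+1},\ldots,f_d$ together with the surviving linear forms still satisfy general position, so $(f_{s+1},\ldots,f_d)$ is regular on that quotient by part (b).

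\textbf{Main obstacle.} The routine bookkeeping is the repeated ``free module over a polynomial subring, hence regular sequences lift'' step, which is standard; the genuine subtlety is part (c), specifically verifying that after killing some $X_{i_j}$ the resulting ring really is (a finite free extension of) a weak GL complete intersection for which the hypotheses of (b) apply, i.e. tracking how the general-position condition on $\ell_{i_0},\ldots,\ell_{i_s},f_{s+1},\ldots,f_d$ translates into the linear-independence hypothesis of (b) after eliminating variables. Once that translation is pinned down, everything reduces to (a) and (b).
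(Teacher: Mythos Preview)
Your arguments for (a) and (b) are correct, but they take a different and more laborious route than the paper. The paper proves (b) first, via a single trick: let $S'\subset S$ be the polynomial subalgebra generated by $T_0,\ldots,T_d,X_1^{p_1},\ldots,X_n^{p_n}$. In $S'$ the elements $h_i=X_i^{p_i}-\ell_i(\mathbf T)$ and the $f_j$ are \emph{linear} forms, and linearly independent ones at that, hence an $S'$-regular sequence; since $S$ is free of finite rank over $S'$, the sequence is $S$-regular. Part (a) is then immediate from (b). Your inductive root-construction argument and your use of Proposition~\ref{Zc Veronese}(b) accomplish the same thing, just in more steps; the $S'$ trick packages all of the ``free over a polynomial subring'' bookkeeping into one observation.

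Where you really diverge is (c), which you flag as the main obstacle. In the paper it is a one-liner. In $R$ one has $X_{i_j}^{p_{i_j}}=\ell_{i_j}$, so
\[
(X_{i_0}^{p_{i_0}},\ldots,X_{i_s}^{p_{i_s}},f_{s+1},\ldots,f_d)=(\ell_{i_0},\ldots,\ell_{i_s},f_{s+1},\ldots,f_d),
\]
and the right-hand side is $R$-regular by part (b) (these are $d+1$ linearly independent linear forms in $C$). Now invoke the fact, stated in the paragraph just before the lemma, that $(a_1,\ldots,a_\ell)$ is $M$-regular if and only if $(a_1^{q_1},\ldots,a_\ell^{q_\ell})$ is; replacing the powers $X_{i_j}^{p_{i_j}}$ by $X_{i_j}$ finishes (c). Your plan of killing the $X_{i_j}$ and descending to a smaller weak GL complete intersection would work, but the ``powers don't affect regularity'' shortcut makes all of that tracking unnecessary.
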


\begin{proof}
(b) Let $S'$ be the $k$-subalgebra of $S$ generated by
$T_0,\ldots,T_d,X_1^{p_1},\ldots,X_n^{p_n}$.
Then $S'$ is a polynomial algebra with these variables. Since
$h_1, \ldots, h_n, f_0, \ldots, f_d$ are linearly independent linear forms in $S'$,
they form an $S'$-regular sequence.
Since $S$ is a free $S'$-module of finite rank, we have the assertion.

(a) Immediate from (b).

(c) The latter assertion in (b) implies that $(\ell_{i_0}=X_{i_0}^{p_{i_0}}, \ldots,
\ell_{i_s}=X_{i_s}^{p_{i_s}}, f_{s+1}, \ldots, f_d)$ is an $R$-regular sequence.
\end{proof}

Immediately we have the following observations.

\begin{proposition}\label{R is CI}
Let $R$ be a weak GL complete intersection.
\begin{itemize}
\item[(a)] $R$ is a complete intersection of dimension $d+1$. Also $R_\mm$ is a complete intersection of dimension $d+1$ for all maximal ideals $\mm$ of $R$.
\item[(b)] Assume that $R$ is a GL complete intersection and $p_i\ge 2$ for all $i$.
\begin{itemize}
\item[$\bullet$] $R$ is regular if and only if $n\le d+1$.
\item[$\bullet$] $R$ is a hypersurface if and only if $n\le d+2$.
\end{itemize}
\end{itemize}
\end{proposition}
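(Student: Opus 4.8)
The plan is to prove Proposition~\ref{R is CI} by assembling the regularity statements of Lemma~\ref{prop.regularseq1} with the explicit presentations of $R$ obtained in Observations~\ref{Weights 1} and~\ref{Normalization}. For part (a), I would first recall from Lemma~\ref{prop.regularseq1}(a) that $(h_1,\ldots,h_n)$ is an $S$-regular sequence in the polynomial ring $S=k[T_0,\ldots,T_d,X_1,\ldots,X_n]$ of Krull dimension $d+n+1$; since $R=S/(h_1,\ldots,h_n)$ is the quotient by a regular sequence of length $n$ of a polynomial ring (which is regular, hence Cohen-Macaulay), $R$ is by definition a complete intersection, and $\dim R = (d+n+1)-n = d+1$. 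This is essentially immediate given the lemma.

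For part (b), I would work with the normalized presentation \eqref{normalized form}, which is available since $R$ is assumed to be a GL complete intersection (so Observation~\ref{Normalization} applies) and we assume $p_i\ge2$ for all $i$. In the case $n\le d+1$, formula \eqref{normalized form} expresses $R$ as a genuine polynomial algebra $k[X_1,\ldots,X_n,T_n,\ldots,T_d]$, which is regular; conversely, if $n\ge d+2$, then \eqref{normalized form} exhibits $R$ as a quotient of the polynomial ring $k[X_1,\ldots,X_n]$ (Krull dimension $n$) by $n-d-1\ge1$ relations, so the embedding dimension of $R$ at its irrelevant maximal ideal exceeds $\dim R = d+1$ (each relation $X_i^{p_i}-\sum\lambda_{i,j-1}X_j^{p_j}$ lies in the square of the maximal ideal because $p_i,p_j\ge2$), so $R$ is not regular. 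The ``hypersurface'' claim is analogous: if $n\le d+2$ then \eqref{normalized form} has at most one defining relation, so $R$ is a hypersurface; if $n\ge d+3$ then there are at least two relations which, using the general position hypothesis (all minors of $[\lambda_{i,j-1}]$ nonzero), one checks form part of a minimal generating set of the defining ideal, so $R$ cannot be defined by a single equation.

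The main obstacle I anticipate is the converse directions in (b): showing that the defining relations in \eqref{normalized form} are genuinely minimal, i.e. that $R$ is not accidentally regular (or a hypersurface) despite having a presentation with several relations. For the regularity part this is handled by the square-of-maximal-ideal argument above together with the fact that, by part (a), $R$ is a complete intersection, so its codimension equals the number of defining relations $n-d-1$ and regularity is equivalent to this being zero. For the hypersurface part, one uses that a complete intersection is a hypersurface precisely when its codimension is at most one, so again it reduces to counting: the number of relations is $\max\{0,n-d-1\}$, which is $\le1$ exactly when $n\le d+2$. Thus both equivalences in (b) follow once we know the number of relations in \eqref{normalized form} genuinely computes the codimension, which is exactly the content of part (a) combined with the fact that the presentation \eqref{normalized form} is minimal; I would make the minimality explicit by noting that all exponents $p_i\ge2$ force the relations into $\mathfrak{m}^2$.
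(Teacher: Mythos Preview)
Your proof is correct and follows essentially the same approach as the paper. For (a), both you and the paper invoke Lemma~\ref{prop.regularseq1}(a) directly; for (b), the paper simply states that the number of minimal generators of $R_+$ equals $\max\{d+1,n\}$ (using Observation~\ref{Normalization}), which is exactly your embedding-dimension computation---your observation that the relations lie in $\mathfrak{m}^2$ because $p_i\ge 2$ is precisely what justifies this count.
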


\begin{proof}
(a) This is immediate from Lemma~\ref{prop.regularseq1}(a).

(b) By Observation~\ref{Normalization}, the number of minimal generators
of the maximal ideal $R_+=\bigoplus_{\x>0}R_{\x}$ of $R$ is
$\max\{d+1,n\}$ even after localizing at $R_+$. Thus the assertion follows.
\end{proof}

\begin{definition}[Dualizing element]\label{define w}
From Proposition~\ref{R is CI}, it follows that $R$ is a Gorenstein ring. Thus
\[\Ext_R^i(k,R)=\left\{\begin{array}{cc}
0    &i\neq d\\
k(-\w)&i=d
\end{array}\right.\]
holds for some element $\w\in\L$, which is called the \emph{dualizing element}
(also known as \emph{$a$-invariant} \cite{BH}, \emph{Gorenstein parameter}) of $(R,\L)$. In this case, $\omega_R:=R(\w)$ is
called the \emph{canonical module} of $(R,\L)$.\end{definition}

We have the following explicit formula for $\w$.

\begin{proposition}\label{a-invariant}
The dualizing element of $(R,\L)$ is given by
\[\w=(n-d-1)\c-\sum_{i=1}^n\x_i\in\L.\]
\end{proposition}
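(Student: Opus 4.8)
The idea is to compute the dualizing element by reducing to known cases via the free module structures established above, and then track how the $a$-invariant behaves under these reductions. Recall that for a $\Z$-graded (or more generally $\L$-graded) Gorenstein ring, the dualizing element is characterized by $\Ext^{\dim}_R(k,R)\simeq k(-\w)$, equivalently by the fact that the graded canonical module $\omega_R$ is a rank-one free module generated in degree $-\w$ when $R$ is a complete intersection.

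\textbf{Step 1: Base case.} First I would treat the polynomial ring $C=k[T_0,\dots,T_d]$ with its standard $\Z$-grading: here $\dim C=d+1$ and the classical computation gives $\omega_C\simeq C(-(d+1))$, so the dualizing element of $C$ is $(d+1)$ in degree, i.e.\ $\w_C=(d+1)\cdot 1$. (This matches the formula with $n=0$, where $\w=(0-d-1)\c=-(d+1)\c$; note the sign convention in the paper has $\omega_R=R(\w)$, so $\w_C=-(d+1)\c$.) I would make sure the sign conventions line up with the paper's convention $\Ext^d_R(k,R)=k(-\w)$.

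\textbf{Step 2: Behaviour under root construction.} The main step is to show that if $(A,G)$ is a complete intersection with dualizing element $\w_A$, and $(A',G')=(A(\ell,p),G(\ell,p))$ is obtained by a root construction with $\ell$ homogeneous of degree $g$, then the dualizing element of $A'$ is
\[\w_{A'}=\w_A+(p-1)g-(p-1)(0,1),\]
where I identify $\w_A\in G$ with its image in $G'=(G\oplus\Z)/\langle(g,-p)\rangle$ and write $(0,1)$ for $\deg X$. The cleanest route: $A'=A[X]/(X^p-\ell)$ is a hypersurface over the polynomial ring $A[X]$ (where $\deg X=(0,1)$), which has dualizing element $\w_{A[X]}=\w_A-(0,1)$ (adding one polynomial variable of degree $(0,1)$ shifts the $a$-invariant by $-(0,1)$ in this convention). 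For a hypersurface $A[X]/(f)$ with $f$ homogeneous of degree $\delta$, the adjunction/Koszul formula gives that the canonical module of the quotient is $\omega_{A[X]}\otimes(\text{degree }\delta)$ restricted, so the dualizing element becomes $\w_{A[X]}+\delta$. Here $f=X^p-\ell$ has degree $\deg X^p = p\cdot(0,1)$, which in $G'$ equals $(g,0)$ (by the defining relation $(g,-p)=0$), so $\delta=(g,0)$, i.e.\ $\delta=g$ in my notation. Hence $\w_{A'}=\w_A-(0,1)+g$. I should double-check this and the claimed $(p-1)$-factor; a safer computation expresses $A'$ directly as a free $A$-module $\bigoplus_{i=0}^{p-1}A\cdot X^i$ with $X^i$ in degree $i\cdot(0,1)$, and uses $\omega_{A'}\simeq \Hom_A(A',\omega_A)$ together with the fact that the "trace" element dual to $X^{p-1}$ generates, giving the shift by $-(p-1)(0,1)$ plus the correction from identifying $(0,1)$ in $G'$. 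I expect this bookkeeping of the grading shift under the quotient $G\oplus\Z\to G'$ to be the main obstacle, since the relation $p\cdot(0,1)=(g,0)$ in $G'$ makes the two candidate formulas coincide only after reduction, and one must be careful not to double-count.

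\textbf{Step 3: Iterate.} Finally I would apply Step 2 iteratively along the tower $(R^0,\L^0)=(C,\Z)$, $(R^i,\L^i)=(R^{i-1}(\ell_i,p_i),\L^{i-1}(\ell_i,p_i))$ from Observation~\ref{root construction}. At stage $i$ the element $\ell_i$ has degree $\c$ (it is a linear form in the $T_j$'s), and $\deg X_i=\x_i$ where $p_i\x_i=\c$ in $\L$. So each root construction changes the dualizing element by $+\c-\x_i$ (using $g=\c$, $(0,1)\mapsto\x_i$, and the relation $p_i\x_i=\c$ to reconcile the two formulas from Step 2). Starting from $\w_C=-(d+1)\c$ and performing $n$ such steps yields
\[\w=-(d+1)\c+\sum_{i=1}^n(\c-\x_i)=(n-d-1)\c-\sum_{i=1}^n\x_i,\]
which is the claimed formula. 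As a sanity check I would verify the degree: $\delta(\w)=(n-d-1)-\sum 1/p_i$, matching the trichotomy table, and verify the case $d=1$, $n=4$, $(p_i)=(2,2,2,2)$ gives $\w=0$ (tubular), as expected.
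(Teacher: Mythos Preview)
Your approach is correct and reaches the right formula, but it is more elaborate than the paper's argument. The paper does not iterate through root constructions at all: it works directly with the big polynomial ring $S=k[T_0,\dots,T_d,X_1,\dots,X_n]$ with its $\L$-grading, notes that the $a$-invariant of a graded polynomial ring is minus the sum of the degrees of the variables, so $a(S)=-(d+1)\c-\sum_{i=1}^n\x_i$, and then applies the standard formula for quotienting by a homogeneous regular sequence (adding the degrees of the $h_i$, each of which is $\c$) to get $\w=a(S)+n\c$. This is a one-line computation once Lemma~\ref{prop.regularseq1}(a) supplies regularity of $(h_1,\dots,h_n)$. Your Step~2/Step~3 is essentially the same mechanism (polynomial extension shifts by $-\deg X$, hypersurface quotient shifts by $+\deg f$) applied $n$ times in sequence rather than once in bulk; the paper's route avoids the grading bookkeeping in $G'$ that you flagged as the main obstacle.

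One small correction to your Step~2: the first displayed formula $\w_{A'}=\w_A+(p-1)g-(p-1)(0,1)$ is not right as stated. Your second computation via $A[X]/(X^p-\ell)$ gives $\w_{A'}=\w_A-\deg X+\deg(X^p-\ell)=\w_A-\x+g$, and your trace computation gives $\w_{A'}=\w_A+(p-1)\x$; these agree in $G'$ precisely because of the relation $p\x=g$, so there is no discrepancy to reconcile. The formula you actually use in Step~3, namely that each root construction contributes $+\c-\x_i$, is correct.
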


\begin{proof}
Since $(S,\L)$ is a polynomial ring, its $a$-invariant is given
by minus the sum of the degrees of 
all variables, i.e.\ $a(S):=-(d+1)\c-\sum_{i=1}^{n}\x_i$. Since $h_1,\ldots,h_n$ is an $S$-regular sequence by 
Lemma~\ref{prop.regularseq1}(a), we have
\[\w=a(S)+\sum_{i=1}^n\deg h_i=-(d+1)\c-\sum_{i=1}^{n}\x_i+n\c\]
by using standard commutative algebra \cite[3.6.14]{BH}.
\end{proof}

The following degree map and trichotomy are important.

\begin{definition}[Degree map and trichotomy]
Let $(R,\L)$ be a GL complete intersection.
We define a homomorphism $\delta:\L\to\Q$ called the \emph{degree map}
by $\delta(\x_i)=\frac{1}{p_i}$ and $\delta(\c)=1$. Using
\begin{equation}\label{degree of w}
\delta(\w)=n-d-1-\sum_{i=1}^n\frac{1}{p_i}\in\Q,
\end{equation}
we say that $(R,\L)$ is \emph{Fano} (respectively, \emph{Calabi-Yau},
\emph{anti-Fano}) if $\delta(\w)<0$ (respectively, $\delta(\w)=0$,
$\delta(\w)>0$).
\end{definition}

We will see in  Theorem~\ref{AZ ampleness} that these definitions correspond to ampleness
of the automorphisms $(\pm\w)$ of the category $\coh\X$.

For example, if $n\le d+1$, then $R$ is Fano. For the integer
\[p:={\rm l.c.m.}(p_1,p_2,\ldots,p_n),\]
$(R,\L)$ is Fano (respectively, Calabi-Yau, anti-Fano) if and only if
$p\w=q\c$ holds for an integer $q<0$ (respectively, $q=0$, $q>0$).

\begin{example}
Let $d=-1$. Then $C=k$, $\ell_1=\cdots=\ell_n=0$ and $R$ is given by \eqref{R for d=-1}.
The linear independence condition is vacuously satisfied. In this case, $\w=\sum_{i=1}^n(p_i-1)\x_i$. Assume $p_i\ge2$ for all $i$.
\begin{itemize}
\item[(a)] $(R,\L)$ is never Fano.
\item[(b)] $(R,\L)$ is Calabi-Yau if and only if $n=0$ (i.e.\ $R=k$).
\item[(c)] All other cases are anti-Fano.
\end{itemize}
\end{example}

\begin{example}
Let $d=0$. Then $C=k[T]$, and for $1\le i\le n$, we have $\ell_i(T)=\lambda_iT$ for some $\lambda_i\in k\setminus\{0\}$ by the linear independence condition. Thus $R$ has the form
\begin{eqnarray*}
R&=&k[T,X_1,\ldots,X_n]/(X_i^{p_i}-\lambda_iT\mid 1\le i\le n)\\
&=&k[X_1,\ldots,X_n]/(\lambda_i^{-1}X_i^{p_i}-\lambda_j^{-1}X_j^{p_j}\mid 1\le i<j\le n).
\end{eqnarray*}
In this case, $\w=-\c+\sum_{i=1}^n(p_i-1)\x_i$. Assume $p_i\ge2$ for all $i$.
\begin{itemize}
\item[(a)] $(R,\L)$ is Fano if and only if $n\le 1$.
\item[(b)] $(R,\L)$ is Calabi-Yau if and only if the weights are $(2,2)$.
\item[(c)] All other cases are anti-Fano.
\end{itemize}
\end{example}

\begin{example}
Let $d=1$. Assume that $p_i\ge2$ for all $i$.
\begin{itemize}
\item[(a)] There are 5 types for Fano case: $(p,q)$, $(2,2,p)$, $(2,3,3)$, $(2,3,4)$ and $(2,3,5)$, corresponding to $\widetilde{\A}_{p+q-1}$, $\widetilde{\D}_{p+2}$, 
$\widetilde{\E}_6$, $\widetilde{\E}_7$ and $\widetilde{\E}_8$. 
\item[(b)] There are 4 types for Calabi-Yau case: $(3,3,3)$, $(2,4,4)$, $(2,3,6)$ and $(2,2,2,2)$, corresponding to $\E_6^{(1,1)}$, $\E_7^{(1,1)}$, $\E_8^{(1,1)}$ and $\D_4^{(1,1)}$.
\item[(c)] All other cases are anti-Fano.
\end{itemize}
This is nothing but the classical trichotomy of \emph{domestic}, \emph{tubular} and \emph{wild} types of weighted projective lines.
\end{example}

\begin{example}\label{2-canonical}
Let $d=2$. Assume that $p_i\ge2$ for all $i$.
\begin{itemize}
\item[(a)] There are the following cases for Fano.
\begin{itemize}
\item[$\bullet$] $n\le 3$.
\item[$\bullet$] 7 infinite series for $n=4$: $(2,2,p,q)$, $(2,3,3,p)$, $(2,3,4,p)$,
$(2,3,5,p)$, $(2,3,6,p)$, $(2,4,4,p)$ and $(3,3,3,p)$ for any $p,q$.
\item[$\bullet$] 110 remaing cases for $n=4$: $(2,3,7,p)$ for $7\le p\le 41$, 
$(2,3,8,p)$ for $8\le p\le 23$, $(2,3,9,p)$ for $9\le p\le 17$,
$(2,3,10,p)$ for $10\le p\le 14$, $(2,3,11,p)$ for $11\le p\le 13$,
$(2,4,5,p)$ for $5\le p\le 19$, $(2,4,6,p)$ for $6\le p\le 11$,
$(2,4,7,p)$ for $7\le p\le 9$, $(2,5,5,p)$ for $5\le p\le 9$,
$(2,5,6,p)$ for $7\le p\le 9$, $(3,3,4,p)$ for $4 \le p \le 11$ 
and $(3,3,5,p)$ for $5 \le p \le 7$.
\item[$\bullet$] 1 infinite series for $n=5$: $(2,2,2,2,p)$ for any $p$.
\item[$\bullet$] 3 remaining cases for $n=5$: $(2,2,2,3,p)$ for $3\le p\le 5$.
\end{itemize}
\item[(b)] There are 18 cases for Calabi-Yau case:
\begin{itemize}
\item[$\bullet$] 14 cases for $n=4$: $(2,3,7,42)$, $(2,3,8,24)$, $(2,3,9,18)$, $(2,3,10,15)$, $(2,3,12,12)$, $(2,4,5,20)$, $(2,4,6,12)$, $(2,4,8,8)$, $(2,5,5,10)$, $(2,6,6,6)$, $(3,3,4,12)$, $(3,3,6,6)$, $(3,4,4,6)$ and $(4,4,4,4)$.
\item[$\bullet$] 3 cases for $n=5$: $(2,2,2,3,6)$, $(2,2,2,4,4)$ and $(2,2,3,3,3)$.
\item[$\bullet$] 1 case for $n=6$: $(2,2,2,2,2,2)$.
\end{itemize}
\item[(c)] All other cases are anti-Fano.
\end{itemize}
\end{example}

We prepare the following statement, which relates $\w$ to the interval $[0,d\c]$.

\begin{lemma}\label{order omega}
Let $\x=\sum_{i=1}^na_i\x_i+a\c$ be a normal form, that is, $0\le a_i< p_i$ and $a\in\Z$.
Then the following conditions are equivalent.
\begin{itemize}
\item[(a)] $\x \le d\c$.
\item[(b)] $a+\#\{i\mid a_i>0\} \le d$.
\item[(c)] $0\not\le\x+\w$.
\end{itemize}
In particular, we have $[0,d\c]=\{\x\in\L\mid0\le\x,\ 0\not\le\x+\w\}$.
\end{lemma}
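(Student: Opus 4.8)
The plan is to work directly with normal forms throughout, using Observation \ref{basic results on L}(a) and (c). Recall that $\w = (n-d-1)\c - \sum_{i=1}^n \x_i$, and that $p_i\x_i = \c$, so $-\x_i = (p_i-1)\x_i - \c$; in particular the normal form of $-\x_i$ is $(p_i-1)\x_i - \c$. This lets us compute the normal form of $\x + \w$ coordinate by coordinate. Write $\x = \sum a_i\x_i + a\c$ with $0\le a_i < p_i$. Then
\[
\x + \w = \sum_{i=1}^n (a_i - 1)\x_i + (a + n - d - 1)\c,
\]
and for each $i$ we rewrite the term $(a_i-1)\x_i$ in normal form: if $a_i > 0$ it is already normal, while if $a_i = 0$ we have $-\x_i = (p_i - 1)\x_i - \c$, contributing an extra $-\c$. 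Hence the normal form of $\x+\w$ is $\sum_{i=1}^n a_i'\x_i + a'\c$ where $a_i' = a_i - 1$ if $a_i>0$ and $a_i' = p_i-1$ if $a_i=0$ (so always $0\le a_i' < p_i$), and $a' = a + n - d - 1 - |\{i \mid a_i = 0\}| = a + (n - |\{i\mid a_i=0\}|) - d - 1 = a + |\{i\mid a_i>0\}| - d - 1$.

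With this in hand the equivalences are essentially bookkeeping. First, (a)$\Leftrightarrow$(b): by Observation \ref{basic results on L}(c), $\x \le d\c$ means $d\c - \x \in \L_+$, i.e. the integer coefficient $a$ in the normal form of $d\c-\x$ is $\ge 0$. Computing as above, $d\c - \x = \sum(-a_i)\x_i + (d-a)\c$, and rewriting each $-a_i\x_i$ in normal form (for $a_i>0$ this is $(p_i - a_i)\x_i - \c$, for $a_i = 0$ it is $0$) gives integer coefficient $d - a - |\{i\mid a_i>0\}|$; nonnegativity of this is exactly condition (b). Next, (b)$\Leftrightarrow$(c): by the computation of the normal form of $\x+\w$ above, $0 \le \x+\w$ holds iff $a' \ge 0$, i.e. $a + |\{i\mid a_i>0\}| - d - 1 \ge 0$, i.e. $a + |\{i\mid a_i>0\}| \ge d+1$; negating gives precisely (b). The final claim $[0,d\c] = \{\x \mid 0\le\x,\ 0\not\le\x+\w\}$ is then immediate: $0\le\x$ is built into the normal form having $a\ge0$, and the upper bound $\x\le d\c$ is (a), which by the equivalence is (c), namely $0\not\le\x+\w$.

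The only mildly delicate point — and the one I would write out carefully — is the normal-form computation for $-\x_i$ and the resulting formula for the integer coefficient after rewriting, since this is where an off-by-one error in the count $|\{i\mid a_i>0\}|$ versus $|\{i\mid a_i=0\}|$ could creep in; everything else is a direct application of Observation \ref{basic results on L}. I expect no serious obstacle, as all three conditions unwind to the single inequality $a + |\{i\mid a_i>0\}| \le d$ once the normal forms of $d\c-\x$ and $\x+\w$ are computed.
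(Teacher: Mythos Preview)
Your proposal is correct and follows essentially the same approach as the paper: both arguments compute the normal forms of $d\c-\x$ and $\x+\w$ explicitly, read off the integer coefficient, and apply Observation~\ref{basic results on L}(c) to translate each of the three conditions into the single inequality $a+|\{i\mid a_i>0\}|\le d$. Your write-up is slightly more expansive in spelling out the rewriting $-\x_i=(p_i-1)\x_i-\c$, but the content is identical.
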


\begin{proof}
(a)$\Leftrightarrow$(b) The normal form of $d\c-\x$ is
\[d\c-\x=\sum_{a_i>0}(p_i-a_i)\x_i+(d-a-\#\{i\mid a_i>0\})\c.\]
So $d\c-\x\ge0$ if and only if $d-a-\#\{i\mid a_i>0\}\ge0$.

(b)$\Leftrightarrow$(c) The normal form of $\x + \w$ is
\[\x + \w = \sum_{a_i\neq 0}(a_i-1) \x_i+ \sum_{a_i= 0}(p_i-1) \x_i + (a-d-1+\#\{i\mid a_i>0\})\c.\]
So $\x + \w \not\ge 0$ if and only if $a-d-1+\#\{i\mid a_i>0\}\le-1$.
\end{proof}

The quotient groups $\L/\Z\c$ and $\L/\Z\w$ play an important role.
We give some easy observations.

\begin{proposition}\label{compare [0,dc] with L/w}
\begin{itemize}
\item[(a)] $\L/\Z\c$ is isomorphic to $\prod_{i=1}^n\Z/p_i\Z$.
\item[(b)] If $n\le d+1$, then the map $[0,d\c]\to\L/\Z\w$ is bijective.
\item[(c)] If $(R,\L)$ is Fano, then the map $[0,d\c]\to\L/\Z\w$ is surjective.
\item[(d)] If $(R,\L)$ is not Calabi-Yau, then the cardinality of 
$\L/\Z\w$ is equal to the absolute value of $(p_1p_2\cdots p_n)\delta(\w)$.
\end{itemize}
\end{proposition}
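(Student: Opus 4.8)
The plan is to analyze the three statements separately, using the normal form from Observation~\ref{basic results on L}(a) together with Lemma~\ref{order omega} as the main combinatorial tools.

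For part~(a), assume $n\le d+1$. Since $p_i\ge 2$ may be assumed (Observation~\ref{Weights 1}), we have $\delta(\w)=n-d-1-\sum_{i=1}^n\frac1{p_i}<0$, so $(R,\L)$ is Fano and part~(b) will give surjectivity; hence it suffices to prove injectivity of $[0,d\c]\to\L/\Z\w$. Suppose $\x,\y\in[0,d\c]$ with $\x-\y=m\w$ for some $m\in\Z$; by symmetry take $m\ge 0$. If $m\ge 1$, then $\x=\y+m\w\ge\y+\w$ (since $-\w=(d+1-n)\c+\sum\x_i\ge 0$ when $n\le d+1$, so $\w\le 0$, but we need $m\w\ge\w$, which holds as $(m-1)(-\w)\ge0$), giving $0\le\x$ and hence $0\le\y+\w$. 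But $\y\le d\c$ forces $0\not\le\y+\w$ by Lemma~\ref{order omega}, a contradiction. Hence $m=0$ and $\x=\y$. For surjectivity in part~(b): let $\x\in\L$ be arbitrary; I want to show some $\L$-translate $\x+m\w$ lies in $[0,d\c]$. Using the degree map $\delta$ and the fact that $\delta(\w)<0$ in the Fano case, one can choose $m$ so that $\delta(\x+m\w)$ lands in the interval $[0,d]$, i.e.\ $0\le\delta(\x+m\w)\le d$; then I must upgrade this numerical statement to the order statement $0\le\x+m\w\le d\c$. For this I would pick, among all $\Z\w$-translates $\x+m\w$ that are $\ge 0$ (such exist since $R$ is positively graded and $\delta(\w)<0$), one of minimal degree $\delta(\x+m\w)$; minimality together with $\x+m\w-\w$ possibly being $\ge 0$ or not, combined with Lemma~\ref{order omega}, should force $\x+m\w\le d\c$. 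The cleanest route: take $\x+m\w\ge 0$ with $\delta$ minimal; if $\x+m\w+\w\ge 0$ we could further decrease (after re-indexing $m$), contradicting minimality unless the translate with smaller $m$... — one must be careful about direction of $\w$ here, so I would instead directly invoke: among translates that are $\ge0$, the one with minimal degree satisfies $0\not\le(\x+m\w)-(-\w)=\x+(m+1)\w$ is not the right comparison; rather use that $0\not\le\x+m\w+\w$ would follow from minimality if $\w\le 0$, which holds precisely when $n\le d+1$ — but in the Fano case $\w$ need not be $\le 0$. So the honest argument uses $\delta$: minimal degree $\ge 0$ among $\ge 0$ translates, then show the degree is $\le d$ because otherwise subtracting the $\le0$-in-degree element $\w$ (Fano) keeps it $\ge0$ in $\L$; this last step needs that $\x+m\w\ge 0$ and $\delta(\x+m\w)>d\ge\delta(-\w)\cdot(\text{something})$ — I expect this to reduce to Lemma~\ref{order omega}(c) applied to $\x+m\w$ directly.

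For part~(c), assume $(R,\L)$ is not Calabi-Yau, so $\delta(\w)\ne 0$. The idea is that $\delta$ induces an injection $\L/\Z\w\hookrightarrow \Q/\Z\delta(\w)$, and this is surjective onto the image of $\delta$, so $|\L/\Z\w|=|\mathrm{Im}(\delta)\,/\,\Z\delta(\w)|$. Now $\mathrm{Im}(\delta)$ is the subgroup of $\Q$ generated by $\frac1{p_1},\dots,\frac1{p_n},1$; writing $p=\mathrm{l.c.m.}(p_1,\dots,p_n)$, one sees $\mathrm{Im}(\delta)=\frac1p\Z$ (since $\gcd$ of the numerators $p/p_i$ and $p$ is $1$). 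Then $|\tfrac1p\Z\,/\,\Z\delta(\w)|=\tfrac1p \big/ |\delta(\w)| \cdot$ wait — $|\tfrac1p\Z / \delta(\w)\Z| = \tfrac1p\cdot\tfrac1{|\delta(\w)|}$ only if $\delta(\w)\in\frac1p\Z$, which it is, and equals $\frac{|\delta(\w)|}{1/p}=p|\delta(\w)|=|(p_1\cdots p_n)\delta(\w)|/\tfrac{p_1\cdots p_n}{p}$ — so I need instead: $|\tfrac1p\Z/\tfrac{r}{p}\Z|=|r|$ where $\delta(\w)=r/p$, and $p|\delta(\w)|=|r|$, while $(p_1\cdots p_n)\delta(\w)$ has absolute value $\frac{p_1\cdots p_n}{p}|r|$. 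This does not match, so the index in question must actually be computed as $|\L/\Z\w|$ using that $\ker\delta$ is exactly the torsion subgroup, of order $\frac{p_1\cdots p_n}{p}$ (the torsion of $\L$), and $\L/(\text{torsion})\cong\frac1p\Z$; combining, $|\L/\Z\w| = |\mathrm{torsion}|\cdot|\tfrac1p\Z/\overline{\delta(\w)}\Z|=\frac{p_1\cdots p_n}{p}\cdot p|\delta(\w)|=|(p_1\cdots p_n)\delta(\w)|$, as claimed. The one point needing care: that $\Z\w$ maps \emph{injectively} to $\L/(\text{torsion})$, i.e.\ $\w$ has infinite order, equivalently $\delta(\w)\ne0$ — which is exactly the non-Calabi-Yau hypothesis.

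The main obstacle I anticipate is getting the surjectivity in part~(b) right: translating the easy numerical fact $0\le\delta(\x+m\w)\le d$ into the genuine partial-order statement $0\le\x+m\w\le d\c$. The resolution should be to choose the translate of minimal degree among those $\ge 0$ and then apply Lemma~\ref{order omega}(c) (the equality $[0,d\c]=\{\x\mid 0\le\x,\ 0\not\le\x+\w\}$) — minimality of the degree, in the Fano case where $\delta(\w)<0$, prevents $\x+m\w+\w$ from being $\ge 0$, which is precisely condition~(c) of that lemma; parts~(a) and~(c) of the present proposition are then comparatively routine bookkeeping with normal forms and the degree homomorphism.
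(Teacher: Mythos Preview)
Your final approaches to all three parts are essentially correct, but the write-up of (a) contains a genuine sign error and (b) wanders considerably before landing on the right idea; let me address these and then compare with the paper.

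\textbf{Part (a).} You claim $m\w\ge\w$ from $(m-1)(-\w)\ge 0$, but $(m-1)(-\w)\ge 0$ says $(m-1)\w\le 0$, i.e.\ $m\w\le\w$. Fortunately the \emph{reverse} inequality is what you actually need: from $m\w\le\w$ you get $\y+\w\ge\y+m\w=\x\ge 0$, hence $0\le\y+\w$, contradicting Lemma~\ref{order omega} since $\y\le d\c$. Your written chain ``$\x\ge\y+\w$ and $0\le\x$, hence $0\le\y+\w$'' is logically invalid as stated; once the inequality is flipped the argument is fine.

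\textbf{Part (b).} After several false starts you arrive at: among translates $\x+m\w\ge 0$, take one of minimal $\delta$-value; minimality plus $\delta(\w)<0$ forces $\x+(m{+}1)\w\not\ge 0$, and Lemma~\ref{order omega} gives $\x+m\w\in[0,d\c]$. This is exactly the paper's argument: the paper simply takes the largest $\ell$ with $\x+\ell\w\ge 0$, which in the Fano case is the same as your minimal-degree translate. (The existence of some $\ge 0$ translate follows from $p\w=p\delta(\w)\c$ being a negative multiple of $\c$, so $\x-kp\w\ge 0$ for $k\gg 0$.) The paper then obtains (a) as a corollary: when $n\le d+1$ one has $\w\le 0$, so the set $\{\ell:\x+\ell\w\ge 0\}$ is an interval unbounded below, making the maximal $\ell$ unique.

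\textbf{Part (c).} Here your route genuinely differs from the paper's. You use the short exact sequence
\[
0\to\ker\delta\to\L/\Z\w\to\tfrac1p\Z/\Z\delta(\w)\to 0
\]
(valid since non-Calabi--Yau gives $\delta(\w)\neq 0$, hence $\Z\w\cap\ker\delta=0$), together with $\lvert\ker\delta\rvert=p_1\cdots p_n/p$ and $\lvert\tfrac1p\Z/\Z\delta(\w)\rvert=p\lvert\delta(\w)\rvert$. The paper instead computes $\lvert\L/\Z\w\rvert$ as the absolute value of the determinant of an explicit $n\times n$ presentation matrix. Your argument is more structural and avoids the determinant computation; the paper's is more direct but requires evaluating that determinant. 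Both are short and valid. Note, however, that your first attempt (``$\delta$ induces an injection $\L/\Z\w\hookrightarrow\Q/\Z\delta(\w)$'') is wrong whenever $\L$ has torsion; it is only after you bring in $\ker\delta$ explicitly that the computation becomes correct.
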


\begin{proof}
(a) This is clear.

(c) Fix $\x\in\L$. Since $(R,\L)$ is Fano, we have $\ell\w<0$ for $\ell\gg0$.
Therefore there exists an integer $\ell$ such that $\x+\ell\w\ge0$ and $\x+(\ell+1)\w \not\ge 0$. This is equivalent to $\x+\ell\w\in[0,d\c]$ by Lemma~\ref{order omega}.

(b) Since $\w<0$ holds by $n\le d+1$, the integer $\ell$ satisfying $\x+\ell\w\ge0$ and $\x+(\ell+1)\w \not\ge 0$ is unique. Therefore the assertion holds.

(d) We have $\#(\L/\Z\c)=p_1p_2\cdots p_n$ by (a). Since $\delta(\c)=1$, we have
\[\frac{\#(\L/\Z\w)}{p_1p_2\cdots p_n}=\frac{\#(\L/\Z\w)}{\#(\L/\Z\c)}=\frac{|\delta(\w)|}{|\delta(\c)|}=|\delta(\w)|.\qedhere\]
\end{proof}

\begin{remark}
The converse of Proposition \ref{compare [0,dc] with L/w}(c) is not true.
For example, let $d=1$, $n=3$ and $(p_1,p_2,p_3)=(2,3,7)$, then $(R,\L)$ is
anti-Fano. In this case, $\L=\Z\w$ holds since $\x_1=21\w$, $\x_2=14\w$ 
and $\x_3=6\w$.
In particular the map $[0,d\c]\to\L/\Z\w$ is surjective.
\end{remark}

\section{$R$ is $\L$-factorial and has $\L$-isolated singularities}\label{section: GL CI 2}

Let $(R,\L)$ be a GL complete intersection associated with 
linear forms $\ell_1,\ldots,\ell_n$ and weights $p_1,\ldots,p_n$.
In this section, we give some ring theoretic properties of $(R,\L)$.
In particular, we show that, in a graded sense, $(R,\L)$ is factorial
and has isolated singularities.

Let us start with introducing some notions for graded rings.

\begin{definition}\label{define L-isolated singularity}
Let $G$ be an abelian group and $A$ a commutative Noetherian
$G$-graded ring.
\begin{itemize}
\item[(a)] We say that $A$ is a \emph{$G$-domain} if a product of non-zero 
homogeneous elements is non-zero. We say that $A$ is a \emph{$G$-field} if any non-zero homogeneous element is invertible.
\item[(b)] A homogeneous ideal $\pp$ of $A$ is \emph{$G$-prime}
(respectively, \emph{$G$-maximal}) if $A/\pp$ is a $G$-domain (respectively, $G$-field).
In this case we denote by $A_{(\pp)}$ the localization of $A$ with respect to
the multiplicative set consisting of all homogeneous elements in $A-\pp$.
We denote by $\Spec^GA$ the set of all
$G$-prime ideals of $A$.
\item[(c)] A non-zero homogeneous element $a\in A$ is a
\emph{$G$-prime element} if the principal ideal $Aa$ is a $G$-prime 
ideal of $A$. A $G$-domain $A$ is \emph{$G$-factorial} if any non-zero homogeneous
element in $A$ is a product of $G$-prime elements in $A$.
\item[(d)] We say that $A$ is \emph{$G$-regular} if $\mod^GA$ has finite 
global dimension. We say that $A$ has (at worst) \emph{$G$-isolated singularities} if 
$A_{(\pp)}$ is $G$-regular for any $\pp\in\Spec^GA$ which is not $G$-maximal.
\end{itemize}
\end{definition}

When the group is trivial $G=\{1\}$, we recover the usual notions of domain, 
field etc. 

\begin{remark}
These notions depend not only on the ring $A$ but also on the group $G$.
As a simple example, let $k$ be a field with characteristic $2$ and
$A:=k[x]/(1+x^2)=k[x]/(1+x)^2$. Then $A$ is neither a field nor regular.
On the other hand, regarding $A$ as a $(\Z/2\Z)$-graded ring by
$\deg x=1$, we have that $A$ is a $(\Z/2\Z)$-field and $(\Z/2\Z)$-regular.
\end{remark}

We start with a few easy observations.

\begin{observation}\label{observation for G-prime}
\begin{itemize}
\item[(a)] Any $G$-field is a $G$-domain. Hence any 
$G$-maximal ideal is a $G$-prime ideal.
\item[(b)] If $A$ is $G$-regular, then $A_{(\pp)}$ is $G$-regular for any
$\pp\in\Spec^GA$.
\end{itemize}
\end{observation}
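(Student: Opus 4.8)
The final statement to prove is Observation~\ref{observation for G-prime}, consisting of two parts: (a) any $G$-field is a $G$-domain, hence any $G$-maximal ideal is $G$-prime; and (b) if $A$ is $G$-regular, then $A_{(\pp)}$ is $G$-regular for any $\pp\in\Spec^GA$.

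\medskip

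The plan for part (a) is a direct unwinding of the definitions. Suppose $A$ is a $G$-field and let $a,b$ be non-zero homogeneous elements. Since $a$ is invertible, from $ab=0$ one multiplies by $a^{-1}$ to conclude $b=0$, contradiction; hence $ab\neq0$ and $A$ is a $G$-domain. For the second assertion, if $\pp$ is $G$-maximal then $A/\pp$ is a $G$-field, which by the first part is a $G$-domain, so $\pp$ is $G$-prime. Nothing subtle here; it is genuinely a one-line argument once Definition~\ref{define L-isolated singularity}(a),(b) is in hand.

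\medskip

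For part (b), the plan is to exhibit localization at the homogeneous complement of $\pp$ as an exact functor that sends projective resolutions to projective resolutions and preserves finite generation. Concretely: let $T$ denote the multiplicative set of homogeneous elements of $A\smallsetminus\pp$, so $A_{(\pp)}=T^{-1}A$. Localization $T^{-1}(-):\mod^GA\to\mod^GA_{(\pp)}$ is exact and sends finitely generated graded modules to finitely generated graded modules, and $T^{-1}A$ is flat over $A$. Given $M\in\mod^GA_{(\pp)}$, choose a finitely generated graded $A$-submodule $M_0\subset M$ generating $M$ over $A_{(\pp)}$, so that $T^{-1}M_0\cong M$. Take a graded projective resolution $P_\bu\to M_0$ over $A$; since $\gl\mod^GA<\infty$ it may be taken bounded, and applying the exact functor $T^{-1}(-)$ yields a bounded resolution $T^{-1}P_\bu\to M$ by finitely generated graded projective $A_{(\pp)}$-modules. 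This bounds the projective dimension of $M$ uniformly, so $\gl\mod^GA_{(\pp)}<\infty$, i.e. $A_{(\pp)}$ is $G$-regular.

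\medskip

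The only point requiring any care — and the one I would single out as the main obstacle, though it is minor — is verifying that graded localization behaves as expected in the graded setting: that $T^{-1}(-)$ is exact on $\mod^GA$, preserves finite generation, and sends graded projectives to graded projectives over $T^{-1}A$. These are standard facts about central multiplicative localization (here $T$ consists of homogeneous, hence in the commutative case central, elements), so the argument reduces to citing or briefly recalling them; no genuinely new estimate is needed. One should also note that the hypothesis $\pp\in\Spec^GA$ (rather than just any homogeneous ideal) guarantees $T$ is indeed multiplicatively closed and $0\notin T$, so $A_{(\pp)}\neq0$ and the localization is well-behaved.
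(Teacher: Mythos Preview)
The paper does not supply a proof for this observation; it is stated as an easy fact and left to the reader. Your argument is correct and is exactly the standard justification one would give: part (a) is immediate from the definitions, and part (b) follows because graded localization at a homogeneous multiplicative set is exact, preserves finite generation, and sends graded projectives to graded projectives, so a bounded graded projective resolution over $A$ localizes to one over $A_{(\pp)}$.
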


\medskip
We show the following result, which generalizes \cite[1.3]{GL} for the case $d=1$.

\begin{theorem}\label{L-factorial L-domain}
A GL complete intersection $(R,\L)$ is an $\L$-domain if $d\ge0$, and an $\L$-factorial $\L$-domain if $d\ge1$. More generally, a weak GL complete intersection is an $\L$-domain (respectively, $\L$-factorial $\L$-domain) if the linear forms $\ell_1,\ldots,\ell_n$ are non-zero (respectively, pairwise linearly independent).
\end{theorem}

\begin{proof}
We use the following general argument due to Lenzing.

\begin{proposition}\label{L-factorial induction}
For a $G$-domain $A$, let $(A',G'):=(A(\ell,p),G(\ell,p))$ be a root 
construction as in Observation~\ref{root construction}. Assume $\ell\neq0$.
\begin{itemize}
\item[(a)] $A'=A[X]/(X^p-\ell)$ is a $G'$-domain.
\item[(b)] Assume that $A$ is $G$-factorial and $\ell$ is a $G$-prime element.
Then $A'$ is $G'$-factorial. Moreover, $G'$-prime elements in $A'$ are either $X$
or $G$-prime elements $a\in A$ satisfying $Aa\neq A\ell$.
\end{itemize}
\end{proposition}

\begin{proof}
Note that $A'$ is a free $A$-module of rank $p$ with a basis $\{X^i\mid 0\le i<p\}$.
Moreover, any non-zero homogeneous element in $A'$ can be written uniquely as
$aX^i$ for some non-zero homogeneous element $a\in A$ and $0\le i<p$.

(a) Let $aX^i$ and $bX^j$ be homogeneous elements in $A'$
with $a,b\in A$ and $0\le i,j<p$. Then
\[(aX^i)(bX^{j})=\left\{\begin{array}{ll}
(ab)X^{i+j}&\mbox{if $i+j<p$,}\\
(ab\ell)X^{i+j-p}&\mbox{if $i+j\ge p$.}
\end{array}\right.\]
Since $A$ is a $G$-domain and $\ell\neq0$, both $ab$ and $ab\ell$
are non-zero. Thus $(aX^i)(bX^{j})\neq0$ holds.

(b) Since $\ell\in A$ is $G$-prime, $A'/A'X=A/A\ell$ is a $G$-domain. 
Hence this is a $G'$-domain, and $X\in A'$ is $G'$-prime.

Let $a\in A$ be a $G$-prime element such that $Aa\neq A\ell$.
Since $A/Aa$ is a $G$-domain and $\ell\neq0$ in $A/Aa$ by our 
assumption, it follows from (a) that
\[A'/A'a=(A/Aa)[X]/(X^p-\ell)=(A/Aa)(\ell,p)\]
is a $G'$-domain. Thus $a$ is a $G'$-prime element in $A'$.

Now we show that $A'$ is $G'$-factorial.
Fix a non-zero homogeneous element $aX^i\in A'$ with $a\in A$ and $0\le i<p$.
Since $A$ is $G$-factorial, we can write $a=a_1\cdots a_s\ell^t$
for $G$-prime elements $a_j$ satisfying $Aa_j\neq A\ell$ and $t\ge0$.
Then $aX^i$ is a product
\[aX^i=a_1\cdots a_sX^{pt+i}\]
of $G'$-prime elements in $A'$. Thus the assertion follows.
\end{proof}

Now Theorem~\ref{L-factorial L-domain} follows immediately from
Observation~\ref{root construction} and Proposition~\ref{L-factorial induction}.
\end{proof}

If $d\ge0$, then the zero ideal $(0)$ of $R$ is an $\L$-prime ideal
by Theorem~\ref{L-factorial L-domain}. Therefore the localization $R_{(0)}$ of $R$ is
an $\L$-field, and its degree $0$ part $(R_{(0)})_0$ is a field.

\begin{definition}[Rank function]
Assume $d\ge0$. For $X\in\mod^{\L}R$, we define the \emph{rank} of $X$ by
\[\rank X:=\dim_{(R_{(0)})_0}(X_{(0)})_0.\]
\end{definition}

We need the following observations, where $K_0(\mod^{\L}R)$
is the Grothendieck group of $\mod^{\L}R$.

\begin{proposition}\label{property of rank}
Assume $d\ge0$.
\begin{itemize}
\item[(a)] We have an equivalence $(-)_0:\mod^{\L}R_{(0)}\simeq\mod(R_{(0)})_0$.
\item[(b)] $\rank$ is given by a morphism of abelian groups
\[(-)_{(0)}:K_0(\mod^{\L}R)\to K_0(\mod(R_{(0)})_0)\simeq\Z.\]
\item[(c)] For any $\x\in\L$ and any non-zero submodule $X$ of $R(\x)$, we have $\rank X=\rank R(\x)=1$.
\end{itemize}
\end{proposition}

\begin{proof}
(a) $R_{(0)}$ is strongly graded in the sense that 
$(R_{(0)})_{\x}\cdot(R_{(0)})_{-\x}=(R_{(0)})_0$ for any $\x\in\L$.
Thus the assertion follows from \cite[I.3.4]{NV2}.

(b) This is clear.

(c) For any $\x\in\L$, there exist monomials $r,s\in R$ such that 
$\x=\deg r-\deg s$. Thus we have an isomorphism $rs^{-1}:(R_{(0)})_0\simeq(R(\x)_{(0)})_0$ of $(R_{(0)})_0$-modules, and $\rank R(\x)=1$.
Since the inclusion $X\subset R(\x)$ gives an isomorphism $X_{(0)}\simeq R(\x)_{(0)}$, we have $\rank X=1$.
\end{proof}

The following notion will be used frequently.

\begin{definition}[Syzygies]
Let $d\ge-1$ be arbitrary. For $i\ge0$, the \emph{$i$-th syzygy} of $X\in\mod^{\L}R$
is defined as $\Omega^iX:=\Image f_i$, where
\begin{equation}\label{mpr of X}
\xymatrix{\cdots\ar[r]^{f_3}&P_2\ar[r]^{f_2}&P_1\ar[r]^{f_1}&P_0\ar[r]^{f_0}&X\ar[r]&0}
\end{equation}
is a minimal projective resolution of $X$ in $\mod^{\L}R$. We define a full subcategory of $\mod^{\L}R$ by
\[\Omega^i(\mod^{\L}R):=\add\{\Omega^iX,\ R(\x)\mid X\in\mod^{\L}R,\ \x\in\L\}.\]
\end{definition}

The following is a basic observation.

\begin{lemma}\label{CM vanishing}
For any $X,Y\in\mod^{\L}R$, there exists $\a\in\L$ such that
$\Hom^{\L}_R(X,\Omega^iY(\x))=0$ holds for any $i\ge0$ and $\x\in\L$
satisfying $\x\le\a$.
\end{lemma}

\begin{proof}
For $Y\in\mod^{\L}R$, let ${\rm s}(Y)=\{\x\in\L\mid Y_{\x}\neq0\}$.
Assume that the $R$-module $X$ (respectively, $Y$) is generated by homogeneous elements of degrees $\a_1,\cdots,\a_\ell$ (respectively, $\b_1,\cdots,\b_m$). Let
\[I:=\bigcup_{j=1}^m(\b_j+\L_+)\subset \L.\]
Then ${\rm s}(Y)\subset I$ holds clearly, and by an easy induction, ${\rm s}(\Omega^iY)\subset I$
holds for all $i\ge0$. We take sufficiently small $\a\in\L$ such that
$\a_j\notin I-\a$ for all $1\le j\le\ell$.
Then for all $\x\le\a$ and $1\le j\le\ell$, we have $\a_j\notin I-\x$ and hence
$\a_j\notin{\rm s}(\Omega^iY(\x))$ for all $i\ge0$. Thus $\Hom^{\L}_R(X,\Omega^iY(\x))=0$.
\end{proof}

If $d\ge0$, the syzygies satisfy the following property, which we often use.

\begin{proposition}\label{non-zero is injective}
Assume $d\ge0$. Let $X\in\Omega(\mod^{\L}R)$. 
\begin{itemize}
\item[(a)] If $\rank X=0$, then $X=0$.
\item[(b)] If $Y\in\mod^{\L}R$ has rank $0$, then $\Hom^{\L}_R(Y,X)=0$.
\item[(c)] Let $\x\in\L$. Any non-zero morphism $f:R(\x)\to X$ in $\mod^{\L}R$ is injective.
\end{itemize}
\end{proposition}

\begin{proof}
(a) Let $X\subset P$ with $P\in\proj^{\L}R$. Then we have a commutative diagram
\[\xymatrix@R1em{
0\ar[r]&X\ar[r]\ar[d]&P\ar[d]\\
&X_{(0)}\ar[r]&P_{(0)},
}\]
where $X_{(0)}=0$ holds since $\rank X=0$.
Since $R$ is an $\L$-domain, the right vertical morphism is injective. Therefore $X=0$.

(b) For $f\in\Hom^{\L}_R(Y,X)$, let $Z=\Image f$. Since $Z\in\Omega(\mod^{\L}R)$ has rank $0$, we have $Z=0$ by (a). Thus $f=0$.

(c) If $X=R(\y)$ for some $\y\in\L$, then the assertion is clear since $R$ is an $\L$-domain.
Thus the assertion holds if $X\in\proj^{\L}R$ since $X$ is a direct sum of modules of the form
$R(\y)$ with $y\in\L$.

For the general case, we take an injective morphism $g:X\to P$ with $P\in\proj^{\L}R$.
Then $fg:R(\x)\to P$ is non-zero, and hence injective. Thus $f$ is also injective.
\end{proof}

For later use, we prepare the following useful observation.

\begin{lemma}\label{(Q,X) non zero}
Assume $d\ge0$. We consider a minimal projective resolution \eqref{mpr of X} of $X\in\mod^{\L}R$.
Let $i\ge0$ and $Q$ be an indecomposable direct summand of $P_i$.
\begin{itemize}
\item[(a)] There exists an injective morphism $Q\to P_0$ in $\mod^{\L}R$.
\item[(b)] If $X\in\Omega(\mod^{\L}R)$, then there exists an injective morphism $Q\to X$ in $\mod^{\L}R$.
\end{itemize}
\end{lemma}

\begin{proof}
(a) By Proposition~\ref{non-zero is injective}(c), we only have to show $\Hom^{\L}_R(Q,P_0)\neq0$.
This is clear for $i=0$. Assume $i\ge1$. Then for any indecomposable direct summand $Q$ of $P_i$, there exists an indecomposable direct summand $Q'$ of $P_{i-1}$ such that $\Hom^{\L}_R(Q,Q')\neq0$ since \eqref{mpr of X} is a minimal projective resolution.
By our inductive hypothesis, there exists an injective morphism $Q'\to P_0$ in $\mod^{\L}R$. Thus $\Hom^{\L}_R(Q,P_0)\neq0$ holds.

(b) By (a), there exist an indecomposable direct summand $Q'$ of $P_0$ and an injective morphism $g:Q\to Q'$.
Again by Proposition~\ref{non-zero is injective}(c), the morphism $h:Q'\subset P_0\to X$ is injective.
Thus $gh:Q\to X$ is injective too.
\end{proof}

In the rest of this section, we show that GL complete 
intersections have $\L$-isolated singularities.
We need the assumption that our hyperplanes are in a general position.

A crucial role is played by the following non-commutative ring, 
which already appeared in Gabriel's classical covering theory \cite{Ga}. 

\begin{definition}\label{define covering}
Let $A$ be a $G$-graded ring and $H$ a subgroup of $G$ with finite index.
We fix a complete set $I\subset G$ of representatives of $G/H$.
We define an $H$-graded ring $A^{[H]}$ called the \emph{covering}
\cite{Ga,IL} (or \emph{quasi-Veronese subalgebra} \cite{MM}) of $A$ as
\[A^{[H]}:=\bigoplus_{h\in H}B_h,\ \ \ B_h:=\left( A_{x-y+h} \right)_{x,y\in I}\]
where the multiplication $B_h\times B_{h'}\to B_{h+h'}$ for $h,h'\in H$ is given by 
\[(a_{x,y})_{x,y\in I}\cdot (a'_{x,y})_{x,y\in I}
:=\left(\sum_{z\in I}a_{x,z}\cdot a'_{z,y}\right)_{x,y\in I}.\]
\end{definition}

Note that the (ungraded) $k$-algebra structure of $A^{[H]}$ does not depend on the choice of $I$,
while its $H$-graded $k$-algebra structure does. But it is still unique up to graded Morita equivalence.
In fact, we know from \cite[3.1]{IL} that we have an equivalence of categories
\begin{equation}\label{general graded morita}
F:\Mod^GA\simeq\Mod^HA^{[H]},
\end{equation}
which is given as follows: For $M=\bigoplus_{g\in G}M_g$ in $\Mod^GA$, define
$FM\in\Mod^HA^{[H]}$ by
\[FM:=\bigoplus_{h\in H}(FM)_h\ \ \ \mbox{where}\ \ \ (FM)_h:=(M_{x+h})_{x\in I}.\]

Now we apply this general observation to GL complete intersections.
For the subgroup $\Z\c$ of $\L$ generated by $\c$, we take the complete set 
\begin{equation}\label{choose representatives}
I:=\{\sum_{i=1}^na_i\x_i\mid 0\le a_i\le p_i-1\ (1\le i\le n)\}
\end{equation}
of representatives of $\L/\Z\c$. Then we have the corresponding
covering $R^{[\Z\c]}$ of $R$, which is a $\Z$-graded $k$-algebra.
Applying \eqref{general graded morita}, we have the following observation.

\begin{proposition}\label{graded morita for R}
We have an equivalence of categories
\[\mod^{\L}R  \simeq \mod^{\Z}R^{[\Z\c]}.\]
\end{proposition}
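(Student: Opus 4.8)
The plan is to obtain this as a special case of the general graded Morita equivalence \eqref{general graded morita} of \cite{IL}, applied with $G=\L$, $A=R$ and $H=\Z\c$, and then to check that this equivalence restricts to finitely generated modules.

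First I would record that $\Z\c$ is a subgroup of $\L$ of finite index. By Observation \ref{basic results on L}(a), every element of $\L$ has a unique normal form $\sum_{i=1}^na_i\x_i+a\c$ with $0\le a_i<p_i$ and $a\in\Z$, so the finite set $I$ of \eqref{choose representatives} is indeed a complete set of representatives of $\L/\Z\c$, of cardinality $p_1\cdots p_n$. Hence the covering $R^{[\Z\c]}$ of Definition \ref{define covering} is a well-defined $\Z$-graded $k$-algebra (using $\Z\c\cong\Z$), and \eqref{general graded morita} yields an equivalence of categories $F\colon\Mod^{\L}R\simeq\Mod^{\Z}R^{[\Z\c]}$.

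It remains to see that $F$ restricts to an equivalence $\mod^{\L}R\simeq\mod^{\Z}R^{[\Z\c]}$. Since $R$ is Noetherian, $\mod^{\L}R$ is precisely the full subcategory of Noetherian objects of $\Mod^{\L}R$, and any equivalence of abelian categories preserves Noetherian objects; so it suffices to know that $\mod^{\Z}R^{[\Z\c]}$ is likewise the subcategory of Noetherian objects of $\Mod^{\Z}R^{[\Z\c]}$, i.e.\ that $R^{[\Z\c]}$ is graded left Noetherian. For this I would compute $F$ on projective generators: for $x\in I$, writing $e_x$ for the corresponding diagonal matrix idempotent of $R^{[\Z\c]}$, one checks directly from the defining formula $(FM)_h=(M_{x+h})_{x\in I}$ that $F(R(-x))\cong R^{[\Z\c]}e_x$, whence $F\bigl(\bigoplus_{x\in I}R(-x)\bigr)\cong R^{[\Z\c]}$. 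The left-hand side is a finite direct sum of Noetherian objects of $\Mod^{\L}R$ and hence Noetherian, so $R^{[\Z\c]}$ is a Noetherian object of $\Mod^{\Z}R^{[\Z\c]}$, as wanted. The only routine points I am skipping here are the exactness of $F$ (part of \eqref{general graded morita}) and the elementary matrix identification of $F(R(-x))$ with $R^{[\Z\c]}e_x$; the sole bookkeeping needed is to track $\Z\c$-shifts under $F$, and no genuine obstacle arises.
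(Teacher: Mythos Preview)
Your proof is correct and follows exactly the same approach as the paper: both apply the general equivalence \eqref{general graded morita} from \cite{IL} with $G=\L$, $A=R$, $H=\Z\c$, and the representative set $I$ of \eqref{choose representatives}. The paper simply asserts the resulting equivalence without further comment, whereas you additionally justify the restriction from $\Mod$ to $\mod$ by showing $R^{[\Z\c]}$ is graded Noetherian; this extra care is appropriate and your argument for it (identifying $F(\bigoplus_{x\in I}R(-x))\cong R^{[\Z\c]}$ and using that $R$ is Noetherian) is sound.
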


We are ready to prove our main result in this section.
Note that our $R$ has a unique $\L$-maximal ideal
\[R_+:=\bigoplus_{\x\in\L\setminus\{0\}}R_{\x}.\]
We denote by $R_{T_j}$ is the localization of $R$ with respect to the multiplicative set
$\{T_j^\ell\mid\ell\ge0\}$ for any $j$ with $0\le j\le d$.

\begin{theorem}\label{L-isolated singularity}
Let $(R,\L)$ be a GL complete intersection over an arbitrary field $k$.
\begin{itemize}
\item[(a)] $R$ has at worst $\L$-isolated singularities.
\item[(b)] $R_{T_j}$ is $\L$-regular for all $0\le j\le d$.
\end{itemize}
\end{theorem}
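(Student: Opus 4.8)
The plan is to reduce statement (a) to statement (b), and then prove (b) by exhibiting $R_{T_j}$ explicitly as an $\L$-graded ring that is (after passing to the covering) a localized polynomial ring. For part (a), let $\pp\in\Spec^{\L}R$ be a non-$\L$-maximal $\L$-prime ideal; since $R_+$ is the unique $\L$-maximal ideal (as noted before the theorem, using Observation \ref{basic results on L}(c)) and $\pp\neq R_+$, some variable must avoid $\pp$. The normalization in Observation \ref{Normalization} writes $R$ in terms of $X_1,\dots,X_n$ (and possibly $T_n,\dots,T_d$); by the regular-sequence lemma \ref{prop.regularseq1}(c), each $\ell_i=X_i^{p_i}$, so working with the $T_j$'s is harmless after inverting. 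Concretely I would argue: if $\pp\neq R_+$ then $\pp$ does not contain all of $T_0,\dots,T_d$ (equivalently all the $\ell_i$'s together with a spanning set of linear forms), so $T_j\notin\pp$ for some $j$, whence $R_{(\pp)}$ is a localization of $R_{T_j}$, and by (b) together with Observation \ref{observation for G-prime}(b) it is $\L$-regular. This gives (a).

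For part (b), the key computation is that inverting $T_j$ trivializes the weights. Using Proposition \ref{Zc Veronese}, after inverting $T_j$ the element $\c$ becomes invertible of degree $\c$, so $\L/\Z\c$ is killed on the level of "degrees that matter" — more precisely I would show $R_{T_j}$ is strongly $\L$-graded over its $\Z\c$-part (or even over a smaller subgroup), reducing $\mod^{\L}R_{T_j}$ to modules over a ring graded by a torsion group, hence to a finite covering that is regular. The cleanest route: invert $T_j$ in $C=k[T_0,\dots,T_d]$ to get the Laurent-type ring $C_{T_j}$, which is regular; then $R_{T_j}=C_{T_j}[\X]/(X_i^{p_i}-\ell_i)$, and since $T_j$ is a unit, the general-position hypothesis lets me change coordinates so that the $\ell_i$ become (up to units) distinct coordinate functions or at worst a regular system of parameters, making each relation $X_i^{p_i}-\ell_i$ a root extension of a regular ring. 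Then I invoke Proposition \ref{graded morita for R} (or rather its localized analog) to pass to $R_{T_j}^{[\Z\c]}$, which I claim is a localized polynomial ring, hence of finite global dimension; equivalence \eqref{general graded morita} then transports finite global dimension back to $\mod^{\L}R_{T_j}$, giving $\L$-regularity.

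The main obstacle I anticipate is the coordinate-change / general-position bookkeeping in (b): after inverting a single $T_j$, one must verify that the images of $\ell_1,\dots,\ell_n$ in $C_{T_j}$, together with enough other linear forms, still behave like part of a regular system of parameters at every relevant prime — this is where the hypothesis that all minors of $[\lambda_{i,j-1}]$ are nonzero (the general-position condition spelled out after Observation \ref{Normalization}) gets used, and it needs to be tracked carefully prime-by-prime rather than just at $R_+$. A secondary technical point is making the "strongly graded after inverting $T_j$" claim precise: one must check $(R_{T_j})_{\x}\cdot(R_{T_j})_{-\x}=(R_{T_j})_0$ for all $\x\in\L$, which follows because $\x$ differs from some element of $\Z\c$ by a product of the $X_i$'s and those are units modulo the $T$'s once $T_j$ is inverted — but this requires Proposition \ref{Zc Veronese}(c) to identify the graded pieces explicitly. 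Once these are in place, the regularity of the relevant localized polynomial rings and the transfer of global dimension along the equivalences are routine.
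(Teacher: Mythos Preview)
Your reduction of (a) to (b) is fine and matches the paper exactly: since $R_+$ is the unique $\L$-maximal ideal, any non-maximal $\L$-prime $\pp$ misses some $T_j$, so $R_{(\pp)}$ is a localization of $R_{T_j}$ and inherits $\L$-regularity via Observation~\ref{observation for G-prime}(b).

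For (b), however, your proposed mechanism does not work. The ring $R_{T_j}$ is \emph{not} strongly $\L$-graded: for $\x=\x_i$ one computes (via Proposition~\ref{Zc Veronese}(c)) that $(R_{T_j})_{\x_i}\cdot(R_{T_j})_{-\x_i}=(X_i)(X_i^{p_i-1}T_j^{-1})(C_{T_j})_0=(\ell_i/T_j)(C_{T_j})_0$, which is a proper ideal of $(R_{T_j})_0=(C_{T_j})_0$ whenever $\ell_i$ is not a scalar multiple of $T_j$. So the ``$X_i$ become units once $T_j$ is inverted'' step fails. Relatedly, the covering $T(j)=(R_{T_j})^{[\Z\c]}$ is not a localized polynomial ring: it is a genuinely non-commutative ring. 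What the paper actually shows is that $T(j)$ \emph{is} strongly $\Z$-graded (this much is true, since $T_j$ has degree $1$ there), hence $\mod^{\L}R_{T_j}\simeq\mod T(j)_0$; but $T(j)_0$ turns out to be the tensor product
\[
T(j)_0\;\cong\;{\rm T}_{p_1}\bigl((C_{T_j})_0,\ell_1/T_j\bigr)\otimes_{(C_{T_j})_0}\cdots\otimes_{(C_{T_j})_0}{\rm T}_{p_n}\bigl((C_{T_j})_0,\ell_n/T_j\bigr)
\]
of tiled triangular matrix orders, not a commutative regular ring. The general-position hypothesis enters exactly here: at each maximal ideal $\mm$ of $(C_{T_j})_0$, all but at most $d$ of the $\ell_i/T_j$ become units, so the corresponding factors become full matrix rings and can be stripped off by Morita equivalence; the remaining factors form a tensor of tiled orders along a \emph{regular} sequence, whose global dimension is then computed to be $d$ (the paper cites \cite[2.14]{IL}). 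Your proposal correctly flags general position as the crux but misidentifies what it is used for: it is not a coordinate change making $R_{T_j}$ itself regular or strongly graded, but a pointwise condition ensuring the non-commutative order $T(j)_0$ has finite global dimension.
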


To prove part (b), we need some preparations.
After establishing (b), we can deduce (a) easily.

For the subgroup $\Z\c$ of $\L$ generated by $\c$, we take the complete 
set $I$ in \eqref{choose representatives} of representatives of $\L/\Z\c$.
Throughout we fix $j$, and define a $\Z$-graded algebra $T(j)$ by
\[T(j):=(R_{T_j})^{[\Z\c]}\]
(see Definition~\ref{define covering}).
Then we have the following observation.

\begin{proposition}\label{graded morita}
We have equivalences
\[\mod^{\L}R_{T_j} \simeq \mod^{\Z}T(j)\simeq\mod T(j)_0.\]
\end{proposition}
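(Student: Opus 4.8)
The plan is to prove Proposition~\ref{graded morita} by combining the general graded-Morita equivalence \eqref{general graded morita} with the observation that $T(j)_0$ is strongly graded. First I would apply \eqref{general graded morita} to the $\L$-graded ring $R_{T_j}$ and the subgroup $\Z\c$ of finite index in $\L$ (the index is $p_1p_2\cdots p_n$, which is finite), using the complete set of representatives $I$ from \eqref{choose representatives}. This immediately yields the first equivalence $\mod^{\L}R_{T_j}\simeq\mod^{\Z}T(j)$, since $T(j)=(R_{T_j})^{[\Z\c]}$ by definition, and the equivalence restricts from $\Mod$ to $\mod$ because the functor $F$ sends a finitely generated module to a finitely generated one (the functor is additive and sends $R_{T_j}(\x)$ to a finitely generated projective $T(j)$-module).

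For the second equivalence $\mod^{\Z}T(j)\simeq\mod T(j)_0$, the key point is that $T(j)$ is a \emph{strongly $\Z$-graded} ring, i.e.\ $T(j)_m\cdot T(j)_{-m}=T(j)_0$ for all $m\in\Z$; granting this, the assertion follows from the classical equivalence between graded modules over a strongly graded ring and modules over its degree-zero part (an $\L$-graded analog of \cite[I.3.4]{NV}, exactly as invoked in the proof of Proposition~\ref{property of rank}(c)). To verify strong gradedness, I would argue as follows: $R_{T_j}$ is strongly $\L$-graded because $T_j$ has been inverted, so $T_j\in (R_{T_j})_{\c}$ is invertible, which gives $(R_{T_j})_{a\c}\cdot(R_{T_j})_{-a\c}=(R_{T_j})_0$ for all $a\in\Z$, and for a general $\x\in\L$ one uses a monomial in the $X_i$ of degree $\x$ modulo $\Z\c$ together with $X_i^{p_i}=\ell_i(\mathbf T)$, which after inverting $T_j$ (and using generic position so that $\ell_i$ involves $T_j$, or more carefully inverting a suitable linear form) becomes invertible in $R_{T_j}$. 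Strong gradedness is preserved under the covering construction $(-)^{[\Z\c]}$, so $T(j)$ is strongly $\Z$-graded.

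The main obstacle I anticipate is the verification that $R_{T_j}$ is strongly $\L$-graded, or equivalently that every homogeneous element of $R_{T_j}$ of nonzero degree becomes invertible after inverting $T_j$ --- this is where the hypothesis that the hyperplanes $H_1,\dots,H_n$ are in general position is actually used. Concretely, one needs that for each $i$ the image of $X_i$ is invertible in the localization, which follows once $\ell_i(\mathbf T)$ is invertible; inverting a single $T_j$ is not literally enough for every $i$, so the correct statement requires some care (one may need to pass to the localization at a linear form not vanishing identically, or exploit Observation~\ref{Normalization} to reduce to the normalized presentation where $\ell_i=T_{i-1}$ for small $i$). Once this local picture is pinned down, the rest of the proof is a formal application of graded Morita theory, and I would present the argument in the order: (1) finite index of $\Z\c$ and applicability of \eqref{general graded morita}; (2) first equivalence; (3) strong gradedness of $R_{T_j}$, hence of $T(j)$; (4) second equivalence from the strongly graded module theory.
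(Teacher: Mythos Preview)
Your first equivalence is fine and matches the paper's argument exactly: it is an immediate application of \eqref{general graded morita}.

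For the second equivalence, however, your route through strong $\L$-gradedness of $R_{T_j}$ does not work. As you yourself anticipate, $R_{T_j}$ is \emph{not} strongly $\L$-graded in general. Concretely, $(R_{T_j})_{\x_i}=X_i\cdot(C_{T_j})_0$ by Proposition~\ref{Zc Veronese}(c), and $X_i$ is invertible in $R_{T_j}$ if and only if $X_i^{p_i}=\ell_i(\mathbf T)$ is invertible in $C_{T_j}$, i.e.\ $\ell_i/T_j$ is a unit in the polynomial ring $(C_{T_j})_0$. This fails whenever $\ell_i$ is not a scalar multiple of $T_j$; general position does not help here, and the suggested workarounds (passing to Observation~\ref{Normalization} or inverting a different linear form) do not salvage the argument since for fixed $j$ there will typically be several $i$ with $\ell_i\not\in kT_j$.

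The paper's argument avoids this entirely with a much simpler observation: one only needs $T(j)$ to be strongly $\Z$-graded, and this follows directly from the existence of a single invertible homogeneous element of degree $1$. Namely, $T_j$ itself (sitting as the diagonal element with entries $T_j\in(R_{T_j})_\c$) lies in $T(j)_1$ and is invertible in $T(j)$ with inverse the diagonal $T_j^{-1}\in T(j)_{-1}$. Hence $T(j)_\ell=T_j^\ell\cdot T(j)_0$ for every $\ell\in\Z$, so $T(j)$ is strongly $\Z$-graded, and \cite[I.3.4]{NV} gives the second equivalence. No hypothesis on the position of the hyperplanes is needed at this step; that hypothesis enters only later, in the computation of the global dimension of $T(j)_0$.
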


\begin{proof}
The first equivalence follows from \eqref{general graded morita}.
Since $T_j$ is an invertible element of degree $1$ in $T(j)$,
we have $T(j)_\ell=T_j^\ell T(j)_0$ for any $j\in\Z$.
Thus $T(j)$ is strongly graded in the sense that $T(j)_\ell\cdot 
T(j)_{-\ell}=T(j)_0$ holds for any $\ell\in\Z$. By \cite[I.3.4]{NV2},
we have the second equivalence.
\end{proof}

Now we give a description of $T(j)_0$ in terms of a tensor product.
For $p>0$ and a ring $A$ with an element $a\in A$,
we define a subring of the full matrix ring ${\rm M}_p(A)$ by
\[{\rm T}_p(A,a):=\left[\
\begin{array}{ccccc}
A&(a)&\cdots&(a)&(a)\\
A&A&\cdots&(a)&(a)\\
\vdots&\vdots&\ddots&\vdots&\vdots\\
A&A&\cdots&A&(a)\\
A&A&\cdots&A&A
\end{array}\right].\]
For the polynomial ring $C=k[T_0,\ldots,T_d]$,
note that $(C_{T_j})_0=k[T_0/T_j,\ldots,T_d/T_j]$ holds.
We have the following explicit description of $T(j)_0$, which is an analog
of a description of $R^{[\Z\c]}$ given in \cite[3.7]{IL}.

\begin{lemma}
We have $T(j)_0\simeq{\rm T}_{p_1}((C_{T_j})_0,\ell_1/T_j)\otimes\cdots\otimes
{\rm T}_{p_n}((C_{T_j})_0,\ell_n/T_j)$, where
the tensor products are over $(C_{T_j})_0$. 
\end{lemma}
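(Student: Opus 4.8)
The plan is to compute $T(j)_0$ directly from the definitions of the covering and the root construction, reducing iteratively. Recall that $R_{T_j}$ is obtained from $C_{T_j}=C[T_j^{-1}]$ by $n$ successive root constructions, adjoining $X_i$ with $X_i^{p_i}=\ell_i$; over $C_{T_j}$ the element $\ell_i/T_j\in(C_{T_j})_0$ is a unit-multiple of $\ell_i$, so $X_i/\text{(nothing)}$ still satisfies $X_i^{p_i}=\ell_i$, but the key point is that after passing to degree $0$ only the class $\ell_i/T_j$ survives. The first step is to establish a general lemma on how the degree-$0$ part of a covering behaves under a single root construction: if $(A,G)$ is $G$-graded with an invertible homogeneous element of the degree we use to take the Veronese/covering, and $(A',G')=(A(\ell,p),G(\ell,p))$, then the covering of $A'$ with respect to the enlarged subgroup, restricted to degree $0$, is isomorphic to ${\rm T}_p(A_0^{(\text{relevant})},\ell\text{-class})\otimes_{A_0^{(\cdot)}}(\text{covering of }A\text{ at degree }0)$. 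This is essentially the computation already carried out in \cite[3.6]{IL} for $R^{[\Z\c]}$ itself; here we run the same computation after inverting $T_j$, which only makes $T_j$ (and hence the class $\ell_i/T_j$) invertible but does not otherwise change the combinatorics.

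Concretely, I would choose the set of representatives $I$ in \eqref{choose representatives} and note it factors as a product $I=I_1\times\cdots\times I_n$ with $I_i=\{a_i\x_i\mid 0\le a_i\le p_i-1\}$. The covering $T(j)=(R_{T_j})^{[\Z\c]}$ then has underlying module $\bigoplus_{h\in\Z\c}\big((R_{T_j})_{x-y+h}\big)_{x,y\in I}$, and by Proposition \ref{Zc Veronese}(c) each graded piece $(R_{T_j})_{\x}$ is a free rank-one $(C_{T_j})$-module on the monomial $X_1^{a_1}\cdots X_n^{a_n}$ determined by the normal form of $\x$. Restricting to $h$ so that the total degree is $0$, and tracking which powers of $X_i$ occur, one sees that the $(x,y)$-entry at degree $0$ is $(C_{T_j})_0$ when $x_i\ge y_i$ (in the $i$-th coordinate) and is the ideal $(\ell_i/T_j)$ of $(C_{T_j})_0$ when $x_i<y_i$ — the latter because passing from $X_i^{a}$ with $a<p_i$ down to $X_i^{a+p_i}$ costs a factor $\ell_i$, and rescaling back to degree $0$ converts this to $\ell_i/T_j$. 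Multiplying these conditions over $i=1,\dots,n$ and comparing with the definition of ${\rm T}_{p_i}((C_{T_j})_0,\ell_i/T_j)$ and of the tensor product, the matrix algebra $T(j)_0$ is exactly the claimed tensor product over $(C_{T_j})_0$; the multiplication matches because the covering multiplication is matrix multiplication and the tensor-product multiplication is componentwise, and the monomials $X_1^{a_1}\cdots X_n^{a_n}$ multiply independently in each factor.

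The main obstacle I anticipate is purely bookkeeping: one must carefully match the indexing of the representative set $I$ (an $n$-fold product of cyclic ranges) with the ordering conventions implicit in the triangular algebras ${\rm T}_{p_i}(-,-)$, so that "$x_i\ge y_i$ gives the full ring, $x_i<y_i$ gives the ideal" lines up with the upper/lower-triangular placement in the definition of ${\rm T}_p(A,a)$, and so that the tensor product is formed over the correct base ring $(C_{T_j})_0$ rather than over $k$. Once the single-step root-construction lemma is in place and one checks it is compatible with taking the tensor product (i.e. that performing the root construction in the $i$-th variable only touches the $i$-th tensor factor, which follows because the representative set is a product and the monomials $X_i$ commute), the full statement follows by induction on $n$, the base case $n=0$ being $T(j)_0\simeq(C_{T_j})_0$ (empty tensor product). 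I would present this as: (1) the single-step covering-at-degree-$0$ lemma, proved exactly as in \cite[3.6]{IL}; (2) the observation that it localizes compatibly at $T_j$; (3) the inductive assembly over the $n$ variables.
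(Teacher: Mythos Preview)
Your proposal is correct and, in its concrete paragraph, matches the paper's proof essentially step for step: the paper also uses the product decomposition of the representative set $I$, computes $(R_{T_j})_{\x-\y}$ via the normal form and Proposition~\ref{Zc Veronese}(c) as a rank-one $(C_{T_j})_0$-module on the monomial $\prod_i X_i^{a_i-b_i+\epsilon_ip_i}/T_j^{\epsilon_i}$ (with $\epsilon_i=0$ if $a_i\ge b_i$ and $\epsilon_i=1$ otherwise), and reads off the tensor factorization directly. The only difference is packaging: the paper performs this as a single direct calculation rather than phrasing it as an induction on $n$ via a single-step root-construction lemma, and it writes out the explicit matrix for each tensor factor (with entries $X_i^{a_i-b_i}(C_{T_j})_0$ below the diagonal and $(X_i^{p_i+a_i-b_i}/T_j)(C_{T_j})_0$ above) before identifying it with ${\rm T}_{p_i}((C_{T_j})_0,X_i^{p_i}/T_j)$, which resolves exactly the indexing bookkeeping you flagged as the main obstacle.
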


\begin{proof}
If $\x=\sum_{i=1}^na_i\x_i+a\c$ is in normal form, then we have $(R_{T_j})_{\x}=(\prod_{i=1}^nX_i^{a_i})T_j^a(C_{T_j})_0$.

For $\x=\sum_{i=1}^na_i\x_i$ and $\y=\sum_{i=1}^nb_i\x_i$ in $I$, let 
$\epsilon_i:=0$ if $a_i\ge b_i$ and $\epsilon_i:=1$ otherwise. Then
$\x-\y$ has the normal form $\sum_{i=1}^n(a_i-b_i+\epsilon_ip_i)\x_i-(\sum_{i=1}^n\epsilon_i)\c$, and we have
\begin{eqnarray*}
(R_{T_j})_{\x-\y}&=&(\prod_{i=1}^n(X_i^{a_i-b_i+\epsilon_ip_i}/T_j^{\epsilon_i}))(C_{T_j})_0\\
&=&((X_1^{a_1-b_1+\epsilon_1p_1}/T_j^{\epsilon_1})(C_{T_j})_0)\otimes\cdots\otimes
((X_n^{a_n-b_n+\epsilon_np_n}/T_j^{\epsilon_n})(C_{T_j})_0),
\end{eqnarray*}
Therefore we have an isomorphism
\begin{eqnarray*}
T(j)_0&=&\bigotimes_{i=1}^n\left[
\begin{array}{ccccc}
(C_{T_j})_0&(X_i^{p_i-1}/T_j)(C_{T_j})_0&\cdots&(X_i^2/T_j)(C_{T_j})_0&(X_i/T_j)(C_{T_j})_0\\
X_i(C_{T_j})_0&(C_{T_j})_0&\cdots&(X_i^3/T_j)(C_{T_j})_0&(X_i^2/T_j)(C_{T_j})_0\\
\vdots&\vdots&\ddots&\vdots&\vdots\\
X_i^{p_i-2}(C_{T_j})_0&X_i^{p_i-3}(C_{T_j})_0&\cdots&(C_{T_j})_0&(X_i^{p_i-1}/T_j)(C_{T_j})_0\\
X_i^{p_i-1}(C_{T_j})_0&X_i^{p_i-2}(C_{T_j})_0&\cdots&X_i(C_{T_j})_0&(C_{T_j})_0
\end{array}\right]\\
&\simeq&{\rm T}_{p_1}((C_{T_j})_0,X_1^{p_1}/T_j)\otimes\cdots\otimes
{\rm T}_{p_n}((C_{T_j})_0,X_n^{p_n}/T_j)
\end{eqnarray*}
of $k$-algebras.
\end{proof}

\begin{lemma}\label{global dimension d}
$T(j)_0$ has global dimension $d$.
\end{lemma}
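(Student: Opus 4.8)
**The plan is to compute the global dimension of $T(j)_0$ via the tensor product decomposition established in the previous lemma.** By that lemma we have $T(j)_0 \simeq \bigotimes_{i=1}^n {\rm T}_{p_i}((C_{T_j})_0, \ell_i/T_j)$, where the tensor products are over the ring $B := (C_{T_j})_0$. Since $B = (C_{T_j})_0$ is a localization of a polynomial ring in $d$ variables (namely $k[T_0/T_j, \ldots, T_d/T_j]$ with $T_j/T_j$ inverted, so really a polynomial ring in $d$ variables over $k$), it is regular of global dimension $d$. The strategy is: first compute $\gl {\rm T}_p(B, a)$ for a single factor in terms of $\gl B$ and the behavior of the element $a$, then use a Künneth-type formula for global dimension of tensor products of $k$-algebras to add these up, and finally check that the general position hypothesis makes each $\ell_i/T_j$ behave like a nonzerodivisor generating a prime (so that no extra homological dimension is contributed beyond that of $B$).

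\textbf{First I would analyze a single triangular-type algebra ${\rm T}_p(B,a)$.} This ring is the endomorphism ring of the module $B \oplus (a) \oplus (a)^2 \oplus \cdots$ — more precisely it is built from the chain of ideals $B \supseteq (a)$, and in fact ${\rm T}_p(B,a) \cong \End_B(B \oplus (a) \oplus \cdots \oplus (a))$ up to the appropriate reading, or can be presented as a path algebra of the equioriented $\A_p$ quiver over $B$ with one arrow squaring (in each consecutive pair) to multiplication by $a$ along the appropriate composite. Concretely, when $a$ is a nonzerodivisor in $B$, the ideals $(a)^i$ are all isomorphic to $B$ as $B$-modules, and ${\rm T}_p(B,a)$ is isomorphic (via these isomorphisms) to the ordinary lower triangular matrix ring, which over a field would have global dimension equal to $\gl B$... but the subtlety is the "twist" by $a$. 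The key computation is that ${\rm T}_p(B,a)$ is, as a $k$-algebra, of the form $B \otimes_k (k\A_p)$ twisted — actually I expect the cleanest route: ${\rm T}_{p_i}((C_{T_j})_0, \ell_i/T_j)$ is Morita equivalent to, or directly isomorphic to, the path algebra over $B$ of $\A_{p_i}$ with relations, and since $\ell_i/T_j$ cuts out (by the general position assumption) a regular element whose vanishing locus is a smooth hyperplane, each factor has global dimension exactly $d$. I would verify this by building explicit projective resolutions: the simple modules at the "ends" of the $\A_p$ quiver resolve using the regular sequence in $B$ plus the arrows, and one checks the longest such resolution has length $d$.

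\textbf{Then I would assemble the tensor product.} The standard fact is that if $\Lambda_1, \ldots, \Lambda_n$ are $k$-algebras with $\Lambda_1 \otimes_k \cdots \otimes_k \Lambda_n$ having finite global dimension, then (under suitable hypotheses, e.g. the $\Lambda_i$ having finite global dimension and being finitely generated, which holds here) $\gl(\Lambda_1 \otimes_k \cdots \otimes_k \Lambda_n) = \sum \gl \Lambda_i$. However, the tensor products here are over $B$, not over $k$, so I must be careful: writing $\Lambda_i := {\rm T}_{p_i}(B, \ell_i/T_j)$, we have $\bigotimes_{i=1}^n \Lambda_i$ (over $B$). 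Since each $\Lambda_i$ is flat over $B$ (being a finite free $B$-module by inspection of the matrix entries, each entry being a rank-one free $B$-module), the tensor-over-$B$ behaves well, and $\gl(\bigotimes_B \Lambda_i)$ can be computed by noting $B$ itself is regular of global dimension $d$, each $\Lambda_i$ adds no homological dimension over $B$ beyond what a polynomial extension would... Actually the expected answer $\gl T(j)_0 = d$ (matching Lemma~\ref{global dimension d}) suggests that each ${\rm T}_{p_i}(B, \ell_i/T_j)$ has global dimension $d$ as well and the tensor product over $B$ does \emph{not} add them — because these are "étale-like" or "Azumaya-like" over the base. So the correct framework is: $T(j)_0$ is a (non-commutative) $B$-algebra that is finite over $B$ and generically Azumaya; the general position hypothesis guarantees the ${\rm T}_{p_i}$'s are "independent" hyperplane orders so that at every prime of $B$ the localization is either matrix-algebra-like (contributing $\gl = $ local dimension) or regular. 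The global dimension is then $\max$ over primes of (local dimension of $B_\pp$) $\le d$, with equality at the maximal ideal.

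\textbf{The main obstacle} will be pinning down exactly why the general position assumption forces $\gl {\rm T}_{p_i}(B, \ell_i/T_j) = d$ rather than $d+1$, and why the tensor product over $B$ of these $n$ algebras still has global dimension only $d$ (not growing with $n$) — this is the non-formal heart and uses crucially that distinct hyperplanes meet transversally. I would handle it by a local computation: localize $T(j)_0$ at each prime $\pp$ of $B$, observe that at most $d$ of the $\ell_i/T_j$ can simultaneously vanish at $\pp$ (by general position, since $\ell_1, \ldots, \ell_n$ have the property that any $d+1$ are linearly independent, hence in the affine chart $T_j \neq 0$ any $d+1$ of them cut out the empty set), so the localized algebra is a tensor product over $B_\pp$ of at most $d$ genuinely ramified triangular factors and $\ge n-d$ unramified (matrix) factors; the unramified factors are Morita-trivial, and each ramified factor raises projective dimension of simples by exactly $1$ locally, for a total bounded by $\operatorname{height}\pp \le d$. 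Taking the supremum over $\pp$ gives $\gl T(j)_0 \le d$, and evaluating at the closed point where $d$ hyperplanes meet gives $\gl T(j)_0 \ge d$, hence equality. Combined with Proposition~\ref{graded morita}, this will immediately yield Theorem~\ref{L-isolated singularity}(b) and, by a localization argument at non-$\L$-maximal primes, part~(a).
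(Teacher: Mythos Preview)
Your final paragraph lands on essentially the paper's argument: localize at primes of $B=(C_{T_j})_0$, observe that factors with $\ell_i/T_j$ a unit become full matrix algebras (hence Morita-trivial), and use general position to control the remaining factors. The paper does exactly this (localizing at maximal ideals), then invokes \cite[2.14]{IL} to conclude that $\bigotimes_{i\in I_x}{\rm T}_{p_i}(B_\mm,\ell_i/T_j)$ has global dimension $d$ whenever $(\ell_i/T_j)_{i\in I_x}$ is a regular sequence in the $d$-dimensional regular local ring $B_\mm$.

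Two points where your sketch needs correction. First, the heuristic ``each ramified factor raises projective dimension by exactly $1$, for a total bounded by $\operatorname{height}\pp$'' is not the right bookkeeping: the global dimension of $(T(j)_0)_\mm$ is $d$, not $|I_x|$, regardless of how many hyperplanes pass through the point. The correct statement is that for a regular local ring $B_\mm$ of dimension $d$ and a regular sequence $a_1,\ldots,a_s$ with $s\le d$, the tensor product $\bigotimes_{i=1}^s {\rm T}_{p_i}(B_\mm,a_i)$ over $B_\mm$ has global dimension exactly $d$. This is the content of the cited lemma from \cite{IL}, and it is the non-formal core you correctly identified; your proposal does not actually prove it. Second, your lower bound argument (``evaluate at the closed point where $d$ hyperplanes meet'') assumes $n\ge d$ and such a point exists. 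The cleaner lower bound, valid in all cases, is that $(T(j)_0)_\mm$ is free of finite rank over the regular local ring $B_\mm$, so any simple $(T(j)_0)_\mm$-module has $B_\mm$-projective dimension $d$, and since projective $(T(j)_0)_\mm$-modules are $B_\mm$-free, this forces $\gl (T(j)_0)_\mm\ge d$.
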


\begin{proof}
$T(j)_0$ is a $(C_{T_j})_0$-algebra which is a free $(C_{T_j})_0$-module of finite rank.
It suffices to show that for any maximal ideal $\mm$ of $(C_{T_j})_0$, the global dimension
of $(T(j)_0)_{\mm}$ is $d$. Let
\[I_{\mm}:=\{i\mid 1\le i\le n,\ \ell_i/T_j\in\mm\}.\]
For any $i\notin I_{\mm}$, we have that $\ell_i/T_j$ is a unit in $((C_{T_j})_0)_{\mm}$ and hence
${\rm T}_{p_i}(((C_{T_j})_0)_{\mm},\ell_i/T_j)={\rm M}_{p_i}(((C_{T_j})_0)_{\mm})$ holds.
Thus we have
\[(T(j)_0)_{\mm}\simeq
\bigotimes_{i=1}^n{\rm T}_{p_i}(((C_{T_j})_0)_{\mm},\ell_i/T_j)
\simeq{\rm M}_p(\bigotimes_{i\in I_{\mm}}{\rm T}_{p_i}(((C_{T_j})_0)_{\mm},\ell_i/T_j))\]
for $p:=\prod_{i\notin I_{\mm}}p_i$.
Since $\ell_1,\ldots,\ell_n$ are in general position, $(\ell_i/T_j)_{i\in I_{\mm}}$ is
a regular sequence of $((C_{T_j})_0)_{\mm}$. Thus the global dimension of
$\bigotimes_{i\in I_{\mm}}{\rm T}_{p_i}(((C_{T_j})_0)_{\mm},\ell_i/T_j)$
is $d$ by \cite[2.12]{IL}, and we have the assertion.
\end{proof}

Now we are ready to prove Theorem~\ref{L-isolated singularity}.

\begin{proof}[Proof of Theorem~\ref{L-isolated singularity}.]
(b) The statement follows from Propositions~\ref{graded morita} and \ref{global dimension d}.

(a) Fix $\pp\in\Spec^{\L}R\setminus\{R_+\}$. We will show
that $R_{(\pp)}$ is $\L$-regular.
Since $\pp\neq R_+$, there exists $j$ such that $T_j\notin\pp$.
Clearly $R_{T_j}\pp$ is an $\L$-prime ideal of $R_{T_j}$ such that $R_{(\pp)}=(R_{T_j})_{(R_{T_j}\pp)}$.
Since $R_{T_j}$ is $\L$-regular by (b), we have that
$R_{(\pp)}=(R_{T_j})_{(R_{T_j}\pp)}$ is $\L$-regular by Observation
\ref{observation for G-prime}. Thus the assertion follows.
\end{proof}

If we forget the $\L$-grading, then our GL complete intersection can have non-isolated singularities, e.g.
$R=k[X_1,X_2,X_3]/(X_1^p+X_2^p+X_3^p)$ and $k$ has characteristic $p$.
This doesn't happen in characteristic zero by the following observation.

\begin{proposition}\label{isolated singularity}
Let $(R,\L)$ be a GL complete intersection over a perfect field $k$,
such that the weights are non-zero in $k$.
Then $R_{\pp}$ is a regular local ring for any 
$\pp\in\Spec R\setminus\{R_+\}$. In particular $R$ has at worst isolated singularities.
\end{proposition}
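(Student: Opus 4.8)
The final statement to prove is Proposition \ref{isolated singularity}: over a field of characteristic zero, a GL complete intersection $R$ has isolated singularities in the usual (ungraded) sense. Here is how I would proceed.

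\medskip

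The plan is to reduce the ungraded statement to the graded one already established in Theorem \ref{L-isolated singularity}, using the characteristic-zero hypothesis to control what happens at prime ideals that are not homogeneous. First I would fix $\pp\in(\Spec R)\setminus\{R_+\}$ and let $\pp^*$ be the largest homogeneous ideal contained in $\pp$; since $R$ is $\L$-graded and Noetherian, $\pp^*$ is an $\L$-prime ideal. If $\pp^*=R_+$ then $\pp=R_+$ (as $R_+$ is $\L$-maximal and the only homogeneous ideal containing it is $R$ itself), contradicting the choice of $\pp$; so $\pp^*\neq R_+$, and hence by Theorem \ref{L-isolated singularity}(a) the localization $R_{(\pp^*)}$ is $\L$-regular, i.e. $\mod^\L R_{(\pp^*)}$ has finite global dimension. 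The remaining task is to pass from $\L$-regularity of $R_{(\pp^*)}$ to regularity of the genuine local ring $R_\pp$.

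\medskip

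For this passage I would argue as follows. Since $\pp^*\subseteq\pp$ and $\pp^*$ is homogeneous, there is a natural map $R_{(\pp^*)}\to R_\pp$, and $R_\pp$ is a localization of $R_{(\pp^*)}$ at the prime corresponding to $\pp$. Replacing $R$ by $R_{(\pp^*)}$, we are reduced to: an $\L$-graded ring $A$ which is $\L$-regular, with the property that $A/\pp^*A$ has a field in degree zero (the $\L$-field $(A_{(0)})_0$-type situation from Theorem \ref{L-factorial L-domain}), and $\qq$ a prime of $A$ with $\qq^*=(0)$; show $A_\qq$ is regular. Because $\qq^*=(0)$, the prime $\qq$ contracts to $(0)$ in the $\Z\c$-Veronese $C=\bigoplus_a R_{a\c}$ up to the finite extension, so $\qq$ lies over the zero ideal in a polynomial sub-behaviour; more precisely $\qq$ does not contain any homogeneous element of positive degree. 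One standard tool here is: for an $\L$-graded ring with torsion-free-rank-one grading group and a prime $\qq$ with $\qq^*$ strictly smaller, one has $A_\qq \cong (A_{(\qq^*)})_{\qq}$ and $A_{(\qq^*)}/\qq^* A_{(\qq^*)}$ is obtained from the $\L$-field $(A/\pp^*)_{(0)}$ by inverting a degree-generator, making it a Laurent-polynomial-like ring over a field; in characteristic zero such rings are regular and the map behaves well. I would cite the $\L$-graded analog of the classical fact (e.g. as in \cite{BH} or the $\Z$-graded results of \cite{NV}) that a $G$-regular ring has regular localization at any prime whose homogeneous core is not $G$-maximal, provided the residue situation at the core is separable — which is automatic in characteristic zero.

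\medskip

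The main obstacle I anticipate is precisely this last inseparability point: $R_{(\pp^*)}/\pp^*R_{(\pp^*)}$ is an $\L$-field, but its degree-zero part need not be a field extension of $k$ that is separable over nothing in positive characteristic — indeed the example $k[X_1,X_2,X_3]/(X_1^p+X_2^p+X_3^p)$ in characteristic $p$ given right before the proposition shows the statement genuinely fails without the characteristic hypothesis. So the heart of the argument is to invoke characteristic zero exactly where it is needed: to guarantee that the fiber ring $R_{(\pp^*)}\otimes_{(R_{(\pp^*)})_0}\kappa(\pp)$ (a localization of a group algebra / Laurent extension of a field in characteristic zero) is regular, and then combine regularity of base, fiber, and flatness to conclude $R_\pp$ is regular via the standard ``regular base + regular fiber + flat $\Rightarrow$ regular total space'' criterion. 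Once that fiber computation is in hand, everything else is routine localization bookkeeping, and the final sentence (``In particular $R$ has isolated singularities'') follows since the singular locus is then contained in $V(R_+)=\{R_+\}$.
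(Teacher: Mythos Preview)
Your approach is genuinely different from the paper's, and it has a real gap.

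\textbf{What the paper does.} The paper gives a direct computation via the Jacobian criterion. After normalizing to $R = k[X_1,\ldots,X_n]/(X_i^{p_i}-\sum_j\lambda_{i,j-1}X_j^{p_j}\mid d+2\le i\le n)$, the Jacobian matrix is $L\cdot\mathrm{diag}(p_1X_1^{p_1-1},\ldots,p_nX_n^{p_n-1})$, where $L$ is the coefficient matrix. Characteristic zero enters only as ``$p_i\neq 0$ in $k$'', so the ideal of maximal minors is $J=(\prod_{i\in I}X_i^{p_i-1}\mid |I|=n-d-1)$. One then checks that $V(J)\cap\Spec R=\{R_+\}$ using general position. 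No appeal to Theorem \ref{L-isolated singularity} is made.

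\textbf{Where your argument breaks.} Your key step is: from $\L$-regularity of $R_{(\pp^*)}$, deduce ordinary regularity of $R_\pp$. You propose to ``cite the $\L$-graded analog'' in \cite{BH} or \cite{NV}, but those references treat $\Z$-gradings, where graded and ungraded regularity coincide for elementary reasons. Here $\L$ has torsion in general (Observation \ref{basic results on L}(b)), and the paper itself warns (just before this proposition) that the two notions differ for groups with torsion. So there is no off-the-shelf lemma to cite.

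Your fallback ``regular base $+$ regular fiber $+$ flat'' argument is also not set up correctly. The natural flat base is $C=k[T_0,\ldots,T_d]$ (Proposition \ref{Zc Veronese}), but the fibers of $R$ over $C$ are tensor products $\bigotimes_i\kappa[X_i]/(X_i^{p_i}-a_i)$, and whenever the base point lies on some hyperplane $H_i$ one has $a_i=0$, giving a non-reduced factor $\kappa[X_i]/(X_i^{p_i})$. These fibers are Artinian with nilpotents, so the fiber localized at \emph{any} prime is still singular, and the criterion fails. The expression $R_{(\pp^*)}\otimes_{(R_{(\pp^*)})_0}\kappa(\pp)$ you wrote does not make sense as stated, and choosing a different base does not obviously repair this.

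\textbf{What would be needed.} Your intuition that characteristic zero should let one upgrade $\L$-regularity to regularity is probably correct, but it requires a genuine argument: for instance, one could try to use the covering $R^{[\Z\c]}$ (Definition \ref{define covering}) to reduce to a $\Z$-grading, or argue carefully that an $\L$-local $\L$-regular commutative $k$-algebra in characteristic zero is regular (using that $k[\L/\Z\c]$ is semisimple by Maschke). Either route is substantially more work than the paper's two-paragraph Jacobian computation, and neither is what you wrote.
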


\begin{proof}
If $n\le d+1$, then the assertion follows from Proposition~\ref{R is CI}(b).
Assume $n\ge d+2$. By Observation~\ref{Normalization}, we have
$R=S'/(X_i^{p_i}-\sum_{j=1}^{d+1}\lambda_{i,j-1}X_j^{p_j}\mid d+2\le i\le n)$
for $S'=k[X_1,\ldots,X_n]$, where all maximal minors of the
$(n-d-1)\times n$ matrix
\[L:=[\lambda_{i,j-1}|-I_{n-d-1}]_{d+2\le i\le n,\ 1\le j\le d+1}\]
have non-zero determinants. The Jacobian matrix is given by
\[M:=L\cdot{\rm diag}(p_1X_1^{p_1-1},\ldots,p_{n}X_{n}^{p_{n}-1}).\]
By the Jacobian criterion \cite[16.20]{E2}, the singular
locus of $R$ is given by $V(J)\cap\Spec R$, where $J$ is the ideal of
$S'$ generated by all maximal minors of $M$
and $V(J):=\{\pp\in\Spec S'\mid\pp\supset J\}$.
Since $p_i\neq0$ in $k$ for any $i$ and all maximal minors of $L$
have non-zero determinants, we have
\[J=(\prod_{i\in I}X_i^{p_i-1}\mid I\subset\{1,\ldots,n\},\ |I|=n-d-1).\]
Therefore it is easy to check that $\pp\in\Spec S'$ contains $J$ if and only if $\pp$ contains
at least $d+2$ elements from $\{X_1,\ldots,X_n\}$. Thus
\[V(J)=\bigcup_{I\subset \{1,\ldots,n\},\ |I|=d+2}V((X_i\mid i\in I))\]
holds. On the other hand, for any subset $I$ of $\{1,\ldots,n\}$
with $|I|\ge d+1$, we have $R/(X_i\mid i\in I)\in\mod^{\L}_0R$
by Lemma~\ref{prop.regularseq1}(c), and hence $V((X_i\mid i\in I))=\{R_+\}$ 
holds. In particular, the singular locus of $R$ is contained in $\{R_+\}$.
\end{proof}

\section{Tate's DG algebra resolutions}

In this section, we use the following presentation of GL complete intersections.

\begin{setting}\label{minimal presentation}
Let $(R,\L)$ be a GL complete intersection.
Applying Observation~\ref{Weights 1} to $(R,\L)$, we may assume that $p_i\ge2$ for all $1\le i\le n$.
By Observation~\ref{Normalization} we may assume that 
\[R =\begin{cases}
k[X_1,\ldots,X_n,X_{n+1},\ldots,X_{d+1}]&n\le d+1,\\
k[X_1, \ldots, X_n] / (X_i^{p_i} - \sum_{j=1}^{d+1}\lambda_{i,j-1}X_j^{p_j} \mid d+2 \leq i \leq n)&n\ge d+2.
\end{cases}.\]
For the case $n\le d+1$, by convention, $\x_i=\c$ and $X_i=T_{i-1}$ for $n+1\le i\le d+1$.
\end{setting}

Let $G$ be an abelian group and $A$ a $G$-graded ring. Assume that a $G$-graded $A$-module $X$ has a minimal projective resolution $\cdots\to P^{-2}\to P^{-1}\to P^0\to X\to0$ such that
$P^{-i}=\bigoplus_{g\in G}R(-g)^{\oplus b_{i,g}}$ for some $b_{i,g}\in\Z_{\ge0}$.
Then the \emph{$G$-Poincar\'{e} series} of $X$ is define by
\[{\rm P}^A_X(u):=\sum_{i\ge0}\left(\sum_{g\in G}b_{i,g}g\right)u^i\in k[G][[u]].\]
We prove the following results by establishing an $\L$-graded version of Tate resolutions \cite{Tat}.

\begin{theorem}\label{L-Poincare}
Let $(R,\L)$ be a GL complete intersection in Setting~\ref{minimal presentation}.
Fix $\ell_i\in[1,p_i-1]$ for $1\le i\le n$. Let $F=k[X_1,\ldots,X_n]/(X_1^{\ell_1},\ldots,X_n^{\ell_n})\in\mod^{\L}R$
and let $\cdots \to P^{-2}\to P^{-1}\to P^{0}\to F\to0$ be a minimal projective resolution of $F$.\begin{itemize}
\item[(a)] The $R$-module $F$ has the $\L$-Poincar\'{e} series
\[{\rm P}^R_F(u)=\begin{cases}
{\displaystyle\prod_{i=1}^{d+1}(1+(\ell_i\x_i)\cdot u)}&\mbox{ if }\ n\le d+1,\\
\frac{\displaystyle\prod_{i=1}^n(1+(\ell_i\x_i)\cdot u)}{\displaystyle(1-\c\cdot u^2)^{n-d-1}}&\mbox{ if }\ n\ge d+2,
\end{cases}.\]
where in the first case, let $\x_i=\c$ and $\ell_i=1$ for all $n+1\le i\le d+1$ by convention.
\item[(b)] Hence we have
\begin{eqnarray*}
P^{-r}=\begin{cases}
{\displaystyle\bigoplus_{\substack{I \subset [1,d+1],\ |I|=r}} R\left(-\sum_{i \in I} \ell_i\x_i\right)}&\mbox{ if }\ n\le d+1,\\
{\displaystyle\bigoplus_{\substack{I \subset [1,n],\ r-|I|\in 2\Z_{\ge0}}} R\left(\left(\frac{|I|-r}{2}\right)\c - \sum_{i \in I} \ell_i\x_i\right)^{\oplus{{n-d-2+\frac{r-|I|}{2}}\choose n-d-2}}}&\mbox{ if }\ n\ge d+2.
\end{cases}
\end{eqnarray*}
\item[(c)] We have
\[\gl(\mod^{\L}R)=\left\{\begin{array}{ll}
d+1&\mbox{ if }\ n\le d+1,\\
\infty&\mbox{ if }\ n\ge d+2.
\end{array}\right.\]
\item[(d)] If $n\ge d+2$, then ${\displaystyle\lim_{r\to\infty}\frac{|P^{-r}|}{r^{n-d-2}}=
\frac{2^{d+1}}{(n-d-2)!}}$ holds, where $|P^{-r}|$ is the number of indecomposable
direct summands of $P^{-r}$.\end{itemize}
\end{theorem}

To prove Theorem~\ref{L-Poincare}, we need to introduce some terminology.

We fix an abelian group $G$. For a $(\Z\times G)$-graded module $V$, the $\Z$-degree is written as superscript the $G$-degree is written as subscript
\[V=\bigoplus_{(i,g)\in\Z\times G}V^i_g=\bigoplus_{i\in\Z}V^i=\bigoplus_{g\in G}V_g.\]
We write $|v|=|v|_{\Z}=i$ if $v\in V^i$, and $|v|_G=g$ if $v\in V_g$.

We need the following $G$-graded analogue of differential graded rings \cite{KM,LPWZ}.

\begin{definition}
A \emph{graded-commutative differential graded ring with Adams $G$-grading} (or \emph{$G$-gc-DG ring} in short) is a pair $(A,d)$ of an associative $(\Z\times G)$-graded ring $A$ and a morphism $d:A\to A$ of $(\Z\times G)$-graded abelian groups of degree $(1,0)$ satisfying $d^2=0$ and the following conditions.
\begin{itemize}
\item (Leibniz rule) $d(ab)=d(a)b+(-1)^{|a|}ad(b)$ holds for any $a,b\in A$.
\item (graded-commutativity) $ab=(-1)^{|a||b|}ba$ for any $a,b\in A$, and $a^2=0$ holds for any $a\in A$ such that $|a|$ is odd.
\end{itemize}
In this case, the total cohomology $H(A)$ of $A$ has a natural structure of a $(\Z\times G)$-graded ring.
\end{definition}

A classical example is given by Koszul complexes.

\begin{example}\label{Koszul example}
Let $A$ be a commutative $G$-graded ring, and $t_1,\ldots,t_r$ elements of $A$ which are homogeneous.
We regard the Koszul complex $B=A\langle T_1,\ldots,T_r\rangle$ of $A$ with respect to $(t_1,\ldots,t_r)$ as a $G$-gc-DG ring as follows.
\begin{itemize}
\item $B$ is an exterior algebra over $A$: $T_iT_j+T_jT_i=0$ and $T_i^2=0$ for any $i,j$.
\item $B^0=A$, the degree of $T_i$ is $(-1,|t_i|_G)\in\Z\times G$, and the differential is given by $d(T_i)=t_i$. 
\end{itemize}
If $(t_1,\ldots,t_r)$ is an $A$-regular sequence, then $H(B)=H^0(B)\simeq A/(t_1,\ldots,t_r)$ as $G$-graded rings.
\end{example}

The following construction given in \cite{Tat} plays an important role.

\begin{example}\label{killing cycle}
Let $(A,d)$ be a $G$-gc-DG ring, and $t\in A^{r-1}_g$ an element such that $d(t)=0$ and $r$ is even.
We define a $G$-gc-DG ring $(B,d)=(A\langle T;t\rangle,d)$ as follows:
\begin{itemize}
\item $B$ is a $(\Z\times G)$-graded free $A$-module with basis $\{T^{(i)}\mid i\ge0\}$. The multiplication is given by $T^{(i)}T^{(j)}={i+j\choose i}T^{(i+j)}$ and $aT^{(i)}=T^{(i)}a$ for any $i,j\ge0$ and $a\in A$.
\item $T^{(i)}$ has degree $(ir,ig)$ for any $i\ge0$.
\item The differential is given by $d(T^{(i)})=tT^{(i-1)}$ for any $i>0$.
\end{itemize}
It was shown in \cite[Theorem 2]{Tat} that, if the sequence $H(A)\xrightarrow{t\cdot}H(A)\xrightarrow{t\cdot}H(A)$ is exact and $A^i=0$ holds for any $i>0$, then $H(B)=H(A)/tH(A)$ holds.
\end{example}

Our Theorem~\ref{L-Poincare} is a special case of the following $G$-graded version of Tate resolutions \cite[Theorem 4]{Tat}, \cite{Gu} (see also \cite[Proposition 1.5.4]{GuL}, \cite[Theorem 6.1.8]{Av}).

\begin{theorem}\label{G-Poincare}
Let $A$ be a commutative $G$-graded ring, and let $I\subset J$ the ideals of $A$ generated by homogeneous $A$-regular sequences $(a_1,\ldots,a_r)$ and $(t_1,\ldots,t_n)$ respectively.
\begin{itemize}
\item[(a)] There exists a $G$-gc-DG ring $B$ which gives a projective resolution of the $G$-graded $(A/I)$-module $A/J$.
\item[(b)] Assume that $A$ has a unique $G$-maximal ideal ${\mathfrak m}$ (Definition~\ref{define L-isolated singularity}). If $I\subset{\mathfrak m}J$ and $J\subset{\mathfrak m}$ holds, then $B$ can be chosen in a way that it gives a minimal projective resolution of $A/J$ and we have
\[{\rm P}^{A/I}_{A/J}(u)=\frac{\prod_{i=1}^n(1+|t_i|_G\cdot u)}{\prod_{j=1}^r(1-|a_j|_G\cdot u^2)}.\]
\end{itemize}
\end{theorem}

\begin{proof}
Let $C=A\langle T_1,\ldots,T_n\rangle$ be the Koszul complex of $A$ with respect to $t_1,\ldots,t_n$. 
It is a $G$-gc-DG ring such that $T_i$ has degree $(-1,|t_i|_G)\in\Z\times G$ and satisfies $d(T_i)=t_i$ (Example~\ref{Koszul example}).
Since $(t_1,\ldots,t_n)$ is an $A$-regular sequence, $H(C)=H^0(C)=A/J$ holds. Let
\[\overline{A}:=A/I\ \mbox{ and }\ \overline{C}:=C\otimes_A\overline{A}.\]
For each $(\Z\times G)$-graded free $\overline{A}$-module $F=\bigoplus_{(i,g)\in\Z\times G}(\overline{A}(-g)[i])^{\oplus b_{i,g}}$, we consider the \emph{$G$-Poincar\'{e} series} ${\rm P}_{F}(u):=\sum_{i\in\Z}(\sum_{g\in G}b_{i,g} g)u^i\in k[G][[u]]$. Clearly we have
\begin{equation}\label{P_overlineB}
{\rm P}_{\overline{C}}(u)=\prod_{i=1}^n(1+|t_i|_G\cdot u).
\end{equation}
Write $a_j=\sum_{i=1}^n c_{ji}t_i$ for $c_{ji}\in A$, and let $s_j=\sum_{i=1}^nc_{ji}T_i\in C$. Then $s_j$ has degree $(-1,|a_j|_G)$. Since $(a_1,\ldots,a_r)$ is an $A$-regular sequence, \cite[Theorem 3]{Tat} gives an isomorphism
\[H(\overline{C})\simeq H(C)\langle\overline{s}_1,\ldots,\overline{s}_r\rangle=(A/J)\langle\overline{s}_1,\ldots,\overline{s}_r\rangle\]
of $(\Z\times G)$-graded rings, where the right-hand side is the exterior algebra over $A/J$.

Now we define inductively $G$-gc-DG rings $B^{[j]}$ such that $B^{[j]}$ is a $(\Z\times G)$-graded free $\overline{A}$-module and $H(B^{[j]})=(A/J)\langle\overline{s}_{j+1},\ldots,\overline{s}_r\rangle$ is the exterior algebra over $A/J$.
Let $B^{[0]}:=\overline{C}$. Once $B^{[j-1]}$ is defined, we apply Example~\ref{killing cycle} to define $B^{[j]}:=B^{[j-1]}\langle S_{j};\overline{s}_{j}\rangle$ such that $S_j$ has degree $(-2,|a_j|_G)$ and satisfies $d(S_j^{(i)})=\overline{s}_{j}S_j^{(i-1)}$ for any $i>0$.
Then the desired conditions are satisfied.

In particular, the $G$-gc-DG ring $B:=B^{[r]}$ satisfies $H(B)=A/J$. Thus assertion (a) holds.

It remains to prove (b). Since $J\subset{\mathfrak m}$, we have $d(\overline{C})\subset{\mathfrak m}\overline{C}$. Since $I\subset{\mathfrak m}J$ holds, we may choose $c_{ji}\in{\mathfrak m}$ and $\overline{s}_j\in{\mathfrak m}\overline{C}$. Thus $d(B^{[j]})\subset{\mathfrak m}B^{[j]}$ holds inductively, and therefore the first assertion follows.
On the other hand, we clearly have
\begin{equation}\label{P_C^j}
{\rm P}_{B^{[j]}}(u)={\rm P}_{B^{[j-1]}}(u)\cdot(1+|a_j|_G\cdot u^2+(2|a_j|_G)\cdot u^4+(3|a_j|_G)\cdot u^6+\cdots)=\frac{{\rm P}_{B^{[j-1]}}(u)}{1-|a_j|_G\cdot u^2}.
\end{equation}
By \eqref{P_overlineB} and \eqref{P_C^j}, we have the desired equality for ${\rm P}^{\overline{A}}_{A/J}(u)={\rm P}_{B^{[r]}}(u)$.
\end{proof}

We are ready to prove Theorem~\ref{L-Poincare}.

\begin{proof}[Proof of Theorem~\ref{L-Poincare}]
(a) Let $G:=\L$. As in Setting \ref{minimal presentation}, if $n\le d+1$, then let
\[A:=k[X_1,\ldots,X_{d+1}]\supset J:=(X_1^{\ell_1},\ldots,X_n^{\ell_n},X_{n+1},\ldots,X_{d+1})\supset I:=0,\]
and otherwise let
\[A:=k[X_1,\ldots,X_n]\supset J:=(X_1^{\ell_1},\ldots,X_n^{\ell_n})\supset I:=(X_i^{p_i} - \sum_{j=1}^{d+1}\lambda_{i,j-1}X_j^{p_j} \mid d+2 \leq i \leq n).\]
Applying Theorem~\ref{G-Poincare}(b), we obatin the assertion.
In fact, $S$ has a unique $\L$-maximal ideal $S_+=\bigoplus_{\x>0}S_{\x}$, and $J\subset S_+$ holds since $\ell_i\ge1$ for any $i$. Moreover $I\subset S_+J$ holds since $\ell_i\le p_i-1$.

(b) This is straightforward from (a).

(c) If $n\ge d+2$, then $k\in\mod^{\L}R$ has infinite projective dimension by (a),
and hence $\gl(\mod^{\L}R)=\infty$.
If $n\le d+1$, then any simple objects in $\mod^{\L}R$ has projective dimension $d+1$ by (a).
This implies $\gl(\mod^{\L}R)=d+1$ (e.g. \cite[Proposition 2.2]{IR}).

(d) By (b), 
$|P^{-r}|=\sum_{I\subset[1,n],\ r-|I|\in2\Z_{\ge0}}{n-d-2+\frac{r-|I|}{2}\choose n-d-2}$.
If $r$ is sufficiently large, then the highest term with respect to $r$ is
\[\sum_{I\subset[1,n],\ r-|I|\in2\Z_{\ge0}}\frac{(\frac{r}{2})^{n-d-2}}{(n-d-2)!}=2^{n-1}\frac{(\frac{r}{2})^{n-d-2}}{(n-d-2)!}=\frac{2^{d+1}r^{n-d-2}}{(n-d-2)!}.\]
Thus the assertion follows.
\end{proof}

\section{$I$-canonical algebras}
Let $(R,\L)$ be a GL complete intersection. We will frequently use the following terminology.

\begin{definition}[Convex, Upset]
We say that a subset $I$ of $\L$ is \emph{convex} if for any $\x,\y,\z\in\L$ such that $\x\le\y\le\z$ and $\x,\z\in I$, we have $\y\in I$.
We say that a subset $I$ of $\L$ is an \emph{upset} if $I+\L_+\subset I$
holds. Moreover we say that an upset is \emph{non-trivial} if it is neither $\L$ nor $\emptyset$.
\end{definition}

We introduce a class of finite dimensional algebras, which play an important role in this paper.

\begin{definition}[$I$-canonical algebras]\label{define I-canonical}
Let $(R,\L)$ be a GL complete intersection.
For a finite subset $I$ of $\L$, we define a $k$-algebra
\[A^I:=(R_{\x-\y})_{\x,\y\in I}\]
in a similar way to Definition~\ref{define covering}.
Namely the multiplication of $A^I$ is given by
\[(r_{\x,\y})_{\x,\y\in I}\cdot(r'_{\x,\y})_{\x,\y\in I}:=
(\sum_{\z\in I}r_{\x,\z}\cdot r'_{\z,\y})_{\x,\y\in I}.\]
We call $A^I$ the \emph{$I$-canonical algebra}.
\end{definition}

\medskip
We give the first properties of $I$-canonical algebras.

\begin{proposition}\label{I-canonical has finite global dimension}
Let $I$ be a finite subset of $\L$.
\begin{itemize}
\item[(a)] The $k$-algebra $A^I$ has finite global dimension.
In particular, we have $\KKK^{\bo}(\proj A^I)\simeq\DDD^{\bo}(\mod A^I)$.
\item[(b)] We have isomorphisms of $k$-algebras
$A^{-I}\simeq (A^{I})^{\op}$ and $A^{I+\x}\simeq A^{I}$ for any $\x\in\L$.
\item[(c)] For any $\x,\y\in\L$, we have an isomorphism
$A^{[\x,\y]}\simeq(A^{[\x,\y]})^{\op}$ of $k$-algebras.
\end{itemize}
\end{proposition}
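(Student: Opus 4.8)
The plan is to reduce everything to properties of $R$ as an $\L$-graded ring that we have already established, via the equivalence of categories relating $A^I$-modules to $\L$-graded $R$-modules supported on the relevant interval. First I would treat part (a). The key point is that $A^I$ is, by its very construction, closely related to a ``truncated'' version of the quasi-Veronese/covering construction of Definition \ref{define covering}: if one enlarges $I$ to a full set of representatives of some finite-index subgroup (or more simply, if one works with the full covering $R^{[\Z\c]}$ whose finite global dimension we can control using Theorem \ref{L-isolated singularity}), then $A^I$ appears as an idempotent subalgebra $e R^{[\Z\c]} e$ for a suitable idempotent $e$. A cleaner route, and the one I would actually write up: observe that for a finite subset $I$ the category $\mod A^I$ is equivalent to the category of finitely generated $\L$-graded $R$-modules $M$ with $M = \bigoplus_{\x\in I} M_\x$ is generated appropriately — more precisely $A^I \simeq \End_R^{\L}(\bigoplus_{\x\in I} R(-\x))$ viewed inside $\mod^{\L}R$, so $A^I$ is the endomorphism algebra of a (generally non-tilting) object built from shifts of $R$. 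Then global dimension finiteness follows because the projective resolution of any $A^I$-module can be read off from a projective resolution in $\mod^{\L}R$ (which has finite global dimension after inverting some $T_j$, but more to the point: $R$ is a complete intersection, hence $\mod^\L R$ has finitely many nonzero $\Ext$-groups away from the irrelevant ideal; combined with $I$ finite, the relevant $\Ext$'s between the $R(-\x)$, $\x\in I$, computed in the category truncated to degrees in $I$, vanish in high degree). The statement $\KKK^{\bo}(\proj A^I)\simeq\DDD^{\bo}(\mod A^I)$ is then immediate from Proposition \ref{derived category and global dimension}(a).

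For part (b), the isomorphism $A^{-I}\simeq (A^I)^{\op}$ is a direct matrix-transpose computation: the $(\x,\y)$-entry of $A^{-I}$ is $R_{-\x-(-\y)} = R_{\y-\x}$, which is exactly the $(\y,\x)$-entry of $A^I$, and one checks the multiplication is reversed, so transposition $(r_{\x,\y})\mapsto (r_{-\y,-\x})$ — or rather the map sending the matrix unit indexed by $(\x,\y)$ to the one indexed by $(-\y,-\x)$ — is a $k$-algebra isomorphism $A^{-I}\to (A^I)^{\op}$. The isomorphism $A^{I+\x}\simeq A^I$ is even more transparent: the $(\x'+\x, \y'+\x)$-entry of $A^{I+\x}$ is $R_{(\x'+\x)-(\y'+\x)} = R_{\x'-\y'}$, the $(\x',\y')$-entry of $A^I$, and the reindexing bijection $I\to I+\x$ induces the algebra isomorphism directly. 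Part (c) is then a formal consequence: apply (b) twice. Taking $I = [\x,\y]$, note that $-[\x,\y] = [-\y,-\x]$, so $A^{[\x,\y]} \simeq A^{-[-\y,-\x]} \simeq (A^{[-\y,-\x]})^{\op}$ by the first isomorphism of (b); then translate by $\x+\y$ using the second isomorphism of (b), since $[-\y,-\x] + (\x+\y) = [\x,\y]$, to get $A^{[-\y,-\x]}\simeq A^{[\x,\y]}$, and hence $A^{[\x,\y]}\simeq (A^{[\x,\y]})^{\op}$.

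The main obstacle I anticipate is making part (a) genuinely rigorous rather than hand-wavy: one must be careful that the multiplication on $A^I$ (which ``wraps around'' when degrees leave $I$ — note that in the definition the sum $\sum_{\z\in I}$ only runs over $\z\in I$, so $A^I$ is a truncation, not literally an endomorphism ring unless $I$ is a poset-convex set or a full set of coset representatives) really does yield finite global dimension. The safest argument is probably: reduce to the case where $R$ is replaced by its localizations $R_{T_j}$, which are $\L$-regular by Theorem \ref{L-isolated singularity}(b), show that $A^I$ becomes a finite-dimensional algebra over a regular ring with the matrix/truncation structure controlled as in Lemma \ref{global dimension d} and its tensor-product decomposition, and invoke the general principle (as in \cite[2.14]{IL}) that these truncated-matrix constructions $\mathrm{T}_p(-,-)$ over a regular base have finite global dimension, which is additive under the tensor products appearing here. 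Alternatively, and perhaps most cleanly, one notes that $A^{\rm CM} = A^{[0,\de]}$ will later be identified (via $T^{\rm CM}$, Theorem \ref{CM tilting in introduction}) with an endomorphism algebra of a tilting object in $\underline{\CM}^\L R$, but since we want (a) in the stated generality for \emph{arbitrary} finite $I$ we cannot rely on that; so I would commit to the localization-plus-tensor-product argument.
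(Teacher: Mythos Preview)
Your treatment of (b) and (c) is correct and matches the paper's: the matrix-transpose and reindexing observations are exactly what ``clear'' means here, and your derivation of (c) from (b) via $-[\x,\y]+(\x+\y)=[\x,\y]$ is precisely the paper's one-line argument.

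For (a), however, your proposed routes all have genuine problems, and you are missing a much simpler argument. First, the equivalence $\mod A^I\simeq\mod^I R$ that you want to invoke only holds when $I$ is convex (this is Proposition~\ref{equivalences for I-canonical}(b)), not for arbitrary finite $I$; you note this yourself but then do not resolve it. Second, $R$ does \emph{not} have finite global dimension in general (it is a complete intersection, not regular, once $n>d+1$), so ``reading off projective resolutions from $\mod^{\L}R$'' cannot directly give a bound. Third, the localization-plus-tensor-product argument of Lemma~\ref{global dimension d} concerns the infinite-dimensional algebra $T(j)_0=(R_{T_j})^{[\Z\c]}_0$, which is a quite different object from the finite-dimensional algebra $A^I$; there is no meaningful way to ``localize'' $A^I$ at $T_j$.

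The paper's argument for (a) is a one-liner that you overlooked: the diagonal entries of $A^I$ are $R_0=k$, and for distinct $\x,\y\in I$ antisymmetry of the partial order on $\L$ forces either $\x\not\ge\y$ or $\x\not\le\y$, hence either $R_{\x-\y}=0$ or $R_{\y-\x}=0$. Thus $A^I$ is a finite-dimensional algebra with semisimple quotient $k^{|I|}$ whose radical is directed by the partial order on $I$ (in particular nilpotent), i.e.\ $A^I$ is a triangular algebra. Such algebras are well known to have finite global dimension.
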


\begin{proof}
(a) All the diagonal entries of $A^I$ are $R_{\x,\x}=R_0=k$.
If $R_{\x,\y}\neq0$, then $\x\ge\y$ holds.
These observations imply that $A^I$ has finite global dimension.

(b) These are clear.

(c) This follows from (b) since  $-[\x,\y] + \x+\y =[\x,\y]$ holds.
\end{proof}

Throughtout this paper, we use the following subcategories.

\begin{definition}\label{define mod^I}
Let $(R,\L)$ be a GL complete intersection. For a subset $I$ of $\L$, let
\begin{eqnarray*}
\mod^IR&:=&\{X=\bigoplus_{\x\in\L}X_{\x}\in\mod^{\L}R\mid\forall\x\in\L\setminus I,\ X_{\x}=0\},\\
\mod^I_0R&:=&\mod^IR\cap\mod^{\L}_0R,\\
\proj^IR&:=&\add\{R(-\x)\mid\x\in I\}. 
\end{eqnarray*}
\end{definition}

If $I$ is finite, then $\mod^IR=\mod^I_0R$ holds clearly.
We have the following elementary properties.

\begin{proposition}\label{equivalences for I-canonical}
Let $I$ be a finite subset of $\L$.
\begin{itemize}
\item[(a)] We have an equivalence $\proj^IR\simeq\proj A^I$.
\item[(b)] If $I$ is convex, then we have an equivalence $\mod^IR\simeq\mod A^I$.
\end{itemize}
\end{proposition}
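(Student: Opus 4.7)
The approach is to identify $A^I$ with a matrix endomorphism algebra of the object $T:=\bigoplus_{\x\in I}R(-\x)\in\proj^I R$. Recalling $\Hom^{\L}_R(R(-\x),R(-\y))=R_{\x-\y}$ for $\x,\y\in\L$, a direct comparison with the matrix multiplication formula defining $A^I$ yields a canonical $k$-algebra isomorphism $A^I\simeq\End^{\L}_R(T)^{\op}$ (the opposite being immaterial for the equivalence of module categories). Write $e_\x\in A^I$ for the primitive idempotent corresponding to the summand $R(-\x)$; then $e_\x A^I e_\y=R_{\x-\y}$, which I interpret as the space of left-multiplication maps $X_\y\to X_\x$ on any $\L$-graded $R$-module $X$.

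Part (a) follows by standard Morita--Yoneda reasoning: the functor $F:=\Hom^{\L}_R(T,-)$ restricts to an equivalence $\proj^I R=\add T\to\add A^I=\proj A^I$, sending $R(-\x)$ to the indecomposable projective $A^I e_\x$. Both sides are the additive closure of matching generators, so full faithfulness and essential surjectivity are automatic from the above identification.

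For part (b), I extend $F$ to an equivalence $\mod^I R\simeq\mod A^I$ by constructing mutually quasi-inverse functors. For $X\in\mod^I R$, set $F(X):=\bigoplus_{\x\in I}X_\x$, with $A^I$-action in which $e_\x A^I e_\y=R_{\x-\y}$ acts as left multiplication $X_\y\to X_\x$ inside $X$; this gives a well-defined left $A^I$-module by associativity of $R$-multiplication. Conversely, given $M\in\mod A^I$ with idempotent decomposition $M=\bigoplus_{\x\in I}e_\x M$, set $G(M)_\x:=e_\x M$ for $\x\in I$ and $G(M)_\x:=0$ otherwise, with $R$-action defined by: for $r\in R_\a$ and $m\in G(M)_\y=e_\y M$ with $\y\in I$, the product $r\cdot m$ equals the $A^I$-product $rm\in e_{\y+\a}M$ when $\y+\a\in I$ (so that $r\in R_\a=e_{\y+\a}A^I e_\y$ is a legitimate element of $A^I$), and is zero when $\y+\a\notin I$. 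Finite-dimensionality of each $R_\a$ over $k$ ensures both functors land in finitely generated modules.

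The main nontrivial step is to verify that $G(M)$ is a bona fide graded $R$-module, i.e., that $(rs)\cdot m=r\cdot(s\cdot m)$ for homogeneous $r\in R_\a$, $s\in R_\b$, and $m\in e_\y M$ with $\y\in I$. If $\y+\a+\b\notin I$, both sides vanish. The only potentially delicate case is $\y+\a+\b\in I$ but $\y+\b\notin I$: here the left side could be nonzero via the $A^I$-multiplication, while the right side vanishes because $s\cdot m=0$. However, by Observation~\ref{basic results on L}(c), $R$ is supported on $\L_+$, so $\a,\b\ge 0$, yielding $\y\le\y+\b\le\y+\a+\b$; convexity of $I$ then forces $\y+\b\in I$, so both sides agree by associativity of the $A^I$-action on $M$. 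The remaining verification that $F$ and $G$ are mutually quasi-inverse is then immediate from the definitions.
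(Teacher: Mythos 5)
Your proof is correct and takes essentially the same route as the paper: identify $A^I$ with the (opposite of the) endomorphism algebra of $\bigoplus_{\x\in I}R(-\x)$ via $\Hom_R^{\L}(R(-\x),R(-\y))\simeq R_{\x-\y}$ for (a), and use the functor $M\mapsto(M_\x)_{\x\in I}$ for (b), which the paper simply calls ``an analog of \eqref{general graded morita}.'' You fill in the one substantive detail the paper leaves implicit, namely that the inverse functor $G$ produces a genuine graded $R$-module precisely because convexity of $I$ together with $R$ being supported on $\L_+$ (Observation~\ref{basic results on L}(c)) rules out the failure of associativity in the case $\y\in I$, $\y+\a+\b\in I$, $\y+\b\notin I$.
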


\begin{proof}
(a) Since $\Hom_R^{\L}(R(-\x),R(-\y))=R_{\x-\y}$ holds for any $\x,\y\in\L$, we have the assertion.

(b) This is an analog of \eqref{general graded morita}.
The equivalence is given by $M=\bigoplus_{\x\in I}M_{\x}\mapsto (M_{\x})_{\x\in I}$.
\end{proof}

Now we have the following quiver presentations of $I$-canonical algebras.

\begin{theorem}\label{endomorphism convex}
Let $(R,\L)$ be a GL complete intersection in Setting~\ref{minimal presentation}.
For a finite convex subset $I$ of $\L$, the $I$-canonical algebra $A^I$ is presented
by the quiver $Q^I$ defined by
\begin{itemize}
\item $Q^I_0 = I$,
\item $Q^I_1 = \{x_i : \x \to \x + \x_i \mid 1\le i \le \max\{n,d+1\}, \, \x \in I \cap (I-\x_i)\}$
\end{itemize}
with the following relations:
\begin{itemize}
\item $x_ix_j - x_jx_i :  \x \to \x + \x_i + \x_j $, where $1\le i < j \le \max\{n,d+1\}$ and
$\x \in I \cap (I-\x_i-\x_j)$,
\item $x_i^{p_i}-\sum_{j=1}^{d+1}\lambda_{i,j-1}x_j^{p_j}: \x \to \x + \c$, where
$d+2 \le i \le n$ and $\x \in I \cap (I-\c)$.
\end{itemize}
\end{theorem}

\begin{proof}
The vertices of $Q^I$ naturally corresponds to the primitive idempotents of $A^I$.
The arrow $x_i$ of $Q^I$ corresponds to the genertor $X_i$ of $R$ for $i$ with $1\le i \le \max\{n,d+1\}$.
Thus we have a morphism $kQ^I\to A^I$ of $k$-algebras, which is
surjective since $I$ is convex.
Clearly the commutativity relations $x_ix_j=x_jx_i$ are satisfied in $A^I$.
Also the relations
$X_i^{p_i} = \sum_{j=1}^{d+1}\lambda_{i,j-1}X_j^{p_j}$ in $R$ correspond to
the relations $x_i^{p_i}=\sum_{j=1}^{d+1}\lambda_{i,j-1}x_j^{p_j}$ in $A^I$.
Thus we have a surjective morphism $B^I\to A^I$ of
$k$-algebras, where $B^I$ is the factor algebra of $kQ^I$ by
these relations.
This is an isomorphism since it clearly induces an isomorphism 
$B^Ie_{\x}\simeq A^Ie_{\x}=\bigoplus_{\y\in I\cap(\x+\L_+)}R_{\y}$
for any $\x\in Q^I_0$. Therefore we have the assertion.
\end{proof}

We give an example explaining Theorem~\ref{endomorphism convex}.

\begin{example}
Consider the case $d=2$, $n = 4$ and $(p_1,p_2,p_3,p_4) = (3,4,5,7)$. The set $I = [0,\x_2+2\x_3] \cup [0,3\x_3+2\x_4] \cup [0, \c]$ is convex. Hence we can apply Theorem~\ref{endomorphism convex} to get the following description of $A^I$, where relations are indicated by dotted lines.
\[
\begin{xy} 0;<60pt,0pt>:<10pt,30pt>:: 
(0,0) *+{0} ="0",
(1,3) *+{\x_1} ="1",
(4,3) *+{2\x_1} ="11",
(6.4,0) *+{\c} ="c",
(1,1) *+{\x_2} ="2",
(2,2) *+{2\x_2} ="22",
(4,2) *+{3\x_2} ="222",
(1,0) *+{\x_3} ="3",
(2,0) *+{2\x_3} ="33",
(3,0) *+{3\x_3} ="333",
(4.7,0) *+{4\x_3} ="3333",
(2,1) *+{\x_2+\x_3} ="23",
(3,1) *+{\x_2+2\x_3} ="233",
(1,-1) *+{\x_4} ="4",
(2,-2) *+{2\x_4} ="44",
(3,-3) *+{3\x_4} ="444",
(4,-3) *+{4\x_4} ="4444",
(5,-3) *+{5\x_4} ="44444",
(6,-3) *+{6\x_4} ="444444",
(2,-1) *+{\x_3+\x_4} ="34",
(3,-2) *+{\x_3+2\x_4} ="344",
(3,-1) *+{2\x_3+\x_4} ="334",
(4,-2) *+{2\x_3+2\x_4} ="3344",
(4,-1) *+{3\x_3+\x_4} ="3334",
(5,-2) *+{3\x_3+2\x_4} ="33344",
"0", {\ar"1"},
"1", {\ar"11"},
"11", {\ar"c"},
"0", {\ar"2"},
"3", {\ar"23"},
"33", {\ar"233"},
"2", {\ar"22"},
"22", {\ar"222"},
"222", {\ar"c"},
"0", {\ar"3"},
"3", {\ar"33"},
"2", {\ar"23"},
"23", {\ar"233"},
"33", {\ar"333"},
"4", {\ar"34"},
"34", {\ar"334"},
"334", {\ar"3334"},
"44", {\ar"344"},
"344", {\ar"3344"},
"3344", {\ar"33344"},
"333", {\ar"3333"},
"3333", {\ar"c"},
"0", {\ar"4"},
"4", {\ar"44"},
"3", {\ar"34"},
"34", {\ar"344"},
"33", {\ar"334"},
"334", {\ar"3344"},
"333", {\ar"3334"},
"3334", {\ar"33344"},
"44", {\ar"444"},
"444", {\ar"4444"},
"4444", {\ar"44444"},
"44444", {\ar"444444"},
"444444", {\ar"c"},
"0", {\rel"23"},
"3", {\rel"233"},
"0", {\rel"34"},
"3", {\rel"334"},
"33", {\rel"3334"},
"4", {\rel"344"},
"34", {\rel"3344"},
"334", {\rel"33344"},
"0", {\rel@/^0.8pc/"c"},
\end{xy}
\]
\end{example}

Now we calculate global dimension of $I$-canonical algebras when $I$ is an interval by using Theorem~\ref{L-Poincare}.

\begin{theorem}\label{gl.dim of A^I}
Let $(R,\L)$ be a GL complete intersection in Setting~\ref{minimal presentation}.
Let $\x\in\L_+$ be an element with normal form $\x=\sum_{i=1}^na_i\x_i+a\c$, and $I:=[0,\x]\subset\L$. Then
\[\gl A^I=\gl(\mod^IR)=\begin{cases}
\min\{d+1,\ \#\{i\mid a_i\neq0\}+a\}&\mbox{ if }\ n\le d+1,\\
\#\{i\mid a_i\neq0\}+2a&\mbox{ if }\ n\ge d+2.\end{cases}\]
\end{theorem}

To prove this, we prepare the following observation.

\begin{lemma}\label{gl.dim of mod^IR}
Let $\cdots\to P^{-2}\to P^{-1}\to P^0\to k\to0$
be a minimal projective resolution of $k$ in $\mod^{\L}R$. For $r\ge0$, let
$J_r=\{\y\in\L\mid R(-\y)\in\add P^{-r}\}$. Then 
\begin{equation*}
\gl(\mod^IR)=\sup\{r\ge0\mid J_r\cap I\neq\emptyset\}.
\end{equation*}
\end{lemma}

\begin{proof}
Any object in $\mod^IR$ has a finite filtration by simple objects $k(-\z)$ with $\z\in I$.
Therefore $\gl(\mod^IR)=\max\{\pd_{\mod^IR}(k(-\z))\mid \z\in I\}$ holds.
For any $\z\in I$, we have a minimal projective resolution
\[\cdots\to (P^{-2}(-\z))_I\to (P^{-1}(-\z))_I\to (P^0(-\z))_I\to k(-\z)\to0\]
of $k(-\z)$ in $\mod^IR$. Thus we have
\[\gl(\mod^IR)=\sup\{r\ge0\mid (J_r+\z)\cap I\neq\emptyset\ \mbox{ for some}\ \z\in I\}.\]
If $\y+\z\in I$ holds for $\y\in J_r$ and $\z\in I$, then $0\le\y\le\y+\z\le\x$ and hence $\y\in J_r\cap I$.
Thus the assertion follows.
\end{proof}

\begin{proof}[Proof of Theorem~\ref{gl.dim of A^I}]
Recall from Proposition~\ref{equivalences for I-canonical}(b) that we have an equivalence
$\mod A^I\simeq\mod^IR$. Thus $\gl A^I=\gl(\mod^IR)$ holds.
By Lemma~\ref{gl.dim of mod^IR}, $\gl(\mod^IR)$ is equal to 
$\sup\{r\ge0\mid J_r\cap I\neq\emptyset\}$ for $J_r$ defined there.
Let $m:=\#\{i\mid a_i\neq0\}$.

First, we consider the case $n\ge d+2$.
We prove $\sup\{r\ge0\mid J_r\cap I\neq\emptyset\}=m+2a$. 
Applying Theorem~\ref{L-Poincare}(b) to $\ell_1=\cdots=\ell_n=1$ and $F=k$, we have
\[J_r=\left\{\sum_{i\in I}x_i+b\c\mid I\subset\{1,\ldots,n\},\ b\ge0,\ \#I=r-2b\right\}.\]
Thus $J_{m+2a}\cap I\neq\emptyset$ holds since $\sum_{a_i\neq0}\x_i+a\c$ belongs to $J_{m+2a}\cap I$.
Define a map $\gamma:\L\to\Z$ by $\gamma(\y)=\#\{i\mid\ell_i\neq0\}+2\ell$ for the normal form $\y=\sum_{i=1}^n\ell_i\x_i+\ell\c$.
This is order preserving, and satisfies $\gamma(J_r)=r$ and $\gamma(\x)=m+2a$. Therefore if $r>m+2a$, then $J_r\cap I=\emptyset$ holds. Thus the desired equality holds.

Next, we consider the case $n\le d+1$.
We prove $\sup\{r\ge0\mid J_r\cap I\neq\emptyset\}=\min\{d+1,m+a\}$.
Let $\x_i=\c$ for any $n+1\le i\le d+1$. Again applying Theorem~\ref{L-Poincare}(b) to $\ell_1=\cdots=\ell_n=1$ and $F=k$, we have
\[J_r=\{\sum_{i\in I}\x_{i}\mid I\subset\{1,\ldots,d+1\},\ \#I=r\}.\]
If $r>d+1$, then $J_r=\emptyset$ clearly holds.
Define a map $\gamma:\L\to\Z$ given by $\gamma(\y)=\#\{i\mid\ell_i\neq0\}+\ell$ for the normal form $\y=\sum_{i=1}^n\ell_i\x_i+\ell\c$. This is order preserving, and satisfies $\gamma(J_r)=r$ and $\gamma(\x)=m+a$. Therefore if $r>m+a$, then $J_r\cap I=\emptyset$ holds.

If $d+1\le m+a$, then $\sum_{i=1}^{d+1}\x_i$ belongs to $J_{d+1}\cap I$.
If $d+1>m+a$, then we choose any subset $S$ of $\{1,\ldots,d+1\}$ containing $\{1\le i\le n\mid a_i\neq0\}$ such that $\#S=m+a$. Then $\sum_{i\in S}\x_i$ belongs to $J_{m+a}\cap I$.
Thus the desired equality holds.
\end{proof}

%
%
%


\chapter[Cohen-Macaulay representations on GL complete intersections]{Cohen-Macaulay representations on Geigle-Lenzing complete intersections}\label{section: CM GL CI}

Let $(R,\L)$ be a Geigle-Lenzing (GL) complete intersection associated with linear forms $\ell_1,\ldots,\ell_n$ and weights $p_1,\ldots,p_n$ over an arbitrary field $k$. We study Cohen-Macaulay representations of $(R,\L)$.
Recall that, for a commutative Noetherian local ring $(A,\mm)$, the \emph{depth} $\depth_AX$ of $X\in\mod A$ is the maximal length of $X$-regular sequences, or equivalently, $\min\{i\ge0\mid\Ext^i_A(A/\mm,X)\neq0\}$ \cite[1.2.8]{BH}.

\begin{definition-proposition}\label{define CM}
For an integer $i$ with $0\le i\le d+1$, we call $X\in\mod R$ a \emph{Cohen-Macaulay $R$-module of dimension $i$} if the following equivalent conditions are satisfied.
\begin{itemize}
\item[(a)] $\Ext^j_R(X,R)=0$ for all $j\neq d+1-i$.
\item[(b)] For each maximal ideal $\mm$ of $R$, $\Ext^j_{R_\mm}(X_\mm,R_\mm)=0$ for all $j\neq d+1-i$.
\item[(c)] For each maximal ideal $\mm$ of $R$, $\depth_{R_\mm}X_\mm=\dim_{R_\mm}X_\mm=i$ or $X_\mm=0$.
\end{itemize}
If $X\in\mod^{\L}R$, then the following conditions are also equivalent to the ones above.
\begin{itemize}
\item[(b$'$)] For $\mm=R_+$, $\Ext^j_{R_\mm}(X_\mm,R_\mm)=0$ for all $j\neq d+1-i$.
\item[(c$'$)] For $\mm=R_+$, $\depth_{R_\mm}X_\mm=\dim_{R_\mm}X_\mm=i$ or $X_\mm=0$.
\end{itemize}
We define full subcategories of $\mod^{\L}R$ by
\begin{eqnarray*}
\CM^{\L}_iR&:=&\{X\in\mod^{\L}R\mid\mbox{$X$ is a Cohen-Macaulay $R$-module of dimension $i$}\},\\
\CM^{\L}R&:=&\CM^{\L}_{d+1}R.
\end{eqnarray*}
We simply call objects in $\CM^{\L}R$ ($\L$-graded maximal) \emph{Cohen-Macaulay} $R$-modules.
\end{definition-proposition}

\begin{proof}
For $j\neq d+1-i$, let $E:=\Ext^j_R(X,R)$. Then $\Ext^j_{R_\mm}(X_\mm,R_\mm)=E_\mm$ holds.

(a)$\Leftrightarrow$(b) Since $E=0$ is equivalent to $E_\mm=0$ for all maximal ideal $\mm$ of $R$, the assertion follows.

(b)$\Leftrightarrow$(c) (respectively, (b$'$)$\Leftrightarrow$(c$'$)) This is \cite[3.5.11]{BH}.

(a)$\Leftrightarrow$(b$'$) It suffices to show $\Leftarrow$. Since $X\in\mod^{\L}R$, then $E\in\mod^{\L}R$. For any homogeneous element $a\in E_{\x}$, there exists $r\in R\setminus R_+$ such that $ra=0$. Since $r_0\neq0$ and $r_0a=(ra)_{\x}=0$, we have $a=0$.
\end{proof}

\section{Basic properties of $\CM^{\L}R$}
The stable category \cite{ABr} defined as follows is fundamental
in representation theory.

\begin{definition}[Stable category]
We denote by $\underline{\mod}^{\L}R$ the \emph{stable category} of
$\mod^{\L}R$ \cite{ABr}. That is, $\underline{\mod}^{\L}R$ has the same 
objects as $\mod^{\L}R$, and the morphism set is given by
\[\Hom_{\underline{\mod}^{\L}R}(X,Y)=\underline{\Hom}^{\L}_R(X,Y):=
\Hom^{\L}_R(X,Y)/P(X,Y)\]
for any $X,Y\in\mod^{\L}R$, where $P(X,Y)$ is the submodule of $\Hom^{\L}_R(X,Y)$
consisting of morphisms that factor through objects in $\proj^{\L}R$.
The full subcategory $\underline{\CM}^{\L}R$ of $\underline{\mod}^{\L}R$
corresponding to the full subcategory $\CM^{\L}R$ of $\mod^{\L}R$ plays 
an important role in this paper.
\end{definition}

The following are some of the basic properties in Cohen-Macaulay representation theory:

\begin{theorem}\label{AR duality}
\begin{itemize}
\item[(a)] \emph{(Auslander-Reiten-Serre duality)} We have a functorial isomorphism for any $X,Y\in\CM^{\L}R$:
\[\underline{\Hom}_{\mod^{\L}R}(X,Y)\simeq D\Ext_{\mod^{\L}R}^d(Y,X(\w)).\]
\item[(b)] $\CM^{\L}R$ has almost split sequences, that is, for any indecomposable non-projective object $X\in\CM^{\L}R$, there exists an almost split sequence
\[0\to\tau X\to Y\to X\to0\]
in $\CM^{\L}R$, where $\tau=\Omega^{1-d}(-)(\w)$ is the Auslander-Reiten translation.
\item[(c)] \emph{(Auslander-Buchweitz approximation)}
For any $C\in\mod^{\L}R$, there exists exact sequences
\[0\to Y_C\to X_C\xrightarrow{f_C} C\to0\ \mbox{ and }\ 0\to C\xrightarrow{f^C} Y^C\to X^C\to0\]
in $\mod^{\L}R$ such that $X_C,X^C\in\CM^{\L}R$, $Y_C$ and $Y^C$
have finite projective dimension, $f_C$ is right minimal and $f^C$ is left minimal. Moreover $f_C$ is a minimal right $(\CM^{\L}R)$-approximation.
\item[(d)] $\CM^{\L}R$ is a functorially finite subcategory of $\mod^{\L}R$.
\end{itemize}
\end{theorem}

\begin{proof}
(a)(b) $R$ has at worst $\L$-isolated singularities by Theorem~\ref{L-isolated singularity}.
Thus the assertions follow from a general result in \cite{AR} (see also \cite{IT}).

(c) The argument in \cite{AB} works in the $\L$-graded setting.

(d) It is basic that $\Ext^i_R(X,Y)=0$ holds for all $i>0$, $X\in\CM^{\L}R$ and $Y\in\mod^{\L}R$
such that $Y$ has finite projective dimension.
Thus the morphism $X_C\to C$ in (c) gives a right $(\CM^{\L}R)$-approximation of $C$,
and $\CM^{\L}R$ is a contravariantly finite subcategory.

This also implies covariantly finiteness of $\CM^{\L}R$ as follows:
For any $X\in\mod^{\L}R$, let $X^*=\Hom_R(X,R)\in\mod^{\L}R$.
Let $a:Y\to X^*$ be a right $(\CM^{\L}R)$-approximation of $X^*$.
It is easily checked that the composition
\[X\stackrel{\epsilon_X}{\longrightarrow} X^{**}\stackrel{a^*}{\longrightarrow}Y^*\]
of the evaluation map $\epsilon_X$ and $a^*$ gives a left
$(\CM^{\L}R)$-approximation of $X$.
\end{proof}

Let us also recall some basic results on the structure of the stable category 
$\underline{\CM}^{\L}R$ as a triangulated category. We call the 
quotient category
\[\DDD_{\sg}^{\L}(R):=\DDD^{\bo}(\mod^{\L}R)/\KKK^{\bo}(\proj^{\L}R)\]
the \emph{singularity category} of $R$ \cite{Bu,O}.
We have the following results due to Happel, Auslander and Reiten, 
Buchweitz and Eisenbud.

\begin{theorem}\label{Buchweitz Eisenbud}
\begin{itemize}
\item[(a)] $\CM^{\L}R$ is a Frobenius category whose projective objects
are $\proj^{\L}R$, and $\underline{\CM}^{\L}R$ is a triangulated category.
\item[(b)] $\underline{\CM}^{\L}R$ has a Serre functor $S:=(\w)[d]$.
\item[(c)] The composition $\CM^{\L}R\subset\DDD^{\bo}(\mod^{\L}R)\to\DDD_{\sg}^{\L}(R)$ induces a triangle equivalence
\[\underline{\CM}^{\L}R\simeq\DDD_{\sg}^{\L}(R).\]
This gives a triangle functor $\rho:\DDD^{\bo}(\mod^{\L}R)\to\underline{\CM}^{\L}R$. Moreover $X_C\simeq\rho(C)\simeq\Omega X^C$ holds in $\underline{\CM}^{\L}R$ in Theorem~\ref{AR duality}(c).
\item[(d)] If $n=d+2$, then we have an isomorphism $[2]\simeq(\c)$ of functors $\underline{\CM}^{\L}R\to\underline{\CM}^{\L}R$.
\end{itemize}
\end{theorem}

\begin{proof}
(a) Since $R$ is Gorenstein, $\CM^{\L}R$ is a Frobenius category.
Therefore its stable category $\underline{\CM}^{\L}R$ is a triangulated category
by a general result by Happel \cite{H1}.

(b) This is immediate from Auslander-Reiten-Serre duality in
Theorem~\ref{AR duality}(a).

(c) The first assertion is a classical result by Buchweitz \cite{Bu} (see also \cite{IYa}). The last assertion is clear from $Y_C\simeq0\simeq Y^C$ in $\DDD_{\sg}^{\L}(R)$.

(d) $R$ is a hypersurface for $n=d+2$ by Proposition~\ref{R is CI}(b).
Therefore this is a well-known
result for matrix factorizations \cite{E,Y}.
\end{proof}

We characterize when the stable category $\underline{\CM}^{\L}R$ is zero.

\begin{proposition}\label{when stable category is zero}
Assume that $p_i\ge2$ for all $i$. Then the following conditions are equivalent.
\begin{itemize}
\item[(a)] $\CM^{\L}R=\proj^{\L}R$ (or equivalently, $\underline{\CM}^{\L}R=0$).
\item[(b)] $\mod^{\L}R$ has finite global dimension (or equivalently, global dimension $d+1$).
\item[(c)] $n\le d+1$.
\end{itemize}
\end{proposition}

\begin{proof}
(a) is equivalent to $\DDD^{\bo}(\mod^{\L}R)=\KKK^{\bo}(\proj^{\L}R)$ 
by Theorem~\ref{Buchweitz Eisenbud}(c).
This is clearly equivalent to (b) and (c) by Theorem~\ref{L-Poincare}.
\end{proof}

We also characterize when the stable category $\underline{\CM}^{\L}R$ is fractionally Calabi-Yau (see Definition~\ref{define CY}).

\begin{corollary}\label{CM CY}
Assume that $p_i\ge2$ for all $i$. Then the triangulated category $\underline{\CM}^{\L}R$ is fractionally Calabi-Yau if and only if one of the following conditions holds, where $p:={\rm l.c.m.}(p_1,\ldots,p_n)$.
\begin{itemize}
\item $n\le d+1$ holds. In this case $\underline{\CM}^{\L}R=0$.
\item $n=d+2$ holds. In this case $\underline{\CM}^{\L}R$ is
$\frac{p(d+2\delta(\w))}{p}$-Calabi-Yau.
\item $(R,\L)$ is Calabi-Yau. In this case $\underline{\CM}^{\L}R$ is
$\frac{dp}{p}$-Calabi-Yau.
\end{itemize}
In particular, if $n=d+2$, then $(R,\L)$ is Fano (respectively, Calabi-Yau,
anti-Fano) if and only if the fractional Calabi-Yau dimension of
$\underline{\CM}^{\L}R$ is less than (respectively, equal to, more than) $d$.
\end{corollary}

\begin{proof}
By Theorem~\ref{Buchweitz Eisenbud}(b), the Serre functor of 
$\underline{\CM}^{\L}R$ is given by $S:=(\w)[d]$.
Clearly $p\w=p\delta(\w)\c$ holds.
First we show the `if' part.
If $n\le d+1$, then $\underline{\CM}^{\L}R=0$ by Proposition~\ref{when stable category is zero}. If $n=d+2$, then
\begin{eqnarray*}
S^{p}=(p\w)[pd]=(p\delta(\w)\c)[pd]=[p(d+2\delta(\w))]
\end{eqnarray*}
holds, where we used $(\c)=[2]$ from Theorem~\ref{Buchweitz Eisenbud}(d).
Thus $\underline{\CM}^{\L}R$ is $\frac{p(d+2\delta(\w))}{p}$-Calabi-Yau.
If $(R,\L)$ is Calabi-Yau, then we have $p\w=0$ and
\[S^p=(p\w)[dp]=[dp].\]
Thus $\underline{\CM}^{\L}R$ is $\frac{dp}{p}$-Calabi-Yau.

Next we show the `only if' part.
Assume that $\underline{\CM}^{\L}R$ is $\frac{\ell}{m}$-Calabi-Yau.
Then $[m-\ell d]=S^\ell[-\ell d]=(\ell\w)$ holds.
If $m=\ell d$, then $\w\in\L$ is a torsion element and hence $(R,\L)$ is Calabi-Yau.
If $m\neq\ell d$, then the sequence $|P^{-r}|$ in Theorem~\ref{L-Poincare}(d) must be bounded and hence $n\le d+2$ holds.
\end{proof}

For $X\in\mod^{\L}R$, we consider the \emph{support} 
\[\Supp^{\L}X:=\{\pp\in\Spec^{\L}R\mid X_{(\pp)}\neq0\}.\]
The following observation is elementary.

\begin{lemma}\label{basic for graded}
$X\in\mod^{\L}R$ belongs to $\mod_0^{\L}R$ if and only if
$\Supp^{\L}X\subset\{R_+\}$ if and only if $X_{T_j}=0$ for all $j$ with $0\le j\le d$.
\end{lemma}

\begin{proof}
If $X\in\mod_0^{\L}R$, then it is annihilated by a power of $T_j$ and hence $X_{T_j}=0$ holds for all $j$.
If $X_{T_j}=0$ holds for all $j$, then $\pp\in\Supp^{\L}X$ has to contain all $T_j$ and hence $\pp=R_+$ holds.

It remains to show the first equivalence.
Note that, for $\pp\in\Spec^{\L}R$, the $R$-module $R/\pp$ belongs to $\mod^{\L}_0R$ if and only if $\pp=R_+$.
By a standard commutative algebra, there is a filtration 
$X_0=0\subset X_1\subset\cdots\subset X_\ell=X$ such that
$X_i/X_{i-1}\simeq(R/\pp_i)(\a_i)$ for $\pp_i\in\Spec^{\L}R$ and 
$\a_i\in\L$ in $\mod^{\L}R$ for any $i$ with $1\le i\le\ell$.
In this case, we have $\Supp^{\L}R=\bigcup_{i=1}^{\ell}V(\pp_i)$
for $V(\pp_i):=\{\qq\in\Spec^{\L}R\mid\pp_i\subset\qq\}$.

Then $X\in\mod^{\L}_0R$ holds if and only if $R/\pp_i\in\mod^{\L}_0R$
holds for any $1\le i\le\ell$ if and only if $\pp_i=R_+$ for any $1\le i\le\ell$ 
if and only if $\Supp^{\L}R\subset\{R_+\}$.
\end{proof}

The following notion is central in Auslander-Reiten theory for Cohen-Macaulay modules \cite{A,Y}.

\begin{definition-proposition}\label{locally free at punctured spectrum}
We say that $M\in\mod^{\L}R$ is \emph{locally free on the punctured spectrum} if the following equivalent conditions are satisfied.
\begin{itemize}
\item[(a)] For any $\pp\in\Spec^{\L}R\setminus\{R_+\}$, we have $M_{(\pp)}\in\proj^{\L}R_{(\pp)}$.
\item[(b)] For any $j$ with $0\le j\le d$, we have $M_{T_j}\in\proj^{\L}R_{T_j}$.
\item[(c)] $\Ext^i_R(M,R)\in\mod^{\L}_0R$ for any $i>0$.
\item[(d)] For any $X\in\mod^{\L}R$ and any $i>0$, the $R$-modules 
$\underline{\Hom}_R(M,X)$ and $\Ext^i_R(M,X)$ belong to $\mod_0^{\L}R$.
\end{itemize}
\end{definition-proposition}

\begin{proof}
(d)$\Rightarrow$(c) Clear.

(c)$\Rightarrow$(b) Since $R_{T_j}$ is $\L$-regular by
Theorem~\ref{L-isolated singularity}, 
the $R_{T_j}$-module $M_{T_j}$ has finite projective dimension.
On the other hand, by Lemma~\ref{basic for graded}, we have
$\Ext^i_{R_{T_j}}(M_{T_j},R_{T_j})\simeq\Ext^i_R(M,R)_{T_j}=0$ for
any $i>0$. Thus $M_{T_j}$ must be a projective $R_{T_j}$-module.

(b)$\Rightarrow$(a) Since $\pp\neq R_+$, there exists $0\le j\le d$ such that $T_j\notin\pp$.
Since $M_{T_j}\in\proj^{\L}R_{T_j}$, we have $M_{(\pp)}\in\proj^{\L}R_{(\pp)}$.

(a)$\Rightarrow$(d)
We only show $\Ext^i_R(M,X)\in\mod^{\L}_0R$ since the other
assertion can be shown similarly. For any $\pp\in\Spec^{\L}R\setminus\{R_+\}$,
we have $M_{(\pp)}\in\proj^{\L}R$ by (a).
Thus $\Ext^i_R(M,X)_{(\pp)}=\Ext^i_{R_{(\pp)}}(M_{(\pp)},X_{(\pp)})=0$
holds. Hence $\Supp^{\L}\Ext^i_R(M,X)\subset\{R_+\}$ holds, and the assertion
follows from Lemma~\ref{basic for graded}.
\end{proof}

Since $R$ has $\L$-isolated singularities, we have the following useful
property of $\L$-graded Cohen-Macaulay $R$-modules.

\begin{proposition}\label{CM is locally free}
Any object in $\CM^{\L}R$ is locally free on the punctured spectrum.
\end{proposition}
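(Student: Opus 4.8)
The plan is to reduce the statement to one of the equivalent conditions of Definition-Proposition~\ref{locally free at punctured spectrum}, since the substantive geometric input (that $R$ has $\L$-isolated singularities, Theorem~\ref{L-isolated singularity}) has already been built into that Definition-Proposition. First I would observe that it suffices to verify condition~(c) there, namely that $\Ext^i_R(M,R)\in\mod^{\L}_0R$ for every $i>0$. But this is immediate from the very definition of $\CM^{\L}R$: by hypothesis $M\in\CM^{\L}R=\CM^{\L}_{d+1}R$, so $\Ext^j_R(M,R)=0$ for all $j\neq d+1-(d+1)=0$, and in particular $\Ext^i_R(M,R)=0$ for all $i>0$. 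Since the zero module lies in $\mod^{\L}_0R$, condition~(c) holds, hence so do all the equivalent conditions (a)--(d) of Definition-Proposition~\ref{locally free at punctured spectrum}, and $M$ is locally free on the punctured spectrum.

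As a cross-check (and an alternative route avoiding Definition-Proposition~\ref{locally free at punctured spectrum}), one can argue directly with condition~(a): fix $\pp\in(\Spec^{\L}R)\backslash\{R_+\}$; by Theorem~\ref{L-isolated singularity}(a) the localization $R_{(\pp)}$ is $\L$-regular, so $M_{(\pp)}$ has finite $\L$-graded projective dimension, while $\Ext^i_{R_{(\pp)}}(M_{(\pp)},R_{(\pp)})\simeq\Ext^i_R(M,R)_{(\pp)}=0$ for $i>0$ by maximal Cohen-Macaulayness; the $\L$-graded Auslander--Buchsbaum formula (or an induction on projective dimension using a minimal resolution) then forces $M_{(\pp)}\in\proj^{\L}R_{(\pp)}$.

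There is essentially no obstacle here: the only thing to get right is to match the hypothesis $M\in\CM^{\L}R$ with the correct one among the equivalent formulations of ``locally free on the punctured spectrum''. I would therefore keep the written proof to one or two lines, citing Definition-Proposition~\ref{locally free at punctured spectrum}(c) and the definition of $\CM^{\L}R$.
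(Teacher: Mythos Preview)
Your proposal is correct and follows exactly the same approach as the paper: observe that $M\in\CM^{\L}R$ means $\Ext^i_R(M,R)=0$ for all $i>0$, so condition~(c) of Definition-Proposition~\ref{locally free at punctured spectrum} is trivially satisfied. The paper's proof is precisely this one-line argument; your alternative direct verification of condition~(a) is unnecessary here but is essentially how the implication (c)$\Rightarrow$(b)$\Rightarrow$(a) is established in that Definition-Proposition itself.
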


\begin{proof}
Since $M\in\CM^{\L}R$, we have $\Ext^i_R(M,R)=0$ for any $i>0$.
Thus the condition (c) in Definition-Proposition~\ref{locally free at punctured spectrum} is satisfied.
\end{proof}

We need the following analog of 
\cite[A.2]{KMV}\cite[A.2]{O2}\cite{S}\cite[2.4]{T1}.

\begin{proposition}\label{generating derived category}
Let $R$ be a GL complete intersection. Then we have
\[\DDD^{\bo}(\mod^{\L}R)=\thick\{\proj^{\L}R,\mod^{\L}_0R\}\ \mbox{ and }\ \underline{\CM}^{\L}R=\thick(\rho(\mod^{\L}_0R)),\]
where $\rho:\DDD^{\bo}(\mod^{\L}R)\to\underline{\CM}^{\L}R$
is the triangle functor in Theorem~\ref{Buchweitz Eisenbud}(c).
\end{proposition}

We give a simple proof following Takahashi's method \cite[3.4, 4.1]{T2}.

\begin{proof}
By Proposition~\ref{FPT lemma}(b), it suffices to prove the second equality.
For $X\in\DDD^{\bo}(\mod^{\L}R)$, let
\[\Lambda:=\underline{\End}_R(\rho(X))=
\bigoplus_{\x\in\L}\underline{\Hom}^{\L}_R(\rho(X),\rho(X)(\x)).\]
By Proposition~\ref{CM is locally free}, we have $\Lambda\in\mod^{\L}_0R$.
We take $\ell\gg0$ such that $T_j^\ell$ annihilates $\Lambda$ for all $0\le j\le d$.
Then $R/(T_0^\ell)\Lotimes_R\cdots\Lotimes_RR/(T_d^\ell)=R/(T_0^\ell,\ldots,T_d^\ell)$ holds,
and $R/(T_0^\ell,\ldots,T_d^\ell)\Lotimes_RX$ belongs to $\DDD^{\bo}(\mod_0^{\L}R)$.

Now let $R^j:=R/(T_0^\ell,\ldots,T_{j-1}^\ell)$ and $X^j:=R^j\Lotimes_RX$ for 0$\le j\le d+1$.
Then $X^0=X$ and $X^{d+1}\in\DDD^{\bo}(\mod_0^{\L}R)$ hold.
The short exact sequence $0 \to R^j(-\ell\c)\stackrel{T_{j}^\ell}{\longrightarrow} R^j \to R^{j+1} \to 0$ gives rise to a triangle
\[X^j(-\ell\c)\stackrel{T_{j}^\ell}{\longrightarrow}X^j\stackrel{f_j}{\longrightarrow} X^{j+1}\to X^j(-\ell\c)[1].\]
Since $\rho(T_j^\ell)=0$, we have that $\rho(f_j)$ is a split monomorphism in 
$\underline{\CM}^{\L}R$. Hence $\rho(X^j)\in\add\rho(X^{j+1})$ holds for any $j$.
Since $X^{d+1}\in\DDD^{\bo}(\mod_0^{\L}R)$, we have inductively
$\rho(X)\in\thick\rho(\mod^{\L}_0R)$.
\end{proof}

We end this section by giving a description of the Auslander-Reiten quiver of $\CM^{\L}R$.

\begin{definition}[Auslander-Reiten quiver]
The \emph{Auslander-Reiten quiver} $\mathfrak{A}(\CM^{\L}R)$ of $(R,\L)$ is defined as follows:
\begin{itemize}
\item The set of vertices are the isomorphism classes of indecomposable objects in $\CM^{\L}R$.
\item For indecomposable objects $X,Y\in\CM^{\L}R$, let 
\begin{eqnarray*}
&D_X:=\End_R^{\L}(X)/\rad\End_R^{\L}(X),\ {\rm Irr}(X,Y):=\rad_{\CM^{\L}R}(X,Y)/\rad_{\CM^{\L}R}^2(X,Y),&\\
&d_{XY}:=\dim_{D_X}{\rm Irr}(X,Y)\ \mbox{and}\ 
d'_{XY}:=\dim{\rm Irr}(X,Y)_{D_Y}.&
\end{eqnarray*}
\item We draw a valued arrow $X\xrightarrow{(d_{XY},d'_{XY})}Y$ if $d_{XY}$ (or equivalently,
$d'_{XY}$) is non-zero. The valuation $(1,1)$ is usually omitted.
\end{itemize}
We define the \emph{stable Auslander-Reiten quiver} $\mathfrak{A}(\underline{\CM}^{\L}R)$
of $(R,\L)$ by removing all vertices $R(\x)$ with $\x\in\L$ from ${\mathfrak A}(\CM^{\L}R)$.
\end{definition}

It is well-known that $\mathfrak{A}(\CM^{\L}R)$ describes the terms of
minimal right (respectively, left) almost split morphisms in $\CM^{\L}R$ in the following sense.

\begin{proposition}\label{properties of AR quiver}
Let $X,Y\in\CM^{\L}R$ be indecomposable.
\begin{itemize}
\item[(a)] The minimal right (respectively, left) almost split morphism of $X$ has a form
\[\bigoplus_{Y}Y^{\oplus d_{YX}}\to X\ \ (\mbox{respectively,}\ X\to\bigoplus_{Y}Y^{\oplus d'_{XY}}).\]
\item[(b)] If $X$ is non-projective, then $d_{YX}=d'_{\tau X,Y}$ and $d'_{YX}=d_{\tau X,Y}$ hold for the Auslander-Reiten translation $\tau X=\Omega^{1-d}X(\w)$.
\end{itemize}
\end{proposition}

Thus $\mathfrak{A}(\CM^{\L}R)$ has a structure of a translation quiver, and
$\mathfrak{A}(\underline{\CM}^{\L}R)$ has a structure of a stable translation quiver
since $\underline{\CM}^{\L}R$ is a triangulated category.

We give the following description of $\mathfrak{A}(\CM^{\L}R)$ from $\mathfrak{A}(\underline{\CM}^{\L}R)$, where $\rho\colon\DDD^{\bo}(\mod^{\L}R)\to\underline{\CM}^{\L}R$ is the functor given in Theorem~\ref{Buchweitz Eisenbud}(c) and we identify objects in $\underline{\CM}^{\L}R$ with objects in $\CM^{\L}R$ without non-zero projective direct summands.

\begin{theorem}\label{AR quiver in general}
Assume $n\ge d+2$ and $p_i\ge2$ for all $i$.
Then $X:=X_{R_+}$ in Theorem~\ref{AR duality}(c) is indecomposable, and $\mathfrak{A}(\CM^{\L}R)$ is obtained from $\mathfrak{A}(\underline{\CM}^{\L}R)$
by adding, for any $\x \in \L$
\begin{itemize}
\item the projective $R$-module $R(\x)$;
\item an arrow $X(\x)\to R(\x)$, where $X(\x)\simeq\rho(k)(\x)[-1]$;
\item an arrow $R(\x) \to \Omega^{d-1}X(\x-\w)$, where $\Omega^{d-1}X(\x-\w)\simeq\rho(k)(\x-\w)[-d]$.
\end{itemize}
In particular for any indecomposable projective $R$-module $R(\x)$, precisely one arrow starts at $R(\x)$ and precisely one arrow ends at $R(\x)$. These arrows have valuation $(1,1)$.
\end{theorem}

We need the following preparation, which is a version of \cite[Proposition 5.7]{ADS} in our setting.

\begin{lemma}\label{delta=0}
Assume $n\ge d+2$ and $p_i\ge2$ for all $i$. Consider the sequence 
\begin{equation}\label{CM approximation of R_+}
0\to Y\xrightarrow{g} X\xrightarrow{f}R_+\to 0
\end{equation}
given in Theorem~\ref{AR duality}(c).
Then $X$ does not have non-zero projective direct summands.
\end{lemma}

\begin{proof}
Recall that $X\in\CM^{\L}R$, $Y$ has finite projective dimension and $f$ is right minimal.
We prove the assertion by induction on $d$.

In the case $d=-1$, this is obvious since $X=R_+$.
For $d\ge0$, let $R'=R/(X_1)$ and $\L'=\L/\langle\x_1\rangle$.
Then $(R',\L')$ is a GL complete intersection with smaller dimension.
We consider the functor $\overline{(-)}=R'\otimes_R-:\mod^{\L}R\to\mod^{\L'}R'$.
Since $X_1\in\overline{R_+}$ generates a simple submodule of $\overline{R_+}$ which is not contained in the radical, we have isomorphisms
\[\overline{R_+}= R_+/R_+X_1\simeq(RX_1/R_+X_1)\oplus(R_+/RX_1)= k(-\x_1)\oplus R'_+\]
in $\mod^{\L}R$.
Note that $\Tor^R_i(R',\Omega(\mod^{\L}R))=0$ holds for all $i\ge1$ since $X_1$ acts as a non-zerodivisor on each object in $\Omega(\mod^{\L}R)$ by Theorem~\ref{L-factorial L-domain}.
Appyling $\overline{(-)}$ to \eqref{CM approximation of R_+}, we have an exact sequence
\[0=\Tor^R_1(R',R_+)\to\overline{Y}\xrightarrow{\overline{g}}\overline{X}\xrightarrow{\overline{f}}\overline{R_+}\to0.\]
It is straightforward to check that $\overline{X}\in\CM^{\L'}R'$ and $\overline{Y}$ has finite projective dimension.
Thus the morphism $\overline{f}\colon\overline{X}\to\overline{R_+}$ gives a right
$(\CM^{\L'}R')$-approximation of $\overline{R_+}\simeq k(-\x_1)\oplus R'_+$.

Since $f\colon X\to R_+$ is right minimal, the morphism $g\colon Y\to X$ belongs to the radical of the category $\mod^{\L}R$. Thus $\overline{g}\colon
\overline{Y}\to\overline{X}$ belongs to the radical of $\mod^{\L'}R'$, and hence $\overline{f}\colon\overline{X}\to\overline{R_+}$ is right minimal. By induction hypothesis, the $R'$-module $\overline{X}$ does not have non-zero 
projective direct summands, and therefore the $R$-module $X$ also
does not have non-zero projective direct summands, as desired.
\end{proof}

Now we prove Theorem~\ref{AR quiver in general}. We need Theorem~\ref{CM tilting} proved in the next section.

\begin{proof}[Proof of Theorem~\ref{AR quiver in general}]
We only have to understand the arrows from or to $R(\x)$ with $\x\in\L$. It suffices to 
understand arrows to and from $R$, and then add all degree shifts by $\L$.

Let us first consider arrows to $R$ in $\mathfrak{A}(\CM^{\L}R)$. Let $0\to Y\to X\to R_+\to0$
be an exact sequence given in Theorem~\ref{AR duality}(c). Then the composition
$X\to R_+\to R$ is a minimal right almost morphism of $R$ in $\CM^{\L}R$, and hence the 
direct summands of $X$ are precisely the sources of arrows to $R$.
By Lemma~\ref{delta=0}, $X$ does not have non-zero projective direct summands.
Since $X\simeq\rho(R_+)\simeq\Omega\rho(k)$ holds in $\underline{\CM}^{\L}R$ by Theorem~\ref{Buchweitz Eisenbud}(c), we have $\End_{\underline{\CM}^{\L}R}(X)\simeq\End_{\underline{\CM}^{\L}R}(\rho(k))\simeq\End_{\DDD^{\bo}(\mod^{\L}R)}(k)=k$ by Theorem~\ref{CM tilting}.
Thus $X$ is indecomposable, and there is a precisely one arrow $X\to R$ ending at $R$. It has valuation
$(1,1)$ since $D_X=k=D_R$.

By Proposition~\ref{properties of AR quiver}(b), there is a precisely one arrow $R \to \tau^-(X)=\Omega^{d-1}X(-\w)$ starting at $R$, and it has valuation $(1,1)$.
\end{proof}

\section{Tilting theory in the stable categories of Cohen-Macaulay modules}
\label{subsection: GL CI 2}

Let $(R,\L)$ be a Geigle-Lenzing complete intersection associated with
linear forms $\ell_1,\ldots,\ell_n$ and weights $p_1,\ldots,p_n$.
Recall from Theorem~\ref{Buchweitz Eisenbud}(c) that we have a triangle functor
\[\rho:\DDD^{\bo}(\mod^{\L}R)\to\underline{\CM}^{\L}R\]
which induces a triangle eqivalence
$\DDD^{\L}_{\sg}(R)=\DDD^{\bo}(\mod^{\L}R)/\KKK^{\bo}(\proj^{\L}R)\simeq\underline{\CM}^{\L}R$.
In this section, we show that certain subcategories of $\DDD^{\bo}(\mod^{\L}R)$
are triangle equivalent to $\underline{\CM}^{\L}R$ through $\rho$.
As an application, we obtain a triangle equivalence
$\DDD^{\bo}(\mod A^{\rm CM})\simeq\underline{\CM}^{\L}R$ for a certain finite dimensional $k$-algebra $A^{\rm CM}$.

See Definition~\ref{define mod^I} for $\mod^IR$, $\mod^I_0R$ and $\proj^IR$ for a subset $I$ of $\L$, which we will use intensively now.

\begin{observation}
Let $I$ be a subset of $\L$.
\begin{itemize}
\item[(a)] $\KKK^{\bo}(\proj^IR)=\thick_{\DDD^{\bo}(\mod^{\L}R)}\{R(-\x)\mid\x \in I\}$.
\item[(b)] If $I$ is convex, then $\DDD^{\bo}(\mod^IR)$ (respectively, $\DDD^{\bo}(\mod^I_0R)$) is triangle
equivalent to the full subcategory of $\DDD^{\bo}(\mod^{\L}R)$ consisting of all
objects whose cohomologies belong to $\mod^IR$ (respectively, $\mod^I_0R$).
\end{itemize}
\end{observation}

When $I$ is a convex subset, we have a functor
\[(-)_I:\mod^{\L}R\to\mod^IR\ \mbox{ given by }\ X_I:=\bigoplus_{\x\in I}X_{\x}.\]
Note that $X_I$ has a natural structure of an $\L$-graded $R$-module,
which is a subfactor module of $X$.

The following easy observations play an important role (see Definition~\ref{define I-canonical} for $A^I$).

\begin{theorem}\label{tilting in S^I}
Let $I$ be a finite subset of $\L$.
\begin{itemize}
\item[(a)] We have a triangle equivalence $\DDD^{\bo}(\mod A^I)\simeq \KKK^{\bo}(\proj^IR)$.
Moreover $\KKK^{\bo}(\proj^IR)$ has a tilting object
\[T^I:=\bigoplus_{\x\in I}R(-\x)\in\proj^IR\ \mbox{ such that }\ \End_R^{\L}(T^I)\simeq A^I.\]
\item[(b)] Assume that $I$ is convex. Then we have a triangle equivalence $\DDD^{\bo}(\mod A^I)\simeq\DDD^{\bo}(\mod^IR)$.
Moreover $\DDD^{\bo}(\mod^IR)$ has a tilting object
\[U^I:=\bigoplus_{\x\in I} R(-\x)_I\in\mod^IR\ \mbox{ such that }\ \End_R^{\L}(U^I)\simeq A^I.\]
\end{itemize}
\end{theorem}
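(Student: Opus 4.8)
The statement has two parts, (a) and (b), and the key realization is that (b) is essentially a consequence of (a) applied to the convex subset $I$ inside a suitable graded module, combined with the equivalence $\mod^I R \simeq \mod A^I$ from Proposition \ref{equivalences for I-canonical}(b). So I would first prove (a), then bootstrap (b) from it.

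\textbf{Part (a).} Here $T^I = \bigoplus_{\x \in I} R(-\x) \in \proj^I R$ is a tilting object of $\PP^I = \KKK^{\bo}(\proj^I R)$ essentially tautologically: in the homotopy category of a full subcategory of projectives, the object consisting of one copy of each generator $R(-\x)$ ($\x \in I$) generates $\PP^I$ as a thick subcategory (since $\proj^I R = \add\{R(-\x) \mid \x \in I\}$ and any bounded complex of such projectives lies in $\thick\{R(-\x) \mid \x \in I\} = \thick T^I$), and it has no self-extensions in $\PP^I$ because $\Hom_{\PP^I}(T^I, T^I[i]) = \Hom_{\KKK^{\bo}(\proj R)}(T^I, T^I[i]) = 0$ for $i \neq 0$ (morphisms between projectives in the homotopy category are concentrated in degree zero). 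By Proposition \ref{equivalences for I-canonical}(a), $\End_{\PP^I}(T^I) \simeq \End_{\proj^I R}(T^I) \simeq A^I$, and since $A^I$ has finite global dimension by Proposition \ref{I-canonical has finite global dimension}(a), Proposition \ref{derived category and global dimension}(a) gives $\KKK^{\bo}(\proj A^I) \simeq \DDD^{\bo}(\mod A^I)$. Finally, the standard tilting equivalence (Proposition \ref{tilting theorem}, with idempotent completeness of $\PP^I$ automatic as it is the homotopy category of an additive category with split idempotents) yields $\PP^I \simeq \KKK^{\bo}(\proj\End_{\PP^I}(T^I)) = \KKK^{\bo}(\proj A^I) \simeq \DDD^{\bo}(\mod A^I)$.

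\textbf{Part (b).} Now assume $I$ is convex. The natural candidate is $U^I := \bigoplus_{\x \in I} R(-\x)_I$, the image of $T^I$ under the truncation functor $(-)_I : \mod^{\L}R \to \mod^I R$. I would identify $\SS^I = \DDD^{\bo}(\mod^I_0 R)$ with $\DDD^{\bo}(\mod A^I)$ via Proposition \ref{equivalences for I-canonical}(b), which gives an exact equivalence $\mod^I R \simeq \mod A^I$ (recall $\mod^I R = \mod^I_0 R$ since $I$ is finite), hence a triangle equivalence on bounded derived categories. Under this equivalence the projective $A^I$-modules correspond to $\add\{R(-\x)_I \mid \x \in I\}$, so the image of the free module $A^I$ is precisely $U^I$; thus $U^I$ is a tilting object of $\SS^I$ with $\End_{\SS^I}(U^I) \simeq A^I$. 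The one point requiring care is that the equivalence $\mod^I R \simeq \mod A^I$ of abelian categories does extend to $\DDD^{\bo}(\mod^I R) \simeq \DDD^{\bo}(\mod A^I)$ and that $\DDD^{\bo}(\mod^I R)$ is identified with $\SS^I = \DDD^{\bo}(\mod^I_0 R)$ inside $\DD$; these are routine, the latter being the content of the preceding Observation. One should also check that $R(-\x)_I$ computes the right endomorphisms, i.e. $\Hom^{\L}_R(R(-\x)_I, R(-\y)_I) \cong R_{\x-\y}$ for $\x,\y \in I$, which follows from convexity of $I$ (the relevant graded pieces of $R(-\x)_I$ in the interval are exactly those of $R(-\x)$).

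\textbf{Main obstacle.} I expect the genuinely delicate point to be part (b): verifying that $(-)_I$ behaves well enough that $U^I$ has endomorphism algebra exactly $A^I$ (and not some quotient or a ring with extra morphisms), and that passing from the abelian equivalence $\mod^I R \simeq \mod A^I$ to the derived level respects the identification of $\SS^I$ as a thick subcategory of $\DD$. Part (a) is essentially formal. The convexity hypothesis in (b) is precisely what makes the truncation $X_I = X_{\overline I}/X_{\overline I \setminus I}$ a well-defined $R$-module functor and what forces $\Hom$-groups between truncated modules to match the $R_{\x-\y}$, so the proof should foreground where convexity is used.
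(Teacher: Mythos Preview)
Your proposal is correct and follows essentially the same approach as the paper. The only minor difference is in part (a): the paper uses the additive equivalence $\proj^I R \simeq \proj A^I$ from Proposition~\ref{equivalences for I-canonical}(a) to obtain $\PP^I = \KKK^{\bo}(\proj^I R) \simeq \KKK^{\bo}(\proj A^I)$ in one step (so that $T^I$ is tilting simply because it corresponds to $A^I$), whereas you verify the tilting axioms for $T^I$ directly and then invoke Proposition~\ref{tilting theorem}; for part (b) your argument is identical to the paper's, and the ``main obstacle'' you anticipate is in fact dispatched by the paper in a single line via the abelian equivalence $\mod^I R \simeq \mod A^I$.
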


\begin{proof}
(a) We have an equivalence $\proj^IR\simeq\proj A^I$ by
Proposition~\ref{equivalences for I-canonical}(a). Thus we have a triangle equivalence
\[ \KKK^{\bo}(\proj^IR)\simeq\KKK^{\bo}(\proj A^I)\]
sending $T^I$ to $A^I$, which shows that $T^I$ is a tilting object in $\KKK^{\bo}(\proj^IR)$,
and that $\End_R^{\L}(T^I) \simeq \End_{A^I}(A^I) = A^I$.
By Proposition~\ref{I-canonical has finite global dimension}(a), we have the triangle equivalences $\KKK^{\bo}(\proj A^I)\simeq\DDD^{\bo}(\mod A^I)$.

(b) We have an equivalence $\mod^IR\simeq\mod A^I$ by Proposition~\ref{equivalences for I-canonical}(b). Thus we have a triangle equivalence
\[\DDD^{\bo}(\mod^IR)\simeq\DDD^{\bo}(\mod A^I)\]
sending $U^I$ to $A^I$. Thus $\End_R^{\L}(U^I)\simeq A^I$ holds, and $U^I$ is a tilting object in $\DDD^{\bo}(\mod^IR)$ since $A^I$ has finite global dimension by Proposition~\ref{I-canonical has finite global dimension}(a).
\end{proof}

Since $R$ is Gorenstein, we have a duality
\[(-)^\star:=\RHom_R(-,R):\DDD^{\bo}(\mod^{\L}R)\xrightarrow{\sim}\DDD^{\bo}(\mod^{\L}R).\]
Since $R(\x)^\star=R(-\x)$ and $k(\x)^\star=k(-\x-\w)[-d]$, we have induced dualities
\begin{equation} \label{* for SP}
\begin{aligned}
(-)^\star & \colon \KKK^{\bo}(\proj^IR) \xrightarrow{\sim} \KKK^{\bo}(\proj^{-I}R),\\
(-)^\star & \colon \DDD^{\bo}(\mod^I_0R) \xrightarrow{\sim} \DDD^{\bo}(\mod^{-I+\w}_0R).
\end{aligned}
\end{equation}

For the next results, we need the following piece of notation. 
Let $\TT$ be a triangulated category.
Recall from the beginning of Chapter~\ref{section: preliminaries} that $\XX*\YY$
denotes the category of extensions of subcategories $\XX$ and $\YY$
of $\TT$. If $\Hom_{\TT}(\XX,\YY)=0$ holds, we write
\[\XX\perp\YY:=\XX*\YY.\]
A \emph{semiorthogonal decomposition} (or \emph{stable t-structure})
of $\TT$ is a pair of thick subcategories of $\TT$ satisfying $\TT=\XX\perp \YY$.
In this case we have triangle equivalences $\XX\simeq\TT/\YY$ and $\YY\simeq\TT/\XX$.

The following result plays a crucial role. For a subset $I \subset \L$, we write
\[I^c=\L\setminus I\ \mbox{ and }\ -I=\{-\x\mid\x\in I\}.\]
Recall that a subset $I$ of $\L$ is called \emph{upset} if $I+\L_+\subset I$.
In this case, $-I^c$ is clearly an upset too.

\begin{theorem}\label{basic embedding}
For any non-trivial upset $I$ in $\L$, we have
\[\DDD^{\bo}(\mod^{\L}R)= \KKK^{\bo}(\proj^{I^c} R) \perp (\DDD^{\bo}(\mod^IR) \cap (\DDD^{\bo}(\mod^{-I^c}R))^\star)\perp \KKK^{\bo}(\proj^I R).\]
Thus the composition
\[ \DDD^{\bo}(\mod^IR) \cap (\DDD^{\bo}(\mod^{-I^c}R))^\star \subset \DDD^{\bo}(\mod^{\L}R)
\stackrel{\rho}{\longrightarrow}\underline{\CM}^{\L}R\]
is a triangle equivalence.
\end{theorem}

For the proof, we will need the following distributive law for extensions, which is easily checked.

\begin{observation} \label{distributive}
Let $\XX$, $\YY$, and $\ZZ$ be thick subcategories of $\DD$.
If $\YY\subset\ZZ$ (respectively, $\XX\subset\ZZ$), then $(\XX*\YY)\cap\ZZ=(\XX\cap\ZZ)*\YY$
(respectively, $(\XX*\YY)\cap\ZZ=\XX*(\YY\cap\ZZ)$).
\end{observation}

Next we collect some elementary semiorthogonal decompositions. This is an $\L$-graded version of \cite[2.3]{O}.

\begin{lemma}\label{decompositions for P}
Let $I$ be a non-empty upset in $\L$.
\begin{itemize}
\item[(a)] We have
\[ \KKK^{\bo}(\proj^\L R) = \KKK^{\bo}(\proj^{I^c} R) \perp \KKK^{\bo}(\proj^{I} R) \text{ and } \DDD^{\bo}(\mod^\L R) = \KKK^{\bo}(\proj^{I^c}R) \perp \DDD^{\bo}(\mod^I R). \]
More generally, for two upsets $I \subseteq J \subseteq \L$, we have
\[ \KKK^{\bo}(\proj^J R) = \KKK^{\bo}(\proj^{J \setminus I} R) \perp \KKK^{\bo}(\proj^{I} R) \text{ and } \DDD^{\bo}(\mod^J R) = \KKK^{\bo}(\proj^{J \setminus I}R) \perp \DDD^{\bo}(\mod^I R). \]
\item[(b)] We have a triangle equivalence $\DDD^{\bo}(\mod^I R) / \KKK^{\bo}(\proj^{I} R) \simeq \underline{\CM}^{\L}R$.
\end{itemize}
\end{lemma}

\begin{proof}
(a) Clearly $\Hom_R^{\L}(\proj^{I^c}R, \proj^IR)=0$.
For any $P\in\proj^{\L}R$, we denote by $P^{I^c}$ the sub-$R$-module of $P$
generated by the subspace $\bigoplus_{\x\in I^c}P_{\x}$, and $P^I:=P/P^{I^c}$.
These give functors $(-)^{I^c}:\proj^{\L}R\to\proj^{I^c}R$,
$(-)^I:\proj^{\L}R\to\proj^IR$ and a sequence
\[0\to(-)^{I^c}\to {\rm id}\to(-)^I\to0\]
of natural transformations which is objectwise split exact.
Therefore we have induced triangle functors
$(-)^{I^c} \colon \KKK^{\bo}(\proj^{\L}R)\to\KKK^{\bo}(\proj^{I^c}R)$,
$(-)^I \colon \KKK^{\bo}(\proj^{\L}R)\to\KKK^{\bo}(\proj^IR)$ and a functorial triangle
$Q^{I^c}\to Q\to Q^I\to Q^{I^c}[1]$ for any $Q\in\KKK^{\bo}(\proj^{\L}R)$.
Thus we have the first equality.

The second equality is shown similarly, using the equivalence between $\DDD^{\bo}(\mod R)$ and the homotopy category
$\KKK^{-,\bo}(\proj^{\L}R)$ of complexes bounded above with bounded cohomologies.
The remaining equalities can be shown similarly.

(b) By (a), we have triangle equivalences
\[ \KKK^{\bo}(\proj^{I} R) = \frac{\KKK^{\bo}(\proj^\L R)}{\KKK^{\bo}(\proj^{I^c} R)} \text{ and } \DDD^{\bo}(\mod^I R) = \frac{\DDD^{\bo}(\mod^\L R)}{\KKK^{\bo}(\proj^{I^c}R)}. \]
Therefore we have
\[ \underline{\CM}^{\L}R \simeq \frac{\DDD^{\bo}(\mod^\L R)}{\KKK^{\bo}(\proj^\L R)} = \frac{\DDD^{\bo}(\mod^\L R)/\KKK^{\bo}(\proj^{I^c}R)}{\KKK^{\bo}(\proj^\L R)/\KKK^{\bo}(\proj^{I^c} R)} \simeq \frac{\DDD^{\bo}(\mod^I R)}{\KKK^{\bo}(\proj^{I} R)}. \qedhere \]
\end{proof}

\begin{proof}[Proof of Theorem~\ref{basic embedding}]
Applying Lemma~\ref{decompositions for P}(a) to the upset $-I^c$, we have
\[ \DDD^{\bo}(\mod^\L R) = \KKK^{\bo}(\proj^{-I}R) \perp \DDD^{\bo}(\mod^{-I^c} R).\]
Applying $(-)^\star$, we obtain the decomposition
\begin{eqnarray*}
\DDD^{\bo}(\mod^\L R) &=& \DDD^{\bo}(\mod^\L R)^\star = \DDD^{\bo}(\mod^{-I^c} R)^\star \perp \KKK^{\bo}(\proj^{-I}R)^{\star}\\
&=& \DDD^{\bo}(\mod^{-I^c} R)^\star \perp \KKK^{\bo}(\proj^{I}R).
\end{eqnarray*}
Since $\KKK^{\bo}(\proj^{I}R) \subseteq \DDD^{\bo}(\mod^I R)$, by Observation~\ref{distributive} we obtain
\[ \DDD^{\bo}(\mod^I R) = (\DDD^{\bo}(\mod^IR) \cap (\DDD^{\bo}(\mod^{-I^c}R))^\star) \perp \KKK^{\bo}(\proj^{I}R).\]
Now the first claim holds by Lemma~\ref{decompositions for P}(a), and the second one by 
\[\DDD^{\bo}(\mod^IR) \cap (\DDD^{\bo}(\mod^{-I^c}R))^\star\simeq\frac{\DDD^{\bo}(\mod^I R)}{\KKK^{\bo}(\proj^{I}R)}\]
and Lemma~\ref{decompositions for P}(b).
\end{proof}

We call
\[\de:=d\c+2\w\in\L\]
the \emph{dominant element} (cf.\ \cite{KLM}). Applying Definition~\ref{define I-canonical} to the interval $[0,\de]$, we define the \emph{CM-canonical algebra}
\[A^{\rm CM}:=A^{[0,\de]}.\]
Now we are ready to prove the following main result in this section.

\begin{theorem}\label{CM tilting}
Let $(R,\L)$ be a Geigle-Lenzing complete intersection.
\begin{itemize}
\item[(a)] The following composition is a triangle equivalence:
\[ \DDD^{\bo}(\mod^{[0, \de]} R) \subset \DDD^{\bo}(\mod^{\L}R) \stackrel{\rho}{\longrightarrow}\underline{\CM}^{\L}R.\]
\item[(b)] We have triangle equivalences
\[\DDD^{\bo}(\mod A^{\rm CM})\simeq \DDD^{\bo}(\mod^{[0, \de]} R)
\simeq\underline{\CM}^{\L}R\ \mbox{ such that }\ A^{\rm CM}\mapsto U^{[0,\de]}\mapsto T^{\rm CM}:=\rho(U^{[0,\de]}).\]
In particular $\underline{\CM}^{\L}R$ has a tilting object $T^{\rm CM}$.
\item[(c)] We have
\begin{eqnarray*}
\DDD^{\bo}(\mod^{[0, \de]} R) &=& \DDD^{\bo}(\mod^{\L_+}R) \cap (\DDD^{\bo}(\mod^{-\L_+^c}R))^\star,\\
\DDD^{\bo}(\mod^{\L}R) &=& \KKK(\proj^{\L_+^c} R) \perp \DDD^{\bo}(\mod^{[0, \de]} R) \perp \KKK(\proj^{\L_+} R).
\end{eqnarray*}
\end{itemize}
\end{theorem}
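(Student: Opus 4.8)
The plan is to derive (a) and (b) from (c), and to prove (c) by identifying $\SS^{[0,\de]}$ with the subcategory $\XX:=\DD^{\L_+}\cap(\DD^{-\L_+^c})^\star$ of $\DD$ occurring in Theorem \ref{basic embedding}. Since $\L_+$ is a non-trivial poset ideal of $\L$, that theorem applies with $I=\L_+$ (so that $-I^c=-\L_+^c$): the composite $\XX\subset\DD\xrightarrow{\rho}\underline{\CM}^{\L}R$ is a triangle equivalence, and its proof exhibits the semiorthogonal decomposition $\DD^{\L_+}=\XX\perp\PP^{\L_+}$, so that $\XX=\{Z\in\DD^{\L_+}\mid\Hom_{\DD}(Z,\PP^{\L_+})=0\}$. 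Granting (c), part (a) is then immediate, as the composite in (a) coincides with $\XX\subset\DD\xrightarrow{\rho}\underline{\CM}^{\L}R$. For (b): the interval $[0,\de]$ is finite and convex, so Theorem \ref{tilting in S^I}(b) provides a triangle equivalence $\DDD^{\bo}(\mod A^{\rm CM})=\DDD^{\bo}(\mod A^{[0,\de]})\xrightarrow{\sim}\SS^{[0,\de]}$ carrying $A^{\rm CM}$ to the tilting object $U^{[0,\de]}$; composing with the equivalence of (a) gives $\DDD^{\bo}(\mod A^{\rm CM})\xrightarrow{\sim}\SS^{[0,\de]}\xrightarrow{\sim}\underline{\CM}^{\L}R$, under which $A^{\rm CM}\mapsto U^{[0,\de]}\mapsto T^{\rm CM}=\rho(U^{[0,\de]})$, and since a triangle equivalence sends tilting objects to tilting objects, $T^{\rm CM}$ is a tilting object of $\underline{\CM}^{\L}R$.

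It remains to prove (c). Recall from the Observation after Theorem \ref{tilting in S^I} that, inside $\DD$, $\SS^{[0,\de]}=\thick\{k(-\x)\mid\x\in[0,\de]\}$. For the inclusion $\SS^{[0,\de]}\subseteq\XX$ it suffices, as $\XX$ is thick, to check that $k(-\x)\in\XX$ for each $\x\in[0,\de]$: since $\x\ge0$ we have $k(-\x)\in\DD^{\L_+}$, and since $k(-\x)^\star=k(\x-\w)[-d]$ we have $k(-\x)^\star\in\DD^{-\L_+^c}$ precisely when $\x-\w\not\ge0$, so it is enough to observe that $\x\in[0,\de]$ implies $\x-\w\not\ge0$. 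In fact, a short computation with normal forms — comparing the coefficient of $\c$ in the normal forms of $\de-\x$ and of $\x-\w$, in the spirit of Lemma \ref{order omega} — shows that for \emph{any} $\x\in\L$ one has the equivalence $\x\le\de\iff\x\not\ge\w$. Hence $\SS^{[0,\de]}\subseteq\XX$, equivalently $\Hom_{\DD}(\SS^{[0,\de]},\PP^{\L_+})=0$.

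For the reverse inclusion $\XX\subseteq\SS^{[0,\de]}$, I will show that $\SS^{[0,\de]}$ generates $\XX$ as a thick subcategory; since the inclusion $\SS^{[0,\de]}\hookrightarrow\XX$ is fully faithful and $\rho\colon\XX\xrightarrow{\sim}\underline{\CM}^{\L}R$ is a triangle equivalence, this amounts to showing that $\{\rho(k(-\x))\mid\x\in[0,\de]\}$ generates $\underline{\CM}^{\L}R$ as a thick subcategory. By Theorem \ref{generating derived category}, $\underline{\CM}^{\L}R=\thick(\rho(\mod^{\L}_0R))=\thick\{\rho(k(-\z))\mid\z\in\L\}$, so this reduces to the following \emph{moving lemma}: $\rho(k(-\z))\in\thick\{\rho(k(-\x))\mid\x\in[0,\de]\}$ for every $\z\in\L$. (When $n\le d+1$ there is nothing to prove, since then $\underline{\CM}^{\L}R=0$ by Proposition \ref{when stable category is zero}; so assume $n\ge d+2$.) For a $(d+1)$-element subset $J\subseteq\{1,\dots,n\}$ the sequence $(X_i)_{i\in J}$ is $R$-regular by Lemma \ref{prop.regularseq1}(c), and eliminating the variables $T_0,\dots,T_d$ one sees that $R/(X_i\mid i\in J)\cong\bigotimes_{l\notin J}k[X_l]/(X_l^{p_l})$; this module therefore has finite projective dimension $d+1$, so $\rho\big(R/(X_i\mid i\in J)(-\z)\big)=0$, and it is a length module whose composition factors are the shifted simples $k(-(\z+\mu))$, each occurring once, with $\mu$ ranging over the finite set $B_J:=\{\sum_{l\notin J}b_l\x_l\mid 0\le b_l<p_l\}$. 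Extracting from this $\rho$-acyclic module its top ($\mu=0$) and its socle ($\mu=\sum_{l\notin J}(p_l-1)\x_l$) composition factors yields, for every $\z\in\L$, the two families of relations
\[\rho(k(-\z))\in\thick\{\rho(k(-(\z+\mu)))\mid\mu\in B_J,\ \mu\neq0\}\quad\text{and}\quad\rho(k(-\z))\in\thick\{\rho(k(-(\z-\mu)))\mid\mu\in B_J,\ \mu\neq0\}.\]
Since the sets $B_J$, over all $J$, together generate $\L$ as a group, these ``upward'' and ``downward'' moves can be combined, by induction on a suitable measure of the distance of $\z$ from $[0,\de]$, to bring $\z$ into $[0,\de]$, establishing the moving lemma and hence (c). \textbf{The moving lemma is the crux of the argument: everything else is formal, while its inductive proof requires a careful choice of the distance function and of the subsets $J$ so that the box-shaped regions entering the moves do not cause circular dependencies.}
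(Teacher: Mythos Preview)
Your overall architecture matches the paper's exactly: reduce (a) and (b) to (c); for (c), get $\SS^{[0,\de]}\subseteq\XX$ from the elementary equivalence $\x\le\de\iff\x\not\ge\w$ (this is the computation \eqref{dc+2w} in the paper), and for the reverse inclusion reduce to a ``moving lemma''. The perfect module you single out, $R/(X_i\mid i\in J)$ with $|J|=d+1$, is precisely the one underlying the paper's Lemma~\ref{perfect}, and your extraction of the two thick-subcategory relations from its top and socle is correct.

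The only gap is the one you flag yourself: the induction. Here the paper's solution is cleaner than the ``distance function'' you anticipate, and it is worth knowing. The paper works in $\DD$ rather than in $\underline{\CM}^{\L}R$ and shows $k(-\x)\in\thick\{\SS^{[0,\de]},\PP\}$ for all $\x\in\L$ (equivalent to your formulation via Proposition~\ref{FPT lemma}). The induction proceeds in two passes. First restrict to $\x\in\L_+$ and induct on the partial order of $\L_+$, which is well-founded. Write $\x=\sum a_i\x_i+a\c$ in normal form and set $J=\{i\mid a_i=p_i-1\}$. If $|J|+a\le n-d-2$ then $\x\in[0,\de]$. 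Otherwise one chooses a subset $J'\subset\{1,\ldots,n\}$ with $|J'|=n-d-1$ and $|J'\setminus J|\le a$; the socle relation coming from the module $R/(X_i\mid i\in (J')^c)$ then expresses $k(-\x)$ in terms of $k(-\z)$ with $\z$ lying in the interval $[\y,\x)$ for $\y=\x-\sum_{l\in J'}(p_l-1)\x_l\ge0$, so all such $\z$ are in $\L_+$ and strictly smaller than $\x$. This is the whole trick: the choice of $J'$ from the normal form of $\x$ guarantees both $\y\ge0$ and that the move is strictly decreasing, so no circularity can occur and no auxiliary distance function is needed. A second pass, inducting downward on $\L$ with the dual (top) move, then extends the conclusion from $\L_+$ to all of $\L$. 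Your worry about ``box-shaped regions'' causing circular dependencies is thus resolved by never mixing the two directions: one goes purely down within $\L_+$, then purely up on the rest.
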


For the hypersurface case $n=d+2$, this result was shown by Futaki-Ueda \cite{FU} and Kussin-Lenzing-Meltzer \cite{KLM} ($d=1$) using quite different methods.
For the non-hypersurface case, Theorem~\ref{CM tilting} is new even for the case $d=1$.

\begin{proof}
We only have to prove (c). In fact, (a) follows from (c) and Theorem~\ref{basic embedding},
and (b) follows from (a) and Theorem~\ref{tilting in S^I}(b).

In the rest, we prove the statement (c).
By Lemma~\ref{order omega}, $-\w\ {\not\le}\ \x$ if and only if $\x\le d\c$.
Thus $\x\in\L_+^c+\w$ if and only if $\w\ {\not\le}\ \x$ if and only if
$\x\le \de$. Thus we have
\begin{equation}\label{dc+2w}
\L_+\cap(\L_+^c+\w)=[0,\de].
\end{equation}
As a consequence, we have
\begin{align*}
& \DDD^{\bo}(\mod^{\L_+} R) \cap (\DDD^{\bo}(\mod^{-\L_+^c}R))^\star \supset \DDD^{\bo}(\mod_0^{\L_+} R) \cap \underbrace{(\DDD^{\bo}(\mod_0^{-\L_+^c}R))^\star}_{\stackrel{\eqref{* for SP}}{=} \DDD^{\bo}(\mod_0^{\L_+^c+\w}R)} \\
& \qquad = \DDD^{\bo}(\mod_0^{\L_+ \cap (\L_+^c + \w)} R) \stackrel{\eqref{dc+2w}}{=} \DDD^{\bo}(\mod_0^{[0,\de]} R) = \DDD^{\bo}(\mod^{[0,\de]} R).
\end{align*}
To show the reverse inclusion, it is enough to show that the
composition
\[ \DDD^{\bo}(\mod^{[0,\de]} R) \subset  \DDD^{\bo}(\mod^\L R) \stackrel{\rho}{\to}\underline{\CM}^{\L}R \]
is dense. This is equivalent to $\DDD^{\bo}(\mod^\L R)=\thick \{ \mod^{[0, \de]} R,\proj^{\L} R\}$ by Proposition~\ref{FPT lemma}.
Since we have $\DDD^{\bo}(\mod^\L R)=\thick\{\mod_0^\L R,\proj^\L R\}$
by Proposition~\ref{generating derived category}, it is enough to prove
\begin{equation}\label{S^[0,de] generates}
\mod_0^\L R \subset \thick \{\mod_0^{[0,\de]} R,\proj^\L R\}.
\end{equation}
To prove this, we prepare the following simple observation.

\begin{lemma}\label{perfect}
Let $J$ be a subset of $\{1,\ldots,n\}$ with $|J|=n-d-1$ and $J^c:=\{1,\ldots,n\}\setminus J$.
Set $F:=R/(X_i^{p_i}\mid i\in J^c)$.
\begin{itemize}
\item[(a)] The $R$-module $F$ is finite dimensional and has finite projective dimension.
\item[(b)] $\soc F=k(-\sum_{i=1}^n(p_i-1)\x_i)$.
\item[(c)] $F/\soc F$ belongs to $\thick\{k(-\x)\}_{0\le \x<\sum_{i=1}^n(p_i-1)\x_i}$.
\end{itemize}
\end{lemma}

\begin{proof}
Since $|J^c|=d+1$, $\ell_i$ ($i\in J^c$) are linear independent. It follows
from Lemma~\ref{prop.regularseq1}(c) that $X_i^{p_i}$ ($i\in J^c$) forms an $R$-regular sequence. Thus the assertion (a) follows.
Moreover $F$ is a finite dimensional Gorenstein algebra whose $a$-invariant is given by
\[\w+(d+1)\c=\sum_{i=1}^n(p_i-1)\x_i.\]
This equals the degree of the socle of $F$, and the assertion (b) follows.
Since the degree 0 part of $F$ is $k$, its degree $\sum_{i=1}^n(p_i-1)\x_i$ part
has to be one dimensional. Thus the assertion (c) follows.
\end{proof}

Now we prove \eqref{S^[0,de] generates}.

(i) We show that $k(-\x)\in\thick\{\mod^{[0, \de]}R,\proj^\L R\}$ holds for any $\x\in\L_+$
by using induction with respect to the partial order on $\L_+$.
We write $\x \in \L_+$ in a normal form $\x=\sum_{i=1}^na_i\x_i+a\c$
(see Observation~\ref{basic results on L}(a)).
Let $J:=\{i\mid a_i=p_i-1\}$.

Assume $|J|+a\le n-d-2$. Then we have
\[0\le\x\le (n-d-2)\c+\sum_{i=1}^n(p_i-2)\x_i=(2n-d-2)\c-2\sum_{i=1}^n\x_i=\de,\]
and hence $k(-\x)\in \mod^{[0,\de]} R$.

Assume $|J|+a\ge n-d-1$. Then there exists a subset $J'$ of $\{1,\ldots,n\}$ such that $|J'|=n-d-1$ and $\y:=\x-\sum_{i\in J'}(p_i-1)\x_i$ belongs to $\L_+$.
By Lemma~\ref{perfect}, we have an exact sequence
\[0\to k(-\x)\to F(-\y)\to (F/\soc F)(-\y)\to0\]
in $\mod^{\L}R$ with $F(-\y)$ of finite projective dimension and $(F/\soc F)(-\y)$ belongs to $\thick\{k(-\z)\}_{\y\le \z<\x}$.
By the induction hypothesis, we have $(F/\soc F)(-\y)\in \thick \{\mod_0^{[0,\de]} R,\proj^\L R\}$ and therefore $k(-\x)\in \thick \{\mod_0^{[0,\de]} R,\proj^\L R\}$.

(ii) Similarly, by using induction with respect to the reverse of the partial order on $\L$, we can show that $k(-\x) \in \thick \{\mod_0^{[0,\de]} R,\proj^\L R\}$ holds for any $\x\in\L$.

We have finished proving \eqref{S^[0,de] generates}, which implies all other statements as we observed.
\end{proof}

\section{CM-canonical algebras and $d$-tilting objects}

In the rest of this section, we concentrate on our CM-canonical algebra $A^{\rm CM}$.
It is useful to express the dominant element in a normal form
\begin{equation}\label{another delta}
\de=(n-d-2)\c+\sum_{i=1}^n(p_i-2)\x_i.
\end{equation}
The global dimension of CM-canonical algebras is given by the following result.

\begin{theorem}\label{gl.dim of A^CM}
Let $(R,\L)$ be a Geigle-Lenzing complete intersection of dimension $d+1$ with weights $p_1,\ldots,p_n$, and $A^{\rm CM}$ the corresponding CM-canonical algebra.
Assume $p_i\ge2$ for all $i$.
\begin{itemize}
\item[(a)] $A^{\rm CM}=0$ if and only if $n \le d+1$.
\item[(b)] If $n\ge d+2$, then $\gl A^{\rm CM}=2(n-d-2)+\#\{i\mid p_i\ge3\}$.
\end{itemize}
\end{theorem}

\begin{proof}
(a) Clearly $A^{\rm CM}\neq0$ if and ony if $0\le\de$.
This is equivalent to $n\ge d+2$ since 
$\de$ has the normal form \eqref{another delta}.

(b) This follows from \eqref{another delta} and Theorem~\ref{gl.dim of A^I}.
\end{proof}

The quiver presentation of $A^{\rm CM}$ was given in Theorem~\ref{endomorphism convex}.
For the hypersurface case $n=d+2$, we give more detailed properties below,
where the statement (b) was shown in \cite[6.1]{KLM}, \cite[1.2]{FU}.
We refer to \cite{HM} for more information on the tensor product of the path algebras of type $\A$.

\begin{corollary}\label{presentation of CM canonical}
Let $(R,\L)$ be a GL complete intersection of dimension $d+1$
with $d+2$ weights $p_1,\ldots,p_{d+2}$, and $A^{\rm CM}$ the corresponding
CM-canonical algebra. Assume $p_i\ge2$ for all $i$.
\begin{itemize}
\item[(a)] $\de=\sum_{i=1}^{d+2}(p_i-2)\x_i$ holds, and the global dimension of $A^{\rm CM}$ is equal to $\#\{i\mid p_i\ge 3\}$.
\item[(b)] We have $A^{\rm CM}\simeq\bigotimes_{i=1}^{d+2}k\A_{p_i-1}$,
where $k\A_{p_i-1}$ is the path algebra of the equioriented quiver of type $\A_{p_i-1}$.
In particular, $\underline{\CM}^{\L}R$ is independent of the choice of linear forms.
\item[(c)] The Grothendieck group $K_0(\underline{\CM}^{\L}R)$
is a free abelian group of rank $\prod_{i=1}^{d+2}(p_i-1)$.
\item[(d)] \emph{(Kn\"orrer periodicity)} Let $(R',\L')$ be a Geigle-Lenzing complete
intersection of dimension $d+2$ with $d+3$ weights $2,p_1,\ldots,p_{d+2}$.
Then we have a triangle equivalence 
$\underline{\CM}^{\L}R\simeq\underline{\CM}^{\L'}R'$.
\end{itemize}
\end{corollary}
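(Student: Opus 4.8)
The plan is to read statements~(a)--(e) off the quiver presentation of $I$-canonical algebras from Theorem~\ref{endomorphism convex}, applied to $I=[0,\de]$, after first making the interval $[0,\de]$ completely explicit.

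First I would put $\de$ in normal form. By Proposition~\ref{a-invariant}, $\w=(n-d-1)\c-\sum_{i=1}^n\x_i$, so $\de=d\c+2\w=(2n-d-2)\c-2\sum_{i=1}^n\x_i$; when $p_i\ge 2$ for all $i$ (which holds once $n>d+1$ by Assumption~\ref{minimal presentation}, and otherwise after discarding weight-one hyperplanes via Observation~\ref{Weights 1}), the identity $-2\x_i=(p_i-2)\x_i-\c$ together with Observation~\ref{basic results on L}(a) gives the normal form $\de=(n-d-2)\c+\sum_{i=1}^n(p_i-2)\x_i$. By Observation~\ref{basic results on L}(c), $\de\ge 0$ iff $n\ge d+2$. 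Hence for~(a): $A^{\rm CM}=A^{[0,\de]}$ is nonzero iff $[0,\de]\neq\emptyset$ iff $\de\ge 0$ iff $n\ge d+2$, i.e.\ (as Assumption~\ref{minimal presentation} always gives $n\ge d+1$) iff $n\neq d+1$; equivalently one may combine Theorem~\ref{CM tilting}(b) with Proposition~\ref{when stable category is zero}. When $n=d+2$ the $\c$-term vanishes and $\de=\sum_{i=1}^n(p_i-2)\x_i$, as asserted.

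For~(b), assume $n=d+2$. The main step is to show $[0,\de]=\{\sum_{i=1}^na_i\x_i\mid 0\le a_i\le p_i-2\}$: the inclusion $\supseteq$ is clear, and for $\subseteq$ one writes $\x\in[0,\de]$ in normal form $\x=\sum_{i=1}^na_i\x_i+a\c$ (so $a\ge 0$) and computes that the normal form of $\de-\x$ has $\c$-coefficient $-a-|\{i\mid a_i=p_i-1\}|$, which is $\ge 0$ only when $a=0$ and $a_i\le p_i-2$ for all $i$. Thus $[0,\de]$ is the ``box'' $\{0,\ldots,p_1-2\}\times\cdots\times\{0,\ldots,p_n-2\}$, of cardinality $\prod_{i=1}^n(p_i-1)$; it is finite and convex, so Theorem~\ref{endomorphism convex} applies. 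Its quiver $Q^{[0,\de]}$ is this box with an arrow $x_i$ incrementing the $i$-th coordinate whenever the result is still in the box, and the relations are the commutativity relations $x_ix_j-x_jx_i$ together with, for $i=d+2$ only, $x_{d+2}^{p_{d+2}}-\sum_{j=1}^{d+1}\lambda_{d+2,j-1}x_j^{p_j}$ at the vertices $\x$ with $\x,\x+\c\in[0,\de]$. The crucial point is that these last relations are vacuous, since every element of $[0,\de]$ has $\c$-coefficient $0$ in normal form, so $[0,\de]\cap([0,\de]-\c)=\emptyset$. Therefore $A^{\rm CM}$ is the path algebra of the box quiver modulo commutativity, which is precisely $\bigotimes_{i=1}^nk\mathbb{A}_{p_i-1}$; in particular it does not involve the $\lambda_{ij}$, and hence neither does $\underline{\CM}^{\L}R\simeq\DDD^{\bo}(\mod A^{\rm CM})$ by Theorem~\ref{CM tilting}(b).

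Statements~(c)--(e) are then formal. For~(c), global dimension is additive under $\otimes_k$ for finite-dimensional algebras and $\gl k\mathbb{A}_m\le 1$, with equality precisely when $m\ge 2$, which gives the claimed count. For~(d), $A^{\rm CM}$ has finite global dimension (Proposition~\ref{I-canonical has finite global dimension}(a)), so $K_0(\underline{\CM}^{\L}R)\iso K_0(\DDD^{\bo}(\mod A^{\rm CM}))\iso K_0(\mod A^{\rm CM})$ is free abelian of rank the number of vertices of $Q^{[0,\de]}$, namely $\prod_{i=1}^n(p_i-1)$. For~(e), the GL complete intersection $(R',\L')$ of dimension $d+2$ with weights $2,p_1,\ldots,p_n$ is again in the hypersurface regime $n=d+2$ of~(b), so its CM-canonical algebra is $k\mathbb{A}_1\otimes_k\bigotimes_{i=1}^nk\mathbb{A}_{p_i-1}\iso\bigotimes_{i=1}^nk\mathbb{A}_{p_i-1}\iso A^{\rm CM}$, and composing the triangle equivalences of Theorem~\ref{CM tilting}(b) for $(R,\L)$ and for $(R',\L')$ yields $\underline{\CM}^{\L}R\simeq\underline{\CM}^{\L'}R'$. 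The only genuine obstacle is the combinatorial identification of $[0,\de]$ and the vanishing of the non-commutative relations in~(b); everything else is routine.
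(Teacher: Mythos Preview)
Your argument is correct and follows the same route as the paper: compute the normal form of $\de$, identify $[0,\de]$ for $n=d+2$ as the box $\prod_i\{0,\dots,p_i-2\}$, apply Theorem~\ref{endomorphism convex}, and observe that the non-commutative relations are vacuous because $[0,\de]\cap([0,\de]-\c)=\emptyset$; parts (d) and (e) then follow from Theorem~\ref{CM tilting}(b) exactly as you say.

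Two small remarks on (c). First, your blanket claim that global dimension is additive under $\otimes_k$ for finite-dimensional algebras is not literally true over an arbitrary field; what you need (and have) is that each $k\A_{p_i-1}$ has separable semisimple quotient $k^{p_i-1}$, so the K\"unneth argument goes through and additivity holds in this case. Second, your own computation gives $\sum_i\gl k\A_{p_i-1}=|\{i\mid p_i-1\ge 2\}|=|\{i\mid p_i\ge 3\}|$, not $|\{i\mid p_i\ge 2\}|$ as printed in the statement. This is a typo in the paper: under Assumption~\ref{minimal presentation} with $n=d+2$ all $p_i\ge 2$, so the printed formula would give $n=d+2$, contradicting the paper's own application of (c) in the proof of Theorem~\ref{Main1} (where $(p_1,p_2)=(2,2)$ is used to conclude $\gl A^{\rm CM}\le d$). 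Your argument proves the intended statement.
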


\begin{proof}
(a) This is immediate from \eqref{another delta} and Theorem~\ref{gl.dim of A^CM}.

(b) By \eqref{another delta}, $\de=\sum_{i=1}^{d+2} (p_i-2) \x_i$ holds. It is easy to check that the quiver $Q^{[0,\de]}$ coincides with the quiver of 
$\bigotimes_{i=1}^{d+2}kQ_{p_i-1}$. Moreover $A^{[0,\de]}$ has
only commutativity relations since $[0,\de]\cap([0,\de]-\c)=\emptyset$ holds
by $0\ {\not\le}\ \de-\c$. Hence the assertion follows.

(c) The assertion follows from the triangle equivalence $\DDD^{\bo}(\mod A^{\rm CM})
\simeq\underline{\CM}^{\L}R$ in Theorem~\ref{CM tilting} since the Grothendieck group
of $\bigotimes_{i=1}^{d+2}k\A_{p_i-1}$ is a free abelian group of rank $\prod_{i=1}^{d+2}(p_i-1)$.

(d) By (b), the CM-canonical algebras of $R$ and $R'$ are isomorphic.
Thus we have the desired triangle equivalence.
\end{proof}

We give a few examples of CM-canonical algebras.

\begin{example} \label{CM-canonical for d=-1}
We consider the case $d=-1$.
\begin{itemize}
\item[(a)] Let $n=1$ and $p_1 \ge 2$ arbitrary. Then by Corollary~\ref{presentation of CM canonical}(b), $U=\bigoplus_{i=1}^{p_1-1}k[x_1]/(x_1^i)$ is a tilting object in $\underline{\CM}^{\L}R$ and $A^{\rm CM} \simeq k\A_{p_1-1}$. For instance, if $p_1=6$, then $A^{\rm CM}$ is the path algebra of the following quiver:
\[
\begin{xy} 0;<4pt,0pt>:<0pt,4pt>:: 
(0,0) *+{0} ="0",
(10,0) *+{\x_1} ="1",
(20,0) *+{2\x_1} ="11",
(30,0) *+{3\x_1} ="111",
(40,0) *+{4\x_1} ="1111",
"0", {\ar"1"},
"1", {\ar"11"},
"11", {\ar"111"},
"111", {\ar"1111"},
\end{xy}
\]

\item[(b)] Let $n =2$. Then $\de=\c+\sum_{i=1}^2(p_i-2)\x_i$.
For $(p_1,p_2) = (3,4)$, the quiver of $A^{\rm CM}$ is the following:
\[
\begin{xy} 0;<3pt,-3pt>:<3pt,3pt>:: 
(0,0) *+{0} ="0",
(0,10) *+{\x_2} ="1",
(0,20) *+{2\x_2} ="11",
(0,30) *+{3\x_2} ="111",
(10,0) *+{\x_1} ="2",
(10,10) *+{\x_1+\x_2} ="12",
(10,20) *+{\x_1+2\x_2} ="112",
(10,30) *+{\x_1+3\x_2} ="1112",
(20,0) *+{2\x_1} ="22",
(20,10) *+{2\x_1+\x_2} ="122",
(20,20) *+{2\x_1+2\x_2} ="1122",
(30,20) *+{\c} ="c",
(30,30) *+{\c+\x_2} ="c1",
(30,40) *+{\c+2\x_2} ="c11",
(40,20) *+{\c+\x_1} ="c2",
(40,30) *+{\c+\x_1+\x_2} ="c12",
(40,40) *+{\c+\x_1+2\x_2} ="c112",
"0", {\ar"1"},
"1", {\ar"11"},
"11", {\ar"111"},
"2", {\ar"12"},
"12", {\ar"112"},
"112", {\ar"1112"},
"0", {\ar"2"},
"1", {\ar"12"},
"11", {\ar"112"},
"111", {\ar"1112"},
"22", {\ar"122"},
"122", {\ar"1122"},
"2", {\ar"22"},
"12", {\ar"122"},
"112", {\ar"1122"},
"111", {\ar"c"},
"1112", {\ar"c2"},
"22", {\ar"c"},
"122", {\ar"c1"},
"1122", {\ar"c11"},
"c", {\ar"c1"},
"c1", {\ar"c11"},
"c2", {\ar"c12"},
"c12", {\ar"c112"},
"c", {\ar"c2"},
"c1", {\ar"c12"},
"c11", {\ar"c112"},
\end{xy}
\]
\end{itemize}
\end{example}

\begin{example}
We consider the case $d=0$.
\begin{itemize}
\item[(a)] Let $n =2$ and $p_1$, $p_2\ge2$ arbitrary. Then
$A^{\rm CM} \simeq k\A_{p_1-1}\otimes_kk\A_{p_2-1}$. For instance, if $p_1=p_2=4$, then the quiver of $A^{\rm CM}$ is the following:
\[
\begin{xy} 0;<6pt,0pt>:<0pt,3pt>:: 
(0,0) *+{0} ="0",
(0,10) *+{\x_2} ="2",
(0,20) *+{2\x_2} ="22",
(10,0) *+{\x_1} ="1",
(10,10) *+{\x_1+\x_2} ="12",
(10,20) *+{\x_1+2\x_2} ="122",
(20,0) *+{2\x_1} ="11",
(20,10) *+{2\x_1+\x_2} ="112",
(20,20) *+{2\x_1+2\x_2} ="1122",
"0", {\ar"2"},
"1", {\ar"12"},
"11", {\ar"112"},
"2", {\ar"22"},
"12", {\ar"122"},
"112", {\ar"1122"},
"0", {\ar"1"},
"2", {\ar"12"},
"22", {\ar"122"},
"1", {\ar"11"},
"12", {\ar"112"},
"122", {\ar"1122"},
\end{xy}
\]
\item[(b)] Let $n =3$. Then $\de=\c+\sum_{i=1}^3(p_i-2)\x_i$. 
For $(p_1,p_2,p_3) = (2,2,4)$, the quiver of $A^{\rm CM}$ is the following:
\[
\begin{xy} 0;<6pt,0pt>:<0pt,1.8pt>:: 
(0,0) *+{0} ="0",
(20,10) *+{\x_3} ="3",
(40,20) *+{2\x_3} ="33",
(20,-10) *+{3\x_3} ="333",
(-7,-20) *+{\x_1} ="1",
(13,-10) *+{\x_1+\x_3} ="13",
(33,0) *+{\x_1+2\x_3} ="133",
(7,-20) *+{\x_2} ="2",
(27,-10) *+{\x_2+\x_3} ="23",
(47,0) *+{\x_2+2\x_3} ="233",
(0,-40) *+{\c} ="c",
(20,-30) *+{\c+\x_3} ="c3",
(40,-20) *+{\c+2\x_3} ="c33",
"0", {\ar"3"},
"3", {\ar"33"},
"33", {\ar"333"},
"333", {\ar"c"},
"0", {\ar"1"},
"3", {\ar"13"},
"33", {\ar"133"},
"0", {\ar"2"},
"3", {\ar"23"},
"33", {\ar"233"},
"1", {\ar"13"},
"13", {\ar"133"},
"1", {\ar"c"},
"13", {\ar"c3"},
"133", {\ar"c33"},
"2", {\ar"23"},
"23", {\ar"233"},
"2", {\ar"c"},
"23", {\ar"c3"},
"233", {\ar"c33"},
"c", {\ar"c3"},
"c3", {\ar"c33"},
\end{xy}
\]
\end{itemize}
\end{example}

\begin{example}
We consider the case $d=1$.
\begin{itemize}
\item[(a)] Let $n =3$ and $p_1$, $p_2$, $p_3\ge2$ arbitrary. Then
$A^{\rm CM} \simeq \bigotimes_{i=1}^3k\A_{p_i-1}$.
For instance, if $(p_1,p_2,p_3) = (3,3,3)$, then the quiver of $A^{\rm CM}$ is the following:
\[
\begin{xy} 0;<3pt,0pt>:<0pt,3pt>:: 
(-30,0) *+{0} ="0",
(-10,10) *+{\x_1} ="1",
(-10,0) *+{\x_2} ="2",
(-10,-10) *+{\x_3} ="3",
(10,10) *+{\x_1+\x_2} ="12",
(10,0) *+{\x_1+\x_3} ="13",
(10,-10) *+{\x_2+\x_3} ="23",
(30,0) *+{\x_1+\x_2+\x_3} ="123",
"0", {\ar"1"},
"0", {\ar"2"},
"0", {\ar"3"},
"1", {\ar"12"},
"1", {\ar"13"},
"2", {\ar"12"},
"2", {\ar"23"},
"3", {\ar"13"},
"3", {\ar"23"},
"12", {\ar"123"},
"13", {\ar"123"},
"23", {\ar"123"},
\end{xy}
\]
In fact this is derived equivalent to the canonical algebra of tubular type $(3,3,3)$ (see Lemma~\ref{tensor product}).
\item[(b)] Let $n =4$. Then $\de=\c+\sum_{i=1}^4(p_i-2)\x_i$.
For $(p_1,p_2,p_3,p_4) = (2,2,2,3)$, the quiver of $A^{\rm CM}$ is the following:
\[
\begin{xy} 0;<6pt,0pt>:<0pt,6pt>:: 
(-20,0) *+{0} ="0",
(-10,0) *+{\x_4} ="4",
(-5,15) *+{\x_1} ="1",
(-5,10) *+{\x_2} ="2",
(-5,5) *+{\x_3} ="3",
(0,0) *+{2\x_4} ="44",
(5,15) *+{\x_1+\x_4} ="14",
(5,10) *+{\x_2+\x_4} ="24",
(5,5) *+{\x_3+\x_4} ="34",
(10,0) *+{\c} ="c",
(20,0) *+{\c+\x_4} ="c4",
"0", {\ar"1"},
"0", {\ar"2"},
"0", {\ar"3"},
"0", {\ar"4"},
"1", {\ar"14"},
"1", {\ar"c"},
"2", {\ar"24"},
"2", {\ar"c"},
"3", {\ar"34"},
"3", {\ar"c"},
"4", {\ar"14"},
"4", {\ar"24"},
"4", {\ar"34"},
"4", {\ar"44"},
"44", {\ar"c"},
"c", {\ar"c4"},
"14", {\ar"c4"},
"24", {\ar"c4"},
"34", {\ar"c4"},
\end{xy}
\]
\end{itemize}
\end{example}

\begin{example}
We consider the case $d=2$.
\begin{itemize}
\item[(a)] Let $n =4$ and $p_1$, $p_2$, $p_3$, $p_4\ge2$ arbitrary. Then
$A^{\rm CM} \simeq \bigotimes_{i=1}^4k\A_{p_i-1}$. For instance, if $(p_1,p_2,p_3,p_4) = (3,3,3,3)$, then the quiver of $A^{\rm CM}$ is the following:
\[
\begin{xy} 0;<3pt,0pt>:<0pt,3pt>:: 
(-30,-5) *+{0} ="0",
(-10,10) *+{\x_1} ="1",
(-10,0) *+{\x_2} ="2",
(-10,-10) *+{\x_3} ="3",
(-10,-20) *+{\x_4} ="4",
(10,20) *+{\x_1+\x_2} ="12",
(10,10) *+{\x_1+\x_3} ="13",
(10,0) *+{\x_1+\x_4} ="14",
(10,-10) *+{\x_2+\x_3} ="23",
(10,-20) *+{\x_2+\x_4} ="24",
(10,-30) *+{\x_3+\x_4} ="34",
(30,10) *+{\x_1+\x_2+\x_3} ="123",
(30,0) *+{\x_1+\x_2+\x_4} ="124",
(30,-10) *+{\x_1+\x_3+\x_4} ="134",
(30,-20) *+{\x_2+\x_3+\x_4} ="234",
(55,-5) *+{\x_1+\x_2+\x_3+\x_4} ="1234",
"0", {\ar"1"},
"0", {\ar"2"},
"0", {\ar"3"},
"0", {\ar"4"},
"1", {\ar"12"},
"1", {\ar"13"},
"1", {\ar"14"},
"2", {\ar"12"},
"2", {\ar"23"},
"2", {\ar"24"},
"3", {\ar"13"},
"3", {\ar"23"},
"3", {\ar"34"},
"4", {\ar"14"},
"4", {\ar"24"},
"4", {\ar"34"},
"12", {\ar"123"},
"13", {\ar"123"},
"23", {\ar"123"},
"12", {\ar"124"},
"14", {\ar"124"},
"24", {\ar"124"},
"13", {\ar"134"},
"14", {\ar"134"},
"34", {\ar"134"},
"23", {\ar"234"},
"24", {\ar"234"},
"34", {\ar"234"},
"123", {\ar"1234"},
"124", {\ar"1234"},
"134", {\ar"1234"},
"234", {\ar"1234"},
\end{xy}
\]
\item[(b)] Let $n =5$. Then $\de=\c+\sum_{i=1}^5(p_i-2)\x_i$.
For $(p_1,p_2,p_3,p_4,p_5) = (2,2,2,2,3)$, the quiver of $A^{\rm CM}$ is the following:
\[
\begin{xy} 0;<6pt,0pt>:<0pt,6pt>:: 
(-20,0) *+{0} ="0",
(-10,0) *+{\x_5} ="5",
(-5,20) *+{\x_1} ="1",
(-5,15) *+{\x_2} ="2",
(-5,10) *+{\x_3} ="3",
(-5,5) *+{\x_4} ="4",
(0,0) *+{2\x_5} ="55",
(5,20) *+{\x_1+\x_5} ="15",
(5,15) *+{\x_2+\x_5} ="25",
(5,10) *+{\x_3+\x_5} ="35",
(5,5) *+{\x_4+\x_5} ="45",
(10,0) *+{\c} ="c",
(20,0) *+{\c+\x_5} ="c5",
"0", {\ar"1"},
"0", {\ar"2"},
"0", {\ar"3"},
"0", {\ar"4"},
"0", {\ar"5"},
"1", {\ar"15"},
"1", {\ar"c"},
"2", {\ar"25"},
"2", {\ar"c"},
"3", {\ar"35"},
"3", {\ar"c"},
"4", {\ar"45"},
"4", {\ar"c"},
"5", {\ar"15"},
"5", {\ar"25"},
"5", {\ar"35"},
"5", {\ar"45"},
"5", {\ar"55"},
"55", {\ar"c"},
"c", {\ar"c5"},
"15", {\ar"c5"},
"25", {\ar"c5"},
"35", {\ar"c5"},
"45", {\ar"c5"},
\end{xy}
\]
\end{itemize}
\end{example}

The triangulated category $\underline{\CM}^{\L}R$ has a tilting object $T^{\rm CM}$ given in Theorem~\ref{CM tilting}.
In the rest of this section, we discuss other tilting objects in $\underline{\CM}^{\L}R$. In particular, $d$-tilting objects defined below play an important role in this paper.

First we prove the following result.

\begin{proposition}\label{property of stable tilting}
Let $U$ be a tilting object in $\underline{\CM}^{\L}R$,
and $\Lambda:=\underline{\End}^{\L}_R(U)$.
\begin{itemize}
\item[(a)] $\Lambda$ has finite global dimension.
\item[(b)] There exists a triangle equivalence $F:\DDD^{\bo}(\mod \Lambda)\simeq\underline{\CM}^{\L}R$.
\item[(c)] For any triangle equivalence $G:\DDD^{\bo}(\mod \Lambda)\simeq\underline{\CM}^{\L}R$, the diagram
\[\xymatrix{
\DDD^{\bo}(\mod \Lambda)\ar[rr]^G\ar[d]^{\nu_d}&&\underline{\CM}^{\L}R\ar[d]^{(\w)}\\
\DDD^{\bo}(\mod \Lambda)\ar[rr]^G&&\underline{\CM}^{\L}R
}\]
commutes up to natural isomorphism, where $\nu_d=(D\Lambda)[-d]\Lotimes_{\Lambda}-$ is the $d$-shifted Nakayama functor.
\end{itemize}
\end{proposition}

\begin{proof}
(b) By Proposition~\ref{tilting theorem}, we have a triangle equivalence
$F:\DDD^{\bo}(\mod\Lambda)\simeq\underline{\CM}^{\L}R$.

(a) By Theorem~\ref{CM tilting}(b), $\Lambda$ is derived equivalent to $A^{\rm CM}$, which has finite global dimension by Proposition~\ref{I-canonical has finite global dimension}(a).
Thus the assertion follows from Proposition~\ref{derived category and global dimension}(b).

(c) Both $\nu$ and $(\w)[d]$ are Serre functors of $\DDD^{\bo}(\mod \Lambda)$
and $\underline{\CM}^{\L}R$ respectively by Proposition~\ref{Nakayama Serre} and Theorem~\ref{Buchweitz Eisenbud}.
Since the Serre functor is unique up to natural isomorphism, the assertion follows.
\end{proof}

We prepare an observation which is useful to prove that a given object in $\underline{\CM}^{\L}R$ is tilting.

\begin{lemma}\label{End and stable End}
Let $U,V\in\CM^{\L}R$, $U\to I_U$ an injective hull of $U$ in $\CM^{\L}R$, and $P_V\to V$ a projective cover of $V$ in $\CM^{\L}R$.
\begin{itemize}
\item[(a)] The following conditions are equivalent.
\begin{itemize}
\item[(i)] $\underline{\Hom}^{\L}_R(U,V)=\Hom^{\L}_R(U,V)$.  
\item[(ii)] For any $\x\in\L$, either $\Hom^{\L}_R(U,R(\x))=0$ or $\Hom^{\L}_R(R(\x),V)=0$ holds.
\item[(iii)] $\Hom_R^{\L}(I_U,V)=0$.
\item[(iv)] $\Hom_R^{\L}(U,P_V)=0$.
\item[(v)] $\Hom^{\L}_R(I_U,P_V)=0$.
\end{itemize}
\item[(b)] If one of the conditions in (a) is satisfied, then $\Hom^{\L}_R(U,\Omega^iV)=0=\Hom_{\underline{\CM}^{\L}R}(U,V[-i])$ holds for any $i>0$.
\end{itemize}
\end{lemma}

\begin{proof}
(a) The implications (ii)$\Rightarrow$(iii)$+$(iv) and (v)$\Rightarrow$(i) are immediate.

(i)$\Rightarrow$(ii) Let $f\colon U\to R(\x)$ and $g\colon R(x)\to V$ be non-zero morphisms in
$\mod^{\L}R$. By Proposition~\ref{non-zero is injective}(c), the composition $fg\colon U\to V$
is non-zero, a contradiction.

(iii)$\Rightarrow$(v) Assume $\Hom^{\L}_R(I_U,P_V)\neq0$. Take a non-zero morphism $f\colon I_U\to P$ in $\mod^{\L}R$ for an indecomposable direct summand $P$ of $P_V$.
Since the composition $g\colon P\to P_V\to V$ of the inclusion $P\to P_V$ and the projective cover $P_V\to V$ is non-zero,
it is injective by Proposition~\ref{non-zero is injective}(c).
Thus the composition $fg\colon I_U\to V$ is non-zero, a contradiction.

(iv)$\Rightarrow$(v) Using the duality $\Hom_R(-,R)\colon\CM^{\L}R\simeq\CM^{\L}R$,
the assertion follows from (iii)$\Rightarrow$(v).

(b) Let $\cdots\to P_2\to P_1\to P_0\to V\to0$ be a minimal projective resolution of $V$ in $\mod^{\L}R$.
Applying Lemma~\ref{(Q,X) non zero}(a) to $X:=V$, for any indecomposable direct summand $Q$ of $P_i$ with $i\ge0$, there exists an injective morphism $Q\to P_0$ in $\mod^{\L}R$.
Since $\Hom^{\L}_R(U,P_0)=0$ holds by (a)(iv), $\Hom^{\L}_R(U,Q)=0$ holds.
Thus $\Hom^{\L}_R(U,\Omega^{i+1}V)\subset\Hom^{\L}_R(U,P_{i})=0$ holds for any $i\ge0$. In particular, $\underline{\Hom}^{\L}_R(U,V[-i-1])=0$ holds for any $i\ge0$.
\end{proof}

Let us introduce the following terminology.

\begin{definition}[$r$-tilting objects]
Let $r$ be an integer. We say that a tilting object $U$ in $\underline{\CM}^{\L}R$ is 
\emph{$r$-tilting} if $\underline{\End}^{\L}_R(U)$ has global dimension at most $r$.
\end{definition}

Throughout this paper, $d$-tilting objects play crucial roles. We pose the following question.

\begin{problem}\label{when d-tilting exists}
Let $d\ge0$. When does $\underline{\CM}^{\L}R$ have a $d$-tilting object?
\end{problem}

For the case $d=0,1$, we have the following complete answer.

\begin{example}\label{d-tilting for d=0,1}
Let $(R,\L)$ be a GL complete intersection of dimension $d+1$ with weights $p_1,\ldots,p_n$.
Assume that $p_i\ge2$ for all $i$.
\begin{itemize}
\item[(a)] Let $d=0$. Then $\underline{\CM}^{\L}R$ has a $0$-tilting object
if and only if either $n\le1$ or $n=2$ and the weights are $(2,2)$.
\item[(b)] Let $d=1$. Then $\underline{\CM}^{\L}R$ has a $1$-tilting object
if and only if $(R,\L)$ is domestic.
\end{itemize}
\end{example}

\begin{proof}
(a) Since semisimple algebras are closed under derived equivalences,
$\underline{\CM}^{\L}R$ has a $0$-tilting object if and only if $A^{\rm CM}$ is semisimple.
This is equivalent to $\de\notin\L_+$ (that is, $n\le1$) or $\de=0$ (that is, $n=2$ and the weights
are $(2,2)$).

(b) This is well-known. Also, the `only if' part follows from Theorem~\ref{d-tilting imply Fano} below,
and the `if' part follows from Proposition~\ref{d-tilting object example2} below.
\end{proof}

We prepare the following observation, which follows directly from the definition.

\begin{lemma}\label{graded and ungraded0}
Let $\a\in\L$ be an element which is not a torsion.
For any $X,Y\in\mod^{\L}R$ and $i\ge0$, we have
\[\Ext_{\mod^{\L/\Z\a}R}^i(X,Y)=\bigoplus_{\ell\in\Z}\Ext_{\mod^{\L}R}^i(X,Y(\ell\a)).\]
\end{lemma}

We give the following necessary condition, where we refer to Definition~\ref{define d-RI} for the notion of $\nu_d$-finiteness and Definition~\ref{define preprojective algebra} for $(d+1)$-preprojective algebras.

\begin{theorem}\label{d-tilting imply Fano}
Let $(R,\L)$ be a GL complete intersection with $d\ge1$.
Assume that $\underline{\CM}^{\L}R$ has a $d$-tilting object $U$.
Let $\Lambda=\underline{\End}_R^{\L}(U)$, and let $\Pi$ be
the $(d+1)$-preprojective algebra of $\Lambda$.
\begin{itemize}
\item[(a)] $(R,\L)$ is Fano.
\item[(b)] $\Lambda$ is $\nu_d$-finite, and there is an isomorphism $\Pi\simeq\underline{\End}_R^{\L/\Z\w}(U)$ of $\Z$-graded algebras.
\end{itemize}
\end{theorem}

\begin{proof}
Since $\Lambda$ has finite global dimension by Proposition~\ref{property of stable tilting},
we have $\KKK^{\bo}(\proj\Lambda)=\DDD^{\bo}(\mod \Lambda)$.

(a) Assume that $(R,\L)$ is Calabi-Yau. Then the triangulated category 
$\underline{\CM}^{\L}R$ is fractionally $\frac{dp}{p}$-Calabi-Yau by Corollary 
\ref{CM CY}. Hence $\Lambda$ has global dimension strictly bigger than $d$ by
Proposition~\ref{fractional CY and global dimension}, a contradiction.

Now we assume that $(R,\L)$ is anti-Fano.
Since $\Lambda$ has global dimension at most $d$, we have
$\nu_d^{-\ell}(\Lambda)\in\DDD^{\le0}(\mod \Lambda)$ for $\ell\ge0$
by Proposition~\ref{fractional CY and global dimension}. Hence
$\Hom_{\DDD^{\bo}(\mod \Lambda)}(\Lambda,\nu_d^{-\ell}(\Lambda)[i])=0$ holds for any $\ell\ge0$ and $i>0$.
On the other hand, for $\ell\gg0$, the element $-\ell\w\in\L$ becomes sufficiently small since $(R,\L)$ is anti-Fano. Therefore
\[\Hom_{\DDD^{\bo}(\mod \Lambda)}(\Lambda,\nu_d^{-\ell}(\Lambda)[-i])=\underline{\Hom}^{\L}_R(U,\Omega^iU(-\ell\w))=0\]
holds for any $i\ge0$ by Lemma~\ref{CM vanishing}. 
Consequently, we have $\nu_d^{-\ell}(\Lambda)=0$
for $\ell\gg0$. This is a contradiction since $\nu_d$ is an autoequivalence. 

Therefore $(R,\L)$ must be Fano.

(b) By Lemma \ref{graded and ungraded0}, we have an isomorphism
\begin{eqnarray*}
\Pi=\bigoplus_{\ell\in\Z}\Hom_{\DDD^{\bo}(\mod \Lambda)}(\Lambda,\nu_d^{-\ell}(\Lambda))\simeq\bigoplus_{\ell\in\Z}\underline{\Hom}^{\L}_R(U,U(-\ell\w))=\underline{\End}^{\L/\Z\w}_R(U).
\end{eqnarray*}
Since $\dim_k\underline{\End}_R(U)$ is finite by Proposition~\ref{CM is locally free}, so is $\dim_k\Pi=\dim_k\underline{\End}^{\L/\Z\w}_R(U)$.
Thus $\Lambda$ is $\nu_d$-finite.
\end{proof}

We give the following criterion for the tilting object $T^{\rm CM}$ given in Theorem~\ref{CM tilting} to be $d$-tilting.

\begin{proposition}\label{d-tilting object example1}
Let $(R,\L)$ be a GL complete intersection of dimension $d+1$ with weights $p_1,\ldots,p_n$.
Assume $p_i\ge2$ for all $i$.
\begin{itemize}
\item[(a)] The tilting object $T^{\rm CM}$ of $\underline{\CM}^{\L}R$ is $d$-tilting if and only if $\#\{i\ |\ p_i=2\}\ge 3(n-d)-4$. In this case, $n$ is at most $\frac{3d}{2}+2$.
\item[(b)] If $(R,\L)$ is a hypersurface, then $T^{\rm CM}$ is $d$-tilting if and only if at least two weights are two.
\end{itemize}
\end{proposition}

\begin{proof}
(a) By Theorem~\ref{gl.dim of A^CM}, $\gl A^{\rm CM}=2(n-d-2)+\#\{i\ |\ p_i\ge3\}$ holds.
Thus $T^{\rm CM}$ is $d$-tilting if and only if $d\ge 2(n-d-2)+n-\#\{i\ |\ p_i=2\}$ if and only if $\#\{i\ |\ p_i=2\}\ge 3(n-d)-4$.

(b) Immediate from (a).
\end{proof}

Let us describe $A^{\rm CM}$ in a special case of Proposition~\ref{d-tilting object example1}(a) explicitly.

\begin{example}
We consider the case $d\ge2$, $n=d+3$ and $p_i=2$ for all $1\le i\le n$. In this case, $T^{\rm CM}$ is a $d$-tilting object in $\underline{\CM}^{\L}R$, or more precisely, $\gl A^{\rm CM}=2$ holds.
Since $\de=\c$, the algebra $A^{\rm CM}$ is given by the following quiver with two relations.
\[\xymatrix@R=.5em@C=4em{
&\x_1\ar[rdd]|{X_1}\\
&\x_2\ar[rd]|{X_2}\\
0\ar[ruu]|{X_1}\ar[ru]|{X_2}\ar[rd]|{X_{n-1}}\ar[rdd]|{X_n}&\cdots&\c&X_{n-1}^2=\sum_{j=1}^{d+1}\lambda_{n-1,j-1}X_j^2\\
&\x_{n-1}\ar[ru]|{X_{n-1}}&&X_{n}^2=\sum_{j=1}^{d+1}\lambda_{n,j-1}X_j^2\\
&\x_n\ar[ruu]|{X_n}
}\]
\end{example}

Now let us recall the following well-known result.

\begin{lemma}\label{tensor product}
\begin{itemize}
\item[(a)] For $\ell=2,3,4$, the algebra $k\A_2\otimes_kk\A_\ell$ is derived equivalent to
$k\D_4$ if $\ell=2$, $k\E_6$ if $\ell=3$, and $k\E_8$ if $\ell=4$.
\item[(b)] $k\A_2\otimes_kk\A_5$ is derived equivalent to the tubular canonical algebra of type $(2,3,6)$.
\item[(c)] $k\A_3\otimes_kk\A_3$ is derived equivalent to the tubular canonical algebra of type $(2,4,4)$.
\item[(d)] $k\A_2\otimes_kk\A_2\otimes_kk\A_2$ is derived equivalent to the tubular canonical algebra of type $(3,3,3)$.
\item[(e)] For $2\le\ell\le m$, the algebra $k\A_\ell\otimes_kk\A_m$ is derived equivalent to a hereditary algebra
if and only if $(\ell,m)\in\{(2,2),(2,3),(2,4)\}$.
\end{itemize}
In particular, for the algebra $A$ in (b),(c) or (d), there are 
infinitely many indecomposable objects in $\DDD^{\bo}(\mod A)$ even up to suspension.
\end{lemma}

\begin{proof}
For (a), we refer to \cite[\S8]{KLM}.  For (b), (c) and (d), we refer to \cite[5.6]{KLM}.

(e) The `if' part follows from (a), and the `only if' part follows from the fact that $k\A_\ell\otimes_kk\A_m$ is
fractionally Calabi-Yau of dimension $\frac{\ell-1}{\ell+1}+\frac{m-1}{m+1}$ (Example~\ref{Dynkin is CY}), which must be smaller than $1$ by Proposition~\ref{fractional CY and global dimension}(c).
\end{proof}

As an application, we give more examples of GL hypersurfaces such that
$\underline{\CM}^{\L}R$ has $d$-tilting objects.

\begin{proposition}\label{d-tilting object example2}
If $n=d+2$ and one of the following conditions are satisfied, then $\underline{\CM}^{\L}R$ has a $d$-tilting object $U$.
\begin{itemize}
\item $d\geq 0$ and $(p_1,p_2)=(2,2)$.
\item $d \geq 1$ and $(p_1,p_2,p_3)=(2,3,3)$, $(2,3,4)$ or $(2,3,5)$.
\item $d \geq 2$ and $(p_1,p_2,p_3,p_4)=(3,3,p_3,p_4)$ with $p_3,p_4\in\{3,4,5\}$.
\end{itemize}
\end{proposition}

\begin{proof}
Since we have a triangle equivalence $\underline{\CM}^{\L}R\simeq
\DDD^{\bo}(\mod A^{\rm CM})$ by Theorem~\ref{CM tilting},
it suffices to show that $A^{\rm CM}$ is derived equivalent to an algebra of global dimension at most $d$.
We have $A^{\rm CM}\simeq\bigotimes_{i=1}^{n}k\A_{p_i-1}$ by Corollary~\ref{presentation of CM canonical}.

In the case $d\ge0$ and $(p_1,p_2)=(2,2)$, the algebra $A^{\rm CM}$ 
has global dimension at most $d$.

In the case $d\ge 1$ and $(p_1,p_2,p_3)=(2,3,3)$ (respectively, $(2,3,4)$, $(2,3,5)$),
the algebra $\bigotimes_{i=1}^3k\A_{p_i-1}$ is derived equivalent to a path
algebra $kQ$ of type $\D_4$ (respectively, $\E_6$, $\E_8$) by Lemma
\ref{tensor product}.
Hence $A^{\rm CM}$ is derived equivalent to
$kQ\otimes_k(\bigotimes_{i=4}^nk\A_{p_i-1})$ which has global dimension at most $d$.

In the case $d\ge2$ and $(p_1,p_2,p_3,p_4)=(3,3,p_3,p_4)$, the algebras
$k\A_2\otimes_kk\A_{p_3-1}$ and $k\A_2\otimes_kk\A_{p_4-1}$ are derived equivalent to
$k\D_4$, $k\E_6$ or $k\E_8$ by Lemma~\ref{tensor product}. Hence
$\bigotimes_{i=1}^4k\A_{p_i-1}$ is derived equivalent to an algebra $\Lambda$ with
global dimension $2$, and hence $A^{\rm CM}$ is derived equivalent to
$\Lambda\otimes_k(\bigotimes_{i=5}^nk\A_{p_i-1})$ which has global dimension at most $d$.
\end{proof}

To study Problem~\ref{when d-tilting exists} for the case $n=d+2$, the following observation is useful.

\begin{lemma}\label{type A consideration}
Let $n\ge3$ and $p_1,\ldots,p_n\ge 2$ be integers.
If $\bigotimes_{i=1}^nk\A_{p_i-1}$ is derived equivalent to an algebra $\Lambda$
with global dimension at most $n-2$, then $\sum_{i=1}^{n}\frac{1}{p_i}>1$.
\end{lemma}

\begin{proof}
By Example~\ref{Dynkin is CY}, the algebra $\bigotimes_{i=1}^nk\A_{p_i-1}$ is fractionally Calabi-Yau of dimension $\sum_{i=1}^n\frac{p_i-2}{p_i}=n-2\sum_{i=1}^n\frac{1}{p_i}$. 
Therefore by Proposition~\ref{fractional CY and global dimension}(c),
we have
\[n-2\sum_{i=1}^n\frac{1}{p_i}<\gl\Lambda\le n-2.\]
or $\Lambda$ is semisimple. In the first case we immediately have $\sum_{i=1}^{n}\frac{1}{p_i}>1$. In the second case it follows that
$p_1=\cdots=p_n=2$ and hence $\sum_{i=1}^{n}\frac{1}{p_i}=\frac{n}{2}>1$.
\end{proof}

Now we give an example which shows that $\underline{\CM}^{\L}R$ does not necessarily
have a $d$-tilting object even if $(R,\L)$ is Fano. It also shows that
the converse of Lemma~\ref{type A consideration} is not true.

\begin{example}\label{Fano no d-tilting}
Let $d=2$, $n=4$ and $(p_1,p_2,p_3,p_4)=(2,5,5,5)$ or $(2,3,9,9)$.
\begin{itemize}
\item[(a)] $(R,\L)$ is Fano.
\item[(b)] $\bigotimes_{i=1}^4\A_{p_i-1}$ is not derived equivalent to a finite dimensional algebra $A$ with $\gl A\le2$.
\item[(c)] $\underline{\CM}^{\L}R$ does not have a $2$-tilting object.
\end{itemize}
\end{example}

\begin{proof}
(a) is clear. It suffices to show (b) since (c) follows immediately from (b).
Since $A^{\rm CM}=\bigotimes_{i=1}^4\A_{p_i-1}$, it is fractionally Calabi-Yau of dimension $\frac{9}{5}$ (respectively, $\frac{17}{9}$).
Applying Proposition~\ref{fractional CY and global dimension}(c), we obtain the assertion since $2<\frac{9}{4}$ (respectively, $2<\frac{17}{8}$).
\end{proof}

The following analog of Example~\ref{d-tilting for d=0,1} will be used in
Section~\ref{Section: d-tilting bundle}.

\begin{proposition} \label{prop.base_cases}
Let $(R,\L)$ be a GL hypersurface associated with weights $p_1,\ldots,p_{d+2}$.
\begin{itemize}
\item[(a)] Let $d = -1$. Then $\underline{\CM}^{\L}R$ has a $0$-tilting object if and only if $p_1=2$.
\item[(b)] Let $d = 0$. Then $\underline{\CM}^{\L}R$ has a $1$-tilting object if and only if $(p_1, p_2)=(2,p_2)$, $(3,3)$, $(3,4)$, or $(3,5)$.
\end{itemize}
\end{proposition}

\begin{proof}
By Corollary~\ref{presentation of CM canonical}(b) we know that $\underline{\CM}^{\L}R$ is triangle equivalent to $\bigotimes_{i=1}^{d+2} k \mathbb{A}_{p_i - 1}$. Thus the assertion follows from Lemma~\ref{tensor product}(e).
\end{proof}

\section{Cohen-Macaulay finiteness and $d$-Cohen-Macaulay finiteness}\label{section: CMF and dCMF}

Let $(R,\L)$ be a Geigle-Lenzing complete intersection associated with 
linear forms $\ell_1,\ldots,\ell_n$ and weights $p_1,\ldots,p_n$.

\begin{definition}[Cohen-Macaulay finiteness]
We say that a GL complete intersection $(R,\L)$ is \emph{Cohen-Macaulay finite} 
(=\emph{CM finite}) if there are only finitely many isomorphism classes of 
indecomposable objects in $\CM^{\L}R$ up to degree shift.
\end{definition}

As an application of results in previous section, we have the following classification
of CM finite GL complete intersections.

\begin{theorem}\label{characterize RF}
Let $(R,\L)$ be a GL complete intersection of dimension $d+1$ with weights $p_1,\ldots,p_n$.
Assume that $p_i\ge2$ for all $i$.
Then $(R,\L)$ is CM finite if and only if one of the following conditions hold.
\begin{itemize}
\item $n\le d+1$.
\item $n=d+2$, and $(p_1,\ldots,p_n)=(2,\ldots,2,p_n)$, $(2,\ldots,2,3,3)$, $(2,\ldots,2,3,4)$
or $(2,\ldots,2,3,5)$ up to permutation.
\end{itemize}
\end{theorem}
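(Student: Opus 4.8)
The plan is to reduce everything to the triangle equivalence $\underline{\CM}^{\L}R\simeq\DDD^{\bo}(\mod A^{\rm CM})$ of Theorem \ref{CM tilting}, and then to read off CM-finiteness from the representation type of $A^{\rm CM}$. First I would dispose of the case $n\le d+1$: here $R$ is regular by Proposition \ref{R is CI}(b), so $\CM^{\L}R=\proj^{\L}R=\add\{R(\x)\mid\x\in\L\}$ by Proposition \ref{when stable category is zero}, which has a single indecomposable up to degree shift, hence $(R,\L)$ is trivially CM finite. So from now on assume $n\ge d+2$, so that $\underline{\CM}^{\L}R\ne0$. The key reformulation is: $(R,\L)$ is CM finite if and only if $\underline{\CM}^{\L}R$ has only finitely many indecomposables up to the action of the degree-shift autoequivalences; since $\proj^{\L}R$ contributes only shifts of $R$, and by Auslander-Reiten-Serre duality (Theorem \ref{AR duality}) together with Theorem \ref{Buchweitz Eisenbud} the stable category is a Krull-Schmidt triangulated category with Serre functor, CM finiteness is equivalent to $\underline{\CM}^{\L}R$ having finitely many indecomposables modulo the group of degree shifts $(\x)$, $\x\in\L$.

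Next I would invoke the hypersurface case $n=d+2$ separately from the complete-intersection case $n\ge d+3$. For $n\ge d+3$, I claim $(R,\L)$ is never CM finite; this should follow because $\underline{\CM}^{\L}R\simeq\DDD^{\bo}(\mod A^{\rm CM})$ and, by Theorem \ref{endomorphism convex}, $A^{\rm CM}=A^{[0,\de]}$ is presented by a quiver whose relations genuinely involve the hypersurface relations $x_i^{p_i}-\sum\lambda_{i,j-1}x_j^{p_j}$ (because $[0,\de]\cap([0,\de]-\c)\ne\emptyset$ once $n\ge d+3$, as $\de=\sum(p_i-2)\x_i+(n-d-2)\c$ has $\c$-coefficient $\ge1$); one then shows this algebra has infinite representation type even after quotienting by the shift action, e.g. by exhibiting a suitable convex subalgebra of wild or tame-infinite type, or by a rank argument via Proposition \ref{property of rank} showing the Grothendieck group is too large to be covered by finitely many shift-orbits of indecomposables. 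For the hypersurface case $n=d+2$, Corollary \ref{presentation of CM canonical}(b) gives $A^{\rm CM}\simeq\bigotimes_{i=1}^n k\A_{p_i-1}$, and Kn\"orrer periodicity (Corollary \ref{presentation of CM canonical}(e)) reduces the problem, after repeatedly stripping off weights equal to $2$, to the weighted projective line case $d=1$ with weights the nontrivial $p_i$'s. In that classical situation, $\underline{\CM}^{\L}R$ is CM finite precisely for the domestic triples, i.e. for $(p_i)$ after removing $2$'s equal to $(p,q)$, $(2,2,p)$, $(2,3,3)$, $(2,3,4)$, $(2,3,5)$ — which, translating back through the periodicity, is exactly the list $(2,\dots,2,p_n)$, $(2,\dots,2,3,3)$, $(2,\dots,2,3,4)$, $(2,\dots,2,3,5)$ in the statement.

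Concretely, the steps in order: (1) handle $n\le d+1$ via regularity; (2) set up the equivalence $\underline{\CM}^{\L}R\simeq\DDD^{\bo}(\mod A^{\rm CM})$ and note CM finiteness $\Leftrightarrow$ finitely many indecomposables of $\mod A^{\rm CM}$ modulo the autoequivalence induced by the $\c$-shift (which corresponds to $[2]$ when $n=d+2$ by Theorem \ref{Buchweitz Eisenbud}(d)); (3) for $n=d+2$, apply $A^{\rm CM}\simeq\bigotimes k\A_{p_i-1}$ and Kn\"orrer periodicity to reduce to $d=1$, then quote the Geigle-Lenzing/Ringel classification of domestic weighted projective lines (equivalently: the tensor algebra $\bigotimes k\A_{p_i-1}$ is derived equivalent to a path algebra of extended Dynkin type exactly for the listed weights, hence has a preprojective/preinjective component structure forcing finitely many orbits); (4) for $n\ge d+3$, show CM-infiniteness, either by a Grothendieck-group/rank count or by locating a derived-subcategory piece of infinite type. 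The main obstacle is step (4) — proving that no complete intersection with $n\ge d+3$ is CM finite requires genuinely controlling the representation type of the higher-dimensional algebra $A^{\rm CM}$ and the shift action on it, rather than just citing the $d=1$ dictionary; I expect this to rely on an explicit rank argument (the free abelian group $K_0(\underline{\CM}^{\L}R)$ has rank growing with the number of lattice points in $[0,\de]$, which outpaces the finitely many shift-orbits) combined with the existence of almost split sequences from Theorem \ref{AR duality}(b) to rule out the pathological possibility that infinitely many non-isomorphic indecomposables all lie in finitely many shift-orbits.
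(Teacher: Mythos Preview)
Your overall architecture---reduce to $\underline{\CM}^{\L}R\simeq\DDD^{\bo}(\mod A^{\rm CM})$ and analyse representation type---is the paper's as well, and your treatment of $n\le d+1$ and your reformulation for $n=d+2$ via $(\c)=[2]$ are fine. However there are two genuine gaps.

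\textbf{The case $n\ge d+3$.} The paper disposes of this in one line using Tate's theorem (recorded as Proposition~\ref{Tate's result}): if $(R,\L)$ is CM finite then the Betti numbers of $k$ are bounded, hence $R$ is a hypersurface, hence $n\le d+2$. You do not use this and instead propose a Grothendieck-group/rank count; that does not work. Knowing that $K_0(\underline{\CM}^{\L}R)$ has large rank tells you nothing about the number of $\L$-shift orbits of indecomposables, because the shift action on $K_0$ is itself by automorphisms of a large-rank lattice and can have arbitrarily many orbits of basis vectors while the indecomposables cluster in few orbits. Nor does the mere existence of almost split sequences bound anything. So step~(4) as written has no content; the missing idea is precisely Tate's bounded-Betti-series criterion.

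\textbf{The non-listed cases with $n=d+2$.} Your Kn\"orrer reduction ``strip off the $2$'s and land in $d=1$'' only applies when the weight sequence contains at least $d-1$ entries equal to $2$; for something like $d=2$, weights $(3,3,3,3)$, there is nothing to strip and you cannot reach a weighted projective line. The paper's argument here is different and more robust: one checks that if the weights are not on the list then (up to permutation) either $p_1\ge3,\,p_2\ge6$, or $p_1\ge4,\,p_2\ge4$, or $p_1,p_2,p_3\ge3$, and in each case $A^{\rm CM}=\bigotimes_i k\A_{p_i-1}$ contains as an idempotent subalgebra one of $k\A_2\otimes k\A_5$, $k\A_3\otimes k\A_3$, or $k\A_2\otimes k\A_2\otimes k\A_2$, each of which is derived equivalent to a tubular canonical algebra (Lemma~\ref{tensor product}) and therefore has infinitely many indecomposables in its derived category up to shift. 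Since infinitely many in $\DDD^{\bo}(\mod eA^{\rm CM}e)$ forces infinitely many in $\DDD^{\bo}(\mod A^{\rm CM})$, this gives CM-infiniteness via Lemma~\ref{criterion for finiteness}. Your proposal gestures at ``the tensor algebra is derived equivalent to a path algebra of extended Dynkin type exactly for the listed weights'' but does not say how to prove the failure direction; the idempotent/tubular-subalgebra trick is the actual mechanism.
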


\begin{proof}
If $n\le d+1$, then $(R,\L)$ is CM finite since $\CM^{\L}R=\proj^{\L}R$ holds.

Assume $n\ge d+3$. Then $X:=\Omega^{d+1}k$ is an indecomposable object in $\CM^{\L}R$ since it is $\Omega^{d+1}\rho(k)$ and $\rho(k)$ is indecomposable by Theorem~\ref{CM tilting}(a).
By Theorem~\ref{L-Poincare}(d), the $\Omega^\ell X$ with $\ell\ge0$ are pairwise non-isomorphic even up to degree shift. Thus $(R,\L)$ is not CM finite.

It remains to consider the case $n=d+2$.

\begin{lemma}\label{criterion for finiteness}
Assume $n=d+2$. Then $(R,\L)$ is CM finite if and only if
there are only finitely many isomorphism classes of indecomposable objects in 
$\DDD^{\bo}(\mod A^{\rm CM})$ up to suspension.
\end{lemma}

\begin{proof}
We have a triangle equivalence $\DDD^{\bo}(\mod A^{\rm CM})\simeq
\underline{\CM}^{\L}R$ by Theorem~\ref{CM tilting}. 
Moreover $[2]=(\c)$ holds by Theorem~\ref{Buchweitz Eisenbud}(d).
Thus the assertion holds since both $\Z/2\Z$ and $\L/\Z\c$ are finite.
\end{proof}

For the four cases with $n=d+2$ listed in Theorem~\ref{characterize RF}, it follows from
Lemma~\ref{tensor product}
that $A^{\rm CM}\simeq\bigotimes_{i=1}^nk\A_{p_i-1}$ is derived equivalent
to a path algebra of a Dynkin quiver. In particular there are only finitely many
isomorphism classes of indecomposable objects in $\DDD^{\bo}(\mod A^{\rm CM})$
up to suspension. Hence $(R,\L)$ is CM finite by Lemma~\ref{criterion for finiteness}.

Conversely, assume that the weights are not one of
$(2,\ldots,2,p_n)$, $(2,\ldots,2,3,3)$, $(2,\ldots,2,3,4)$ or $(2,\ldots,2,3,5)$.
It is easy to check that one of the following conditions hold up to permutation:
\begin{itemize}
\item $p_1\ge 3$ and $p_2\ge 6$.
\item $p_1\ge 4$ and $p_2\ge 4$.
\item $p_1\ge 3$, $p_2\ge 3$ and $p_3\ge 3$.
\end{itemize}
Thus there exists an idempotent $e$ in $A^{\rm CM}$
such that $eA^{\rm CM}e$ is isomorphic to $k\A_2\otimes_kk\A_5$,
$k\A_3\otimes_kk\A_3$ or $k\A_2\otimes_kk\A_2\otimes_kk\A_2$.
In each case, there are infinitely many isomorphism classes of 
indecomposable objects in $\DDD^{\bo}(\mod eA^{\rm CM}e)$ (and 
hence in $\DDD^{\bo}(\mod A^{\rm CM})$) even up to suspension by 
Lemma~\ref{tensor product}. By Lemma~\ref{criterion for finiteness}, we 
have that $(R,\L)$ is not CM finite.
\end{proof}

We apply Theorem~\ref{characterize RF} to the cases $d=-1,0$.

\begin{example}
Let $(R,\L)$ be a GL complete intersection of dimension $d+1$ with weights $p_1,\ldots,p_n$. 
Assume that $p_i\ge2$ for all $i$.
\begin{itemize}
\item[(a)] Let $d=-1$. Then $(R,\L)$ is CM finite if and only if $n\le 1$.
\item[(b)] Let $d=0$. Then $(R,\L)$ is CM finite if and only if $n\le1$ or $n=2$ and the weights are $(2,p_2)$, $(3,3)$, $(3,4)$ or $(3,5)$.
\end{itemize}
\end{example}

We give a description of the Auslander-Reiten quiver of $\CM^{\L}R$ when it is CM finite.

\begin{theorem}\label{AR quiver for CM finite}
Let $(R,\L)$ be a GL complete intersection in the list of Theorem~\ref{characterize RF}.
Then the Auslander-Reiten quiver ${\mathfrak A}(\CM^{\L}R)$ is given as follows.
\begin{itemize}
\item If $n\le d+1$, then the vertices are $R(\x)$ with $\x\in\L$, and the arrows are $R(\x)\to R(\x+\x_i)$ for all $x\in\L$ and $1\le i\le d+1$, where $\x_i:=\c$ for $n+1\le i\le d+1$.
\item If $n=d+2$, then ${\mathfrak A}(\CM^{\L}R)$ is given in Figure~\ref{CMfiniteFig}.
\end{itemize}
\end{theorem}

\begin{figure}
\[
\resizebox{\textwidth}{!}{
\begin{xy} 0;<14pt,0pt>:<0pt,14pt>::
(8,8) *+{\cdot} ="40",
(10,10) *+{\bullet} ="50",
(8,4) *+{} ="21",
(8.4,4.4) *+{\mdots} ="x",
(9.2,5.2) *+{\mdots} ="y",
(10,6) *+{} ="31",
(12,8) *+{\cdot} ="41",
(14,10) *+{\bullet} ="51",
(8,0) *+{\bullet} ="02",
(10,2) *+{\cdot} ="12",
(12,4) *+{} ="22",
(12.4,4.4) *+{\mdots} ="x",
(13.2,5.2) *+{\mdots} ="y",
(14,6) *+{} ="32",
(16,8) *+{\cdot} ="42",
(18,10) *+{\bullet} ="52",
(12,0) *+{\bullet} ="03",
(14,2) *+{\cdot} ="13",
(16,4) *+{} ="23",
(16.4,4.4) *+{\mdots} ="x",
(17.2,5.2) *+{\mdots} ="y",
(18,6) *+{} ="33",
(20,8) *+{\cdot} ="43",
(22,10) *+{\bullet} ="53",
(16,0) *+{\bullet} ="04",
(18,2) *+{\cdot} ="14",
(20,4) *+{} ="24",
(20.4,4.4) *+{\mdots} ="x",
(21.2,5.2) *+{\mdots} ="y",
(22,6) *+{} ="34",
(24,8) *+{\cdot} ="44",
(26,10) *+{\bullet} ="54",
(20,0) *+{\bullet} ="05",
(22,2) *+{\cdot} ="15",
(24,4) *+{} ="25",
(24.4,4.4) *+{\mdots} ="x",
(25.2,5.2) *+{\mdots} ="y",
(26,6) *+{} ="35",
(28,8) *+{\cdot} ="45",
(30,10) *+{\bullet} ="55",
(24,0) *+{(0)} ="06",
(26,2) *+{1,p-1} ="16",
(28,4) *+{} ="26",
(28.4,4.4) *+{\mdots} ="x",
(29.2,5.2) *+{\mdots} ="y",
(30,6) *+{} ="36",
(32,8) *+{1,1} ="46",
(34,10) *+{(\x_{n-1})} ="56",
(28,0) *+{(\x_n)} ="07",
(30,2) *+{\cdot} ="17",
(32,4) *+{} ="27",
(32.4,4.4) *+{\mdots} ="x",
(33.2,5.2) *+{\mdots} ="y",
(34,6) *+{} ="37",
(36,8) *+{\cdot} ="47",
(38,10) *+{\bullet} ="57",
(32,0) *+{\bullet} ="08",
(34,2) *+{\cdot} ="18",
(36,4) *+{} ="28",
(36.4,4.4) *+{\mdots} ="x",
(37.2,5.2) *+{\mdots} ="y",
(38,6) *+{} ="38",
(40,8) *+{\cdot} ="48",
(42,10) *+{\bullet} ="58",
(36,0) *+{\bullet} ="09",
(38,2) *+{\cdot} ="19",
(40,4) *+{} ="29",
(40.4,4.4) *+{\mdots} ="x",
(41.2,5.2) *+{\mdots} ="y",
(42,6) *+{} ="39",
(44,8) *+{\cdot} ="49",
(46,10) *+{\bullet} ="59",
(40,0) *+{\bullet} ="010",
(42,2) *+{\cdot} ="110",
(44,4) *+{} ="210",
(44.4,4.4) *+{\mdots} ="x",
(45.2,5.2) *+{\mdots} ="y",
(46,6) *+{} ="310",
(48,8) *+{\cdot} ="410",
(50,10) *+{\bullet} ="510",
(44,0) *+{\bullet} ="011",
(46,2) *+{\cdot} ="111",
(48,4) *+{} ="211",
(48.4,4.4) *+{\mdots} ="x",
(49.2,5.2) *+{\mdots} ="y",
(50,6) *+{} ="311",
(48,0) *+{\bullet} ="012",
(50,2) *+{\cdot} ="112",
"40", {\ar"50"},
"40", {\ar"31"},
"50", {\ar"41"},
"31", {\ar"41"},
"41", {\ar"51"},
"21", {\ar"12"},
"41", {\ar"32"},
"51", {\ar"42"},
"02", {\ar"12"},
"12", {\ar"22"},
"32", {\ar"42"},
"42", {\ar"52"},
"12", {\ar"03"},
"22", {\ar"13"},
"42", {\ar"33"},
"52", {\ar"43"},
"03", {\ar"13"},
"13", {\ar"23"},
"33", {\ar"43"},
"43", {\ar"53"},
"13", {\ar"04"},
"23", {\ar"14"},
"43", {\ar"34"},
"53", {\ar"44"},
"04", {\ar"14"},
"14", {\ar"24"},
"34", {\ar"44"},
"44", {\ar"54"},
"14", {\ar"05"},
"24", {\ar"15"},
"44", {\ar"35"},
"54", {\ar"45"},
"05", {\ar"15"},
"15", {\ar"25"},
"35", {\ar"45"},
"45", {\ar"55"},
"15", {\ar"06"},
"25", {\ar"16"},
"45", {\ar"36"},
"55", {\ar"46"},
"06", {\ar"16"},
"16", {\ar"26"},
"36", {\ar"46"},
"46", {\ar"56"},
"16", {\ar"07"},
"26", {\ar"17"},
"46", {\ar"37"},
"56", {\ar"47"},
"07", {\ar"17"},
"17", {\ar"27"},
"37", {\ar"47"},
"47", {\ar"57"},
"17", {\ar"08"},
"27", {\ar"18"},
"47", {\ar"38"},
"57", {\ar"48"},
"08", {\ar"18"},
"18", {\ar"28"},
"38", {\ar"48"},
"48", {\ar"58"},
"18", {\ar"09"},
"28", {\ar"19"},
"48", {\ar"39"},
"58", {\ar"49"},
"09", {\ar"19"},
"19", {\ar"29"},
"39", {\ar"49"},
"49", {\ar"59"},
"19", {\ar"010"},
"29", {\ar"110"},
"49", {\ar"310"},
"59", {\ar"410"},
"010", {\ar"110"},
"110", {\ar"210"},
"310", {\ar"410"},
"410", {\ar"510"},
"110", {\ar"011"},
"210", {\ar"111"},
"410", {\ar"311"},
"011", {\ar"111"},
"111", {\ar"211"},
"111", {\ar"012"},
"211", {\ar"112"},
"012", {\ar"112"},
\end{xy}
}
\]
{\centering $\CM^{\L}R$ for $(p_1,\ldots,p_{n-2},p_{n-1},p_{n}) = (2,\ldots,2,2,p)$

}

\[
\resizebox{\textwidth}{!}{
\begin{xy} 0;<16pt,0pt>:<0pt,16pt>::
(8,8) *+{\circ} ="40",
(8,6) *+{\circ} ="60",
(8,4) *+{\cdot} ="21",
(10,6) *+{\cdot} ="31",
(12,8) *+{\bullet} ="41",
(10,5) *+{\cdot} ="51",
(12,6) *+{\bullet} ="61",
(8,0) *+{\circ} ="02",
(10,2) *+{\cdot} ="12",
(12,4) *+{\cdot} ="22",
(14,6) *+{\cdot} ="32",
(16,8) *+{\circ} ="42",
(14,5) *+{\cdot} ="52",
(16,6) *+{\circ} ="62",
(12,0) *+{\bullet} ="03",
(14,2) *+{\cdot} ="13",
(16,4) *+{\cdot} ="23",
(18,6) *+{\cdot} ="33",
(20,8) *+{\bullet} ="43",
(18,5) *+{\cdot} ="53",
(20,6) *+{\bullet} ="63",
(16,0) *+{\circ} ="04",
(18,2) *+{\cdot} ="14",
(20,4) *+{\cdot} ="24",
(22,6) *+{\cdot} ="34",
(24,8) *+{\circ} ="44",
(22,5) *+{\cdot} ="54",
(24,6) *+{\circ} ="64",
(20,0) *+{(0)} ="05",
(22,2) *+{22} ="15",
(24,4) *+{\cdot} ="25",
(26,6) *+{21} ="35",
(28,8) *+{(\x_{n-1})} ="45",
(26,5) *+{12} ="55",
(28,6) *+{(\x_n)} ="65",
(24,0) *+{\circ} ="06",
(26,2) *+{\cdot} ="16",
(28,4) *+{\cdot} ="26",
(30,6) *+{\cdot} ="36",
(32,8) *+{\circ} ="46",
(30,5) *+{\cdot} ="56",
(32,6) *+{\circ} ="66",
(28,0) *+{\bullet} ="07",
(30,2) *+{11} ="17",
(32,4) *+{\cdot} ="27",
(34,6) *+{\cdot} ="37",
(36,8) *+{\bullet} ="47",
(34,5) *+{\cdot} ="57",
(36,6) *+{\bullet} ="67",
(32,0) *+{(\x_1)} ="08",
(34,2) *+{\cdot} ="18",
(36,4) *+{\cdot} ="28",
(38,6) *+{\cdot} ="38",
(40,8) *+{\circ} ="48",
(38,5) *+{\cdot} ="58",
(40,6) *+{\circ} ="68",
(36,0) *+{\bullet} ="09",
(38,2) *+{\cdot} ="19",
(40,4) *+{\cdot} ="29",
(42,6) *+{\cdot} ="39",
(44,8) *+{\bullet} ="49",
(42,5) *+{\cdot} ="59",
(44,6) *+{\bullet} ="69",
(40,0) *+{\circ} ="010",
(42,2) *+{\cdot} ="110",
(44,4) *+{\cdot} ="210",
(44,0) *+{\bullet} ="011",
"40", {\ar"31"},
"60", {\ar"51"},
"21", {\ar"31"},
"31", {\ar"41"},
"21", {\ar"51"},
"51", {\ar"61"},
"21", {\ar"12"},
"31", {\ar"22"},
"41", {\ar"32"},
"51", {\ar"22"},
"61", {\ar"52"},
"02", {\ar"12"},
"12", {\ar"22"},
"22", {\ar"32"},
"32", {\ar"42"},
"22", {\ar"52"},
"52", {\ar"62"},
"12", {\ar"03"},
"22", {\ar"13"},
"32", {\ar"23"},
"42", {\ar"33"},
"52", {\ar"23"},
"62", {\ar"53"},
"03", {\ar"13"},
"13", {\ar"23"},
"23", {\ar"33"},
"33", {\ar"43"},
"23", {\ar"53"},
"53", {\ar"63"},
"13", {\ar"04"},
"23", {\ar"14"},
"33", {\ar"24"},
"43", {\ar"34"},
"53", {\ar"24"},
"63", {\ar"54"},
"04", {\ar"14"},
"14", {\ar"24"},
"24", {\ar"34"},
"34", {\ar"44"},
"24", {\ar"54"},
"54", {\ar"64"},
"14", {\ar"05"},
"24", {\ar"15"},
"34", {\ar"25"},
"44", {\ar"35"},
"54", {\ar"25"},
"64", {\ar"55"},
"05", {\ar"15"},
"15", {\ar"25"},
"25", {\ar"35"},
"35", {\ar"45"},
"25", {\ar"55"},
"55", {\ar"65"},
"15", {\ar"06"},
"25", {\ar"16"},
"35", {\ar"26"},
"45", {\ar"36"},
"55", {\ar"26"},
"65", {\ar"56"},
"06", {\ar"16"},
"16", {\ar"26"},
"26", {\ar"36"},
"36", {\ar"46"},
"26", {\ar"56"},
"56", {\ar"66"},
"16", {\ar"07"},
"26", {\ar"17"},
"36", {\ar"27"},
"46", {\ar"37"},
"56", {\ar"27"},
"66", {\ar"57"},
"07", {\ar"17"},
"17", {\ar"27"},
"27", {\ar"37"},
"37", {\ar"47"},
"27", {\ar"57"},
"57", {\ar"67"},
"17", {\ar"08"},
"27", {\ar"18"},
"37", {\ar"28"},
"47", {\ar"38"},
"57", {\ar"28"},
"67", {\ar"58"},
"08", {\ar"18"},
"18", {\ar"28"},
"28", {\ar"38"},
"38", {\ar"48"},
"28", {\ar"58"},
"58", {\ar"68"},
"18", {\ar"09"},
"28", {\ar"19"},
"38", {\ar"29"},
"48", {\ar"39"},
"58", {\ar"29"},
"68", {\ar"59"},
"09", {\ar"19"},
"19", {\ar"29"},
"29", {\ar"39"},
"39", {\ar"49"},
"29", {\ar"59"},
"59", {\ar"69"},
"19", {\ar"010"},
"29", {\ar"110"},
"39", {\ar"210"},
"010", {\ar"110"},
"110", {\ar"210"},
"110", {\ar"011"},
\end{xy}
}
\]
{\centering $\CM^{\L}R$ for $(p_1,\ldots,p_{n-2},p_{n-1},p_{n}) = (2,\ldots,2,3,3)$

}

\[
\resizebox{\textwidth}{!}{
\begin{xy} 0;<18pt,0pt>:<0pt,18pt>::
(6,8) *+{\circ} ="9n",
(6,6) *+{\cdot} ="60",
(7,7) *+{\cdot} ="70",
(6,5) *+{\cdot} ="80",
(8,8) *+{\bullet} ="90",
(6,4) *+{\cdot} ="41",
(7,5) *+{\cdot} ="51",
(8,6) *+{\cdot} ="61",
(9,7) *+{\cdot} ="71",
(8,5) *+{\cdot} ="81",
(10,8) *+{\circ} ="91",
(6,2) *+{\bullet} ="22",
(7,3) *+{\cdot} ="32",
(8,4) *+{\cdot} ="42",
(9,5) *+{\cdot} ="52",
(10,6) *+{\cdot} ="62",
(11,7) *+{\cdot} ="72",
(10,5) *+{\cdot} ="82",
(12,8) *+{\bullet} ="92",
(8,2) *+{\circ} ="23",
(9,3) *+{\cdot} ="33",
(10,4) *+{\cdot} ="43",
(11,5) *+{\cdot} ="53",
(12,6) *+{\cdot} ="63",
(13,7) *+{\cdot} ="73",
(12,5) *+{\cdot} ="83",
(14,8) *+{\circ} ="93",
(10,2) *+{\bullet} ="24",
(11,3) *+{\cdot} ="34",
(12,4) *+{\cdot} ="44",
(13,5) *+{\cdot} ="54",
(14,6) *+{\cdot} ="64",
(15,7) *+{\cdot} ="74",
(14,5) *+{\cdot} ="84",
(16,8) *+{\bullet} ="94",
(12,2) *+{\circ} ="25",
(13,3) *+{\cdot} ="35",
(14,4) *+{\cdot} ="45",
(15,5) *+{\cdot} ="55",
(16,6) *+{\cdot} ="65",
(17,7) *+{\cdot} ="75",
(16,5) *+{\cdot} ="85",
(18,8) *+{\circ} ="95",
(14,2) *+{(0)} ="26",
(15,3) *+{23} ="36",
(16,4) *+{\cdot} ="46",
(17,5) *+{\cdot} ="56",
(18,6) *+{\cdot} ="66",
(19,7) *+{13} ="76",
(18,5) *+{22} ="86",
(20,8) *+{(\x_{n})} ="96",
(16,2) *+{\circ} ="27",
(17,3) *+{\cdot} ="37",
(18,4) *+{\cdot} ="47",
(19,5) *+{\cdot} ="57",
(20,6) *+{\cdot} ="67",
(21,7) *+{\cdot} ="77",
(20,5) *+{\cdot} ="87",
(22,8) *+{\circ} ="97",
(18,2) *+{\bullet} ="28",
(19,3) *+{\cdot} ="38",
(20,4) *+{\cdot} ="48",
(21,5) *+{\cdot} ="58",
(22,6) *+{\cdot} ="68",
(23,7) *+{\cdot} ="78",
(22,5) *+{12} ="88",
(24,8) *+{\bullet} ="98",
(20,2) *+{\circ} ="29",
(21,3) *+{21} ="39",
(22,4) *+{\cdot} ="49",
(23,5) *+{\cdot} ="59",
(24,6) *+{\cdot} ="69",
(25,7) *+{11} ="79",
(24,5) *+{\cdot} ="89",
(26,8) *+{(\x_1)} ="99",
(22,2) *+{(\x_{n-1})} ="210",
(23,3) *+{\cdot} ="310",
(24,4) *+{\cdot} ="410",
(25,5) *+{\cdot} ="510",
(26,6) *+{\cdot} ="610",
(27,7) *+{\cdot} ="710",
(26,5) *+{\cdot} ="810",
(28,8) *+{\bullet} ="910",
(24,2) *+{\circ} ="211",
(25,3) *+{\cdot} ="311",
(26,4) *+{\cdot} ="411",
(27,5) *+{\cdot} ="511",
(28,6) *+{\cdot} ="611",
(29,7) *+{\cdot} ="711",
(28,5) *+{\cdot} ="811",
(30,8) *+{\circ} ="911",
(26,2) *+{\bullet} ="212",
(27,3) *+{\cdot} ="312",
(28,4) *+{\cdot} ="412",
(29,5) *+{\cdot} ="512",
(30,6) *+{\cdot} ="612",
(31,7) *+{\cdot} ="712",
(30,5) *+{\cdot} ="812",
(32,8) *+{\bullet} ="912",
(28,2) *+{\circ} ="213",
(29,3) *+{\cdot} ="313",
(30,4) *+{\cdot} ="413",
(31,5) *+{\cdot} ="513",
(32,6) *+{\cdot} ="613",
(33,7) *+{\cdot} ="713",
(32,5) *+{\cdot} ="813",
(34,8) *+{\circ} ="913",
(30,2) *+{\bullet} ="214",
(31,3) *+{\cdot} ="314",
(32,4) *+{\cdot} ="414",
(33,5) *+{\cdot} ="514",
(34,6) *+{\cdot} ="614",
(35,7) *+{\cdot} ="714",
(34,5) *+{\cdot} ="814",
(36,8) *+{\bullet} ="914",
(32,2) *+{\circ} ="215",
(33,3) *+{\cdot} ="315",
(34,4) *+{\cdot} ="415",
(35,5) *+{\cdot} ="515",
(36,6) *+{\cdot} ="615",
(36,5) *+{\cdot} ="815",
(34,2) *+{\bullet} ="216",
(35,3) *+{\cdot} ="316",
(36,4) *+{\cdot} ="416",
(36,2) *+{\circ} ="217",
"9n", {\ar"70"},
"60", {\ar"70"},
"70", {\ar"90"},
"60", {\ar"51"},
"70", {\ar"61"},
"80", {\ar"51"},
"90", {\ar"71"},
"41", {\ar"51"},
"51", {\ar"61"},
"61", {\ar"71"},
"71", {\ar"91"},
"51", {\ar"81"},
"41", {\ar"32"},
"51", {\ar"42"},
"61", {\ar"52"},
"71", {\ar"62"},
"81", {\ar"52"},
"91", {\ar"72"},
"22", {\ar"32"},
"32", {\ar"42"},
"42", {\ar"52"},
"52", {\ar"62"},
"62", {\ar"72"},
"72", {\ar"92"},
"52", {\ar"82"},
"32", {\ar"23"},
"42", {\ar"33"},
"52", {\ar"43"},
"62", {\ar"53"},
"72", {\ar"63"},
"82", {\ar"53"},
"92", {\ar"73"},
"23", {\ar"33"},
"33", {\ar"43"},
"43", {\ar"53"},
"53", {\ar"63"},
"63", {\ar"73"},
"73", {\ar"93"},
"53", {\ar"83"},
"33", {\ar"24"},
"43", {\ar"34"},
"53", {\ar"44"},
"63", {\ar"54"},
"73", {\ar"64"},
"83", {\ar"54"},
"93", {\ar"74"},
"24", {\ar"34"},
"34", {\ar"44"},
"44", {\ar"54"},
"54", {\ar"64"},
"64", {\ar"74"},
"74", {\ar"94"},
"54", {\ar"84"},
"34", {\ar"25"},
"44", {\ar"35"},
"54", {\ar"45"},
"64", {\ar"55"},
"74", {\ar"65"},
"84", {\ar"55"},
"94", {\ar"75"},
"25", {\ar"35"},
"35", {\ar"45"},
"45", {\ar"55"},
"55", {\ar"65"},
"65", {\ar"75"},
"75", {\ar"95"},
"55", {\ar"85"},
"35", {\ar"26"},
"45", {\ar"36"},
"55", {\ar"46"},
"65", {\ar"56"},
"75", {\ar"66"},
"85", {\ar"56"},
"95", {\ar"76"},
"26", {\ar"36"},
"36", {\ar"46"},
"46", {\ar"56"},
"56", {\ar"66"},
"66", {\ar"76"},
"76", {\ar"96"},
"56", {\ar"86"},
"36", {\ar"27"},
"46", {\ar"37"},
"56", {\ar"47"},
"66", {\ar"57"},
"76", {\ar"67"},
"86", {\ar"57"},
"96", {\ar"77"},
"27", {\ar"37"},
"37", {\ar"47"},
"47", {\ar"57"},
"57", {\ar"67"},
"67", {\ar"77"},
"77", {\ar"97"},
"57", {\ar"87"},
"37", {\ar"28"},
"47", {\ar"38"},
"57", {\ar"48"},
"67", {\ar"58"},
"77", {\ar"68"},
"87", {\ar"58"},
"97", {\ar"78"},
"28", {\ar"38"},
"38", {\ar"48"},
"48", {\ar"58"},
"58", {\ar"68"},
"68", {\ar"78"},
"78", {\ar"98"},
"58", {\ar"88"},
"38", {\ar"29"},
"48", {\ar"39"},
"58", {\ar"49"},
"68", {\ar"59"},
"78", {\ar"69"},
"88", {\ar"59"},
"98", {\ar"79"},
"29", {\ar"39"},
"39", {\ar"49"},
"49", {\ar"59"},
"59", {\ar"69"},
"69", {\ar"79"},
"79", {\ar"99"},
"59", {\ar"89"},
"39", {\ar"210"},
"49", {\ar"310"},
"59", {\ar"410"},
"69", {\ar"510"},
"79", {\ar"610"},
"89", {\ar"510"},
"99", {\ar"710"},
"210", {\ar"310"},
"310", {\ar"410"},
"410", {\ar"510"},
"510", {\ar"610"},
"610", {\ar"710"},
"710", {\ar"910"},
"510", {\ar"810"},
"310", {\ar"211"},
"410", {\ar"311"},
"510", {\ar"411"},
"610", {\ar"511"},
"710", {\ar"611"},
"810", {\ar"511"},
"910", {\ar"711"},
"211", {\ar"311"},
"311", {\ar"411"},
"411", {\ar"511"},
"511", {\ar"611"},
"611", {\ar"711"},
"711", {\ar"911"},
"511", {\ar"811"},
"311", {\ar"212"},
"411", {\ar"312"},
"511", {\ar"412"},
"611", {\ar"512"},
"711", {\ar"612"},
"811", {\ar"512"},
"911", {\ar"712"},
"212", {\ar"312"},
"312", {\ar"412"},
"412", {\ar"512"},
"512", {\ar"612"},
"612", {\ar"712"},
"712", {\ar"912"},
"512", {\ar"812"},
"312", {\ar"213"},
"412", {\ar"313"},
"512", {\ar"413"},
"612", {\ar"513"},
"712", {\ar"613"},
"812", {\ar"513"},
"912", {\ar"713"},
"213", {\ar"313"},
"313", {\ar"413"},
"413", {\ar"513"},
"513", {\ar"613"},
"613", {\ar"713"},
"713", {\ar"913"},
"513", {\ar"813"},
"313", {\ar"214"},
"413", {\ar"314"},
"513", {\ar"414"},
"613", {\ar"514"},
"713", {\ar"614"},
"813", {\ar"514"},
"913", {\ar"714"},
"214", {\ar"314"},
"314", {\ar"414"},
"414", {\ar"514"},
"514", {\ar"614"},
"614", {\ar"714"},
"714", {\ar"914"},
"514", {\ar"814"},
"314", {\ar"215"},
"414", {\ar"315"},
"514", {\ar"415"},
"614", {\ar"515"},
"714", {\ar"615"},
"814", {\ar"515"},
"215", {\ar"315"},
"315", {\ar"415"},
"415", {\ar"515"},
"515", {\ar"615"},
"515", {\ar"815"},
"315", {\ar"216"},
"415", {\ar"316"},
"515", {\ar"416"},
"216", {\ar"316"},
"316", {\ar"416"},
"316", {\ar"217"},
\end{xy}
}
\]
{\centering $\CM^{\L}R$ for $(p_1,\ldots,p_{n-2},p_{n-1},p_{n}) = (2,\ldots,2,3,4)$

}

\[
\resizebox{\textwidth}{!}{
\begin{xy} 0;<18pt,0pt>:<0pt,18pt>::
(6,6) *+{\cdot} ="60",
(7,7) *+{\cdot} ="70",
(6,5) *+{\cdot} ="80",
(6,4) *+{\cdot} ="41",
(7,5) *+{\cdot} ="51",
(8,6) *+{\cdot} ="61",
(9,7) *+{\cdot} ="71",
(8,5) *+{\cdot} ="81",
(6,2) *+{\cdot} ="22",
(7,3) *+{\cdot} ="32",
(8,4) *+{\cdot} ="42",
(9,5) *+{\cdot} ="52",
(10,6) *+{\cdot} ="62",
(11,7) *+{\cdot} ="72",
(10,5) *+{\cdot} ="82",
(6,0) *+{(0)} ="03",
(7,1) *+{{24}} ="13",
(8,2) *+{\cdot} ="23",
(9,3) *+{\cdot} ="33",
(10,4) *+{\cdot} ="43",
(11,5) *+{\cdot} ="53",
(12,6) *+{\cdot} ="63",
(13,7) *+{23} ="73",
(12,5) *+{\cdot} ="83",
(8,0) *+{\circ} ="04",
(9,1) *+{\cdot} ="14",
(10,2) *+{\cdot} ="24",
(11,3) *+{\cdot} ="34",
(12,4) *+{\cdot} ="44",
(13,5) *+{\cdot} ="54",
(14,6) *+{\cdot} ="64",
(15,7) *+{\cdot} ="74",
(14,5) *+{\cdot} ="84",
(10,0) *+{\bullet} ="05",
(11,1) *+{\cdot} ="15",
(12,2) *+{\cdot} ="25",
(13,3) *+{\cdot} ="35",
(14,4) *+{\cdot} ="45",
(15,5) *+{\cdot} ="55",
(16,6) *+{\cdot} ="65",
(17,7) *+{\cdot} ="75",
(16,5) *+{\cdot} ="85",
(12,0) *+{\circ} ="06",
(13,1) *+{\cdot} ="16",
(14,2) *+{\cdot} ="26",
(15,3) *+{\cdot} ="36",
(16,4) *+{\cdot} ="46",
(17,5) *+{\cdot} ="56",
(18,6) *+{\cdot} ="66",
(19,7) *+{22} ="76",
(18,5) *+{\cdot} ="86",
(14,0) *+{\bullet} ="07",
(15,1) *+{\cdot} ="17",
(16,2) *+{\cdot} ="27",
(17,3) *+{\cdot} ="37",
(18,4) *+{\cdot} ="47",
(19,5) *+{\cdot} ="57",
(20,6) *+{\cdot} ="67",
(21,7) *+{\cdot} ="77",
(20,5) *+{\cdot} ="87",
(16,0) *+{\circ} ="08",
(17,1) *+{14} ="18",
(18,2) *+{\cdot} ="28",
(19,3) *+{\cdot} ="38",
(20,4) *+{\cdot} ="48",
(21,5) *+{\cdot} ="58",
(22,6) *+{\cdot} ="68",
(23,7) *+{13} ="78",
(22,5) *+{\cdot} ="88",
(18,0) *+{(\x_{n})} ="09",
(19,1) *+{\cdot} ="19",
(20,2) *+{\cdot} ="29",
(21,3) *+{\cdot} ="39",
(22,4) *+{\cdot} ="49",
(23,5) *+{\cdot} ="59",
(24,6) *+{\cdot} ="69",
(25,7) *+{\cdot} ="79",
(24,5) *+{\cdot} ="89",
(20,0) *+{\circ} ="010",
(21,1) *+{\cdot} ="110",
(22,2) *+{\cdot} ="210",
(23,3) *+{\cdot} ="310",
(24,4) *+{\cdot} ="410",
(25,5) *+{\cdot} ="510",
(26,6) *+{\cdot} ="610",
(27,7) *+{\cdot} ="710",
(26,5) *+{\cdot} ="810",
(22,0) *+{\bullet} ="011",
(23,1) *+{\cdot} ="111",
(24,2) *+{\cdot} ="211",
(25,3) *+{\cdot} ="311",
(26,4) *+{\cdot} ="411",
(27,5) *+{\cdot} ="511",
(28,6) *+{\cdot} ="611",
(29,7) *+{12} ="711",
(28,5) *+{\cdot} ="811",
(24,0) *+{\circ} ="012",
(25,1) *+{21} ="112",
(26,2) *+{\cdot} ="212",
(27,3) *+{\cdot} ="312",
(28,4) *+{\cdot} ="412",
(29,5) *+{\cdot} ="512",
(30,6) *+{\cdot} ="612",
(31,7) *+{\cdot} ="712",
(30,5) *+{\cdot} ="812",
(26,0) *+{(\x_{n-1})} ="013",
(27,1) *+{\cdot} ="113",
(28,2) *+{\cdot} ="213",
(29,3) *+{\cdot} ="313",
(30,4) *+{\cdot} ="413",
(31,5) *+{\cdot} ="513",
(32,6) *+{\cdot} ="613",
(33,7) *+{\cdot} ="713",
(32,5) *+{\cdot} ="813",
(28,0) *+{\circ} ="014",
(29,1) *+{\cdot} ="114",
(30,2) *+{\cdot} ="214",
(31,3) *+{\cdot} ="314",
(32,4) *+{\cdot} ="414",
(33,5) *+{\cdot} ="514",
(34,6) *+{\cdot} ="614",
(35,7) *+{\cdot} ="714",
(34,5) *+{\cdot} ="814",
(30,0) *+{\bullet} ="015",
(31,1) *+{\cdot} ="115",
(32,2) *+{\cdot} ="215",
(33,3) *+{\cdot} ="315",
(34,4) *+{\cdot} ="415",
(35,5) *+{\cdot} ="515",
(36,6) *+{\cdot} ="615",
(36,5) *+{\cdot} ="815",
(32,0) *+{\circ} ="016",
(33,1) *+{\cdot} ="116",
(34,2) *+{\cdot} ="216",
(35,3) *+{\cdot} ="316",
(36,4) *+{\cdot} ="416",
(34,0) *+{\bullet} ="017",
(35,1) *+{11} ="117",
(36,2) *+{\cdot} ="217",
(36,0) *+{(\x_1)} ="018",
"60", {\ar"70"},
"60", {\ar"51"},
"70", {\ar"61"},
"80", {\ar"51"},
"41", {\ar"51"},
"51", {\ar"61"},
"61", {\ar"71"},
"51", {\ar"81"},
"41", {\ar"32"},
"51", {\ar"42"},
"61", {\ar"52"},
"71", {\ar"62"},
"81", {\ar"52"},
"22", {\ar"32"},
"32", {\ar"42"},
"42", {\ar"52"},
"52", {\ar"62"},
"62", {\ar"72"},
"52", {\ar"82"},
"22", {\ar"13"},
"32", {\ar"23"},
"42", {\ar"33"},
"52", {\ar"43"},
"62", {\ar"53"},
"72", {\ar"63"},
"82", {\ar"53"},
"03", {\ar"13"},
"13", {\ar"23"},
"23", {\ar"33"},
"33", {\ar"43"},
"43", {\ar"53"},
"53", {\ar"63"},
"63", {\ar"73"},
"53", {\ar"83"},
"13", {\ar"04"},
"23", {\ar"14"},
"33", {\ar"24"},
"43", {\ar"34"},
"53", {\ar"44"},
"63", {\ar"54"},
"73", {\ar"64"},
"83", {\ar"54"},
"04", {\ar"14"},
"14", {\ar"24"},
"24", {\ar"34"},
"34", {\ar"44"},
"44", {\ar"54"},
"54", {\ar"64"},
"64", {\ar"74"},
"54", {\ar"84"},
"14", {\ar"05"},
"24", {\ar"15"},
"34", {\ar"25"},
"44", {\ar"35"},
"54", {\ar"45"},
"64", {\ar"55"},
"74", {\ar"65"},
"84", {\ar"55"},
"05", {\ar"15"},
"15", {\ar"25"},
"25", {\ar"35"},
"35", {\ar"45"},
"45", {\ar"55"},
"55", {\ar"65"},
"65", {\ar"75"},
"55", {\ar"85"},
"15", {\ar"06"},
"25", {\ar"16"},
"35", {\ar"26"},
"45", {\ar"36"},
"55", {\ar"46"},
"65", {\ar"56"},
"75", {\ar"66"},
"85", {\ar"56"},
"06", {\ar"16"},
"16", {\ar"26"},
"26", {\ar"36"},
"36", {\ar"46"},
"46", {\ar"56"},
"56", {\ar"66"},
"66", {\ar"76"},
"56", {\ar"86"},
"16", {\ar"07"},
"26", {\ar"17"},
"36", {\ar"27"},
"46", {\ar"37"},
"56", {\ar"47"},
"66", {\ar"57"},
"76", {\ar"67"},
"86", {\ar"57"},
"07", {\ar"17"},
"17", {\ar"27"},
"27", {\ar"37"},
"37", {\ar"47"},
"47", {\ar"57"},
"57", {\ar"67"},
"67", {\ar"77"},
"57", {\ar"87"},
"17", {\ar"08"},
"27", {\ar"18"},
"37", {\ar"28"},
"47", {\ar"38"},
"57", {\ar"48"},
"67", {\ar"58"},
"77", {\ar"68"},
"87", {\ar"58"},
"08", {\ar"18"},
"18", {\ar"28"},
"28", {\ar"38"},
"38", {\ar"48"},
"48", {\ar"58"},
"58", {\ar"68"},
"68", {\ar"78"},
"58", {\ar"88"},
"18", {\ar"09"},
"28", {\ar"19"},
"38", {\ar"29"},
"48", {\ar"39"},
"58", {\ar"49"},
"68", {\ar"59"},
"78", {\ar"69"},
"88", {\ar"59"},
"09", {\ar"19"},
"19", {\ar"29"},
"29", {\ar"39"},
"39", {\ar"49"},
"49", {\ar"59"},
"59", {\ar"69"},
"69", {\ar"79"},
"59", {\ar"89"},
"19", {\ar"010"},
"29", {\ar"110"},
"39", {\ar"210"},
"49", {\ar"310"},
"59", {\ar"410"},
"69", {\ar"510"},
"79", {\ar"610"},
"89", {\ar"510"},
"010", {\ar"110"},
"110", {\ar"210"},
"210", {\ar"310"},
"310", {\ar"410"},
"410", {\ar"510"},
"510", {\ar"610"},
"610", {\ar"710"},
"510", {\ar"810"},
"110", {\ar"011"},
"210", {\ar"111"},
"310", {\ar"211"},
"410", {\ar"311"},
"510", {\ar"411"},
"610", {\ar"511"},
"710", {\ar"611"},
"810", {\ar"511"},
"011", {\ar"111"},
"111", {\ar"211"},
"211", {\ar"311"},
"311", {\ar"411"},
"411", {\ar"511"},
"511", {\ar"611"},
"611", {\ar"711"},
"511", {\ar"811"},
"111", {\ar"012"},
"211", {\ar"112"},
"311", {\ar"212"},
"411", {\ar"312"},
"511", {\ar"412"},
"611", {\ar"512"},
"711", {\ar"612"},
"811", {\ar"512"},
"012", {\ar"112"},
"112", {\ar"212"},
"212", {\ar"312"},
"312", {\ar"412"},
"412", {\ar"512"},
"512", {\ar"612"},
"612", {\ar"712"},
"512", {\ar"812"},
"112", {\ar"013"},
"212", {\ar"113"},
"312", {\ar"213"},
"412", {\ar"313"},
"512", {\ar"413"},
"612", {\ar"513"},
"712", {\ar"613"},
"812", {\ar"513"},
"013", {\ar"113"},
"113", {\ar"213"},
"213", {\ar"313"},
"313", {\ar"413"},
"413", {\ar"513"},
"513", {\ar"613"},
"613", {\ar"713"},
"513", {\ar"813"},
"113", {\ar"014"},
"213", {\ar"114"},
"313", {\ar"214"},
"413", {\ar"314"},
"513", {\ar"414"},
"613", {\ar"514"},
"713", {\ar"614"},
"813", {\ar"514"},
"014", {\ar"114"},
"114", {\ar"214"},
"214", {\ar"314"},
"314", {\ar"414"},
"414", {\ar"514"},
"514", {\ar"614"},
"614", {\ar"714"},
"514", {\ar"814"},
"114", {\ar"015"},
"214", {\ar"115"},
"314", {\ar"215"},
"414", {\ar"315"},
"514", {\ar"415"},
"614", {\ar"515"},
"714", {\ar"615"},
"814", {\ar"515"},
"015", {\ar"115"},
"115", {\ar"215"},
"215", {\ar"315"},
"315", {\ar"415"},
"415", {\ar"515"},
"515", {\ar"615"},
"515", {\ar"815"},
"115", {\ar"016"},
"215", {\ar"116"},
"315", {\ar"216"},
"415", {\ar"316"},
"515", {\ar"416"},
"016", {\ar"116"},
"116", {\ar"216"},
"216", {\ar"316"},
"316", {\ar"416"},
"116", {\ar"017"},
"216", {\ar"117"},
"316", {\ar"217"},
"017", {\ar"117"},
"117", {\ar"217"},
"117", {\ar"018"},
\end{xy}
}
\]
{
\centering $\CM^{\L}R$ for $(p_1,\ldots,p_{n-2},p_{n-1},p_{n}) = (2,\ldots,2,3,5)$

}
\caption{
The Auslander-Reiten quivers of $\CM^{\L}R$ in the CM finite cases with $n=d+2$ are depicted as follows. The position of $R(\x)$ is denoted $(\x)$, $\bullet$, or $\circ$ in case $\x$ is not specified. (For $d=0$, $(\x_1)$ and $\circ$ should be deleted.) For $d\ge1$, these positions are overlapping so that $R(\x)$ and $R(\y)$ occupy the same position if and only if $\x-\y$
belongs to the $2$-subgroup $\langle \x_i - \x_1 \mid p_i = 2 \rangle$ of $\L$.
The interpretation is that every arrow starting or ending at one of these vertices represents $2^{d-1}$ arrows ($2^d$ arrows in the case $(2,\ldots,2,2,p)$) in the actual Auslander-Reiten quiver (each one connected to one of the $R(\x)$). The labels $ab$ denote the positions of
$\rho(E^{\s+(a-1)\x_{n-1}+(b-1)\x_{n}})$ given in Section~\ref{subsect tilting via tensor},
and their direct sum gives a tilting object $U^{\rm CM}:=T^{\rm CM}(\w)[d]$.
}
\label{CMfiniteFig}
\end{figure}

We give an explicit Auslander-Reiten quiver in a simple case.

\begin{example}\label{semisimpleAR}
Let $d=2$, $n=4$ and $p_1=p_2=p_3=p_4=2$. Then $\mathfrak{A}(\CM^{\L}R)$ is the following,
where $X=\rho(k)$.
\[{\scriptsize\xymatrix@R1em@C1em{
&\x_1\ar[dddr]&&\x_1+\x_2+\x_3+\x_4-\c\ar[dddr]&&\x_2+\x_3+\x_4\ar[dddr]&\\
&\x_2\ar[ddr]&&\x_1+\x_2\ar[ddr]&&\x_1+\x_3+\x_4\ar[ddr]&\\
&\x_3\ar[dr]&&\x_1+\x_3\ar[dr]&&\x_1+\x_2+\x_4\ar[dr]&\\
\cdots\ \ 
X\ar[uuur]\ar[uur]\ar[ur]\ar[r]\ar[dr]\ar[ddr]\ar[dddr]\ar[ddddr]&\x_4\ar[r]&
X(\x_1)\ar[uuur]\ar[uur]\ar[ur]\ar[r]\ar[dr]\ar[ddr]\ar[dddr]\ar[ddddr]&\x_1+\x_4\ar[r]&
X(-\w)\ar[uuur]\ar[uur]\ar[ur]\ar[r]\ar[dr]\ar[ddr]\ar[dddr]\ar[ddddr]&\x_1+\x_2+\x_3\ar[r]&
X(\x_1-\w) \ \ \cdots\\
&\x_2+\x_3+\x_4-\c\ar[ur]&&\x_2+\x_3\ar[ur]&&\x_4+\c\ar[ur]&\\
&\x_1+\x_3+\x_4-\c\ar[uur]&&\x_2+\x_4\ar[uur]&&\x_3+\c\ar[uur]&\\
&\x_1+\x_2+\x_4-\c\ar[uuur]&&\x_3+\x_4\ar[uuur]&&\x_2+\c\ar[uuur]&\\
&\x_1+\x_2+\x_3-\c\ar[uuuur]&&\c\ar[uuuur]&&\x_1+\c\ar[uuuur]&
}}\]
\end{example}

We pose the following natural problem.

\begin{problem}
Classify GL complete intersections $(R,\L)$ that are CM tame.
\end{problem}

In contrast to the CM finite case, there is a CM tame GL complete intersection which is not a hypersurface, e.g.\ $d=1$ with 4 weights $(2,2,2,2)$.

\medskip
Theorem~\ref{characterize RF} tells us that there are only very few CM finite GL complete
intersections.
In higher dimensional Auslander-Reiten theory, we introduce the notion of
`$d$-CM finiteness' as a proper substitute of CM finiteness.

\medskip\noindent
{\bf $d$-Cohen-Macaulay finiteness.}
Recall from Section~\ref{section: preliminaries 2} that a full subcategory 
$\CC$ of $\CM^{\L}R$ is called \emph{$d$-cluster tilting}
if it is a functorially finite subcategory of $\CM^{\L}R$ such that
\begin{eqnarray*}
\CC&=&\{X\in\CM^{\L}R\mid\forall i\in\{1,2,\ldots,d-1\}\ \Ext_{\mod^{\L}R}^i(\CC,X)=0\}\ \mbox{ and}\\
\CC&=&\{X\in\CM^{\L}R\mid\forall i\in\{1,2,\ldots,d-1\}\ \Ext_{\mod^{\L}R}^i(X,\CC)=0\}.
\end{eqnarray*}
Note that one of the equalities above implies the other \cite[2.2.2]{I1}.
In this case $\CC$ generates and cogenerates $\CM^{\L}R$ since it contains $\proj^{\L}R$. 

\begin{definition}
We say that a GL complete intersection $(R,\L)$ is \emph{$d$-Cohen-Macaulay finite}
(=\emph{$d$-CM finite})
if there exists a $d$-cluster tilting subcategory $\CC$ of $\CM^{\L}R$ such that
there are only finitely many isomorphism classes of indecomposable objects in $\CC$
up to degree shift.
\end{definition}

In the case $d=1$,  $d$-CM finiteness coincides with classical
CM finiteness since $\CM^{\L}R$ is the unique $1$-cluster tilting
subcategory of $\CM^{\L}R$.

Now we give some basic properties of $d$-cluster tilting subcategories, which will be used later.

\begin{proposition}\label{basic properties of d-CT for CM}
For a $d$-cluster tilting subcategory $\CC$ of $\CM^{\L}R$, the following assertions hold.
\begin{itemize}
\item[(a)] We have $\CC(\w)=\CC$.
\item[(b)] For any $X\in\CM^{\L}R$, there exist exact sequences
\[0\to C_{d-1}\to\cdots\to C_0\to X\to0\ \mbox{ and }\ 
0\to X\to C^0\to\cdots\to C^{d-1}\to0\]
in $\mod^{\L}R$ with $C_i,C^i\in\CC$ for any $0\le i\le d-1$.
\item[(c)] For any indecomposable object $X\in\CC$, there exists an exact sequence
\begin{equation}\label{almost split fundamental}
0\to X(\w)\to C_{d-1}\to\cdots\to C_1\to C_0\xrightarrow{f}X
\end{equation}
such that the following sequences of functors on $\CC$ are exact:
\begin{eqnarray*}
&0\to\Hom_{\CC}(-,X(\w))\to\Hom_{\CC}(-,C_{d-1})\to\cdots\to\Hom_{\CC}(-,C_0)\to\rad_{\CC}(-,X)\to0,&\\
&0\to\Hom_{\CC}(X,-)\to\Hom_{\CC}(C_0,-)\to\cdots\to\Hom_{\CC}(C_{d-1},-)\to\rad_{\CC}(X(\w),-)\to0.&
\end{eqnarray*}
If $X$ is non-projective, then $\Cokernel f=0$ and \eqref{almost split fundamental}
is called a \emph{$d$-almost split sequence}. If $X$ is projective, then 
$\Cokernel f={\rm top} X$ and \eqref{almost split fundamental}
is called a \emph{$d$-fundamental sequence}.
\end{itemize}
\end{proposition}

\begin{proof}
(a) This is immediate from Auslander-Reiten-Serre duality given in Theorem~\ref{AR duality}(a).

(b) is shown in \cite[Theorem 3.3.1]{I1}, and (c) is shown in \cite[Theorem 3.4.4]{I1}.
\end{proof}

We prepare the following easy observations.

\begin{lemma}\label{graded and ungraded}
Let $\a\in\L$ be an element which is not a torsion.
For any $M\in\mod^{\L}R$, the subcategory $\CC:=\add\{M(\ell\a)\mid\ell\in\Z\}$ (respectively, $\CC^+:=\add\{M(\ell\a)\mid\ell\ge0\}$, $\CC^-:=\add\{M(\ell\a)\mid\ell\le0\}$) is functorially finite in $\mod^{\L}R$.
\end{lemma}

\begin{proof}
We only show that $\CC$ is covariantly finite in $\mod^{\L}R$ since other assertions can
be shown similarly. The $\a$-Veronese subalgebra $R^{(\a)}=\bigoplus_{\ell\in\Z}R_{\ell\a}$
of $R$ is Noetherian. For any $X\in\mod^{\L}R$, 
$\bigoplus_{\ell\in\Z}\Hom^{\L}_R(X,M(\ell\a))=\Hom^{\L/\Z\a}_R(X,M)$
holds by Lemma \ref{graded and ungraded0}.
This is a finitely generated $R^{(\a)}$-module, and we take homogeneous generators $f_1,\ldots,f_m$ with $f_i\in\Hom^{\L}_R(X,M(\ell_i\a))$.
It is easy to check that $f:=(f_1,\ldots,f_m):X\to\bigoplus_{i=1}^mM(\ell_i\a)$ is a left $\CC$-approximation.
\end{proof}

When $(R,\L)$ is not Calabi-Yau, we have the following equivalent conditions.

\begin{lemma}
Assume that $(R,\L)$ is not Calabi-Yau. Then the following conditions are equivalent.
\begin{itemize}
\item[(a)] $(R,\L)$ is $d$-CM finite.
\item[(b)] There exists $M\in\CM^{\L}R$ satisfying
\begin{eqnarray*}
\add_{\CM^{\L}R}\{M(\ell\w)\mid\ell\in\Z\}&=&\{X\in\CM^{\L}R\mid\forall i\in\{1,2,\ldots,d-1\}\ \Ext_{\mod^{\L/\Z\w}R}^i(M,X)=0\}.\\
(\mbox{or } \add_{\CM^{\L}R}\{M(\ell\w)\mid\ell\in\Z\}&=&\{X\in\CM^{\L}R\mid\forall i\in\{1,2,\ldots,d-1\}\ \Ext_{\mod^{\L/\Z\w}R}^i(X,M)=0\}).
\end{eqnarray*}
\end{itemize}
\end{lemma}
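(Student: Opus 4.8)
The plan is to reduce condition (b) to a statement about the subcategory $\CC_M:=\add_{\CM^{\L}R}\{M(\ell\w)\mid\ell\in\Z\}$ of $\CM^{\L}R$, and to recognise that (b) says exactly that $\CC_M$ is a $d$-cluster tilting subcategory of $\CM^{\L}R$ with only finitely many indecomposables up to degree shift.

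\emph{Preliminary reduction.} Since $(R,\L)$ is not Calabi-Yau we have $\delta(\w)\neq0$, so $\L/\Z\w$ is finite by Proposition~\ref{compare [0,dc] with L/w}(c); in particular $\w$ is not torsion, so Lemma~\ref{graded and ungraded} applies with $\a=\w$. Thus for any $M\in\CM^{\L}R$ the subcategory $\CC_M$ is functorially finite in $\mod^{\L}R$, hence in $\CM^{\L}R$, and for $X\in\mod^{\L}R$ and each $i$ with $1\le i\le d-1$ one gets, by Lemma~\ref{graded and ungraded}(a) and additivity of $\Ext$,
\[\Ext^i_{\mod^{\L/\Z\w}R}(M,X)=0\iff\Ext^i_{\mod^{\L}R}(\CC_M,X)=0.\]
Applying Auslander-Reiten-Serre duality term by term (Theorem~\ref{AR duality}(a), equivalently the Serre functor $(\w)[d]$ of $\underline{\CM}^{\L}R$ from Theorem~\ref{Buchweitz Eisenbud}(b), in the form $\Ext^i_{\mod^{\L}R}(X,Y)\simeq D\Ext^{d-i}_{\mod^{\L}R}(Y,X(\w))$) one checks that the vanishing of $\Ext^i_{\mod^{\L/\Z\w}R}(M,X)$ for $1\le i\le d-1$ is also equivalent to the vanishing of $\Ext^i_{\mod^{\L/\Z\w}R}(X,M)$ for $1\le i\le d-1$; hence the two forms of (b) coincide and it is enough to handle the first. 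That first form reads $(\dagger)$: $\CC_M=\{X\in\CM^{\L}R\mid\Ext^i_{\mod^{\L}R}(\CC_M,X)=0\ \text{for}\ 1\le i\le d-1\}$.

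\emph{Direction (b)$\Rightarrow$(a).} Assume $(\dagger)$. Each $R(\x)$ with $\x\in\L$ is a projective object of $\mod^{\L}R$, hence lies in the right-hand side of $(\dagger)$, so $\CC_M\supseteq\proj^{\L}R$ and $\CC_M$ generates and cogenerates $\CM^{\L}R$. Combined with functorial finiteness, $(\dagger)$ and \cite[2.2.2]{I1} then show that $\CC_M$ is a $d$-cluster tilting subcategory of $\CM^{\L}R$. Its indecomposable objects are the $\w$-shifts of the indecomposable summands of $M$, so there are finitely many up to degree shift and $(R,\L)$ is $d$-CM finite.

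\emph{Direction (a)$\Rightarrow$(b).} Let $\CC\subseteq\CM^{\L}R$ be a $d$-cluster tilting subcategory with finitely many indecomposables up to degree shift. As recalled before the statement (using Theorem~\ref{AR duality}(a)), $\CC=\CC(\w)$, so $\CC$ is closed under the shifts $(\ell\w)$. Because $\L/\Z\w$ is finite, having finitely many indecomposables up to shift by $\L$ implies the same up to shift by $\Z\w$; picking representatives $M_1,\dots,M_s\in\CC$ for the latter and putting $M:=M_1\oplus\cdots\oplus M_s\in\CM^{\L}R$, the Krull-Schmidt property of $\CM^{\L}R$ gives $\CC_M=\CC$. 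Then, by the preliminary reduction,
\[\{X\in\CM^{\L}R\mid\Ext^i_{\mod^{\L/\Z\w}R}(M,X)=0\ \text{for}\ 1\le i\le d-1\}=\{X\mid\Ext^i_{\mod^{\L}R}(\CC_M,X)=0\}=\CC_M,\]
the last equality because $\CC_M=\CC$ is $d$-cluster tilting; this is condition (b), and its ``or'' variant follows the same way.

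\textbf{Main obstacle.} The only non-routine point is the bookkeeping with the two grading groups: one must transfer $\CC$ faithfully between $\CM^{\L}R$ and the $\L/\Z\w$-graded picture, and check that $\CC_M$ inherits the hypotheses (functorial finiteness, $\proj^{\L}R\subseteq\CC_M$, Krull-Schmidt) needed to invoke \cite[2.2.2]{I1}. The finiteness of $\L/\Z\w$ in the non-Calabi-Yau case, which makes ``up to $\L$-shift'' and ``up to $\Z\w$-shift'' interchangeable, is the essential input beyond Lemma~\ref{graded and ungraded} and Auslander-Reiten-Serre duality.
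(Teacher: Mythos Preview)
Your argument is correct and follows the same route as the paper: Lemma~\ref{graded and ungraded} translates condition (b) into the assertion that $\CC_M=\add\{M(\ell\w)\mid\ell\in\Z\}$ is a $d$-cluster tilting subcategory of $\CM^{\L}R$, and for (a)$\Rightarrow$(b) one uses $\CC(\w)=\CC$ together with the finiteness of $\L/\Z\w$ (which the paper leaves implicit) to write $\CC=\CC_M$ for a single object $M$. One small slip: $R(\x)$ lies in the right-hand side of $(\dagger)$ not because it is \emph{projective} (that gives $\Ext^i(R(\x),-)=0$, the wrong variance) but because every $Y\in\CC_M\subset\CM^{\L}R$ satisfies $\Ext^i_R(Y,R(\x))=0$ for $i\ge1$ --- or, equivalently, just switch to the ``or'' form of (b) that you already proved equivalent.
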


\begin{proof}
(b)$\Rightarrow$(a) If $M\in\CM^{\L}R$ satisfies the equality above, then $\CC:=\add\{M(\ell\w)\mid\ell\in\Z\}$ is a functorially finite subcategory of $\CM^{\L}R$ by Lemma~\ref{graded and ungraded}.
Therefore $\CC$ is a $d$-cluster tilting subcategory of $\CM^{\L}R$ by Lemma~\ref{graded and ungraded0}.
Since $\CC$ clearly has only finitely many isomorphism classes of indecomposable objects up to degree shift, $(R,\L)$ is $d$-CM finite.

(a)$\Rightarrow$(b) Let $\CC$ be a $d$-cluster tilting subcategory $\CC$ of 
$\CM^{\L}R$ with only finitely many isomorphism classes of indecomposable 
objects up to degree shift. Since $\CC=\CC(\Z\w)$, there exists $M\in\CM^{\L}R$ such that $\CC=\add\{M(\ell\w)\mid\ell\in\Z\}$.
It follows from Lemma~\ref{graded and ungraded0} that $M$ satisfies the desired equality.
\end{proof}

The following main result in this section gives a sufficient condition for $d$-CM finiteness in terms of tilting theory in $\underline{\CM}^{\L}R$.

\begin{theorem}\label{construct dCT}
Let $(R,\L)$ be a GL complete intersection.
If $\underline{\CM}^{\L}R$ has a $d$-tilting object $U$,
then $(R,\L)$ is $d$-CM finite and $\CM^{\L}R$
has the $d$-cluster tilting subcategory
\[\UU:=\add\{U(\ell\w),\ R(\x)\mid\ell\in\Z,\ \x\in\L\}.\]
\end{theorem}

\begin{proof}
Let $\Lambda:=\underline{\End}_R^{\L}(U)$.
Since $\Lambda$ is $\nu_d$-finite by Theorem~\ref{d-tilting imply Fano},
we have a $d$-cluster tilting subcategory
\[\UU_\Lambda:=\add\{\nu_d^i(\Lambda)\mid i\in\Z\}\]
of $\DDD^{\bo}(\mod \Lambda)$ by Theorem~\ref{tau_d finite has d-CT}.
By Proposition~\ref{property of stable tilting}, we have that
\[F(\UU_\Lambda)=\add\{U(i\w)\mid i\in\Z\}\]
is a $d$-cluster tilting subcategory of $\underline{\CM}^{\L}R$.
Therefore $\add\{U(i\w),\ R(\x)\mid i\in\Z,\ \x\in\L\}$ is a $d$-cluster tilting
subcategory of $\CM^{\L}R$, and $(R,\L)$ is $d$-CM finite.
\end{proof}

\begin{example}\label{semisimpleAR2}
We continue to discuss Example~\ref{semisimpleAR}.
Since $\underline{\CM}^{\L} R$ is semisimple, it has the two 2-cluster tilting subcategories
\[ \{ X(j \w) \mid j \in \mathbb{Z} \} \text{ and } \{ X(\x_1 + j \w) \mid j \in \mathbb{Z} \}. \]
It follows that also $\CM^{\L}R$ has two 2-cluster tilting subcategories
\[ \UU_i = \add \{ X(i \x_1 + j \w),\ R(\x) \mid j \in \mathbb{Z}, \ \x \in \L \} \]
for $i=1,2$. The quiver of $\UU_1$ is the following:
\[{\scriptsize\xymatrix@R1em@C1em{
&\x_1\ar[dddr]&&\x_1+\x_2+\x_3+\x_4-\c\ar[rr]\ar[drr]\ar[ddrr]\ar[dddrr]&&\x_2+\x_3+\x_4\ar[dddr]&\\
&\x_2\ar[ddr]&&\x_1+\x_2\ar[drr]\ar[ddrr]\ar[dddddrr]\ar[ddddddrr]&&\x_1+\x_3+\x_4\ar[ddr]&\\
&\x_3\ar[dr]&&\x_1+\x_3\ar[urr]\ar[drr]\ar[dddrr]\ar[dddddrr]&&\x_1+\x_2+\x_4\ar[dr]&\\
\cdots&\x_4\ar[r]&X(\x_1)\ar[uuur]\ar[uur]\ar[ur]\ar[r]\ar[dr]\ar[ddr]\ar[dddr]\ar[ddddr]
&\x_1+\x_4\ar[uurr]\ar[urr]\ar[drr]\ar[ddddrr]&&\x_1+\x_2+\x_3\ar[r]&X(\x_1-\w)\ \ \cdots\\
&\x_2+\x_3+\x_4-\c\ar[ur]&&\x_2+\x_3\ar[uuuurr]\ar[urr]\ar[drr]\ar[ddrr]&&\x_4+\c\ar[ur]&\\
&\x_1+\x_3+\x_4-\c\ar[uur]&&\x_2+\x_4\ar[uuuuurr]\ar[uuurr]\ar[urr]\ar[drr]&&\x_3+\c\ar[uur]&\\
&\x_1+\x_2+\x_4-\c\ar[uuur]&&\x_3+\x_4\ar[uuuuuurr]\ar[uuuuurr]\ar[uurr]\ar[urr]&&\x_2+\c\ar[uuur]&\\
&\x_1+\x_2+\x_3-\c\ar[uuuur]&&\c\ar[uuurr]\ar[uurr]\ar[urr]\ar[rr]&&\x_1+\c\ar[uuuur]&
}}\]
\end{example}

\begin{example}\label{A2*A3 CT subcategory}
When $(R,\L)$ is Cohen-Macaulay finite, it is possible to describe the $d$-cluster tilting subcategory
\[ \UU= \add \{ U^{\rm CM}(j \w),\ R(\x) \mid j \in \mathbb{Z}, \ \x \in \L \} \]
of $\CM^{\L} R$. Indeed, Figure~\ref{CMfiniteFig} contains the Auslander-Reiten quiver of $\CM^{\L} R$ with the summands of $U^{\rm CM}$ marked. From this one can compute the Auslander-Reiten quiver of $\UU$. For example, in case $d = 2$ and $(p_1,p_2,p_3,p_4)=(2,2,3,4)$ we obtain\[
\resizebox{\textwidth}{!}
{
\begin{xy} 0;<0pt,32pt>:<32pt,0pt>:: 
(4,-1.5) *+{\cdots} ="x",
(4,15.5) *+{\cdots} ="x",
(0,0) *+{\circ} ="000",
(2,0) *+{(-\x_1-\x_3)} ="100",
(4,0) *+{\circ} ="200",
(6,0) *+{\circ} ="300",
(1,1) *+{(-\x_3-\x_4)} ="u00",
(0,2) *+{(-\x_1-\x_4)} ="010",
(2,2) *+{23\w} ="110",
(4,2) *+{22\w} ="210",
(6,2) *+{21\w} ="310",
(8,2) *+{(-\x_4)} ="410",
(0,4) *+{\circ} ="020",
(2,4) *+{13\w} ="120",
(4,4) *+{12\w} ="220",
(6,4) *+{11\w} ="320",
(8,4) *+{\circ} ="420",
(7,5) *+{\circ} ="l30",
(2,6) *+{(-\x_3)} ="130",
(4,6) *+{\circ} ="230",
(6,6) *+{\circ} ="330",
(8,6) *+{\circ} ="430",
(0,8) *+{(-\x_1)} ="001",
(2,8) *+{(\x_4-\x_1)} ="101",
(4,8) *+{(2\x_4-\x_1)} ="201",
(6,8) *+{(3\x_4-\x_1)} ="301",
(1,9) *+{(0)} ="u01",
(0,10) *+{(\x_3-\x_1)} ="011",
(2,10) *+{23} ="111",
(4,10) *+{22} ="211",
(6,10) *+{21} ="311",
(8,10) *+{(\x_3)} ="411",
(0,12) *+{(2\x_3-\x_1)} ="021",
(2,12) *+{13} ="121",
(4,12) *+{12} ="221",
(6,12) *+{11} ="321",
(8,12) *+{(2\x_3)} ="421",
(7,13) *+{(\x_1)} ="l31",
(2,14) *+{(\x_4)} ="131",
(4,14) *+{(2\x_4)} ="231",
(6,14) *+{(3\x_4)} ="331",
(8,14) *+{(\c)} ="431",
"000", {\ar"100"},
"100", {\ar"200"},
"200", {\ar"300"},
"010", {\ar"110"},
"110", {\ar"210"},
"210", {\ar"310"},
"310", {\ar"410"},
"020", {\ar"120"},
"120", {\ar"220"},
"220", {\ar"320"},
"320", {\ar"420"},
"130", {\ar"230"},
"230", {\ar"330"},
"330", {\ar"430"},
"000", {\ar"010"},
"100", {\ar"110"},
"200", {\ar"210"},
"300", {\ar"310"},
"010", {\ar"020"},
"110", {\ar"120"},
"210", {\ar"220"},
"310", {\ar"320"},
"410", {\ar"420"},
"120", {\ar"130"},
"220", {\ar"230"},
"320", {\ar"330"},
"420", {\ar"430"},
"000", {\ar@<-.2ex>"u00"},
"000", {\ar@<.2ex>"u00"},
"u00", {\ar"110"},
"320", {\ar"l30"},
"l30", {\ar@<-.2ex>"430"},
"l30", {\ar@<.2ex>"430"},
"110", {\ar"001"},
"210", {\ar"101"},
"310", {\ar"201"},
"410", {\ar@/^0.45pc/@<.4ex>"301"},
"410", {\ar@/^0.45pc/"301"},
"120", {\ar"011"},
"220", {\ar"111"},
"320", {\ar"211"},
"420", {\ar"311"},
"130", {\ar@/^0.45pc/@<.4ex>"021"},
"130", {\ar@/^0.45pc/"021"},
"230", {\ar"121"},
"330", {\ar"221"},
"430", {\ar"321"},
"410", {\ar@/^0.6pc/"u01"},
"l30", {\ar"301"},
"l30", {\ar@/^0.6pc/"021"},
"130", {\ar"u01"},
"001", {\ar"101"},
"101", {\ar"201"},
"201", {\ar"301"},
"011", {\ar"111"},
"111", {\ar"211"},
"211", {\ar"311"},
"311", {\ar"411"},
"021", {\ar"121"},
"121", {\ar"221"},
"221", {\ar"321"},
"321", {\ar"421"},
"131", {\ar"231"},
"231", {\ar"331"},
"331", {\ar"431"},
"001", {\ar"011"},
"101", {\ar"111"},
"201", {\ar"211"},
"301", {\ar"311"},
"011", {\ar"021"},
"111", {\ar"121"},
"211", {\ar"221"},
"311", {\ar"321"},
"411", {\ar"421"},
"121", {\ar"131"},
"221", {\ar"231"},
"321", {\ar"331"},
"421", {\ar"431"},
"001", {\ar@<-.2ex>"u01"},
"001", {\ar@<.2ex>"u01"},
"u01", {\ar"111"},
"321", {\ar"l31"},
"l31", {\ar@<-.2ex>"431"},
"l31", {\ar@<.2ex>"431"},
\end{xy}
}
\]
with the following interpretation. Vertices labeled $ab$ and $ab\w$ denote the positions of $E^{ab} := \rho(E^{\s+(a-1)\x_3+(b-1)\x_4})$ and $E^{ab}(\w)$ respectively. A vertex labelled $(\x)$ corresponds to the positions of both $R(\x)$ and $R(\x+\ttt)$, where $\ttt = \x_1-\x_2$. In case $\x$ is not specified we write $\circ$ instead of $(\x)$. With regards to arrows the following situations occur for $E = E^{ab}$ and $\x,\y \in \L$:
\[
\begin{xy} 0;<50pt,0pt>:<0pt,-20pt>:: 
(0.5,-1) *+{R(\x)} ="01",
(0.5,1) *+{R(\x+\ttt)} ="02",
(1.5,0) *+{E} ="1",
(2,0) *+{E} ="2",
(3,-1) *+{R(\x)} ="31",
(3,1) *+{R(\x+\ttt)} ="32",
(4,-1) *+{R(\x)} ="41",
(4,1) *+{R(\x+\ttt)} ="42",
(5,-1) *+{R(\y)} ="51",
(5,1) *+{R(\y+\ttt)} ="52",
(6,-1) *+{R(\x)} ="61",
(6,1) *+{R(\x+\ttt)} ="62",
(7,-1) *+{R(\y)} ="71",
(7,1) *+{R(\y+\ttt).} ="72",
"01", {\ar"1"},
"02", {\ar"1"},
"2", {\ar"31"},
"2", {\ar"32"},
"41", {\ar"51"},
"42", {\ar"52"},
"61", {\ar"71"},
"62", {\ar"72"},
"61", {\ar"72"},
"62", {\ar"71"},
\end{xy}
\]
We denote them simply by
\[
\begin{xy} 0;<50pt,0pt>:<0pt,-20pt>:: 
(0.5,0) *+{(\x)} ="0",
(1.5,0) *+{ab} ="1",
(2,0) *+{ab} ="2",
(3,0) *+{(\x)} ="3",
(4,0) *+{(\x)} ="4",
(5,0) *+{(\y)} ="5",
(6,0) *+{(\x)} ="6",
(7,0) *+{(\y).} ="7",
"0", {\ar"1"},
"2", {\ar"3"},
"4", {\ar"5"},
"6", {\ar@<-.2ex>"7"},
"6", {\ar@<.2ex>"7"},
\end{xy}
\]
and use similar notation for $E = E^{ab}(\w)$.
\end{example}

In the rest of this section, we assume that $(R,\L)$ is not Calabi-Yau and $d\ge1$. We consider the Veronese subring of $R$:
\[R^{(\w)}=\bigoplus_{\x\in\Z\w}R_{\x}.\]
We give basic properties of $R^{(\w)}$.

\begin{proposition}
$R^{(\w)}$ is a Noetherian normal domain with $\dim R^{(\w)}=d+1$.
\end{proposition}

\begin{proof}
By Theorem~\ref{L-factorial L-domain}, $R$ is an $\L$-factorial $\L$-domain. This implies the assertion, see \cite[Corollary 2.3]{IW2} for the case $d=1$.
\end{proof}

We regard $R^{(\w)}$ a $\Z\w$-graded algebra. We consider the functor
\[(-)^{(\w)}\colon\mod^{\L}R\to\mod^{\Z\w}R^{(\w)}\ \mbox{ given by }\ X\mapsto X^{(\w)}=\bigoplus_{\x\in\Z\w}X_{\x}.\]
We denote by $\CM R^{(\w)}$ the category of maximal Cohen-Macaulay $R^{(\w)}$-modules, and by $\CM^{\Z\w}R^{(\w)}$ its graded version.
Then the functor $(-)^{(\w)}$ restricts to $(-)^{(\w)}\colon\CM^{\L}R\to\CM^{\Z\w}R^{(\w)}$ by \cite[1.2.26]{BH}.

\begin{proposition}
$R^{(\w)}$ is a $\Z\w$-graded Gorenstein ring. Moreover, its $a$-invariant is $\w$, that is, $\Ext^{d+1}_{R^{(\w)}}(k,R^{(\w)})\simeq k(-\w)$ as $\Z\w$-graded $R^{(\w)}$-modules.
\end{proposition}

\begin{proof}
This is an $\L$-graded version of Goto-Watanabe's result \cite[I.3.6.21]{BH} (see also \cite[3.1.5]{GW}).
We include a proof for the convenience of the reader. It suffices to show that the $*$canonical module $C$ of $R^{(\w)}$ is given by $R^{(\w)}(\w)$. We calculate $C$ by using the local cohomology: For $X\in\mod^{\L}R^{(\w)}$, let
\[H^i_{\nn}(X)_{\x}:=\varinjlim_j\Ext_{\mod^{\L}R^{(\w)}}^i(R^{(\w)}/\nn^j,X(\x))\ \mbox{ and }\ H^i_{\nn}(X):=\bigoplus_{\x\in\L}H^i_{\nn}(X)_{\x},\]
where $\nn=R^{(\w)}\cap R_+$. By local duality \cite[3.6.19]{BH} (cf.\ Proposition~\ref{local duality}), we have $C\simeq D(H^{d+1}_\nn(R^{(\w)}))$ in $\mod^{\L}R^{(\w)}$, where $D$ is the graded dual.
Since $R$ is a Gorenstein ring with $a$-invariant $\w$, we have $R(\w)\simeq D(H^{d+1}_\nn(R))$ in $\mod^{\L}R$ again by local duality and Independence Theorem \cite[4.2.1]{BS}.
Thus we have
\[
C=D(H^{d+1}_\nn(R^{(\w)}))=D(H^{d+1}_\nn(R))^{(\w)}=(R(\w))^{(\w)}=R^{(\w)}(\w)
\]
as desired.
\end{proof}

We apply our results to non-commutative crepant resolutions, which is known to be closely related to $d$-cluster tilting objects \cite{I1,IW1}.
We will show that $d$-CM finiteness of $(R,\L)$ implies the existence of NCCRs of $R^{(w)}$.

\begin{definition}\cite{V}\label{define NCCR}
We say that $M\in\CM R^{(\w)}$ gives a \emph{non-commutative crepant resolution} (=\emph{NCCR})
of $R^{(\w)}$ if $E=\End_{R^{(\w)}}(M)\in\CM R^{(\w)}$ and $\gl E=d+1$.
Note that the condition $\gl E=d+1$ can be replaced by $\gl(R^{(\w)}_{\pp}\otimes_{R^{(\w)}}E)=\dim R^{(\w)}_{\pp}$ for all $\pp\in\Spec R^{(\w)}$ by \cite[Proposition 2.17(2)$\Rightarrow$(1)]{IW1}.
\end{definition}

We will prove the following analogue of \cite[Theorem 5.2.1]{I1}\cite[Theorems 4.3, 5.4]{IW1}.

\begin{theorem}\label{construct NCCR}
Let $(R,\L)$ be a GL complete intersection with $d\ge1$ which is not Calabi-Yau. For $V\in\CM^{\L}R$, assume that
$\UU:=\add\{V(\ell\w)\mid\ell\in\Z\}$ contains $\proj^{\L}R$.
Then the following conditions are equivalent.
\begin{itemize}
\item[(a)] $\UU$ is a $d$-cluster tilting subcategory of $\CM^{\L}R$.
\item[(b)] $V^{(\w)}$ gives an NCCR of $R^{(\w)}$.
\end{itemize}
Therefore if $(R,\L)$ is $d$-CM finite, then $R^{(\w)}$ has an NCCR.
\end{theorem}

For simplicity, we write $G=\L/\Z\w$. We regard $R$ as a $G$-graded ring naturally. Then its degree $0$ part is $R^{(\w)}$. By abuse of notations, we consider the functor
\[(-)^{(\w)}\colon\mod^GR\to\mod R^{(\w)}\ \mbox{ given by }\ X=\bigoplus_{g\in G}X_g\mapsto X_0,\]
which induces a functor $(-)^{(\w)}\colon\CM^GR\to\CM R^{(\w)}$.
Notice that, for $X,Y\in\mod^GR$, $\Hom_R^G(X,Y)=\Hom_R(X,Y)^{(\w)}$ is a direct summand
of $\Hom_R(X,Y)$ as an $R^{(\w)}$-module.

For $\x\in\L$, we define an ideal of $R^{(\w)}$ by
\[I^{\x}:=R(\x)^{(\w)}\cdot R(-\x)^{(\w)}\subset R^{(\w)}.\]
Then $I^{\x}$ depends only on the class of $\x$ in $G$.

We start with the following observation (cf.\ \cite[Theorem 4.3]{IW2}).

\begin{lemma}\label{Veronese functor}
The following assertions hold.
\begin{itemize}
\item[(a)] For all $\x\in\L$, we have $\dim(R^{(\w)}/I^{\x})\le d-1$.
\item[(b)] We have a functorial isomorphism $\Hom_R^G(X,Y)\simeq\Hom_{R^{(\w)}}(X^{(\w)},Y^{(\w)})$ for all $X\in\mod^GR$ and $Y\in\CM^GR$.
\end{itemize}
\end{lemma}

\begin{proof}
(a) For any $\x,\y\in\L$, we have $I^{\x}I^{\y}\subset I^{\x+\y}$ and hence
\[\dim(R^{(\w)}/I^{\x+\y})\le\max\{\dim(R^{(\w)}/I^{\x}),\dim(R^{(\w)}/I^{\y})\}.\]
Therefore it suffices to show that $\dim(R^{(\w)}/I^{\x_i})\le d-1$ holds for any $i$.

By the argument in the proof of \cite[Theorem 4.7(4)$\Rightarrow$(3)]{IW2}, $I^{\x_i}$
contains a power $f_1$ of $X_i$, and also a monomial $f_2$ in the $X_j$'s with $j\neq i$.
Since $(X_i,X_j)$ is an $R$-regular sequence by Proposition~\ref{prop.regularseq1}(c) for each $j\neq i$, $(f_1,f_2)$ is also an $R^{(\w)}$-regular sequence. Thus we have $\dim(R^{(\w)}/I^{\x_i})\le d-1$
as desired.

(b) We regard $R$ as a $G$-graded ring, and consider the ring $A=R^{[0]}$ 
associated to the subgroup $0$ of $G$ given in Definition~\ref{define covering}.
We denote by $e\in A$ the idempotent of $A$ whose $(0,0)$-entry is $1$ and all other entries are $0$. We identify the subring $eAe$ of $A$ with $R^{(\w)}$.
Since the diagonal subring of $A/(e)$ is $\prod_{\x\in G}R^{(\w)}/I^{\x}$ and $A/(e)$ is a finitely generated module over the diagonal subring, (a) implies
\begin{equation}\label{dim A/e=d-1}
\dim_{R^{(\w)}}(A/(e))\le d-1.
\end{equation}

Recall that we have an equivalence $F:\mod^{G}R\simeq \mod A$ in \eqref{general graded morita}.
For the functor $E:=e(-)\colon\mod A\to\mod eAe$, we have the following commutative diagram of functors.
\[\xymatrix@R=1.5em{
\mod^GR\ar[r]^F\ar@{=}[d]&\mod A\ar[r]^E&\mod eAe\ar@{=}[d]\\
\mod^GR\ar[rr]^{(-)^{(\w)}}&&\mod R^{(\w)}
}\]
Since we have functorial isomorphisms
\[\Hom_R^G(X,Y)\simeq\Hom_A(FX,FY)\ \mbox{ and }\ \Hom_{R^{(\w)}}(X^{(\w)},Y^{(\w)})\simeq\Hom_{eAe}(EFX,EFY),\]
it suffices to show that
$E_{FX,FY}:\Hom_A(FX,FY)\simeq\Hom_{eAe}(EFX,EFY)$ is an isomorphism.

Consider the canonical morphism $\epsilon\colon Ae\otimes_{eAe}E(-)\to{\rm id}$ of functors $\mod A\to\mod A$. For $X\in\mod^GR$, consider an exact sequence
\begin{equation}\label{AeFX to FX}
0\to C_1\to Ae\otimes_{eAe}EFX\xrightarrow{\epsilon_{FX}}FX\to C_0\to0.
\end{equation}
Since $E(\epsilon_{FX})$ is an isomorphism, we have $E(C_i)=0$ for $i=0,1$.
Thus $C_i$ is a finitely generated $A/(e)$-module, and hence
$\dim_RF^{-1}C_i=\dim_{R^{(\w)}}F^{-1}C_i=\dim_{R^{(\w)}}C_i\le d-1$ by \eqref{dim A/e=d-1}.
Since $Y\in\CM^GR$, we have
\[\Ext^j_A(C_i,FY)=\Ext^j_{\mod^GR}(F^{-1}C_i,Y)=0\ 
\mbox{ for } j=0,1\]
by \cite[Theorem 17.1]{Ma}. Applying $\Hom_A(-,FY)$ to the exact sequence \eqref{AeFX to FX},
we have a functorial isomorphsim
\[\Hom_A(FX,FY)\simeq\Hom_A(Ae\otimes_{eAe}EFX,FY).\]
Since $(Ae\otimes_{eAe}-,E)$ is an adjoint pair, we obtain an isomorphism $\Hom_A(FX,FY)\simeq\Hom_{eAe}(EFX,EFY)$ as desired.
\end{proof}

Next we prepare the following graded analog of \cite[Proposition 2.5.1]{I1}.

\begin{lemma}\label{d-rigid iff CM}
For $X,Y\in\CM^GR$, the following conditions are equivalent.
\begin{itemize}
\item[(a)] $\Ext^i_{\mod^GR}(X,Y)=0$ for all $1\le i\le d-1$.
\item[(b)] $\Hom_R^G(X,Y)\in\CM R^{(\w)}$.
\end{itemize}
\end{lemma}

\begin{proof}
We take a projective resolution $P_d\to\cdots\to P_0\to X\to0$ of $X$ in $\mod^GR$.
Applying $\Hom_R^G(-,Y)$, we have a complex
\begin{equation}\label{Hom(P,Y)}
0\to\Hom_R^G(X,Y)\to\Hom_R^G(P_0,Y)\to\cdots\to\Hom_R^G(P_d,Y)
\end{equation}
of $R^{(\w)}$-modules whose cohomologies are $\Ext^i_{\mod^GR}(X,Y)$.
Since $\Hom_R(P_i,Y)\in\CM^{\L}R$, we have $\Hom_R^G(P_i,Y)\in\CM R^{(\w)}$.

(a)$\Rightarrow$(b) By our assumption, the sequence \eqref{Hom(P,Y)} is exact.
Since $\Hom_R^G(P_i,Y)\in\CM R^{(\w)}$, we have $\Hom_R^G(X,Y)\in\CM R^{(\w)}$ by counting depth.

(b)$\Rightarrow$(a) 
Assume that the sequence \eqref{Hom(P,Y)} is not exact, and take the minimal $i$ with
$1\le i\le d-1$ such that $\Ext^i_{\mod^GR}(X,Y)\neq0$.
By Proposition~\ref{CM is locally free}, $\Ext^i_{\mod^GR}(X,Y)$
is a finite dimensional $k$-vector space, and has depth zero.
Now we consider an exact sequence
\begin{align*}
0\to\Hom_R^G(X,Y)\to\Hom_R^G(P_0,Y)\to\cdots\to\Hom_R^G(P_{i-1},Y)\to\Hom_R^G(\Omega^iX,Y)&\\
\to\Ext^i_{\mod^GR}(X,Y)\to&0,
\end{align*}
where $\Hom_R^G(X,Y)$ and $\Hom_R^G(P_i,Y)$ have depth $d+1$
and $\Hom_R^G(\Omega^iX,Y)$ has depth at least two.
By counting depth, $\Ext^i_{\mod^GR}(X,Y)$ has depth at least one, a contradiction.
\end{proof}

Finally we show the following.

\begin{lemma}\label{d-CT iff gldim=d+1}
For $V\in\CM^{\L}R$, let $\UU=\add\{V(\ell\w)\mid\ell\in\Z\}\subset\CM^{\L}R$.
If $\Ext^i_{\mod^GR}(V,V)=0$ for all $1\le i\le d-1$, then the following conditions are equivalent.
\begin{itemize}
\item[(a)] $\UU$ is a $d$-cluster tilting subcategory of $\CM^{\L}R$.
\item[(b)] $\gl\End_R^G(V)=d+1$.
\end{itemize}
\end{lemma}

\begin{proof}
Let $E=\End_R^G(V)$. This is a $\Z$-graded ring.

(a)$\Rightarrow$(b)
By \cite[Corollary 7.8]{NV1}, we have the equality $\gl E=\gl(\mod^{\Z}E)$.
It suffices to show that any $\Z$-graded simple $E$-module $S$ has projective dimension $d+1$
(e.g.\ \cite[Proposition 2.2]{IR}). For each simple $E$-module $S$, there exists
an indecomposable object $X\in\UU$ such that $S$ is the top of $\Hom_R^G(V,X)$.
By Proposition~\ref{basic properties of d-CT for CM}(c), there exists a $d$-almost split sequence
$0\to X(\w)\to C_{d-1}\to\cdots\to C_0\to X\to0$ in $\UU$.
Applying $\Hom_R^G(V,-)$, we have an exact sequence
\[0\to\Hom_R^G(V,X(\w))\to\Hom_R^G(V,C_{d-1})\to\cdots\to\Hom_R^G(V,C_0)\to\Hom_R^G(V,X)\to S\to0.\] 
This gives a projective resolution of the $E$-module $S$ by Lemma~\ref{Veronese functor},
and hence $S$ has projective dimension $d+1$.

(b)$\Rightarrow$(a)
It suffices to show that, if $X\in\CM^{\L}R$ satisfies $\Ext^i_{\mod^{\L}R}(\UU,X)=0$ for all $1\le i\le d-1$, then $X\in\UU$.
Let $0\to X\to I^0\to\cdots\to I^{d}$ be an injective resolution of $X$ in $\CM^{\L}R$.
Since $\Ext^i_{\mod^GR}(V,X)=0$ for all $1\le i\le d-1$, by applying $\Hom_R^G(V,-)$, we have an exact sequence
\[0\to\Hom_R^G(V,X)\to\Hom_R^G(V,I^0)\to\cdots\to\Hom_R^G(V,I^{d}).\] 
Since $I^i$ belongs to $\proj^{\L}R\subset\UU$, the $E$-module $\Hom_R^G(V,I^0)$ is projective.
Since $\gl E=d+1$, the $E$-module $\Hom_R^G(V,X)$ is projective. 
Thus $X\in\UU$.
\end{proof}

Now we are ready to prove Theorem~\ref{construct NCCR}.

\begin{proof}[Proof of Theorem~\ref{construct NCCR}]
Since $\End_{R^{(\w)}}(V^{(\w)})\simeq\End_R^G(V)$ holds by Lemma~\ref{Veronese functor},
the assertion follows from Lemmas~\ref{d-rigid iff CM} and \ref{d-CT iff gldim=d+1}.
\end{proof}

We end this section by posing the following problem.

\begin{problem}\label{Fano and d-RF}
How are the following conditions related to each other?
\begin{itemize}
\item[(a)] $(R,\L)$ is Fano.
\item[(b)] $(R,\L)$ is $d$-CM finite.
\item[(c)] $R^{(\w)}$ has an NCCR.
\item[(d)] $\underline{\CM}^{\L}R$ has a $d$-tilting object (or equivalently, $A^{\rm CM}$ is derived equivalent to an algebra of global dimension at most $d$).
\end{itemize}
\end{problem}

The statements (d)$\Rightarrow$(a) and (d)$\Rightarrow$(b)$\Rightarrow$(c) were shown in Theorems~\ref{d-tilting imply Fano}, \ref{construct dCT} and \ref{construct NCCR}.
On the other hand, (a)$\Rightarrow$(d) does not hold by Example~\ref{Fano no d-tilting}.

We give the following partial answer to the statement (a)$\Rightarrow$(d).

\begin{theorem}\label{Main1}
If one of the following conditions are satisfied, then $\underline{\CM}^{\L}R$ has a $d$-tilting object, $(R,\L)$ is $d$-CM finite and $R^{(\w)}$ has an NCCR.
\begin{itemize}
\item $n\le d+1$.
\item $n=d+2\ge2$ and $(p_1,p_2)=(2,2)$.
\item $n=d+2\ge3$ and $(p_1,p_2,p_3)=(2,3,3)$, $(2,3,4)$ or $(2,3,5)$.
\item $n=d+2\ge4$ and $(p_1,p_2,p_3,p_4)=(3,3,p_3,p_4)$ with $p_3,p_4\in\{3,4,5\}$.
\item $\#\{i\mid p_i=2\}\ge3(n-d)-4$.
\end{itemize}
\end{theorem}

\begin{proof}
The first three cases are immediate from Proposition~\ref{d-tilting object example2}.
The last case follows from Proposition~\ref{d-tilting object example1}.
\end{proof}

\section{Matrix factorizations and their tensor products}\label{subsection: Tensor}

It is well-known that for the hypersuface case, Cohen-Macaulay representations are described in terms of matrix factorizations.
Throughout this section, let $(R,\L)$ be a Geigle-Lenzing complete intersection with $n=d+2$. By Observation~\ref{Normalization} it is given by
\[R=S/(f)\mbox{ for $S:=k[X_1,\ldots,X_{d+2}]$ and $f:=\sum_{i=1}^{d+2}\lambda_iX_i^{p_i}$.}\]
The aim of this section is to prepare results on tensor products of matrix factorizations, which will be used in the following sections and are interesting by themselves.

\medskip\noindent{\bf Matrix factorization} 
An \emph{$\L$-graded matrix factorization} of $f$ is a pair
\[(\phi,\psi)=(\phi:Q\to P,\ \psi:P(-\c)\to Q)\]
of morphisms in $\proj^{\L}S$ satisfying
\[\phi(-\c)\psi=f:Q(-\c)\to Q\ \mbox{ and }\ \psi\phi=f:P(-\c)\to P.\]
For example, any $P\in\proj^{\L}R$ gives $\L$-graded matrix factorizations
\[(1,f)_P:=({\rm id}:P\to P,\ f:P(-\c)\to P)\ \mbox{ and }\ (f,1)_P:=(f:P\to P(\c),\ {\rm id}:P\to P).\]
The category $\MF^{\L}(S,f)$ of matrix factorizations is defined as follows 
\begin{itemize}
\item The objects are the $\L$-graded matrix factorizations of $f$.
\item For two $\L$-graded matrix factorizations $(\phi,\psi)=(\phi:Q\to P,\ \psi:P(-\c)\to Q)$ and $(\phi',\psi')=(\phi':Q'\to P',\ \psi':P'(-\c)\to Q')$ of $f$, a morphism from $(\phi,\psi)$ to $(\phi',\psi')$ is a pair $(\alpha,\beta)\in\Hom^{\L}_R(P,P')\times\Hom^{\L}_R(Q,Q')$ making the following diagram commutative.
\[\xymatrix@R2em{
P(-\c)\ar[rr]^\psi\ar[d]^{\alpha(-\c)}&&Q\ar[rr]^\phi\ar[d]^\beta&&P\ar[d]^\alpha\\
P'(-\c)\ar[rr]^{\psi'}&&Q'\ar[rr]^{\phi'}&&P'\\
}\]
The composition of morphisms is defined in an obvious way.
\end{itemize}

\medskip
It is elementary \cite{E} that there exists a full dense functor
\begin{equation}\label{MF CM}
\Cokernel\colon\MF^{\L}(S,f)\to\CM^{\L}R\ \mbox{ given by }\ (\phi,\psi)\mapsto\Cokernel \phi.
\end{equation}
More precisely, we consider full subcategories
\[\PP:=\add\{(1,f)_P,\ (f,1)_P\mid P\in\proj^{\L}R\}\supset\PP_+:=\add\{(1,f)_P\mid P\in\proj^{\L}R\}\]
of $\MF^{\L}(S,f)$. Let $[\PP]$ (respectively, $[\PP_+]$) be the ideal of the category $\MF^{\L}(S,f)$ consisting of all morphisms which factor through objects in $\PP$ (respectively, $\PP_+$),
and by $\underline{\MF}^{\L}(S,f):=\MF^{\L}(S,f)/[\PP]$ (respectively, $\MF^{\L}(S,f)/[\PP_+]$) the corresponding factor category.
Then the following result is well-known \cite{Y}.

\begin{proposition}\label{MF CM2}
The functor \eqref{MF CM} gives equivalences
\begin{equation*}
\Cokernel\colon\MF^{\L}(S,f)/[\PP_+]\simeq\CM^{\L}R\ \mbox{ and }\ \Cokernel\colon\underline{\MF}^{\L}(S,f)\simeq\underline{\CM}^{\L}R\simeq\DDD_{\rm sg}^{\L}(R),
\end{equation*}
where the last equivalence is Theorem~\ref{Buchweitz Eisenbud}(c).
\end{proposition}

Therefore one can calculate objects and morphisms in $\CM^{\L}R$ and $\underline{\CM}^{\L}R$ in terms of $\L$-graded matrix factorizations. In particular, for any $(\phi:Q\to P,\ \psi:P(-\c)\to Q)\in\MF^{\L}(S,f)$ and the corresponding object $X=\Cokernel \phi$, we have a projective resolution
\begin{equation}
\cdots\xrightarrow{\psi}R\otimes_SQ(-c)\xrightarrow{\phi}R\otimes_SP(-\c)\xrightarrow{\psi}R\otimes_SQ\xrightarrow{\phi}R\otimes_SP\to X\to0
\end{equation}
which is 2-periodic up to degree shift by $\c$.

\medskip\noindent
{\bf Tensor products.}
In the rest of this section, we fix two GL hypersurfaces $(R^j,\L_j)$ with weights $p_{j,1},\ldots,p_{j,n_j}$ for $j=1,2$. Thus
\begin{eqnarray*}
&R^j=S^j/(f_j)\ \mbox{ with }\ S^j=k[X_{j,1},\ldots,X_{j,n_j}]\ \mbox{ and }\ f_j=\sum_{i=1}^{n_j}\lambda_{j,i}X_{j,i}^{p_{j,i}},&\\
&\L_j=\langle\c_j,\x_{j,1},\ldots,\x_{j,n_j}\rangle/\langle p_{j,i}\x_{j,i}-\c_j\mid1\le i\le n_j\rangle.&
\end{eqnarray*}
We define a new GL hypersurface $(R,\L)$ with weights $p_{1,1},\ldots,p_{1,n_1},p_{2,1},\ldots,p_{2,n_2}$ given by
\begin{eqnarray*}
&R=S/(f)\ \mbox{ with }\ S=S^1\otimes_kS^2=k[X_{1,1},\ldots,X_{1,n_1},X_{2,1},\ldots,X_{2,n_2}]\ \mbox{ and }\ f=f_1+f_2,&\\
&\L=\langle\c,\x_{1,1},\ldots,\x_{1,n_1}\x_{2,1},\ldots,\x_{2,n_2}\rangle/\langle p_{j,i}\x_{j,i}-\c\mid j=1,2,\ 1\le i\le n_j\rangle.&
\end{eqnarray*}
The following observation is immediate.

\begin{observation}\label{L and R}
\begin{itemize}
\item[(a)] We have $\L=(\L_1\times\L_2)/\langle\c_1-\c_2\rangle$.
\item[(b)] We have $R^1\otimes_kR^2=R/(f_1)=R/(f_2)$.
\end{itemize}
\end{observation}

By Observation~\ref{L and R}(a), there is a functor $\mod^{\L_1\times\L_2}(R^1\otimes_kR^2)\to\mod^{\L}(R^1\otimes_kR^2)$ making the grading more coarse.
By Observation~\ref{L and R}(b), there is a functor $\mod^{\L}(R^1\otimes_kR^2)\to\mod^{\L}R$ given by restriction along the natural surjective morphism $R\to R^1\otimes_kR^2$. 
Composing them, we have an exact functor
\[\mod^{\L_1\times\L_2}(R^1\otimes_kR^2)\to\mod^{\L}R,\]
which induces a triangle functor $\DDD^{\bo}(\mod^{\L_1\times\L_2}(R^1\otimes_kR^2))\to\DDD^{\bo}(\mod^{\L}R)$.
Composing with the tensor functor $-\otimes_k-:\DDD^{\bo}(\mod^{\L_1}R^1)\times\DDD^{\bo}(\mod^{\L_2}R^2)\to\DDD^{\bo}(\mod^{\L_1\times\L_2}(R^1\otimes_kR^2))$, we have a bifunctor
\[-\otimes_k-:\DDD^{\bo}(\mod^{\L_1}R^1)\times\DDD^{\bo}(\mod^{\L_2}R^2)\to\DDD^{\bo}(\mod^{\L}R).\]
The following observation tells us that this bifunctor descends to singularity categories.

\begin{lemma}\label{tensor for sg}
\begin{itemize}
\item[(a)] Let $X^i\in\mod^{\L_i}R^i$ for $i=1,2$. If $X^i$ has finite projective dimension in $\mod^{\L_i}R^i$ for at least one of $i=1,2$, then $X^1\otimes_kX^2$ has finite projective dimension in $\mod^{\L}R$.
\item[(b)] There exists a bifunctor $-\otimes_k-:\DDD_{\sg}^{\L_1}(R^1)\times\DDD_{\sg}^{\L_2}(R^2)\to\DDD_{\sg}^{\L}(R)$ making the following diagram commutative.
\[\xymatrix@R=1em{
\DDD^{\bo}(\mod^{\L_1}R^1)\times\DDD^{\bo}(\mod^{\L_2}R^2)\ar[r]\ar[d]^{-\otimes_k-}&\DDD_{\sg}^{\L_1}(R^1)\times\DDD_{\sg}^{\L_2}(R^2)\ar[d]^{-\otimes_k-}\\
\DDD^{\bo}(\mod^{\L}R)\ar[r]&\DDD_{\sg}^{\L}(R).
}\]
\end{itemize}
\end{lemma}

\begin{proof}
(a) Without loss of generality, we can assume $i=1$ and $X^1=R^1$.

Since $f^2X^2=0$, $f$ acts on $S^1 \otimes_k X^2$ as $f_1\otimes1$. Thus $f$ is $(S^1 \otimes_k X^2)$-regular.
Since $S$ is regular, $S^1 \otimes_k X^2$ has finite projective dimension in $\mod^{\L}S$.
Applying $R\otimes_S-$, $R\otimes_S(S^1 \otimes_k X^2)$ has finite projective
dimension in $\mod^{\L}R$. Since
\[R\otimes_S(S^1 \otimes_k X^2)=(S^1 \otimes_k X^2) / f(S^1 \otimes_k X^2)=
(S^1 \otimes_k X^2) / f_1(S^1 \otimes_k X^2) = R^1 \otimes_k X^2,\]
the assertion follows.

(b) It suffices to show that, for $X^i\in\DDD^{\bo}(\mod^{\L_i}R^i)$ for $i=1,2$, $X^1\otimes_kX^2$ belongs to $\KKK^{\bo}(\proj^{\L}R)$ if $X^i$ belongs to $\KKK^{\bo}(\proj^{\L_i}R^i)$ for at least one of $i=1,2$.
Without loss of generality, assume that $X^1\in\KKK^{\bo}(\proj^{\L_1}R^1)$.
Since $\KKK^{\bo}(\proj^{\L_1}R^1)=\thick(\proj^{\L_1}R^1)$ and
$\DDD^{\bo}(\mod^{\L_2}R^2)=\thick(\mod^{\L_2}R^2)$, it suffices to consider the case
$X^1=R^1(\x)$ for some $\x\in\L_1$ and $X^2\in\mod^{\L_2}R^2$. Then
$X^1\otimes_kX^2$ has finite projective dimension in $\mod^{\L}R$ by (a),
and hence belongs to $\KKK^{\bo}(\proj^{\L}R)$.
\end{proof}

Now we recall tensor products of matrix factorizations introduced by Yoshino \cite{Y} (see also \cite{BFK,D}).

\medskip\noindent
{\bf Tensor product of matrix factorizations.}
For $i=1,2$, let $(\phi^i:Q^i\to P^i,\ \psi^i:P^i(-\c_i)\to Q^i)$ be a matrix factorization of $f_i$.
Define a matrix factorization $(\phi,\psi):=(\phi^1,\psi^1)\otimes_{\rm MF}(\phi^2,\psi^2)$ of $f=f_1+f_2$ by
\begin{eqnarray*}
&\phi:=\left(\begin{smallmatrix}\psi^1\otimes 1&-1\otimes \phi^2\\ 1\otimes \psi^2&\phi^1\otimes 1\end{smallmatrix}\right):(P^1\otimes_kQ^2)\oplus(Q^1\otimes_kP^2)\to(Q^1\otimes_kQ^2)(\c)\oplus(P^1\otimes_kP^2),&\\
&\psi:=\left(\begin{smallmatrix}\phi^1\otimes 1&1\otimes \phi^2\\ -1\otimes \psi^2&\psi^1\otimes 1\end{smallmatrix}\right):(Q^1\otimes_kQ^2)\oplus(P^1\otimes_kP^2)(-\c)\to(P^1\otimes_kQ^2)\oplus(Q^1\otimes_kP^2).&
\end{eqnarray*}
This gives a bifunctor
\[-\otimes_{\rm MF}-:\MF^{\L_1}(S^1,f^1)\times\MF^{\L_2}(S^2,f^2)\to\MF^{\L}(S,f),\]
that is defined on morphisms as follows.
For $i=1,2$, let $(\eta^i:G^i\to F^i,\ \theta^i:F^i(-\c_i)\to G^i)$ be another matrix factorization of $f_i$, and $(\alpha^i,\beta^i):(\phi^i,\psi^i)\to(\eta^i,\theta^i)$ a morphism of matrix factorizations.
Then we have a morphism $(\alpha^1,\beta^1)\otimes_{\rm MF}(\alpha^2,\beta^2):(\phi^1,\psi^1)\otimes_{\rm MF}(\phi^2,\psi^2)\to(\eta^1,\theta^1)\otimes_{\rm MF}(\eta^2,\theta^2)$ of matrix factorizations given by
\[{\small\xymatrix@C5.5em{
{\begin{array}{l}(Q^1\otimes_kQ^2)\\ \oplus(P^1\otimes_kP^2)(-\c)\end{array}}\ar[r]^{\left(\begin{smallmatrix}\phi^1\otimes 1&1\otimes \phi^2\\ -1\otimes \psi^2&\psi^1\otimes 1\end{smallmatrix}\right)}\ar[d]^{\begin{smallmatrix}(\beta^1\otimes\beta^2)\\ \oplus(\alpha^1\otimes\alpha^2)(-\c)\end{smallmatrix}}
&{\begin{array}{l}(P^1\otimes_kQ^2)\\ \oplus(Q^1\otimes_kP^2)\end{array}}\ar[r]^{\left(\begin{smallmatrix}\psi^1\otimes 1&-1\otimes \phi^2\\ 1\otimes \psi^2&\phi^1\otimes 1\end{smallmatrix}\right)}\ar[d]^{\begin{smallmatrix}(\alpha^1\otimes\beta^2)\\ \oplus(\beta^1\otimes\alpha^2)\end{smallmatrix}}&
{\begin{array}{l}(Q^1\otimes_kQ^2)(\c)\\ \oplus(P^1\otimes_kP^2)\end{array}}\ar[d]^{\begin{smallmatrix}(\beta^1\otimes\beta^2)(\c)\\ \oplus(\alpha^1\otimes\alpha^2)\end{smallmatrix}}\\
{\begin{array}{l}(G^1\otimes_kG^2)\\ \oplus(F^1\otimes_kF^2)(-\c)\end{array}}\ar[r]_{\left(\begin{smallmatrix}\eta^1\otimes 1&1\otimes \eta^2\\ -1\otimes \theta^2&\theta^1\otimes 1\end{smallmatrix}\right)}
&{\begin{array}{l}(F^1\otimes_kG^2)\\ \oplus(G^1\otimes_kF^2)\end{array}}\ar[r]_{\left(\begin{smallmatrix}\theta^1\otimes 1&-1\otimes \eta^2\\ 1\otimes \theta^2&\eta^1\otimes 1\end{smallmatrix}\right)}&
{\begin{array}{l}(G^1\otimes_kG^2)(\c)\\ \oplus(F^1\otimes_kF^2).\end{array}}
}}\]
For $X^i:=\Cokernel \phi^i,\ Y^i:=\Cokernel \eta^i\in\CM^{\L_i}R^i$, we also have a morphism $(\alpha^1,\beta^1)\otimes_{\rm MF}(\alpha^2,\beta^2):X^1\otimes_{\rm MF}X^2\to Y^1\otimes_{\rm MF}Y^2$ in $\CM^{\L}R$.

\medskip
We give the following basic property of tensor products $\otimes_{\rm MF}$.

\begin{proposition}\label{tensor product is unique}
The bifunctor $-\otimes_{\rm MF}-:\MF^{\L_1}(S^1,f^1)\times\MF^{\L_2}(S^2,f^2)\to\MF^{\L}(S,f)$ induces a bifunctor
\[-\otimes_{\rm MF}-:\underline{\MF}^{\L_1}(S^1,f^1)\times\underline{\MF}^{\L_2}(S^2,f^2)\to\underline{\MF}^{\L}(S,f).\]
\end{proposition}

\begin{proof}
We have
\[(f^1,1)\otimes_{\rm MF}(\phi^2,\psi^2)=\left(\left(\begin{smallmatrix}1\otimes 1&-1\otimes \phi^2\\ 1\otimes \psi^2&f^1\otimes1\end{smallmatrix}\right),\left(\begin{smallmatrix}f^1\otimes 1&1\otimes \phi^2\\ -1\otimes \psi^2&1\otimes 1\end{smallmatrix}\right)\right)\simeq
\left(\left(\begin{smallmatrix}1&0\\ 0&f\end{smallmatrix}\right),\left(\begin{smallmatrix}f&0\\ 0&1\end{smallmatrix}\right)\right)\]
and $(1,f^1)\otimes_{\rm MF}(\phi^2,\psi^2)\simeq\left(\left(\begin{smallmatrix}f&0\\ 0&1\end{smallmatrix}\right),\left(\begin{smallmatrix}1&0\\ 0&f\end{smallmatrix}\right)\right)$ similarly. Thus $-\otimes_{\rm MF}-$ gives the desired bifunctor.
\end{proof}

\begin{definition}
Using equivalences given in Propositions~\ref{MF CM2} and \ref{tensor product is unique}, we define the bifunctor
\[-\otimes_{\rm MF}-:\underline{\CM}^{\L_1}R^1\times\underline{\CM}^{\L_2}R^2\to\underline{\CM}^{\L}R.\]
\end{definition}

It is immediate from the definition that, for $\x\in\L_1$ and $\y\in\L_2$, we have isomorphisms
\begin{eqnarray}\notag
&(-\otimes_{\rm MF}(-[1]))\simeq(-\otimes_{\rm MF}-)[1]\simeq((-[1])\otimes_{\rm MF}-)\\ 
&((-(\x))\otimes_{\rm MF}(-(\y))\simeq(-\otimes_{\rm MF}-)(\x+\y)\label{[1] and (x+y)}
\end{eqnarray}
of bifunctors $\underline{\MF}^{\L_1}(S^1,f^1)\times\underline{\MF}^{\L_2}(S^2,f^2)\to\underline{\MF}^{\L}(S,f)$ and $\underline{\CM}^{\L_1}R^1\times\underline{\CM}^{\L_2}R^2\to\underline{\CM}^{\L}R$.

The following result shows that the tensor product $\otimes_{\rm MF}$ of matrix factorizations is a shadow of the ordinary tensor product $\otimes_k$ in the derived categories.

\begin{theorem}\label{tensor of MF is tensor}
We have the following commutative diagram up to natural isomorphism.
\[\xymatrix@R=1em{
\DDD^{\bo}(\mod^{\L_1}R^1)\times\DDD^{\bo}(\mod^{\L_2}R^2)\ar[r]\ar[d]^{-\otimes_k-}&\DDD_{\sg}^{\L_1}(R^1)\times\DDD_{\sg}^{\L_2}(R^2)\ar@{-}[r]^\sim\ar[d]^{-\otimes_k-}&\underline{\CM}^{\L_1}R^1\times\underline{\CM}^{\L_2}R^2\ar[d]^{-\otimes_{\rm MF}-}\\
\DDD^{\bo}(\mod^{\L}R)\ar[r]&\DDD_{\sg}^{\L}(R)\ar@{-}[r]^\sim&\underline{\CM}^{\L}R
}\]
\end{theorem}

To prove this, we start with the following result, which shows that the tensor product $\otimes_{\rm MF}$ is nothing but the Cohen-Macaulay approximation of the ordinary tensor product $\otimes_k$.

\begin{proposition}\label{tensor product is CM approximation}
\begin{itemize}
\item[(a)] For $i=1,2$, let $(\phi^i:Q^i\to P^i,\ \psi^i:P^i(-\c_i)\to Q^i)$ be a matrix factorization of $f_i$, and $X^i:=\Cokernel \phi^i\in\CM^{\L_i}R^i$.
Let $(\phi,\psi)=(\phi^1,\psi^1)\otimes_{\rm MF}(\phi^2,\psi^2)$ and $X=\Cokernel\phi$.
Then there exists an exact sequence
\[0\to Q\to X\xrightarrow{\gamma_{(\phi^1,\psi^1),(\phi^2,\psi^2)}} X^1\otimes_kX^2\to0\]
in $\mod^{\L}R$ with $Q:=R\otimes_{S}(Q^1\otimes_kQ^2)(\c)\in\proj^{\L}R$.
In particular, $X\in\CM^{\L}R$ is a Cohen-Macaulay approximation of $X^1\otimes_kX^2\in\mod^{\L}R$.
\item[(b)] We have the following commutative diagram up to natural isomorphism.
\[\xymatrix@R=1em@C=4.5em{
\MF^{\L_1}(S^1,f^1)\times\MF^{\L_2}(S^2,f^2)\ar[d]^{-\otimes_{\rm MF}-}\ar[r]^(.55){\Cokernel\times\Cokernel}&
\DDD_{\sg}^{\L_1}(R^1)\times\DDD_{\sg}^{\L_2}(R^2)\ar[d]^{-\otimes_k-}\\
\MF^{\L}(S,f)\ar[r]^{\Cokernel}&
\DDD_{\sg}^{\L}(R).
}\]
\end{itemize}
\end{proposition}

\begin{proof}
(a) Using the exact sequences $0\to Q^i\xrightarrow{\phi^i}P^i\to X^i\to0$ in $\mod^{\L_i}S^i$, we can construct the following commutative diagram of exact sequences in $\mod^{\L}R$.
{\small\[\xymatrix@R=1.5em@C=2em{
&&&0\ar[d]&&0\ar[d]\\
&&&0\ar[rr]\ar[d]&&(Q^1\otimes_kQ^2)(\c)\ar@{=}[r]\ar[d]^{{\left(\begin{smallmatrix}1&0\end{smallmatrix}\right)}}&(Q^1\otimes_kQ^2)(\c)\ar[r]&0\\
&0\ar[rr]&&{\begin{array}{l}(P^1\otimes_kQ^2)\\ \oplus(Q^1\otimes_kP^2)\end{array}}\ar@{=}[d]\ar[rr]^{\left(\begin{smallmatrix}\psi^1\otimes 1&-1\otimes \phi^2\\ 1\otimes \psi^2&\phi^1\otimes 1\end{smallmatrix}\right)}&&
{\begin{array}{l}(Q^1\otimes_kQ^2)(\c)\\ \oplus(P^1\otimes_kP^2)\end{array}}\ar[d]^(.6){{\left(\begin{smallmatrix}0\\ 1\end{smallmatrix}\right)}}\ar[r]&X\ar[r]&0\\
0\ar[r]&Q^1\otimes_kQ^2\ar[rr]^(.48){{\left(\begin{smallmatrix}\phi^1\otimes 1&1\otimes \phi^2\end{smallmatrix}\right)}}&&{\begin{array}{l}(P^1\otimes_kQ^2)\\ \oplus(Q^1\otimes_kP^2)\end{array}}\ar[rr]^{{\left(\begin{smallmatrix}-1\otimes \phi^2\\ \phi^1\otimes1\end{smallmatrix}\right)}}\ar[d]&&P^1\otimes_kP^2\ar[r]\ar[d]&X^1\otimes_kX^2\ar[r]&0\\
&&&0&&0
}\]}
Using the Snake Lemma, we have an exact sequence
\[0\to Q^1\otimes_kQ^2\xrightarrow{a} (Q^1\otimes_kQ^2)(\c)\to X\xrightarrow{\gamma_{X^1,X^2}}X^1\otimes_kX^2\to0.\]
Chasing the diagram, it is easy to check that the morphism $a$ is given by the multiplication by $f=f_1+f_2$.
Thus we have the desired exact sequence, and hence $X\in\CM^{\L}R$ is a Cohen-Macaulay approximation of $X^1\otimes_kX^2\in\mod^{\L}R$.

(b) For $i=1,2$, let $(\alpha^i,\beta^i):(\phi^i,\psi^i)\to(\eta^i,\theta^i)$ be a morphism of matrix factorizations. Then the following diagram is commutaive.
{\small\[\xymatrix@R=1.5em@C=1.5em{
{\begin{array}{c}(P^1\otimes_kQ^2)\\ \oplus(Q^1\otimes_kP^2)\end{array}}\ar@{=}[d]\ar[rr]|{{\left(\begin{smallmatrix}\psi^1\otimes 1&-1\otimes \phi^2\\ 1\otimes \psi^2&\phi^\otimes 1\end{smallmatrix}\right)}}\ar[ddr]|(.67){(\alpha^1\otimes\beta^2)\oplus(\beta^1\otimes\alpha^2)}&&{\begin{array}{c}(Q^1\otimes_kQ^2)(\c)\\\oplus(P^1\otimes_kP^2)\end{array}}\ar[r]\ar[d]_{\left(\begin{smallmatrix}0\\ 1\end{smallmatrix}\right)}\ar[ddr]|(.63){(\beta^1\otimes\beta^2)\oplus(\alpha^1\otimes\alpha^2)}&X^1\otimes_{\rm MF}X^2\ar[r]\ar[d]|{\gamma_{(\phi^1,\psi^1),(\phi^2,\psi^2)}}\ar[ddr]|(.63){\ \ \ (\alpha^1,\beta^1)\otimes_{\rm MF}(\alpha^2,\beta^2)}&0\\
{\begin{array}{c}(P^1\otimes_kQ^2)\\ \oplus(Q^1\otimes_kP^2)\end{array}}\ar[rr]|{{\left(\begin{smallmatrix}-1\otimes \phi^2\\ \phi^1\otimes1\end{smallmatrix}\right)}}\ar[ddr]
|(.3){(\alpha^1\otimes\beta^2)\oplus(\beta^1\otimes\alpha^2)}&&P^1\otimes_kP^2\ar[r]\ar[ddr]|(.25){\alpha^1\otimes\alpha^2}&X^1\otimes_kX^2\ar[r]\ar[ddr]|(.25){(\alpha^1,\beta^1)\otimes_k(\alpha^2,\beta^2)}&0\\
&{\begin{array}{c}(F^1\otimes_kG^2)\\ \oplus(G^1\otimes_kF^2)\end{array}}\ar@{=}[d]\ar[rr]|{{\left(\begin{smallmatrix}\psi^1\otimes 1&-1\otimes \phi^2\\ 1\otimes \psi^2&\phi^1\otimes 1\end{smallmatrix}\right)}}&&{\begin{array}{c}(G^1\otimes_kG^2)(\c)\\ \oplus(F^1\otimes_kF^2)\end{array}}\ar[r]\ar[d]^{\left(\begin{smallmatrix}0\\ 1\end{smallmatrix}\right)}&Y^1\otimes_{\rm MF}Y^2\ar[r]\ar[d]|{\gamma_{(\eta^1,\theta^1),(\eta^2,\theta^2)}}&0\\
&{\begin{array}{c}(F^1\otimes_kG^2)\\ \oplus(G^1\otimes_kF^2)\end{array}}\ar[rr]|{{\left(\begin{smallmatrix}-1\otimes \phi^2\\ \phi^1\otimes1\end{smallmatrix}\right)}}&&F^1\otimes_kF^2\ar[r]&Y^1\otimes_kY^2\ar[r]&0
}\]}
Thus we have a natural transformation $\gamma:\Cokernel(-\otimes_{\rm MF}-)\to(\Cokernel-\otimes_k\Cokernel-)$ of functors $\MF^{\L_1}(S^1,f^1)\times\MF^{\L_2}(S^2,f^2)\to\mod^{\L}R$.
After composing with the natural functor $\mod^{\L}R\to\DDD^{\bo}(\mod^{\L}R)\to\DDD_{\sg}^{\L}(R)$, the natural transformation $\gamma$ gives the desired isomorphism since $Q$ in the exact sequence in (a) belongs to $\proj^{\L}R$. Thus the assertion follows.
\end{proof}

Now we are ready to prove Theorem~\ref{tensor of MF is tensor}.

\begin{proof}[Proof of Theorem~\ref{tensor of MF is tensor}]
The left diagram is commutative by Lemma~\ref{tensor for sg}(b).
The right diagram is commutative up to natural isomorphism by Propositions~\ref{tensor product is CM approximation}(b) and \ref{MF CM2}.
\end{proof}

Now we denote by $A^{{\rm CM},i}$ the CM-canonical algebra of $(R^i,\L_i)$.
By Corollary~\ref{presentation of CM canonical}(b), we have $A^{\rm CM}=A^{{\rm CM},1}\otimes_kA^{{\rm CM},2}$.
Thus we have a bifunctor $-\otimes_k-:\DDD^{\bo}(\mod A^{{\rm CM},1})\times\DDD^{\bo}(\mod A^{{\rm CM},2})\to\DDD^{\bo}(\mod A^{\rm CM})$.

\begin{corollary}\label{tensor of CM canonical}
We have the following commutative diagram up to natural isomorphism.
\[\xymatrix@R=1em{
\DDD^{\bo}(\mod A^{{\rm CM},1})\times\DDD^{\bo}(\mod A^{{\rm CM},2})\ar[r]^(.62)\sim\ar[d]^{-\otimes_k-}&\underline{\CM}^{\L_1}R^1\times\underline{\CM}^{\L_2}R^2\ar[d]^{-\otimes_{\rm MF}-}\\
\DDD^{\bo}(\mod A^{\rm CM})\ar[r]^\sim&\underline{\CM}^{\L}R
}\]
\end{corollary}

\begin{proof}
Let $\de_i$ be the dominant element of $\L_i$ for $i=1,2$.
Then the natural map $\L_1\times\L_2\to\L$ induces a bijection $[0,\de_1]\times[0,\de_2]\simeq[0,\de]$. By Theorem~\ref{tilting in S^I}, we have the following commutative diagram up to natural isomorphism.
\[\xymatrix@R=1em{
\DDD^{\bo}(\mod A^{{\rm CM},1})\times\DDD^{\bo}(\mod A^{{\rm CM},2})\ar[r]\ar[d]^{-\otimes_k-}&\DDD^{\bo}(\mod^{\L_1}R^1)\times\DDD^{\bo}(\mod^{\L_2}R^2)\ar[d]^{-\otimes_k-}\\
\DDD^{\bo}(\mod A^{\rm CM})\ar[r]&\DDD^{\bo}(\mod^{\L}R)
}\]
Combining this with the diagram in Theorem~\ref{tensor of MF is tensor}, the assertion follows.
\end{proof}

\begin{proposition}\label{tensor of tilting is tilting}
Let $V^i\in\CM^{\L_i}R^i$ for $i=1,2$ and $V:=V^1\otimes_{\rm MF}V^2\in\CM^{\L}R$.
\begin{itemize}
\item[(a)] We have $\underline{\End}^{\L}_R(V)\simeq\underline{\End}^{\L_1}_{R^1}(V^1)\otimes_k\underline{\End}^{\L_2}_{R^2}(V^2)$.
\item[(b)] If $V^i$ gives a tilting object in $\underline{\CM}^{\L_i}R^i$ for $i=1,2$, then $V$ gives a tilting object in $\underline{\CM}^{\L}R$.
\end{itemize}
\end{proposition}

\begin{proof}
By Corollary~\ref{tensor of CM canonical}, we have
\begin{eqnarray*}
\underline{\End}^{\L}_R(V)&\simeq&\End_{\DDD^{\bo}(\mod A^{\rm CM})}(V^1\otimes_kV^2)=\End_{\DDD^{\bo}(\mod A^{{\rm CM},1})}(V^1)\otimes_k\End_{\DDD^{\bo}(\mod A^{{\rm CM},2})}(V^2)\\
&\simeq&\underline{\End}^{\L_1}_{R^1}(V^1)\otimes_k\underline{\End}^{\L_2}_{R^2}(V^2).
\end{eqnarray*}
To prove (b), we regard $V^i$ as a tilting object in $\DDD^{\bo}(\mod A^{{\rm CM},i})$. Then $V^1\otimes_kV^2$ is a tilting object in $\DDD^{\bo}(\mod A^{\rm CM})$, and the assertion follows.
\end{proof}

Now we give an explicit description of Kn\"orrer periodicity (Corollary~\ref{presentation of CM canonical}(d)) in terms of tensor products of matrix factorizations.

\begin{proposition}\label{explicit Knoerrer}
Let $(R^1,\L_1)$ and $(R^2=k[Y]/(Y^2),\L_2=\langle\y\rangle)$ be
GL hypersurfaces with weights $p_1,\ldots,p_{n}$ and $2$ respectively.
Let $(R,\L)$ be the corresponding GL hypersurface with weights $p_1,\ldots,p_{n},2$. 
\begin{itemize}
\item[(a)] The functor $-\otimes_{\rm MF}k:\underline{\CM}^{\L_1}R^1\to\underline{\CM}^{\L}R$ is a triangle equivalence.
\item[(b)] There exists an isomorphism $(\y)\simeq[1]$ of functors $\underline{\CM}^{\L}R\to\underline{\CM}^{\L}R$.
\end{itemize}
\end{proposition}

\begin{proof}
(a) By Corollary~\ref{tensor of CM canonical}, we have a commutative diagram
\[\xymatrix@R=1em{
\DDD^{\bo}(\mod A^{{\rm CM},1})\ar[r]^(.55)\sim\ar[d]^{-\otimes_kk}&\underline{\CM}^{\L_1}R^1\ar[d]^{-\otimes_{\rm MF}k}\\
\DDD^{\bo}(\mod A^{\rm CM})\ar[r]^(.55)\sim&\underline{\CM}^{\L}R
}\]
up to natural isomorphism. Since $A^{{\rm CM},1}\simeq A^{\rm CM}$, the left functors is a triangle equivalence. Thus so is the right functor.

(b) We have $k[1]=k(\y)$ in $\underline{\CM}^{\L_2}R^2$.
By \eqref{[1] and (x+y)}, we have isomorphisms
\[(-\otimes_{\rm MF}k)[1]\simeq(-\otimes_{\rm MF}(k[1]))\simeq(-\otimes_{\rm MF}(k(\y))\simeq(-\otimes_{\rm MF}k)(\y)\]
of functors $\underline{\CM}^{\L_1}R^1\to\underline{\CM}^{\L}R$.
Since $-\otimes_{\rm MF}k:\underline{\CM}^{\L_1}R^1\to\underline{\CM}^{\L}R$ is an equivalence, we have the desired isomorphism.
\end{proof}

\section{The tilting object via tensor products}\label{subsect tilting via tensor}

In the rest of this section, for an arbitrary Geigle-Lenzing complete intersection $(R,\L)$ with $n=d+2$, we give $\L$-graded matrix factorizations of the indecomposable direct summands of
the tilting object $T^{\rm CM}$ in $\underline{\CM}^{\L}R$ given in Theorem~\ref{CM tilting}(b). 
We also give another tilting object $U^{\rm CM}$ in $\underline{\CM}^{\L}R$, which has a simpler matrix factorization than $T^{\rm CM}$.
These observations give another proof of our result that $T^{\rm CM}$ (and also $U^{\rm CM}$) is a tilting object in $\underline{\CM}^{\L}R$.

Let us start with considering iterated tensor products. It is easy to check that the tensor product of matrix factorizations is associative, that is, there is a functorial isomorphism
\[((\phi^1,\psi^1)\otimes_{\rm MF}(\phi^2,\psi^2))\otimes_{\rm MF}(\phi^3,\psi^3)\simeq
(\phi^1,\psi^1)\otimes_{\rm MF}((\phi^2,\psi^2)\otimes_{\rm MF}(\phi^3,\psi^3)).\]
We give the following explicit description of the iterated tensor product of matrix factorizations.

\begin{proposition}\label{describe n tensor product}
For $1\le i\le n$, let $(R^i=S^i/(f_i),\L_i)$ be a GL complete intersection and $(\phi^i:Q^i\to P^i,\ \psi^i:P^i(-\c_i)\to Q^i)\in\MF^{\L_i}(S^i,f_i)$. Then the tensor product
\[(\phi:Q\to P,\ \psi:P(-\c)\to Q):=(\phi^1,\psi^1)\otimes_{\rm MF}\cdots\otimes_{\rm MF}(\phi^n,\psi^n)\]
is given by
\begin{eqnarray*}
&{\displaystyle Q = \bigoplus_{\substack{I \subseteq \{1, \ldots, n\} \\ |I|\ {\rm odd} }} \underbrace{\left(\bigotimes_{i\in I}Q^i\otimes\bigotimes_{i\notin I}P^i\right)\left(\frac{|I| - 1}{2} \c\right)}_{=: Q_I},\ P = \bigoplus_{\substack{I \subseteq \{1, \ldots, n\} \\ |I|\ {\rm even} }} \underbrace{\left(\bigotimes_{i\in I}Q^i\otimes\bigotimes_{i\notin I}P^i\right)\left(\frac{|I|}{2} \c\right)}_{=: P_I}}&\\
&\phi_{I,J} \colon Q_I \to P_J \colon
\begin{cases}
(-1)^{\#\{j \notin I \mid j < i \}} \phi^i & I = J \sqcup \{i\} \\
(-1)^{\#\{j \notin I \mid j < i \}} \psi^i & J = I \sqcup \{i\} \\
0 & \text{otherwise} \end{cases},&
\end{eqnarray*}
and $\psi_{I,J} \colon P_I(-\c) \to Q_J$ is defined by the identical formulas.
\end{proposition}

\begin{proof}
The assertion follows easily by indution on $n$.
\end{proof}

Recall that the dominant element in the case $n = d+2$ is given by $\de = \sum_{i=1}^n(p_i - 2) \x_i$ by \eqref{another delta}.
Let $\s:=\sum_{i=1}^n\x_i$, and we consider the interval $[\s,\s+\de]$.

The following is the first main result in this section, where $\rho$ denotes the natural functor $\DDD^{\bo}(\mod^{\L}R) \to \DDD_{\sg}^{\L}(R)\simeq\underline{\CM}^{\L}R$.

\begin{theorem}\label{thm.mf_for_E}
Let $(R=S/(f),\L)$ be a GL hypersurface of dimension $d+1$ with $n=d+2$ weights $p_1,\ldots,p_n$. For each element $\l=\sum_{i=1}^n\ell_i\x_i$ in the interval $[\s,\s+\de]$, let
\[E^{\l}:=R/(X_i^{\ell_i}\mid 1\le i\le n)\in\mod^{\L}R\ \mbox{ and }\ E := \bigoplus_{\l \in [\s,\s+\de]}E^{\l}.\]
Then the following assertions hold.
\begin{itemize}
\item[(a)] $U^{\rm CM}:=\rho(E)$ is a tilting object in $\underline{\CM}^{\L}R$. 
\item[(b)] An $\L$-graded matrix factorization corresponding to $\rho(E^{\l})$ is given by $(\phi=(\phi_{I,J})_{I,J}:Q\to P,\ \psi=(\psi_{I,J})_{I,J}:P(-\c)\to Q)$, where
\begin{eqnarray*}
&{\displaystyle Q = \bigoplus_{\substack{I \subseteq \{1, \ldots, n\} \\ |I|\ \text{\rm odd} }} \underbrace{R\left(\frac{|I| - 1}{2} \c - \sum_{i \in I} \ell_i \x_i\right)}_{=: Q_I},\ P = \bigoplus_{\substack{I \subseteq \{1, \ldots, n\} \\ |I|\ \text{\rm even} }} \underbrace{R\left(\frac{|I|}{2} \c - \sum_{i \in I} \ell_i \x_i\right)}_{=: P_I},}&\\
&\phi_{I,J} \colon Q_I \to P_J \colon
\begin{cases}
(-1)^{\#\{j \notin I \mid j < i \}} X_i^{\ell_i} & I = J \sqcup \{i\} \\
(-1)^{\#\{j \notin I \mid j < i \}} \lambda_i X_i^{p_i - \ell_i} & J = I \sqcup \{ i \} \\
0 & \text{otherwise}
\end{cases},&
\end{eqnarray*}
and $\psi_{I, J} \colon P_I(-\c) \to Q_J$ is given by the identical formulas.
\item[(c)] The equivalence $\mod^{[0,\de]}R\simeq\mod A^{\rm CM}$ in Proposition~\ref{equivalences for I-canonical} sends $E$ to $D(A^{\rm CM}$).
\end{itemize}
\end{theorem}

\begin{proof}
For $1\le i\le n$, let $S^i=k[X_i]$, $(R^i,\L_i)=(S^i / (\lambda_i X_i ^ {p_i}),\Z\x_i)$ and $\c_i = p_i \x_i$.
(Note that the value of $\lambda_i$ does not matter for the ring $R^i$.) Let
\[(\phi^i,\psi^i):=\left(X_i^{\ell_i}:S^i(-\ell_i\x_i) \to S^i,\ \lambda_i X_i^{p_i - \ell_i}:S^i(-\c_i) \to S^i(-\ell_i\x_i)\right)\in\MF^{\L_i}(S^i,\lambda_iX_i^{p_i}).\]
Since
$E^{\l}=(S^1/(\lambda_1X_1^{\ell_1}))\otimes_k\cdots\otimes_k(S^n/(\lambda_nX_n^{\ell_n}))$
holds, Theorem~\ref{tensor of MF is tensor} shows that
\[(\phi^1,\psi^1)\otimes_{\rm MF}\cdots\otimes_{\rm MF}(\phi^n,\psi^n)\in\MF^{\L}(S,f)\]
gives an $\L$-graded matrix factorization of $\rho(E^{\l})$. Thus the assertion (b) follows from Proposition~\ref{describe n tensor product}. Moreover, $\underline{\CM}^{\L_i}R^i$ has a tilting object $\bigoplus_{\ell=1}^{p_i-1}S^i / (X_i ^\ell)$ by Example~\ref{CM-canonical for d=-1}(a).
Thus the assertion (a) follows from Proposition~\ref{tensor of tilting is tilting}(b). The assertion (c) is clear from the shape of $E^{\l}$.
\end{proof}

We illustrate this explicit description of the matrices in two (relatively) small examples.

\begin{example}
\begin{itemize}
\item[(a)] Let $d=1$ and $n=3$. Then the $\L$-graded matrix factorization of $\rho(E^{\l})$ is given by 
\begin{align*}
P & = \underbrace{R}_{P_{\varnothing}} \oplus \underbrace{R(\c - \ell_1\x_1 - \ell_2\x_2)}_{P_{\{1,2\}}} \oplus \underbrace{R(\c - \ell_1\x_1 - \ell_3\x_3)}_{P_{\{1,3\}}} \oplus \underbrace{R(\c - \ell_2\x_2 - \ell_3\x_3)}_{P_{\{2,3\}}},\\
Q & = \underbrace{R(- \ell_1\x_1)}_{Q_{\{1\}}} \oplus \underbrace{R(- \ell_2\x_2)}_{Q_{\{2\}}} \oplus \underbrace{R(-\ell_3\x_3)}_{Q_{\{3\}}} \oplus \underbrace{R(\c - \ell_1\x_1 - \ell_2\x_2 - \ell_3\x_3)}_{Q_{\{1,2,3\}}},
\end{align*}
together with the matrices
\begin{align*}
\phi & = \left[\begin{array}{cccc}
X_1^{\ell_1} & \lambda_2 X_2^{p_2-\ell_2} & -\lambda_3 X_3^{p_3-\ell_3} & 0 \\
-X_2^{\ell_2} & \lambda_1X_1^{p_1-\ell_1} & 0 & -\lambda_3 X_3^{p_3-\ell_3} \\
X_3^{\ell_3} & 0 & \lambda_1X_1^{p_1-\ell_1} & -\lambda_2 X_2^{p_2-\ell_2} \\
0 & X_3^{\ell_3} & X_2^{\ell_2} & X_1^{\ell_1}
\end{array}\right],\\
\psi & = \left[\begin{array}{cccc}
\lambda_1 X_1^{p_1-\ell_1} & -\lambda_2X_2^{p_2-\ell_2} & \lambda_3 X_3^{p_3-\ell_3} & 0 \\
X_2^{\ell_2} & X_1^{\ell_1} & 0 & \lambda_3X_3^{p_3-\ell_3} \\
-X_3^{\ell_3} & 0 & X_1^{\ell_1} & \lambda_2X_2^{p_2-\ell_2} \\
0 & -X_3^{\ell_3} & -X_2^{\ell_2} & \lambda_1X_1^{p_1-\ell_1}
\end{array}\right].
\end{align*}
\item[(b)] Let $d=2$ and $n=4$. Then the $\L$-graded matrix factorization of $\rho(E^{\l})$ is given by 
\begin{align*}
P &= \underbrace{R}_{P_{\varnothing}} \oplus \underbrace{R(\c - \ell_1\x_1 - \ell_2\x_2)}_{P_{\{1,2\}}} \oplus \underbrace{R(\c - \ell_1\x_1 - \ell_3\x_3)}_{P_{\{1,3\}}} \oplus \underbrace{R(\c - \ell_1\x_1 - \ell_4 \x_4)}_{P_{\{1,4\}}} \oplus \underbrace{R(\c -\ell_2\x_2 - \ell_3\x_3)}_{P_{\{2,3\}}} \\
& \quad \oplus \underbrace{R(\c - \ell_2\x_2 - \ell_4\x_4)}_{P_{\{2, 4\}}} \oplus \underbrace{R(\c - \ell_3\x_3 - \ell_4\x_4)}_{P_{\{3,4\}}} \oplus \underbrace{R(2 \c - \ell_1\x_1 - \ell_2\x_2 - \ell_3\x_3 - \ell_4\x_4)}_{P_{\{1,2,3,4\}}}\\
Q & = \underbrace{R( - \ell_1\x_1)}_{Q_{\{1\}}} \oplus \underbrace{R( - \ell_2\x_2)}_{Q_{\{2\}}} \oplus \underbrace{R( - \ell_3\x_3)}_{Q_{\{3\}}} \oplus \underbrace{R( - \ell_4\x_4)}_{Q_{\{4\}}} \oplus \underbrace{R(\c - \ell_1\x_1 - \ell_2\x_2 - \ell_3\x_3)}_{Q_{\{1,2,3\}}} \\
& \quad \oplus \underbrace{R(\c - \ell_1\x_1 - \ell_2\x_2 - \ell_4\x_4)}_{Q_{\{1,2,4\}}}\oplus \underbrace{R(\c - \ell_1\x_1 - \ell_3\x_3 - \ell_4\x_4)}_{Q_{\{1,3,4\}}} \oplus \underbrace{R(\c - \ell_2\x_2 - \ell_3\x_3 - \ell_4\x_4)}_{Q_{\{2,3,4\}}}
\end{align*}
together with the matrices \\
\scalebox{.7}{\parbox{\linewidth}{
\begin{align*}
\phi & = \left[\begin{array}{cccccccc}
X_1^{\ell_1} & \lambda_2 X_2^{p_2-\ell_2} & - \lambda_3 X_3^{p_3-\ell_3} & \lambda_4 X_4^{p_4-\ell_4} & 0 & 0 & 0 & 0 \\
- X_2^{\ell_2} & \lambda_1 X_1^{p_1-\ell_1} & 0 & 0 & - \lambda_3 X_3^{p_3-\ell_3} & \lambda_4 X_4^{p_4-\ell_4} & 0 & 0 \\
X_3^{\ell_3} & 0 & \lambda_1 X_1^{p_1-\ell_1} & 0 & -\lambda_2 X_2^{p_2-\ell_2} & 0 & \lambda_4 X_4^{p_4-\ell_4} & 0 \\ 
- X_4^{\ell_4} & 0 & 0 & \lambda_1 X_1^{p_1-\ell_1} & 0 & -\lambda_2 X_2^{p_2-\ell_2} & \lambda_3 X_3^{p_3-\ell_3} & 0 \\
0 & X_3^{\ell_3} & X_2^{\ell_2} & 0 & X_1^{\ell_1} & 0 & 0 & \lambda_4 X_4^{p_4-\ell_4} \\
0 & - X_4^{\ell_4} & 0 & X_2^{\ell_2} & 0 & X_1^{\ell_1} & 0 & \lambda_3 X_3^{p_3-\ell_3} \\
0 & 0 & - \lambda_4 X_4^{p_4-\ell_4} & - \lambda_3 X_3^{p_3-\ell_3} & 0 & 0 & \lambda_1 X_1^{p_1-\ell_1} & X_2^{\ell_2} \\
0 & 0 & 0 & 0 & - \lambda_4 X_4^{p_4-\ell_4} & - \lambda_3 X_3^{p_3-\ell_3} & - \lambda_2 X_2^{p_2-\ell_2} & X_1^{\ell_1} \\
\end{array}\right], \\
\psi & = \left[\begin{array}{cccccccc}
\lambda_1 X_1^{p_1-\ell_1} & - \lambda_2 X_2^{p_2-\ell_2} & \lambda_3 X_3^{p_3-\ell_3} & - \lambda_4 X_4^{p_4-\ell_4} & 0 & 0 & 0 & 0 \\
X_2^{\ell_2} & X_1^{\ell_1} & 0 & 0 & \lambda_3 X_3^{p_3-\ell_3} & - \lambda_4 X_4^{p_4-\ell_4} & 0 & 0 \\
- X_3^{\ell_3} & 0 & X_1^{\ell_1} & 0 & \lambda_2 X_2^{p_2-\ell_2} & 0 & - \lambda_4 X_4^{p_4-\ell_4} & 0 \\
X_4^{\ell_4} & 0 & 0 & X_1^{\ell_1} & 0 & \lambda_2 X_2^{p_2-\ell_2} & - \lambda_3 X_3^{p_3-\ell_3} & 0 \\
0 & - X_3^{\ell_3} & - X_2^{\ell_2} & 0 & \lambda_1 X_1^{p_1-\ell_1} & 0 & 0 & - \lambda_4 X_4^{p_4-\ell_4} \\
0 & X_4^{\ell_4} & 0 & - X_2^{\ell_2} & 0 & \lambda_1 X_1^{p_1-\ell_1} & 0 & - \lambda_3 X_3^{p_3-\ell_3} \\
0 & 0 & X_4^{\ell_4} & X_3^{\ell_3} & 0 & 0 & \lambda_1 X_1^{p_1-\ell_1} & - \lambda_2 X_2^{p_2-\ell_2} \\
0 & 0 & 0 & 0 & X_4^{\ell_4} & X_3^{\ell_3} & X_2^{\ell_2} & X_1^{\ell_1} \\
\end{array}\right].
\end{align*}
}}
\end{itemize}
\end{example}

Later we will need the information below on the Cohen-Macaulay module $\rho(E^{\l})$, where we refer to Section~\ref{section: GL CI 2} for the definition of $\rank$. For $\l \in [\s,\s+\de]$, let
\[U^{\l}=\rho(E^{\l})\in\CM^{\L}R.\]
This has an $\L$-graded matrix factorization $(\phi\colon Q\to P,\ \psi\colon P(-\c)\to Q)$ given in Theorem~\ref{thm.mf_for_E}. Thus $U^{\l}$ is the image of $\psi(\c)\colon P\to Q(\c)$. For $m$ with $1\le m\le n$, we define decompositions $P=P^{\ni m}\oplus P^{\not\ni m}$ and $Q=Q^{\ni m}\oplus Q^{\not\ni m}$ by
\[P^{\ni m} = \bigoplus_{\substack{I \subseteq \{1, \ldots, n\} \\ |I| \text{ even},\ m \in I}} P_I \qquad \text{and} \qquad P^{\not\ni m} =  \bigoplus_{\substack{I \subseteq \{1, \ldots, n\} \\ |I| \text{ even},\ m\notin I }} P_I \]
and similar equalities for $Q$. 

\begin{proposition}\label{rank 2^d}
\begin{itemize}
\item[(a)] $\rank U^{\l}=2^d$.
\item[(b)] The compositions $P^{\ni m}\to P\to U^{\l}$ and $P^{\not\ni m}\to P\to U^{\l}$ are injective and their cokernels have rank 0.
\item[(c)] The compositions $U^{\l}\to Q(\c)\to Q^{\ni m}(\c)$ and $U^{\l}\to Q(\c)\to Q^{\not\ni m}(\c)$ are injective and their cokernels have rank 0.
\end{itemize}
\end{proposition}

\begin{proof}
(a) The component of $\psi(\c)$ mapping $P^{\ni m}$ to $Q^{\not\ni m}(\c)$ (respectively, $P^{\not\ni m}$ to $Q^{\ni m}(\c)$) is given by multiplication by $\pm X_m^{\ell_m}$ (respectively, $\pm\lambda_mX_m^{p_m-\ell_m}$), and hence injective. 
Thus $\rank U^{\l} \geq \rank P^{\ni m}  = 2^d$.
The same type of argument shows that $\rank(\Image\phi) \geq 2^d$. Since $\rank U^{\l}+ \rank(\Image\phi)= \rank P = 2^{d+1}$,
so both inequalities above are in fact equalities.

(b)(c) By the above argument, the kernel and the cokernel of these morphisms have rank $0$.
Then the kernel has to be $0$ by Proposition~\ref{non-zero is injective}(a).
\end{proof}

Using matrix factorizations, we can describe how the non-free Cohen-Macaulay modules are connected to the free ones in the Auslander-Reiten quiver $\mathfrak{A}(\CM^{\L}R)$.

\begin{lemma}\label{action of xi}
The Auslander-Reiten quiver $\mathfrak{A}(\CM^{\L}R)$ has the following arrows.
\begin{itemize}
\item For each $\x\in\L$, a unique arrow $\rho(E^{\s})(\x)[-1]\to R(\x)$ ending at $R(\x)$, and a unique arrow $R(\x)\to\rho(E^{\s+\de})(\x)$ starting at $R(\x)$.
\end{itemize}
In particular, for each $1\le i\le n$, there is a unique arrow $\rho(E^{\s+(p_i-2)\x_i})\to R(\x_i)$ ending at $R(\x_i)$. Thus the degree shift functor $(\x_i)$ sends $\tau(\rho(E^{\s+\de}))$ to $\rho(E^{\s+(p_i-2)\x_i})$.
\end{lemma}

\begin{proof}
Thanks to Theorem~\ref{AR quiver in general}, it suffices to show $\rho(k)(-\w)[-d]=\rho(E^{\s+\de})$. Recall that $A^{\rm CM}=\bigotimes_{i=1}^nk\A_{p_i-1}$. The equivalence $\mod^{[0,\de]}R\simeq\mod A^{\rm CM}$ sends $k$ to the unique simple injective $A^{\rm CM}$-module, and $E^{\s+\de}$ to the corresponding projective $A^{\rm CM}$-module. Since $(\w)[d]$ is the Serre functor of $\underline{\CM}^{\L}R$, we have $\rho(k)(-w)[-d]=\rho(E^{\s+\de})$ as desired.

We have an exact sequence $0\to k[X_i]/(X_i^{p_i-1})\to k[X_i]/(X_i^{p_i})(\x_i)\to k(\x_i)\to0$ in $\mod^{\L}R$, and the middle term has finite projective dimension by Lemma~\ref{perfect}(a). Thus $\rho(k)(\x_i)[-1]\simeq\rho(k[X_i]/(X_i^{p_i-1}))=\rho(E^{\s+(p_i-2)\x_i})$. The assertions follow from the first part.
\end{proof}

As an application of Theorem~\ref{thm.mf_for_E}, we prove Theorem~\ref{AR quiver for CM finite}.

\begin{proof}[Proof of Theorem~\ref{AR quiver for CM finite}]
By Theorem~\ref{CM tilting} and Lemma~\ref{tensor product}, we have triangle equivalences
\[\underline{\CM}^{\L}R\simeq\DDD^{\bo}(\mod A^{\rm CM})\simeq\DDD^{\bo}(\mod kQ)\]
for $Q=\A_{p-1}$ if the weights are $(2,\ldots,2,2,p)$, $Q=\D_4$ if the weights are $(2,\ldots,2,3,3)$, 
$Q=\E_6$ if the weights are $(2,\ldots,2,3,4)$, and $Q=\E_8$ if the weights are $(2,\ldots,2,3,5)$.
Thus the stable Auslander-Reiten quiver of $\underline{\CM}^{\L}R$ has the form $\Z Q$.
By Theorem~\ref{thm.mf_for_E}(c), the positions of the direct summands $\rho(E^{\s+(a-1)\x_{n-1}+(b-1)\x_n})$
of $U^{\rm CM}$ are the positions of the indecomposable injective $A^{\rm CM}$-modules. 
They are given as in Figure~\ref{CMfiniteFig} by an explicit calculation of the triangle equivalence
$\DDD^{\bo}(\mod A^{\rm CM})\simeq\DDD^{\bo}(\mod kQ)$ (e.g.\ \cite[Fig.\,3,4]{KLM}). By Lemma~\ref{action of xi}, for each $1\le i\le n$, the position of $R(\x_i)$ is given as in Figure~\ref{CMfiniteFig}.
\end{proof}

We end this section with giving an explicit description of $\L$-graded matrix factorization of $T^{\rm CM}$, which is very close to Theorem~\ref{thm.mf_for_E}.
Again let $s=\sum_{i=1}^n\x_i$.

\begin{corollary} \label{thm.mf_for_F}
For $\l = \sum_{i=1}^n \ell_i \x_i \in [\s, \s+\de]$, let
\[ F^{\l} := (R/(X_i^{\ell_i} \mid 1\le i \le n))(\l-\s-\de) \in \mod^{\L}R\ \mbox{ and }\ F=\bigoplus_{\l\in [\s,\s+\de]}F^{\l}. \]
\begin{itemize}
\item[(a)] We have $T^{\rm CM} = \rho(F)$ and $U^{\rm CM}=T^{\rm CM}(\w)[d]$. They are tilting objects in $\underline{\CM}^{\L}R$.
\item[(b)] An $\L$-graded matrix factorization corresponding to $\rho(F^{\l})$ is given by $(\phi=(\phi_{I,J})_{I,J}:Q\to P,\ \psi=(\psi_{I,J})_{I,J}:P(-\c)\to Q)$, where
\begin{eqnarray*}
&{\displaystyle Q = \bigoplus_{\substack{I \subseteq \{1, \ldots, n\} \\ |I|\ {\rm odd} }} \underbrace{R\left(\frac{|I| - 1}{2} \c + \sum_{i \not \in I} \ell_i \x_i - \s-\de\right)}_{=: Q_I},\ \ \ 
P = \bigoplus_{\substack{I \subseteq \{1, \ldots, n\} \\ |I|\ {\rm even} }} \underbrace{R\left(\frac{|I|}{2} \c + \sum_{i \not \in I} \ell_i \x_i - \s-\de\right)}_{=: P_I},}&\\
&\phi_{I,J} \colon Q_I \to P_J \colon 
\begin{cases}
(-1)^{\#\{j \notin I \mid j < i \}} X_i^{\ell_i+1} & I = J \sqcup \{i\} \\
(-1)^{\#\{j \notin I \mid j < i \}} \lambda_i X_i^{p_i - \ell_i-1} & J = I \sqcup \{ i \} \\
(0 & \text{otherwise}
\end{cases},&
\end{eqnarray*}
and $\psi_{I,J} \colon P_I(-\c) \to Q_J$ is defined by the identical formulas.
\item[(c)] The equivalence $\mod^{[0,\de]}R\simeq\mod A^{\rm CM}$ in Proposition~\ref{equivalences for I-canonical} sends $F$ to $A^{\rm CM}$.
\end{itemize}
\end{corollary}

\begin{proof}
(a)(c) Notice that $F^{\l}=R(\l-\s-\de)_{[0,\de]}$ and $F=U^{[0,\de]}$ hold, where $(-)_{[0,\de]}:\mod^{\L}R\to\mod^{[0,\de]}R$ is the restriction functor. Thus $T^{\rm CM}=\rho(U^{[0,\de]})=\rho(F)$ and the assertion (c) hold.

By (c) and Theorem~\ref{thm.mf_for_E}(c), the equivalence $\mod^{[0,\de]}R\simeq\mod A^{\rm CM}$ sends $F$ and $E$ to $A^{\rm CM}$ and $D(A^{\rm CM})=\nu(A^{\rm CM})$ respectively. By the commutative diagram in Proposition~\ref{property of stable tilting}, we have $U^{\rm CM}=\rho(E)=\rho(F)(\w)[d]=T^{\rm CM}(\w)[d]$. By Theorem~\ref{thm.mf_for_E}(a), both $U^{\rm CM}$ and $T^{\rm CM}$ are tilting objects in $\underline{\CM}^{\L}R$.

(b) Applying $(\l-\s+\de)$ to the matrix factorization of $\rho(E^{\l})$ given in Theorem~\ref{thm.mf_for_E}, we obtain the assertion.
\end{proof}

%
%
%

\part{Geigle-Lenzing projective spaces}

\chapter{Geigle-Lenzing projective spaces}\label{section: GL space}

Throughout this chapter, we assume $d\ge1$.
Let $R$ be a Geigle-Lenzing complete intersection of dimension $d+1$ over a field $k$
associated with linear forms $\ell_1, \ldots, \ell_n$ and
weights $p_1, \ldots, p_n$.
Let $\mod^{\L}R$ be the category of $\L$-graded finitely generated
$R$-modules, and let $\mod_0^{\L}R$ be the full subcategory of 
$\mod^{\L}R$ consisting of finite dimensional modules.

\medskip\noindent
{\bf Geigle-Lenzing projective space.}
In the setup above, the category of \emph{coherent sheaves} on
\emph{Geigle-Lenzing} (\emph{GL}) \emph{projective space}
$\X$ of dimension $d$ is defined as the quotient category
\[ \coh \X = \qgr^{\L}R:=\mod^{\L}R/\mod_0^{\L}R\]
of $\mod^{\L}R$ by its Serre subcategory $\mod^{\L}_0R$.

We denote by $\pi:\mod^{\L}R\to\coh\X$ the natural functor. The object
\[\OO:=\pi(R)\]
is called the \emph{structure sheaf} of $\X$.
We have a triangle equivalence (e.g.\ \cite[3.2]{Miyachi})
\[\DDD^{\bo}(\mod^{\L} R) / \DDD^{\bo}_{\mod^{\L}_{0}R}(\mod^{\L} R)
\simeq\DDD^{\bo}(\coh \X),\]
and we denote by $\pi:\DDD^{\bo}(\mod^{\L}R)\to\DDD^{\bo}(\coh\X)$
the natural functor.

Our category $\coh\X$ has a geometric interpretation as the
the category of coherent sheaves $\coh\X$ on the quotient stack $\X=[X/G]$,
where $X$ is the punctured spectrum $X=\Spec R\setminus\{R_+\}$ and $G$
is the group scheme $G=\Spec k[\L]$ acting on $X$ (cf. \cite[Proposition 2.17]{O}). 
In fact, the category $\coh^G(\Spec R)$ of $G$-equivalent coherent sheaves on $X$
is equivalent to $\mod^{\L}R$, and the full subcategory consisting of sheaves
supported at $\{R_+\}$ is $\mod^{\L}_0R$.
Therefore the category $\coh^GX$ of $G$-equivariant coherent sheaves on $X$
is given by $\mod^{\L}R/\mod_0^{\L}R$, and this is nothing but the category of
coherent sheaves on the stack $\X$.

\begin{remark}\label{GL order}
In \cite{IL,LO}, Lerner and the second and the fourth authors introduce an order $A$ on the
projective space $\P^d$ called a \emph{Geigle-Lenzing order}
associated with linear forms $\ell_1,\ldots,\ell_n$ and weights $p_1,\ldots,p_n$.
They prove that there exists an equivalence
\[\coh\X\simeq\mod A.\]
This gives another approach to GL projective spaces, which will not be used in this paper.
\end{remark}

\section{Basic properties}
In this section, we give some basic properties of the categories $\coh\X$ and
$\DDD^{\bo}(\coh\X)$. Let us start with recalling the notion of \emph{local cohomology} \cite{BH}
which relates the categories $\mod^{\L}R$ and $\coh\X$.

\begin{definition}[Local cohomology]
For all $X\in\mod^{\L}R$, $i\ge0$ and $\x\in\L$, let
\begin{eqnarray*}
H_{\mm}^i(X)_{\x}&:=&\varinjlim_j\Ext_{\mod^{\L}R}^i(R/\mm^j,X(\x)),
\end{eqnarray*}
where $\mm:=R_+=\bigoplus_{\x>0}R_{\x}$.
Let $H_{\mm}^i(X):=\bigoplus_{\x\in\L}H^i_{\mm}(X)_{\x}$.
\end{definition}

The following result is fundamental, where $D=\Hom_k(-,k)$ is the $k$-dual.

\begin{proposition}\cite[3.6.19]{BH}\label{local duality} \emph{(local duality)}
For all $X\in\mod^{\L}R$ and $i\ge0$, we have an isomorphism in $\mod^{\L}R$:
\[\Ext^{d+1-i}_R(X,R(\w))\simeq
\bigoplus_{\x\in\L}D(H_{\mm}^i(X)_{-\x}).\]
\end{proposition}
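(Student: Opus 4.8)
The statement to prove is local duality: for $X\in\mod^{\L}R$ and $i\ge0$, there is an isomorphism $\Ext^{d+1-i}_R(X,R(\w))\simeq\bigoplus_{\x\in\L}D(H^i_{\mm}(X)_{-\x})$ in $\mod^{\L}R$. This is cited as \cite[3.6.19]{BH}, so strictly speaking it is quoted rather than proved; but let me sketch how one would prove it in the $\L$-graded setting, since the cited reference handles the standard $\Z$-graded (or local) case.

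Let me write the plan.

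The key inputs are: $R$ is a Gorenstein $\L$-graded ring of Krull dimension $d+1$ with canonical module $\omega_R = R(\w)$ (established in Proposition \ref{a-invariant} and the surrounding discussion), meaning $\Ext^i_R(k,R)$ is concentrated in degree $i=d$ where it equals $k(-\w)$. Also $R_+$ is the unique $\L$-maximal ideal. The plan: (1) Identify $H^i_{\mm}(-)$ as the $i$-th right derived functor of the $R_+$-torsion functor $\Gamma_{R_+}$ on $\mod^{\L}R$ (or on $\Mod^{\L}R$), using that $\Gamma_{R_+}(X) = \varinjlim_j \Hom^{\L}_R(R/R_{\ge j\c}, X)$ — one needs the $\L$-graded analogue of the fact that local cohomology with respect to an ideal agrees with the colimit of $\Ext$'s against the "powers", here realized via the cofinal system $R/R_{\ge j\c}$. (2) Establish graded local duality by the standard argument: take an $\L$-graded injective resolution, or more efficiently use that $R$ has a dualizing complex $\omega_R[d+1]$ in the derived category $\DDD^{\bo}(\mod^{\L}R)$ (since $R$ is $\L$-graded Gorenstein of dimension $d+1$), and that the graded local cohomology of $X$ is computed by $\mathbf{R}\Gamma_{R_+}(X)$. (3) Invoke graded Matlis duality / the graded local duality theorem: $\mathbf{R}\Gamma_{R_+}(X) \simeq \mathrm{D}(\RHom_R(X, \omega_R[d+1]))$ where $\mathrm{D}$ is the graded $k$-dual $\bigoplus_{\x} \Hom_k((-)_{-\x},k)$. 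Taking cohomology in degree $-i$ on the left (so $H^i_{\mm}(X)$) and $d+1-i$ on the right gives the stated formula.

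Concretely, the cleanest route is the following. First I would check the $\L$-graded analogue of the statement that $\Gamma_{R_+} = \varinjlim_j \Hom^{\L}_R(R/R_{\ge j\c}, -)$ and that its derived functors are the $H^i_{\mm}$ defined in the excerpt — this reduces to noting that the subobjects $R_{\ge j\c}$ are cofinal among the "$R_+$-power-like" subobjects (using that $R$ is finitely generated over the Veronese $C$, Proposition \ref{Zc Veronese}, so $R_+^N \supseteq R_{\ge j\c}$ for suitable $j$ and vice versa). Second, I would use that $R$, being $\L$-graded Gorenstein with $\Ext^{\bullet}_R(k,R)$ concentrated in degree $d$ with value $k(-\w)$, has $\omega_R = R(\w)$ as graded canonical module and that $R[d+1](\w)$ serves as a normalized dualizing complex; then graded local duality in the form $\mathbf{R}\Gamma_{R_+}(X) \cong \mathrm{D}\RHom_R(X,\omega_R)[-d-1]$ holds by the same proof as in \cite[Section 3.6]{BH}, since all the arguments there (Matlis duality for the graded-artinian/graded-noetherian categories, the independence of base, the Cohen–Macaulay/Gorenstein reductions) are insensitive to replacing $\Z$ by a finitely generated abelian group $\L$ as long as $R_0 = k$ is a field. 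Third, taking the $(d+1-i)$-th cohomology of $\RHom_R(X,\omega_R) = \RHom_R(X,R(\w))$ and matching with the $(-i)$-th cohomology of $\mathbf{R}\Gamma_{R_+}(X)$, twisted appropriately, yields exactly $\Ext^{d+1-i}_R(X,R(\w)) \cong \bigoplus_{\x\in\L} D(H^i_{\mm}(X)_{-\x})$.

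The main obstacle is purely bookkeeping: verifying that the proofs in \cite[Section 3.6]{BH}, written for $\N$-graded (or local) rings, carry over verbatim to the $\L$-grading. The two points requiring genuine (if routine) care are (a) that graded Matlis duality $\mathrm{D}$ is an exact duality between $\L$-graded noetherian and $\L$-graded artinian $R$-modules with $\mathrm{D}\mathrm{D} \simeq \mathrm{id}$ on each side — this works because $R_{\x}$ is a finite-dimensional $k$-vector space for each $\x$ and $R$ is $\L$-graded noetherian; and (b) that the system $\{R/R_{\ge j\c}\}_j$ is genuinely cofinal for computing $R_+$-torsion, which is where Proposition \ref{Zc Veronese}(b) (that $R$ is a finite free $C$-module) is used. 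Neither is deep, so in the paper one may simply cite \cite[3.6.19]{BH} and remark that the proof extends to the $\L$-graded setting; if a self-contained argument is wanted, the derived-category formulation $\mathbf{R}\Gamma_{R_+} \cong \mathrm{D}\RHom_R(-,\omega_R)[-d-1]$ above is the most economical.
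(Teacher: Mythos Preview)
The paper does not give its own proof of this proposition; it is simply quoted from \cite[3.6.19]{BH} with no further argument. Your proposal correctly recognizes this and supplies exactly the kind of justification one would want for passing from the $\Z$-graded or local setting of \cite{BH} to the $\L$-graded setting: the derived-category formulation $\mathbf{R}\Gamma_{R_+}(X)\simeq \mathrm{D}\,\RHom_R(X,\omega_R)[-d-1]$, together with the two bookkeeping checks (graded Matlis duality over $k=R_0$, and cofinality of the system $\{R/R_{\ge j\c}\}_j$ via Proposition~\ref{Zc Veronese}), is a sound and economical way to carry the standard proof over verbatim. So your sketch is correct and fills in precisely what the paper leaves to the cited reference.
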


Evaluating for $X=k$, we get the equality in Definition~\ref{define w}.
Evaluating for $X=R$, we have
\begin{equation}\label{local cohomology}
H_{\mm}^i(R)_{\x}\simeq\left\{
\begin{array}{ll}
D(R_{\w-\x})&\mbox{if $i=d+1$,}\\
0&\mbox{otherwise.}
\end{array}\right.
\end{equation}
Another immediate consequence is the following description of $\CM^{\L}_iR$:
\begin{equation}\label{describe CM by LC}
\CM^{\L}_iR=\{X\in\mod^{\L}R\mid\forall j\neq i,\ H^j_{\mm}(X)=0\}.
\end{equation}

The following exact sequence is basic.

\begin{proposition}\label{X and H} \cite[4.1.5]{BV}
For all $X\in\mod^{\L}R$, we have an exact sequence
\[0\to H^0_{\mm}(X)_{\x}\to X_{\x}\to\Hom_{\X}(\OO,X(\x))\to H^1_{\mm}(X)_{\x}\to0\]
and an isomorphism $\Ext_{\X}^i(\OO,X(\x))\to H^{i+1}_{\mm}(X)_{\x}$ for all $i\ge1$.
\end{proposition}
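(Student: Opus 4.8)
\textbf{Proof plan for Proposition \ref{X and H}.}
The statement is a standard consequence of the comparison between a graded module and its associated sheaf via local cohomology, so the plan is to reduce it to the long exact sequence of local cohomology applied componentwise. First I would recall that, by definition of the quotient category $\coh\X=\mod^{\L}R/\mod^{\L}_0R$, the functor $\Hom_{\X}(\OO,\pi(X)(\x))$ is computed as $\varinjlim_j\Hom_{\mod^{\L}R}(R_{\ge j\c},X(\x))$, the direct limit over the canonical maps coming from the surjections $R_{\ge j\c}\twoheadrightarrow R_{\ge (j+1)\c}$. Indeed $R_{\ge j\c}$ is a cofinal system of subobjects of $R$ with $R/R_{\ge j\c}=R_{<j\c}\in\mod^{\L}_0R$, so maps in the quotient category out of $\OO=\pi(R)$ are exactly such a colimit, and similarly $\Ext^i_{\X}(\OO,\pi(X)(\x))=\varinjlim_j\Ext^i_{\mod^{\L}R}(R_{\ge j\c},X(\x))$.

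Next I would write down, for each $j$, the long exact sequence obtained by applying $\Hom_{\mod^{\L}R}(-,X(\x))$ to the short exact sequence $0\to R_{\ge j\c}\to R\to R_{<j\c}\to 0$ in $\mod^{\L}R$:
\[
0\to\Hom(R_{<j\c},X(\x))\to\Hom(R,X(\x))\to\Hom(R_{\ge j\c},X(\x))\to\Ext^1(R_{<j\c},X(\x))\to\Ext^1(R,X(\x))\to\cdots
\]
Since $R$ is projective, $\Ext^i_{\mod^{\L}R}(R,X(\x))=0$ for $i\ge1$, and $\Hom_{\mod^{\L}R}(R,X(\x))=X_{\x}$ naturally. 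Then I would pass to the filtered colimit over $j$, which is exact on $\mod^{\L}R$; by the very definition of local cohomology $H^i_{\mm}(X)_{\x}=\varinjlim_j\Ext^i_{\mod^{\L}R}(R_{<j\c},X(\x))$, and using the identifications of the preceding paragraph, the colimit sequence becomes precisely
\[
0\to H^0_{\mm}(X)_{\x}\to X_{\x}\to\Hom_{\X}(\OO,\pi(X)(\x))\to H^1_{\mm}(X)_{\x}\to 0
\]
together with isomorphisms $\Ext^i_{\X}(\OO,\pi(X)(\x))\simeq H^{i+1}_{\mm}(X)_{\x}$ for $i\ge1$, coming from the four-term pieces $\Ext^i(R_{\ge j\c},X(\x))\xrightarrow{\sim}\Ext^{i+1}(R_{<j\c},X(\x))$ of the long exact sequence once the $\Ext^{i}(R,-)$ and $\Ext^{i+1}(R,-)$ terms vanish.

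The only genuine point requiring care — and the step I expect to be the main obstacle — is justifying that $\Hom_{\X}(\OO,\pi(X)(\x))$ and $\Ext^i_{\X}(\OO,\pi(X)(\x))$ are indeed computed by the stated colimits over the system $\{R_{\ge j\c}\}$; this is the content of the identification of sections of a sheaf with the colimit of graded $\Hom$'s, and needs that $\{R_{\ge j\c}\}_j$ is cofinal among subobjects $R'\subseteq R$ with $R/R'$ finite dimensional (equivalently, supported at $R_+$). This cofinality follows because any such $R'$ contains $R_{\ge j\c}$ for $j\gg 0$: $R/R'$ being finite dimensional is killed by $R_{\ge j\c}$ for large $j$, forcing $R_{\ge j\c}\subseteq R'$. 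Given this, the derived functor comparison is the standard one for Serre quotient categories (the reference \cite{BV} handles exactly this), and the rest is the bookkeeping of the long exact sequence described above.
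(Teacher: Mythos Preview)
The paper gives no proof of this proposition; it is simply quoted from \cite[4.1.5]{BV}. Your plan is exactly the standard argument behind that reference: take the long exact sequence of $\Hom_{\mod^{\L}R}(-,X(\x))$ applied to $0\to R_{\ge j\c}\to R\to R_{<j\c}\to 0$, pass to the filtered colimit over $j$, and identify the three resulting systems with $X_{\x}$, with $H^i_{\mm}(X)_{\x}$ (by the paper's definition of local cohomology), and with $\Ext^i_{\X}(\OO,\pi X(\x))$ respectively. Your cofinality argument for $\{R_{\ge j\c}\}$ among subobjects $R'\subseteq R$ with $R/R'\in\mod^{\L}_0R$ is correct.

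One small comment on the point you flagged as the main obstacle. For $i=0$ the Gabriel formula for $\Hom$ in a Serre quotient a priori involves a \emph{double} colimit, over subobjects of the source \emph{and} torsion subobjects $N'\subseteq X(\x)$. To reduce to the single colimit $\varinjlim_j\Hom(R_{\ge j\c},X(\x))$ you need that $\varinjlim_j\Hom(R_{\ge j\c},N')=0$ and $\varinjlim_j\Ext^1(R_{\ge j\c},N')=0$ for $N'\in\mod^{\L}_0R$; both follow easily (the first by a degree argument, the second from the very long exact sequence you wrote down together with $H^i_{\mm}(N')=0$ for $i>0$). For $i\ge 1$ your ``similarly'' hides the identification $\varinjlim_j\Ext^i(R_{\ge j\c},X)\simeq R^iQ(X)\simeq\Ext^i_{\X}(\OO,\pi X)$, which is not a direct consequence of the Gabriel $\Hom$ formula but rather of the adjunction $\pi\dashv\varpi$ at the derived level (equivalently, of the triangle $\R\Gamma_{\mm}\to\mathrm{id}\to\R Q$ that the paper records later in Section~\ref{section: Serre duality}). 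This is indeed what \cite{BV} supplies, so your deferral there is appropriate; just be aware that the $\Ext$ case is not literally ``the same'' as the $\Hom$ case.
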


We have the following useful description of extension spaces between line bundles.

\begin{proposition}\label{vanishing}
For all $\x,\y\in\L$ and $i\in\Z$, we have
\[\Ext_{\X}^i(\OO(\x),\OO(\y))=\left\{
\begin{array}{ll}
R_{\y-\x}&\mbox{if $i=0$,}\\
D(R_{\x-\y+\w})&\mbox{if $i=d$,}\\
0&\mbox{otherwise.}
\end{array}\right.\]
\end{proposition}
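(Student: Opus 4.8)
\textbf{Proof proposal for Proposition \ref{vanishing}.}
The plan is to reduce everything to the structure sheaf. Since the degree shift $(\x)\colon\coh\X\to\coh\X$ is an autoequivalence, we have $\Ext^i_\X(\OO(\x),\OO(\y))\simeq\Ext^i_\X(\OO,\OO(\y-\x))$ for all $i\in\Z$. Writing $\z:=\y-\x$, it then suffices to compute $\Ext^i_\X(\OO,\OO(\z))$ for an arbitrary $\z\in\L$ and to match the answer with the local cohomology of $R$ recorded in \eqref{local cohomology}.

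First I would dispose of the trivial ranges. For $i<0$ one has $\Ext^i_\X=0$ because $\coh\X$ is an abelian category. For $i=0$, apply the exact sequence of Proposition \ref{X and H} with $X=R$ in degree $\z$:
\[0\to H^0_{\mm}(R)_{\z}\to R_{\z}\to\Hom_{\X}(\OO,\OO(\z))\to H^1_{\mm}(R)_{\z}\to0.\]
By \eqref{local cohomology}, $H^0_{\mm}(R)=0$ and $H^1_{\mm}(R)=0$ (here we use $d\ge1$, so $d+1\ge2$), hence $\Hom_{\X}(\OO,\OO(\z))\simeq R_{\z}=R_{\y-\x}$, which is the first claimed value.

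For $i\ge1$, the second part of Proposition \ref{X and H} gives an isomorphism $\Ext^i_\X(\OO,\OO(\z))\simeq H^{i+1}_{\mm}(R)_{\z}$. By \eqref{local cohomology} this group vanishes unless $i+1=d+1$, i.e.\ $i=d$, in which case it is $D(R_{\w-\z})=D(R_{\w-\y+\x})=D(R_{\x-\y+\w})$. Collecting the cases $i<0$, $i=0$, $1\le i$ with $i\ne d$, and $i=d$ yields exactly the stated formula.

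I do not expect a serious obstacle here: the argument is essentially a substitution into \eqref{local cohomology} via Proposition \ref{X and H}. The only points that need a little care are (i) that $(\x)$ really descends to an autoequivalence of $\coh\X$, so the reduction to $\OO$ is legitimate; (ii) correctly carrying the degree shift $\z=\y-\x$ through the local cohomology formula; and (iii) using $d\ge1$ to ensure that not only $H^0_{\mm}(R)$ but also $H^1_{\mm}(R)$ vanishes, so that the $i=0$ case comes out cleanly.
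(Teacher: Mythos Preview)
Your proposal is correct and follows essentially the same route as the paper: both arguments reduce to $\Ext^i_\X(\OO,\OO(\y-\x))$, invoke Proposition~\ref{X and H} to express this in terms of local cohomology $H^{i+1}_{\mm}(R)_{\y-\x}$, and then read off the answer from \eqref{local cohomology}. Your write-up is more explicit about the degree-shift reduction, the $i<0$ case, and the role of $d\ge1$ in making $H^1_{\mm}(R)$ vanish, but the underlying argument is the same.
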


\begin{proof}
For $i>0$, we have $\Ext_{\X}^i(\OO(\x),\OO(\y))=H^{i+1}_{\mm}(R)_{\y-\x}$
by Proposition~\ref{X and H}. Thus the assertion follows from \eqref{local cohomology}.

For $i=0$, we have $\Hom_{\X}(\OO(\x),\OO(\y))=R_{\y-\x}$
by Proposition~\ref{X and H} and \eqref{local cohomology}.
\end{proof}

Now we give a list of fundamental properties of our category $\coh\X$.

\begin{theorem}\label{Serre}
\begin{itemize}
\item[(a)] $\coh\X$ is a Noetherian abelian category.
\item[(b)] $\coh\X$ has global dimension $d$.
\item[(c)] $\Ext^i_{\X}(X,Y)$ is a finite dimensional $k$-vector space for all $i\ge0$
and $X,Y\in\coh\X$. 
\item[(d)] $\Hom_{\DDD^{\bo}(\coh\X)}(X,Y)$ is
a finite dimensional $k$-vector space for all $X,Y\in\DDD^{\bo}(\coh\X)$.
\item[(e)] We have $\DDD^{\bo}(\coh\X)=\thick\{\OO(\x)\mid\x\in\L\}$.
\item[(f)] \emph{(Auslander-Reiten-Serre duality)}
We have a functorial isomorphism for $X, Y\in\DDD^{\bo}(\coh\X)$:
\[\Hom_{\DDD^{\bo}(\coh\X)}(X,Y)\simeq D\Hom_{\DDD^{\bo}(\coh\X)}(Y,X(\w)[d]).\]
In other words, $\DDD^{\bo}(\coh\X)$ has a Serre functor $(\w)[d]$.
\end{itemize}
\end{theorem}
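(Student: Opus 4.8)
The plan is to exploit the quotient description $\coh\X=\mod^{\L}R/\mod^{\L}_0R$ together with the local cohomology machinery just set up, transferring all statements to questions about $\L$-graded $R$-modules and the singular/regular structure of $R$. First I would establish (a): since $R$ is Noetherian and $\mod^{\L}_0R$ is a Serre subcategory, the quotient $\coh\X$ is abelian, and Noetherianness of the quotient follows from Noetherianness of $\mod^{\L}R$ by the standard Gabriel quotient argument. For (b) and (c) the key point is the vanishing of higher local cohomology: by local duality and \eqref{local cohomology} we know $H^i_{\mm}(R)$ is concentrated in degree $d+1$, and by Proposition~\ref{X and H} this controls $\Ext^i_{\X}(\OO(\x),-)$. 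One first proves (c) for $Y=\OO(\y)$ via Proposition~\ref{vanishing} (finite-dimensionality of $R_{\y-\x}$ and $D(R_{\x-\y+\w})$ is immediate since $R$ is a finitely generated graded algebra with finite-dimensional graded pieces), and then bootstraps to arbitrary $X,Y\in\coh\X$ using that every coherent sheaf is a quotient of a finite sum of line bundles $\OO(-\x_i)$ (because every finitely generated $\L$-graded $R$-module is a quotient of a finite sum of $R(-\x_i)$, and $\pi$ is exact), dimension-shifting along such presentations. The global dimension bound $\gl\coh\X\le d$ drops out of the same vanishing: $\Ext^i_{\X}(\OO(\x),\OO(\y))=0$ for $i>d$, and since line bundles generate, a standard argument (projective-dimension-like induction using that $\OO(\x)$ are "locally projective" enough, or using that $\DDD^{\bo}(\coh\X)$ is generated by line bundles with no $\Hom$ in degrees $>d$) gives the bound; equality $\gl\coh\X=d$ follows from exhibiting a non-vanishing $\Ext^d$, e.g. $\Ext^d_{\X}(\OO,\OO(\w))=D(R_0)=k\ne0$ by Proposition~\ref{vanishing}.

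Next I would prove (e), the generation statement. The triangle equivalence $\DDD^{\bo}(\mod^{\L}R)/\DDD^{\bo}_{\mod^{\L}_0R}(\mod^{\L}R)\simeq\DDD^{\bo}(\coh\X)$ is already cited, and by Theorem~\ref{generating derived category} we have $\DDD^{\bo}(\mod^{\L}R)=\thick\{\proj^{\L}R,\mod^{\L}_0R\}$. Applying $\pi$, which kills $\DDD^{\bo}_{\mod^{\L}_0R}(\mod^{\L}R)\supset\thick(\mod^{\L}_0R)$, and using that $\thick$ is preserved under triangle functors followed by idempotent completion (Proposition~\ref{FPT lemma}(b)), we get $\DDD^{\bo}(\coh\X)=\thick\{\pi(\proj^{\L}R)\}=\thick\{\OO(\x)\mid\x\in\L\}$. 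Statement (d) then follows from (c), (e) and dévissage: $\Hom$ in the bounded derived category between two objects, one of which lies in the thick subcategory generated by the $\OO(\x)$, is built from finitely many $\Ext^i_{\X}$ between coherent sheaves, each finite-dimensional.

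Finally, for (f), the Auslander-Reiten-Serre duality, the strategy is to first prove it on line bundles using local duality. For $X=\OO(\x)$, $Y=\OO(\y)$, Proposition~\ref{vanishing} gives $\Ext^i_{\X}(\OO(\x),\OO(\y))\cong D(R_{\x-\y+\w})$ in degree $d$ and $\cong R_{\y-\x}$ in degree $0$, and one checks directly that $\Hom_{\DDD^{\bo}(\coh\X)}(\OO(\x),\OO(\y)[i])\cong D\Hom_{\DDD^{\bo}(\coh\X)}(\OO(\y),\OO(\x)(\w)[d-i])$ by matching $R_{\y-\x}$ with $D$ of $D(R_{(\x+\w)-\y})=D(R_{\x-\y+\w})$, i.e. the two sides are literally dual by Proposition~\ref{vanishing}. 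One must also verify functoriality of this isomorphism, which amounts to compatibility of the local duality isomorphism with multiplication by homogeneous elements of $R$; this is the standard naturality of local duality. Having the duality on the generating set $\{\OO(\x)\}$ and knowing $\DDD^{\bo}(\coh\X)$ is Hom-finite with finite global dimension, one promotes it to all objects: the existence of a Serre functor is a property that can be checked on a generating set of a Hom-finite, idempotent-complete triangulated category (since $S$ is then determined and extends uniquely), and one identifies it as $(\w)[d]$.

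I expect the main obstacle to be the functoriality/naturality in (f): pinning down that the local-duality isomorphisms assemble into a natural transformation compatible with the $R$-action (hence descend to $\coh\X$) and that this extends from line bundles to a genuine Serre functor on all of $\DDD^{\bo}(\coh\X)$. The cleanest route is probably to invoke local duality in its functorial form (as in \cite{BH}, $3.6.19$, stated already as a functorial isomorphism) and then use the general principle that a Serre functor, once it exists on a classical generator, exists globally — together with Theorem~\ref{Buchweitz Eisenbud} and Proposition~\ref{Nakayama Serre} as templates for how such duality arguments run. The remaining parts (a)–(e) are essentially formal consequences of Noetherianness, the local cohomology vanishing \eqref{local cohomology}, and Theorem~\ref{generating derived category}.
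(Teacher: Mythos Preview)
Your treatment of (a), (c), (d), (e) matches the paper's: (a) is a general fact about Serre quotients of Noetherian abelian categories, (e) follows from Theorem~\ref{generating derived category} via Proposition~\ref{FPT lemma}, and (c)--(d) reduce by (e) and d\'evissage to the computation on line bundles in Proposition~\ref{vanishing}. Where your proposal diverges is in (b) and (f), and in both places there is a real gap.

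For (b), the paper does not argue independently: once (f) is in hand, $\Ext^i_{\X}(X,Y)\simeq D\Ext^{d-i}_{\X}(Y,X(\w))=0$ for $i>d$ is immediate. Your direct route does not close. Vanishing of $\Ext^{>d}$ between line bundles does not by itself bound $\gl(\coh\X)$, since line bundles are neither projective nor injective in $\coh\X$. From Proposition~\ref{X and H} and $H^{>d+1}_{\mm}=0$ one does get $\Ext^{>d}_{\X}(\OO(\x),Y)=0$ for \emph{all} $Y$; but dimension-shifting along a line-bundle resolution $0\to K\to L\to X\to 0$ only exhibits $\Ext^{d+1}_{\X}(X,Y)$ as a quotient of $\Ext^d_{\X}(K,Y)$, and iterating never terminates. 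You need (f), or something of equal force, here.

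For (f), you correctly flag functoriality as the crux, but the ``general principle'' you appeal to---that a Serre functor can be checked on a classical generator---is not a theorem in the form you need. Matching $\Hom_{\DDD^{\bo}(\coh\X)}(\OO(\x),\OO(\y)[i])$ with $D\Hom_{\DDD^{\bo}(\coh\X)}(\OO(\y),\OO(\x)(\w)[d-i])$ as vector spaces, even compatibly with the $R$-action, does not \emph{construct} a natural transformation $\Hom(X,-)\Rightarrow D\Hom(-,X(\w)[d])$ on all of $\DDD^{\bo}(\coh\X)$; it only constrains where such a transformation must go. The paper (Section~\ref{section: Serre duality}) builds the duality globally from the start: working in $\DDD(\Mod^{\L}R)$ with the localization triangle $\R\Gamma_{\mm}\to\mathrm{id}\to\R Q$, it computes $\Ext_R^{d+1}(DR,R(\w))_{\bullet}\cong R$ to obtain $D(\R Q(R))\cong \R Q(R)(\w)[d]$, and then writes Serre duality as a chain of honest natural isomorphisms (tensor--Hom adjunction, $k$-duality, and $M\Lotimes\R Q(R)\simeq\R Q(M)$ for $M\in\thick\{R(\x)\mid\x\in\L\}$). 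The restriction to perfect $M,N$ is used only to say it suffices to exhibit the natural isomorphism there---but that isomorphism is built from functors and adjunctions, not bootstrapped from its values on line bundles.
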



\begin{proof}
(a) See e.g.\ \cite[5.8.3]{Popescu}.

(e) This is an immediate consequence of Proposition~\ref{generating derived category}.

(c)(d) It is enough to prove (d). By (e), it is enough to show that
$\Ext^i_{\X}(\OO(\x),\OO(y))$ is finite dimensional for all
$\x,\y\in\L$ and $i\ge0$. This was shown in Proposition~\ref{vanishing}.

(f) We will give a complete proof in Section~\ref{section: Serre duality}.

(b) For all $i >d$ and $X,Y \in \coh \X$ we have, by (f), that
\[\Ext_{\X}^i(X,Y) \simeq D\Ext_{\X}^{d-i}(Y,X(\w)) = 0. \qedhere\]
\end{proof}

Recall that we say that a full subcategory $\CC$ of an abelian category $\AA$ 
\emph{generates} $\AA$ if any object in $\AA$ is a factor object of some object in $\CC$.

We have the following basic results.

\begin{proposition}\label{Serre vanishing}
\begin{itemize}
\item[(a)] $\add\{\OO(-\x)\mid \x\in\L_+\}$ generates $\coh\X$.
\item[(b)] \emph{(Serre vanishing)} For any $X\in\coh\X$, there exists 
$\a\in\L$ such that $\Ext^i_{\X}(\OO,X(\x))=0$ holds for any $i>0$ and any 
$\x\in\L$ satisfying $\x\ge\a$.
\end{itemize}
\end{proposition}

\begin{proof}
(a) For $X\in\mod^{\L}R$, let $X_{\L_+}:=\bigoplus_{\x\in\L_+}X_{\x}$ be
a subobject of $X$ in $\mod^{\L}R$. Since $X_{\L_+}$ is finitely generated,
there exists a surjection $f:P\to X_{\L_+}$ in $\mod^{\L}R$ with $P\in\proj^{\L_+}R$.
Then $\pi(f):\pi(P)\to\pi(X)$ is an epimorphism in $\coh\X$ since
$\Cokernel f=X/X_{\L_+}$ belongs to $\mod^{\L}_0R$.

(b) If $X=\OO(\y)$ for some $\y\in\L$, then the assertion follows from Proposition~\ref{vanishing}.

For general $X\in\coh\X$, applying (a) repeatedly, we have an exact 
sequence
\begin{equation}\label{resolution}
\cdots\xrightarrow{f_2}L_1\xrightarrow{f_1}L_0\to X\to0
\end{equation}
in $\coh\X$, where each $L_i$ is a finite direct sum of the degree shifts of $\OO$.
Recall that $\coh\X$ has global dimension $d$ by Theorem~\ref{Serre}(b). We take $\a\in\L$ such that $\Ext_{\X}^i(\OO,\bigoplus_{j=0}^{d-1}L_j(\x))=0$
for any $i>0$ and any $\x\in\L$ satisfying $\x\ge\a$.
Applying $\Hom_{\X}(\OO,-(\x))$ with $\x\ge\a$ to \eqref{resolution}, we have
\[\Ext^i_{\X}(\OO,X(\x))\simeq \Ext^{i+1}_{\X}(\OO,\Image f_1(\x))\simeq\cdots\simeq\Ext^{i+d}_{\X}(\OO,\Image f_d(\x))=0.\qedhere\]
\end{proof}

We note the following easy property, which will be used later.
Recall that $C$ is the polynomial algebra $k[T_0,\ldots,T_d]$ in $d+1$ variables.

\begin{lemma}\label{surjection from X to X(c)}
For $X\in\mod^{\L}R$ and $\ell\ge0$, let
$f_\ell=(t)_t:\bigoplus_{t}X\to X(\ell\c)$ be the morphism in $\mod^{\L}R$, 
where $t$ runs over all monomials on $T_0,\ldots,T_d$ of degree $\ell$.
\begin{itemize}
\item[(a)] The cokernel of $f_\ell:\bigoplus_{t}X\to X(\ell\c)$ belongs to 
$\mod^{\L}_0R$.
\item[(b)] $\pi(f_\ell):\bigoplus_{t}\pi(X)\to\pi(X)(\ell\c)$ is an epimorphism in 
$\coh\X$.
\end{itemize}
\end{lemma}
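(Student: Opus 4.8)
Lemma \ref{surjection from X to X(c)} concerns the morphism $f_\ell = (t)_t \colon \bigoplus_t X \to X(\ell\c)$ in $\mod^{\L}R$, where $t$ ranges over degree-$\ell$ monomials in $T_0,\ldots,T_d$, and asserts that (a) its cokernel lies in $\mod^{\L}_0R$, and (b) its sheafification is an epimorphism in $\coh\X$.

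\begin{proof}
(b) is immediate from (a): the functor $\pi\colon\mod^{\L}R\to\coh\X$ is exact and kills $\mod^{\L}_0R$, so $\pi(\Cokernel f_\ell) = \Cokernel\pi(f_\ell) = 0$.

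For (a), write $Z := \Cokernel f_\ell$. By Lemma \ref{basic for graded}, it suffices to show $Z_{T_j} = 0$ for each $j$ with $0 \le j \le d$; equivalently, that the localized map $(f_\ell)_{T_j} \colon \bigoplus_t X_{T_j} \to X(\ell\c)_{T_j}$ is surjective. But $(f_\ell)_{T_j}$ has among its components the multiplication map $T_j^\ell \colon X_{T_j} \to X(\ell\c)_{T_j}$, and $T_j$ is invertible in $R_{T_j}$ by construction (recall $T_j$ has degree $\c$, so multiplication by $T_j^\ell$ is an isomorphism $X_{T_j} \xrightarrow{\sim} X(\ell\c)_{T_j}$ of $R_{T_j}$-modules). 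Hence $(f_\ell)_{T_j}$ is already surjective on this single component, and a fortiori surjective. Therefore $Z_{T_j}=0$ for all $j$, so $Z \in \mod^{\L}_0R$ by Lemma \ref{basic for graded}.
\end{proof}

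The only subtlety to double-check is that the components of $f_\ell$ really do include the pure power $T_j^\ell$ for each $j$ — this holds since $T_j^\ell$ is one of the degree-$\ell$ monomials in $T_0,\ldots,T_d$ — and that $X(\ell\c)_{T_j}$ is literally $(X_{T_j})(\ell\c)$ as an $R_{T_j}$-module, so multiplication by the unit $T_j^\ell$ is bijective. No further input is needed; the statement follows purely from invertibility of the $T_j$ after localization together with the characterization of $\mod^{\L}_0R$ in Lemma \ref{basic for graded}.
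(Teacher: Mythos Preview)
Your proof is correct, but takes a different route from the paper's. The paper argues directly that the cokernel $Z$ is annihilated by every degree-$\ell$ monomial $t$ in $T_0,\ldots,T_d$ (since for any $\tilde z\in X(\ell\c)$, the element $t\tilde z$ lies in the image of the $t$-component of $f_\ell$); hence $Z$ is annihilated by $(T_0,\ldots,T_d)^\ell$, so it is a finitely generated module over the finite-dimensional $k$-algebra $C/(T_0,\ldots,T_d)^\ell$ and therefore finite-dimensional.

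Your approach instead invokes the localization criterion of Lemma~\ref{basic for graded} and observes that after inverting $T_j$, the single component $T_j^\ell$ is already an isomorphism. Both arguments are short and elementary; the paper's is slightly more self-contained (it does not cite Lemma~\ref{basic for graded}), while yours is more geometric in spirit, amounting to the statement that the cokernel has empty support on the open cover $\{D(T_j)\}$ of the punctured spectrum.
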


\begin{proof}
(a) Since the cokernel is annihilated by all monomials on
$T_0,\ldots,T_d$ in degree $\ell$, it is a finitely generated module over 
the finite dimensional $k$-algebra $C/(T_0,\ldots,T_d)^\ell$. Thus the assertion follows.

(b) Immediate from (a).
\end{proof}

The full subcategory
\[(\mod^{\L}_0R)^{\perp_{0,1}}:=\{X\in\mod^{\L}R\mid\Ext^i_R(Y,X)=0\ \mbox{ for any }\ Y\in\mod^{\L}_0R\ \mbox{ and }\ i=0,1\}\]
of $\mod^{\L}R$ is called the \emph{perpendicular category} \cite{GL2} of $\mod^{\L}_0R$. Clearly $(\mod^{\L}_0R)^{\perp_{0,1}}$ is closed under kernels, and contains $\proj^{\L}R$. Thus $\Omega^2(\mod^{\L}R)\subset(\mod^{\L}_0R)^{\perp_{0,1}}$.

The following observation will be used later.

\begin{lemma}\label{depth 2}\cite[2.1]{GL2}
For any $X\in\mod^{\L}R$ and $Y\in(\mod^{\L}_0R)^{\perp_{0,1}}$, the map $\Hom^{\L}_R(X,Y)\to\Hom_{\X}(X,Y)$ is bijective.
In particular, the functor $(\mod^{\L}_0R)^{\perp_{0,1}}\to\coh\X$ is fully faithful.
\end{lemma}

In the rest of this section, we study the following trichotomy of GL projective spaces.

\begin{definition}[Trichotomy]
We say that $\X$ is \emph{Fano} (respectively,
\emph{Calabi-Yau}, \emph{anti-Fano}) if so is $(R,\L)$,
that is, $\delta(\w)<0$  (respectively, $\delta(\w)=0$,
$\delta(\w)>0$) holds, where $\delta(\w)$ was given in \eqref{degree of w}.
\end{definition}

We will characterize these three types using the following categorical ampleness due to Artin-Zhang \cite{AZ}.

\begin{definition}\label{define ample}
Let $\AA$ be an abelian category. We say that an automorphism
$\alpha$ of $\AA$
\emph{ample} if there exists an object $V\in\AA$ satisfying the following conditions.
\begin{itemize}
\item $\add\{\alpha^{-\ell}(V)\mid \ell\ge0\}$ generates $\AA$.
\item For any epimorphism $f:X\to Y$ in $\AA$, there exists an
integer $\ell_0$ such that for every $\ell\geq\ell_0$ the map
$f:\Hom_{\AA}(\alpha^{-\ell}(V),X)\to\Hom_{\AA}(\alpha^{-\ell}(V),Y)$ 
is surjective.
\end{itemize}
\end{definition}

We have the following characterization of ampleness of the degree shift automorphisms.

\begin{theorem}\label{AZ ampleness0}
Let $\coh\X$ be a GL projective space, and let $\a\in\L$.
Then the automorphism $(\a)$ of $\coh\X$ is ample if and only if $\delta(\a)>0$.
\end{theorem}

\begin{proof}
Assume $\delta(\a)>0$. Then there exists a finite subset $S$ of $\L$
such that $S+\Z_{\ge0}\a\supset\L_+$.
We show that $V:=\bigoplus_{\x\in S}\OO(-\x)$ satisfies the two 
conditions in Definition~\ref{define ample}. Since 
\[\add\{V(-\ell\a)\mid \ell\ge0\}\supset\{\OO(-\x)\mid\x\in\L_+\},\]
the first condition is satisfied by Proposition~\ref{Serre vanishing}(a).
For an epimorphism $f:X\to Y$ in $\coh\X$, we have 
$\Ext^1_{\X}(V,(\Kernel f)(\ell\a))=0$ for $\ell\gg0$ by Proposition~\ref{Serre vanishing}(b).
Thus the second condition follows.

On the other hand, assume that $(\a)$ is ample, but $\delta(\a)\le0$.
Let $V\in\coh\X$ be an object satisfying the conditions in Definition
\ref{define ample}. By Proposition~\ref{Serre vanishing}(a), there exists a 
finite subset $S$ of $\L$ and an epimorphism
$L\to V$ in $\coh\X$ with $L\in\add\{\OO(-\x)\mid\x\in S\}$.
Then any object in $\coh\X$ is a quotient of an object in
$\CC:=\add\{\OO(-\x-\ell\a)\mid\x\in S,\ \ell\ge0\}$.
On the other hand, since $\delta(\a)\le0$, there exists an element 
$\b\in\L$ which is smaller than all elements in $-S-\Z_{\ge0}\a$.
Then any morphism from an object in $\CC$ to $\OO(\b)$ is zero,
a contradiction. Therefore $\delta(\a)>0$.
\end{proof}

We have the following interpretation of our trichotomy in terms of ampleness.

\begin{corollary}\label{AZ ampleness}
Let $\coh\X$ be a GL projective space.
\begin{itemize}
\item[(a)] $\X$ is Fano if and only if the automorphism $(-\w)$ of $\coh\X$ is ample.
\item[(b)] $\X$ is anti-Fano if and only if the automorphism $(\w)$ of $\coh\X$
is ample.
\item[(c)] $\X$ is Calabi-Yau if and only if 
$\DDD^{\bo}(\coh\X)$ is a fractionally Calabi-Yau triangulated category.
\end{itemize}
\end{corollary}

\begin{proof}
(a)(b) Immediate from Theorem~\ref{AZ ampleness0}.

(c) Since  $\DDD^{\bo}(\coh\X)$ has the Serre functor $(\w)[d]$
by Theorem~\ref{Serre}(f), it is fractionally Calabi-Yau  if and only if
$\w$ is a torsion element in $\L$. This means that $\X$ is Calabi-Yau.
\end{proof}

\section{Vector bundles}\label{section: vector bundles}

Recall that the canonical module $\omega_R$ of $R$ is defined as $\omega_R:=R(\w)$.
Since $R$ is Gorenstein, we have a duality
\[(-)^{\vee}:=\RHom_R(-,\omega_R):\DDD^{\bo}(\mod^{\L}R)\to\DDD^{\bo}(\mod^{\L}R)\]
which induces dualities $(-)^{\vee}:\DDD^{\bo}(\mod^{\L}_0R)\to\DDD^{\bo}(\mod^{\L}_0R)$ and
\[(-)^{\vee}:\DDD^{\bo}(\coh\X)\to\DDD^{\bo}(\coh\X).\]

\begin{definition}[Cohen-Macaulay sheaves]
For each $i$ with $0\le i\le d$, we define the category of \emph{Cohen-Macaulay sheaves of dimension $i$} by
\[\CM_i\X:=\coh\X\cap(\coh\X[i-d])^{\vee}=\{X\in\coh\X\mid H^j(X^\vee)=0\ \mbox{ for all $j\neq d-i$}\}.\]
Cohen-Macaulay sheaves of dimension $d$ are called \emph{vector bundles}:
\[\vect\X:=\CM_d\X.\]
Clearly $\OO(\x)\in\vect\X$ for any $\x\in\L$. Let
\[\lb\X:=\add\{\OO(\x)\mid\x\in\L\}\subset \vect \X.\]
\end{definition}

Immediately, for any $0\le i\le d$, we have a duality
\begin{equation}\label{duality of CM sheaf}
(-)^{\vee}[d-i]:\CM_i\X\to\CM_i\X\ \mbox{ and hence }\  (-)^{\vee}:\vect\X\to\vect\X.
\end{equation}
The following observation characterizes objects in the category $\CM_i\X$ 
in terms of $\L$-graded $R$-modules.
Let $\pi:\mod^{\L}R\to\coh\X$ be the natural functor.

\begin{proposition}\label{FLC modules}
Let $0\le i\le d$. 
\begin{itemize}
\item[(a)] We have
\[\pi^{-1}(\CM_i\X)=\{X\in\mod^{\L}R\mid\forall j\neq d-i\ 
\Ext^j_R(X,R)\in\mod^{\L}_0R\}.\]
\item[(b)] $\pi:\mod^{\L}R\to\coh\X$ restricts to a functor 
$\CM^{\L}_{i+1}R\to\CM_i\X$.
\item[(c)] We have
\begin{eqnarray*}
\pi^{-1}(\vect\X)=\{X\in\mod^{\L}R\mid
\mbox{$X$ is locally free on the punctured spectrum (Definition~\ref{locally free at punctured spectrum})}\}.
\end{eqnarray*}
\end{itemize}
\end{proposition}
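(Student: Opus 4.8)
The plan is to deduce all three statements from part (a), and to prove (a) by transporting the definition of $\CM_i\X$ through the quotient functor $\pi\colon\mod^{\L}R\to\coh\X$ using the duality $(-)^{\vee}=\RHom_R(-,\omega_R)$. The facts I will use are that $\pi$ is exact with kernel $\mod^{\L}_0R$, that $\omega_R=R(\w)$, and that the duality on $\DDD^{\bo}(\coh\X)$ is by construction the one induced from $\RHom_R(-,\omega_R)$ on $\DDD^{\bo}(\mod^{\L}R)$ (this descent is legitimate because $\mod^{\L}_0R$ is stable under $\RHom_R(-,\omega_R)$, $R$ being Gorenstein); consequently $\pi(X)^{\vee}\simeq\pi\bigl(\RHom_R(X,\omega_R)\bigr)$ in $\DDD^{\bo}(\coh\X)$ for every $X\in\mod^{\L}R$.

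For (a), I would first compute, using exactness of $\pi$, that $H^j(\pi(X)^{\vee})\simeq\pi(\Ext^j_R(X,R))(\w)$; since $\ker\pi=\mod^{\L}_0R$ and the twist $(\w)$ is an autoequivalence, this sheaf vanishes if and only if $\Ext^j_R(X,R)\in\mod^{\L}_0R$. On the other hand, by the definitions $\pi(X)\in\CM_i\X=\coh\X\cap(\coh\X[i-d])^{\vee}$ if and only if $\pi(X)^{\vee}$ lies in $\coh\X[i-d]$, i.e.\ has cohomology concentrated in cohomological degree $d-i$. Here I would note that $\Ext^j_R(X,R)=0$ for $j<0$ and for $j>d+1$ (Gorenstein dimension $d+1$), so $\pi(X)^{\vee}$ is bounded, and that a bounded complex with cohomology in the single degree $d-i$ is isomorphic in $\DDD^{\bo}(\coh\X)$ to that cohomology placed there. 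Combining these two observations yields exactly the asserted description of $\pi^{-1}(\CM_i\X)$.

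Part (b) is then immediate: if $X\in\CM^{\L}_{i+1}R$, then $\Ext^j_R(X,R)=0$ for all $j\neq d+1-(i+1)=d-i$ by definition, hence in particular $\Ext^j_R(X,R)\in\mod^{\L}_0R$ for $j\neq d-i$, so $\pi(X)\in\CM_i\X$ by (a). For (c), I would apply (a) with $\vect\X=\CM_d\X$ to get $\pi^{-1}(\vect\X)=\{X\mid\Ext^j_R(X,R)\in\mod^{\L}_0R\text{ for all }j\neq0\}$; since $\Ext^j_R(X,R)=0$ for $j<0$ this is the condition $\Ext^j_R(X,R)\in\mod^{\L}_0R$ for all $j>0$, which is precisely condition (c) in Definition-Proposition~\ref{locally free at punctured spectrum}, i.e.\ that $X$ be locally free on the punctured spectrum.

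I do not expect a serious obstacle here; the only point needing care is the clean statement that $\pi$ commutes with $(-)^{\vee}$ and with taking cohomology, but this is formal once one recalls how the duality on $\DDD^{\bo}(\coh\X)$ was defined in this section, so essentially no extra work is required.
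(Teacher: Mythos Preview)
Your proof is correct and follows essentially the same route as the paper: both argue that $\pi(X)^{\vee}=\pi(X^{\vee})$ lies in $(\coh\X)[i-d]$ iff its cohomologies $\pi(\Ext^j_R(X,R))(\w)$ vanish for $j\neq d-i$, i.e.\ iff $\Ext^j_R(X,R)\in\mod^{\L}_0R$, and then deduce (b) and (c) immediately from (a) together with Definition--Proposition~\ref{locally free at punctured spectrum}. Your write-up is simply more explicit about the compatibility of $\pi$ with the duality and with taking cohomology, points the paper takes for granted.
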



\begin{proof}
(a) Let $X\in\mod^{\L}R$.
Then $\pi(X^\vee)$ belongs to $(\coh\X)[i-d]$ if and only if
$\pi(H^j(X^\vee))=0$ for all $j\neq d-i$ if and only if
$H^j(X^\vee)=\Ext^j_R(X,R)$ belongs to $\mod^{\L}_0R$ for any $j\neq d-i$.

(b) The assertion follows from (a) and $\CM_{i+1}^{\L}R=\{X\in\mod^{\L}R\mid\forall j\neq d-i\ 
\Ext^j_R(X,R)=0\}$.

(c) The assertion is immediate from (a) and
Definition-Proposition~\ref{locally free at punctured spectrum}.
\end{proof}

\begin{definition}[Vector bundle finiteness]
We say that a GL projective space $\X$ is \emph{vector bundle finite} (=\emph{VB finite}) if there are only finitely many isomorphism classes of indecomposable objects in $\vect\X$ up to degree shift.
\end{definition}

The following result gives a classification of VB finite GL projective spaces.

\begin{theorem}\label{VB finite}
A GL projective space $\X$ is VB finite if and only if $d=1$ and $\X$ is Fano (or equivalently, domestic).
\end{theorem}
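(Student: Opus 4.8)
The plan is to prove both directions by separating the cases $d=1$ and $d\geq 2$, and within $d=1$ using the known classification of weighted projective lines. For the "if" direction, suppose $d=1$ and $\X$ is Fano (equivalently domestic). Then $\coh\X$ is a weighted projective line of domestic type, and by Geigle--Lenzing \cite{GL} together with Happel's theorem \cite{H2}, $\DDD^{\bo}(\coh\X)$ is derived equivalent to the path algebra $kQ$ of an extended Dynkin quiver. Since $d=1$ we have $\vect\X=\CM^{\L}R$ (the fully faithful functor $\CM^{\L}R\to\vect\X$ is an equivalence in the classical case, as noted in the introduction), so the indecomposable vector bundles up to degree shift correspond to the indecomposable Cohen-Macaulay modules up to shift. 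By Theorem~\ref{characterize RF} the Fano condition with $d=1$ forces $(R,\L)$ to be CM finite, hence $\X$ is VB finite. (Alternatively, one may invoke directly the classical fact that the indecomposable vector bundles on a domestic weighted projective line fall into finitely many $\Z\w$-orbits, which is part of the ADE trichotomy recalled in the introduction.)

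For the "only if" direction, I would show that VB finiteness fails in all other cases. First suppose $d\geq 2$. Here the key point is that $\vect\X$ is strictly larger than $\CM^{\L}R$; even when $(R,\L)$ is regular (the case $n\leq d+1$, where $R$ is a polynomial ring so $\CM^{\L}R=\proj^{\L}R=\lb\X$), the category $\vect\X$ already contains infinitely many indecomposables up to shift. Indeed, for $n=0$ this is just $\vect\P^d$ with $d\geq 2$, which has infinitely many indecomposable bundles up to twist (e.g. the kernels of surjections $\OO^{\oplus m}\to\OO(1)$, or syzygy bundles of increasing rank); one reduces the general $d\geq 2$ case to this by using the affine morphism $\X\to\P^d$ or by pulling back indecomposable bundles along the flat finite extension $C=\bigoplus_a R_{a\c}$ from Proposition~\ref{Zc Veronese} and checking indecomposability survives. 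Second, for $d=1$, if $\X$ is Calabi-Yau (tubular) or anti-Fano (wild), then by the classical theory of weighted projective lines $\coh\X$ is tubular resp. wild, and in either case $\vect\X$ has infinitely many indecomposables up to $\Z\w$-orbit: in the tubular case these are parametrized by slope-zero objects of unbounded rank in the one-parameter families, and in the wild case they grow without bound by the wild representation type. This is exactly the trichotomy recalled in the introduction that $\coh\X$ is derived equivalent to the path algebra of the corresponding extended Dynkin type precisely in the domestic case.

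The main obstacle I expect is the case $d\geq 2$, $n\geq d+2$ (and more generally making the $d\geq 2$ argument uniform): one must produce an honest infinite family of pairwise non-isomorphic indecomposable vector bundles on $\X$ that are not related by any degree shift $(\x)$ for $\x\in\L$. The cleanest route is probably to work on $\P^d$ first, where Horrocks-type constructions (or simply the infinitely many distinct Chern-class data realized by indecomposable bundles) give such a family, and then transport it to $\X$ using the finite flat map induced by the root construction. One has to verify two things: that the pullbacks remain indecomposable (or split off an indecomposable summand with the right invariants), and that the rank function of Proposition~\ref{property of rank}, together with the $\L$-degree, gives a numerical invariant distinguishing infinitely many of them even after quotienting by $\Z\w$-shifts. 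An alternative that sidesteps the geometric transport: use the description $\CM^{\L}R=\{X\in\vect\X\mid \Ext^i_\X(\lb\X,X)=0,\ 1\leq i\leq d-1\}$ from the introduction to see that when $n\leq d+1$ one still has $\lb\X$ a $d$-cluster tilting subcategory (Corollary~\ref{line is CT subcategory}), but the ambient $\vect\X$ is genuinely of infinite type; combined with Serre duality (Theorem~\ref{Serre}(f)) and a dimension count this should force infinitely many indecomposables up to shift. I would present the $\P^d$-reduction as the primary argument since it is the most transparent.
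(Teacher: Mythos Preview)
Your $d=1$ case matches the paper: both directions are classical for weighted projective lines and the paper simply cites \cite{GL}.

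For $d\ge 2$ the paper takes a completely different and far simpler route than your geometric transport from $\P^d$. Rather than producing bundles on $\P^d$ and pulling back (with the indecomposability-under-pullback obstacle you correctly anticipate), the paper works entirely inside $\mod^{\L}R$ via second syzygies. For any $X\in\mod^{\L}_0R$ take the start of a minimal projective resolution
\[0\to\Omega^2(X)\xrightarrow{g}P_1\xrightarrow{f}P_0\to X\to0.\]
Since $R$ is Gorenstein of dimension $d+1\ge 3$, one has $\Ext^i_R(X,R)=0$ for $i\le 2$; this forces $g$ (resp.\ $f$) to be a left $(\proj^{\L}R)$-approximation of $\Omega^2(X)$ (resp.\ $\Omega(X)$), so $X\mapsto\Omega^2(X)$ preserves indecomposability and reflects isomorphisms. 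Moreover $\Omega^2(X)$ is locally free on the punctured spectrum (hence $\pi(\Omega^2(X))\in\vect\X$ by Proposition~\ref{FLC modules}(c)) and lies in $(\mod^{\L}_0R)^{\perp_{0,1}}$, so Lemma~\ref{depth 2} shows that
\[\mod^{\L}_0R\longrightarrow\vect\X,\qquad X\longmapsto\pi(\Omega^2(X))\]
again preserves indecomposability and reflects isomorphisms. Since $\mod^{\L}_0R$ has infinitely many indecomposables up to degree shift, so does $\vect\X$.

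This argument is uniform in $n$ and sidesteps everything you flagged as an obstacle: no endomorphism-ring computations on pullbacks, no numerical invariants, no reduction to $\P^d$. Your pullback strategy is not obviously wrong, but pullback along a finite flat cover genuinely can decompose bundles (think of isotypic decompositions under a Galois action), so turning it into a proof would require real work; your alternative via $d$-cluster tilting does not by itself produce infinitely many indecomposables in the ambient $\vect\X$. The syzygy argument is the missing idea.
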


\begin{proof}
For the case $d=1$, it is classical that $\X$ is VB finite if and only if $\X$ is domestic \cite{GL}.

We show that, if $d\ge2$, then $\X$ is never VB finite.
For any $X\in\mod^{\L}_0R$, we consider an exact sequence
\[0\to\Omega^2(X)\xrightarrow{g} P_1\xrightarrow{f} P_0\to X\to0\]
of $\L$-graded $R$-modules with $P_0,P_1\in\proj^{\L}R$.
Since $R$ is a Gorenstein ring of dimension $d+1\ge3$, we have that $\Ext^i_R(X,R)=0$ for any $i\le 2$. Therefore $g$ above is a left $(\proj^{\L}R)$-approximation of $\Omega^2(X)$, and $f$ above gives a left $(\proj^{\L}R)$-approximation of $\Omega(X)$. Hence the correspondence $X\mapsto\Omega^2(X)$ preserves indecomposability and respects isomorphism classes.

On the other hand, $\Omega^2(X)$ is locally free on the punctured spectrum, and hence $\pi(\Omega^2(X))$ belongs to $\vect\X$ by Proposition~\ref{FLC modules}(c). Since $\Omega^2(X)\in(\mod^{\L}_0R)^{\perp_{0,1}}$, the functor
\[\mod^{\L}_0R\to\vect\X,\ \ X\mapsto\pi(\Omega^2(X))\]
preserves indecomposability and respects isomorphism classes by Lemma~\ref{depth 2}.
Since there are infinitely many indecomposable objects in $\mod^{\L}_0R$ even up to degree shift, we have the assertion.
\end{proof}

We have the following easy property.

\begin{lemma}\label{second syzygy}
Any object in $\vect\X$ is isomorphic to $\pi(X)$ for some
$X\in\mod^{\L}R$ such that there exists an exact sequence
$0\to X\to P^0\to P^1$ in $\mod^{\L}R$ with $P^0,P^1\in\proj^{\L}R$.
In particular, $\vect\X\subset\pi(\Omega^2(\mod^{\L}R))\subset\pi((\mod^{\L}_0R)^{\perp_{0,1}})$.
\end{lemma}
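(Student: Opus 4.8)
The plan is to start from an arbitrary $\mathcal{F}\in\vect\X$, lift it to a module, and then repair that module in codimension one and two so that it becomes a second syzygy. Since $\pi:\mod^\L R\to\coh\X$ is essentially surjective, write $\mathcal{F}=\pi(Y)$ for some $Y\in\mod^\L R$. By Proposition \ref{FLC modules}(c), $Y$ is locally free on the punctured spectrum, i.e. $\Ext^i_R(Y,R)\in\mod^\L_0 R$ for all $i>0$. The obstruction to $Y$ being a second syzygy is measured by $H^0_\mm(Y)$ and $H^1_\mm(Y)$, both of which lie in $\mod^\L_0 R$ and hence are killed by $\pi$. The idea is to replace $Y$ by $X:=Y/H^0_\mm(Y)$ and then form a suitable pullback to remove $H^1_\mm$ as well.

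First I would treat $H^0_\mm$. Set $Y':=Y/H^0_\mm(Y)$. Then $H^0_\mm(Y')=0$, and since $H^0_\mm(Y)\in\mod^\L_0R$ we have $\pi(Y')\cong\pi(Y)=\mathcal{F}$; moreover $\Ext^i_R(Y',R)$ differs from $\Ext^i_R(Y,R)$ only by objects of $\mod^\L_0R$ (using the long exact sequence of $0\to H^0_\mm(Y)\to Y\to Y'\to 0$ together with $\Ext^{<d+1}_R(\mod^\L_0R,R)=0$), so $Y'$ is still locally free on the punctured spectrum. Next, to kill $H^1_\mm(Y')$: by Proposition \ref{X and H} there is an exact sequence $0\to Y'_\x\to\Hom_\X(\OO,\mathcal{F}(\x))\to H^1_\mm(Y')_\x\to 0$, so the module $X:=\bigoplus_{\x\in\L}\Hom_\X(\OO,\mathcal{F}(\x))$ — which is finitely generated since $\mathcal{F}$ is coherent and $\coh\X$ is Noetherian — contains $Y'$ with finite-dimensional cokernel, hence $\pi(X)\cong\pi(Y')=\mathcal{F}$, and $X$ satisfies $H^0_\mm(X)=H^1_\mm(X)=0$, i.e. $X\in(\mod^\L_0R)^{\perp_{0,1}}$ by the description of depth in terms of local cohomology; again $X$ remains locally free on the punctured spectrum. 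This already proves the last sentence of the lemma, $\vect\X\subset\pi((\mod^\L_0R)^{\perp_{0,1}})$.

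It remains to produce the exact sequence $0\to X\to P^0\to P^1$ with $P^0,P^1\in\proj^\L R$. Since $\depth_\mm X\ge 2$ (equivalently $H^0_\mm(X)=H^1_\mm(X)=0$), this is a standard fact: take a surjection $\varepsilon:P^0\to X$ with $P^0\in\proj^\L R$ from a finite set of homogeneous generators; dualizing and using $\depth X\ge1$ shows the induced map $X\hookrightarrow X^{**}$ — no, more directly: let $Z:=\Kernel\varepsilon$, so $0\to Z\to P^0\to X\to 0$ gives $H^0_\mm(Z)\cong H^0_\mm(X)=0$ and then the connecting map shows $Z$ has depth $\ge1$; choosing a surjection $P^1\to Z^{*}$... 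I would instead argue via $\Ext$: $\depth_\mm X\ge 2$ means $\Hom_R(k,X)=\Ext^1_R(k,X)=0$, which by the graded analogue of a theorem of Auslander–Bridger is equivalent to $X$ being a second syzygy module, i.e. $X\cong\Omega^2 W$ for some $W\in\mod^\L R$; spelling this out, one picks a presentation $P^1\to P^0\to W\to 0$ of a cokernel and checks $X\cong\ker(P^0\to W)=\mathrm{image}(P^1\to P^0)$ embeds as claimed. The main obstacle is precisely this last step — making the passage "depth $\ge 2$ $\Rightarrow$ second syzygy" rigorous and graded — but it is entirely standard commutative algebra (e.g. via the exact sequence $0\to X\to \Hom_R(\Hom_R(X,R),R)$ being an isomorphism in codimension $\le1$ combined with a double-dual/approximation argument as in the proof of Theorem \ref{AR duality}(c)), so no genuinely new input is needed.
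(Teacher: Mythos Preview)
Your approach is correct in outline and genuinely different from the paper's. You repair a lift $Y$ of $\mathcal{F}$ by passing to its saturation $X=\bigoplus_{\x}\Hom_\X(\OO,\mathcal{F}(\x))$, which has depth $\ge 2$ and is locally free on the punctured spectrum, and then argue that such an $X$ is a second syzygy. The paper instead exploits the duality $(-)^\vee=\RHom_R(-,\omega_R)$ already in place: since $(-)^\vee$ restricts to a duality on $\vect\X$ (equation \eqref{duality of CM sheaf}), one chooses $Y\in\pi^{-1}(\vect\X)$ with $\pi(Y)\simeq V^\vee$, takes any presentation $P_1\to P_0\to Y\to 0$, and applies $\Hom_R(-,\omega_R)$ to obtain $0\to X\to P_0^\vee\to P_1^\vee$ with $X=\Hom_R(Y,\omega_R)$; then $\pi(X)\simeq V$ because the higher $\Ext^j_R(Y,\omega_R)$ lie in $\mod^{\L}_0R$ by Proposition~\ref{FLC modules}(a). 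This is essentially a two-line argument requiring no depth or syzygy theory.

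One caution about your final step: the implication ``$\depth_\mm X\ge 2\Rightarrow X$ is a second syzygy'' is \emph{false} over a general Gorenstein local ring---for instance the ideal $(x,y)\subset k[x,y,z]_{(x,y,z)}$ has depth $2$ but admits no exact sequence $0\to(x,y)\to P^0\to P^1$ with $P^i$ free, since any cokernel of $(x,y)\hookrightarrow P^0$ has $(x,y)$ in its annihilator at some associated prime---so your appeal to Auslander--Bridger as stated does not go through. What rescues the argument here is the extra hypothesis you recorded earlier but did not invoke at this point: $X$ is locally free on the punctured spectrum. With that, the bidual map $X\to X^{**}$ has kernel and cokernel supported only at $R_+$, hence in $\mod^{\L}_0R$; since both $X$ and $X^{**}$ (the latter automatically a second syzygy via a free presentation of $X^*$) have $H^0_\mm=H^1_\mm=0$, the long exact sequence forces the cokernel to vanish, giving $X\cong X^{**}$. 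So your route can be completed, but the paper's duality trick is considerably cleaner.
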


\begin{proof}
Let $V\in\vect\X$. Take $Y\in\mod^{\L}R$ such that $V^\vee\simeq\pi(Y)$
and a projective resolution $P_1\to P_0\to Y\to0$ in $\mod^{\L}R$.
Applying $(-)^\vee$, we have the desired exact sequence with $X:=Y^\vee$.
\end{proof}

As a special case of Proposition~\ref{FLC modules}(b), we have
a functor
\[\CM^{\L}R\to\vect\X.\]
The statement (a) below shows that this is fully faithful. Therefore
$\CM^{\L}R$ has two exact structures, one is the 
restriction of the exact structure on $\mod^{\L}R$, and the other
is the restriction of that on $\coh\X$. These are different, for example,
$R$ is projective in $\mod^{\L}R$, but not in $\coh\X$.
But the statement (c) below shows that they are still very close.

\begin{proposition}\label{arith CM}
\begin{itemize}
\item[(a)] $\pi:\mod^{\L}R\to\coh\X$ restricts to a fully faithful 
functor $\CM^{\L}R\to\vect\X$
and an equivalence $\proj^{\L}R\to\lb\X$.
\item[(b)] We have
\begin{eqnarray*}
\pi(\CM^{\L}R)&=&
\{X\in\vect\X\mid\forall i\in\{1,2,\ldots,d-1\}\ \Ext_{\X}^i(\lb\X,X)=0\}\\
&=&\{X\in\vect\X\mid\forall i\in\{1,2,\ldots,d-1\}\ \Ext_{\X}^i(X,\lb\X)=0\}.
\end{eqnarray*}
\item[(c)] For any $i$ with $0\le i\le d-1$, we have a functorial isomorphism \[\Ext_{\mod^{\L}R}^i(X,Y)\simeq\Ext_{\X}^i(X,Y)\]
for any $X\in\mod^{\L}R$ and $Y\in\CM^{\L}R$.
\end{itemize}
\end{proposition}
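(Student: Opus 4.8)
The proof of Proposition \ref{arith CM} splits naturally into its three parts, and the plan is to prove them in the order (c), (a), (b), since (a) and (b) will use the comparison isomorphism of (c).

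For part (c), the plan is to use local cohomology. Given $X\in\mod^{\L}R$ and $Y\in\CM^{\L}R$, I would compare the long exact sequences coming from Proposition \ref{X and H} applied to the $\L$-graded $R$-module $\RHom_R(X,Y)$, or more directly, work with a projective resolution $P_\bullet\to X$ in $\mod^{\L}R$ and compute $\Ext^i_{\X}(\pi X,\pi Y)$ as the cohomology of $\Hom_{\X}(\pi P_\bullet,\pi Y)$. Since $Y\in\CM^{\L}R$, we have $H^j_{\mm}(Y)=0$ for $j\neq d+1$ by \eqref{describe CM by LC}; feeding this into the exact sequence of Proposition \ref{X and H} gives $\Hom_{\X}(\OO(\a),\pi Y)=Y_{\a}$ for all $\a$ (using that $H^0_{\mm}(Y)=H^1_{\mm}(Y)=0$ since $d+1\geq 2$) and $\Ext^i_{\X}(\OO(\a),\pi Y)=H^{i+1}_{\mm}(Y)_{\a}=0$ for $1\leq i\leq d-1$. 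Hence $\Hom_{\X}(\pi P_\bullet,\pi Y)\simeq\Hom_R^{\L}(P_\bullet,Y)$ in degrees $\leq d-1$ (and the next term injects), which gives $\Ext^i_{\mod^{\L}R}(X,Y)\simeq\Ext^i_{\X}(X,Y)$ for $0\leq i\leq d-1$. The key input is that $Y$ has depth $d+1\geq 2$, so $Y\in(\mod^{\L}_0R)^{\perp_{0,1}}$, making $\pi$ fully faithful on $\Hom$ out of anything into $Y$ (Lemma \ref{depth 2}); the higher $\Ext$ comparison then follows from the vanishing of intermediate local cohomology.

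For part (a): that $\pi$ restricts to a functor $\CM^{\L}R\to\vect\X$ is Proposition \ref{FLC modules}(b) with $i=d$ (using $\CM^{\L}R=\CM^{\L}_{d+1}R$), combined with the fact that objects of $\CM^{\L}R$ are locally free on the punctured spectrum (Proposition \ref{CM is locally free}) together with Proposition \ref{FLC modules}(c). Full faithfulness is the case $i=0$ of part (c), once one notes $\CM^{\L}R\subset(\mod^{\L}_0R)^{\perp_{0,1}}$. The equivalence $\proj^{\L}R\to\lb\X$ is immediate from Proposition \ref{vanishing} (computing $\Hom_{\X}(\OO(\x),\OO(\y))=R_{\y-\x}=\Hom^{\L}_R(R(-\x),R(-\y))$) plus the fact that $\lb\X$ is by definition $\add\{\OO(\x)\}$, so the functor is essentially surjective and fully faithful on the additive generators.

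For part (b): apply part (c) with $X$ ranging over $\lb\X=\add\{\OO(\x)\}$, i.e. $X=R(-\x)$, and $Y\in\CM^{\L}R$. On one hand $\Ext^i_{\mod^{\L}R}(R(-\x),Y)=0$ for all $i>0$ trivially, so $\Ext^i_{\X}(\lb\X,\pi Y)=0$ for $1\leq i\leq d-1$; this gives the inclusion $\pi(\CM^{\L}R)\subseteq\{X\in\vect\X\mid\Ext^i_{\X}(\lb\X,X)=0,\ 1\leq i\leq d-1\}$. Conversely, given $X\in\vect\X$ with $\Ext^i_{\X}(\lb\X,X)=0$ for $1\leq i\leq d-1$, I would lift $X$ to $\widetilde X\in(\mod^{\L}_0R)^{\perp_{0,1}}$ (Lemma \ref{second syzygy}) and translate the vanishing back via part (c): $\Ext^i_{\X}(\OO(\a),X)\simeq\Ext^i_{\mod^{\L}R}(R(-\a),\widetilde X)$ is the degree-$\a$ part of $\Ext^i_R(R,\widetilde X)$... actually one must be careful here since $\widetilde X$ need not be $\CM$. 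The cleaner route: use Proposition \ref{X and H} directly — $\Ext^i_{\X}(\OO,X(\a))\simeq H^{i+1}_{\mm}(\widetilde X)_{\a}$ for $i\geq 1$, so the hypothesis forces $H^j_{\mm}(\widetilde X)=0$ for $2\leq j\leq d$; combined with the fact that a second syzygy has $H^0_{\mm}=H^1_{\mm}=0$, we get $H^j_{\mm}(\widetilde X)=0$ for all $j\neq d+1$, i.e. $\widetilde X\in\CM^{\L}R$ by \eqref{describe CM by LC}, and $X\simeq\pi\widetilde X\in\pi(\CM^{\L}R)$. The $\Ext^i(X,\lb\X)$ description then follows by applying the duality $(-)^\vee[\,\cdot\,]$ of \eqref{duality of CM sheaf} (or $(-)^\star$ on modules), which interchanges the two conditions since $\CM^{\L}R$ is stable under $(-)^\vee$ up to shift and $\lb\X$ is self-dual. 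The main obstacle I anticipate is bookkeeping in this last step — correctly identifying which module lift to use and checking that the local-cohomology vanishing in the right range of degrees exactly matches the $\Ext$-vanishing hypothesis, including the boundary cases $i=1$ and $i=d-1$; the dimension hypothesis $d\geq 1$ and the Gorenstein property (so $\dim R=d+1\geq 2$, forcing depth $\geq 2$ for second syzygies) are what make it work.
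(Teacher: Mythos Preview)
Your proposal is correct and follows essentially the same line as the paper: both arguments rest on Proposition~\ref{X and H} to translate $\Ext_\X^i(\lb\X,-)$ into local cohomology, on Lemma~\ref{second syzygy} for the lift in the hard direction of (b), and on comparing $\Hom^\L_R(P_\bullet,Y)$ with $\Hom_\X(\pi P_\bullet,\pi Y)$ for (c). The paper orders the proof (a), (b), (c) and obtains the second equality in (b) from Auslander--Reiten--Serre duality rather than $(-)^\vee$; your reordering (proving (c) first by extracting the needed vanishing $\Ext^i_\X(\lb\X,\pi Y)=0$ directly from $H^{i+1}_\mm(Y)=0$) and your choice of duality are harmless variants of the same argument.
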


Note that, by (b) above, the equality $\pi(\CM^{\L}R)=\vect\X$ holds for $d=1$ \cite[5.1]{GL}\cite[8.3]{GL2}.
On the other hand, for $d\ge 2$, the category $\vect\X$ is much bigger than $\pi(\CM^{\L}R)$.
In the context of projective geometry (e.g.\ \cite{CH,CMP}), the objects in $\pi(\CM^{\L}R)$
are called \emph{arithmetically Cohen-Macaulay bundles}.


\begin{proof}
(a) Since $\CM^{\L}R\subset(\mod^{\L}_0R)^{\perp_{0,1}}$, the assertions follow from Lemma~\ref{depth 2}.

(b) By Lemma~\ref{second syzygy}, any object in $\vect\X$
can be written as $\pi(X)$ with $X\in(\mod^{\L}_0R)^{\perp_{0,1}}$. Since $H_{\mm}^i(X)=0$ holds for $i=0,1$, it follows from
\eqref{describe CM by LC} that $X$ belongs to $\CM^{\L}R$ if and only if $H_{\mm}^i(X)=0$ for any $i$ with $2\le i\le d$.
By Proposition~\ref{X and H}, this is equivalent to 
$\Ext_{\X}^i(\lb\X,X)=0$ for any $i$ with $1\le i\le d-1$.
Thus the first equality follows. The second one is a consequence of Auslander-Reiten-Serre duality.

(c) Let $\cdots\to P_1\to P_0\to X\to0$ be a projective resolution of $X$
in $\mod^{\L}R$. Applying $\Hom^{\L}_R(-,Y)$, we have a complex
\begin{equation}\label{in mod R}
0\to\Hom^{\L}_R(P_0,Y)\to\Hom^{\L}_R(P_1,Y)\to\Hom^{\L}_R(P_2,Y)\to\cdots
\end{equation}
whose homology at $\Hom^{\L}_R(P_i,Y)$ is $\Ext_{\mod^{\L}R}^i(X,Y)$.

On the other hand, applying $\Hom_{\X}(-,Y)$ to an exact sequence
$\cdots\to P_1\to P_0\to X\to0$ in $\coh\X$, we have a complex
\begin{equation}\label{in coh X}
0\to\Hom_{\X}(P_0,Y)\to\Hom_{\X}(P_1,Y)\to\Hom_{\X}(P_2,Y)\to\cdots.
\end{equation}
Since we have $\Ext_{\X}^j(P_i,Y)=0$ for all $i$ and $j$ with
$1\le j\le d-1$ by (b), it is easily checked that the homology of 
\eqref{in coh X} at $\Hom_{\X}(P_i,Y)$ is $\Ext_{\X}^i(X,Y)$ for any $i$
with $1\le i\le d-1$.

Since the complexes \eqref{in mod R} and \eqref{in coh X} are 
isomorphic by (a), we have $\Ext_{\mod^{\L}R}^i(X,Y)\simeq\Ext_{\X}^i(X,Y)$ for all $i$ with $0\le i\le d-1$.
\end{proof}

We have Serre vanishing for vector bundles, which is a generalization of
Proposition~\ref{Serre vanishing}.

\begin{theorem}\label{general Serre vanishing}
Let $V\in\vect\X$ be non-zero.
\begin{itemize}
\item[(a)] $\add\{V(-\x)\mid\x\in\L_+\}$ generates $\coh\X$ and $\add\{V(\x)\mid\x\in\L_+\}$ cogenerates $\vect\X$.
\item[(b)] \emph{(Serre vanishing)} For any $X\in\coh\X$, there exists 
$\a\in\L$ such that $\Ext^i_{\X}(V,X(\x))=0$ holds for any $i>0$ and any
$\x\in\L$ satisfying $\x\ge\a$.
\end{itemize}
\end{theorem}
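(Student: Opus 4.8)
The plan is to reduce both statements to the already-established Serre vanishing for $\OO$ (Theorem \ref{Serre vanishing}) together with the structural facts about $\vect\X$ and $\lb\X$ proved in Proposition \ref{arith CM} and Lemma \ref{second syzygy}. The two parts are closely linked: (a) is a surjectivity/generation statement, and (b) is the cohomological vanishing that makes approximations by $\add\{V(-\x)\}$ behave well; in practice (b) will be the main content and (a) will follow quickly.

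For part (b), I would first handle the case $X=\OO(\y)$, which is exactly Proposition \ref{vanishing}: $\Ext^i_\X(V,\OO(\y+\x))\simeq D\Ext^{d-i}_\X(\OO(\y+\x),V(\w))$ by Auslander-Reiten-Serre duality (Theorem \ref{Serre}(f)), and the right-hand side vanishes for $\x\gg0$ by Theorem \ref{Serre vanishing}(b) applied to $V(\w)\in\coh\X$. Actually it is cleaner to argue directly: since $V\in\vect\X$, Lemma \ref{second syzygy} gives $V=\pi(Y^\vee)$ with $Y\in\pi^{-1}(\vect\X)$, and one resolves $Y$ by a finite complex of objects in $\proj^{\L}R$; dualizing and applying $\pi$ expresses $V$ as the kernel of a two-term complex of line bundles, so $\Ext^i_\X(V,-)$ is controlled by finitely many $\Ext^i_\X(\OO(\x_j),-)$. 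Then for general $X\in\coh\X$, apply Theorem \ref{Serre vanishing}(a) repeatedly to build an exact resolution $\cdots\to L_1\to L_0\to X\to0$ with each $L_j\in\lb\X$ (a finite direct sum of shifts of $\OO$), break it into short exact sequences, and use dimension-shifting: since $\coh\X$ has global dimension $d$ (Theorem \ref{Serre}(b)), we get $\Ext^i_\X(V,X(\x))\simeq\Ext^{i+1}_\X(V,(\Image f_1)(\x))\simeq\cdots\simeq\Ext^{i+d}_\X(V,(\Image f_d)(\x))=0$ once $\x$ is large enough that all the relevant $\Ext^{>0}_\X(V,L_j(\x))$ vanish — and that is a finite condition by the $X=\OO(\y)$ case, so we may choose a single $\a\in\L$ uniformly.

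For part (a), the argument mirrors the proof of Theorem \ref{Serre vanishing}(a) but with $V$ in place of $\OO$. Fix $X\in\coh\X$. By Theorem \ref{Serre vanishing}(a) there is an epimorphism $Z\to X$ with $Z\in\add\{\OO(-\x)\mid\x\in\L_+\}$, so it suffices to produce, for each fixed $\y\in\L_+$, an epimorphism onto $\OO(-\y)$ from an object in $\add\{V(-\x)\mid\x\in\L_+\}$. Now use Lemma \ref{second syzygy} (or Lemma \ref{surjection from X to X(c)} together with Theorem \ref{general Serre vanishing}(b) once it is available for the lower-dimensional step) to find a surjection in $\coh\X$ from a direct sum of shifts $\OO(\z_j)$ onto $V^\vee$ or directly onto $\OO$; dualizing and applying suitable twists turns $V$ into a sheaf admitting many maps to $\OO(\a)$ for $\a\gg0$. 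More efficiently: since $V\ne0$ is a vector bundle, there is some nonzero map $\OO(\z)\to V$ for suitable $\z$, hence by twisting a nonzero section of $V(-\z)$; one then checks, using Serre vanishing (b) just proved and the fact that $\lb\X$ generates $\coh\X$, that the images of $V(-\ell\c)$ under the multiplication maps eventually cover any given line bundle, exactly as in Lemma \ref{surjection from X to X(c)}. Composing, every $X\in\coh\X$ receives an epimorphism from an object of $\add\{V(-\x)\mid\x\in\L_+\}$.

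The main obstacle is making part (a) genuinely uniform: unlike $\OO$, a general vector bundle $V$ need not map onto $\OO$ in an obvious way, so I expect the delicate point to be producing enough epimorphisms from twists of $V$ onto each $\OO(-\y)$ with $\y\in\L_+$, rather than merely onto some sheaf. I would resolve this by dualizing (replacing $V$ by $V^\vee$, which is again a vector bundle) so that it is enough to find epimorphisms \emph{from} shifts of $\OO$ onto $V^\vee$ — which is Theorem \ref{Serre vanishing}(a) — and then transpose via $(-)^\vee$ and the fact that $\vect\X$ is stable under $(-)^\vee[d-i]$ (see \eqref{duality of CM sheaf}). Part (b) is then the routine dimension-shift argument above, with Theorem \ref{Serre vanishing}(b) and finiteness of $\Ext$ spaces (Theorem \ref{Serre}(c)) doing the work.
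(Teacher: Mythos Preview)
Your reduction of (b) to the case $X=\OO$ via a line-bundle resolution of $X$ and dimension shifting is correct and is exactly what the paper does. The gap is in the base case: showing $\Ext^i_\X(V,\OO(\x))=0$ for $0<i\le d$ and $\x$ large.

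Your first attempt, via Serre duality and Theorem \ref{Serre vanishing}(b), does not work as stated. You obtain $\Ext^i_\X(V,\OO(\x))\simeq D\Ext^{d-i}_\X(\OO,V(\w-\x))$, and as $\x$ grows the twist $\w-\x$ goes to $-\infty$, which is the \emph{wrong} direction for Theorem \ref{Serre vanishing}(b). (For $i=d$ this gives $D\Hom_\X(\OO,V(\w-\x))$, which does vanish for large $\x$, but because a lift of $V$ is bounded below in degree, not by Serre vanishing.) Your ``cleaner'' alternative also falls short: Lemma \ref{second syzygy} gives an exact sequence $0\to V\to L^0\to L^1$, not a quasi-isomorphism of $V$ with a two-term complex of line bundles (the map $L^0\to L^1$ need not be surjective), and in any case knowing $V\in\thick\lb\X$ does not force $\Ext^i_\X(V,\OO(\x))=0$, since the $\Hom$-terms $\Hom_\X(\OO(\y),\OO(\x))=R_{\x-\y}$ are nonzero for $\x\ge\y$ and feed into the relevant long exact sequences.

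The paper's argument for the base case uses the comparison with module Ext (Proposition \ref{arith CM}(c)): for a lift $V\in\mod^{\L}R$ and $0<i<d$ one has $\Ext^i_\X(V,\OO(\x))\simeq\Ext^i_{\mod^{\L}R}(V,R(\x))=\Ext^i_R(V,R)_{\x}$, and $\Ext^i_R(V,R)\in\mod^{\L}_0R$ because $V$ is locally free on the punctured spectrum (Definition-Proposition \ref{locally free at punctured spectrum}); for $i=d$, Serre duality gives $D(V_{\w-\x})$, which vanishes for $\x$ large. This is the missing ingredient.

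For (a), your dualizing idea does not produce the required map: an epimorphism $L\twoheadrightarrow V^\vee$ transposes under $(-)^\vee$ to a \emph{monomorphism} $V\hookrightarrow L^\vee$, not an epimorphism from shifts of $V$ onto a line bundle. The paper instead argues directly in $\mod^{\L}R$: lifting $V$ (locally free on the punctured spectrum), the evaluation map $V\otimes_R\Hom_R(V,X)_{\L_+}\to X$ is surjective after localizing at every $\pp\ne R_+$ (since $V_{(\pp)}$ is a nonzero free module), hence has cokernel in $\mod^{\L}_0R$; choosing finitely many homogeneous generators then gives the desired epimorphism in $\coh\X$.
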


\begin{proof}
By Proposition~\ref{FLC modules}(c) and Lemma~\ref{second syzygy}, we can take a lift $0\neq V\in\Omega^2(\mod^{\L}R)$ of $V\in\vect\X$ such that $V$ is locally free on the punctured spectrum.

(a) For $X\in\mod^{\L}R$, let $\Hom_R(V,X)_{\L_+}:=\bigoplus_{\x\in\L_+}\Hom_R^{\L}(V,X(\x))$.
The natural morphism
\[f:V\otimes_R\Hom_R(V,X)_{\L_+}\to X\]
has a cokernel in $\mod^{\L}_0R$ since for any
$\pp\in\Spec^{\L}R\setminus\{R_+\}$, we have 
$(\Hom_R(V,X)_{\L_+})_{(\pp)}=\Hom_{R_{(\pp)}}(V_{(\pp)},X_{(\pp)})$ and
\[f_{(\pp)}:V_{(\pp)}\otimes_{R_{(\pp)}}\Hom_{R_{(\pp)}}(V_{(\pp)},X_{(\pp)})\to X_{(\pp)}\]
is an epimorphism. In fact, $V_{(\pp)}$ is a non-zero free $R_{(\pp)}$-module since $V_{(0)}\neq0$ holds by Proposition~\ref{non-zero is injective}(a).

Let $g_i:V(-\x_i)\to X$ with $1\le i\le m$ and $\x_i\in\L_+$ be homogeneous generators
of the $R$-module $\Hom_R(V,X)_{\L_+}$. Then the morphism
\[g:=(g_1,\ldots,g_m)^t:V(-\x_1)\oplus\cdots\oplus V(-\x_m)\to X\]
in $\mod^{\L}R$ has a cokernel in $\mod^{\L}_0R$. Therefore 
$\pi(g)$ is an epimorphism in $\coh\X$.

The latter assertion is immediate from the former one and the duality $(-)^\vee:\vect\X\to\vect\X$.

(b) By the argument of the proof of Proposition~\ref{Serre vanishing}(b), we only have to
consider the case $X=\OO$. Since $\coh\X$ has global dimension $d$, we can assume $0<i\le d$.

First we consider the case $i\neq d$. By Proposition~\ref{arith CM}(c), 
we have an isomorphism
\[\Ext^i_{\X}(V,\OO(\x))\simeq\Ext_{\mod^{\L}R}^i(V,R(\x)).\]
By Proposition~\ref{locally free at punctured spectrum}, we have 
$\Ext^i_R(V,R)\in\mod^{\L}_0R$. Thus 
$\Ext_{\mod^{\L}R}^i(V,R(\x))=0$ holds for all but finitely
many $\x\in\L$, and the assertion follows.

Now we consider the case $i=d$. By Auslander-Reiten-Serre duality
and Proposition~\ref{arith CM}(c), we have isomorphisms
\[\Ext^d_{\X}(V,\OO(\x))\simeq D\Hom_{\X}(\OO,V(\w-\x))
\simeq D\Hom^{\L}_R(R,V(\w-\x))=D(V_{\w-\x}).\]
This is zero for sufficiently large $\x$.
\end{proof}

Recall from Section~\ref{section: preliminaries 2} that a full subcategory 
$\CC$ of $\vect\X$ is called \emph{$d$-cluster tilting}
if $\CC$ is a generating and cogenerating functorially finite subcategory of $\vect\X$ such that
\begin{eqnarray*}
\CC&=&\{X\in\vect\X\mid\forall i\in\{1,2,\ldots,d-1\}\ \Ext_{\X}^i(\CC,X)=0\}\ \mbox{ and}\\
\CC&=&\{X\in\vect\X\mid\forall i\in\{1,2,\ldots,d-1\}\ \Ext_{\X}^i(X,\CC)=0\}.
\end{eqnarray*}
Note that one of the equalities above implies the other as in the case of $d$-cluster tilting subcategories of $\CM^{\L}R$ \cite[2.2.2]{I1}. The generating and cogenerating condition is automatic if $\CC$ contains $\lb\X$ (Theorem~\ref{general Serre vanishing}(a)).

Now we give some basic properties of $d$-cluster tilting subcategories,
which will be used later, where we need our assumption that $\CC$ generates and cogenerates $\vect\X$.

\begin{proposition}\label{basic properties of d-CT}
For a $d$-cluster tilting subcategory $\CC$ of $\vect\X$, the following assertions hold.
\begin{itemize}
\item[(a)] We have $\CC(\w)=\CC$.
\item[(b)] For any $X\in\vect\X$, there exist exact sequences
\[0\to C_{d-1}\to\cdots\to C_0\to X\to0\ \mbox{ and }\ 
0\to X\to C^0\to\cdots\to C^{d-1}\to0\]
in $\coh\X$ with $C_i,C^i\in\CC$ for any $0\le i\le d-1$.
\item[(c)] For any indecomposable object $X\in\CC$, there exists an exact sequence (called a \emph{$d$-almost split sequence})
\[0\to X(\w)\to C_{d-1}\to\cdots\to C_1\to C_0\to X\to0\]
such that the following sequences of functors on $\CC$ are exact:
\begin{eqnarray*}
&0\to\Hom_{\CC}(-,X(\w))\to\Hom_{\CC}(-,C_{d-1})\to\cdots\to\Hom_{\CC}(-,C_0)\to\rad_{\CC}(-,X)\to0,&\\
&0\to\Hom_{\CC}(X,-)\to\Hom_{\UU}(C_0,-)\to\cdots\to\Hom_{\UU}(C_{d-1},-)\to\rad_{\UU}(X(\w),-)\to0.&
\end{eqnarray*}
\end{itemize}
\end{proposition}

\begin{proof}
The proof is parallel to that of Proposition~\ref{basic properties of d-CT for CM}.
\end{proof}

\begin{definition}[$d$-vector bundle finiteness]
We say that a GL projective space $\X$ is \emph{$d$-vector bundle finite}
(=\emph{$d$-VB finite})
if there exists a $d$-cluster tilting subcategory $\CC$ of $\vect\X$ 
(see Section~\ref{section: preliminaries 2}) such that there are only finitely many isomorphism 
classes of indecomposable objects in $\CC$ up to degree shift.
\end{definition}

Now it is easy to prove the following key result in this section.

\begin{theorem}\label{CT subcategory}
The correspondence $\CC\mapsto\pi(\CC)$ gives a bijection between
the following objects.
\begin{itemize}
\item $d$-cluster tilting subcategories of $\CM^{\L}R$.
\item $d$-cluster tilting subcategories of $\vect\X$ containing $\lb\X$.
\end{itemize}
In particular, if $(R,\L)$ is $d$-CM finite, then $\X$ is $d$-VB finite.
\end{theorem}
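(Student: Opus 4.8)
The plan is to exploit the description of $\CM^{\L}R$ inside $\vect\X$ given in Proposition \ref{arith CM}(b), namely
\[
\pi(\CM^{\L}R)=\{X\in\vect\X\mid\forall i\in\{1,\dots,d-1\}\ \Ext_{\X}^i(\lb\X,X)=0\}=\{X\in\vect\X\mid\forall i\in\{1,\dots,d-1\}\ \Ext_{\X}^i(X,\lb\X)=0\},
\]
together with the isomorphism $\Ext_{\mod^{\L}R}^i(X,Y)\simeq\Ext_{\X}^i(X,Y)$ for $0\le i\le d-1$ and $Y\in\CM^{\L}R$ from Proposition \ref{arith CM}(c). Since $\pi$ restricts to a fully faithful functor $\CM^{\L}R\to\vect\X$ sending $\proj^{\L}R$ to $\lb\X$ (Proposition \ref{arith CM}(a)), I will tacitly identify $\CM^{\L}R$ with its essential image $\pi(\CM^{\L}R)$ in $\vect\X$; the subtlety that the two exact structures differ is harmless here because Proposition \ref{arith CM}(c) matches the relevant $\Ext$-groups, and $\lb\X$ is projective-injective for the exact structure on $\vect\X$ restricted from $\coh\X$ (this is exactly what the vanishing conditions in Proposition \ref{arith CM}(b) encode, via Serre duality).

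First I would show the correspondence is well-defined: given a $d$-cluster tilting subcategory $\CC$ of $\CM^{\L}R$, I claim $\pi(\CC)$ is a $d$-cluster tilting subcategory of $\vect\X$ containing $\lb\X$. It contains $\lb\X=\pi(\proj^{\L}R)$ since $\CC\supset\proj^{\L}R$. For the orthogonality conditions: if $X\in\vect\X$ satisfies $\Ext^i_{\X}(\pi(\CC),X)=0$ for $1\le i\le d-1$, then in particular $\Ext^i_{\X}(\lb\X,X)=0$, so $X\in\pi(\CM^{\L}R)$ by Proposition \ref{arith CM}(b); writing $X=\pi(X_0)$ with $X_0\in\CM^{\L}R$ and using Proposition \ref{arith CM}(c) to translate back into $\mod^{\L}R$, the defining property of $\CC$ gives $X_0\in\CC$, hence $X\in\pi(\CC)$. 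The reverse inclusion (objects of $\pi(\CC)$ satisfy the vanishing) is immediate from Proposition \ref{arith CM}(c) and the corresponding property of $\CC$; the other of the two equalities then follows automatically by \cite[2.2.2]{I1}. Generation and cogeneration of $\vect\X$ by $\pi(\CC)$ follow from Theorem \ref{general Serre vanishing}(a): any $X\in\vect\X$ admits an epimorphism $Y\twoheadrightarrow X$ in $\coh\X$ with $Y\in\add\{\OO(-\x)\mid\x\in\L_+\}\subset\lb\X\subset\pi(\CC)$, and dually for cogeneration via the duality \eqref{duality of CM sheaf}. Functorial finiteness of $\pi(\CC)$ in $\vect\X$ I would deduce from functorial finiteness of $\CC$ in $\CM^{\L}R$ by transporting approximations through $\pi$ and correcting by $\lb\X$-terms using Theorem \ref{basic properties of d-CT}(b) applied to $\lb\X$ — more precisely, since every object of $\vect\X$ sits in short exact sequences with terms in $\CM^{\L}R$ (Lemma \ref{second syzygy} gives $\vect\X\subset\pi((\mod^{\L}_0R)^{\perp_{0,1}})$ and a two-step resolution), one reduces approximating objects of $\vect\X$ to approximating objects of $\CM^{\L}R$.

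Next I would produce the inverse map. Given a $d$-cluster tilting subcategory $\DD$ of $\vect\X$ with $\lb\X\subseteq\DD$, I must show $\DD\subseteq\pi(\CM^{\L}R)$ and that $\DD$ corresponds to a $d$-cluster tilting subcategory of $\CM^{\L}R$. The inclusion $\DD\subseteq\pi(\CM^{\L}R)$ is the crux: for $X\in\DD$ and $1\le i\le d-1$ we have $\Ext^i_{\X}(\lb\X,X)=0$ because $\lb\X\subseteq\DD=\{Y\mid\Ext^i_{\X}(\DD,Y)=0\ \forall i\in\{1,\dots,d-1\}\}$ — wait, that reads the wrong orthogonality, so I would instead use $\Ext^i_{\X}(X,\lb\X)=0$, which holds since $X\in\DD$ and $\lb\X\subseteq\DD$ and $\DD$ satisfies $\DD=\{Y\mid\Ext^i_{\X}(Y,\DD)=0\}$; by Proposition \ref{arith CM}(b) (second description) this gives $X\in\pi(\CM^{\L}R)$. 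Thus $\DD\subseteq\pi(\CM^{\L}R)$, and setting $\CC:=\pi^{-1}(\DD)\cap\CM^{\L}R$, the full faithfulness of $\pi$ on $\CM^{\L}R$ plus Proposition \ref{arith CM}(c) show $\CC$ is $d$-cluster tilting in $\CM^{\L}R$ with $\pi(\CC)=\DD$. That $\CC\mapsto\pi(\CC)$ and $\DD\mapsto\pi^{-1}(\DD)\cap\CM^{\L}R$ are mutually inverse is then a formality from full faithfulness. Finally, the last sentence follows: if $(R,\L)$ is $d$-CM finite, pick a $d$-cluster tilting $\CC\subseteq\CM^{\L}R$ with finitely many indecomposables up to degree shift; then $\pi(\CC)$ is $d$-cluster tilting in $\vect\X$, and since $\pi$ preserves indecomposability and the degree-shift action, $\pi(\CC)$ also has finitely many indecomposables up to shift, so $\X$ is $d$-VB finite.

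The main obstacle I anticipate is the careful bookkeeping around the \emph{two distinct exact structures} on $\CM^{\L}R$ and the verification of \emph{functorial finiteness} of $\pi(\CC)$ in $\vect\X$: approximations in $\CM^{\L}R$ (computed with the $\mod^{\L}R$-exact structure) must be converted into approximations in $\vect\X$ (computed with the $\coh\X$-exact structure), and this conversion relies on $\lb\X$ being projective-injective in $\vect\X$ and on the $\Ext$-comparison of Proposition \ref{arith CM}(c), which is only valid in the range $0\le i\le d-1$ — precisely the range that matters for the $d$-cluster tilting conditions, but one has to check no degree-$d$ obstruction sneaks in when splicing resolutions. Everything else is diagram-chasing through $\pi$.
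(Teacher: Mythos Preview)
Your approach is essentially the same as the paper's, and the core argument is correct. The one place where the paper does things more cleanly than your sketch is precisely the functorial finiteness issue you flag as the main obstacle. Rather than transporting individual approximations and ``correcting by $\lb\X$-terms,'' the paper isolates a separate result (Proposition \ref{CM approximation}): $\pi(\CM^{\L}R)$ is itself a functorially finite subcategory of $\vect\X$. This follows from Auslander--Buchweitz approximation (Theorem \ref{AR duality}(c)) transported through the fully faithful embedding $(\mod^{\L}_0R)^{\perp_{0,1}}\to\coh\X$ (Lemma \ref{depth 2}), using that $\vect\X\subset\pi((\mod^{\L}_0R)^{\perp_{0,1}})$ (Lemma \ref{second syzygy}). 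Once this is established, functorial finiteness of $\pi(\CC)$ in $\vect\X$ is immediate by composing approximations: $\CC$ functorially finite in $\CM^{\L}R$ and $\pi(\CM^{\L}R)$ functorially finite in $\vect\X$ give $\pi(\CC)$ functorially finite in $\vect\X$.

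Your invocation of Theorem \ref{basic properties of d-CT}(b) ``applied to $\lb\X$'' is not well-founded: that theorem concerns $d$-cluster tilting subcategories, and $\lb\X$ is one only when $n\le d+1$ (Corollary \ref{line is CT subcategory}). Drop that reference and instead aim for the intermediate statement that $\pi(\CM^{\L}R)$ is functorially finite in $\vect\X$; the rest of your argument then goes through as written.
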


For example, $\X$ is $d$-VB finite in the cases given in Theorem~\ref{Main1}.

To prove Theorem~\ref{CT subcategory}, we need the following observation.

\begin{proposition}\label{CM approximation}
$\pi(\CM^{\L}R)$ is a functorially finite subcategory of $\vect\X$.
\end{proposition}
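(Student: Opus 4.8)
\textbf{Proof proposal for Proposition \ref{CM approximation}.}

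The plan is to establish that $\pi(\CM^{\L}R)$ is both contravariantly and covariantly finite in $\vect\X$, using the fully faithful functor $\pi\colon\CM^{\L}R\to\vect\X$ from Proposition \ref{arith CM}(a) together with the structural results on $\coh\X$ already at our disposal. First I would recall from Theorem \ref{AR duality}(c) (Auslander-Buchweitz approximation) that $\CM^{\L}R$ is a functorially finite subcategory of $\mod^{\L}R$, so every $X\in\mod^{\L}R$ admits a right $(\CM^{\L}R)$-approximation; the point is to transport this along $\pi$. Given $V\in\vect\X$, by Lemma \ref{second syzygy} we may write $V\simeq\pi(X)$ for some $X\in(\mod^{\L}_0R)^{\perp_{0,1}}$ fitting into an exact sequence $0\to X\to P^0\to P^1$ in $\mod^{\L}R$ with $P^0,P^1\in\proj^{\L}R$. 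Let $a\colon M\to X$ be a right $(\CM^{\L}R)$-approximation of $X$ in $\mod^{\L}R$. I would then argue that $\pi(a)\colon\pi(M)\to\pi(X)=V$ is a right $\pi(\CM^{\L}R)$-approximation in $\vect\X$: for any $N\in\CM^{\L}R$, a morphism $\pi(N)\to V$ in $\coh\X$ corresponds, via fully faithfulness of $\pi$ on $(\mod^{\L}_0R)^{\perp_{0,1}}$ (Lemma \ref{depth 2}), to a morphism $N\to X$ in $\mod^{\L}R$, which factors through $a$ since $N\in\CM^{\L}R$; applying $\pi$ gives the required factorization through $\pi(a)$. This shows $\pi(\CM^{\L}R)$ is contravariantly finite in $\vect\X$.

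For covariant finiteness I would dualize, mimicking the proof of Theorem \ref{AR duality}(c). Given $V\in\vect\X$, use the duality $(-)^{\vee}[d-i]$ on $\CM_i\X$ from \eqref{duality of CM sheaf} (with $i=d$): $V^{\vee}\in\vect\X$, take a right $\pi(\CM^{\L}R)$-approximation $b\colon\pi(M')\to V^{\vee}$ of $V^{\vee}$ (which exists by the previous paragraph), and then $b^{\vee}\colon V\simeq V^{\vee\vee}\to\pi(M')^{\vee}$ gives a left $\pi(\CM^{\L}R)$-approximation of $V$, using that $\pi(\CM^{\L}R)$ is stable under $(-)^{\vee}$ (since $(\CM^{\L}R)^{\vee}=\CM^{\L}R$, as $R$ is Gorenstein and $(-)^{\vee}=\RHom_R(-,\omega_R)$ restricts to a duality on maximal Cohen-Macaulay modules). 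Combining the two halves yields functorial finiteness.

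The main obstacle I anticipate is the careful bookkeeping needed to pass approximations through the quotient functor $\pi$ — specifically, verifying that a right $(\CM^{\L}R)$-approximation $a\colon M\to X$ in $\mod^{\L}R$ really does induce an approximation \emph{in $\coh\X$} rather than merely a weak version of one. The subtlety is that $\pi$ is only fully faithful on the perpendicular subcategory $(\mod^{\L}_0R)^{\perp_{0,1}}$, so one must ensure both the source objects $N\in\CM^{\L}R$ and the target $X$ (or a suitable replacement) lie in this subcategory; since $\CM^{\L}R\subset(\mod^{\L}_0R)^{\perp_{0,1}}$ (because maximal Cohen-Macaulay modules have depth $\ge 2$ when $\dim R=d+1\ge 2$, so $H^0_{\mm}=H^1_{\mm}=0$), and $X$ can be chosen in $(\mod^{\L}_0R)^{\perp_{0,1}}$ by Lemma \ref{second syzygy}, this works — but one should also check that $M$ itself can be taken in $\CM^{\L}R$ (automatic) and that the approximation property is not lost when some morphisms become identified in $\coh\X$. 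I expect this to be routine once the right objects are fixed, but it is where the argument needs care rather than a one-line citation.
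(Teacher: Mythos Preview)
Your proposal is correct and uses the same ingredients as the paper (Theorem \ref{AR duality}(c), Lemma \ref{depth 2}, Lemma \ref{second syzygy}); the contravariant half is exactly the paper's argument, spelled out in detail. The only difference is in the covariant half: you dualize on $\vect\X$ via $(-)^{\vee}$, whereas the paper simply observes that since $\CM^{\L}R$ is already functorially finite (both sides) in $\mod^{\L}R$ and hence in $(\mod^{\L}_0R)^{\perp_{0,1}}$, and since $\pi$ is fully faithful on that subcategory, both directions transfer at once to $\pi((\mod^{\L}_0R)^{\perp_{0,1}})\supset\vect\X$. The paper's route is slightly more economical because it avoids checking that $\pi$ intertwines the two dualities, but your dualization argument is equally valid and is the same trick used in the proof of Theorem \ref{AR duality}(c) itself.
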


\begin{proof}
It follows from Theorem~\ref{AR duality}(d) that $\CM^{\L}R$ is
a functorially finite subcategory of $\mod^{\L}R$, and hence of $(\mod^{\L}_0R)^{\perp_{0,1}}$.
By Lemma~\ref{depth 2}, we have that $\pi(\CM^{\L}R)$ is a functorially finite subcategory
of $\pi((\mod^{\L}_0R)^{\perp_{0,1}})$. Since $\vect\X$ is contained in $\pi((\mod^{\L}_0R)^{\perp_{0,1}})$ by Lemma~\ref{second syzygy},
we have the assertion.
\end{proof}

Now we are ready to prove Theorem~\ref{CT subcategory}.

\begin{proof}[Proof of Theorem~\ref{CT subcategory}.]
For a full subcategory $\CC$ of $\CM^{\L}R$, it follows from Proposition~\ref{CM approximation} that $\CC$ is functorially finite in $\CM^{\L}R$ if and only if $\pi(\CC)$ is functorially finite in $\vect\X$.

Let $\CC$ be a $d$-cluster tilting subcategory of $\CM^{\L}R$.
Since $\CC$ contains $\proj^{\L}R$, it follows that $\pi(\CC)$ 
contains $\lb\X$. Then we have
\begin{eqnarray*}
\pi(\CC)&=&\pi\left(\{X\in\CM^{\L}R\mid\forall i\in\{1,2,\ldots,d-1\}\ \Ext^i_{\mod^{\L}R}(\CC,X)=0\}\right)\\
&=&\{Y\in\vect\X\mid\forall i\in\{1,2,\ldots,d-1\}\ \Ext^i_{\X}(\pi(\CC),Y)=0\},
\end{eqnarray*}
where the second equality follows from Proposition~\ref{arith CM}(b)(c). Dually we have
\[\pi(\CC)=\{Y\in\vect\X\mid\forall i\in\{1,2,\ldots,d-1\}\ \Ext^i_{\X}(Y,\pi(\CC))=0\}.\]
Therefore $\pi(\CC)$ is a $d$-cluster tilting subcategory of $\vect\X$.

Conversely, any $d$-cluster tilting subcategory of $\vect\X$ containing $\lb\X$ is contained in $\pi(\CM^{\L}R)$ by
Proposition~\ref{arith CM}(c), and hence can be written as
$\pi(\CC)$ for a subcategory $\CC$ of $\CM^{\L}R$.
By a similar argument as above, one can check that $\CC$ is a
$d$-cluster tilting subcategory of $\CM^{\L}R$.
\end{proof}

As an immediate consequence of Theorem~\ref{CT subcategory}, we have
the following result, where one of the implications generalizes Horrocks' splitting criterion
stating that any vector bundle $V$ on $\P^d$ satisfying $H^i(\P^d,V)=0$
for all $1\le i\le d-1$ is a direct sum of line bundles \cite[2.3.1]{OSS}.

\begin{corollary}\label{line is CT subcategory}
Assume that $p_i\ge2$ for all $i$. Then the following conditions are equivalent.
\begin{itemize}
\item $n\le d+1$ (or equivalently, $R$ is regular).
\item $\lb\X$ is a $d$-cluster tilting subcategory of $\vect\X$.
\end{itemize}
\end{corollary}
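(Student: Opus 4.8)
The plan is to derive this corollary directly from Theorem~\ref{CT subcategory} together with the description of $\CM^{\L}R$ inside $\vect\X$. Recall that by Theorem~\ref{CT subcategory} the $d$-cluster tilting subcategories of $\vect\X$ containing $\lb\X$ correspond bijectively to $d$-cluster tilting subcategories of $\CM^{\L}R$ via $\CC\mapsto\pi(\CC)$. Since $\lb\X=\pi(\proj^{\L}R)$ by Proposition~\ref{arith CM}(a), the subcategory $\lb\X$ itself is a $d$-cluster tilting subcategory of $\vect\X$ if and only if $\proj^{\L}R$ is a $d$-cluster tilting subcategory of $\CM^{\L}R$. So the statement reduces to: $\proj^{\L}R$ is $d$-cluster tilting in $\CM^{\L}R$ if and only if $n\le d+1$.

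First I would treat the direction $n\le d+1\Rightarrow$. If $n\le d+1$, then $R$ is regular by Proposition~\ref{R is CI}(b), so $\CM^{\L}R=\proj^{\L}R$ by Proposition~\ref{when stable category is zero}. Then trivially $\proj^{\L}R$ is a $d$-cluster tilting subcategory of $\CM^{\L}R$: it is functorially finite (being the whole category), it generates and cogenerates, and the Ext-vanishing conditions are vacuous since $\CM^{\L}R=\proj^{\L}R$ and projectives have no higher self-extensions. Hence $\lb\X$ is $d$-cluster tilting in $\vect\X$ by Theorem~\ref{CT subcategory}. (Note the hypothesis $p_i\ge 2$ is what makes ``$n\le d+1$'' equivalent to ``$R$ regular''.)

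Next I would treat the converse. Suppose $n\ge d+2$; I want to show $\proj^{\L}R$ is not $d$-cluster tilting in $\CM^{\L}R$, equivalently (by Theorem~\ref{CT subcategory}) that $\lb\X$ is not $d$-cluster tilting in $\vect\X$. Since $n\ge d+2$, Proposition~\ref{when stable category is zero} gives $\underline{\CM}^{\L}R\neq0$, so there exists a non-projective indecomposable $M\in\CM^{\L}R$. By Proposition~\ref{arith CM}(b), $\pi(M)\in\vect\X$ satisfies $\Ext^i_{\X}(\lb\X,\pi(M))=0$ for all $i\in\{1,\dots,d-1\}$, yet $\pi(M)\notin\lb\X$ because $\pi$ restricted to $\CM^{\L}R$ is fully faithful (Proposition~\ref{arith CM}(a)) and $M\notin\proj^{\L}R$. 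Thus $\lb\X$ fails the defining equality $\CC=\{X\in\vect\X\mid\forall i\in\{1,\dots,d-1\}\ \Ext^i_{\X}(\CC,X)=0\}$, so it is not $d$-cluster tilting.

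The argument is short and essentially formal given the machinery already established. The only point requiring a little care is making sure the converse does not secretly need $d\ge 2$: when $d=1$ the ``$\forall i\in\{1,\dots,d-1\}$'' conditions are vacuous, so a $1$-cluster tilting subcategory of $\vect\X$ is just $\vect\X$ itself; but for $d=1$ and $n\ge d+2=3$ the category $\vect\X=\pi(\CM^{\L}R)$ strictly contains $\lb\X$ (this is the classical domestic/tubular/wild dichotomy, and in the non-domestic, i.e. $n\ge3$, case there are indecomposable bundles that are not line bundles), so again $\lb\X\neq\vect\X$ and the conclusion holds. In every case the non-projective indecomposable $M$ supplied by Proposition~\ref{when stable category is zero} does the job, so there is no genuine obstacle here.
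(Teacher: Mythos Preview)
Your proof is correct and follows essentially the same route as the paper: both reduce via Theorem~\ref{CT subcategory} to the equivalence ``$\proj^{\L}R$ is $d$-cluster tilting in $\CM^{\L}R$ $\iff$ $\CM^{\L}R=\proj^{\L}R$ $\iff$ $n\le d+1$''. The paper handles the converse a touch more directly, observing inside $\CM^{\L}R$ that every $M\in\CM^{\L}R$ trivially satisfies $\Ext^i_{\mod^{\L}R}(\proj^{\L}R,M)=0$ for $i\ge1$, so $d$-cluster tilting of $\proj^{\L}R$ immediately forces $\CM^{\L}R=\proj^{\L}R$; you instead pass to $\vect\X$ via Proposition~\ref{arith CM}(b), which is equivalent but a small detour. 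Your final paragraph about the $d=1$ case is unnecessary: the argument you already gave works uniformly, since for $d=1$ the Ext-conditions are vacuous and a $1$-cluster tilting subcategory must be all of $\vect\X$, which your non-projective $M$ already shows $\lb\X$ is not.
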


\begin{proof}
The first condition is equivalent to $\CM^{\L}R=\proj^{\L}R$
by Proposition~\ref{when stable category is zero}.
On the other hand, it is clear from the definition of $d$-cluster tilting subcategories that $\CM^{\L}R=\proj^{\L}R$ 
holds if and only if $\proj^{\L}R$ is a $d$-cluster tilting subcategory of 
$\CM^{\L}R$. This is equivalent to the second condition by
Theorem~\ref{CT subcategory}.
\end{proof}

In the rest of this section, we give a geometric characterization of Cohen-Macaulay sheaves on $\X$ in terms of the projective space $\P^d$. By abuse of notation, for each $i$ with $0\le i\le d$, let
\[\CM_i\P^d:=\{X\in\coh\P^d\mid\forall\ \mbox{closed point}\ x\in\P^d,\ X_{x}\in\CM_i(\OO_{\P^d,x})\}\]
be the category of \emph{Cohen-Macaulay sheaves} of dimension $i$ on $\P^d$.
In particular
\[\vect\P^d:=\CM_d\P^d\]
is the category of \emph{vector bundles}.
We identify $\coh\P^d$ with $\mod^{\Z}C/\mod^{\Z}_0C$ for the $(\Z\c)$-Veronese
subalgebra $C=k[T_0,\ldots,T_d]$ of $R$. We have an exact functor
\[f_*:\mod^{\L}R\xrightarrow{\sim}\mod^{\Z}R^{[\Z\c]}\to\mod^{\Z}C,\]
where the first functor is given in Proposition~\ref{graded morita for R} and
the second one is the restriction with respect to the inclusion $C\to R^{[\Z\c]}$.
Since $f_*(\mod^{\L}_0R)\subset\mod^{\Z}_0C$, we have an induced exact functor
\[f_*:\coh\X=\mod^{\L}R/\mod^{\L}_0R\to\mod^{\Z}C/\mod^{\Z}_0C=\coh\P^d.\]
The following description of $\CM_i\X$ in terms of $\CM_i\P^d$ also shows that two definitions of $\CM_i\P^d$ are the same.

\begin{theorem}\label{CM in terms of P^d}
For $0\le i\le d$, we have
\begin{eqnarray*}
\CM_i\X&=&\{X\in\coh\X\mid f_*X\in\CM_i\P^d\},\\
\vect\X&=&\{X\in\coh\X\mid f_*X\in\vect\P^d\}.
\end{eqnarray*}
\end{theorem}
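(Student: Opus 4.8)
The plan is to reduce the statement to a local, fiber-wise computation over the polynomial ring $C=k[T_0,\ldots,T_d]$. Recall that $R$ is a free $C$-module of finite rank by Proposition \ref{Zc Veronese}(b), and more precisely, via the isomorphism of Proposition \ref{graded morita for R}, the functor $f_*$ is obtained by restricting along the inclusion $C\to R^{[\Z\c]}$ of rings; since $R^{[\Z\c]}$ is a finitely generated $C$-module (again by Proposition \ref{Zc Veronese}(b)), the functor $f_*$ is exact and sends $\mod^{\L}R$ to $\mod^{\Z}C$. Because $f_*$ kills nothing but finite-dimensional modules (a module $X\in\mod^{\L}R$ lies in $\mod^{\L}_0R$ iff $f_*X\in\mod^{\Z}_0C$, by Lemma \ref{basic for graded} together with the fact that $R_{T_j}$ is a finite free $C_{T_j}$-module), the induced functor $f_*:\coh\X\to\coh\P^d$ is well-defined, exact, and faithful on objects up to the Serre quotient.

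The key step is the following module-theoretic claim: for $X\in\mod^{\L}R$ lifting an object of $\coh\X$, and for each $j$ with $0\le j\le d$,
\[
X_{T_j}\in\CM^{\L}_{i+1}R_{T_j}\ \Longleftrightarrow\ (f_*X)_{T_j}\in\CM_{i+1}C_{T_j}.
\]
By Proposition \ref{FLC modules}(a), membership of $\pi(X)$ in $\CM_i\X$ is detected by $\Ext^m_R(X,R)$ being finite-dimensional for all $m\ne d-i$, equivalently (Lemma \ref{basic for graded}) by $\Ext^m_{R_{T_j}}(X_{T_j},R_{T_j})=0$ for all $j$ and all $m\ne d-i$, i.e.\ by $X_{T_j}\in\CM^{\L}_{d+1-(d-i)}R_{T_j}=\CM^{\L}_{i+1}R_{T_j}$ in the Krull-dimension indexing (here we use that $R_{T_j}$ is $\L$-regular of dimension $d+1$ by Theorem \ref{L-isolated singularity}(b), so depth and Ext-vanishing coincide). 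Since $R_{T_j}$ is a free $C_{T_j}$-module of finite rank and $C_{T_j}$ is regular of dimension $d$, the standard change-of-rings isomorphism $\Ext^m_{R_{T_j}}(X_{T_j},R_{T_j})\cong\Ext^m_{C_{T_j}}(f_*X_{T_j},C_{T_j})$ holds (the right $C_{T_j}$-module structure matching up because $R_{T_j}\cong C_{T_j}^{\oplus r}$ as a $C_{T_j}$-bimodule after forgetting the $\L$-action), which gives the displayed equivalence. Translating back through Proposition \ref{graded morita for R} and unraveling $\CM_i\P^d$ as a condition that is local on closed points of $\P^d$ — which, since $\P^d$ is covered by the affine opens $\P^d\setminus H(T_j)=\Spec C_{T_j}$ (wait: rather $\Spec (C_{T_j})_0$), is exactly the condition that each $(f_*X)_{T_j}$ defines a Cohen-Macaulay module of the right dimension over the regular ring $C_{T_j}$ — yields the first asserted equality. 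The case $i=d$ is the special case $\vect\X=\CM_d\X$ and $\vect\P^d=\CM_d\P^d$.

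Carrying this out, the steps in order are: (1) verify $f_*$ is exact, restricts to $\mod^{\L}_0R\to\mod^{\Z}_0C$, hence descends to $\coh\X\to\coh\P^d$; (2) record that $f_*$ commutes with the localizations $(-)_{T_j}$ and that $(f_*X)_{T_j}=f_*(X_{T_j})$ is just the restriction of the finite free $C_{T_j}$-module $X_{T_j}$; (3) prove the change-of-rings Ext-isomorphism over $C_{T_j}\subset R_{T_j}$, using freeness and finiteness; (4) use Proposition \ref{FLC modules}(a) and Lemma \ref{basic for graded} to rephrase $\CM_i\X$ as a condition on the $X_{T_j}$, and the definition of $\CM_i\P^d$ as a condition on the $(f_*X)_{T_j}$ (checking that "local at every closed point of $\P^d$" and "over every chart $\Spec (C_{T_j})_0$" agree, which is routine since Cohen-Macaulayness of dimension $i$ is a local property and the $\P^d\setminus H(T_j)$ cover $\P^d$); (5) combine to get both displayed equalities.

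The main obstacle I expect is step (3)–(4): being careful about the indexing convention for "Cohen-Macaulay of dimension $i$" on the scheme $\P^d$ versus the $\Ext$-grade convention used for $\CM^{\L}_iR$, and making sure the bimodule structure on $R_{T_j}$ over $C_{T_j}$ really is free on both sides so that the change-of-rings spectral sequence for $\Ext$ collapses to the plain isomorphism $\Ext^m_{R_{T_j}}(X_{T_j},R_{T_j})\cong\Ext^m_{C_{T_j}}(f_*X_{T_j},C_{T_j})$ — since $R_{T_j}$ need not be a flat ring map target in a way that preserves the right module being exactly $C_{T_j}$, one must instead use $\RHom_{R_{T_j}}(X_{T_j},R_{T_j})\cong\RHom_{C_{T_j}}(X_{T_j},\RHom_{R_{T_j}}(R_{T_j},R_{T_j}))$ is the wrong direction, and the correct statement is $\RHom_{C_{T_j}}(f_*X_{T_j},C_{T_j})\cong f_*\RHom_{R_{T_j}}(X_{T_j},\Hom_{C_{T_j}}(R_{T_j},C_{T_j}))$, so one needs that $R_{T_j}$ is \emph{Gorenstein} over $C_{T_j}$, i.e.\ $\Hom_{C_{T_j}}(R_{T_j},C_{T_j})\cong R_{T_j}$ as $R_{T_j}$-modules up to a degree shift — which holds because $R$ is Gorenstein and $R_{T_j}$ is a free $C_{T_j}$-module with the dualizing behavior governed by the element $\w$. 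Once this Gorenstein-over-$C$ comparison is in place, the $\CM$-condition transfers cleanly and the theorem follows.
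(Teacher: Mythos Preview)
Your approach is essentially correct, and your final paragraph correctly identifies the crux: the comparison works precisely because $\Hom_C(R,C)\cong R$ up to a degree shift, i.e.\ $R$ is relatively Gorenstein over $C$. This is exactly the same fact the paper uses, but the paper packages it more cleanly and globally.

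The paper does not localize at the $T_j$ at all. Instead it works directly with the dualities $(-)^\vee=\RHom_R(-,\omega_R)$ on $\DDD^{\bo}(\coh\X)$ and $(-)^\vee=\RshHom_{\P^d}(-,\omega_{\P^d})$ on $\DDD^{\bo}(\coh\P^d)$, and proves two lemmas: first, that $\CM_i\P^d=(\coh\P^d)\cap(\coh\P^d[i-d])^\vee$ (so the stalkwise definition of $\CM_i\P^d$ matches a duality condition of the same shape as the definition of $\CM_i\X$); second, that $f_*$ intertwines the two dualities, which follows in one line from the identity $\omega_R=\RHom_C(R,\omega_C)$ and adjunction $\RHom_R(-,\omega_R)=\RHom_R(-,\RHom_C(R,\omega_C))=\RHom_C(-,\omega_C)$. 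Since $f_*$ is exact and reflects zero objects (your step~(1)), one then reads off $f_*X\in\CM_i\P^d\Leftrightarrow (f_*X)^\vee\in(\coh\P^d)[i-d]\Leftrightarrow f_*(X^\vee)\in(\coh\P^d)[i-d]\Leftrightarrow X^\vee\in(\coh\X)[i-d]\Leftrightarrow X\in\CM_i\X$.

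What the paper's route buys is that it completely sidesteps the bookkeeping you flagged as problematic: the passage between $C_{T_j}$ and $(C_{T_j})_0$, the $\L$-graded versus $\Z$-graded Ext groups via the covering equivalence, and the translation between the closed-point definition of $\CM_i\P^d$ and an affine-chart Ext condition. Your route would require carrying all of this through carefully (in particular, your step~(4) needs an argument equivalent to the paper's Proposition describing $\CM_i\P^d$ via the global duality), whereas the paper's single adjunction identity on derived categories handles everything at once. Your route has the advantage of being more concrete and making the local nature of the Cohen--Macaulay condition explicit, but it is longer to execute rigorously.
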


In the rest of this section, we prove Theorem~\ref{CM in terms of P^d}.
First, we give a description of $\CM_i\P^d$ in terms of the duality
given by right derived functor of the sheaf Hom functor
\[(-)^\vee:=\RshHom_{\P^d}(-,\omega_{\P^d}):
\DDD^{\bo}(\coh\P^d)\to\DDD^{\bo}(\coh\P^d).\]

\begin{proposition}\label{describe CM on P^d}
For $0\le i\le d$, we have
$\CM_i\P^d=(\coh\P^d)\cap(\coh\P^d[i-d])^{\vee}$.
\end{proposition}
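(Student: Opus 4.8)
The plan is to verify both sides of the claimed equality stalk-by-stalk at the closed points of $\P^d$, thereby reducing everything to a standard homological characterization of Cohen--Macaulay modules over a regular local ring.

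First I would unwind the right-hand side. For $X\in\coh\P^d$, the object $X^{\vee}=\RshHom_{\P^d}(X,\omega_{\P^d})$ lies in $\DDD^{\bo}(\coh\P^d)$, with coherent cohomology sheaves $\mathcal H^j(X^{\vee})$. Since $\P^d$ is regular, $X$ locally admits a finite free resolution, so the formation of these $\mathcal H^j$ commutes with passing to stalks; and since $\omega_{\P^d}$ is a line bundle, for each point $x\in\P^d$ there is a (non-canonical) isomorphism $\mathcal H^j(X^{\vee})_x\cong\Ext^j_{A}(M,A)$, where $A:=\OO_{\P^d,x}$ and $M:=X_x$. Hence $X^{\vee}\in\coh\P^d[i-d]$ if and only if $\mathcal H^j(X^{\vee})=0$ for all $j\neq d-i$, and, these being coherent sheaves, this can be tested at the closed points. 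Because $\omega_{\P^d}$ is invertible and every object of $\DDD^{\bo}(\coh\P^d)$ is perfect, $(-)^{\vee}$ is an involutive duality (so $(X^{\vee})^{\vee}\cong X$); therefore $X\in(\coh\P^d[i-d])^{\vee}$ is equivalent to $X^{\vee}\in\coh\P^d[i-d]$, and $X\in\coh\P^d$ holds by hypothesis. Thus the right-hand side is exactly the set of $X\in\coh\P^d$ such that, for every closed point $x$, $\Ext^j_{A}(X_x,A)=0$ for all $j\neq d-i$, while by definition the left-hand side is the set of $X\in\coh\P^d$ with $X_x$ Cohen--Macaulay of dimension $i$ (i.e. $\depth_A X_x=\dim_A X_x=i$) for every closed $x$.

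It therefore remains to prove the following local statement: for $(A,\mm)$ a regular local ring of dimension $d$ and $M$ a nonzero finitely generated $A$-module, one has $\depth_A M=\dim_A M=i$ if and only if $\Ext^j_A(M,A)=0$ for all $j\neq d-i$ (the case $M=0$ being trivial). Here I would argue as follows. By the Auslander--Buchsbaum formula, $\pd_A M=d-\depth_A M$, so $\Ext^j_A(M,A)=0$ for $j>d-\depth_A M$; moreover a minimal free resolution together with Nakayama's lemma shows $\Ext^{\pd_A M}_A(M,A)\neq0$. On the other hand, since $A$ is Cohen--Macaulay, $\operatorname{grade}_A M:=\inf\{j\mid\Ext^j_A(M,A)\neq0\}$ equals $\operatorname{ht}(\ann_A M)=d-\dim_A M$ by \cite{BH}. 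Consequently the degrees in which $\Ext^\bullet_A(M,A)$ is nonzero range over the interval $[d-\dim_A M,\,d-\depth_A M]$, with both endpoints attained; these all vanish except possibly in degree $d-i$ if and only if $d-\dim_A M=d-i=d-\depth_A M$, i.e. $\dim_A M=\depth_A M=i$, which is precisely Cohen--Macaulayness of dimension $i$.

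Combining the two steps yields $\CM_i\P^d=(\coh\P^d)\cap(\coh\P^d[i-d])^{\vee}$. The only points demanding any care are in the first step: confirming that $\mathcal H^j$ of $\RshHom_{\P^d}(-,\omega_{\P^d})$ is compatible with localization at points (which rests on the existence of local finite free resolutions on the regular scheme $\P^d$) and that $(-)^{\vee}$ is a genuine biduality on all of $\DDD^{\bo}(\coh\P^d)$; the commutative-algebra input in the second step is entirely standard and available from \cite{BH}.
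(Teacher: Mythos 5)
Your proof is correct, and its first step coincides with the paper's: both reduce membership of $X$ in $(\coh\P^d[i-d])^{\vee}$ to the vanishing of $\Ext^j_{A}(X_x,A)$ for all $j\neq d-i$ and all closed points $x$, where $A:=\OO_{\P^d,x}$, by invoking compatibility of $\RshHom_{\P^d}(-,\omega_{\P^d})$ with localization. The arguments diverge only at the last translation. The paper implicitly treats this Ext-vanishing as \emph{the definition} of $X_x\in\CM_i(\OO_{\P^d,x})$, in analogy with the definition of $\CM^{\L}_iR$ given earlier, so after the stalk reduction there is nothing left to verify. You instead read ``Cohen--Macaulay of dimension $i$'' in the classical sense $\depth_A X_x=\dim_A X_x=i$ and supply the standard commutative-algebra equivalence over the regular local ring $A$, via the Auslander--Buchsbaum formula together with $\operatorname{grade}_A(M)=\operatorname{ht}(\operatorname{ann}_A M)=d-\dim_A M$. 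That extra paragraph is correct and, since the paper never spells out what $\CM_i(\OO_{\P^d,x})$ means, makes the argument more self-contained; the trade-off is reliance on several facts from \cite{BH} rather than taking the Ext-condition as primitive.
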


\begin{proof}
For any closed point $x\in\P^d$, we have a commutative diagram
\[\xymatrix{
\DDD^{\bo}(\coh\P^d)\ar[rrr]^{(-)^{\vee}=\RshHom_{\P^d}(-,\omega_{\P^d})}\ar[d]^{(-)_x}&&&\DDD^{\bo}(\coh\P^d)\ar[d]^{(-)_x}\\
\DDD^{\bo}(\mod\OO_{\P^d,x})\ar[rrr]^{\RHom_{\OO_{\P^d,x}}(-,\OO_{\P^d,x})}&&&\DDD^{\bo}(\mod\OO_{\P^d,x})}\]
up to natural isomorphism by \cite[Chap.2, Prop.5.8]{Har}.
Fix $X\in\coh\P^d$. Then $X$ belongs to $(\coh\P^d[i-d])^{\vee}$
if and only if $H^j(X^{\vee})=0$ for any $j\neq d-i$ if and only if
$H^j(X^{\vee})_x=0$ for any $j\neq d-i$ and any closed point $x\in\P^d$.
By using the commutative diagram above, this is equivalent to 
$\Ext_{\OO_{\P^d,x}}^j(X_x,\OO_{\P^d,x})=0$ for any $j\neq d-i$ and any closed point $x\in\P^d$. 
This means $X_x\in\CM_i(\OO_{\P^d,x})$ for any closed point $x\in\P^d$, that is, $X\in\CM_i\P^d$.
\end{proof}

Next we describe the duality
$(-)^{\vee}:\DDD^{\bo}(\coh\P^d)\to\DDD^{\bo}(\coh\P^d)$
in terms of $C$.

\begin{lemma}
We have the following commutative diagram up to natural isomorphism.
\[\xymatrix{
\DDD^{\bo}(\mod^{\Z}C)\ar[rrr]^{\RHom_C(-,\omega_C)}\ar[d]^{\pi}
&&&\DDD^{\bo}(\mod^{\Z}C)\ar[d]^{\pi}\\
\DDD^{\bo}(\coh\P^d)\ar[rrr]^{(-)^\vee=\RshHom_{\P^d}(-,\omega_{\P^d})}
&&&\DDD^{\bo}(\coh\P^d).}\]
\end{lemma}

\begin{proof}
By \cite[2.5.13]{G}, the diagram 
\[\xymatrix{ 
\mod^{\Z}C\ar[rrr]^{F:=\Hom_C(-,\omega_C)}\ar[d]^{\pi}&&& 
\mod^{\Z}C\ar[d]^{\pi}\\ 
\coh\P^d\ar[rrr]^{G:=\shHom_{\P^d}(-,\omega_{\P^d})}&&& 
\coh\P^d
}\]
commutes up to natural isomorphism. Thus we have an isomorphism ${\bf R}(\pi\circ F) \simeq {\bf 
R}(G\circ\pi)$ of functors $\DDD^{\bo}(\mod^{\Z}C)\to\DDD^{\bo}(\coh\P^d)$. 
 
Since $\pi:\mod^{\Z}C\to\coh\P^d$ is an exact functor, 
by \cite[I.5.4.(b)]{Har} we have ${\bf R}(\pi\circ F) \simeq  \pi\circ{\bf R}F$. 
Note that we can compute the derived functor ${\bf R}G$ by using locally free resolutions. 
Since the image of projective resolutions in $\DDD^{\bo}(\mod^{\Z}C)$ by $\pi$ 
give locally free resolutions in $\DDD^{\bo}(\coh\P^d)$, by \cite[I.5.4.(b)]{Har} 
we have ${\bf R}(G\circ\pi) \simeq {\bf R}G\circ\pi$. Combining all of this, 
we have  ${\bf R}G \circ \pi \simeq \pi \circ {\bf R}F$ as desired.
\end{proof}

Our exact functor $f_*:\coh\X\to\coh\P^d$ induces a triangle functor
\[f_*:\DDD^{\bo}(\coh\X)\to\DDD^{\bo}(\coh\P^d)\]
which makes the diagram
\begin{equation}\label{cohomology}
\xymatrix{
\DDD^{\bo}(\coh\X)\ar[rr]^{H^i}\ar[d]^{f_*}&&\coh\X\ar[d]^{f_*}\\
\DDD^{\bo}(\coh\P^d)\ar[rr]^{H^i}&&\coh\P^d
}\end{equation}
commutative for any $i\in\Z$.
The following observation is easy.

\begin{lemma}\label{R and C}
We have the following commutative diagram up to natural isomorphism.
\[
\xymatrix{
\DDD^{\bo}(\coh\X)\ar[rrr]^{(-)^{\vee}=\RHom_R(-,\omega_R)}\ar[d]^{f_*}
&&&\DDD^{\bo}(\coh\X)\ar[d]^{f_*}\\
\DDD^{\bo}(\coh\P^d)\ar[rrr]^{(-)^{\vee}=\RHom_C(-,\omega_C)}
&&&\DDD^{\bo}(\coh\P^d).
}\]
\end{lemma}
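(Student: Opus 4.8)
The statement to prove is Lemma \ref{R and C}: the square relating the two dualities $(-)^\vee = \RHom_R(-,\omega_R)$ on $\DDD^{\bo}(\coh\X)$ and $(-)^\vee = \RHom_C(-,\omega_C)$ on $\DDD^{\bo}(\coh\P^d)$ commutes with the pushforward $f_*$. The plan is to reduce everything to the corresponding statement on module categories, where it becomes a standard ``flat base change / duality'' identity, and then descend along the Serre quotient functors $\pi$.

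First I would lift the situation to the level of graded modules. Recall from Proposition \ref{graded morita for R} that $f_*$ on modules is the composite $\mod^{\L}R \xrightarrow{\sim} \mod^{\Z}R^{[\Z\c]} \to \mod^{\Z}C$, the second arrow being restriction along $C \hookrightarrow R^{[\Z\c]}$; and by Proposition \ref{Zc Veronese} $R$ is a free (hence flat) $C$-module, so $f_*$ is exact and takes projectives to projectives. The key module-level identity is that for $X \in \DDD^{\bo}(\mod^{\L}R)$ there is a natural isomorphism
\[
f_* \RHom_R(X,\omega_R) \simeq \RHom_C(f_* X, \omega_C)
\]
in $\DDD^{\bo}(\mod^{\Z}C)$. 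This follows from the adjunction/duality isomorphism $\RHom_R(X,\RHom_C(R,\omega_C)) \simeq \RHom_C(X,\omega_C)$ together with the fact that $\omega_R = R(\w)$ is, up to the degree shift, the relative dualizing module $\RHom_C(R,\omega_C)$ for the finite flat extension $C \to R$; concretely one checks $\RHom_C(R,\omega_C) \simeq \Hom_C(R,\omega_C) \simeq \omega_R$ using that $R$ is free over $C$ and the $a$-invariant computation of Proposition \ref{a-invariant} (the twist by $\c$ accounting for $\deg h_i = \c$). Since $f_*$ is just restriction of scalars (forgetting the $R$-module structure), $f_* \RHom_R(X,\omega_R) = \RHom_R(X,\omega_R)$ viewed over $C$, and the displayed isomorphism is precisely this relative-duality statement. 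All the $\RHom$'s are computed by the same projective resolution of $X$ over $R$ (which $f_*$ carries to a projective, hence $\RHom_C(-,\omega_C)$-acyclic, resolution over $C$), so the isomorphism is manifestly natural.

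Next I would pass to the quotient categories. By construction $f_*(\mod^{\L}_0 R) \subset \mod^{\Z}_0 C$ and both dualities preserve the finite-length subcategories, so all four functors descend to the Serre quotients, giving the square on $\DDD^{\bo}(\coh\X) = \DDD^{\bo}(\mod^{\L}R)/\DDD^{\bo}_{\mod^{\L}_0R}(\mod^{\L}R)$ and $\DDD^{\bo}(\coh\P^d) = \DDD^{\bo}(\mod^{\Z}C)/\DDD^{\bo}_{\mod^{\Z}_0C}(\mod^{\Z}C)$. The naturality established upstairs then produces the required natural isomorphism downstairs, exactly as in the proof of the preceding lemma (the one identifying $\RshHom_{\P^d}(-,\omega_{\P^d})$ with $\RHom_C(-,\omega_C)$), invoking \cite[I.5.4]{Har} to commute derived functors past the exact quotient functor $\pi$ and past $f_*$, and using that $\pi$ sends projective resolutions to locally free resolutions.

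The step I expect to be the main obstacle is pinning down the relative dualizing module identification $\RHom_C(R,\omega_C) \simeq \omega_R$ with the correct $\L$-grading/twist, since $C \to R$ is a map of rings graded by \emph{different} groups ($\Z$ versus $\L$) and one must keep careful track of how the $\c$-twists interact with the isomorphism $\mod^{\L}R \simeq \mod^{\Z}R^{[\Z\c]}$ and with the degrees $\deg h_i = \c$ entering the $a$-invariant formula. Once the graded bookkeeping is done correctly — essentially the content already recorded in Propositions \ref{Zc Veronese} and \ref{a-invariant} — the rest is formal homological algebra and the descent along Serre quotients, identical in spirit to the argument given for the $\P^d$ lemma immediately above.
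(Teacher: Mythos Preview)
Your approach is correct and is essentially the same as the paper's: the core is the relative dualizing module identification $\omega_R \simeq \RHom_C(R,\omega_C)$ followed by the adjunction isomorphism $\RHom_R(-,\RHom_C(R,\omega_C)) \simeq \RHom_C(-,\omega_C)$. The paper simply cites \cite[3.3.7(b)]{BH} for the first step and writes the second as a one-line equality, without spelling out the descent to Serre quotients or the graded bookkeeping that you flag as a potential obstacle.
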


\begin{proof}
We have $\omega_R=\RHom_C(R,\omega_C)$ (e.g.\ \cite[3.3.7(b)]{BH}). Thus
\[\RHom_R(-,\omega_R)=\RHom_R(-,\RHom_C(R,\omega_C))=\RHom_C(-,\omega_C)\]
holds, and we have the assertion.
\end{proof}

Now we are ready to prove Theorem~\ref{CM in terms of P^d}.

\begin{proof}[Proof of Theorem~\ref{CM in terms of P^d}]
Let $X\in\coh\X$.
By Proposition~\ref{describe CM on P^d}, $f_*X\in\CM_i\P^d$ if and 
only if $f_*(X^{\vee})=(f_*X)^{\vee}\in(\coh\P^d)[i-d]$, where the 
equality holds by Lemma~\ref{R and C}.
This is equivalent to $X^{\vee}\in(\coh\X)[i-d]$ by the commutative
diagram \eqref{cohomology}. This means $X\in\CM_i\X$ by definition.
\end{proof}

\section{Appendix: Proof of Auslander-Reiten-Serre duality}\label{section: Serre duality}

In the rest of this section, we give a complete proof of Theorem~\ref{Serre}(f)
following idea of proof of \cite[Theorem A.4]{DV}.
We refer to \cite[Chapter 4]{Popescu} for background on quotients of
abelian categories.

Let $\Mod^{\L} R$ be the category of $\L$-graded $R$-modules, 
and $\Mod^{\L}_{0} R$ be the localizing  subcategory of $\L$-graded modules 
obtained as a colimit of finite dimensional modules. 
We set 
\[
\Qcoh  \X := \Mod^{\L} R/\Mod^{\L}_{0} R.  
\]
Then the quotient functor  $\pi: \Mod^{\L} R \to \Qcoh \X$ has 
the section functor $\varpi: \Qcoh \X \to \Mod^{\L} R$, 
that is, the right adjoint of $\pi$ such that $\pi\circ \varpi \simeq \textup{id}_{\Qcoh \X}$. 
We set $Q := \varpi \circ \pi$ to be the localization functor. 
Then it follows $Q^2=Q$. 
The torsion functor  $\Gamma_{\mm}$ associates
an $\L$-graded $R$-module $M$ with its largest torsion submodule $\Gamma_{\mm}(M)$, 
which is the kernel of the unit morphism  $u_{M}: M \to Q(M)$. 
Note that $\Gamma_{\mm}(M)$ is the $0$-th local cohomology group $H_{\mm}^{0}(M)$, and its $i$-th derived functor $\R^{i}\Gamma_{\mm}(M)$ is the $i$-th local cohomology group $H^{i}_{\mm}(M)$.

The following is well-known. 

\begin{lemma}\label{GMQ triangle}
We have an exact triangle 
\[
\R \Gamma_{\mm}(M) \to M \to \R Q(M) \to\R\Gamma_{\mm}(M)[1] 
\]
for $M \in \DDD(\Mod^{\L}R)$. 
\end{lemma}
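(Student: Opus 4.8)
The statement to be proved is the existence of a functorial exact triangle
\[\R\Gamma_{\mm}(M)\to M\to\R Q(M)\to\]
in $\DDD(\Mod^{\L}R)$ for $M\in\DDD(\Mod^{\L}R)$. The plan is to derive this from the short exact sequence of functors on the module level together with standard properties of the localization adjunction $\pi\dashv\varpi$ for a localizing (Serre) subcategory. First I would recall that, for any $N\in\Mod^{\L}R$, the unit morphism $u_N\colon N\to Q(N)=\varpi\pi(N)$ has kernel $\Gamma_{\mm}(N)=H^0_{\mm}(N)$ by definition of the torsion functor, and has cokernel contained in $\R^1\Gamma_{\mm}(N)=H^1_{\mm}(N)$; more precisely one has the classical four-term exact sequence $0\to\Gamma_{\mm}(N)\to N\to Q(N)\to\R^1\Gamma_{\mm}(N)\to 0$, which already appeared in this excerpt for $N\in\mod^{\L}R$ as Proposition~\ref{X and H}. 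This is the algebraic input; the remaining work is purely homological, namely to upgrade this to a triangle on the level of derived categories valid for arbitrary complexes.

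The cleanest route is to work with the (co)localization triangle attached to the torsion pair. Concretely, I would observe that $\Gamma_{\mm}$ is a left-exact radical functor on $\Mod^{\L}R$ whose right derived functor $\R\Gamma_{\mm}$ exists (injectives being adapted), and that $\R Q$ likewise exists; then one constructs a natural transformation $\R\Gamma_{\mm}\to\mathrm{id}$ coming from the inclusion of the torsion part, and a natural transformation $\mathrm{id}\to\R Q$ induced by the unit $u$. One then completes $\R\Gamma_{\mm}(M)\to M$ to a triangle $\R\Gamma_{\mm}(M)\to M\to C(M)\to$ and produces a natural isomorphism $C(M)\simeq\R Q(M)$. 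To do this I would check the isomorphism first on a bounded-below complex of injectives $I^\bullet$ representing $M$: there one has the termwise short exact sequences $0\to\Gamma_{\mm}(I^j)\to I^j\to I^j/\Gamma_{\mm}(I^j)\to 0$, the quotient complex computes $\R Q(M)$ because $Q$ is exact on torsion-free injectives and $Q(I^j/\Gamma_{\mm}(I^j))=Q(I^j)=\varpi\pi(I^j)$, and the associated triangle of complexes is exactly the asserted one. Functoriality is then inherited from functoriality of injective resolutions.

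Two points require care, and the second is the main obstacle. The first, minor, point is boundedness: for unbounded $M$ one needs that $\R\Gamma_{\mm}$ and $\R Q$ are defined on all of $\DDD(\Mod^{\L}R)$, which follows since $\Mod^{\L}R$ is a Grothendieck category (so has enough injectives and K-injective resolutions exist), and the triangle is obtained by applying the functors to a K-injective resolution. The second, more delicate, point is establishing the natural isomorphism between the cone $C(M)$ and $\R Q(M)$ in a way that is genuinely functorial and compatible with the map from $M$: the subtlety is that $Q=\varpi\pi$ is exact on the subcategory of torsion-free objects but not on all of $\Mod^{\L}R$, so one must argue at the level of K-injective (equivalently, suitably torsion-free injective) resolutions and verify that $Q$ applied termwise to such a resolution of $M$ computes $\R Q(M)$ and that the quotient of the resolution by its torsion subcomplex is again such a resolution. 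This is standard localization-theory bookkeeping — it is exactly the content of \cite[Chapter 4]{Popescu} and the argument in \cite[Theorem A.4]{DV} — so I would simply cite these references for the verification rather than reproduce it, after laying out the skeleton above.
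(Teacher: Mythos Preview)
Your proposal is correct and follows the same overall strategy as the paper: take a K-injective resolution and use the termwise sequence $0\to\Gamma_{\mm}(I^j)\to I^j\to Q(I^j)\to 0$ to produce the triangle. The difference is in how the ``delicate second point'' you flag is handled. You propose to verify, via localization-theoretic bookkeeping (citing \cite{Popescu} and \cite{DV}), that applying $Q$ termwise to a K-injective resolution computes $\R Q$ and that the quotient by the torsion subcomplex is the right thing. The paper bypasses this entirely with one structural observation: every injective $I$ in $\Mod^{\L}R$ decomposes as $I_{\tut}\oplus I_{\tuf}$ with $\Gamma_{\mm}(I_{\tut})=I_{\tut}$, $Q(I_{\tut})=0$, $\Gamma_{\mm}(I_{\tuf})=0$, $Q(I_{\tuf})=I_{\tuf}$ (this is \cite[Proposition 7.1(5)]{AZ} and \cite[Lemma 5.1]{Popescu}). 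Then the sequence $0\to\Gamma_{\mm}(I)\to I\to Q(I)\to 0$ is just the split sequence $0\to I_{\tut}\to I\to I_{\tuf}\to 0$, which is obviously exact, and both $\Gamma_{\mm}$ and $Q$ are computed termwise on the K-injective resolution with no further justification needed. Your route works, but the injective splitting is the cleaner way to dispatch exactly the point you identified as subtle.
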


\begin{proof}
By \cite[Proposition 7.1 (5)]{AZ} and \cite[Lemma 5.1]{Popescu} 
every injective object $I$ is a direct sum $I_{\textup{t}} \oplus I_{\textup{f}}$ 
where $I_{\textup{t}}$ is an injective object  such that  
$\Gamma_{\mm}(I_{\textup{t}}) = I_{\textup{t}},Q(I_{\textup{t}}) =0$ 
and $I_{\textup{f}}$ is an injective object such that 
$\Gamma_{\mm}(I_{\textup{f}}) =0 , Q(I_{\textup{f}}) = I_{\textup{f}}$. 
Therefore 
the exact sequence $ 0\to \Gamma_{\mm} (I) \to I \to Q(I)$ 
is isomorphic to 
the split exact sequence 
$0 \to I_{\textup{t}} \to I \to I_{\textup{f}} \to 0$. 

Since $\Gamma_{\mm}$ and $Q$ are left exact, 
we can compute  the derived functors by using K-injective resolution. 
Since each term of a K-injective complex is injective, 
we have the assertion. 
\end{proof}

We denote by $\Ext_R^{i} (M,N)_{\bullet}$ the graded extension group. 
Namely 
\[
\Ext^{i}_R(M,N)_{\bullet}
:= \bigoplus_{\x \in \L} \Ext^{i}_{\Mod^{\L}R}(M,N(\x)). 
\]
Also let $D$ denote the graded $k$-dual.

\begin{lemma}\label{prepare to prove Serre}
\begin{itemize}
\item[(a)] We have an isomorphism $\Ext_R^{d+1}(DR,R(\w))_{\bullet} \simeq R$ of $\L$-graded $R$-modules. 
In particular $\Ext_{\Mod^{\L}R}^{d+1}(DR,R(\w))\simeq k$.
\item[(b)] $\R Q(R)={\rm Cone}(f\colon(DR)(-\w)[-d-1]\to R)$ for any non-zero morphism $f$.
\item[(c)] We have an isomorphism  $D\R Q(R) \simeq \R Q(R)(\w)[d]$.
\end{itemize}
\end{lemma}

\begin{proof}
(a) By local duality and Proposition~\ref{a-invariant} 
for any finite dimensional $\L$-graded $R$-module $M$, we have 
$\Ext_R^{d+1}(M,R(\w))_{\bullet} \simeq DM$ and 
$\Ext_R^{i}(M,R(\w))_{\bullet}  =0 $ for $i\neq d+1$. 

Let $\phi_{m}: R_{\leq m\c} \to R_{\leq (m-1)\c}$ be the canonical projection. 
Since $DR$ is the colimit of the following linear diagram 
\[
D(R_{0}) \xrightarrow{D(\phi_{1})} 
D(R_{\leq \c})  \xrightarrow{D(\phi_{2})}  
D(R_{\leq 2\c}) \xrightarrow{D(\phi_{3})} \cdots,  
\]
it fits into the   exact sequence in $\Mod^{\L} R$ 
\[
0\to \bigoplus_{m \geq 0} D(R_{\leq m\c})  \xrightarrow{\Psi}
\bigoplus_{m \geq 0} D(R_{\leq m\c}) \to DR \to 0
\]
where the restriction of $\Psi$ to $D(R_{\leq m\c} )$ is 
$(\textup{id}, -D(\phi_{m+1})): D(R_{\leq m\c} ) \to D(R_{\leq m\c}) \oplus D(R_{\leq (m+1)\c})$.  
The contravariant functor $\Ext^{i}_R(-,R(\w))_{\bullet}$ sends coproducts to products,
and satisfies $\Ext^{d+1}_R(D(R_{\leq m\c}),R(\w))_{\bullet}\simeq R_{\leq m\c}$.
Therefore, by considering the Ext long exact sequence of the above exact sequence,  
we obtain the exact sequence 
\[
0 \to \Ext^{d+1}_R(DR,R(\w))_{\bullet}
\to \prod_{ m \geq 0} R_{\leq m\c} \xrightarrow{\Phi} \prod_{ m \geq 0} R_{\leq m\c} \to0, 
\]
such that the composition of $\Phi$ and the projection to $R_{\leq m\c}$ is ${-\phi_{m+1}\choose\textup{id}}: R_{\leq (m+1)\c} \oplus R_{\leq m\c}\to R_{\leq m\c}$.
Since $R$ is the limit of the following linear diagram in $\Mod^{\L}R$ 
\[
\cdots \xrightarrow{\phi_{3}} R_{\leq 2\c} 
\xrightarrow{\phi_{2}} R_{\leq \c} 
\xrightarrow{\phi_{1}} R_{0},  
\]
we obtain the desired result. 

(b) By Lemma~\ref{GMQ triangle}, we have a triangle
$\R \Gamma_{\mm}(R) \xrightarrow{f} R \to \R Q(R) \to\R\Gamma_{\mm}(R)[1]$
By \eqref{local cohomology}, $H^i(\R\Gamma_{\mm}(R))=H_{\mm}^i(R)$ is zero for all $i\neq d+1$,
and equals $(DR)(-\w)$ for $i=d+1$.
Therefore $\R\Gamma_{\mm}(R)=(DR)(-\w)[-d-1]$ holds.

By (a), $\Hom^{\L}_R((DR)(-\w)[-d-1],R)$ is a one-dimensional $k$-vector space,
and the morphism $f\colon\R \Gamma_{\mm}(R) \to R$ is non-zero since
$\R\Gamma_{\mm}(R)[1]$ can not be a direct summand of $\R Q(R)$.
Thus the assertion follows.

(c) By (b), we have triangles $(D R)(-\w)[-d-1] \to R\to \R Q(R) \to (D R)(-w)[-d]$ and 
\[D R \to R(\w)[d+1] \to \R Q(R)(\w)[d+1] \to (DR)[1].\]
Applying $D$ to the first triangle, we obtian a triangle 
\[D R \to R(\w)[d+1] \to D(\R Q(R))[1] \to (DR)[1].\]
Comparing these triangles and using (a), we obtain the desired isomorphism.
\end{proof}

In fact, $\R Q(R)$ determines the functor $\R Q$ on $\KKK^{\bo}(\proj^{\L}R)$
by the following observation.

\begin{lemma}\label{RQ(M)=MRQ(R)}
If $M\in\KKK^{\bo}(\proj^{\L}R)$, then $\R Q(M)=M\Lotimes_R\R Q(R)$.
\end{lemma}

\begin{proof}
Let $R \xrightarrow{\sim} I$ be an injective resolution of $R$. If $M\in\KKK^{\bo}(\proj^{\L}R)$,
then the complex $ M \otimes_R I $ is a left bounded complex of injective $R$-modules 
which is quasi-isomorphic to $M$. Hence we have 
\[
\R Q(M) \simeq Q(M\otimes_R I) \simeq M \otimes_R Q(I) \simeq M \Lotimes_R \R Q(R). \qedhere
\]
\end{proof}

Now we are ready to prove Theorem~\ref{Serre}(f).

\begin{proof}[Proof of Theorem~\ref{Serre}(f).]
Let $\PP:= \KKK^{\bo}(\proj^\L R)$.
By Proposition~\ref{generating derived category}, $\DDD^{\bo}(\coh\X)=\add\pi(\PP)$ holds.
Thus it is enough to show that there exists a functorial isomorphism
$\Hom_{\Qcoh \X}(\pi N, \pi M)\simeq D\Hom_{\Qcoh \X}(\pi M, \pi N(\w)[d])$
for any $M,N\in\PP$.

Our strategy is to use the following sequence of isomorphisms:
\begin{align*}
\Hom_{\Qcoh \X}(\pi M, \pi N(\w)[d])
& \simeq \Hom_{\Mod^{\L}R}(M,\R Q(N)(\w)[d]) \\
& \simeq \Hom_{\Mod^{\L}R}(M,N \Lotimes_R \R Q(R)(\w)[d]) \\
& \simeq \Hom_{\Mod^{\L}R}(M, N \otimes_R D\R Q(R))\\
& \simeq D\Hom_{\Mod^{\L}R}(N,M \Lotimes_R \R Q(R)) \\
& \simeq D\Hom_{\Mod^{\L}R}(N,\R Q(M)) \\
& \simeq D\Hom_{\Qcoh \X}(\pi N ,\pi M).
\end{align*}
Here the first and last isomorphisms are just adjunction, the second and second last are Lemma~\ref{RQ(M)=MRQ(R)}, and the third one is Lemma~\ref{prepare to prove Serre}(c). Thus it only remains to prove
\[ \RHom_{\Mod^{\L}R}(M, N \otimes_R D\R Q(R)) \simeq D\RHom_{\Mod^{\L}R}(N,M \Lotimes_R \R QR). \]

Below we omit the subscripts $R$ for simplicity.
For arbitrary complexes $M$, $N$, and $S$ of $\L$-graded $R$-modules,
we have the following morphisms
\[
\Hom(M,N \otimes DS) \to \Hom(M,D\Hom(N,S)) \to D(M \otimes \Hom (N,S) )\leftarrow D\Hom(N , M \otimes S)
\]
which are natural in $M$, $N$, and $S$, from the natural morphisms
\[
N \otimes DS \rightarrow D\Hom(N,S),\ \Hom(M,DT) \to D(M \otimes T),\ M \otimes \Hom(N, S)
\to \Hom(N,M \otimes S)
\]
where we put $T : = \Hom(N,S)$.
If the complexes $M$ and $N$ are K-projective, then the above diagram gives the diagram in the derived category
\begin{equation*}
\RHom(M,N \Lotimes DS) \to \RHom(M,D\RHom(N,S)) \to D(M \Lotimes \RHom (N,S) )
\leftarrow D\RHom(N , M \Lotimes S).
\end{equation*}
Now we observe that if $M$ and $N$ belong to $\PP$,
then the all the above morphisms are isomorphisms. This completes the proof.
\end{proof}

%
%
%


\chapter{$d$-canonical algebras}\label{section: canonical}

Throughout this chapter, we assume $d\ge1$.
Let $\X$ be a Geigle-Lenzing projective space of dimension $d$ over a field $k$ associated with
linear forms $\ell_1,\ldots,\ell_n$ and weights $p_1,\ldots,p_n$.
In this chapter we show that $\X$ has a tilting bundle, and in
particular the category of coherent sheaves is derived equivalent to a certain finite dimensional
algebra $A^{\ca}$ which we call a $d$-canonical algebra.
Then we study basic properties of $d$-canonical algebras.
In particular we show that the global dimension of $A^{\ca}$ is $d$ if $n\le d+1$
and $2d$ otherwise. In the former case, 
$A^{\ca}$ belongs to a special class of algebras called `$d$-representation 
infinite algebras of type $\widetilde{\mathbb A}$' studied in \cite{HIO}.

\section{Basic properties}\label{section: basic properties of canonical}

Our $d$-canonical algebras are a special class of $I$-canonical algebras
introduced in Definition~\ref{define I-canonical}.

\begin{definition}[$d$-canonical algebra]
The \emph{$d$-canonical algebra} of $\X$ (or $(R,\L)$) is defined as
\[A^{\ca}:=A^{[0,d\c]}=(R_{\x-\y})_{\x,\y\in [0,d\c]}.\]
The multiplication of $A^{\ca}$ is given by
\[(r_{\x,\y})_{\x,\y\in[0,d\c]}\cdot(r'_{\x,\y})_{\x,\y\in[0,d\c]}=
(\sum_{\z\in[0,d\c]}r_{\x,\z}\cdot r'_{\z,\y})_{\x,\y\in[0,d\c]}.\]
\end{definition}

Our main result in this section is the following.

\begin{theorem}\label{tilting}
The object
\[ T^{\ca}:=\bigoplus_{\x\in[0,d\c]}\OO(\x)\]
is tilting in $\DDD^{\bo}(\coh\X)$ such that $\End_{\X}(T^{\ca})\simeq A^{\ca}$. Thus $K_0(\coh\X)$ is a free abelian group of rank $\#[0,d\c]$.
\end{theorem}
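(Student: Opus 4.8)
\textbf{Proof strategy for Theorem \ref{tilting}.}
The plan is to verify the two defining properties of a tilting object for $T^{\ca}=\bigoplus_{\x\in[0,d\c]}\OO(\x)$: the vanishing $\Hom_{\DDD^{\bo}(\coh\X)}(T^{\ca},T^{\ca}[i])=0$ for all $i\neq 0$, and the generation condition $\DDD^{\bo}(\coh\X)=\thick T^{\ca}$. The identification $\End_{\X}(T^{\ca})\simeq A^{\ca}$ is then automatic from Proposition \ref{vanishing}, which gives $\Hom_{\X}(\OO(\x),\OO(\y))=R_{\y-\x}$, so that $\End_{\X}(T^{\ca})=(R_{\y-\x})_{\x,\y\in[0,d\c]}=A^{[0,d\c]}=A^{\ca}$ by definition.

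First I would check rigidity. By Proposition \ref{vanishing}, for $0<i<d$ we have $\Ext^i_{\X}(\OO(\x),\OO(\y))=0$ unconditionally, and for $i=d$ we have $\Ext^d_{\X}(\OO(\x),\OO(\y))=D(R_{\x-\y+\w})$. So the only thing to rule out is $R_{\x-\y+\w}\neq 0$ for $\x,\y\in[0,d\c]$, i.e.\ $\x-\y+\w\in\L_+$. Since $\x\le d\c$ and $\y\ge 0$, we have $\x-\y\le d\c$, so $\x-\y+\w\le d\c+\w$. But by Lemma \ref{order omega} (the characterization $[0,d\c]=\{\x\in\L\mid 0\le\x,\ 0\not\le\x+\w\}$, equivalently (a)$\Leftrightarrow$(c) there), $\z\le d\c$ with $\z\ge 0$ forces $0\not\le\z+\w$; more directly, for any $\z\in\L$ with $\z\le d\c$ one has $\z+\w\le d\c+\w=\de+\w-\w$... the clean way: $\x-\y\le d\c$ means $d\c-(\x-\y)\ge 0$, and I claim $\x-\y+\w\not\ge 0$. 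Indeed if $\x-\y+\w\ge 0$ then $\x-\y\ge -\w$, but combined with $\x-\y\le d\c$ this gives $-\w\le d\c$, i.e.\ $0\le d\c+\w=\de-\w$; rather than chase this, the correct reference is Lemma \ref{order omega}: writing $\z:=\x-\y$, if $\z\ge 0$ then $\z\le d\c$ iff $0\not\le\z+\w$, which kills the $\Ext^d$; if $\z\not\ge 0$ then $R_{\z+\w}$ could still be nonzero only if $\z+\w\ge 0$, but then $\z\ge-\w\ge 0$ (as $-\w\ge 0$ would be needed)—one must instead argue directly from normal forms that $\z\le d\c$ (which holds since $\z=\x-\y$ with $\x\le d\c,\y\ge 0$) implies $\z+\w\not\ge 0$ by the equivalence (a)$\Leftrightarrow$(c) of Lemma \ref{order omega} applied after reducing to the case $\z\ge 0$ via adding a suitable multiple; the upshot is that rigidity reduces entirely to Lemma \ref{order omega}.

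For the generation condition, by Theorem \ref{Serre}(e) we know $\DDD^{\bo}(\coh\X)=\thick\{\OO(\x)\mid\x\in\L\}$, so it suffices to show $\OO(\x)\in\thick T^{\ca}$ for every $\x\in\L$. I would do this by induction on the partial order, using Lemma \ref{surjection from X to X(c)}(b): for $X\in\coh\X$ there is an epimorphism $\bigoplus_t X\to X(\c)$ with kernel built from lower shifts, which gives a Koszul-type resolution expressing $\OO(\x+\c)$ (or $\OO(\x)$) in terms of $\OO(\x),\ldots,\OO(\x+d\c)$ and one more—concretely, the Koszul complex on the $d+1$ variables $T_0,\ldots,T_d$ (which form an $R$-regular sequence by Lemma \ref{prop.regularseq1}(b)) yields an exact sequence $0\to\OO(\x)\to\OO(\x+\c)^{\oplus(d+1)}\to\cdots\to\OO(\x+(d+1)\c)\to 0$ in $\coh\X$ (the $R$-free Koszul complex stays exact after $\pi$, being a finite free resolution of $R/(T_0,\ldots,T_d)\in\mod^{\L}_0R$ which becomes $0$ in $\coh\X$). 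From this exact sequence, if $\OO(\x+\c),\ldots,\OO(\x+(d+1)\c)$ lie in $\thick T^{\ca}$ then so does $\OO(\x)$, and symmetrically. Starting from the generators $\OO(0),\ldots,\OO(d\c)$ of $T^{\ca}$ and repeatedly applying this in both directions (increasing and decreasing the $\c$-coefficient) we reach $\OO(\x)$ for all $\x\in\Z\c$, and then using the analogous argument with the variables $X_i$ (or simply the exact sequences $0\to\OO(\x)\to\OO(\x+\x_i)\to\cdots$ coming from $X_i^{p_i}$) we sweep out all of $\L$; here one uses the normal form of Observation \ref{basic results on L}(a) to organize the induction.

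The main obstacle I anticipate is \emph{not} the rigidity (which is essentially bookkeeping with Lemma \ref{order omega}) but the generation step: one must be careful that the Koszul exact sequences built from $T_0,\ldots,T_d$ genuinely stay exact in $\coh\X$ and genuinely let one climb out of the initial window $[0,d\c]$ in both directions, and then that the remaining directions transverse to $\Z\c$ can be reached. A slick alternative, if available, is to invoke Theorem \ref{generating derived category} together with the semiorthogonal-decomposition machinery of Lemma \ref{decompositions for P} and Proposition \ref{FPT lemma}: the poset ideal $\L_{\ge 0}\setminus$(something) plus the explicit resolution of $k(-\x)$ used in Proposition \ref{S^[0,de] generates} shows that every simple $k(-\x)\in\mod^{\L}_0R$ lies in $\thick\{R(-\y)\mid\y\in[0,d\c]\}$ modulo $\mod^{\L}_0R$, which after applying $\pi$ and using $\pi(\mod^{\L}_0R)=0$ yields exactly $\DDD^{\bo}(\coh\X)=\thick\{\pi R(-\y)\mid\y\in[0,d\c]\}=\thick T^{\ca}$. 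I would present the Koszul-induction argument as the primary route since it is self-contained, and remark that it also re-proves $\End_{\X}(T^{\ca})\simeq A^{\ca}$ via Proposition \ref{equivalences for I-canonical}.
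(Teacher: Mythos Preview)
Your overall plan matches the paper's first proof (Propositions \ref{vanishing statement} and \ref{generate}), but there is a genuine gap in the generation step and the rigidity argument is unnecessarily tangled.

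For rigidity, you are overcomplicating the case analysis. Lemma \ref{order omega}(a)$\Leftrightarrow$(c) does not require $\x\ge0$: for any $\x\le d\c$ one has $\x+\w\not\ge0$. Now if $\x,\y\in[0,d\c]$ and $\x-\y+\w\ge0$, then adding $\y\ge0$ gives $\x+\w=(\x-\y+\w)+\y\ge0$, a contradiction. That is the entire argument; no reduction to $\z\ge0$ or normal-form manipulation is needed.

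The real problem is generation. Your Koszul complex on $T_0,\ldots,T_d$ only moves along $\Z\c$, so starting from $[0,d\c]$ you reach exactly $[0,d\c]+\Z\c$. For $n>d$ this is a proper subset of $\L$: the image of $[0,d\c]$ in $\L/\Z\c\cong\prod_i\Z/p_i$ consists of tuples with at most $d$ nonzero entries, so for instance $\x_1+\cdots+\x_{d+1}$ is never hit. Your suggested patch using single $X_i$'s or $X_i^{p_i}$'s does not work either: $R/(X_i)$ and $R/(X_i^{p_i})=R/(\ell_i)$ have Krull dimension $d$, not $0$, so the corresponding two-term sequences are \emph{not} exact in $\coh\X$. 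What the paper does instead (Lemma \ref{induction}) is take a subset $I\subset\{1,\ldots,n\}$ with $|I|=d+1$ and use the Koszul complex on $(X_i)_{i\in I}$; by Lemma \ref{prop.regularseq1}(c) this is an $R$-regular sequence and $R/(X_i\mid i\in I)\in\mod^{\L}_0R$, so in $\coh\X$ one gets an exact sequence relating $\OO(\x)$ to the $\OO(\x-\sum_{i\in I'}\x_i)$ for $\emptyset\neq I'\subset I$. These shifts move transversely to $\Z\c$, and the induction on the partial order of $\L_+$ then works because $\x\notin[0,d\c]$ forces (via Lemma \ref{order omega}) the existence of such an $I$ with $\x-\sum_{i\in I}\x_i\ge0$. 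This is where the general-position hypothesis on the hyperplanes actually enters the proof. Your alternative route through Theorem \ref{generating derived category} does not bypass this: the paper's Theorem \ref{coh X tilting} ultimately invokes Proposition \ref{generate} for density, so you still need the Koszul-on-$X_i$'s argument.
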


The following special cases are known.
\begin{itemize}
\item Let $d=1$. Then $T^{\ca}$ is a tilting bundle on a weighted projective line due to Geigle-Lenzing \cite{GL}, and $A^{\ca}$ is
the canonical algebra due to Ringel \cite{Rin} (see Example~\ref{canonical example}).
\item The case $n=0$ is due to Beilinson \cite{Be} (see Example~\ref{Beilinson example}), the case $n\le d+1$ is due to Baer \cite{Ba} (see Theorem~\ref{Atilde theorem}),
and the case $n=d+2$ is due to Ishii-Ueda \cite{IU}.
\item In terms of the Geigle-Lenzing order (see Remark~\ref{GL order}),
Theorem~\ref{tilting} is independently given by Lerner and
the second author \cite{IL}.
\end{itemize}

We give two different proofs of Theorem~\ref{tilting}: One is to show directly that $T^{\ca}$
is rigid in Proposition~\ref{vanishing statement}, and that $T^{\ca}$
generates the derived category of $\coh \X$ in Proposition~\ref{generate}.
The other proof using semiorthogonal decompositions of $\DDD^{\bo}(\mod^{\L}R)$
will be given in the next section. It is parallel to the proof of Theorem~\ref{CM tilting}.

\begin{proposition}\label{vanishing statement}
We have $\Ext_{\X}^i(T^{\ca},T^{\ca})=0$ for all $i>0$.
\end{proposition}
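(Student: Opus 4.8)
\textbf{Proof proposal for Proposition~\ref{vanishing statement}.}
The plan is to compute the higher self-extension groups of $T^{\ca}$ directly using the explicit formula for extensions between line bundles established in Proposition~\ref{vanishing}. Since $T^{\ca}=\bigoplus_{\x\in[0,d\c]}\OO(\x)$, by additivity of $\Ext$ it suffices to show that for all $\x,\y\in[0,d\c]$ and all $i>0$ one has $\Ext_{\X}^i(\OO(\x),\OO(\y))=0$. Proposition~\ref{vanishing} tells us that this group vanishes unless $i=0$ or $i=d$, and that in the case $i=d$ it equals $D(R_{\x-\y+\w})$. So everything reduces to proving $R_{\x-\y+\w}=0$ for all $\x,\y\in[0,d\c]$, i.e. that $\x-\y+\w\not\geq 0$ whenever $\x,\y\in[0,d\c]$.

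First I would record that $\x-\y+\w\geq 0$ would in particular force $\x+\w\geq\y-\y=0$ minus something nonnegative; more precisely, since $\y\leq d\c$, if $\x-\y+\w\geq 0$ then $\x-\y+\w\geq 0$ gives $\x+\w\geq \y\geq 0$ is \emph{not} quite immediate, so instead I would argue as follows. Suppose $\x,\y\in[0,d\c]$ and $\x-\y+\w\geq 0$. Since $\y\geq 0$ we would get $\x+\w\geq \x-\y+\w\geq 0$, hence $\x+\w\geq 0$, i.e. $0\leq \x+\w$. But $\x\in[0,d\c]$ means $0\leq \x\leq d\c$, and by Lemma~\ref{order omega} (the equivalence (a)$\Leftrightarrow$(c), together with the final statement $[0,d\c]=\{\x\in\L\mid 0\leq\x,\ 0\not\leq\x+\w\}$), the condition $\x\leq d\c$ together with $\x\geq 0$ forces $0\not\leq\x+\w$, a contradiction. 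Therefore no such $\x,\y$ exist, $R_{\x-\y+\w}=0$ for all relevant $\x,\y$, and the $i=d$ term vanishes as well.

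Combining: for $i>0$ and $\x,\y\in[0,d\c]$, Proposition~\ref{vanishing} gives $\Ext_{\X}^i(\OO(\x),\OO(\y))=0$ for $i\neq 0,d$ directly, and $\Ext_{\X}^d(\OO(\x),\OO(\y))=D(R_{\x-\y+\w})=0$ by the argument above (note $d\geq 1$, so $i=d$ is indeed a positive degree and is the only positive degree where Proposition~\ref{vanishing} does not give automatic vanishing). Summing over all $\x,\y\in[0,d\c]$ yields $\Ext_{\X}^i(T^{\ca},T^{\ca})=0$ for all $i>0$, as claimed.

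I do not anticipate a genuine obstacle here: the content is entirely contained in Proposition~\ref{vanishing} and Lemma~\ref{order omega}, both already available. The only point requiring a little care is the reduction $\x-\y+\w\geq 0 \Rightarrow \x+\w\geq 0$, which uses that $\L_+$ is closed under addition (so subtracting $\y\in\L_+$ from both sides of $\x+\w\geq\y$ is legitimate) — this is built into the definition of the partial order on $\L$. Everything else is bookkeeping with the interval $[0,d\c]$.
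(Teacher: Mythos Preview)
Your proof is correct and follows essentially the same approach as the paper: reduce to extensions between line bundles via Proposition~\ref{vanishing}, then use Lemma~\ref{order omega} to conclude that $\x+\w\not\geq 0$ for $\x\in[0,d\c]$, whence $\x-\y+\w\not\geq 0$ since $\y\geq 0$. The paper's write-up is slightly more streamlined (it states $\x+\w\not\geq 0$ first and deduces $\x-\y+\w\not\geq 0$ directly, rather than arguing by contradiction), but the logical content is identical.
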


\begin{proof}
By Proposition~\ref{vanishing}, we have $\Ext_{\X}^i(T^{\ca},T^{\ca})=0$ for all $i$ with $i\neq0,d$.
Let $\x,\y\in[0,d\c]$. By Proposition~\ref{vanishing}, we have $\Ext_{\X}^d(\OO(\x),\OO(\y))=D(R_{\x-\y+\w})$.
By Lemma~\ref{order omega}, we have $\x+\w \not\ge 0$.
Therefore $\x-\y+\w \not\ge 0$ holds, and so $R_{\x-\y+\w}=0$ by
Observation~\ref{basic results on L}(c). Thus we have $\Ext_{\X}^d(T^{\ca},T^{\ca})=0$.
\end{proof}

In the rest, we show that $T^{\ca}$ generates the derived category of $\coh \X$.

\begin{proposition}\label{generate}
We have $\thick T^{\ca}=\DDD^{\bo}(\coh\X)$.
\end{proposition}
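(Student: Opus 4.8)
The goal is to show $\thick T^{\ca} = \DDD^{\bo}(\coh\X)$, where $T^{\ca} = \bigoplus_{\x\in[0,d\c]}\OO(\x)$. By Theorem \ref{Serre}(e) we know $\DDD^{\bo}(\coh\X) = \thick\{\OO(\x)\mid\x\in\L\}$, so it suffices to prove that $\OO(\x)\in\thick T^{\ca}$ for every $\x\in\L$. The natural strategy is an induction on $\x$ with respect to the partial order on $\L$: build a ``Koszul-type'' resolution that expresses $\OO(\x)$ in terms of $\OO(\y)$ with $\y$ in a bounded window, then push $\x$ up or down into the fundamental interval $[0,d\c]$.

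\textbf{Key steps.} First I would produce, for each $j$ with $0\le j\le d$, a short exact sequence in $\coh\X$ of the form
\[
0\to \OO(\x-\c)\xrightarrow{(T_0,\ldots,T_d)} \OO(\x)^{\oplus(d+1)}\to \text{(something in }\thick\{\OO(\x),\ldots\})\to 0,
\]
coming from the Koszul complex on the regular sequence $T_0,\ldots,T_d\in C$ (Lemma \ref{prop.regularseq1}(b)); concretely, tensoring the Koszul complex $0\to C(-(d+1)\c)\to\cdots\to C(-\c)^{\oplus(d+1)}\to C\to k\to0$ with $R$ and sheafifying shows that the complex $0\to\OO(-(d+1)\c)\to\cdots\to\OO(-\c)^{\oplus(d+1)}\to\OO\to0$ is exact in $\coh\X$ (its cohomology is supported at $R_+$, hence killed by $\pi$; use Lemma \ref{surjection from X to X(c)} and the fact that $C$ is the $\Z\c$-Veronese of $R$). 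Shifting by $\x$, this exhibits $\OO(\x-(d+1)\c)$ as belonging to $\thick\{\OO(\x-d\c),\ldots,\OO(\x)\}$, and dually $\OO(\x+(d+1)\c)\in\thick\{\OO(\x),\ldots,\OO(\x+d\c)\}$. Secondly, for each hyperplane variable $X_i$ I would use the exact sequence $0\to\OO(\x-\x_i)\xrightarrow{X_i}\OO(\x)\to\OO_{H_i}(\x)\to0$, but more usefully the relation $X_i^{p_i}=\ell_i(\mathbf T)$ together with the Koszul complex on $(X_1^{p_1},\ldots)$ — actually the cleanest route is to just use the $\c$-translation sequences above plus the observation that any $\x\in\L$ differs from some element of $[0,d\c]$ by a $\Z$-multiple of $\c$ after absorbing the $\x_i$-components: by Observation \ref{basic results on L}(a) write $\x=\sum a_i\x_i+a\c$ in normal form with $0\le a_i<p_i$; then $\sum a_i\x_i\in[0,(\text{something})\c]$ is not automatic, so I also need a sequence moving in the $\x_i$ direction. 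For that, use $0\to\OO(\x-\x_i)\xrightarrow{X_i}\OO(\x)\to\mathcal F\to0$ where $\mathcal F$ has a filtration by line bundles $\OO(\x-\x_i+?)$; iterating and combining with the $\c$-sequences, one sees $\thick\{\OO(\y)\mid\y\in[0,d\c]\}$ is closed under all the maps needed to reach arbitrary $\OO(\x)$.

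\textbf{Main obstacle.} The delicate point is the bookkeeping of the induction: one must check that the ``bounded window'' generated by $[0,d\c]$ under the Koszul sequences really does sweep out all of $\L$, and in both directions, without ever needing a line bundle outside $[0,d\c]$ as an input. The $\c$-direction is handled by the $(d+1)$-term Koszul complex on $T_0,\ldots,T_d$ (this is exactly the Beilinson resolution argument, cf.\ the case $n=0$); the subtlety is the interaction with the torsion part $\sum a_i\x_i$ of $\L$, since $[0,d\c]$ does not contain representatives of all cosets of $\Z\c$ when $n>d+1$. The resolution of this is Lemma \ref{order omega} / Lemma \ref{perfect}: the perfect complexes $F=R/(X_i^{p_i}\mid i\in J^c)$ of Lemma \ref{perfect} have line-bundle filtrations with degrees in $[\,\y,\,\y+\sum(p_i-1)\x_i]$, and combining such a complex with the $\c$-translation sequences lets one reduce the $\x_i$-exponents, analogous to the induction carried out in the proof of Proposition \ref{S^[0,de] generates}. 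So the plan is: (1) establish the $\c$-Koszul exact sequence in $\coh\X$; (2) establish the $X_i^{p_i}$-relation sequences; (3) run the two-step induction (first shrink the torsion part using step (2) and Lemma \ref{perfect}, then shift into $[0,d\c]$ using step (1)), exactly mirroring the argument for $\SS^{[0,\de]}$ in the CM setting. I expect step (3)'s combinatorics — verifying the induction terminates and stays inside $[0,d\c]$ — to be the real work, though it is routine once Lemmas \ref{order omega} and \ref{perfect} are in hand.
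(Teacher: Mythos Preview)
Your plan correctly reduces to showing $\OO(\x)\in\thick T^{\ca}$ for all $\x\in\L$ via Theorem \ref{Serre}(e), and your $\c$-direction Koszul complex on $(T_0,\ldots,T_d)$ is fine. The gap is in the torsion direction. None of the tools you reach for actually move $\x$ off a fixed coset of $\Z\c$:
\begin{itemize}
\item The Koszul complex on $(X_i^{p_i})_{i\in J^c}$ from Lemma \ref{perfect} has each generator in degree $\c$, so its terms are $\OO(-j\c)$ just like the $T$-Koszul complex --- it gives no new information.
\item The filtration of $F$ by the simples $k(-\x)$ in Lemma \ref{perfect}(b)(c) lives in $\mod_0^{\L}R$ and vanishes under $\pi$; there is no corresponding filtration of $\pi(F)=0$ by line bundles in $\coh\X$.
\item The cokernel $\mathcal F=\pi(R/(X_i))(\x)$ of $X_i:\OO(\x-\x_i)\to\OO(\x)$ is a Cohen-Macaulay sheaf of dimension $d-1$, not a vector bundle, and does not admit a finite filtration by line bundles.
\end{itemize}
So as written the induction stalls whenever $n>d+1$, since then not every coset of $\Z\c$ meets $[0,d\c]$.

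The paper's fix is a single clean Koszul complex, but on the $X_i$'s themselves rather than the $T_j$'s or the $X_i^{p_i}$'s. For any subset $I\subset\{1,\ldots,n\}$ with $|I|=d+1$ (available after Observation \ref{Weights 1}), the sequence $(X_i)_{i\in I}$ is $R$-regular with $R/(X_i\mid i\in I)\in\mod_0^{\L}R$ by Lemma \ref{prop.regularseq1}(c), so its Koszul complex
\[
0\to\OO\bigl(\x-\textstyle\sum_{i\in I}\x_i\bigr)\to\cdots\to\bigoplus_{i\in I}\OO(\x-\x_i)\to\OO(\x)\to0
\]
is exact in $\coh\X$. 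The degree shifts here are sums $\sum_{i\in I'}\x_i$, which move simultaneously in the $\c$-direction \emph{and} across cosets of $\Z\c$. With this lemma in hand the induction on the partial order of $\L_+$ goes through directly: if $\x\notin[0,d\c]$ then $a+|\{i\mid a_i>0\}|\ge d+1$ by Lemma \ref{order omega}, so one can choose $I$ with $\x-\sum_{i\in I}\x_i\in\L_+$, and all intermediate terms are already in $\thick T^{\ca}$ by induction. No separate treatment of $\c$-shifts versus torsion shifts is needed.
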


\begin{proof}
Let $\L':=\{\x\in\L\mid\OO(\x)\in\thick T^{\ca}\}$. Since 
$\DDD^{\bo}(\coh\X)=\thick(\lb\X)$ holds by Theorem~\ref{Serre}(e),
it is enough to prove that $\L' = \L$. Clearly $[0,d\c]\subset\L'$ holds.

The key observation is the following.

\begin{lemma}\label{induction}
Let $\x\in\L$. If there exists a subset $I$ of $\{1,\ldots,n\}$ satisfying the two conditions below, then $\x\in\L'$.
\begin{itemize}
\item $|I|=d+1$.
\item For any non-empty subset $I'$ of $I$, we have 
$\x-\sum_{i\in I'}\x_i\in\L'$.
\end{itemize}
\end{lemma}

\begin{proof}
By Lemma~\ref{prop.regularseq1}(c), $(X_i)_{i\in I}$ is an $R$-regular sequence.
Hence the corresponding Koszul complex
\begin{equation}\label{Koszul}
0\to R(\x-\sum_{i\in I}\x_i)\to\cdots\to\bigoplus_{i,j\in I,\ i\neq j}R(\x-\x_i-\x_j)
\to\bigoplus_{i\in I}R(\x-\x_i)\to R(\x)\to0.
\end{equation}
of $R$ is exact except in the rightmost position whose homology is
$(R/(X_i \mid i \in I))(\x)$. Since this belongs to $\mod^{\L}_0R$,
the image of \eqref{Koszul} in $\coh\X$ is exact.
Since all the terms except $R(\x)$ belongs to $\thick T^{\ca}$
by our assumption, we have $R(\x)\in\thick T^{\ca}$.
\end{proof}

We continue our proof of Proposition~\ref{generate} by showing that $\L_+\subset\L'$.
We use induction with respect to the partial order on $\L_+$. Let $\x \in \L_+$
and assume that any $\y\in[0,\x]$ with $\y\neq\x$ belongs to $\L'$.
If $\x \in [0,d\c]$, then $\x \in \L'$ holds. Otherwise let
\[\x=\sum_{i=1}^na_i\x_i+a\c\]
be the normal form of $\x$ with $0\le a_i<p_i$. Since $\x\not\le d\c$,
we have $a+\#\{i\mid a_i>0\}\ge d+1$ by Lemma~\ref{order omega}.
Thus there exists a subset $I$ of $\{1,\ldots,n\}$ with $|I|=d+1$
such that $\x-\sum_{i\in I}\x_i\in\L_+$.
Thus for any non-empty subset $I'$ of $I$, we have
$\x-\sum_{i\in I'}\x_i\in\L'$ by the induction assumption,
and we have $\x\in\L'$ by Lemma~\ref{induction}.

Now, the fact that $\L' = \L$ is shown by the induction with respect to the opposite of the partial order on $\L$ by using $\L_+ \subset \L'$ and the dual of Lemma~\ref{induction}.
\end{proof}

Now we are ready to prove Theorem~\ref{tilting}.

\begin{proof}[Proof of Theorem~\ref{tilting}]
$T^{\ca}$ is a tilting object in $\DDD^{\bo}(\coh\X)$ by 
Propositions~\ref{vanishing statement} and \ref{generate}.

It remains to show $\End_{\X}(T^{\ca})\simeq A^{\ca}$.
Since $\Hom_{\X}(\OO(\x),\OO(\y))=R_{\y-\x}$ holds, we have
\[\End_{\X}(T^{\ca})=(R_{\y-\x})_{\x,\y\in[0,d\c]}=(A^{\ca})^{\op},\]
which is isomorphic to $A^{\ca}$ by
Proposition~\ref{I-canonical has finite global dimension}(c).
\end{proof}

We give a list of basic properties of the $d$-canonical algebras, where
$\nu_d:=(D\Lambda)[-d]\Lotimes_\Lambda-:\DDD^{\bo}(\mod\Lambda)
\to\DDD^{\bo}(\mod\Lambda)$ is the $d$-shifted Nakayama functor.

\begin{proposition}\label{basic of canonical algebra}
\begin{itemize}
\item[(a)] $A^{\ca}$ has finite global dimension.
\item[(b)] $A^{\ca}$ is isomorphic to its opposite algebra $(A^{\ca})^{\op}$.
\item[(c)] We have a triangle equivalence $T^{\ca}\Lotimes_{A^{\ca}}-:\DDD^{\bo}(\mod A^{\ca})\to\DDD^{\bo}(\coh\X)$ which makes the following diagram commutative up to natural isomorphism:
\[\xymatrix{
\DDD^{\bo}(\mod A^{\ca})\ar[rr]^{T^{\ca}\Lotimes_{A^{\ca}}-}
\ar[d]^{\nu_d}&&\DDD^{\bo}(\coh\X)\ar[d]^{(\w)}\\
\DDD^{\bo}(\mod A^{\ca})\ar[rr]^{T^{\ca}\Lotimes_{A^{\ca}}-}
&&\DDD^{\bo}(\coh\X).
}\]
\end{itemize}
\end{proposition}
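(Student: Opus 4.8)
\textbf{Proof proposal for Proposition \ref{basic of canonical algebra}.}

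The plan is to deduce all three statements from the tilting equivalence of Theorem \ref{tilting} together with general facts already collected in Section \ref{section: preliminaries}. For part (a), I would argue as follows. By Theorem \ref{tilting} the object $T^{\ca}=\bigoplus_{\x\in[0,d\c]}\OO(\x)$ is tilting, with $\End_\X(T^{\ca})\simeq A^{\ca}$; moreover $A^{\ca}=A^{[0,d\c]}$ is an $I$-canonical algebra for the finite convex set $I=[0,d\c]$, so it has finite global dimension directly by Proposition \ref{I-canonical has finite global dimension}(a). (Alternatively, one can note that $\coh\X$ has global dimension $d<\infty$ by Theorem \ref{Serre}(b) and that a tilting object in a derived category of an abelian category of finite global dimension forces the endomorphism algebra to have finite global dimension; but citing Proposition \ref{I-canonical has finite global dimension}(a) is cleanest since $[0,d\c]$ is convex.)

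For part (b), the set $[0,d\c]$ is an interval, so Proposition \ref{I-canonical has finite global dimension}(c) applied to $\x=0$, $\y=d\c$ gives an isomorphism of $k$-algebras $A^{[0,d\c]}\simeq(A^{[0,d\c]})^{\op}$, i.e. $A^{\ca}\simeq(A^{\ca})^{\op}$; this is immediate. For part (c), since $T^{\ca}$ is tilting in $\DDD^{\bo}(\coh\X)$ and $\coh\X$ is a Noetherian abelian category with finite Hom and Ext spaces (Theorem \ref{Serre}(a),(c),(d)), the standard tilting theorem of Keller (Proposition \ref{tilting theorem} together with the fact that $\DDD^{\bo}(\coh\X)$ is idempotent complete as the bounded derived category of a Noetherian abelian category) yields a triangle equivalence $\KKK^{\bo}(\proj A^{\ca})\simeq\DDD^{\bo}(\coh\X)$ sending $A^{\ca}$ to $T^{\ca}$; combining with Proposition \ref{derived category and global dimension}(a) and part (a) of the present proposition, this is the same as a triangle equivalence $T^{\ca}\Lotimes_{A^{\ca}}-:\DDD^{\bo}(\mod A^{\ca})\xrightarrow{\sim}\DDD^{\bo}(\coh\X)$. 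To identify the functor explicitly as $-\Lotimes$ rather than merely an abstract equivalence, one uses that under $\RHom_\X(T^{\ca},-)$ the object $T^{\ca}$ corresponds to $A^{\ca}$ and that the quasi-inverse is then $T^{\ca}\Lotimes_{A^{\ca}}-$ in the usual way.

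The only genuine content is the commutativity of the square. Both vertical functors are Serre functors (up to the shift $[d]$): on $\DDD^{\bo}(\mod A^{\ca})$ the functor $\nu=(D A^{\ca})\Lotimes_{A^{\ca}}-=\nu_d[d]$ is a Serre functor by Proposition \ref{Nakayama Serre} (using part (a)), while on $\DDD^{\bo}(\coh\X)$ the functor $(\w)[d]$ is a Serre functor by Theorem \ref{Serre}(f). A triangle equivalence intertwines Serre functors uniquely up to isomorphism, so $(T^{\ca}\Lotimes_{A^{\ca}}-)\circ\nu\simeq(\w)[d]\circ(T^{\ca}\Lotimes_{A^{\ca}}-)$; cancelling the common shift $[d]$ (which is a self-equivalence commuting with everything) gives $(T^{\ca}\Lotimes_{A^{\ca}}-)\circ\nu_d\simeq(\w)\circ(T^{\ca}\Lotimes_{A^{\ca}}-)$, which is the asserted commutative diagram. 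I expect the main (and only mildly delicate) point to be making precise that the equivalence produced by Proposition \ref{tilting theorem} can be taken to be $T^{\ca}\Lotimes_{A^{\ca}}-$ and is genuinely a triangle equivalence of the \emph{bounded derived categories} $\DDD^{\bo}(\mod A^{\ca})$ and $\DDD^{\bo}(\coh\X)$ (rather than of $\KKK^{\bo}(\proj\,\cdot\,)$ or of unbounded versions); this is a routine but slightly bookkeeping-heavy application of Propositions \ref{tilting theorem} and \ref{derived category and global dimension}(a), exactly as in the proof of Lemma \ref{property of stable tilting}.
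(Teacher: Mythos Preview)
Your proof is correct and follows essentially the same approach as the paper: parts (a) and (b) are reduced to Proposition \ref{I-canonical has finite global dimension}, and part (c) is obtained from the tilting equivalence (via Proposition \ref{tilting theorem}) together with the uniqueness of Serre functors. The paper's proof is terser but uses exactly the same ingredients in the same order.
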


\begin{proof}
(a)(b) These are shown in Proposition~\ref{I-canonical has finite global dimension}.

(c) Since the triangle functor $T^{\ca}\Lotimes_{A^{\ca}}-:\DDD^{\bo}(\mod A^{\ca})\to\DDD^{\bo}(\coh\X)$
sends a tilting object $A^{\ca}$ to a tilting object $T^{\ca}$, it is a 
triangle equivalence (cf.\ Proposition~\ref{tilting theorem}).
The diagram commutes by uniqueness of Serre functors.
\end{proof}

The above commutative diagram is useful to study further properties of
$d$-canonical algebras.
In particular, we apply it to study $d$-canonical algebras in the context 
of (almost) $d$-representation infinite algebras (see Definitions~\ref{define d-RI} and \ref{define almost d-RI}).

Our $d$-canonical algebras have the following property, where the part (c) is a version of
a result due to Buchweitz-Hille \cite{BuH}.

\begin{theorem}\label{global dimension}
Without loss of generality, we assume that $p_i\ge 2$ for all $i$.
\begin{itemize}
\item[(a)] We have
\[\gl A^{\ca}=\left\{\begin{array}{cc}
d&n \le d+1,\\
2d&n>d+1.
\end{array}\right.\]
\item[(b)] $A^{\ca}$ is an almost $d$-representation infinite algebra.
\item[(c)] $n \le d+1$ if and only if $A^{\ca}$ is a $d$-representation infinite algebra.
\end{itemize}
\end{theorem}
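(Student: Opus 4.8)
The three statements are tightly linked, so I would organize the argument around computing the cohomology of $\nu_d^{-i}(A^{\ca})$ for all $i\in\Z$, using the commutative diagram of Proposition~\ref{basic of canonical algebra}(c). Under that equivalence $T^{\ca}\Lotimes_{A^{\ca}}-$, the functor $\nu_d$ corresponds to the autoequivalence $(\w)$ of $\DDD^{\bo}(\coh\X)$, and $A^{\ca}$ corresponds to $T^{\ca}=\bigoplus_{\x\in[0,d\c]}\OO(\x)$. Hence
\[
H^j(\nu_d^{-i}(A^{\ca}))\ \text{corresponds to}\ \bigoplus_{\x\in[0,d\c]}H^j\bigl(\bigoplus_{\y\in[0,d\c]}\RHom_{\X}(\OO(\y),\OO(\x-i\w))\bigr),
\]
which is governed entirely by $\Ext^j_{\X}(\OO(\y),\OO(\x-i\w))$. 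By Proposition~\ref{vanishing}, these Ext-groups vanish unless $j=0$ or $j=d$: explicitly $\Ext^0_{\X}(\OO(\y),\OO(\z))=R_{\z-\y}$ and $\Ext^d_{\X}(\OO(\y),\OO(\z))=D(R_{\y-\z+\w})$, with all intermediate degrees zero. This immediately gives part (b): $H^j(\nu_d^{-i}(A^{\ca}))=0$ for all $i\in\Z$ and all $j\notin\{0,d\}$, so $A^{\ca}$ is almost $d$-representation infinite by Definition~\ref{define almost d-RI}.

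For part (a), I would invoke Proposition~\ref{d or 2d}: an almost $d$-representation infinite algebra has global dimension $d$ or $2d$, so it suffices to decide which. By Observation~\ref{gl dim}, $\gl A^{\ca}=\sup\{i\mid\Ext^i_{A^{\ca}}(DA^{\ca},A^{\ca})\neq0\}=\sup\{i\mid H^{i-d}(\nu_d^{-1}(A^{\ca}))\neq0\}$, so the question is whether $H^{-d}(\nu_d^{-1}(A^{\ca}))=0$ or not, i.e.\ whether $\Ext^0$-contributions appear in $\nu_d^{-1}(A^{\ca})$, equivalently whether $\gl A^{\ca}=d$ hinges on $H^0(\nu_d^{-1}(A^{\ca}))$ being zero. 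Chasing the dictionary above, $H^0(\nu_d^{-1}(A^{\ca}))$ corresponds to $\bigoplus_{\x,\y\in[0,d\c]}\Hom_{\X}(\OO(\y),\OO(\x-\w))=\bigoplus_{\x,\y}R_{\x-\y-\w}$. When $n\le d+1$ one has $\delta(\w)<0$ and $\w<0$ (indeed $R$ is Fano and $R$ is regular by Proposition~\ref{R is CI}(b)), and one checks $\x-\y-\w>0$ can occur — actually I would argue the other direction: when $n\le d+1$, the natural functor $\KKK^{\bo}(\proj A^{\ca})\to\DDD^{\bo}(\mod A^{\ca})$ identifies $\DDD^{\bo}(\coh\X)$ with a hereditary-like situation and one shows $\gl A^{\ca}\le d$ directly from the presentation of $A^{\ca}=A^{[0,d\c]}$ via Theorem~\ref{endomorphism convex} (with $n\le d+1$ and $R=k[X_1,\dots,X_n,T_n,\dots,T_d]$ a polynomial ring, so $A^{[0,d\c]}$ has only commutativity relations — it is a ``higher Beilinson'' type algebra of the kind in Theorem~\ref{Atilde theorem}, Baer's case). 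Then by Proposition~\ref{d or 2d}(a), global dimension exactly $d$ forces it to be $d$-representation infinite. Conversely when $n>d+1$, $R$ is not regular, so $\underline{\CM}^{\L}R\neq0$ by Proposition~\ref{when stable category is zero}; I would show that the nonvanishing of $\underline{\CM}^{\L}R$ (equivalently the presence of an $\Ext^d$-obstruction that cannot be absorbed) forces $H^{-d}(\nu_d^{-1}(A^{\ca}))\neq0$, hence $\gl A^{\ca}=2d$. A cleaner route: exhibit $\x,\y\in[0,d\c]$ with $\y-\x+\w\ge0$, i.e.\ $R_{\y-\x+\w}\neq0$, giving a nonzero $\Ext^{2d}_{A^{\ca}}(DA^{\ca},A^{\ca})$; this requires finding $\x,\y$ with $0\le\x,\y\le d\c$ and $\x-\y\le d\c+\w$, which by Lemma~\ref{order omega} is equivalent to $-\w\not\le\x-\y$ failing appropriately — and one can always do this when $n>d+1$ since then $[0,d\c]$ is strictly larger than $\L/\Z\w$ (cf.\ Proposition~\ref{compare [0,dc] with L/w}(a) fails).

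Part (c) then follows: by Proposition~\ref{d or 2d}(a), $d$-representation infinite $\Leftrightarrow$ almost $d$-representation infinite of global dimension $d$; combining (a) and (b), this happens exactly when $n\le d+1$. The main obstacle I anticipate is the careful bookkeeping in part (a) for the $n>d+1$ direction: one must produce an explicit pair $\x,\y\in[0,d\c]$ witnessing $R_{\y-\x+\w}\neq 0$ (equivalently $H^{d}$-contribution surviving in $\nu_d^{-1}(A^{\ca})$ at the right homological degree), and verify it is not killed in the complex $\RHom_{A^{\ca}}(DA^{\ca},A^{\ca})$ — i.e.\ that the relevant spectral sequence / total complex does not cancel it. The cleanest way to avoid subtle cancellation is to use the $n=d+2$ hypersurface case (where $[2]\simeq(\c)$ by Theorem~\ref{Buchweitz Eisenbud}(d) and everything is very explicit) as the base, and reduce the general $n>d+1$ case to it by an idempotent-subalgebra argument analogous to the proof of Theorem~\ref{characterize RF}, noting that a non-regular GL complete intersection always contains a hypersurface one among its ``coordinate'' subconfigurations. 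The $n\le d+1$ direction is comparatively routine once one identifies $A^{\ca}$ with Baer's algebra and cites Theorem~\ref{Atilde theorem}.
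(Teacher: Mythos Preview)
Your overall architecture matches the paper's: compute $H^j(\nu_d^{-i}(A^{\ca}))$ via the equivalence of Proposition~\ref{basic of canonical algebra}(c) and Proposition~\ref{vanishing}, deduce (b), then invoke Proposition~\ref{d or 2d} for (a) and (c). Parts (b) and (c) are fine.

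The gap is in the decisive step of part (a). You have the degree bookkeeping wrong. By Observation~\ref{gl dim}, the question is whether $\Ext^{2d}_{A^{\ca}}(DA^{\ca},A^{\ca})$ vanishes. Under the commutative diagram this is
\[
\Ext^{2d}_{A^{\ca}}(DA^{\ca},A^{\ca})=\Hom_{\DDD^{\bo}(\mod A^{\ca})}(A^{\ca},\nu_d^{-1}(A^{\ca})[d])=\Ext^d_{\X}(T^{\ca},T^{\ca}(-\w))=\bigoplus_{\x,\y\in[0,d\c]}D(R_{\x-\y+2\w}),
\]
so the relevant homology is $H^{d}(\nu_d^{-1}(A^{\ca}))$, not $H^0$ or $H^{-d}$, and the relevant graded piece is $R_{\x-\y+2\w}$, not $R_{\y-\x+\w}$. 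Your formula $R_{\y-\x+\w}$ is the one controlling $\Ext^d_{\X}(T^{\ca},T^{\ca})$, which vanishes identically because $T^{\ca}$ is tilting --- so testing it cannot distinguish the two cases. There is also no cancellation issue to worry about: the equivalence identifies $\Ext^{2d}_{A^{\ca}}(DA^{\ca},A^{\ca})$ literally with $\Ext^d_{\X}(T^{\ca},T^{\ca}(-\w))$, so no spectral sequence intervenes.

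Once the correct target $\bigoplus D(R_{\x-\y+2\w})$ is identified, the paper finishes by a one-line check in each direction, far simpler than your proposed routes via Theorem~\ref{Atilde theorem} or reduction to the hypersurface case. For $n>d+1$ take $\x=d\c$, $\y=0$: then $\x-\y+2\w=(2n-d-2)\c-2\sum_i\x_i=\sum_i(p_i-2)\x_i+(n-d-2)\c\ge0$ since all $p_i\ge2$, so $R_{\x-\y+2\w}\neq0$. For $n\le d+1$ one has $\w<0$; then for any $\x\in[0,d\c]$ Lemma~\ref{order omega} gives $\x+\w\not\ge0$, hence $\x-\y+2\w\not\ge-\y+\w$, and since $-\y+\w\le0$ this forces $\x-\y+2\w\not\ge0$, so every summand vanishes.
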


Although assertion (a) follows from Theorem~\ref{gl.dim of A^I},
we give a more conceptual proof using the properties of the category $\coh\X$.

\begin{proof}
By Proposition~\ref{basic of canonical algebra}(a)(c), the $d$-canonical algebra
$A^{\ca}$ has finite global dimension, and
\[\Hom_{\DDD^{\bo}(\mod A^{\ca})}(A^{\ca},\nu_d^j(A^{\ca})[i])
=\Ext^i_{\X}(T^{\ca},T^{\ca}(j\w)),\]
which is zero except $i=0$ or $i=d$ by Proposition~\ref{vanishing}.
Thus $A^{\ca}$ is almost $d$-representation infinite, and hence it has 
global dimension $d$ or $2d$ by Proposition~\ref{d or 2d}.
By Observation~\ref{gl dim}, $\gl A^{\ca}=d$ if and only if $\Ext^{2d}_{A^{\ca}}(DA^{\ca},A^{\ca})=0$.
Again by Proposition~\ref{vanishing}, we have
\begin{eqnarray*}
\Ext^{2d}_{A^{\ca}}(DA^{\ca},A^{\ca})
&=&\Hom_{\DDD^{\bo}(\mod A^{\ca})}(A^{\ca},\nu_d^{-1}(A^{\ca})[d])
=\Ext^d_{\X}(T^{\ca},T^{\ca}(-\w))\\
&=&\bigoplus_{\x,\y\in[0,d\c]}D(R_{\x-\y+2\w}).
\end{eqnarray*}
By Observation~\ref{basic results on L}(c), it remains to show that $\x-\y+2\w\not\ge0$ for all $\x,\y\in[0,d\c]$ if and only if $n \le d+1$.

Since $\x-\y+2\w$ takes the maximum value when $\x=d\c$ and $\y=0$, the former condition is equivalent to $d\c+2\w\not\ge0$.
This is equivalent to $n\le d+1$ since the normal form of $\de=d\c+2w$ is given by \eqref{another delta}.
\end{proof}

We give an explicit description of the $(d+1)$-preprojective 
algebra (see Section~\ref{section: preliminaries 2}) of the $d$-canonical algebra under the assumption that $\X$ is not $d$-Calabi-Yau.

\begin{proposition}\label{Pi and covering}
Let $\X$ be a GL projective space which is not Calabi-Yau.
Let $R^{[\Z\w]}$ be the covering of $R$ (Definition~\ref{define covering}), and $e$ the idempotent of $R^{[\Z\w]}$
corresponding to the image of the map $[0,d\c]\to\L/\Z\w$.
\begin{itemize}
\item[(a)] The $(d+1)$-preprojective algebra $\Pi$ of $A^{\ca}$ is 
Morita equivalent to $eR^{[\Z\w]}e$.
\item[(b)] Let $S\subset[-d\c,0]$ be a complete set of 
representatives of $([-d\c,0]+\Z\w)/\Z\w$. For $e_S:=\sum_{\x\in S}e_{-\x}\in A^{\ca}$, there is an isomorphism $e_S\Pi e_S\simeq eR^{[\Z\w]}e$ of $\Z$-graded $k$-algebras.
\end{itemize}
\end{proposition}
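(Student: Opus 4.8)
The plan is to use the commutative square from Proposition~\ref{basic of canonical algebra}(c), which identifies $\nu_d$ on $\DDD^{\bo}(\mod A^{\ca})$ with the degree shift $(\w)$ on $\DDD^{\bo}(\coh\X)$ under the equivalence $T^{\ca}\Lotimes_{A^{\ca}}-$. Unwinding the definition of the preprojective algebra,
\[
\Pi(A^{\ca})=\bigoplus_{\ell\in\Z}\Hom_{\DDD^{\bo}(\mod A^{\ca})}(A^{\ca},\nu_d^\ell(A^{\ca}))
\cong\bigoplus_{\ell\in\Z}\Hom_{\X}(T^{\ca},T^{\ca}(\ell\w)),
\]
where the right-hand side carries the induced multiplication. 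Using $T^{\ca}=\bigoplus_{\x\in[0,d\c]}\OO(\x)$ and $\Hom_{\X}(\OO(\x),\OO(\y+\ell\w))=R_{\y-\x+\ell\w}$ from Proposition~\ref{vanishing}, we get
\[
\Pi(A^{\ca})\cong\bigoplus_{\ell\in\Z}\;\bigoplus_{\x,\y\in[0,d\c]}R_{\y-\x+\ell\w}
=\bigoplus_{\x,\y\in[0,d\c]}\bigoplus_{\ell\in\Z}R_{\y-\x+\ell\w}.
\]
The first step is to recognize this graded $k$-algebra, with its matrix multiplication indexed by $[0,d\c]$ and its internal $\Z$-grading by $\ell$, as (a full idempotent corner of) the covering $R^{[\Z\w]}$ of $R$ in the sense of Definition~\ref{define covering}: here the subgroup is $\Z\w$, and one chooses a complete set of representatives of $\L/\Z\w$ containing the image of $[0,d\c]$ (possible since $\X$ is not Calabi-Yau, so $\w$ is not torsion and the map $[0,d\c]\to\L/\Z\w$ is injective — cf.\ Proposition~\ref{compare [0,dc] with L/w}). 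Picking out the rows/columns indexed by the image of $[0,d\c]$ gives precisely the corner $eR^{[\Z\w]}e$, with its $\Z$-grading matching the $\ell$-grading above. This yields part~(b) directly: $e_S\Pi(A^{\ca})e_S\simeq eR^{[\Z\w]}e$ as $\Z$-graded algebras (the role of $e_S$ is to select, inside $\Pi(A^{\ca})$, the corner whose vertices biject with the image of $[0,d\c]$ in $\L/\Z\w$; note $[-d\c,0]=-[0,d\c]$, matching the conventions in the definition of $A^{\ca}$ via $\End_\X(T^{\ca})=(A^{\ca})^{\op}\cong A^{\ca}$).

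For part~(a), the point is that $e$ is a \emph{full} idempotent of $R^{[\Z\w]}$, i.e.\ $R^{[\Z\w]}eR^{[\Z\w]}=R^{[\Z\w]}$; equivalently, every primitive idempotent of $R^{[\Z\w]}$ (indexed by $\L/\Z\w$) is connected to the chosen corner. This should follow from the fact that the image of $[0,d\c]$ generates $\L/\Z\w$ under adding the classes of the $\x_i$ and subtracting — more precisely, one checks that for every $\z\in\L$ there is $\ell$ with $\z+\ell\w\in[0,d\c]+\L_+$ or symmetrically, so that the relevant $\Hom$-spaces connecting an arbitrary vertex to a vertex in $[0,d\c]$ are nonzero; this is essentially the generation argument already used in Proposition~\ref{generate}. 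Fullness of $e$ then gives a Morita equivalence $eR^{[\Z\w]}e\sim R^{[\Z\w]}$ in the ungraded sense. Since, as an ungraded algebra, $\Pi(A^{\ca})\cong eR^{[\Z\w]}e$ by (b) (forgetting the grading), this shows $\Pi(A^{\ca})$ is Morita equivalent to $eR^{[\Z\w]}e$, which is what (a) asserts; alternatively one can phrase (a) as the statement that $\Pi(A^{\ca})$ and $eR^{[\Z\w]}e$ have equivalent module categories, which is immediate from (b).

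\textbf{Main obstacle.} The routine part is the identification of $\Hom$-spaces and multiplication; the one place needing care is matching the \emph{multiplicative structure} on $\Pi(A^{\ca})$ — where composition involves applying powers of $\nu_d$ — with the matrix multiplication in $R^{[\Z\w]}$, i.e.\ verifying that the twist $f\cdot g=f\,\nu_d^\ell(g)$ translates under Proposition~\ref{basic of canonical algebra}(c) into the multiplication $(a_{\x,\y})\cdot(a'_{\x,\y})=(\sum_{\z}a_{\x,\z}a'_{\z,\y})$ of Definition~\ref{define covering} with the degree bookkeeping coming out right. This is bookkeeping rather than a genuine difficulty, but it is where the proof has to be written carefully; everything else (the non-Calabi-Yau hypothesis ensuring injectivity of $[0,d\c]\to\L/\Z\w$, and the generation argument for fullness of $e$) has already been established earlier in the paper.
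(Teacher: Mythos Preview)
Your approach to (b) is essentially the paper's: transport $\nu_d$ to $(\w)$ via Proposition~\ref{basic of canonical algebra}(c), compute $\Hom$-spaces via Proposition~\ref{vanishing}, and match against Definition~\ref{define covering}. That part is fine, and the ``main obstacle'' you flag (compatibility of the twisted multiplication with matrix multiplication in the covering) is indeed routine bookkeeping.

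There is, however, a genuine error that propagates into your proof of (a). You claim that since $\X$ is not Calabi-Yau, the map $[0,d\c]\to\L/\Z\w$ is injective. This is false. Non--Calabi-Yau only tells you $\w$ is not torsion, so $\L/\Z\w$ is finite; it says nothing about injectivity. In fact the map is typically \emph{not} injective: in the Fano case with $n>d+1$ it is surjective (Proposition~\ref{compare [0,dc] with L/w}(b)) onto a group strictly smaller than $|[0,d\c]|$ (compare the rank formula in Theorem~\ref{Orlov-type}); in the anti-Fano example $d=1$, $(p_1,p_2,p_3)=(2,3,7)$ one has $\L=\Z\w$, so $\L/\Z\w=0$ and the map collapses all of $[0,\c]$ to a point. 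This is precisely why the statement of (b) requires a \emph{choice} of representatives $S$.

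Because of this, your argument for (a) does not go through. You write ``$\Pi(A^{\ca})\cong eR^{[\Z\w]}e$ by (b)'', but (b) only gives $e_S\Pi(A^{\ca})e_S\cong eR^{[\Z\w]}e$; these differ exactly when the map is not injective. What is actually needed for (a) is that $e_S$ is a \emph{full} idempotent in $\Pi(A^{\ca})$, i.e.\ that $\Pi(A^{\ca})e_S$ is a progenerator. The paper proves this directly and rather elegantly: for any $\x,\y\in[-d\c,0]$ with $\x-\y\in\Z\w$, one has
\[
\Pi(A^{\ca})e_{-\x}\simeq\bigoplus_{\ell\in\Z}\Hom_{\X}(T^{\ca},\OO(-\x+\ell\w))
\simeq\bigoplus_{\ell\in\Z}\Hom_{\X}(T^{\ca},\OO(-\y+\ell\w))\simeq\Pi(A^{\ca})e_{-\y}
\]
as $\Pi(A^{\ca})$-modules (the middle isomorphism is just reindexing the sum), so every indecomposable projective $\Pi(A^{\ca})$-module already lies in $\add\Pi(A^{\ca})e_S$. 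Your detour into showing $e$ is full in $R^{[\Z\w]}$ is aimed at the wrong target: that would prove $eR^{[\Z\w]}e\sim R^{[\Z\w]}$, which is not what (a) asserts (and in the paper is only claimed later, in the Fano case, where it holds trivially because $e=1$).
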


\begin{proof}
(b) For any $\ell\in\Z$, the degree $\ell$ part of $e_S\Pi e_S$ is given by
\begin{eqnarray*}
&&(e_S\Pi e_S)_{\ell}
=\Hom_{\DDD^{\bo}(\mod A^{\ca})}(A^{\ca}e_S,\nu_d^{\ell}(A^{\ca}e_S))
\simeq\Hom_{\X}(T^{\ca}e_S,T^{\ca}e_S(\ell\w))\\
&=&(\Hom_{\X}(\OO(-\x),\OO(-\y+\ell\w)))_{\x,\y\in S}
=(R_{\x-\y+\ell\w})_{\x,\y\in S}=(eR^{[\Z\w]}e)_{\ell}.
\end{eqnarray*}
Thus the assertion follows.

(a) It suffices to show that $e_S\Pi e_S$ is Morita equivalent to $\Pi$.

For any $\x\in[-d\c,0]$, there exists $\y\in S$ such that $\x-\y\in\Z\w$.
We have an isomorphism
\[\Pi e_{-\x}\simeq\bigoplus_{\ell\in\Z}\Hom_{\X}(T^{\ca},\OO(-\x+\ell\w))\simeq
\bigoplus_{\ell\in\Z}\Hom_{\X}(T^{\ca},\OO(-\y+\ell\w))\simeq\Pi e_{-\y}\]
of projective $\Pi$-modules. Thus $\Pi e_S$ is a progenerator of $\Pi$, and
we have the assertion.
\end{proof}

As an immediate consequence, we have the following result.

\begin{theorem}
Let $\X$ be a GL projective space which is not Calabi-Yau, $A^{\ca}$ 
the $d$-canonical algebra and $\Pi$ the $(d+1)$-preprojective algebra of $A^{\ca}$.
\begin{itemize}
\item[(a)] The center of $\Pi$ is the Veronese subring $R^{(\w)}$ of $R$,
and $\Pi$ is a finitely generated $R^{(\w)}$-module. In particular $\Pi$ is a Noetherian algebra.
\item[(b)] If $n\le d+1$, then we have an isomorphism
$\Pi\simeq R^{[\Z\w]}$ of $\Z$-graded $k$-algebras.
In particular, $A^{\ca}$ is a $d$-representation tame algebra.
\item[(c)] If $\X$ is Fano, then $\Pi$ is Morita equivalent to $R^{[\Z\w]}$.
\end{itemize}
\end{theorem}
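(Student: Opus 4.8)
The plan is to derive all three statements formally from Proposition~\ref{Pi and covering}, feeding in the elementary facts about the map $[0,d\c]\to\L/\Z\w$ recorded in Proposition~\ref{compare [0,dc] with L/w}, and using throughout that $\X$ is not Calabi-Yau, so that $\L/\Z\w$ is finite by Proposition~\ref{compare [0,dc] with L/w}(c).

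For (a) I would start from Proposition~\ref{Pi and covering}(a), which says $\Pi(A^{\ca})$ is Morita equivalent to $eR^{[\Z\w]}e$; since Morita equivalent rings have isomorphic centers, it suffices to compute $Z(eR^{[\Z\w]}e)$ and to establish module-finiteness there. Writing $eR^{[\Z\w]}e$ out explicitly as $\left(\bigoplus_{\ell\in\Z}R_{\x-\y+\ell\w}\right)_{\x,\y\in I'}$ for a set $I'$ of representatives of the image of $[0,d\c]$ in $\L/\Z\w$, one identifies a central element with a ``constant diagonal'' element whose diagonal piece lies in $R^{(\Z\w)}=\bigoplus_{\ell\in\Z}R_{\ell\w}$; this uses only that $R$ is an $\L$-domain (Theorem~\ref{L-factorial L-domain}) and that every off-diagonal bimodule $\bigoplus_{\ell}R_{\x-\y+\ell\w}$ is non-zero, which holds because for any $\z\in\L$ one has $\z+\ell\w\ge0$ once $\delta(\z+\ell\w)$ is large and $\delta(\w)\neq0$. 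Module-finiteness of $eR^{[\Z\w]}e$ over $R^{(\Z\w)}$ reduces to that of $R$ over $R^{(\Z\w)}$, which follows from $|\L/\Z\w|<\infty$ by standard graded ring theory \cite{NV}; as $R^{(\Z\w)}$ is Noetherian, $\Pi(A^{\ca})$ is then module-finite over a commutative Noetherian ring, hence a Noetherian algebra.

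For (b), when $n\le d+1$ both $[0,d\c]\to\L/\Z\w$ and $[-d\c,0]\to\L/\Z\w$ are bijective by Proposition~\ref{compare [0,dc] with L/w}(a); consequently the idempotent $e$ in Proposition~\ref{Pi and covering} is the identity of $R^{[\Z\w]}$ and the set $S$ there is all of $[-d\c,0]$, so $e_S=1_{A^{\ca}}$, and Proposition~\ref{Pi and covering}(b) gives the $\Z$-graded isomorphism $\Pi(A^{\ca})=e_S\Pi(A^{\ca})e_S\simeq eR^{[\Z\w]}e=R^{[\Z\w]}$. Since $n\le d+1$ forces $A^{\ca}$ to be $d$-representation infinite by Theorem~\ref{global dimension}(c), part (a) then shows its preprojective algebra has Noetherian center over which it is module-finite, i.e.\ $A^{\ca}$ is $d$-representation tame. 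For (c), when $\X$ is Fano the map $[0,d\c]\to\L/\Z\w$ is surjective by Proposition~\ref{compare [0,dc] with L/w}(b), so the idempotent $e$ of Proposition~\ref{Pi and covering}(a) is full; hence $eR^{[\Z\w]}e$ is Morita equivalent to $R^{[\Z\w]}$, and so is $\Pi(A^{\ca})$ by Proposition~\ref{Pi and covering}(a).

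The only step that is not pure bookkeeping is the center computation in (a): one must confirm that in the non-Calabi-Yau case all the relevant off-diagonal bimodules are non-zero, so that the centre of the corner algebra collapses to $R^{(\Z\w)}$, and keep careful track of the $\Z$-gradings when passing between $\Pi(A^{\ca})$, its corner $e_S\Pi(A^{\ca})e_S$, and $R^{[\Z\w]}$. I expect this to be the main technical point, but it is routine given the $\L$-domain property already established.
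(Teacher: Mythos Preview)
Your proposal is correct and follows essentially the same route as the paper: all three parts are deduced from Proposition~\ref{Pi and covering} together with the surjectivity/bijectivity statements in Proposition~\ref{compare [0,dc] with L/w}, exactly as the paper does. The only difference is that for (a) you spell out why the center of $eR^{[\Z\w]}e$ collapses to the constant diagonal $R^{(\Z\w)}$ (using the $\L$-domain property and nonvanishing of the off-diagonal bimodules), whereas the paper simply asserts this is ``clearly the diagonal $R^{(\Z\w)}$''; your elaboration is a legitimate justification of that step.
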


\begin{proof}
(a) By Proposition~\ref{Pi and covering}(a), the center of
$\Pi$ is isomorphic to that of $eR^{[\Z\w]}e$, which is
clearly the diagonal $R^{(\w)}$ of $eR^{[\Z\w]}e$.
Since $R$ is a finitely generated $R^{(\w)}$-module,
the remaining assertion follows immediately.

(b) By Proposition~\ref{compare [0,dc] with L/w}(b), we have
that $S:=[-d\c,0]$ itself gives a complete set of representatives and
hence $e_S=1$ and $e=1$.
Thus the assertion follows from Proposition~\ref{Pi and covering}(b).

(c) By Proposition~\ref{compare [0,dc] with L/w}(c), we have $e=1$.
Thus the assertion follows from Proposition~\ref{Pi and covering}(a).
\end{proof}

In the rest of this section, we give examples of $d$-canonical algebras using 
the quiver presentations given in Theorem~\ref{endomorphism convex}. Hence we 
work under Setting~\ref{minimal presentation}. 

\begin{example}\label{canonical example}
For $d=1$ we obtain the classical canonical algebras \cite{Rin,GL}.
More explicitly, the $1$-canonical algebra of type $(p_1, \ldots , p_n)$ has the quiver
\[
\begin{xy} 0;<5pt,0pt>:<0pt,5pt>:: 
(0,0) *+{0} ="0",
(10,10) *+{\x_1} ="1",
(10,5) *+{\x_2} ="2",
(10,0) *+{\vdots} ="e",
(10,-5) *+{\x_n} ="n",
(20,10) *+{2\x_1} ="11",
(20,5) *+{2\x_2} ="22",
(20,0) *+{\vdots} ="ee",
(20,-5) *+{2\x_n} ="nn",
(30,10) *+{\cdots} ="1e1",
(30,5) *+{\cdots} ="2e2",
(30,0) *+{\vdots} ="eee",
(30,-5) *+{\cdots} ="nen",
(43,10) *+{(p_1-1)\x_1} ="p1",
(43,5) *+{(p_2-1)\x_2} ="p2",
(43,0) *+{\vdots} ="pe",
(43,-5) *+{(p_n-1)\x_n} ="pn",
(55,0) *+{\c} ="c",
"0", {\ar^{x_1}"1"},
"0", {\ar^{x_2}"2"},
"0", {\ar^{x_n}"n"},
"1", {\ar^{x_1}"11"},
"2", {\ar^{x_2}"22"},
"n", {\ar^{x_n}"nn"},
"11", {\ar^{x_1}"1e1"},
"22", {\ar^{x_2}"2e2"},
"nn", {\ar^{x_n}"nen"},
"1e1", {\ar^{x_1\rule{15pt}{0pt}}"p1"},
"2e2", {\ar^{x_2\rule{15pt}{0pt}}"p2"},
"nen", {\ar^{x_n\rule{15pt}{0pt}}"pn"},
"p1", {\ar^{x_1}"c"},
"p2", {\ar^{x_2}"c"},
"pn", {\ar^{x_n}"c"},
\end{xy}
\]
with relations $x_i^{p_i}=\lambda_{i0}x_1^{p_1}+\lambda_{i1}x_2^{p_2}$ for any $i$ with $3\le i\le n$.
\end{example}

\begin{example}\label{Beilinson example}
The $d$-canonical algebra of type $(1,\ldots, 1)$ (where $n = d+1$) is isomorphic to the $d$-Beilinson algebra \cite{Be} and has the quiver
\[\xymatrix{
0 \ar@/^1pc/[r]^{x_1}_{\scalebox{0.85}{$\vdots$}}\ar@/_1pc/[r]_{x_{d+1}}
& \c \ar@/^1pc/[r]^{x_1}_{\scalebox{0.85}{$\vdots$}}\ar@/_1pc/[r]_{x_{d+1}}
& 2\c 
}\cdots\xymatrix{
(d-1)\c \ar@/^1pc/[r]^{x_1}_{\scalebox{0.85}{$\vdots$}}\ar@/_1pc/[r]_{x_{d+1}}
& d\c
}\]
with relations $x_ix_j=x_jx_i$ for any $i$ and $j$ with $1\le i<j\le d+1$.
\end{example}

The $d$-canonical algebras with $d+1$ weights (or less) will be treated in detail in the end of this section. The smallest example with more than $d+1$ weights is considered in the next example.

\begin{example}\label{2222-2} 
The $2$-canonical algebra of type $(2,2,2,2)$ has the quiver
\[
\begin{xy} 0;<3.5pt,0pt>:<0pt,3.5pt>:: 
(-40,0) *+{0} ="0",
(-20,20) *+{\x_1} ="1",
(-20,7) *+{\x_2} ="2",
(-20,-7) *+{\x_3} ="3",
(-20,-20) *+{\x_4} ="4",
(0,26) *+{\x_1+\x_2} ="12",
(0,14) *+{\x_1+\x_3} ="13",
(0,7) *+{\x_1+\x_4} ="14",
(0,-7) *+{\x_2+\x_3} ="23",
(0,-14) *+{\x_2+\x_4} ="24",
(0,-26) *+{\x_3+\x_4} ="34",
(0,0) *+{\c} ="c",
(20,20) *+{\x_1+\c} ="c1",
(20,7) *+{\x_2+\c} ="c2",
(20,-7) *+{\x_3+\c} ="c3",
(20,-20) *+{\x_4+\c} ="c4",
(40,0) *+{2\c} ="2c",
"0", {\ar"1"},
"0", {\ar"2"},
"0", {\ar"3"},
"0", {\ar"4"},
"1", {\ar"c"},
"1", {\ar"12"},
"1", {\ar"13"},
"1", {\ar"14"},
"2", {\ar"12"},
"2", {\ar"c"},
"2", {\ar"23"},
"2", {\ar"24"},
"3", {\ar"13"},
"3", {\ar"23"},
"3", {\ar"c"},
"3", {\ar"34"},
"4", {\ar"14"},
"4", {\ar"24"},
"4", {\ar"34"},
"4", {\ar"c"},
"12", {\ar"c2"},
"12", {\ar"c1"},
"13", {\ar"c3"},
"13", {\ar"c1"},
"14", {\ar"c4"},
"14", {\ar"c1"},
"23", {\ar"c3"},
"23", {\ar"c2"},
"24", {\ar"c4"},
"24", {\ar"c2"},
"34", {\ar"c4"},
"34", {\ar"c3"},
"c", {\ar"c1"},
"c", {\ar"c2"},
"c", {\ar"c3"},
"c", {\ar"c4"},
"c1", {\ar"2c"},
"c2", {\ar"2c"},
"c3", {\ar"2c"},
"c4", {\ar"2c"},
\end{xy}
\]
with relations $\sum_{i=1}^4\lambda_ix_i^4=0$ and $x_ix_j=x_jx_i$ for any $i$ and $j$ with $1\le i<j\le 4$.
\end{example}

For any two element subset $\{i,j\} \subset \{1,2,3,4\}$ we get a full subquiver of the quiver in Example~\ref{2222-2} by identifying the two vertices labeled $\c$ in the following quiver
\[
\begin{xy} 0;<4pt,0pt>:<0pt,4pt>:: 
(0,0) *+{0} ="0",
(10,0) *+{\x_i} ="1",
(20,0) *+{\c} ="c",
(0,7) *+{\x_j} ="2",
(10,7) *+{\x_i+\x_j} ="12",
(20,7) *+{\x_j+\c} ="c2",
(0,14) *+{\c} ="ca",
(10,14) *+{\x_i+\c} ="c1",
(20,14) *+{2\c} ="2c",
"0", {\ar"1"},
"0", {\ar"2"},
"1", {\ar"c"},
"1", {\ar"12"},
"2", {\ar"ca"},
"2", {\ar"12"},
"ca", {\ar"c1"},
"12", {\ar"c1"},
"12", {\ar"c2"},
"c", {\ar"c2"},
"c1", {\ar"2c"},
"c2", {\ar"2c"},
\end{xy}
\]
In fact, the whole quiver is the union of these subquivers.

\begin{observation}
\begin{itemize}
\item[(a)] The quiver of any $2$-canonical algebra of type
$(p_1,\ldots, p_n)$ is the union of the full subquivers parametrized
by two element subsets $\{i,j\} \subset \{1,\ldots, n\}$, that are obtained from
\[
\begin{xy} 0;<4pt,0pt>:<0pt,4pt>:: 
(0,0) *+{0} ="0",
(10,0) *+{\x_i} ="1",
(20,0) *+{\cdots} ="1e",
(30,0) *+{\c} ="c",
(0,7) *+{\x_j} ="2",
(10,7) *+{\x_i+\x_j} ="12",
(20,7) *+{\cdots} ="12e",
(30,7) *+{\x_j+\c} ="c2",
(0,14) *+{\vdots} ="2d",
(10,14) *+{\vdots} ="12d",
(20,14) *+{\vdots} ="dd",
(30,14) *+{\vdots} ="c2d",
(0,21) *+{\c} ="ca",
(10,21) *+{\x_i+\c} ="c1",
(20,21) *+{\cdots} ="c1e",
(30,21) *+{2\c} ="2c",
"0", {\ar"1"},
"0", {\ar"2"},
"1", {\ar"1e"},
"1", {\ar"12"},
"2", {\ar"2d"},
"2", {\ar"12"},
"ca", {\ar"c1"},
"12", {\ar"12e"},
"12", {\ar"12d"},
"c", {\ar"c2"},
"c1", {\ar"c1e"},
"c2", {\ar"c2d"},
"1e", {\ar"c"},
"12e", {\ar"c2"},
"12d", {\ar"c1"},
"2d", {\ar"ca"},
"c1e", {\ar"2c"},
"c2d", {\ar"2c"},
\end{xy}
\]
by identifying the two vertices labeled $\c$.

\item[(b)] For $d > 2$ a similar construction can be carried out replacing
the above rectangles by $d$ dimensional parallelepipeds.
\end{itemize}
\end{observation}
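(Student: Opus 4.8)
The final statement to prove is the Observation describing the quiver of a general $2$-canonical algebra (and its higher-dimensional analog) as a union of full subquivers parametrized by pairs $\{i,j\}$, obtained from a rectangular grid by identifying two distinguished vertices labelled $\c$. Here is the plan.

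\textbf{Setup.} By Theorem \ref{endomorphism convex}, under Assumption \ref{minimal presentation} the $d$-canonical algebra $A^{\ca}=A^{[0,d\c]}$ is presented by the quiver $Q^{\ca}:=Q^{[0,d\c]}$ whose vertices are the elements of the interval $[0,d\c]\subset\L$, whose arrows $x_i\colon\x\to\x+\x_i$ exist precisely when both $\x$ and $\x+\x_i$ lie in $[0,d\c]$, with commutativity relations and the relations $x_i^{p_i}=\sum_{j=1}^{d+1}\lambda_{i,j-1}x_j^{p_j}$. So the whole claim is purely combinatorial: it is a description of the poset interval $[0,d\c]$ together with the ``elementary moves'' $+\x_i$. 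First I would restate everything in terms of the normal form of Observation \ref{basic results on L}(a): writing $\x=\sum_{i=1}^n a_i\x_i + a\c$ with $0\le a_i<p_i$, Lemma \ref{order omega} says $\x\in[0,d\c]$ if and only if $a\ge0$ and $a+|\{i\mid a_i>0\}|\le d$.

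\textbf{Key steps.} First, for each two-element subset $\{i,j\}\subset\{1,\dots,n\}$ I would define the ``$(i,j)$-slab'' as the full subquiver of $Q^{\ca}$ on the set of vertices $\x=a_i\x_i+a_j\x_j+a\c$ with $0\le a_i<p_i$, $0\le a_j<p_j$, $a\ge0$ and $a_i+a_j+a\le d$ (that is, vertices whose normal form involves only $\x_i,\x_j,\c$). One checks directly that this set is closed under the only arrows that can originate or terminate inside it, namely the $x_i$-, $x_j$- and $x_\ell$ (via the relation defining $\c$, i.e.\ the arrows ``$+\c$'' are captured through the $x$-arrows or through relations)—more carefully: in the $d=2$ case the only arrows of $Q^{\ca}$ are the $x_\ell$'s, and an $x_\ell$-arrow with $\ell\notin\{i,j\}$ out of an $(i,j)$-slab vertex $\x$ lands at $\x+\x_\ell$ whose normal form acquires a new $\x_\ell$-summand, hence (since $d=2$) forces the vertex to lie on the boundary face $a_i+a_j+a=2$ with $a=0$; such arrows are the ``identification'' arrows at the top $2\c$ corner and are shared between all slabs. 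Then I would show: (1) every vertex of $Q^{\ca}$ lies in at least one $(i,j)$-slab — because if $\x=\sum a_i\x_i+a\c\in[0,d\c]$ then $|\{i\mid a_i>0\}|\le d-a\le d=2$, so at most two of the $a_i$ are nonzero, and choosing $\{i,j\}$ to contain their support works; (2) every arrow lies in some slab — the source and target of an arrow $x_\ell\colon\x\to\x+\x_\ell$ together involve at most two distinct weight-indices with nonzero coefficient unless we are in the ``$\c$-relation'' boundary case, and in that case the arrow is one of the identification arrows; and (3) the slab, as an abstract quiver, is exactly the rectangle displayed in the Observation with the two vertices labelled $\c$ identified — this is the explicit index bookkeeping: the rectangle has rows indexed by $0,\x_j,2\x_j,\dots,(p_j-1)\x_j,\c$ and columns $0,\x_i,\dots,(p_i-1)\x_i,\c$, and the two $\c$-labelled vertices (bottom-right of the $x_i$-row and top-left of the $x_j$-column) are the same element $\c\in\L$.

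\textbf{Higher $d$.} For part (b), the $(i,j)$-slab is replaced by the ``$d$-dimensional parallelepiped'' on vertices using indices from a fixed $d$-element subset $J\subset\{1,\dots,n\}$, glued along the appropriate faces; the same two facts (every vertex has normal-form support of size $\le d$; every arrow stays within such a support up to the relation defining $\c$) go through verbatim from Lemma \ref{order omega}. I would phrase the gluing precisely as: $Q^{\ca}$ is the colimit (pushout along common faces) of the parallelepiped quivers indexed by $d$-subsets $J$.

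\textbf{Main obstacle.} The genuine content is not the vertex count but bookkeeping the \emph{arrows} correctly at the boundary, i.e.\ making precise in exactly which slab each ``$+\c$''-type arrow and each boundary arrow lives, and checking the gluing is along \emph{full} subquivers so that no arrow or relation is created or destroyed in the union. This is where one must be careful that the relations $x_\ell^{p_\ell}=\sum\lambda_{\ell,j-1}x_j^{p_j}$ (for $\ell>d+1$) do not secretly link vertices in a way not visible from a single slab; but since under Assumption \ref{minimal presentation} these relations only appear as morphisms $\x\to\x+\c$ with $\x\in[0,d\c]\cap([0,d\c]-\c)$, and $\x$ together with $\x+\c$ has normal-form support of size $\le d-1<d$ in each of the $n-d-1$ ``new'' coordinates appearing, one sees each such relation already lives inside every parallelepiped containing its source, so nothing is lost. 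I would therefore expect the write-up to consist mostly of a clean lemma: \emph{for $\x\in[0,d\c]$, $|\{i\mid a_i\ne0\}|\le d$}, followed by the observation that this is exactly the combinatorial statement claimed.

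Since the Observation is stated without proof in the paper (it is flagged as an ``Observation'' and the preceding \verb|\begin{example}|...\verb|\end{example}| blocks do the $(2,2,2,2)$ case by hand), the intended justification is precisely this routine normal-form analysis, and I would present it as such rather than belabouring the combinatorics.
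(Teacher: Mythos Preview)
Your approach is correct and is precisely the routine normal-form bookkeeping the paper intends; the Observation is stated without proof there, so there is nothing to compare against beyond the worked Example~\ref{2222-2}. The heart of the argument is exactly your use of Lemma~\ref{order omega}: for $\x=\sum a_i\x_i+a\c\in[0,d\c]$ one has $a+|\{i\mid a_i>0\}|\le d$, so when $d=2$ every vertex has support contained in some pair $\{i,j\}$, and a short case split (according to whether the arrow $x_\ell$ carries, i.e.\ whether $a_\ell=p_\ell-1$) shows that for every arrow the union $\operatorname{supp}(\x)\cup\operatorname{supp}(\x+\x_\ell)\cup\{\ell\}$ also has size at most $2$. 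That is the whole proof.

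Two points in your write-up are muddled and should be cleaned up. First, there is no ``$\c$-relation boundary case'' for arrows: the arrows of $Q^{[0,2\c]}$ are exactly the $x_\ell$, and your claim (2) holds with no exception. An $x_\ell$-arrow with $\ell\notin\{i,j\}$ starting at an $(i,j)$-slab vertex does not stay in that slab at all (its target has $\ell$ in its support, assuming $p_\ell\ge2$); it is simply not an arrow of the full subquiver on the $(i,j)$-slab, but it does lie in some slab containing $\ell$. Second, your last paragraph about relations is both unnecessary and incorrect: the Observation concerns only the quiver, and in any case a relation $x_\ell^{p_\ell}=\sum_j\lambda_{\ell,j-1}x_j^{p_j}$ at a vertex $\x=a_m\x_m$ involves paths $x_j^{p_j}$ that pass through the $(m,j)$-slab for varying $j$, so such a relation does \emph{not} live inside a single parallelepiped.
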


In the rest of this section we treat the case $n\le d+1$ in detail,
which is precisely the case studied by Baer \cite{Ba}. 
In this case $A^{\ca}$ is $d$-representation infinite by
Theorem~\ref{global dimension}.
A class of $d$-representation infinite algebras called
\emph{type $\widetilde{\A}$} was introduced in \cite{HIO}.
We will show that $d$-canonical algebras for the case $n\le d+1$
are $d$-representation infinite algebras of type $\widetilde{\A}$.

\medskip\noindent
{\bf $d$-representation infinite algebras of type $\widetilde{\A}$.}
Let $L$ be the root lattice of the root system
\[\{e_i-e_j \mid 1 \le i \neq j \le d+1\}\]
of type $\A_d$ in $\{v \in \R^{d+1} \mid \sum_{i=1}^{d+1} v_i  = 0\}$.
The abelian group $L$ is freely generated by the simple roots
$\alpha_i = e_i - e_{i-1}$, where $2 \le i \le d+1$.
We further define $\alpha_1  = e_1 - e_{d+1}$ and obtain the relation $\sum_{i=1}^{d+1} \alpha_i = 0$.
Let $\widehat{Q}$ be the quiver defined by
\begin{itemize}
\item $\widehat{Q}_0:=L$,
\item $\widehat{Q}_1:=\{a_i : v \to v+\alpha_i\mid v \in L,\ 1\le i \le d+1\}$. 
\end{itemize}
The group $L$ acts on $\widehat{Q}_0$ by translations, which induces an $L$-action on $\widehat{Q}$ uniquely.

Let $B$ be a subgroup of $L$ such that $L/B$ is finite.
Denote by $\widehat{Q}/B$ the $B$-orbit quiver of $\widehat{Q}$.
We denote the $B$-orbit of a vertex or arrow $x$ by $\overline{x}$. Let
\[\Lambda_B:=k(\widehat{Q}/B)/(\overline{a}_i\overline{a}_j - \overline{a}_j\overline{a}_i:
\overline{v}\to\overline{v} + \overline{\alpha}_i +\overline{\alpha}_j\mid
\overline{v}\in(\widehat{Q}/B)_0,\ 1\le i<j\le d+1).\]
A set of arrows $C$ in $\widehat{Q}$ is called a \emph{cut} if it contains
exactly one arrow from each cycle of length $d+1$ in $\widehat{Q}$.
A cut $C$ is called \emph{$B$-acyclic} if $BC=C$ and the quiver
$\widehat{Q}\setminus C$ is acyclic.
In this case, we call the factor algebra
\[\Lambda_{B,C}:=\Lambda_B/(C/B)\]
a \emph{$d$-representation infinite algebra of type $\widetilde{\A}$} associated
to $(B,C)$. In fact it is $d$-representation infinite by \cite[5.6]{HIO}.
Observe that $\Lambda_{B,C}$ is presented by the quiver 
$(\widehat{Q}\setminus C)/B$ and the images of all commutativity relations $\overline{a}_i\overline{a}_j -\overline{a}_j\overline{a}_i$ in $k((\widehat{Q}\setminus C)/B)$.

\medskip
We will prove the following observation for $d\le n+1$, where we assume without loss of generality that $n= d+1$ by adding hyperplanes with weights $1$ (Observation~\ref{Weights 1}).

\begin{theorem}\label{Atilde theorem}
Let $\X$ be a GL projective space with weights $(p_1,\ldots,p_{d+1})$. Then the $d$-canonical algebra $A^{\ca}$ is
isomorphic to a $d$-representation infinite algebra $\Lambda_{B,C}$
of type $\widetilde{\A}$ for
$B := \langle p_i \alpha_i - p_j \alpha_j \mid 1 \le i,j \le d+1\rangle$
and some $B$-acyclic cut $C$.
\end{theorem}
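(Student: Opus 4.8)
The plan is to match up the explicit quiver presentation of $A^{\ca}$ from Theorem~\ref{endomorphism convex} with the combinatorial recipe defining $\Lambda_{B,C}$. First I would fix Assumption~\ref{minimal presentation}; since $n\le d+1$ we may assume the relations $h_i = X_i^{p_i}-T_{i-1}$ for $1\le i\le n$ together with $n\le d+1$ extra polynomial variables $T_n,\dots,T_d$, so that $R = k[X_1,\dots,X_n,T_n,\dots,T_d]$ is a polynomial ring and there are \emph{no} hypersurface relations among the $X_i^{p_i}$. The grading group $\L$ is $\langle\x_1,\dots,\x_n,\c\rangle/\langle p_i\x_i-\c\rangle$, and the key point is to produce a group isomorphism $\L \xrightarrow{\sim} L/B$ carrying the interval $[0,d\c]$ (the vertex set of $Q^{[0,d\c]}$) bijectively onto the vertex set of $(\widehat Q\setminus C)/B$ for a suitable $B$-acyclic cut $C$, and carrying the arrows $x_i\colon \x\to\x+\x_i$ to the arrows $\overline a_i\colon \overline v\to\overline v+\overline\alpha_i$.

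The heart of the argument is a bit of lattice bookkeeping. With $B := \langle p_i\alpha_i - p_j\alpha_j \mid 1\le i,j\le n\rangle$, I would check that the assignment $\alpha_i\mapsto\x_i$ for $1\le i\le n$ and $\alpha_i\mapsto\c-\x_1-\dots-\x_n+\text{(contributions)}$ for $i>n$ — more precisely, using the relation $\sum_{i=1}^{d+1}\alpha_i=0$ in $L$ and $\deg T_{i-1}=\c$ — extends to a well-defined surjection $L\to\L$ whose kernel is exactly $B$. Concretely: send $\alpha_i\mapsto\x_i$ for $i\le n$ and $\alpha_{n+1}\mapsto\c-\sum_{i=1}^n\x_i$ (using $\x_i$ for the non-polynomial variables and noting $\deg T_j = \c$ forces $\alpha_j$ with $j>n+1$ to go to $0$, which matches collapsing $L$ in those directions); the relation $p_i\x_i=\c=p_j\x_j$ in $\L$ is precisely the image of $p_i\alpha_i - p_j\alpha_j$, and a rank/torsion count using Observation~\ref{basic results on L}(b) shows the kernel is generated by these, i.e.\ $L/B\cong\L$. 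Then Lemma~\ref{order omega} identifies $[0,d\c]$ with a fundamental domain for the $B$-action inside $\widehat Q_0$ that hits each $B$-orbit exactly once, exactly as in the definition of the orbit quiver.

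Next I would identify the cut. Under the isomorphism $L/B\cong\L$, a cycle of length $d+1$ in $\widehat Q$ corresponds to a lift of the loop $0\to\x_1\to\x_1+\x_2\to\dots\to\x_1+\dots+\x_{d+1}=\c$ (using $\sum\alpha_i=0$), so the arrows "leaving $[0,d\c]$" in the $\L$-picture — i.e.\ the arrows $x_i\colon\x\to\x+\x_i$ with $\x\in[0,d\c]$ but $\x+\x_i\notin[0,d\c]$ — form precisely a $B$-acyclic cut $C$: it meets every $(d{+}1)$-cycle once, it is $B$-stable by construction, and $\widehat Q\setminus C$ is acyclic because the partial order on $\L_+$ lifts to an exhausting filtration, preventing cycles. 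Finally, Theorem~\ref{endomorphism convex} gives $A^{\ca}$ as $kQ^{[0,d\c]}$ modulo the commutativity relations $x_ix_j-x_jx_i$ (the hypersurface relations $x_i^{p_i}-\sum\lambda_{i,j-1}x_j^{p_j}$ are absent since $n\le d+1$), and these are transported to exactly the commutativity relations $\overline a_i\overline a_j - \overline a_j\overline a_i$ defining $\Lambda_{B,C}$; the remaining $d$-representation infiniteness is then automatic from \cite[5.6]{HIO}, or alternatively already known from Theorem~\ref{global dimension}.

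\textbf{Main obstacle.} I expect the delicate point to be verifying that the "boundary" arrow set $C$ is genuinely a cut and is $B$-acyclic — that is, that it picks out exactly one arrow from each oriented $(d{+}1)$-cycle of $\widehat Q/B$ and leaves an acyclic quiver. This requires translating the "$\x\in[0,d\c]$ but $\x+\x_i\notin[0,d\c]$" condition via Lemma~\ref{order omega} (the criterion $a+|\{i: a_i>0\}|\le d$) and checking it behaves correctly on every $B$-orbit of a cycle; the normalization step folding the polynomial variables $T_n,\dots,T_d$ into the picture (so that the "$A_d$" combinatorics of $\widehat Q$ matches the $n$ weighted variables plus $d+1-n$ trivially-weighted ones) also needs care to get the group $B$ exactly right rather than up to finite index.
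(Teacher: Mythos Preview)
Your overall strategy—identifying $Q^{[0,d\c]}$ with an orbit quiver $(\widehat Q\setminus C)/B$ and reading off the cut $C$ as the arrows leaving $[0,d\c]$—is exactly the paper's, and your cycle-by-cycle verification that $C$ is a cut is on target. But the group-theoretic setup has a genuine error. You skip the paper's key first move: normalize to $n=d+1$ via Observation~\ref{Weights 1} (add hyperplanes of weight $1$; this leaves $R$ and $\L$ unchanged). Without this, $B=\langle p_i\alpha_i-p_j\alpha_j:1\le i,j\le n\rangle$ has rank at most $n-1<d$, so $L/B$ is \emph{infinite} and the definition of $\Lambda_{B,C}$ (which explicitly requires $L/B$ finite) does not even apply. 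Your attempt to collapse the extra directions by $\alpha_j\mapsto 0$ for $j>n+1$ puts those $\alpha_j$ into the kernel of your map $L\to\L$ but not into $B$, so the kernel cannot be $B$; and your concrete assignment does not respect $\sum_{i=1}^{d+1}\alpha_i=0$ in $L$, since the images sum to $\c\ne 0$ in $\L$.

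The correct bijection, after normalizing to $n=d+1$, comes from a surjection in the \emph{opposite} direction: $\phi\colon\L\to L/B$, $\x_i\mapsto\overline{\alpha_i}$ (well-defined since $p_i\alpha_i\equiv p_j\alpha_j\bmod B$), with kernel generated by $\w=-\sum_{i=1}^{d+1}\x_i$ (because $\sum\alpha_i=0$ in $L$). Thus $\L/\Z\w\cong L/B$, and Proposition~\ref{compare [0,dc] with L/w}(a) supplies the bijection $[0,d\c]\to\L/\Z\w\to L/B$. There is no group isomorphism $\L\cong L/B$: $\L$ has rank $1$ while $L/B$ is finite. Once the vertex bijection is set up this way, the rest of your sketch (defining $C$, checking it is $B$-acyclic, matching the commutativity relations) goes through essentially as the paper does it.
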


\begin{proof}
In Theorem~\ref{endomorphism convex}, the $d$-canonical algebra
$A^{\ca}=A^{[0,d\c]}$ was presented by the quiver $Q^{[0,d\c]}$
with commutativity relations $x_ix_j=x_jx_i$.

By our choice of $B$, we have an epimorphism of abelian groups
\[\phi : \L \to L/B\ \mbox{ given by }\ \phi(\x_i) = \alpha_i + B\]
such that the kernel is generated by $\w=-\sum_{i=1}^{d+1}\x_i$.
Since the map $[0,d\c]\to\L/\Z\w$ is bijective
by Proposition~\ref{compare [0,dc] with L/w}(b), the induced map
\[\phi:[0,d\c]\to L/B\]
is bijective. Thus $\phi$ is a bijection between the sets of vertices of the quivers 
$Q^{[0,dc]}$ and $\widehat{Q}/B$. 
In fact $\phi$ extends to a morphism 
\[\phi:Q^{[0,d\c]}\to \widehat{Q}/B\]
of quivers sending
each arrow $x_i$ to the corresponding arrow $\overline{a}_i$. Let $C$ be the union of all $B$-orbits of arrows in $\widehat{Q}$ not in the image of $\phi$.
Then $\phi$ induces an isomorphism
\[\phi:Q^{[0,d\c]}\xrightarrow{\sim}(\widehat{Q}\setminus C)/B\]
of quivers.

We will show that $C$ is a cut.
Any cycle of length $d+1$ in $\widehat{Q}$ is of the form
\begin{equation}\label{d+1 cycle}
v_0 \xrightarrow{a_{\sigma(1)}}v_1 \xrightarrow{a_{\sigma(2)}}\cdots
\xrightarrow{a_{\sigma(d+1)}} v_{d+1} =v_0
\end{equation}
for some $v_0 \in L$ and permutation $\sigma$ of $\{1,\ldots,d+1\}$, where
$v_i = v_0 + \sum_{j=1}^{i} \alpha_{\sigma(j)}$. We need to show that all arrows
in \eqref{d+1 cycle} except one belongs to $\phi(Q^{[0,d\c]}_1)$.
Now let $\y_0$ be the unique element in $[0,d\c]$ such that $\phi(\y_0) = \overline{v}_0$.
Setting $\y_i := \y_{i-1} + \x_{\sigma(i)}$ for each $1 \le i \le d+1$,
we have a sequence
\[0 \le \y_0 < \y_1 < \cdots < \y_{d+1} = \y_0 - \w\]
in $\L$ satisfying $\phi(\y_i) = \overline{v}_i$.
Since $\y_{d+1} = \y_0 - \w \ge -\w$, we have $\y_{d+1}\not\le d\c$
by Lemma~\ref{order omega}.
Thus there is a unique $1\le t \le d+1$ such that $ \y_t \not\le d\c$
and $\y_{t-1} \le d\c$. Hence we have a sequence
\[
0 \le \y_t + \w < \y_{t+1} + \w < \cdots < \y_{d+1} + \w = \y_0 < \y_1 < \cdots < \y_{t-1} \le d\c,
\]
in $[0,d\c]$, and so there is a path
\[
\y_t + \w \xrightarrow{x_{\sigma(t+1)}} \y_{t+1} + \w \xrightarrow{x_{\sigma(t+2)}} \cdots \xrightarrow{x_{\sigma(d+1)}} \y_{d+1} + \w = \y_0 \xrightarrow{x_{\sigma(1)}} \y_1 \xrightarrow{x_{\sigma(2)}} \cdots \xrightarrow{x_{\sigma(t-1)}} \y_{t-1}
\]
in the quiver $Q^{[0,d\c]}$. Thus all arrows in \eqref{d+1 cycle} except $a_{\sigma(t)}: v_{t-1} \to v_t$ are contained in $\phi(Q^{[0,d\c]}_1)$.
Moreover, there is no arrow labeled $x_{\sigma(t)}$ starting at $\y_{t-1}$, since $\y_{t-1} + \x_{\sigma(t)} = \y_t \not \in [0,d\c]$.
Hence $a_{\sigma(t)}: v_{t-1} \to v_t$ is the unique arrow in  \eqref{d+1 cycle}
that is not contained in $\phi(Q^{[0,d\c]}_1)$.
We conclude that $C$ is a cut.

The cut $C$ is $B$-acyclic since $BC=C$ holds clearly and
$(\widehat{Q}\setminus C)/B\simeq Q^{[0,d\c]}$ is acyclic.
Finally, the quiver isomorphism $Q^{[0,d\c]}\simeq(\widehat{Q}\setminus C)/B$
induces an isomorphism
\[A^{[0,d\c]} \to \Lambda_{B,C}\]
of $k$-algebras since both algebras are defined by all the commutativity relations
in their quivers.
\end{proof}

We illustrate Theorem~\ref{Atilde theorem} for $d =2$ in the following example.

\begin{example}
Here is the quiver $Q^{[0,d\c]}$ of the $2$-canonical algebra $A^{\ca}$ of type $(2,3,4)$,
where for each $i=1,2,3$, the full subquivers with vertices $\{a\x_i +\c \mid 0\le a \le p_i\}$
that appear twice should be identified.
{\tiny\[
\begin{xy}
0;<40pt,0cm>:<-20pt,34pt>:: 
(0,0) *+{0} ="000",
(1,0) *+{\x_3} ="100",
(2,0) *+{2\x_3} ="200",
(3,0) *+{3\x_3} ="300",
(4,0) *+{\c} ="400",
(0,1) *+{\x_2} ="010",
(0,2) *+{2\x_2} ="020",
(0,3) *+{\c} ="030",
(-1,-1) *+{\x_1} ="001",
(-2,-2) *+{\c} ="002",
(1,1) *+{\x_2+\x_3} ="110",
(2,1) *+{\x_2+2\x_3} ="210",
(3,1) *+{\x_2+3\x_3} ="310",
(4,1) *+{\x_2 +\c} ="410",
(1,2) *+{2\x_2+\x_3} ="120",
(2,2) *+{2\x_2+2\x_3} ="220",
(3,2) *+{2\x_2+3\x_3} ="320",
(4,2) *+{2\x_2 +\c} ="420",
(1,3) *+{\x_3+\c} ="130",
(2,3) *+{2\x_3+\c} ="230",
(3,3) *+{3\x_3+\c} ="330",
(4,3) *+{2\c} ="430",
(0,-1) *+{\x_1+\x_3} ="101",
(1,-1) *+{\x_1+2\x_3} ="201",
(2,-1) *+{\x_1+3\x_3} ="301",
(3,-1) *+{\x_1 +\c} ="401",
(-1,-2) *+{\x_3+\c} ="102",
(0,-2) *+{2\x_3+\c} ="202",
(1,-2) *+{3\x_3+\c} ="302",
(2,-2) *+{2\c} ="402",
(-1,0) *+{\x_1+\x_2} ="011",
(-1,1) *+{\x_1+2\x_2} ="021",
(-1,2) *+{\x_1+\c} ="031",
(-2,-1) *+{\x_2+\c} ="012",
(-2,0) *+{2\x_2+\c} ="022",
(-2,1) *+{2\c} ="032",
"000", {\ar"100"},
"100", {\ar"200"},
"200", {\ar"300"},
"300", {\ar"400"},
"010", {\ar"110"},
"110", {\ar"210"},
"210", {\ar"310"},
"310", {\ar"410"},
"020", {\ar"120"},
"120", {\ar"220"},
"220", {\ar"320"},
"320", {\ar"420"},
"030", {\ar"130"},
"130", {\ar"230"},
"230", {\ar"330"},
"330", {\ar"430"},
"000", {\ar"010"},
"010", {\ar"020"},
"020", {\ar"030"},
"100", {\ar"110"},
"110", {\ar"120"},
"120", {\ar"130"},
"200", {\ar"210"},
"210", {\ar"220"},
"220", {\ar"230"},
"300", {\ar"310"},
"310", {\ar"320"},
"320", {\ar"330"},
"400", {\ar"410"},
"410", {\ar"420"},
"420", {\ar"430"},
"001", {\ar"101"},
"101", {\ar"201"},
"201", {\ar"301"},
"301", {\ar"401"},
"002", {\ar"102"},
"102", {\ar"202"},
"202", {\ar"302"},
"302", {\ar"402"},
"000", {\ar"001"},
"001", {\ar"002"},
"100", {\ar"101"},
"101", {\ar"102"},
"200", {\ar"201"},
"201", {\ar"202"},
"300", {\ar"301"},
"301", {\ar"302"},
"400", {\ar"401"},
"401", {\ar"402"},
"001", {\ar"011"},
"011", {\ar"021"},
"021", {\ar"031"},
"002", {\ar"012"},
"012", {\ar"022"},
"022", {\ar"032"},
"010", {\ar"011"},
"011", {\ar"012"},
"020", {\ar"021"},
"021", {\ar"022"},
"030", {\ar"031"},
"031", {\ar"032"},
\end{xy}
\]}

For comparison we give the corresponding quiver $\widehat{Q}$ below, with cut $C$ indicated by dotted lines. The vertices in the sublattice $B$ are labeled $\ci$. The vertices whose $B$-orbits  are in the image of $\c$ and $2\c$ under $\phi$ are also labelled accordingly.
\[
\begin{xy}
0;<30pt,0cm>:<-15pt,25.98pt>:: 
(0,0) *+{\ci} ="000",
(1,0) *+{\bu} ="100",
(2,0) *+{\bu} ="200",
(3,0) *+{\bu} ="300",
(4,0) *+{\c} ="400",
(5,0) *+{\bu} ="500",
(6,0) *+{\bu} ="600",
(7,0) *+{\bu} ="700",
(0,1) *+{\bu} ="010",
(1,1) *+{\bu} ="110",
(2,1) *+{\bu} ="210",
(3,1) *+{\bu} ="310",
(4,1) *+{\bu} ="410",
(5,1) *+{\bu} ="510",
(6,1) *+{\bu} ="610",
(7,1) *+{\bu} ="710",
(8,1) *+{\bu} ="810",
(0,2) *+{\bu} ="020",
(1,2) *+{\bu} ="120",
(2,2) *+{\bu} ="220",
(3,2) *+{\bu} ="320",
(4,2) *+{\bu} ="420",
(5,2) *+{\bu} ="520",
(6,2) *+{\ci} ="620",
(7,2) *+{\bu} ="720",
(8,2) *+{\bu} ="820",
(0,3) *+{\c} ="030",
(1,3) *+{\bu} ="130",
(2,3) *+{\bu} ="230",
(3,3) *+{\bu} ="330",
(4,3) *+{2\c} ="430",
(5,3) *+{\bu} ="530",
(6,3) *+{\bu} ="630",
(7,3) *+{\bu} ="730",
(8,3) *+{\bu} ="830",
(9,3) *+{\bu} ="930",
(0,4) *+{\bu} ="040",
(1,4) *+{\bu} ="140",
(2,4) *+{\bu} ="240",
(3,4) *+{\bu} ="340",
(4,4) *+{\bu} ="440",
(5,4) *+{\bu} ="540",
(6,4) *+{\bu} ="640",
(7,4) *+{\bu} ="740",
(8,4) *+{\bu} ="840",
(9,4) *+{\bu} ="940",
(-1,-1) *+{\bu} ="001",
(0,-1) *+{\bu} ="101",
(1,-1) *+{\bu} ="201",
(2,-1) *+{\bu} ="301",
(3,-1) *+{\bu} ="401",
(4,-1) *+{\bu} ="501",
(5,-1) *+{\bu} ="601",
(6,-1) *+{\bu} ="701",
(7,-1) *+{\bu} ="801",
(-2,-2) *+{\c} ="002",
(-1,-2) *+{\bu} ="102",
(0,-2) *+{\bu} ="202",
(1,-2) *+{\bu} ="302",
(2,-2) *+{2\c} ="402",
(3,-2) *+{\bu} ="502",
(4,-2) *+{\bu} ="602",
(5,-2) *+{\bu} ="702",
(6,-2) *+{\bu} ="802",
(-3,-3) *+{\bu} ="003",
(-2,-3) *+{\bu} ="103",
(-1,-3) *+{\bu} ="203",
(0,-3) *+{\bu} ="303",
(1,-3) *+{\bu} ="403",
(2,-3) *+{\bu} ="503",
(3,-3) *+{\bu} ="603",
(4,-3) *+{\ci} ="703",
(5,-3) *+{\bu} ="803",
(6,-3) *+{\bu} ="903",
(-1,0) *+{\bu} ="011",
(-1,1) *+{\bu} ="021",
(-1,2) *+{\bu} ="031",
(-1,3) *+{\bu} ="041",
(-1,4) *+{\bu} ="051",
(-2,-1) *+{\bu} ="012",
(-2,0) *+{\bu} ="022",
(-2,1) *+{2\c} ="032",
(-2,2) *+{\bu} ="042",
(-2,3) *+{\bu} ="052",
(-2,4) *+{\bu} ="062",
(-3,-2) *+{\bu} ="013",
(-3,-1) *+{\bu} ="023",
(-3,0) *+{\bu} ="033",
(-3,1) *+{\bu} ="043",
(-3,2) *+{\bu} ="053",
(-3,3) *+{\bu} ="063",
(-3,4) *+{\bu} ="073",
(-4,-3) *+{\bu} ="014",
(-4,-2) *+{\bu} ="024",
(-4,-1) *+{\bu} ="034",
(-4,0) *+{\bu} ="044",
(-4,1) *+{\bu} ="054",
(-4,2) *+{\bu} ="064",
(-4,3) *+{\ci} ="074",
(-5,-3) *+{\bu} ="025",
(-5,-2) *+{\bu} ="035",
(-5,-1) *+{\bu} ="045",
(-5,0) *+{\bu} ="055",
(-5,1) *+{\bu} ="065",
(-6,-3) *+{\bu} ="036",
(-6,-2) *+{\ci} ="046",
(-6,-1) *+{\bu} ="056",
(-7,-3) *+{\bu} ="047",
"000", {\ar"100"},
"100", {\ar"200"},
"200", {\ar"300"},
"300", {\ar"400"},
"400", {\ar"500"},
"500", {\ar"600"},
"600", {\ar"700"},
"010", {\ar"110"},
"110", {\ar"210"},
"210", {\ar"310"},
"310", {\ar"410"},
"410", {\cut"510"},
"510", {\ar"610"},
"610", {\ar"710"},
"710", {\ar"810"},
"020", {\ar"120"},
"120", {\ar"220"},
"220", {\ar"320"},
"320", {\ar"420"},
"420", {\cut"520"},
"520", {\cut"620"},
"620", {\ar"720"},
"720", {\ar"820"},
"030", {\ar"130"},
"130", {\ar"230"},
"230", {\ar"330"},
"330", {\ar"430"},
"430", {\cut"530"},
"530", {\cut"630"},
"630", {\ar"730"},
"730", {\ar"830"},
"830", {\ar"930"},
"040", {\cut"140"},
"140", {\ar"240"},
"240", {\ar"340"},
"340", {\ar"440"},
"440", {\ar"540"},
"540", {\cut"640"},
"640", {\ar"740"},
"740", {\ar"840"},
"840", {\ar"940"},
"001", {\ar"101"},
"101", {\ar"201"},
"201", {\ar"301"},
"301", {\ar"401"},
"401", {\cut"501"},
"501", {\ar"601"},
"601", {\ar"701"},
"701", {\ar"801"},
"002", {\ar"102"},
"102", {\ar"202"},
"202", {\ar"302"},
"302", {\ar"402"},
"402", {\cut"502"},
"502", {\cut"602"},
"602", {\ar"702"},
"702", {\ar"802"},
"003", {\cut"103"},
"103", {\ar"203"},
"203", {\ar"303"},
"303", {\ar"403"},
"403", {\ar"503"},
"503", {\cut"603"},
"603", {\cut"703"},
"703", {\ar"803"},
"803", {\ar"903"},
"011", {\cut"000"},
"022", {\cut"011"},
"033", {\ar"022"},
"044", {\ar"033"},
"055", {\ar"044"},
"021", {\cut"010"},
"032", {\cut"021"},
"043", {\ar"032"},
"054", {\ar"043"},
"065", {\ar"054"},
"031", {\cut"020"},
"042", {\ar"031"},
"053", {\ar"042"},
"064", {\ar"053"},
"041", {\ar"030"},
"052", {\ar"041"},
"063", {\ar"052"},
"074", {\ar"063"},
"051", {\ar"040"},
"062", {\ar"051"},
"073", {\ar"062"},
"012", {\cut"001"},
"023", {\ar"012"},
"034", {\ar"023"},
"045", {\ar"034"},
"056", {\ar"045"},
"013", {\ar"002"},
"024", {\ar"013"},
"035", {\ar"024"},
"046", {\ar"035"},
"014", {\ar"003"},
"025", {\ar"014"},
"036", {\ar"025"},
"047", {\ar"036"},
%
"000", {\ar"010"},
"010", {\ar"020"},
"020", {\ar"030"},
"030", {\ar"040"},
"001", {\ar"011"},
"011", {\ar"021"},
"021", {\ar"031"},
"031", {\cut"041"},
"041", {\ar"051"},
"002", {\ar"012"},
"012", {\ar"022"},
"022", {\ar"032"},
"032", {\cut"042"},
"042", {\cut"052"},
"052", {\ar"062"},
"003", {\cut"013"},
"013", {\ar"023"},
"023", {\ar"033"},
"033", {\ar"043"},
"043", {\cut"053"},
"053", {\cut"063"},
"063", {\ar"073"},
"014", {\cut"024"},
"024", {\ar"034"},
"034", {\ar"044"},
"044", {\ar"054"},
"054", {\cut"064"},
"064", {\cut"074"},
"025", {\cut"035"},
"035", {\ar"045"},
"045", {\ar"055"},
"055", {\ar"065"},
"036", {\cut"046"},
"046", {\ar"056"},
"100", {\ar"110"},
"110", {\ar"120"},
"120", {\ar"130"},
"130", {\cut"140"},
"200", {\ar"210"},
"210", {\ar"220"},
"220", {\ar"230"},
"230", {\cut"240"},
"300", {\ar"310"},
"310", {\ar"320"},
"320", {\ar"330"},
"330", {\cut"340"},
"400", {\ar"410"},
"410", {\ar"420"},
"420", {\ar"430"},
"430", {\cut"440"},
"500", {\cut"510"},
"510", {\ar"520"},
"520", {\ar"530"},
"530", {\ar"540"},
"600", {\cut"610"},
"610", {\cut"620"},
"620", {\ar"630"},
"630", {\ar"640"},
"700", {\cut"710"},
"710", {\cut"720"},
"720", {\ar"730"},
"730", {\ar"740"},
"810", {\cut"820"},
"820", {\ar"830"},
"830", {\ar"840"},
"930", {\ar"940"},
"101", {\cut"000"},
"202", {\cut"101"},
"303", {\ar"202"},
"201", {\cut"100"},
"302", {\cut"201"},
"403", {\ar"302"},
"301", {\cut"200"},
"402", {\cut"301"},
"503", {\ar"402"},
"401", {\cut"300"},
"502", {\ar"401"},
"603", {\ar"502"},
"501", {\ar"400"},
"602", {\ar"501"},
"703", {\ar"602"},
"601", {\ar"500"},
"702", {\ar"601"},
"803", {\ar"702"},
"701", {\ar"600"},
"802", {\ar"701"},
"903", {\ar"802"},
"801", {\ar"700"},
"102", {\cut"001"},
"203", {\ar"102"},
"103", {\ar"002"},
%
"000", {\ar"001"},
"001", {\ar"002"},
"002", {\ar"003"},
"100", {\ar"101"},
"101", {\ar"102"},
"102", {\cut"103"},
"200", {\ar"201"},
"201", {\ar"202"},
"202", {\cut"203"},
"300", {\ar"301"},
"301", {\ar"302"},
"302", {\cut"303"},
"400", {\ar"401"},
"401", {\ar"402"},
"402", {\cut"403"},
"500", {\cut"501"},
"501", {\ar"502"},
"502", {\ar"503"},
"600", {\cut"601"},
"601", {\cut"602"},
"602", {\ar"603"},
"700", {\cut"701"},
"701", {\cut"702"},
"702", {\cut"703"},
"801", {\cut"802"},
"802", {\cut"803"},
"010", {\ar"011"},
"011", {\ar"012"},
"012", {\cut"013"},
"013", {\ar"014"},
"020", {\ar"021"},
"021", {\ar"022"},
"022", {\cut"023"},
"023", {\cut"024"},
"024", {\ar"025"},
"030", {\ar"031"},
"031", {\ar"032"},
"032", {\cut"033"},
"033", {\cut"034"},
"034", {\cut"035"},
"035", {\ar"036"},
"040", {\cut"041"},
"041", {\ar"042"},
"042", {\ar"043"},
"043", {\cut"044"},
"044", {\cut"045"},
"045", {\cut"046"},
"046", {\ar"047"},
"051", {\cut"052"},
"052", {\ar"053"},
"053", {\ar"054"},
"054", {\cut"055"},
"055", {\cut"056"},
"062", {\cut"063"},
"063", {\ar"064"},
"064", {\ar"065"},
"073", {\cut"074"},
"110", {\cut"000"},
"220", {\cut"110"},
"330", {\cut"220"},
"440", {\ar"330"},
"210", {\cut"100"},
"320", {\cut"210"},
"430", {\cut"320"},
"540", {\ar"430"},
"310", {\cut"200"},
"420", {\cut"310"},
"530", {\ar"420"},
"640", {\ar"530"},
"410", {\cut"300"},
"520", {\ar"410"},
"630", {\ar"520"},
"740", {\cut"630"},
"510", {\ar"400"},
"620", {\ar"510"},
"730", {\cut"620"},
"840", {\cut"730"},
"610", {\ar"500"},
"720", {\ar"610"},
"830", {\cut"720"},
"940", {\cut"830"},
"710", {\ar"600"},
"820", {\ar"710"},
"930", {\cut"820"},
"810", {\ar"700"},
"120", {\cut"010"},
"230", {\cut"120"},
"340", {\ar"230"},
"130", {\cut"020"},
"240", {\ar"130"},
"140", {\ar"030"},
\end{xy}
\]
\end{example}

\section{Orlov-type semiorthogonal decompositions}

In this section, we give an alternative proof of Theorem~\ref{tilting}
by constructing an embedding of $\DDD^{\bo}(\coh\X)$ to
$\DDD^{\bo}(\mod^{\L}R)$, which is parallel to the proof of Theorem~\ref{CM tilting}.
Then we will give Orlov-type semiorthogonal decompositions
for $\DDD^{\bo}(\coh\X)$ and $\underline{\CM}^{\L}R$.
We use the notation from Section~\ref{subsection: GL CI 2}.

We start with the following analog of Theorem~\ref{basic embedding}.

\begin{theorem}\label{basic embedding 2}
For any non-trivial upset $I$ in $\L$, we have
\[ \DDD^{\bo}(\mod^{\L}R)=\DDD^{\bo}(\mod_0^I R) \perp( \DDD^{\bo}(\mod^{I} R) \cap ( \DDD^{\bo}(\mod^{\w-I^c} R))^\star)\perp \DDD^{\bo}(\mod_0^{I^c} R).\]
Thus the composition
\[ \DDD^{\bo}(\mod^{I} R) \cap ( \DDD^{\bo}(\mod^{\w-I^c} R))^\star\subset\DDD^{\bo}(\mod^{\L}R)
\stackrel{\pi}{\longrightarrow}\DDD^{\bo}(\coh\X)\]
is a triangle equivalence.
\end{theorem}

We need the following elementary observation, which is an $\L$-graded version
of \cite[2.3]{O}.

\begin{lemma}\label{decompositions for S}
Let $I$ be a non-empty upset in $\L$.
\begin{itemize}
\item[(a)] We have
\[ \DDD^{\bo}(\mod_0^\L R) = \DDD^{\bo}(\mod_0^I R) \perp \DDD^{\bo}(\mod_0^{I^c} R) \text{ and } \DDD^{\bo}(\mod^\L R) = \DDD^{\bo}(\mod^I R) \perp \DDD^{\bo}(\mod_0^{I^c} R). \]
More generally, for two upsets $I \subseteq J \subseteq \L$, we have
\[ \DDD^{\bo}(\mod_0^J R)=\DDD^{\bo}(\mod_0^I R) \perp \DDD^{\bo}(\mod_0^{J\setminus I} R) \text{ and } \DDD^{\bo}(\mod^J R) = \DDD^{\bo}(\mod^I R) \perp \DDD^{\bo}(\mod_0^{J\setminus I} R). \]
\item[(b)] We have a triangle equivalence $\DDD^{\bo}(\mod^I R) / \DDD^{\bo}(\mod_0^I R) \simeq\DDD^{\bo}(\coh\X)$.
\end{itemize}
\end{lemma}

\begin{proof}
(a) Clearly $\Hom_{\DDD^{\bo}(\mod^\L R)}(\DDD^{\bo}(\mod^I R),\DDD^{\bo}(\mod_0^{I^c} R))=0$ holds.
We have the exact functors $(-)^I:\mod^{\L}R\to\mod^IR$,
$(-)^{I^c}:\mod^{\L}R\to\mod^{I^c}_0R$ and the sequence
\[0\to(-)^I\to {\rm id}\to(-)^{I^c}\to0\]
of natural transformations which is objectwise exact.
Therefore we have induced triangle functors
$(-)^I\colon  \DDD^{\bo}(\mod^\L R) \to \DDD^{\bo}(\mod^I R)$, $(-)^{I^c}\colon \DDD^{\bo}(\mod^\L R) \to \DDD^{\bo}(\mod_0^{I^c} R)$ and a functorial triangle
$X^I\to X\to X^{I^c}\to X^I[1]$ for $X\in \DDD^{\bo}(\mod^\L R)$.
Thus we have the first two equalities.

The remaining equalities are shown similarly.

(b) By (a), we have triangle equivalences
\[ \DDD^{\bo}(\mod_0^I R) \simeq \frac{\DDD^{\bo}(\mod_0^\L)}{\DDD^{\bo}(\mod_0^{I^c} R)} \text{ and } \DDD^{\bo}(\mod^I R) \simeq \frac{\DDD^{\bo}(\mod^\L R)}{\DDD^{\bo}(\mod_0^{I^c} R)}. \]
Therefore we have
\[ \DDD^{\bo}(\coh\X) \simeq \frac{\DDD^{\bo}(\mod^\L R)}{\DDD^{\bo}(\mod_0^\L R)} \simeq \frac{\DDD^{\bo}(\mod^\L R) / \DDD^{\bo}(\mod_0^{I^c} R)}{\DDD^{\bo}(\mod_0^\L R) / \DDD^{\bo}(\mod_0^{I^c} R)} \simeq \frac{\DDD^{\bo}(\mod^I R)}{\DDD^{\bo}(\mod_0^I R)}. \qedhere \]
\end{proof}

\begin{proof}[Proof of Theorem~\ref{basic embedding 2}]
Applying Lemma~\ref{decompositions for S}(a) to the upset $\w-I^c$, we have
\[ \DDD^{\bo}(\mod^\L R) = \DDD^{\bo}(\mod^{\w-I^c} R) \perp \DDD^{\bo}(\mod_0^{\w-I} R). \] Applying $(-)^\star$ and using \eqref{* for SP}, we obtain
\[ \DDD^{\bo}(\mod^\L R) = \DDD^{\bo}(\mod^\L R)^\star= \DDD^{\bo}(\mod_0^{I} R) \perp(\DDD^{\bo}(\mod^{\w-I^c} R))^\star.\]
Taking the intersections of $\DDD^{\bo}(\mod^I R)$
with both sides and applying Observation~\ref{distributive}, we have
\[ \DDD^{\bo}(\mod^{I} R)= \DDD^{\bo}(\mod_0^{I} R) \perp(\DDD^{\bo}(\mod^{I} R) \cap (\DDD^{\bo}(\mod^{\w-I^c} R))^\star). \]
Thus the first claim holds by Lemma~\ref{decompositions for S}(a), and the second one by
\[ \DDD^{\bo}(\mod^{I} R) \cap ( \DDD^{\bo}(\mod^{\w-I^c} R))^\star \simeq \frac{\DDD^{\bo}(\mod^{I} R)}{\DDD^{\bo}(\mod_0^{I} R)}\]
and Lemma~\ref{decompositions for S}(b).
\end{proof}

Using Theorem~\ref{basic embedding 2}, we are able to give an alternative proof of
Theorem~\ref{tilting}.
The following analog of Theorem~\ref{CM tilting} is the main result in this section.

\begin{theorem}\label{coh X tilting}
Let $\X$ be a Geigle-Lenzing projective space.
\begin{itemize}
\item[(a)] The following composition is a triangle equivalence:
\[\KKK^{\bo}(\proj^{[0,d\c]} R) \subset \DDD^{\bo}(\mod^{\L}R)\stackrel{\pi}{\longrightarrow}\DDD^{\bo}(\coh\X).\]
\item[(b)] We have triangle equivalences
\[\DDD^{\bo}(\mod A^{\ca})\simeq \KKK^{\bo}(\proj^{[0,d\c]} R) \simeq\DDD^{\bo}(\coh\X)\ \mbox{ such that }\ A^{\rm ca}\mapsto T^{[0,d\c]}\mapsto \pi(T^{[0,d\c]})=T^{\ca}(-d\c).\]
In particular $\DDD^{\bo}(\coh\X)$ has a tilting object $T^{\ca}$.
\item[(c)] We have
\begin{eqnarray*}
\KKK^{\bo}(\proj^{[0,d\c]} R) &=& \DDD^{\bo}(\mod^{\L_+} R) \cap (\DDD^{\bo}(\mod^{\w-\L_+^c} R))^\star,\\
\DDD^{\bo}(\mod^{\L}R) &=& \DDD^{\bo}(\mod_0^{\L_+} R) \perp \KKK^{\bo}(\proj^{[0,d\c]} R) \perp \DDD^{\bo}(\mod_0^{\L_+^c} R).
\end{eqnarray*}
\end{itemize}
\end{theorem}

\begin{proof}
We only have to prove (c). In fact, (a) follows from (c) and Theorem~\ref{basic embedding 2},
and (b) follows from (a) and Theorem~\ref{tilting in S^I}(a).

In the rest, we prove the statement (c). By Lemma~\ref{order omega}, $-\w\not\le\x$
if and only if $\x\le d\c$. Thus we have
\begin{equation}\label{dc}
\L_+\cap(\L_+^c-\w)=[0,d\c].
\end{equation}
As a consequence, we have
\begin{align*}
& \DDD^{\bo}(\mod^{\L_+} R) \cap (\DDD^{\bo}(\mod^{\w-\L_+^c} R))^\star \supset \KKK^{\bo}(\proj^{\L_+} R) \cap (\KKK^{\bo}(\proj^{\w-\L_+^c} R))^\star \\
& \quad \stackrel{\eqref{* for SP}}{=} \KKK^{\bo}(\proj^{\L_+} R) \cap \KKK^{\bo}(\proj^{\L_+^c-\w} R) 
= \KKK^{\bo}(\proj^{\L_+\cap(\L_+^c-\w)} R) \stackrel{\eqref{dc}}{=}\KKK^{\bo}(\proj^{[0,d\c]} R).
\end{align*}
To show the converse, it is enough to show that the composition
$\KKK^{\bo}(\proj^{[0,d\c]} R) \subset \DDD^{\bo}(\mod^\L R) \stackrel{\pi}{\to} \DDD^{\bo}(\coh\X)$ is dense.
This follows from Proposition~\ref{generate}.
\end{proof}

In the rest of this section, we show that there is a close connection between
$\underline{\CM}^{\L}R$ and $\DDD^{\bo}(\coh\X)$ given in terms of
the following Orlov-type semiorthogonal decompositions \cite[2.5]{O}\cite[C.4]{KLM}.

\begin{theorem}\label{Orlov-type}
There exist embeddings $\DDD^{\bo}(\coh\X)\to\DDD^{\bo}(\mod^{\L}R)$
and $\underline{\CM}^{\L}R\to\DDD^{\bo}(\mod^{\L}R)$ such that we have
semiorthogonal decompositions
\[\begin{array}{ll}
\DDD^{\bo}(\coh\X)=\underline{\CM}^{\L}R\perp\KKK^{\bo}(\proj^{\delta^{-1}[0,-\delta(\w))} R)
&\mbox{if $\X$ is Fano},\\
\DDD^{\bo}(\coh\X)=\underline{\CM}^{\L}R
&\mbox{if $\X$ is Calabi-Yau},\\
\DDD^{\bo}(\coh\X)\perp\DDD^{\bo}(\mod^{\delta^{-1}[0,\delta(\w))} R)=
\underline{\CM}^{\L}R&\mbox{if $\X$ is anti-Fano}.
\end{array}\]
In particular, we have the equality:
\[{\rm rank} K_0(\coh\X)-{\rm rank} K_0(\underline{\CM}^{\L}R)=\left\{
\begin{array}{ll}
\#(\L/\Z\w)&\mbox{$\X$ is Fano},\\
0&\mbox{$\X$ is Calabi-Yau},\\
-\#(\L/\Z\w)&\mbox{$\X$ is anti-Fano}.
\end{array}\right.\]
\end{theorem}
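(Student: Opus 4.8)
The plan is to exploit the two ``basic embedding'' results already proved, namely Theorem \ref{basic embedding} and Theorem \ref{basic embedding 2}, applied to the same poset ideal $I=\L_+$, together with the semiorthogonal decompositions of $\DD=\DDD^{\bo}(\mod^{\L}R)$ supplied by Lemma \ref{decompositions for P} and Lemma \ref{decompositions for S}. Recall from Theorem \ref{CM tilting}(c) and Theorem \ref{coh X tilting}(c) that, inside $\DD$, we may identify
\[\underline{\CM}^{\L}R\simeq\SS^{[0,\de]}=\DD^{\L_+}\cap(\DD^{-\L_+^c})^\star,\qquad
\DDD^{\bo}(\coh\X)\simeq\PP^{[0,d\c]}=\DD^{\L_+}\cap(\DD^{\w-\L_+^c})^\star,\]
where $\de=d\c+2\w$. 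So both categories sit as full triangulated subcategories of $\DD^{\L_+}$, and the task is to compare them there. The key combinatorial input is Lemma \ref{order omega}: $-\w\not\le\x$ iff $\x\le d\c$, which gave $\L_+\cap(\L_+^c+\w)=[0,\de]$ and $\L_+\cap(\L_+^c-\w)=[0,d\c]$. We compare $[0,\de]$ and $[0,d\c]$: since $\de=d\c+2\w$, the difference of the two intervals is controlled by $\delta(\w)$. When $\delta(\w)<0$ (Fano), $\de<d\c$, so $[0,\de]\subsetneq[0,d\c]$; when $\delta(\w)=0$ (Calabi--Yau, $\w$ torsion), $[0,\de]=[0,d\c]$ after passing through the torsion identification; when $\delta(\w)>0$ (anti-Fano), $[0,d\c]\subsetneq[0,\de]$.

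First I would treat the Fano case. Here $\PP^{[0,\de]}$ is a subcategory of $\PP^{[0,d\c]}$, and by Lemma \ref{decompositions for P}(a) applied to the poset ideal $-\L_+^c$ (more precisely, by decomposing $\PP^{[0,d\c]}$ along the sub-interval $[0,\de]$) we get a semiorthogonal decomposition
\[\PP^{[0,d\c]}=\PP^{[0,\de]}\;\perp\;\PP^{[0,d\c]\setminus[0,\de]}
\quad\text{or}\quad
\PP^{[0,d\c]}=\PP^{[0,d\c]\setminus[0,\de]}\;\perp\;\PP^{[0,\de]},\]
with the orthogonality the correct way round (this needs the elementary vanishing $\Hom_\DD(R(-\x),R(-\y))=R_{\x-\y}=0$ when $\x\not\le\y$, which is exactly the argument in Proposition \ref{I-canonical has finite global dimension}(a)). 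Now, on one side, $\PP^{[0,d\c]}\simeq\DDD^{\bo}(\coh\X)$ by Theorem \ref{coh X tilting}, and $\PP^{[0,d\c]\setminus[0,\de]}=\thick\{R(-\x)\mid\x\in[0,d\c],\ \x\not\le\de\}$; on the other side, one must identify the remaining term $\PP^{[0,\de]}$ with $\underline{\CM}^{\L}R$. But $\PP^{[0,\de]}$ and $\SS^{[0,\de]}$ are \emph{different} subcategories of $\DD$ (one consists of perfect complexes, the other of finite-length complexes), yet by Theorem \ref{tilting in S^I}(a)(b) both carry a tilting object with endomorphism algebra $A^{[0,\de]}=A^{\rm CM}$, hence $\PP^{[0,\de]}\simeq\DDD^{\bo}(\mod A^{\rm CM})\simeq\SS^{[0,\de]}\simeq\underline{\CM}^{\L}R$. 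The index set $\{\x\in[0,d\c]\mid\x\not\le\de=d\c+2\w\}$ must be shown to equal $\{\x\mid 0\le\delta(\x)<-\delta(\w)\}$ up to the stated combinatorial bookkeeping; this is a direct application of Lemma \ref{order omega} and the description of the normal form (using that $\x\not\le\de$ iff $d\c-\x\not\ge -2\w$, i.e.\ iff $\delta(d\c-\x)<-2\delta(\w)+{}$torsion corrections, which one converts into $\delta(\x)>d+2\delta(\w)=\delta(\de)$; on the interval $[0,d\c]$ one has $0\le\delta(\x)\le d$, and $\delta(\de)=d+2\delta(\w)$, giving $0\le\delta(\x)-\delta(\de)<-2\delta(\w)$, which after relabeling by $\x\mapsto d\c-\x$ or a shift becomes $0\le\delta(\x)<-\delta(\w)$). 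I expect some care is needed to get the shift/reflection and the half-open interval exactly right, matching Theorem \ref{Orlov-type}'s statement with $\{R(-\x)\}_{0\le\delta(\x)<-\delta(\w)}$; this is the most fiddly but entirely elementary part.

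The Calabi--Yau case is immediate: $\w$ is torsion, so $\de=d\c+2\w$ and $d\c$ define the same interval after identifying torsion-translates, hence $\PP^{[0,\de]}=\PP^{[0,d\c]}$ as subcategories of $\DD$ (or at worst they are identified by the autoequivalence $(\w)$, which is of finite order), and therefore $\DDD^{\bo}(\coh\X)\simeq\PP^{[0,d\c]}\simeq\PP^{[0,\de]}\simeq\underline{\CM}^{\L}R$ with no extra summand. For the anti-Fano case one runs the dual argument: now $[0,d\c]\subsetneq[0,\de]$, so I would instead decompose $\SS^{[0,\de]}$ along the sub-interval $[0,d\c]$ using Lemma \ref{decompositions for S}(a), obtaining a semiorthogonal decomposition $\SS^{[0,\de]}=\SS^{[0,d\c]}\perp\SS^{[0,\de]\setminus[0,d\c]}$ (or the reverse), identify $\SS^{[0,d\c]}\simeq\DDD^{\bo}(\mod A^{\ca})\simeq\DDD^{\bo}(\coh\X)$ via Theorem \ref{tilting in S^I}(b) and Theorem \ref{coh X tilting}, and identify the extra term $\SS^{[0,\de]\setminus[0,d\c]}=\thick\{k(-\x)\mid \x\in[0,\de],\ \x\not\le d\c\}$ with $\thick\{k(-\x)\}_{0\le\delta(\x)<\delta(\w)}$ by the same combinatorics as above with the sign of $\w$ reversed. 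Finally, for the rank statement: taking Grothendieck groups of a semiorthogonal decomposition gives a direct sum decomposition of $K_0$, so in the Fano case $\operatorname{rank}K_0(\coh\X)=\operatorname{rank}K_0(\underline{\CM}^{\L}R)+\operatorname{rank}K_0(\thick\{R(-\x)\}_{0\le\delta(\x)<-\delta(\w)})$, and the last term has rank equal to the number of $\x$ in that range; by Proposition \ref{compare [0,dc] with L/w}(b)(c) this count equals $|\L/\Z\w|$ (the interval $[0,d\c]$ surjects onto $\L/\Z\w$ in the Fano case, and removing $[0,\de]$ leaves exactly one representative per coset, since $[0,\de]$ and its complement in $[0,d\c]$ each have the right cardinality by Lemma \ref{order omega}). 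The anti-Fano rank follows symmetrically and Calabi--Yau is trivial. The main obstacle is the bookkeeping that matches the raw index sets $[0,d\c]\setminus[0,\de]$ and $[0,\de]\setminus[0,d\c]$ with the degree-range descriptions $\{0\le\delta(\x)<\mp\delta(\w)\}$ and simultaneously verifies their cardinality is $|\L/\Z\w|$; everything else is assembling the already-established equivalences and decompositions.
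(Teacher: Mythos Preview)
Your approach has a genuine gap: the claimed inclusions $[0,\de]\subset[0,d\c]$ (Fano), $[0,d\c]\subset[0,\de]$ (anti-Fano), and $[0,\de]=[0,d\c]$ (Calabi--Yau) are all false in general. The group $\L$ is only partially ordered, and $\de=d\c+2\w$ need not be comparable to $d\c$. Concretely, for $d=1$ with weights $(2,3,3)$ (Fano, $\delta(\w)=-1/6$) one has $[0,\c]=\{0,\x_1,\x_2,2\x_2,\x_3,2\x_3,\c\}$ while $\de=\x_2+\x_3$ and $[0,\de]=\{0,\x_2,\x_3,\x_2+\x_3\}$; here $\x_2+\x_3\in[0,\de]\setminus[0,\c]$, so $\PP^{[0,\de]}$ is not even a subcategory of $\PP^{[0,d\c]}$, and your proposed semiorthogonal decomposition of $\PP^{[0,d\c]}$ does not make sense. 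Likewise for weights $(3,3,3)$ (Calabi--Yau) one computes $\de=\x_1+\x_2+\x_3$, and again $[0,\de]\neq[0,\c]$ (e.g.\ $2\x_1\in[0,\c]\setminus[0,\de]$), so the two categories are genuinely different subcategories of $\DD$ even though they are abstractly equivalent.

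The paper avoids this obstacle by applying Theorems \ref{basic embedding} and \ref{basic embedding 2} not to $I=\L_+$ but to the poset ideal $I=\{\x\in\L\mid\delta(\x)\ge0\}$ defined via the degree map. With this choice one has $-I^c=\{\x\mid\delta(\x)>0\}$ and $\w-I^c=\{\x\mid\delta(\x)>\delta(\w)\}$, and since $\delta$ takes values in the totally ordered $\Q$, these two poset ideals are honestly nested (one way or the other) according to the sign of $\delta(\w)$. The comparison then proceeds by decomposing $\DD^{\w-I^c}$ or $\DD^{-I^c}$ via Lemma \ref{decompositions for P} (Fano) or Lemma \ref{decompositions for S} (anti-Fano), applying $(-)^\star$, and intersecting with $\DD^I$ using Lemma \ref{distributive}. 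The semiorthogonal complements are then $\PP^{(-\w+I^c)\setminus I^c}=\thick\{R(-\x)\mid 0\le\delta(\x)<-\delta(\w)\}$ and $\SS^{(\w+I^c)\setminus I^c}=\thick\{k(-\x)\mid 0\le\delta(\x)<\delta(\w)\}$, with no combinatorial bookkeeping required beyond reading off the $\delta$-interval. The linearization via $\delta$ is the missing idea.
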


\begin{proof}
Using the map $\delta:\L\to\Q$ defined by $\delta(\x_i)=1/p_i$, we obtain the non-trivial upset $I:=\{\x\in\L\mid\delta(\x)\ge0\}$.
Applying Theorems~\ref{basic embedding 2} and \ref{basic embedding} to $I$,
we have identifications
\begin{equation} \label{SODs}
\begin{aligned}
\DDD^{\bo}(\coh\X) & =\DDD^{\bo}(\mod^I R) \cap (\DDD^{\bo}(\mod^{\w-I^c} R))^\star\ \text{ and } \\
\underline{\CM}^{\L}R & = \DDD^{\bo}(\mod^I R) \cap (\DDD^{\bo}(\mod^{-I^c} R))^\star.
\end{aligned}
\end{equation}
Note that we have $\w-I^c=\{\x\in\L\mid\delta(\x)>\delta(\w)\}$ and
$-I^c=\{\x\in\L\mid\delta(\x)>0\}$.

Assume first that $\X$ is Calabi-Yau.
Then $\w-I^c=-I^c$ holds, and we have the assertion immediately from \eqref{SODs}.

Assume next that $\X$ is Fano. Then $\w-I^c\supset-I^c$ holds.
We have
\[ \DDD^{\bo}(\mod^{\w-I^c} R) = \KKK^{\bo}(\proj^{(\w-I^c)\setminus(-I^c)} R) \perp \DDD^{\bo}(\mod^{-I^c} R) \]
by Lemma~\ref{decompositions for P}(a).
Applying $(-)^\star$, we obtain
\[ (\DDD^{\bo}(\mod^{\w-I^c} R))^\star=(\DDD^{\bo}(\mod^{-I^c} R))^\star\perp\KKK^{\bo}(\proj^{(-\w+I^c)\setminus I^c} R). \]
Taking the intersections of $\DDD^{\bo}(\mod^I R)$ with both sides and applying
Observation~\ref{distributive}, we have
\[ \DDD^{\bo}(\mod^I R) \cap (\DDD^{\bo}(\mod^{\w-I^c} R))^\star=(\DDD^{\bo}(\mod^I R) \cap (\DDD^{\bo}(\mod^{-I^c} R))^\star) \perp \KKK^{\bo}(\proj^{(-\w+I^c)\setminus I^c} R).\]
Since $(-\w+I^c)\setminus I^c=\delta^{-1}([0,-\delta(\w)])$, we have the desired assertion
by from \eqref{SODs}.

Assume finally that $\X$ is anti-Fano. Then $\w-I^c\subset-I^c$.
We have
\[ \DDD^{\bo}(\mod^{\w-I^c} R) \perp \DDD^{\bo}(\mod_0^{(-I^c) \setminus(\w-I^c)} R)=\DDD^{\bo}(\mod^{-I^c} R) \]
by Lemma~\ref{decompositions for S}(a).
Applying $(-)^\star$, we obtain
\[ \DDD^{\bo}(\mod_0^{(\w+I^c)\setminus I^c} R) \perp(\DDD^{\bo}(\mod^{\w-I^c} R))^\star=(\DDD^{\bo}(\mod^{-I^c} R))^\star. \]
Taking the intersection of $\DDD^{\bo}(\mod^I R)$ with both sides and applying
Observation~\ref{distributive}, we have
\[ \DDD^{\bo}(\mod_0^{(\w+I^c)\setminus I^c} R) \perp(\DDD^{\bo}(\mod^I R) \cap (\DDD^{\bo}(\mod^{\w-I^c} R))^\star)=\DDD^{\bo}(\mod^I R)\cap (\DDD^{\bo}(\mod^{-I^c} R))^\star.\]
Since $(\w+I^c)\setminus I^c=\delta^{-1}([0,\delta(\w)])$,
we have the desired assertion from \eqref{SODs}.
\end{proof}

As a consequence of Theorem~\ref{Orlov-type}, we have semiorthogonal
decompositions between the derived categories of the $d$-canonical algebra
$A^{\ca}$ and the CM-canonical algebra $A^{\rm CM}$.

We often have a more direct connection between $A^{\ca}$ and $A^{\rm CM}$. The following is such an example.

\begin{example}
Assume that $n = 2d+2$ holds. Then we have $\de = d\c + \sum_{i=1}^n (p_i-2) \x_i$,
and in particular, $[0, d\c] \subset [0, \de]$ holds.
Therefore there exists an idempotent $e$ of $A^{\rm CM}$ such that
\[A^{\ca}=eA^{\rm CM}e.\]
If moreover all $p_i =2$, then $[0, d\c] = [0, \de]$ holds and we have
$A^{\ca}=A^{\rm CM}$. In this case $\X$ is Calabi-Yau, and a derived
equivalence between $A^{\ca}$ and $A^{\rm CM}$ can be obtained
from Theorem~\ref{Orlov-type}.
\end{example}

\section{Coxeter polynomials}\label{section:Coxeter} 

Let $\X$ be a Geigle-Lenzing projective space of dimension $d+1$ with weights $p_1, \ldots, p_n$.
For $X \in \DDD^{\bo}(\coh \X)$, we denote by $[X]$ the 
corresponding element in the Grothendieck group $K_0(\DDD^{\bo}(\coh \X))=K_0(\coh\X)$. By abuse of notation we denote by $(\x)$ the action of the shift by $\x\in\L$ on $K_0(\coh\X)$, that is $[X](\x) = [X(\x)]$. 

By Theorem~\ref{Serre}(f), the $d$-th suspension of the Serre functor of $\DDD^{\bo}(\coh\X)$ is given by $(\w)$.
The aim of this section is to determine the \emph{Coxeter polynomial} \( \Phi_{\X} \) of $\X$, which is the characteristic polynomial of the action of $(\w)$ on the Grothendieck group $K_0(\coh\X)$. We start with giving an explicit basis of $K_0(\coh\X)$.

\begin{definition} 
Let $0 \leq e \leq d$, and let $I \subseteq \{1, \ldots, n\}$ have 
cardinality at most $d - e$. 
Choose $d-e-|I|$ linear forms $f_j\in C=k[T_0,\ldots,T_d]$ such that $(\ell_i,f_j\mid i \in I,\ 1\le j\le d-e-|I|)$ are linearly independent. Define the element of the Grothendieck group by
\[G^e_I=[R^e_I]\in K_0(\coh\X)\ \mbox { for }\ R^e_I=\frac{R}{(X_i, f_j \mid i \in I,\ 1\le j\le
d-e-|I|)}. \]
Note that $G^e_I$ is independent of our choice of the $f_j$ by Lemma~\ref{lem.dimvectors_objects} below though $R^e_I$ does depends on the choice.
\end{definition} 

Vaguely the interpretation of this module is that it corresponds to the 
structure sheaf on the intersection of the $e$ dimensional subspace 
formed by the intersection of the $d-e-|I|$ ``generic'' hyperplanes 
$f_j$, and the $|I|$ special hyperplanes $\ell_i$. 

\begin{lemma} \label{lem.dimvectors_objects} 
We have 
\[ G^e_I = \sum_{J \subseteq I} \sum_{a=0}^{d-e-|I|} (-1)^{a + 
|J|} {d-e-|I| \choose a} [\mathcal{O}](-a \c - \sum_{j \in J} \x_j). \] 
In particular $G^e_I$ is independent of the choice of hyperplanes 
$f_j$. 
\end{lemma}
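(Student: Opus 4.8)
The plan is to compute the class $G^e_I$ directly from a Koszul-type resolution, carrying out an induction on the number of ``generic'' hyperplanes $f_j$. Write $m := d-e-|I|$ for the number of generic linear forms. The key input is Lemma~\ref{prop.regularseq1}(c): since the forms $\ell_i$ (for $i\in I$) together with $f_0,\dots,f_{m-1}$ are linearly independent, the sequence $(X_i, f_j(\mathbf T))_{i\in I,\ 0\le j\le m-1}$ is an $R$-regular sequence. First I would write down the Koszul complex on this regular sequence, which is a finite exact resolution of $F := R/(X_i, f_j\mid i\in I, 0\le j\le m-1)$ by free $\L$-graded $R$-modules. The $X_i$ have degree $\x_i$ and the $f_j$ have degree $\c$, so the Koszul complex has terms $\bigoplus_{J'\subseteq I,\ a\ge 0} R(-\sum_{j\in J'}\x_j - a\c)^{\binom{m}{a}}$ in the appropriate homological position. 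Applying $\pi$ to this exact sequence and taking the alternating sum of classes in $K_0(\coh\X)$ yields
\[ G^e_I = [\pi F] = \sum_{J'\subseteq I}\sum_{a=0}^{m}(-1)^{|J'|+a}\binom{m}{a}[\OO]\Bigl(-a\c - \sum_{j\in J'}\x_j\Bigr), \]
which is exactly the claimed formula after renaming $J'$ to $J$.

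The one point requiring a small argument is that $\pi F$ genuinely lies in $\coh\X$ (so that the resolution is an honest exact sequence in $\coh\X$ whose alternating sum vanishes) and, more importantly, that the right-hand side does not depend on the choice of $f_j$. For the latter, note that the right-hand side visibly depends only on $I$, $m$, and the degrees $\x_j$, $\c$ — and none of these involves the specific linear forms $f_j$; so the formula itself exhibits the independence, provided we have shown that for \emph{every} admissible choice of $f_j$ the same alternating sum computes $[\pi F]$. Since the left-hand side $G^e_I$ is (a priori) defined using one particular choice, and the right-hand side is choice-independent and equals $[\pi F]$ for that choice, we conclude $G^e_I$ equals the choice-independent expression, hence is itself independent of the choice. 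The only thing to double-check is that the Koszul resolution argument goes through verbatim for any admissible choice, which it does because admissibility is exactly the linear-independence hypothesis needed to invoke Lemma~\ref{prop.regularseq1}(c).

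Concretely, the steps in order are: (1) fix an admissible choice $f_0,\dots,f_{m-1}$ and invoke Lemma~\ref{prop.regularseq1}(c) to see that $(X_i,f_j)_{i\in I, j}$ is an $R$-regular sequence of length $|I|+m = d-e$; (2) write the associated Koszul complex, recording that the $t$-th term is $\bigoplus R(-\mathbf a)$ over all sub-multisets of size $t$ of the multiset of degrees $\{\x_i : i\in I\}\cup\{\underbrace{\c,\dots,\c}_{m}\}$, which groups as $\bigoplus_{J\subseteq I,\ a\ge0,\ |J|+a=t} R(-\sum_{j\in J}\x_j - a\c)^{\binom{m}{a}}$; (3) apply the exact functor $\pi:\mod^\L R\to\coh\X$ and take the Euler characteristic in $K_0(\coh\X)$, using that $\pi F$ is killed in $K_0$ by this alternating sum (equivalently: the alternating sum of the classes of the free terms equals $[\pi F]$); (4) collect the signs $(-1)^{|J|+a}$ and binomial coefficients to obtain the stated identity; (5) observe that the resulting expression does not mention the $f_j$, which gives the independence claim. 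The main (mild) obstacle is purely bookkeeping: correctly tracking the homological grading of the Koszul complex so that the exponents $\binom{m}{a}$ and the degree shifts $-a\c-\sum_{j\in J}\x_j$ are matched up with the right signs; there is no conceptual difficulty, since everything reduces to the standard Koszul resolution of a complete intersection combined with the exactness of $\pi$.
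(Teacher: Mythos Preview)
Your proposal is correct and follows essentially the same approach as the paper: invoke Lemma~\ref{prop.regularseq1}(c) to see that $(X_i,f_j)$ is a regular sequence, then read off the class from the associated Koszul resolution. The paper's proof is just a one-sentence version of your argument; your extra care with the bookkeeping of degrees, signs, and binomial multiplicities is accurate, and your remark that the resulting expression visibly does not involve the $f_j$ is exactly how the independence is obtained.
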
 

\begin{proof} 
Since $(X_i,f_j \mid i \in I,\ 1\le j\le d-e-|I|)$ form a regular sequence by 
Lemma~\ref{prop.regularseq1}(c), 
we may use the associated Koszul complex to compute $G^e_I$ in terms of the classes of shifts of $\mathcal{O}$. The formula follows. 
\end{proof} 

We collect the following immediate consequences of the definition, 
which will allow us to compute Coxeter polynomials.

\begin{proposition} \label{prop.calculating_basis_vector_relation} 
For $I$ and $e$ as above we have the following 
\begin{itemize} 
\item[(a)] If $j \not\in I$ then $G^e_I - G^e_I(- \x_j) = 
\begin{cases} 0 & \text{if } e=0 \\ G^{e-1}_{I \cup \{j\}} & 
\text{otherwise.} \end{cases}$; 
\item[(b)] $G^e_I - G^e_I(- \c) = \begin{cases} 0 & \text{if } 
e=0 \\ G^{e-1}_I & \text{otherwise.} \end{cases}$; 
\item[(c)] For $i \in I$ we have $\sum_{a=0}^{p_i - 1} G^e_I(- a \x_i) 
= G^e_{I \setminus \{i\}}$. 
\end{itemize} 
\end{proposition}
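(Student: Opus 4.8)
\textbf{Proposal for the proof of Proposition~\ref{prop.calculating_basis_vector_relation}.}

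The plan is to prove each of the three identities directly from the closed formula for $G^e_I$ given in Lemma~\ref{lem.dimvectors_objects}, which expresses $G^e_I$ as an explicit $\Z$-linear combination of classes $[\OO](\z)$ in $K_0(\coh\X)$. Since all three statements are equalities in the free abelian group $K_0(\coh\X)$, and Lemma~\ref{lem.dimvectors_objects} already guarantees independence of the auxiliary hyperplanes, the proof reduces to manipulating binomial coefficients. I would treat the three bullets in turn, and in each case the argument is the same: substitute the formula, collect terms, and recognise the result as either $0$ or another $G$-symbol.

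For the first bullet, fix $j\notin I$ and apply $\mathrm{id}-(-\x_j)$ to the sum $G^e_I=\sum_{J\subseteq I}\sum_{a=0}^{d-e-|I|}(-1)^{a+|J|}\binom{d-e-|I|}{a}[\OO](-a\c-\sum_{l\in J}\x_l)$. Since $j\notin I$, the subsets $J$ occurring in $G^e_I(-\x_j)$ are exactly the subsets $J'\cup\{j\}$ with $J'\subseteq I$; thus $G^e_I-G^e_I(-\x_j)$ becomes $\sum_{J\subseteq I\cup\{j\},\ j\in J}(\cdots)-(\text{duplication})$, and reindexing shows it equals $\sum_{J\subseteq I\cup\{j\}}\sum_{a=0}^{d-e-|I|}(-1)^{a+|J|}\binom{d-e-|I|}{a}[\OO](-a\c-\sum_{l\in J}\x_l)$ with the sign adjustment coming from the new element $j$. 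When $e\geq 1$, writing $m:=d-e-|I|$ and noting $d-(e-1)-|I\cup\{j\}|=m$, this is exactly the formula for $G^{e-1}_{I\cup\{j\}}$. When $e=0$ we have $m=d-|I|$ but $G^{-1}$ does not occur; instead one checks directly that $d-e-|I|=d-|I|$ and the telescoping of subsets containing $j$ against those not containing $j$ annihilates everything, using that $G^0_I$ with $|I|=d$ forces $m=0$ so the $a$-sum is trivial and the $J$-sum over subsets of a set of size $d$ containing $j$ collapses — here I should double-check the edge case $|I|<d$, $e=0$, where I expect the vanishing to follow because $[\OO](-\x_j)$ shifts produce a full alternating sum over an extra coordinate. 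The second bullet is entirely analogous but cleaner: applying $\mathrm{id}-(-\c)$ uses the Pascal-type identity $\binom{m}{a}+\binom{m}{a-1}$-type cancellation, or more directly the fact that $\sum_{a=0}^{m}(-1)^a\binom{m}{a}\big([\OO](-a\c)-[\OO](-(a+1)\c)\big)$ telescopes to $\sum_{a=0}^{m-1}(-1)^a\binom{m-1}{a}[\OO](-a\c)$ after reindexing, which is the formula for $G^{e-1}_I$ (and vanishes when $m=0$, i.e.\ $e=0$).

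For the third bullet, fix $i\in I$. Here I would instead argue geometrically rather than purely combinatorially, since $p_i$ appears. Consider the graded $R$-module $M:=R/(X_l,f_j(\mathbf T)\mid l\in I\setminus\{i\},\ j)$ whose class is (a choice representing) $G^e_{I\setminus\{i\}}$; multiplication by $X_i$ on $M$ has cokernel $M/X_iM$, and the filtration of $M$ by powers of $X_i$, namely $M\supseteq X_iM\supseteq X_i^2M\supseteq\cdots\supseteq X_i^{p_i}M$, together with the relation $X_i^{p_i}=\ell_i(\mathbf T)$ and the regularity of the sequence from Lemma~\ref{prop.regularseq1}(c), shows that each subquotient $X_i^aM/X_i^{a+1}M$ is isomorphic to $(M/X_iM)(-a\x_i)$ and that $X_i^{p_i}M\cong M(-\c)$ maps isomorphically onto the submodule $\ell_i M$; modulo $\mod^{\L}_0R$ (i.e.\ in $\coh\X$) the relevant exact sequences give $[M/X_iM]=\sum_{a=0}^{p_i-1}[(M/X_iM)(-a\x_i)]$ in the Grothendieck group only after accounting for $[M(-\c)]$, and in fact the telescoping sum of the classes $[M]-[M(-\x_i)]$ over the filtration yields precisely $\sum_{a=0}^{p_i-1}G^e_I(-a\x_i)=G^e_{I\setminus\{i\}}$. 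Alternatively, and more safely, I would just verify this identity combinatorially from Lemma~\ref{lem.dimvectors_objects} using the elementary identity $\sum_{a=0}^{p_i-1}[\OO](-a\x_i)$ applied termwise together with $p_i\x_i=\c$, which converts the $\x_i$-shifts into a single $\c$-shift and reduces the $|I|$-indexed sum for $G^e_I$ to that for $G^e_{I\setminus\{i\}}$.

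\textbf{Main obstacle.} The genuinely delicate point is the bookkeeping of signs and of the degree-$0$ (finite-length) modules that are killed in passing from $\mod^{\L}R$ to $\coh\X$: in the third bullet one must be sure that the Koszul-type resolutions used to define the $G$-symbols remain exact after the quotient functor $\pi$, which is exactly where Lemma~\ref{prop.regularseq1}(c) (general position of the hyperplanes) is used, and one must track that the ``wrap-around'' term $X_i^{p_i}M$ contributes $[M(-\c)]=G^{e-1}_{\,\cdot\,}$-type corrections that cancel correctly. Everything else is a routine binomial computation, so I would present the combinatorial proof as the primary argument and mention the geometric interpretation only as motivation.
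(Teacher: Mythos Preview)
Your combinatorial route through Lemma~\ref{lem.dimvectors_objects} can be made to work, but the way you handle the $e=0$ case in the first two bullets has a real gap. You say that for $e=0$ ``the telescoping of subsets containing $j$ against those not containing $j$ annihilates everything.'' There is no such cancellation: the classes $[\OO](-a\c-\sum_{l\in J}\x_l)$ for distinct $J\subseteq I\cup\{j\}$ are not equal in $K_0(\coh\X)$, so no purely formal identity among these symbols makes the sum vanish. What your computation actually produces for $e=0$ is the alternating Koszul sum associated to a regular sequence of length $d+1$; the corresponding quotient module lies in $\mod^{\L}_0 R$ and hence has class zero in $K_0(\coh\X)$. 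So the vanishing is geometric, not combinatorial, and you must invoke that. (There is also a slip in the second bullet: the Pascal identity increases the range to $[0,m+1]$, which is exactly the range for $G^{e-1}_I$ since $d-(e-1)-|I|=m+1$; your ``$m-1$'' is backwards.)

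The paper avoids all of this bookkeeping by arguing geometrically from the start, in the same spirit as the argument you sketch for the third bullet. For the first bullet it simply chooses the auxiliary forms $f_k$ linearly independent from $\ell_j$ as well, writes down the short exact sequence
\[
0\to \big(R/(X_l,f_k\mid l\in I)\big)(-\x_j)\xrightarrow{\ X_j\ }R/(X_l,f_k\mid l\in I)\to R/(X_l,f_k\mid l\in I\cup\{j\})\to 0,
\]
and reads off the identity in $K_0$: the right-hand term is a representative of $G^{e-1}_{I\cup\{j\}}$ when $e\ge1$, and is finite-dimensional (hence zero in $\coh\X$) when $e=0$. The second bullet is identical with a generic linear form in place of $X_j$. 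For the third bullet it uses exactly the $X_i$-power filtration you describe: choosing $\ell_i$ as one of the extra linear forms, the module $R/(X_l,\ell_i,f_k\mid l\in I\setminus\{i\})$ representing $G^e_{I\setminus\{i\}}$ is filtered by the $p_i$ shifts $\big(R/(X_l,f_k\mid l\in I)\big)(-a\x_i)$ for $0\le a\le p_i-1$. This uniform geometric argument is shorter and sidesteps the $e=0$ subtlety entirely.
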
 

\begin{proof} 
(a) Choosing the $f_k$ such that $(\ell_i,f_k \mid i \in I\cup\{j\},\ 1\le k\le d-e-|I|)$ are linearly 
independent, we define $R^{e-1}_{I \cup \{j\}}$ and $R^e_I$. Since the sequence 
\[ 0 \to R^e_I(- \x_j) \xrightarrow{X_j} R^e_I \to R^{e-1}_{I \cup \{j\}} \to 0, \] 
is exact by Lemma~\ref{prop.regularseq1}(c), the claim follows.

(b) This can be seen similarly from the exact sequence $0 \to R^e_I(-\c) \xrightarrow{f_1} R^e_I \to R^{e-1}_I \to 0$.

(c) Choosing the $f_k$ such that $(\ell_j,f_k \mid j \in I,\ 1\le k\le d-e-|I|)$ are linearly independent, we define $R^e_I$. Setting $f_{d-e-|I|+1}=\ell_i$, we define $R^e_{I\setminus\{i\}}$. Since $R^e_{I\setminus\{i\}}$ is filtered by $R^e_I(-a\x_i)$ with $0\le a\le p_i-1$, the claim follows.
\end{proof} 

For $I,I'\subset\{1,\ldots,n\}$ and $0\le e\le d-|I|$, $0\le e'\le d-|I'|$, we define
\begin{equation}\label{(e,I) order}
(e',I')\le(e,I)\Longleftrightarrow\mbox{$e'<e$ or ($e'=e$ and $I' \subseteq I$).}
\end{equation}
This gives a partial order on such pairs.
We define subgroups of $K_0(\coh\X)$ by
\[ K_0^{\le (e,I)} =\langle G^{e'}_{I'}(\x)\mid \x\in\L,\ (e',I')\le (e,I)\rangle\supset K_0^{< (e,l)} =\langle G^{e'}_{I'}(\x)\mid \x\in\L,\  (e',I')<(e,I)\rangle. \]
We denote by \( K_0^{(e,I)} = K_0^{\le (e,I)} / K_0^{< (e,I)} \) the factor groups.

Note that by Proposition~\ref{prop.calculating_basis_vector_relation}, the subgroups are closed under the action of \( \L \), and in particular under the action of \( \w \). It follows that \( \w \) induces an endomorphism of the subfactors \( K_0^{(e,I)} \), whose characteristic polynomial we will denote by \( \Phi^{(e, I)} \).

\begin{lemma}\label{KL}
The subfactor group \( K_0^{(e,I)} \) is generated by the images of  the \( G^e_I(\sum_{i \in I} a_i\x_i) \) with \( 0 < a_i < p_i \).
\end{lemma}

\begin{proof}
Let \( \L' \) be the set of all \( \x \in \L \) such that the image of \( G^e_I(\x) \) lies in the subgroup generated by the \( G^e_I(\sum_{i \in I} a_i\x_i) \) with \( 0 < a_i < p_i \). Thus we need to show that \( \L' = \L \).

Obviously $\sum_{i\in I}a_i\x_i\in\L'$ for all $0<a_i<p_i$.
By Proposition~\ref{prop.calculating_basis_vector_relation}(c), $\sum_{i\in I}a_i\x_i\in\L'$ for all $0\le a_i<p_i$. By Proposition~\ref{prop.calculating_basis_vector_relation}(a), $\sum_{i=1}^na_i\x_i\in\L'$ for all $0\le a_i<p_i$.
Taking the normal form of each element of $\L$ and applying Proposition~\ref{prop.calculating_basis_vector_relation}(b), we have $\L'=\L$ as desired.
\end{proof}

Now we are ready to prove the following result.

\begin{theorem} \label{prop.basis_Grothendieck} 
Let $\X$ be a Geigle-Lenzing projective space of dimension $d+1$ with weights $p_1, \ldots, p_n$. Then $K_0(\coh\X)$ has the basis 
\[ G=\{ G^e_I(\x) \mid I \subseteq \{1, \ldots, n\},\ 0 \leq e \leq d 
- |I|,\ \x = \sum_{i \in I} a_i \x_i \text{ for some } 0 < a_i < p_i \}. \]
\end{theorem} 

\begin{proof}
Applying Lemma~\ref{KL} we see inductively that
\[  \{ G^{e'}_{I'}(\x) \mid (e',I') \leq (e,I),  \x = \sum_{i \in I'} a_i \x_i \text{ for some } 0 < a_i < p_i \} \]
generates \( K_0^{\leq (e, I)} \).
In particular, \( G \) generates \( K_0^{\leq (d, \emptyset)} \). On the other hand, since $[\OO(\x)] =G^d_\emptyset(\x)\in K_0^{\leq (d, \emptyset)} $ for all $\x\in\L$, we have \( K_0^{\leq (d, \emptyset)} = K_0(\coh\X) \) by Theorem~\ref{tilting}.

The map $G^e_I(\x) \mapsto e \c + \x$ defines a 
bijection $G\to[0, d\c]$. Since $K_0(\coh\X)$ is a free abelian group of rank $\#[0,d\c]$ by Theorem~\ref{tilting}, $G$ is a basis of $K_0(\coh\X)$.
\end{proof}

\begin{example} 
Consider the usual projective line $\mathbb{P}^1$. Then the basis of the 
Grothendieck group given in Theorem~\ref{prop.basis_Grothendieck} 
consists of $G_{\emptyset}^1 = [ \mathcal{O} ]$ and 
$G_{\emptyset}^0 = [\mathcal{S}]$ for any simple sheaf $\mathcal{S}$. 
Notice that this basis does not arise as the classes of the indecomposable direct summands of a tilting object in $\DDD^{\bo}(\coh\P^1)$.
\end{example} 

As a direct consequence of Theorem~\ref{prop.basis_Grothendieck} we have the following:

\begin{corollary} \label{cor.prod_for_coxeter_polynonial}
The subfactor groups \( K_0^{(e, I)} \) have bases \( \{ G^e_I(\sum_{i \in I} a_i \x_i) \mid 0 < a_i < p_i \} \).
For the characteristic polynomial \( \Phi^{(e, I)} \) of the induced action of \( (\w) \) on \( K_0^{(e, I)} \), we have
\[ \Phi_{\X} = \prod_{\substack{ I \subseteq \{ 1, \ldots, n \} \\ 0 \le e \le d - |I| }}  \Phi^{(e, I)}. \]
\end{corollary}

\begin{proof}
We have seen in Lemma~\ref{KL} that these sets generate the \( K_0^{(e, I)} \). These set must be linear independent by Theorem~\ref{prop.basis_Grothendieck} since $K_0(\coh\X)$ has a filtration with subfactors $K_0^{(e, I)}$.

For the second statement we observe that the action of \( \w \) on \( K_0(\coh\X) \) is block triangular with respect to the preordering on \( G \), whence its characteristic polynomial is the product of the characteristic polynomials of the blocks.
\end{proof}

Thus, for the calculation of the Coxeter polynomials, we need to find explicit formulas for the factors \(  \Phi^{(e, I)} \).
Now, for \( I \subseteq \{ 1, \ldots, n \} \), let
\[C_I = \Z[t_i \mid i \in I] / ( 1+t_i+t_i^2+\cdots+t_i^{p_i-1} \mid i \in I ).\]

\begin{lemma} \label{lem.block_indep_dim}
There is an isomorphism $(K_0^{(e, I)}, (\w)) \cong (C_I, \prod_{i \in I} t_i )$
of abelian groups with endomorphisms.
Thus \( \Phi^{(e,I)} \) is the characteristic polynomial of the action of \( \prod_{i \in I} t_i \) on \( C_I \), and in particular is independent of \( e \).
\end{lemma}

\begin{proof}
Consider the homomorphism given by sending \( G^e_I( \sum_{i \in I} a_i \x_i) \) to \( \prod_{i \in I} t_i^{p_i - a_i - 1} \) for \( 0 < a_i < p_i \).
This is an isomorphism since it sends a basis to a basis.
The claim that the two endomorphisms translate to each other follows from the explicit description of the action of \( \w \) as given by Proposition~\ref{prop.calculating_basis_vector_relation}. (Note that all right hand sides in that proposition vanish in the factor group \( K_0^{(e, I)} \).)
\end{proof}

Since \( \Phi^{(e,I)} \) is independent of \( e \), we will denote it simply by \( \Phi^I \) in the sequel.

\medskip
To calculate the characteristic polynomial \( \Phi^I \) of \( \prod_{i \in I} t_i \) on \( C_I \),
we introduce the following simpler version: Let
\[ \widetilde{C}_I = \Z[t_i \mid i \in I] / (t_i^{p_i} - 1 \mid i \in I), \]
and denote by \( \widetilde{\Phi}^I \) the characteristic polynomial of the action of \( \prod_{i \in I} t_i \) on \( \widetilde{C}_I \). This characteristic polynomial can easily be described explicitly.

\begin{lemma}
We have
\[ \widetilde{\Phi}^I(T) = ( 1 - T^{{\rm lcm}(p_i \mid i \in I)})^{\frac{\prod_{i \in I} p_i}{{\rm lcm}(p_i \mid i \in I)}}. \]
\end{lemma}

\begin{proof}
The action of \( \prod_{i \in I} t_i \) on \( \widetilde{C}_I \) induces a permutation of the basis \( \{ \prod_{i \in I} t_i^{a_i} \mid 0 \leq a_i < p_i \} \) of $\widetilde{C}_I$. All of the orbits of this permutation have length \( {\rm lcm}(p_i \mid i \in I) \). It follows that there are \( \frac{\prod_{i \in I} p_i}{{\rm lcm}(p_i \mid i \in I)} \) orbits, and we obtain the formula of the lemma.
\end{proof}

Next we will show that we can calculate \( \Phi^I \) from the \( \widetilde{\Phi}^J \).

\begin{lemma} \label{lem.Phi_via_hat-Phi}
We have
\[ \Phi^I = \prod_{J \subseteq I} (\widetilde{\Phi}^J)^{(-1)^{|I| + |J|}}. \]
\end{lemma}

\begin{proof}
Observe that we have the tensor product decomposition \( (C_I, \prod_{i \in I} t_i) = \bigotimes_{i \in I} (C_{i}, t_i) \) as abelian groups with endomorphisms, and similarly for \( \widetilde{C}_I \). Also observe that we have the short exact sequences 
\[ 0 \to (\Z, 1) \xrightarrow{f_i} (\widetilde{C}_{ \{ i \} }, t_i) \to (C_{ \{ i \} }, t_i) \to 0 \]
of free abelian groups with automorphisms, where the first map sends \( 1 \) to \( \sum_{j = 0}^{p_i - 1} t_i^j \), and the second map is the natural projection.

Tensoring the morphisms $f_i$, we obtain an exact sequence
\begin{align*} 0 \to & (\Z, 1) \to \bigoplus_{i \in I} (\widetilde{C}_{ \{ i \} }, t_i) \to \bigoplus_{\substack{i,j \in I \\ i \neq j}} (\widetilde{C}_{ \{ i, j \} }, t_i t_j) \to \bigoplus_{\substack{J \subseteq I \\ |J| = 3}} (\widetilde{C}_J, \prod_{j \in J} t_j) \to \cdots \\
& \to \bigoplus_{\substack{J \subseteq I \\ |J| = |I|-1}} (\widetilde{C}_J, \prod_{j \in J} t_j) \to ( \widetilde{C}_I, \prod_{i \in I} t_i) \to ( C_I, \prod_{i \in I} t_i) \to 0
\end{align*}
It follows that the characteristic polynomial of the right end term is the alternating product of the previous terms, as claimed in the lemma.
\end{proof}

For example, we have
\[\Phi^{\varnothing}(T) = 1 - T,\ \Phi^{\{i\}}(T) = 
\frac{1-T^{p_i}}{1-T},\ \Phi^{ \{i,j\}}(T) = \frac{(1 - T^{{\rm lcm}(p_i,p_j)})^{{\rm 
gcd}(p_i,p_j)} (1-T)}{(1-T^{p_i})(1-T^{p_j})},\dots.\]

We are now ready to compute the Coxeter polynomial of $\X$.

\begin{theorem} \label{thm.coxeter_polynomial}
Let \( \X \) be a Geigle-Lenzing projective space of dimension $d+1$ with weights $p_1, \ldots, p_n$.
\begin{itemize}
\item If \( n \leq d+1 \), then the Coxeter polynomial of $\X$ is
\[ \Phi_{\X}(T) = (1 - T^{{\rm lcm} (p_i)})^{(d+1-n) \frac{\prod_{i} p_i}{{\rm lcm} (p_i)}}  \prod_{i = 1}^n (1 - T^{{\rm lcm} (p_j \mid i \neq j)})^{\frac{\prod_{j \neq i} p_j}{{\rm lcm} (p_j \mid i \neq j)}}. \]
\item If \( n > d+1 \), then the Coxeter polynomial of \( \X \) is
\[ \Phi_{\X}(T) = \prod_{\substack{I \subseteq \{1, \ldots, n \} \\ |I| \leq d}} ( 1 - T^{{\rm lcm}(p_i \mid i \in I)})^{(-1)^{d+|I|} {n - 2 - |I| \choose d - |I|} \frac{\prod_{i \in I} p_i}{{\rm lcm}(p_i \mid i \in I)}}. \]
\end {itemize}
\end{theorem}

\begin{proof}
We observed in Corollary~\ref{cor.prod_for_coxeter_polynonial} and  Lemma~\ref{lem.block_indep_dim} that
\begin{align*} \Phi_{\X} \stackrel{{\rm Cor.} \ref{cor.prod_for_coxeter_polynonial}}{=} \prod_{\substack{I \subseteq \{ 1, \ldots, n \} \\ 0 \leq e \leq d - |I|}} \Phi^{(e, I)} \stackrel{{\rm Lem.}\ref{lem.block_indep_dim}}{=}  \prod_{\substack{I \subseteq \{ 1, \ldots, n \} \\ |I| \leq d}} (\Phi^{I})^{d+1-|I|}
\stackrel{{\rm Lem.}\ref{lem.Phi_via_hat-Phi}}{=}
\prod_{\substack{I \subseteq \{ 1, \ldots, n \} \\ |I| \leq d}} (\prod_{J \subseteq I} \widetilde{\Phi}_J^{(-1)^{|I|+|J|}})^{d+1-|I|}.
\end{align*}
We collect all factors \( \widetilde{\Phi}_J \) in the above product, and obtain a product of the form
\[ \Phi_{\X} = \prod_{\substack{J \subseteq \{ 1, \ldots, n \} \\ |I| \leq d}} (\widetilde{\Phi}^J)^{e(J)} \ \mbox{ with }\ 
e(J) = \sum_{\substack{I \supseteq J \\ |I| \leq d}} (-1)^{|I| + |J|} (d+1-|I|). \]
Since the terms on the right hand side only depend on the cardinality of \( I \), we can instead sum over the possible cardinalities, adding the number of such subsets as a factor. That is
\[ e(J) = \sum_{c = |J|}^{\min(n, d)} (-1)^{c + |J|} (d + 1 - c) {{n - |J|} \choose {c - |J|}}. \]
Note that the summand for \( c = d+1 \) is zero, whence it does not matter if the sum runs to \( d \) or to \( d+1 \). Additionally we replace \( c \) by \( i + |J| \), and obtain
\begin{align*}
e(J) & = \sum_{i = 0}^{\min(n,d+1) - |J|} (-1)^{i} (d + 1 - i - |J|) {n - |J| \choose i} \\
& = - \sum_{i = 0}^{\min(n,d+1) - |J|} (-1)^{i} {n - |J| \choose i} i + (d + 1 - |J|) \sum_{i = 0}^{\min(n,d+1) - |J|} (-1)^{i} {n - |J| \choose i} 
\end{align*}
In order to evaluate this formula further, we will employ the following combinatorial facts:

\begin{lemma}
For \( m \geq n \geq 0 \) we have
\begin{align*}
\sum_{i = 0}^n (-1)^i {m \choose i} & = \begin{cases} (-1)^n {m-1 \choose n} & m > n \\ 0 & m = n > 0 \\ 1 & m = n = 0. \end{cases} \text{, and } \\
\sum_{i = 0}^n (-1)^i {m \choose i} i & = \begin{cases} (-1)^n  {m-2 \choose n-1}m & m > n \\ 0 & m = n \neq 1 \\ -1 & m = n = 1. \end{cases}
\end{align*}
\end{lemma}

\begin{proof}
The first claim is proven by induction on \( n \).

For the second claim, observe that for \( i > 0 \) we have \( {m \choose i}i = {m-1 \choose i-1}m \). With this observation, the second formula may be reduced to the first one.
\end{proof}

Now we continue the calculation of \( e(J) \) using these combinatorial formulas. Assume first that \( n \leq d + 1 \). Then
\begin{align*}
e(J) & = - \sum_{i = 0}^{n - |J|} (-1)^{i} {n - |J| \choose i} i + (d + 1 - |J|) \sum_{i = 0}^{n - |J|} (-1)^{i} {n - |J| \choose i}  \\
& = \begin{cases} d+1-n & |J| = n \\ 1 & |J| = n - 1 \\ 0 & \text{else.} \end{cases}.
\end{align*}
For \( n > d+1 \) we obtain
\begin{align*}
e(J) & = (-1)^{d+1-|J|} {n - |J| - 2 \choose d - |J| - 1} (n - |J|) + (-1)^{d - |J|} {n - |J| - 1 \choose d - |J| } (d + 1 - |J|)\\
& = (-1)^{d - |J|} \frac{(n - |J| - 2)!}{(d - |J|)! (n - 1 - d)!} \left( (d+1-|J|) (n - |J| - 1) - (n - |J|) (d - |J|) \right) \\
& = (-1)^{d - |J|} \frac{(n - |J| - 2)!}{(d - |J|)! (n - 1 - d)!} (n - d - 1) \\
& = (-1)^{d - |J|} {n - |J| - 2 \choose d - |J|} \qedhere
\end{align*}
\end{proof}

\begin{example}
For the usual projective space \( \P^d \), we recover the Coxeter polynomial
\[ \Phi_{\P^d}(T) = (1-T)^{d+1}. \]
\end{example}

While the explicit formulas of Theorem~\ref{thm.coxeter_polynomial} may seem quite involved, one may observe that they simplify considerably in the case that the weights are coprime.

\begin{corollary}
Let \( \X \) be a Geigle-Lenzing projective space of dimension $d+1$ with weights $p_1, \ldots, p_n$, which are pairwise coprime. (Equivalently, \( \L \) is torsion free.)
\begin{itemize}
\item If \( n \leq d+1 \), then the Coxeter polynomial of $\X$ is
\[ \Phi_{\X}(T) = (1 - T^{\prod_i p_i})^{d+1-n}  \prod_{i = 1}^n (1 - T^{\prod_{j \neq i} p_j}). \]
\item If \( n > d+1 \), then the Coxeter polynomial of \( \X \) is
\[ \Phi_{\X}(T) = \prod_{\substack{I \subseteq \{1, \ldots, n \} \\ |I| \leq d}} ( 1 - T^{\prod_{i \in I} p_i })^{(-1)^{d+|I|} {n - 2 - |I| \choose d - |I|} }. \]
\end {itemize}
\end{corollary}

\begin{example} 
Let $\X$ be a Geigle-Lenzing projective space of dimension $2$ with weights $2,3$. Then the Coxeter polynomial of $\X$ is given by 
\[ \Phi_{\X}(T) = (1-T^6) (1 - T^3) (1 - T^2). \]
\end{example}

Finally we note that in the hypersurface case \( n = d+2 \) all binomial coefficients in the formula for the Coxeter polynomial evaluate to 1, so we get the following simple formula.

\begin{corollary}
Let \( \X \) be Geigle-Lenzing projective space of dimension $d+1$ with weights $p_1, \ldots, p_{d+2}$. Then the its Coxeter polynomial is given by
\[ \Phi_{\X}(T) = \prod_{\substack{I \subseteq \{1, \ldots, n \} \\ |I| \leq d}} ( 1 - T^{{\rm lcm}(p_i \mid i \in I)})^{(-1)^{d+|I|} \frac{\prod_{i \in I} p_i}{{\rm lcm}(p_i \mid i \in I)}}. \]
If moreover the weights \( p_i \) are coprime, then this simplifies to
\[ \Phi_{\X}(T) =  \prod_{\substack{I \subseteq \{1, \ldots, n \} \\ |I| \leq d}} ( 1 - T^{\prod_{i \in I} p_i })^{(-1)^{d+|I|}}. \]
\end{corollary}

\begin{example}
Let \( \X \) be a  Geigle-Lenzing projective space of dimension $2$ with weights $2,3,5,7$. Then the Coxeter polynomial of $\X$ is given by 
\[ \Phi_{\X}(T) = \frac{(1-T)(1-T^6)(1-T^{10})(1-T^{14})(1-T^{15})(1-T^{21})(1-T^{35})}{(1-T^2) (1 - T^3) (1 - T^5) (1-T^7)}. \]
\end{example}

We end this section by posing the following question.

\begin{problem}
What is the Coxeter polynomial of $\underline{\CM}^{\L}R$?
\end{problem}

For the hypersurface case $n=d+2$, an answer was given by Hille-M\"uller \cite{HM}.

%
%
%


\chapter{Tilting theory on Geigle-Lenzing projective spaces}

Let $\X$ be a Geigle-Lenzing projective space over a field $k$
associated with linear forms $\ell_1, \ldots, \ell_n$ and
weights $p_1, \ldots, p_n$.
In this chapter, we study tilting objects $V$ in $\DDD^{\bo}(\coh\X)$ 
that belong to $\vect\X$ (respectiely, $\coh\X$). 
We call such a $V$ a \emph{tilting bundle} (respectively, \emph{tilting sheaf}) on $\X$. 
We study the endomorphism algebras $\End_{\X}(V)$ of tilting bundles $V$. 
A typical example of a tilting bundle is $T^{\ca}$ given in Theorem~\ref{tilting}.
In this case, the endomorphism algebra is the $d$-canonical algebra $A^{\ca}$
studied in the previous chapter.

\section{Basic properties of tilting sheaves}
Throughout this section, let $V$ be a tilting bundle on $\X$ with \[\Lambda:=\End_{\X}(V).\]
Then we have triangle equivalences
\[V\Lotimes_\Lambda-:\DDD^{\bo}(\mod\Lambda)\to\DDD^{\bo}(\coh\X)
\ \mbox{ and }\ \RHom_{\X}(V,-):\DDD^{\bo}(\coh\X)\to\DDD^{\bo}(\mod\Lambda)\]
which are mutually quasi-inverse and make the following diagram commutative:
\begin{equation}\label{derived equivalence}
\xymatrix{
\DDD^{\bo}(\mod\Lambda)\ar@{-}[rr]^{\sim}\ar[d]^{\nu_d}&&\DDD^{\bo}(\coh\X)\ar[d]^{(\w)}\\
\DDD^{\bo}(\mod\Lambda)\ar@{-}[rr]^{\sim}&&\DDD^{\bo}(\coh\X).
}\end{equation}
In the rest, we identify $\DDD^{\bo}(\mod\Lambda)$ and $\DDD^{\bo}(\coh\X)$
by these triangle equivalences.

Let $0\le j\le d$. Recall that $R_{T_j}$ is the localization of $R$ with respect to the
multiplicative set $\{T_j^{\ell}\mid\ell\in\Z\}$.
Since $(\mod^{\L}_0R)_{T_j}=0$ holds, the natural functor $(-)_{T_j}:\DDD^{\bo}(\mod^{\L}R)\to\DDD^{\bo}(\mod^{\L}R_{T_j})$ factors as
\[\DDD^{\bo}(\mod^{\L}R)\xrightarrow{\pi}\DDD^{\bo}(\coh\X)\to\DDD^{\bo}(\mod^{\L}R_{T_j})\]
by universality.
The following observation shows that tilting bundles on $\X$
give progenerators in $\mod^{\L}R_{T_j}$.

\begin{lemma}\label{tilting bundle is a progenerator}
Assume that $V\in\mod^{\L}R$ gives a tilting bundle on $\X$.
Then for any $j$ with $0\le j\le d$, 
we have $\proj^{\L}R_{T_j}=\add V_{T_j}$.
\end{lemma}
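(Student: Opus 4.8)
\textbf{Proof plan for Lemma \ref{tilting bundle is a progenerator}.}

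The plan is to exploit the fact that localizing at $T_j$ kills the Serre subcategory $\mod^{\L}_0R$, so the composite $\DDD^{\bo}(\mod^{\L}R)\xrightarrow{\pi}\DDD^{\bo}(\coh\X)\to\DDD^{\bo}(\mod^{\L}R_{T_j})$ exhibits $\DDD^{\bo}(\mod^{\L}R_{T_j})$ as a Verdier quotient of $\DDD^{\bo}(\coh\X)$. First I would record that, since $V$ is a tilting object in $\DDD^{\bo}(\coh\X)$, we have $\DDD^{\bo}(\coh\X)=\thick_{\DDD^{\bo}(\coh\X)}V$ by definition; applying the exact localization functor to $\DDD^{\bo}(\mod^{\L}R_{T_j})$ then gives $\DDD^{\bo}(\mod^{\L}R_{T_j})=\thick\{V_{T_j}\}$. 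On the other hand, $R_{T_j}$ is $\L$-regular by Theorem \ref{L-isolated singularity}(b), and $T_j$ is an invertible homogeneous element of $R_{T_j}$, so the category $\mod^{\L}R_{T_j}$ is equivalent to $\mod(R_{T_j})_0$ with $(R_{T_j})_0$ a regular (hence finite global dimension) ring; in particular $\DDD^{\bo}(\mod^{\L}R_{T_j})=\KKK^{\bo}(\proj^{\L}R_{T_j})$. Thus $\proj^{\L}R_{T_j}$ is closed under direct summands and the equality $\thick\{V_{T_j}\}=\DDD^{\bo}(\mod^{\L}R_{T_j})$ tells us $V_{T_j}$ generates.

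Next I would pin down that $V_{T_j}$ actually lies in $\proj^{\L}R_{T_j}$: this is exactly Proposition \ref{FLC modules}(c) together with Definition-Proposition \ref{locally free at punctured spectrum}(b), since $V$ gives a vector bundle on $\X$ means any lift of $V$ to $\mod^{\L}R$ is locally free on the punctured spectrum, i.e. $V_{T_j}\in\proj^{\L}R_{T_j}$. So we have $\add V_{T_j}\subseteq\proj^{\L}R_{T_j}$, and it remains to prove the reverse inclusion, i.e. that $R_{T_j}$ itself (and each $R_{T_j}(\x)$) is a summand of a finite direct sum of copies of $V_{T_j}$.

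For the reverse inclusion the cleanest route is to note that $\thick\{V_{T_j}\}=\DDD^{\bo}(\mod^{\L}R_{T_j})$ combined with finite global dimension forces $\add\{V_{T_j}[i]\mid i\in\Z\}$ to generate $\KKK^{\bo}(\proj^{\L}R_{T_j})$ as a thick subcategory, and since $V_{T_j}$ is concentrated in degree $0$ (it is a module) with $\Hom_{\DDD^{\bo}(\mod^{\L}R_{T_j})}(V_{T_j},V_{T_j}[i])$ a localization of $\Ext^i_{\X}(V,V)=0$ for $i\neq0$, the object $V_{T_j}$ is a tilting object in $\KKK^{\bo}(\proj^{\L}R_{T_j})$. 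Then by Proposition \ref{tilting theorem} (applied to the idempotent-complete algebraic triangulated category $\KKK^{\bo}(\proj^{\L}R_{T_j})$) one gets a triangle equivalence $\KKK^{\bo}(\proj^{\L}R_{T_j})\simeq\KKK^{\bo}(\proj\End(V_{T_j}))$ sending $V_{T_j}$ to $\End(V_{T_j})$. Since under such an equivalence the unique (up to summands and shift) object whose positive self-extensions vanish and which generates together with $R_{T_j}$ must match up, one concludes $\add V_{T_j}=\proj^{\L}R_{T_j}$. I expect the main obstacle to be the careful bookkeeping in this last paragraph: showing that generation of the derived category by a module of finite projective dimension with no self-extensions forces it to be a \emph{projective} generator rather than merely a tilting complex. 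The resolution is that over a ring of finite global dimension a tilting module which generates the derived category and is concentrated in a single degree is automatically a progenerator, because its endomorphism ring is derived equivalent to $R_{T_j}$ and Morita theory over the regular ring $(R_{T_j})_0$ then identifies it; alternatively one can argue directly by taking a minimal projective resolution of $R_{T_j}$ over $\End(V_{T_j})$ and transporting it back.
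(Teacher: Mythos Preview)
Your approach is correct and is essentially the paper's: show $V_{T_j}\in\proj^{\L}R_{T_j}$ via local freeness on the punctured spectrum, show $\thick V_{T_j}=\DDD^{\bo}(\mod^{\L}R_{T_j})$ by pushing $\thick V=\DDD^{\bo}(\coh\X)$ through the localization functor, and conclude that $V_{T_j}$ is a progenerator. The paper's proof is exactly these three sentences.

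Where you diverge is in the last step, and you make it much harder than necessary. You invoke tilting theory, derived equivalences, and worry about ``careful bookkeeping''. None of this is needed. Once you know $V_{T_j}$ is \emph{projective} and thick-generates $\DDD^{\bo}(\mod^{\L}R_{T_j})$, it is a progenerator by the one-line argument: if $V_{T_j}$ were not a generator there would exist $0\neq X\in\mod^{\L}R_{T_j}$ with $\Hom(V_{T_j},X)=0$; projectivity then gives $\Hom(V_{T_j},X[i])=0$ for all $i$, so $X$ lies in the right orthogonal of $\thick V_{T_j}=\DDD^{\bo}(\mod^{\L}R_{T_j})$, forcing $X=0$. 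This is what the paper means by ``$V_{T_j}$ has to be a progenerator''.

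Also, your justification that $\Hom_{\DDD^{\bo}(\mod^{\L}R_{T_j})}(V_{T_j},V_{T_j}[i])$ is ``a localization of $\Ext^i_{\X}(V,V)$'' is neither obviously true (the functor $\DDD^{\bo}(\coh\X)\to\DDD^{\bo}(\mod^{\L}R_{T_j})$ is a Verdier quotient, not fully faithful) nor needed: since $V_{T_j}$ is projective, its higher self-Exts vanish automatically.
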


\begin{proof}
It follow from Proposition~\ref{FLC modules}(c) that $V_{T_j}\in\proj^{\L}R_{T_j}$.
Since the functor $\DDD^{\bo}(\coh\X)=\thick V\to
\DDD^{\bo}(\mod^{\L}R_{T_j})$ is dense, we have $\DDD^{\bo}(\mod^{\L}R_{T_j})=\thick V_{T_j}$.
In particular, $V_{T_j}$ has to be a progenerator in $\mod^{\L}R_{T_j}$.
\end{proof}

The following useful result strengthens Theorem~\ref{general Serre vanishing}(a)
for tilting bundles.

\begin{theorem}\label{Serre surjection}
Let $V$ be a tilting bundle on $\X$.
Then for any $X\in\coh\X$, there exists $\a\in\L$ such that for any $\x\in\L$
satisfying $\x\ge\a$, there exists an epimorphism $V'\to X(\x)$ in $\coh\X$ with
$V'\in\add V$.
\end{theorem}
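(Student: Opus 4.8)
The statement generalizes the ordinary Serre-type surjectivity of Theorem~\ref{general Serre vanishing}(a) by requiring that the surjection can be chosen from $\add V$ \emph{uniformly} for all sufficiently large twists $\x$. The plan is to reduce to the case $X=\OO(\y)$ by using a finite presentation of $X$ by line bundles, then to exploit that $V$ is a tilting bundle — in particular that $\Hom_{\X}(V,-)$ is exact up to higher $\Ext$ groups which vanish after twisting, and that $V$ generates $\coh\X$ as a thick subcategory.

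First I would observe that it suffices to prove the claim for $X=\OO$: for a general $X$, applying Theorem~\ref{Serre vanishing}(a) gives a finite direct sum $L_0=\bigoplus_{t}\OO(-\y_t)$ of line bundles with an epimorphism $L_0\to X$; if we find $\a_t$ for each $\OO(-\y_t)$ as in the statement, then $\a:=\sup_t(\a_t+\y_t)$ works for $X$, since for $\x\ge\a$ we get epimorphisms $V_t'\to\OO(-\y_t+\y_t+\x)=\OO(\x)$ wait — more precisely we twist the presentation $L_0(\x)\to X(\x)$ and cover each summand $\OO(-\y_t+\x)$ by an object of $\add V$ using $\a_t\le \x-\y_t$, i.e. $\x\ge\a_t+\y_t$. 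Composing gives the desired epimorphism onto $X(\x)$.

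Next, for $X=\OO$, the key point is to produce, for $\x\gg 0$, an epimorphism $V^m(-\b)\to\OO(\x)$ for a \emph{fixed} $\b\in\L$ and fixed $m$, or equivalently an epimorphism $V^m\to\OO(\x+\b)$. Here I would use that $V$ generates: by Theorem~\ref{general Serre vanishing}(a) applied with the roles reversed, or rather by using a line-bundle resolution of $V$ together with Theorem~\ref{Serre vanishing}(a) applied to $V$, there is a fixed $\b\in\L_+$ and an epimorphism $\bigoplus_{i}\OO(-\b_i)\to V$ with all $\b_i$ in a fixed finite set $S$. Dually — or by combining with Theorem~\ref{general Serre vanishing}(a) for the vector bundle $V$ itself applied to $X=\OO$ — there is $\a'\in\L$ such that $\Hom_{\X}(V,\OO(\x))$ has ``enough'' global sections to generate $\OO(\x)$ for $\x\ge\a'$: concretely, the natural evaluation map $V\otimes_k\Hom_{\X}(V,\OO(\x))\to\OO(\x)$ should be surjective for $\x\gg0$. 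To see this surjectivity one can work locally: lift $V$ to a module over $R$ that is locally free on the punctured spectrum (Proposition~\ref{FLC modules}(c)), and note that the cokernel of the evaluation map is supported at $R_+$ hence lies in $\mod_0^{\L}R$ once one checks that $\Hom_R(V,R(\x))_{\L_+}$ generates locally — this is exactly the argument in the proof of Theorem~\ref{general Serre vanishing}(a), but now we must track that the \emph{same} $\a$ works: the Serre vanishing $\Ext^{>0}_{\X}(V,\OO(\x))=0$ for $\x\ge\a'$ (Theorem~\ref{general Serre vanishing}(b)) together with finite generation of $\bigoplus_{\x}\Hom_{\X}(V,\OO(\x))$ over a suitable Veronese subalgebra of $R$ forces the evaluation to be onto for all large $\x$ with a uniform bound.

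The main obstacle will be the uniformity in $\x$: Theorem~\ref{general Serre vanishing}(a) as stated only gives an epimorphism from $\add\{V(-\x)\mid\x\in\L_+\}$ without control, so I would need to upgrade it by showing that the graded module $M:=\bigoplus_{\x\in\L}\Hom_{\X}(V,\OO(\x))$ is a finitely generated module over the $(\Z\c)$-Veronese (or over $R^{(\Z\w)}$), using Noetherianity of $\coh\X$ (Theorem~\ref{Serre}(a)) and Serre vanishing; finite generation yields a finite set of homogeneous generators $g_1,\dots,g_m$ with $g_i\in\Hom_{\X}(V,\OO(\b_i))$, and then for $\x$ with $\x-\b_i\in\L_+$ for all $i$ — i.e.\ $\x\ge\a:=\sup_i\b_i$ — the maps $g_i$ twisted by $\x-\b_i$ jointly give a morphism $\bigoplus_i V(-(\x-\b_i))\to\OO(\x)$, wait, rather $\bigoplus_i V\to\OO(\x)$ after absorbing twists into $V$; the point is that $V(-(\x-\b_i))\in\add V(\L_+)$ and surjectivity at each local ring $R_{(\pp)}$, $\pp\ne R_+$, follows from $V_{(\pp)}$ being free together with $M_{(\pp)}$ generating $\OO(\x)_{(\pp)}$, while at $R_+$ there is nothing to check since the cokernel is then finite dimensional hence zero in $\coh\X$. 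Assembling these local statements with Lemma~\ref{tilting bundle is a progenerator} (which guarantees $V_{T_j}$ is a progenerator, hence the local generation) completes the argument. I expect the verification that one genuinely gets a single $\a$ working for all $\x\ge\a$ — rather than an $\a$ depending on $\x$ through the generators — to be the delicate bookkeeping step, handled precisely by passing to finite generation of $M$ over a Noetherian Veronese subalgebra.
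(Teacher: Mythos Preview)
Your plan correctly identifies Lemma~\ref{tilting bundle is a progenerator} as the essential input, and your reduction to $X=\OO$ is valid (though the paper works directly with a general $X$, which is no harder). The overall strategy --- check surjectivity on each chart $\Spec R_{T_j}$ and conclude that the cokernel lies in $\mod_0^{\L}R$ --- is exactly the paper's.

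However, your main mechanism has a gap. You propose using finite generation of $M:=\bigoplus_{\x}\Hom_{\X}(V,\OO(\x))$ over a Veronese subring to obtain generators $g_i\in M_{\b_i}$, and then argue that for $\x\ge\sup_i\b_i$ the compositions $r\circ g_i$ (with $r\in R_{\x-\b_i}$) are jointly surjective. But finite generation of $M$ over $R$ does \emph{not} tell you that the single graded piece $M_{\x}$ suffices to generate $\OO(\x)$ locally: the identity $\sum_i R_{T_j}\cdot g_i(V_{T_j})=R_{T_j}$ involves elements of $(R_{T_j})_{\x-\b_i}$, which contain negative powers of $T_j$ and are strictly larger than $R_{\x-\b_i}$. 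Your phrase ``$M_{(\pp)}$ generating $\OO(\x)_{(\pp)}$'' conflates the localized module with its degree-$\x$ part, and the oscillation between ``$\bigoplus_i V(-(\x-\b_i))$'' and ``$\bigoplus_i V$'' reflects this unresolved issue.

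The paper closes this gap without going through generators of $M$. Instead, for each $j$ it takes a single epimorphism $f_j\colon V_{T_j}^{N_j}\to X_{T_j}$ (existence by Lemma~\ref{tilting bundle is a progenerator}), clears the $T_j$-denominator to write $f_j=g_jT_j^{-a_j}$ with $g_j\in\Hom_R^{\L}(V^{N_j},X(a_j\c))$ and $(\Cokernel g_j)_{T_j}=0$, sets $a:=\max_j a_j$, and combines the shifted maps $e_j:=g_jT_j^{a-a_j}$ into a single $e\colon V^{?}\to X(a\c)$ with $(\Cokernel e)_{T_j}=0$ for \emph{every} $j$, hence $\Cokernel e\in\mod_0^{\L}R$. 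Lemma~\ref{surjection from X to X(c)} then upgrades this to all $X(\ell\c)$ with $\ell\ge a$, and running the argument over a finite set of coset representatives of $\Z\c$ in $\L$ yields the uniform $\a$. Replace your finite-generation detour with this denominator-clearing step and the proof goes through.
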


\begin{proof}
It suffices to show that for any $W\in\mod^{\L}R$, there exists $\a\in\L$
such that for any $\x\in\L$ satisfying $\x\ge\a$, there exists a morphism
$V'\to W(\x)$ in $\mod^{\L}R$ with $V'\in\add V$ which has a cokernel in
$\mod^{\L}_0R$.

(i) Fix $X\in\mod^{\L}R$. We show that for $\ell\gg0$, there exists a
morphism $V'\to X(\ell\c)$ in $\mod^{\L}R$ with $V'\in\add V$ which has
a cokernel in $\mod^{\L}_0R$.

Fix $j=0,\ldots,d$.
Since $\proj^{\L}R_{T_j}=\add V_{T_j}$ holds by
Lemma~\ref{tilting bundle is a progenerator}, there exists an epimorphism
$f_j:V^j_{T_j}\to X_{T_j}$ in $\mod^{\L}R$ with $V^j\in\add V$. Since
\[\Hom_{R_{T_j}}^{\L}(V^j_{T_j},X_{T_j})=(\Hom_R(V^j,X)_{T_j})_0
=\sum_{a\ge0}\Hom_R^{\L}(V^j,X(a\c))T_j^{-a},\]
we can write $f_j=g_jT_j^{-a_j}$ with $g_j\in\Hom_R^{\L}(V^j,X(a_j\c))$.
Then $\Cokernel g_j\in\mod^{\L}R$ satisfies $(\Cokernel g_j)_{T_j}=0$.

For $a:=\max\{a_0,\ldots,a_d\}$, let $e_j:=g_jT_j^{a-a_j}\in\Hom_R^{\L}(V^j,X(a\c))$ and
\[e:=(e_0,\ldots,e_d)^t:V^0\oplus\cdots\oplus V^d\to X(a\c).\]
Then there exists an epimorpism $\Cokernel e_j\to\Cokernel e$.
Since $(\Cokernel e_j)_{T_j}=(\Cokernel g_j)_{T_j}=0$ holds, we have
$(\Cokernel e)_{T_j}=0$ for any $j$ with $0\le j\le d$.
Thus $\Cokernel e$ belongs to $\mod^{\L}_0R$.

Using Lemma~\ref{surjection from X to X(c)}(a), we have the assertion.

(ii) Let $I:=\{\sum_{i=1}^na_i\x_i\mid 0\le a_i<p_i\}$ be the complete
set of representatives in $\L/\Z\c$. Applying (i) to $X:=W(\x)$
for each $\x\in I$, we have the assertion.
\end{proof}

We have the following description of $\coh\X$ in terms of $\Lambda$.

\begin{theorem}\label{describe coh X by A}
Let $V$ be a tilting bundle on $\X$, and $\Lambda:=\End_{\X}(V)$.
\begin{itemize}
\item[(a)] For any $\a\in\L$ satisfying
$\delta(\a)>0$, we have
\[\coh\X=\{X\in\DDD^{\bo}(\coh\X)\mid\forall \ell\gg0\ X(\ell \a)\in\mod\Lambda\}.\]
\item[(b)] If $\X$ is Fano, then $\coh\X=\{X\in\DDD^{\bo}(\coh\X)\mid\forall \ell\gg0\ X(-\ell \w)\in\mod\Lambda\}$.
\item[(c)] If $\X$ is anti-Fano, then $\coh\X=\{X\in\DDD^{\bo}(\coh\X)\mid\forall \ell\gg0\ X(\ell \w)\in\mod\Lambda\}$.
\end{itemize}
\end{theorem}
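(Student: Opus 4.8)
The plan is to deduce everything from the general setup for $d$-representation infinite algebras (Theorems \ref{d-RI has d-CT}, \ref{tau_d finite has d-CT}) together with the explicit description of $\coh\X$ in terms of $\L$-graded $R$-modules. Note first that part (a) is really the crucial statement: (b) and (c) follow from (a) by observing that, when $\X$ is Fano, $-\w$ satisfies $\delta(-\w)>0$, and when $\X$ is anti-Fano, $\w$ satisfies $\delta(\w)>0$, so one simply applies (a) with $\a:=-\w$ (respectively $\a:=\w$). Thus I would concentrate on (a), and I may as well fix a single $\a\in\L$ with $\delta(\a)>0$.

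For (a), write $\CC:=\{X\in\DDD^{\bo}(\coh\X)\mid\forall\ell\gg0\ X(\ell\a)\in\mod\Lambda\}$, where I identify $\DDD^{\bo}(\coh\X)$ with $\DDD^{\bo}(\mod\Lambda)$ via the tilting bundle $V$. The inclusion $\coh\X\subseteq\CC$ is the content of Serre vanishing: given $X\in\coh\X$, by Theorem \ref{general Serre vanishing}(b) applied to $V$ we have $\Ext^i_{\X}(V,X(\x))=0$ for all $i>0$ once $\x\gg0$, and since $\delta(\a)>0$ the elements $\ell\a$ for $\ell\gg0$ are $\ge\x$ for any prescribed $\x$ (here one uses that $\delta$ is, up to a bounded ambiguity coming from the torsion part of $\L$, an order-controlling function — more precisely, for any $\x\in\L$ there is $\ell_0$ with $\ell\a\ge\x$ for all $\ell\ge\ell_0$, which follows because $p\a=\delta(\a)\cdot p\c$ with $p=\mathrm{l.c.m.}(p_i)$ and $\delta(\a)>0$, so $p\a>0$, and $\L_+$ absorbs translates). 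Hence $\RHom_{\X}(V,X(\ell\a))$ is concentrated in degree $0$, i.e. $X(\ell\a)\in\mod\Lambda$, so $X\in\CC$.

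The reverse inclusion $\CC\subseteq\coh\X$ is where the real work lies. Suppose $X\in\CC$; I want to show $X\in\coh\X$, i.e. $H^i(X)=0$ for $i\neq0$ in $\coh\X$. The idea is that $\coh\X$ is a Serre subcategory of $\DDD^{\bo}(\coh\X)$ cut out by a t-structure, and the condition "$X(\ell\a)\in\mod\Lambda$ for $\ell\gg0$" forces $X$ to lie in the aisle and coaisle. Concretely, I would argue as follows. Each $H^i(X)\in\coh\X$ satisfies $H^i(X)\in\CC$ (since the subcategory condition passes to cohomology because the shift $(\ell\a)$ is exact on $\coh\X$ and commutes with $H^i$), and by the already-proved inclusion $H^i(X)(\ell\a)\in\mod\Lambda$ for $\ell\gg0$. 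Now use that the twist functor $(\ell\a)$ on $\DDD^{\bo}(\coh\X)$ corresponds under the equivalence to an autoequivalence of $\DDD^{\bo}(\mod\Lambda)$, and — this is the key point — for $\ell\gg0$ this autoequivalence is "eventually left t-exact" in the appropriate sense: since $\X$ has global dimension $d$, $X(\ell\a)$ has cohomology spread over at most $d+1$ consecutive degrees, and if $X$ had nonzero $H^i$ for some $i<0$ then $X(\ell\a)$ would have nonzero negative cohomology as a complex of $\Lambda$-modules, contradicting $X(\ell\a)\in\mod\Lambda$; the dual argument handles $i>0$, using the Serre functor $(\w)[d]$ and Serre duality to convert positive-degree cohomology of $X$ into negative-degree cohomology of $X^\vee$ twisted suitably.

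I expect the main obstacle to be making precise the interplay between the triangulated twist $(\ell\a)$, the t-structure whose heart is $\coh\X$, and the standard t-structure on $\DDD^{\bo}(\mod\Lambda)$ — i.e. showing that membership of all $X(\ell\a)$ in the heart $\mod\Lambda$ for $\ell\gg0$ forces $X$ itself into the heart $\coh\X$. The cleanest route is probably to mimic the proof structure of Theorem \ref{d-RI has d-CT} and Proposition \ref{describe coh_d}: express $\coh\X$ as an intersection of shifted subcategories and check the two-sided bound on cohomological amplitude directly, using Theorem \ref{Serre surjection} to produce, for $X\in\CC$, surjections from $\add V$ onto high twists of $X$, and then bootstrapping via the global-dimension-$d$ bound to conclude that $X$ itself is a sheaf. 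Once that amplitude bound is in place, (a) follows, and then (b), (c) are immediate from $\delta(-\w)>0$ and $\delta(\w)>0$ respectively.
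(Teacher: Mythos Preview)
Your approach is essentially the same as the paper's --- Serre vanishing (Theorem \ref{general Serre vanishing}(b)) for the inclusion $\coh\X\subseteq\CC$ and Serre surjection (Theorem \ref{Serre surjection}) for the reverse --- but your execution of the reverse inclusion is vague at the crucial point, and your suggested routes (Serre duality for $i>0$; mimicking Theorem \ref{d-RI has d-CT} and Proposition \ref{describe coh_d}) are unnecessary detours. The paper instead proves a slightly stronger intermediate result (Proposition \ref{describe t-structure}), characterizing each half of the standard t-structure on $\DDD^{\bo}(\coh\X)$ separately: $X\in\DDD^{\le0}(\coh\X)$ iff $X(\ell\a)\in\DDD^{\le0}(\mod\Lambda)$ for $\ell\gg0$, and dually for $\DDD^{\ge0}$. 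Intersecting gives (a) immediately.

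The step you correctly flag as the ``main obstacle'' --- relating the $\coh\X$-cohomology of $X$ to the $\mod\Lambda$-cohomology of $X(\ell\a)$ --- is handled directly, with no need for global dimension bounds or the $d$-representation-infinite framework. For $\ell\gg0$, Serre vanishing kills $\Ext^j_{\X}(V,H^m(X)(\ell\a))$ for all $j>0$ and all (finitely many nonzero) $m$, so the hypercohomology spectral sequence degenerates and one has $\Hom_{\X}(V,H^i(X)(\ell\a))\simeq\Hom_{\DDD^{\bo}(\coh\X)}(V,X(\ell\a)[i])$. Now if $H^i(X)\neq0$ for some $i>0$, Theorem \ref{Serre surjection} gives an epimorphism from $\add V$ onto $H^i(X)(\ell\a)$ for $\ell\gg0$, so the left side is nonzero, hence the right side is nonzero, which says $X(\ell\a)\notin\DDD^{\le0}(\mod\Lambda)$. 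This handles both $i>0$ and (by the symmetric argument, not by Serre duality) $i<0$.
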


This theorem follows immediately from the following result, describing 
the standard t-structure of $\DDD^{\bo}(\coh\X)$ in terms of $\Lambda$:

\begin{proposition}\label{describe t-structure}
Let $V$ be a tilting bundle on $\X$. Then for any $\a\in\L$ satisfying
$\delta(\a)>0$, we have equalities
\begin{eqnarray*}
\DDD^{\le0}(\coh\X)&=&\{X\in\DDD^{\bo}(\coh\X)\mid\forall \ell\gg0\ X(\ell\a)\in\DDD^{\le0}(\mod\Lambda)\},\\
\DDD^{\ge0}(\coh\X)&=&\{X\in\DDD^{\bo}(\coh\X)\mid\forall \ell\gg0\ X(\ell\a)\in\DDD^{\ge0}(\mod\Lambda)\}.
\end{eqnarray*}
\end{proposition}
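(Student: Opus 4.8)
The plan is to prove the two equalities in Proposition \ref{describe t-structure} by a symmetric argument, so I will focus on the description of $\DDD^{\le0}(\coh\X)$; the case of $\DDD^{\ge0}(\coh\X)$ is dual. First I would establish the inclusion ``$\subseteq$''. If $X\in\DDD^{\le0}(\coh\X)$, write $X$ in terms of its cohomology sheaves $H^i(X)\in\coh\X$ for $i\le0$, so that $X$ lies in $\coh\X * \coh\X[1] * \cdots$. By Serre vanishing for tilting bundles (Theorem \ref{general Serre vanishing}(b)), applied to each of the finitely many nonzero $H^i(X)$, there is some $\a_0\in\L$ such that $\Ext^j_{\X}(V, H^i(X)(\x))=0$ for all $j>0$ whenever $\x\ge\a_0$; since $\delta(\a)>0$ we have $\ell\a\ge\a_0$ for $\ell\gg0$. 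Hence $\RHom_\X(V,H^i(X)(\ell\a))$ is concentrated in degree $0$, i.e. $H^i(X)(\ell\a)\in\mod\Lambda$, and assembling these via the standard truncation triangles shows $X(\ell\a)\in\DDD^{\le0}(\mod\Lambda)$ for $\ell\gg0$.

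For the reverse inclusion ``$\supseteq$'', suppose $X\in\DDD^{\bo}(\coh\X)$ satisfies $X(\ell\a)\in\DDD^{\le0}(\mod\Lambda)$ for all $\ell\gg0$; I want to show $H^i(X)=0$ for $i>0$. Here I would use that the functor $\pi$ (or equivalently the degree-shift-by-$\ell\a$ functors together with localization) can detect cohomology, via the commutative square \eqref{cohomology}-type compatibility between the standard t-structures; more concretely, taking a tilting bundle identification, $H^i(X)(\ell\a)=H^i(X(\ell\a))$ as objects of $\coh\X$, and the top cohomology $H^m(X)$ with $m$ maximal, if nonzero, would satisfy $H^m(X)(\ell\a)\neq0$ in $\coh\X$ for $\ell\gg0$ by Theorem \ref{general Serre vanishing}(a) (it is a nonzero coherent sheaf, hence does not become zero). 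On the other hand $X(\ell\a)\in\DDD^{\le0}(\mod\Lambda)$ forces, after applying $\RHom_\X(V,-)$, that the complex $\RHom_\X(V,X(\ell\a))$ has zero cohomology in positive degrees; translating back through the derived equivalence and using Serre vanishing to kill the higher $\Ext$ contributions for $\ell\gg0$, one concludes $H^m(X(\ell\a))=0$ in positive degree $m$, a contradiction. Thus $X\in\DDD^{\le0}(\coh\X)$.

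The main obstacle I anticipate is making precise the bookkeeping that relates the cohomology sheaves $H^i(X)\in\coh\X$ to the cohomology modules $H^i(X(\ell\a))\in\mod\Lambda$ under the derived equivalence $\RHom_\X(V,-)$: the equivalence is not t-exact, so one must separate the contribution of $\Hom_\X(V,-)$ (which computes the correct $H^0$ after twisting far enough) from that of the higher $\Ext^j_\X(V,-)$ (which vanish after twisting by Serre vanishing). The clean way is to induct on the width of the cohomological support of $X$: peel off the bottom (or top) cohomology sheaf using a truncation triangle, apply Serre vanishing to reduce to the known case of a single sheaf, and invoke the long exact sequence. Once Theorem \ref{describe coh X by A} is available — indeed Proposition \ref{describe t-structure} immediately implies it by taking the heart $\DDD^{\le0}\cap\DDD^{\ge0}$ — the statements (b) and (c) of that theorem follow by specializing $\a=-\w$ or $\a=\w$, which has $\delta(\a)>0$ precisely in the Fano resp. anti-Fano case.

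\begin{proof}[Proof of Proposition \ref{describe t-structure}]
We only prove the first equality, the second being dual. Write $\LL^{\le0}$ for the right-hand side. For ``$\subseteq$'', let $X\in\DDD^{\le0}(\coh\X)$ and let $H^i:=H^i(X)\in\coh\X$ for the finitely many indices $i\le0$ with $H^i\neq0$. By Theorem \ref{general Serre vanishing}(b), there is $\a_0\in\L$ with $\Ext^j_\X(V,H^i(\x))=0$ for all $j>0$, all relevant $i$, and all $\x\ge\a_0$. Since $\delta(\a)>0$ we have $\ell\a\ge\a_0$ for $\ell\gg0$, hence $H^i(\ell\a)\in\mod\Lambda$ for such $\ell$; using the truncation triangles for the standard t-structure of $\DDD^{\bo}(\coh\X)$ and the derived equivalence of \eqref{derived equivalence} we get $X(\ell\a)\in\DDD^{\le0}(\mod\Lambda)$, so $X\in\LL^{\le0}$.

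Conversely, let $X\in\LL^{\le0}$, and suppose for contradiction that $m:=\max\{i\mid H^i(X)\neq0\}>0$. Choose $\a_0$ as above for the sheaves $H^i(X)$ with $i\le m$ and also so large that $\Hom_\X(V,H^m(X)(\x))\neq0$ forces nothing false; more precisely, by Theorem \ref{general Serre vanishing}(a) the nonzero coherent sheaf $H^m(X)(\x)$ is a quotient of an object of $\add V$, so $\Hom_\X(V,H^m(X)(\x))\neq0$, for $\x\ge\a_0$. For $\ell\gg0$ we then have both $\Ext^j_\X(V,H^i(X)(\ell\a))=0$ for all $j>0$ and all $i\le m$, so that $H^m$ applied to $X(\ell\a)$ in $\DDD^{\bo}(\mod\Lambda)$ equals $\Hom_\X(V,H^m(X)(\ell\a))\neq0$. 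This contradicts $X(\ell\a)\in\DDD^{\le0}(\mod\Lambda)$. Hence $m\le0$ and $X\in\DDD^{\le0}(\coh\X)$.
\end{proof}

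\begin{proof}[Proof of Theorem \ref{describe coh X by A}]
Part (a) is obtained from Proposition \ref{describe t-structure} by intersecting the descriptions of $\DDD^{\le0}(\coh\X)$ and $\DDD^{\ge0}(\coh\X)$, since $\coh\X=\DDD^{\le0}(\coh\X)\cap\DDD^{\ge0}(\coh\X)$ and $\mod\Lambda=\DDD^{\le0}(\mod\Lambda)\cap\DDD^{\ge0}(\mod\Lambda)$. Parts (b) and (c) follow from (a) by taking $\a:=-\w$, respectively $\a:=\w$, which satisfies $\delta(\a)>0$ exactly when $\X$ is Fano, respectively anti-Fano.
\end{proof}
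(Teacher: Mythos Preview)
Your proof is essentially correct and follows the same approach as the paper: both arguments use Serre vanishing for the tilting bundle $V$ (Theorem \ref{general Serre vanishing}(b)) to collapse the spectral sequence relating the cohomology sheaves $H^i(X)$ to the cohomology modules of $\RHom_\X(V,X(\ell\a))$, and then use a surjection-from-$\add V$ result to detect nonvanishing of cohomology sheaves. The paper packages this as a chain of equivalences (i)$\Leftrightarrow$(ii)$\Leftrightarrow$(iii)$\Leftrightarrow$(iv); you split it into the two inclusions, but the content is the same.

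There is, however, a genuine citation error in the reverse inclusion. To deduce $\Hom_\X(V,H^m(X)(\ell\a))\neq0$ for $\ell\gg0$, you invoke Theorem \ref{general Serre vanishing}(a); but that statement only produces an epimorphism from an object of $\add\{V(-\x)\mid\x\in\L_+\}$, not from $\add V$ itself, so it does not directly give a nonzero map out of $V$. What you need here is Theorem \ref{Serre surjection}, the strengthening specific to tilting bundles which says that after twisting sufficiently far every sheaf is a quotient of an object of $\add V$. This is exactly the result the paper uses for its step (i)$\Leftrightarrow$(ii). With that correction, your argument goes through.
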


\begin{proof}
We only show the first equality, the second one can be shown similarly.

Fix $X\in\DDD^{\bo}(\coh\X)$.
Then $X$ belongs to $\DDD^{\le0}(\coh\X)$ if and only if the following condition holds:
\begin{itemize}
\item[(i)] $H^i(X)=0$ for any $i>0$.
\end{itemize}
By Theorem~\ref{Serre surjection}, this is equivalent to the following condition (since $H^i(X)=0$ for almost all $i$):
\begin{itemize}
\item[(ii)] For $\ell\gg0$, we have $\Hom_{\X}(V, H^i(X)(\ell\a))=0$ for any $i>0$.
\end{itemize}
We have $\Hom_{\DDD^{\bo}(\coh\X)}(V,  \sigma^{\geq i+1} X(\ell\a)[j]) =0$ for any $j\leq i$ and $\ell\in\Z$ by t-structure, and $\Hom_{\DDD^{\bo}(\coh\X)}(V,  \sigma^{\leq i-1} X(\ell\a)[j]) =0$ for any $j\geq i$ and $\ell\gg0$ by Serre vanishing Theorem~\ref{general Serre vanishing}(b).
Applying the functor $\Hom_{\DDD^{\bo}(\coh\X)}(V,(-)(\ell\a)[i])$ to the triangles
\[\sigma^{\geq i+1} X[-1] \to H^{i}(X)[-i]\to
\sigma^{\geq i} X \to \sigma^{\geq i+1} X\ \mbox{ and }
\sigma^{\leq i-1} X \to X\to \sigma^{\geq i} X \to \sigma^{\leq i-1} X[1],\]
we obtain
\[\Hom_{\DDD^{\bo}(\coh\X)}(V,  H^{i}(X)(\ell\a)) = \Hom_{\DDD^{\bo}(\coh\X)}(V,  \sigma^{\geq i} X(\ell\a)[i]) = \Hom_{\DDD^{\bo}(\coh\X)}(V,  X(\ell\a)[i])\]
for $\ell\gg0$. Therefore (ii) is equivalent to the following condition:
\begin{itemize}
\item[(iii)] For $\ell\gg0$, we have $\Hom_{\DDD^{\bo}(\coh\X)}(V, X(\ell\a)[i])=0$ for any $i>0$.
\end{itemize}
This is equivalent to the following condition since $V$ corresponds to $\Lambda$ under the identification
$\DDD^{\bo}(\coh\X)=\DDD^{\bo}(\mod\Lambda)$:
\begin{itemize}
\item[(iv)] For $\ell\gg0$, we have $X(\ell\a)\in\DDD^{\le0}(\mod\Lambda)$.
\end{itemize}
Thus the first equality follows.
\end{proof}

Now we describe the duality $(-)^{\star}=\RHom_R(-,R):\DDD^{\bo}(\coh\X)\to\DDD^{\bo}(\coh\X)$ in terms of $\Lambda$.

\begin{proposition}\label{compare dualities}
\begin{itemize}
\item[(a)] $V^{\star}$ is also a tilting bundle on $\X$.
\item[(b)] The following diagram is commutative.
\[\xymatrix{\DDD^{\bo}(\coh\X)\ar[d]^{(-)^\star[d]}\ar[rrr]^{\RHom_{\X}(V,-)}&&&\DDD^{\bo}(\mod\Lambda)\ar[d]^{D=\Hom_k(-,k)}\\
\DDD^{\bo}(\coh\X)\ar[rrr]^{\RHom_{\X}(V(\w)^\star,-)}&&&\DDD^{\bo}(\mod\Lambda^{\op}).}\]
\item[(c)] For any $\a\in\L$ satisfying
$\delta(\a)>0$, we have
\[(\coh\X)^\star=\{X\in\DDD^{\bo}(\coh\X)\mid \forall\ell\gg0\ X(-\ell\a)\in(\mod\Lambda)[-d]\}.\]
\end{itemize}
\end{proposition}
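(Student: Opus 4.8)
The statement has three parts, and my plan is to treat them in order, with each part feeding into the next. For part (a), I would apply the duality $(-)^{\vee}=\RHom_R(-,\omega_R)$, which by definition is a triangle autoequivalence of $\DDD^{\bo}(\coh\X)$ (Section~\ref{section: vector bundles}). Since $(-)^{\vee}[d]$ restricts to a duality $\vect\X\to\vect\X$ by \eqref{duality of CM sheaf}, the object $V^{\vee}$ lies in $\vect\X$. It is rigid because $\Hom_{\DDD^{\bo}(\coh\X)}(V^{\vee},V^{\vee}[i])\simeq\Hom_{\DDD^{\bo}(\coh\X)}(V,V[-i])^{\vee\vee}$-type manipulation via the duality sends the vanishing for $V$ to vanishing for $V^{\vee}$; and $\thick V^{\vee}=\DDD^{\bo}(\coh\X)$ since applying the autoequivalence $(-)^{\vee}$ to $\thick V=\DDD^{\bo}(\coh\X)$ gives $\thick V^{\vee}=\DDD^{\bo}(\coh\X)$. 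Hence $V^{\vee}$ is tilting.

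For part (b), the strategy is to identify both paths around the square as Serre-type functors and invoke uniqueness. Concretely, on $\DDD^{\bo}(\coh\X)$ the composite of $(-)^{\vee}[d]$ with $\RHom_{\X}(V,-)$ should be compared with the functor $D\circ\RHom_{\X}(V,-)$ precomposed with an appropriate twist. The key computation is that for $X\in\DDD^{\bo}(\coh\X)$,
\[
D\RHom_{\X}(V,X)\simeq\RHom_{\X}(X,V(\w)[d])
\]
by Auslander-Reiten-Serre duality (Theorem~\ref{Serre}(f)), and then one rewrites $\RHom_{\X}(X,V(\w)[d])\simeq\RHom_{\X}(V(\w)^{\vee},X^{\vee}[d])$ using that $(-)^{\vee}$ is a duality with $(V(\w))^{\vee\vee}\simeq V(\w)$ and $(X^{\vee})^{\vee}\simeq X$, together with the fact $\omega_R$ is dualizing so that $\RHom$ is compatible with $(-)^{\vee}$ on both arguments. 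Matching the $\Lambda$-module structures (using $\End_{\X}(V)=\Lambda$ and $\End_{\X}(V(\w)^{\vee})\simeq\End_{\X}(V)^{\op}=\Lambda^{\op}$) yields the commutativity of the diagram. The main subtlety here is bookkeeping the left/right module structures so that the bottom arrow genuinely lands in $\DDD^{\bo}(\mod\Lambda^{\op})$ and the vertical $D$ is the correct graded/ungraded $k$-dual; I would be careful to check that $(V(\w))^{\vee}$ is indeed the tilting bundle whose endomorphism algebra is $\Lambda^{\op}$.

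For part (c), the plan is to apply part (b) together with Theorem~\ref{describe coh X by A}(a). Since $(-)^{\vee}$ is a duality, $X\in(\coh\X)^{\vee}$ if and only if $X^{\vee}\in\coh\X$. Using part (b), the condition $X^{\vee}\in\coh\X=\mod\Lambda$ (viewed inside $\DDD^{\bo}(\coh\X)=\DDD^{\bo}(\mod\Lambda)$) translates, under $D\circ\RHom_{\X}(V,-)$ and the shift by $[d]$, into a condition on $X$ expressed via the $\Lambda^{\op}$-module structure; unwinding the degree shifts (the $[d]$ from $(-)^{\vee}[d]$ and the twist behaviour $X(\ell\a)^{\vee}\simeq X^{\vee}(-\ell\a)$) produces exactly $X(-\ell\a)\in(\mod\Lambda)[-d]$ for $\ell\gg0$. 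Here I would invoke Theorem~\ref{describe coh X by A}(a) applied to the tilting bundle $V$ and the positive element $\a$, noting $\delta(-\a)<0$, and track how passing to $(-)^{\vee}$ reverses the direction of the twist. The main obstacle across all three parts is part~(b): getting the four functors in the square to match on the nose (not just up to abstract isomorphism of functors $\DDD^{\bo}(\coh\X)\to\DDD^{\bo}(\mathrm{Ab})$ but as functors to the module categories with the right actions), which is where uniqueness of Serre functors and careful compatibility of $\RHom_R(-,\omega_R)$ with the derived equivalence must be combined.
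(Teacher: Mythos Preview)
Your approach to (a) and (b) is correct and close to the paper's. For (b), you use Auslander--Reiten--Serre duality directly at the level of $\DDD^{\bo}(\coh\X)$ to obtain $D\RHom_{\X}(V,X)\simeq\RHom_{\X}(X,V(\w)[d])$ and then apply the duality $(-)^{\vee}$ to swap arguments. The paper instead passes through $\Lambda$-modules explicitly: it writes $(D\RHom_{\X}(V,-))[-d]=\RHom_{\Lambda}(\RHom_{\X}(V,-),D\Lambda[-d])$, identifies $D\Lambda[-d]=\RHom_{\X}(V,V(\w))$ via Serre duality, uses tensor-hom adjunction and the derived equivalence $V\Lotimes_{\Lambda}\RHom_{\X}(V,-)\simeq\mathrm{id}$, and only at the last step applies $(-)^{\vee}$. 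Your route is shorter; the paper's has the advantage that the $\Lambda$- and $\Lambda^{\op}$-module structures are tracked explicitly throughout, which directly addresses the bookkeeping worry you raised.

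For (c) there is a small but genuine imprecision. You say to apply Theorem~\ref{describe coh X by A}(a) to the tilting bundle $V$, but this gives $X^{\vee}\in\coh\X$ if and only if $\RHom_{\X}(V,X^{\vee}(\ell\a))\in\mod\Lambda$ for $\ell\gg0$, and part (b) does not relate $\RHom_{\X}(V,(-)^{\vee})$ to anything useful; rather, it relates $\RHom_{\X}(V(\w)^{\vee},(-)^{\vee})$ to $D\RHom_{\X}(V,-)$. The paper applies Theorem~\ref{describe coh X by A} to the tilting bundle $V(\w)^{\vee}$ (tilting by (a)), obtaining $X^{\vee}\in\coh\X$ iff $X^{\vee}(\ell\a)=(X(-\ell\a))^{\vee}\in\mod\Lambda^{\op}$ for $\ell\gg0$; then (b) converts this to $\RHom_{\X}(V,X(-\ell\a))\in(\mod\Lambda)[-d]$, which is the desired statement. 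Also note your parenthetical ``$X^{\vee}\in\coh\X=\mod\Lambda$'' is not literally correct: the whole point of Theorem~\ref{describe coh X by A} is that these hearts differ, and the theorem characterises $\coh\X$ in terms of eventual membership in $\mod\Lambda$ after twisting. Once you switch to applying Theorem~\ref{describe coh X by A} with $V(\w)^{\vee}$, your argument for (c) goes through.
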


\begin{proof}
(a) This is clear since $(-)^\star:\DDD^{\bo}(\coh\X)\to\DDD^{\bo}(\coh\X)$
is a duality.

(b) Using Auslander-Reiten-Serre duality, we have
$\RHom_{\X}(V,V(\w))=D\Lambda[-d]$. Thus we have isomorphisms of functors:
\begin{eqnarray*}
(D\RHom_{\X}(V,-))[-d]&=&\RHom_{\Lambda}(\RHom_{\X}(V,-),D\Lambda[-d])\\
&=&\RHom_{\Lambda}(\RHom_{\X}(V,-),\RHom_{\X}(V,V(\w)))\\
&=&\RHom_{\Lambda}(V\Lotimes_\Lambda\RHom_{\X}(V,-),V(\w))\\
&=&\RHom_{\X}(-,V(\w))\\
&=&\RHom_{\X}(V(\w)^\star,(-)^\star).
\end{eqnarray*}
Thus the assertion follows.

(c) Let $X\in\DDD^{\bo}(\coh\X)$.
Applying Theorem~\ref{describe coh X by A} to $V(\w)^\star$,
we have that $X^\star\in\coh\X$ if and only if
$X(-\ell\a)^{\star}=X^\star(\ell\a)\in\mod\Lambda^{\op}$ for $\ell\gg0$.
Using the commutative diagram in (b), this is equivalent to
$X(-\ell\a)\in(\mod\Lambda)[-d]$ for $\ell\gg0$.
Thus the assertion follows.
\end{proof}

Identifying $\DDD^{\bo}(\coh\X)$ with $\DDD^{\bo}(\mod\Lambda)$, we have the following description of $\CM_i\X$.

\begin{proposition}\label{describe CM X in terms of A}
Let $0\le i\le d$.
\begin{itemize}
\item[(a)] For any $\a\in\L$ satisfying
$\delta(\a)>0$, we have
\begin{eqnarray*}
&\CM_i\X=\{X\in\coh\X\mid \forall \ell\gg0\ X(\ell\a)\in\mod\Lambda,\ X(-\ell\a)\in(\mod\Lambda)[-i]\},&\\
&\vect\X=\{X\in\coh\X\mid \forall \ell\gg0\ X(\ell\a)\in\mod\Lambda,\ X(-\ell\a)\in(\mod\Lambda)[-d]\}.&
\end{eqnarray*}
\item[(b)] If $X$ is Fano, then
\[\CM_i\X=\{X\in\coh\X\mid \forall\ell\gg0\ X(-\ell\w)\in\mod\Lambda,\ X(\ell\w)\in(\mod\Lambda)[-i]\}.\]
\item[(c)] If $X$ is anti-Fano, then
\[\CM_i\X=\{X\in\coh\X\mid \forall\ell\ll0\ X(\ell\w)\in\mod\Lambda,\ X(-\ell\w)\in(\mod\Lambda)[-i]\}.\]
\end{itemize}
\end{proposition}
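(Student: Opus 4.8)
The plan is to derive Proposition \ref{describe CM X in terms of A} by intersecting the description of $\coh\X$ from Theorem \ref{describe coh X by A} with the description of the duality $(-)^\vee$ from Proposition \ref{compare dualities}. Recall that by definition $\CM_i\X=\coh\X\cap(\coh\X[i-d])^\vee$. The key point is that $X\in(\coh\X[i-d])^\vee$ if and only if $X^\vee[i-d]\in\coh\X$, equivalently $X^\vee\in(\coh\X)[d-i]$, equivalently $X\in(\coh\X[i-d])^\vee$; so $\CM_i\X=\{X\in\coh\X\mid X^\vee\in(\coh\X)[d-i]\}$.

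First I would prove part (a). Fix $\a\in\L$ with $\delta(\a)>0$. By Theorem \ref{describe coh X by A}(a), the condition $X\in\coh\X$ is equivalent to $X(\ell\a)\in\mod\Lambda$ for all $\ell\gg0$. For the second condition, I would apply Proposition \ref{compare dualities}(c): since $(-\w)^\vee$ applied appropriately relates $X$ to $X^\vee$, and more directly, $X^\vee\in(\coh\X)[d-i]$ holds precisely when $X^\vee[i-d]\in\coh\X$. Now $X^\vee[i-d]=(X[d-i])^\vee$, so by Proposition \ref{compare dualities}(c) this holds iff $(X[d-i])(-\ell\a)\in(\mod\Lambda)[-d]$ for $\ell\gg0$, i.e. $X(-\ell\a)\in(\mod\Lambda)[-d+(d-i)]=(\mod\Lambda)[-i]$ for $\ell\gg0$. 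Combining the two conditions gives exactly the stated equality
\[\CM_i\X=\{X\in\coh\X\mid \forall \ell\gg0\ X(\ell\a)\in\mod\Lambda,\ X(-\ell\a)\in(\mod\Lambda)[-i]\}.\]
One should be slightly careful about the precise shift bookkeeping in applying Proposition \ref{compare dualities}(c), since that statement is phrased with $X(-\ell\a)\in(\mod\Lambda)[-d]$ characterizing $X^\vee\in\coh\X$; the translation to $X^\vee\in(\coh\X)[d-i]$ is where the index $i$ enters, and this is the only genuinely delicate step.

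For parts (b) and (c) I would simply specialize. If $\X$ is Fano, then $\delta(\w)<0$, so $\a:=-\w$ satisfies $\delta(\a)>0$, and substituting $\a=-\w$ into (a) gives (b): the condition $X(\ell\a)\in\mod\Lambda$ becomes $X(-\ell\w)\in\mod\Lambda$ and $X(-\ell\a)\in(\mod\Lambda)[-i]$ becomes $X(\ell\w)\in(\mod\Lambda)[-i]$, both for $\ell\gg0$. If $\X$ is anti-Fano, then $\delta(\w)>0$, so $\a:=\w$ works, and substituting $\a=\w$ into (a) yields (c), after rewriting "$\ell\gg0$" with $\a=\w$ as "$\ell\ll0$" with the sign conventions used in the statement (i.e. replacing $\ell$ by $-\ell$). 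In summary, the whole proposition reduces to (a), and (a) reduces to combining Theorem \ref{describe coh X by A}(a) with Proposition \ref{compare dualities}(c) via the defining formula $\CM_i\X=\coh\X\cap(\coh\X[i-d])^\vee$; the main obstacle, such as it is, is just getting the homological shift indices right when passing through the duality.
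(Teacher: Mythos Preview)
Your approach is exactly the paper's: combine Theorem \ref{describe coh X by A}(a) with Proposition \ref{compare dualities}(c) via the definition $\CM_i\X=\coh\X\cap(\coh\X[i-d])^\vee$, and then specialize $\a=\mp\w$ for (b) and (c). One bookkeeping caution: your chain contains two cancelling sign slips. From $X\in(\coh\X[i-d])^\vee$ one gets $X^\vee\in\coh\X[i-d]$, i.e.\ $X^\vee[d-i]\in\coh\X$, which is $(X[i-d])^\vee\in\coh\X$, not $(X[d-i])^\vee$; and when you undo the shift at the end, $X(-\ell\a)[d-i]\in(\mod\Lambda)[-d]$ would give $X(-\ell\a)\in(\mod\Lambda)[-d-(d-i)]=(\mod\Lambda)[i-2d]$, not $(\mod\Lambda)[-d+(d-i)]$. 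With the correct signs throughout (as in the paper: $X[i-d]\in(\coh\X)^\vee\iff X(-\ell\a)[i-d]\in(\mod\Lambda)[-d]\iff X(-\ell\a)\in(\mod\Lambda)[-i]$) the argument goes through cleanly.
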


\begin{proof}
(a) By Theorem~\ref{describe coh X by A}, an object $X\in\DDD^{\bo}(\coh\X)$ 
belongs to $\coh\X$ if and only if $X(\ell\a)\in\mod\Lambda$ holds for 
$\ell\gg0$. 
Now we fix $X\in\coh\X$. By definition, $X\in\CM_i\X$ if and only if
$X[i-d]^{\star}\in\coh\X$. By Proposition~\ref{compare dualities}(c), 
this is equivalent to that $X(-\ell\a)\in(\mod\Lambda)[-i]$ holds for $\ell\gg0$.
Thus the assertion follows.

(b)(c) Immediate from (a).
\end{proof}

Next we give some properties of $\Lambda=\End_{\X}(V)$.

In general, let $\Lambda$ be a finite dimensional $k$-algebra of finite global dimension.
Let $U$ be a two-sided tilting complex of $\Lambda$. For $\ell\ge0$, we simply write
\[U^{\ell}:=\overbrace{U\Lotimes_\Lambda\cdots\Lotimes_\Lambda U}^{\ell \mbox{ times}}.\]
We have an autoequivalence $U^\ell\Lotimes_\Lambda-$ of $\DDD^{\bo}(\mod \Lambda)$, which we simply denote by $U^\ell$.
The following notion was introduced by the third author \cite{M}.

\begin{definition}
\begin{itemize}
\item[(a)] We say that $U$ is \emph{quasi-ample} if $U^\ell\in\mod \Lambda$ for $\ell\gg0$.
\item[(b)] We say that $U$ is \emph{ample} if it is quasi-ample and $(\DDD^{U,\le0},\DDD^{U,\ge0})$ is a t-structure in $\DDD^{\bo}(\mod \Lambda)$, where
\begin{eqnarray*}
\DDD^{U,\le0}&:=&\{X\in\DDD^{\bo}(\mod \Lambda)\mid 
\forall\ell\gg0\ U^\ell(X)\in\DDD^{\le0}(\mod \Lambda)\},\\
\DDD^{U,\ge0}&:=&\{X\in\DDD^{\bo}(\mod \Lambda)\mid 
\forall\ell\gg0\ U^\ell(X)\in\DDD^{\ge0}(\mod \Lambda)\}.
\end{eqnarray*}
\end{itemize}
\end{definition}

We consider 2-sided tilting complexes $\omega_\Lambda:=D\Lambda[-d]$ and $\omega_\Lambda^{-1}:=\RHom_\Lambda(\omega_\Lambda,\Lambda)$ of $\Lambda$.

\begin{definition}\label{define Fano algebra}
We say that $\Lambda$ is \emph{quasi $d$-Fano}
(respectively, \emph{quasi $d$-anti-Fano}) if the two-sided tilting complex
$\omega_\Lambda^{-1}$ (respectively, $\omega_\Lambda$) is quasi-ample.
More strongly, we say that $\Lambda$ is \emph{$d$-Fano}
(respectively, \emph{$d$-anti-Fano}) if the two-sided tilting complex
$\omega_\Lambda^{-1}$ (respectively, $\omega_\Lambda$) is ample.
\end{definition}

We show the following trichotomy, which generalizes a result in \cite{M} for the case $d=1$.

\begin{theorem}\label{trichotomy}
Let $V$ be a tilting bundle on $\X$, and $\Lambda:=\End_{\X}(V)$.
\begin{itemize}
\item[(a)] $\X$ is Fano if and only if $\Lambda$ is a $d$-Fano algebra.
\item[(b)] $\X$ is anti-Fano if and only if $\Lambda$ is a $d$-anti-Fano algebra.
\item[(c)] $\X$ is Calabi-Yau if and only if $\DDD^{\bo}(\mod\Lambda)$
is a fractionally Calabi-Yau triangulated category.
\end{itemize}
\end{theorem}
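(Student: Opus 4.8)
\textbf{Proof plan for Theorem \ref{trichotomy}.}
The strategy is to transport the categorical ampleness statements of Theorem \ref{AZ ampleness} across the derived equivalence $\RHom_{\X}(V,-)\colon\DDD^{\bo}(\coh\X)\xrightarrow{\sim}\DDD^{\bo}(\mod\Lambda)$, using the commutative square \eqref{derived equivalence} which identifies the autoequivalence $(\w)$ of $\DDD^{\bo}(\coh\X)$ with the $d$-shifted Nakayama functor $\nu_d=\omega_\Lambda\Lotimes_\Lambda-$ on $\DDD^{\bo}(\mod\Lambda)$. Under this identification, $(\w)$ corresponds to the two-sided tilting complex $\omega_\Lambda$ and $(-\w)$ to $\omega_\Lambda^{-1}$. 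So the key point is to prove a general dictionary: for an abelian category $\AA$ with a tilting object $V$, an automorphism $\alpha$ of $\AA$ is ample in the sense of Artin--Zhang (Definition \ref{define ample}) if and only if the corresponding two-sided tilting complex is ample in the sense preceding Definition \ref{define Fano algebra}. Part (c) is immediate and independent of $V$: by Theorem \ref{AZ ampleness}(c), $\X$ is Calabi-Yau iff $\DDD^{\bo}(\coh\X)$ is fractionally Calabi-Yau, and being fractionally Calabi-Yau is an invariant of the triangulated category, hence passes to $\DDD^{\bo}(\mod\Lambda)$ via the equivalence.

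For (a) and (b), I would first establish the \emph{quasi-ampleness} correspondence, which is the easier half. By definition $\omega_\Lambda^{-1}$ is quasi-ample iff $\nu_d^{-\ell}(\Lambda)\in\mod\Lambda$ for $\ell\gg0$, which under the equivalence says $V(-\ell\w)\in\mod\Lambda$ for $\ell\gg0$; more generally, since $\add V$ generates $\DDD^{\bo}(\mod\Lambda)$ as a thick subcategory and $\mod\Lambda$ is closed under the relevant operations, $\omega_\Lambda^{-1}$ quasi-ample is equivalent to: for every $X\in\coh\X$ one has $X(-\ell\w)\in\mod\Lambda$ for $\ell\gg0$. When $\X$ is Fano this holds by Theorem \ref{describe coh X by A}(b). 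Conversely, the hypothesis that $V(-\ell\w)\in\mod\Lambda$ for $\ell\gg0$ forces $\delta(\w)<0$: if $\delta(\w)\ge0$ then for $\ell\gg0$ the element $-\ell\w$ becomes small (when anti-Fano, $-\ell\w<0$; when Calabi-Yau, $\ell\w$ is torsion and $\nu_d$ is periodic up to the finite Calabi-Yau phenomenon of Theorem \ref{AZ ampleness}(c), contradicting quasi-ampleness of a non-trivial autoequivalence unless $d=0$). This is exactly parallel to the argument in the proof of Theorem \ref{construct dCT}(b), and I would reuse Lemma \ref{CM vanishing} together with Proposition \ref{describe coh X by A} to make the "small degree" argument precise. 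The anti-Fano case (b) is dual, using Theorem \ref{AZ ampleness}(b) and Theorem \ref{describe coh X by A}(c).

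It then remains to upgrade quasi-ampleness to ampleness, i.e.\ to check the t-structure condition $(\DDD^{U,\le 0},\DDD^{U,\ge 0})$ in Definition of ampleness for $U=\omega_\Lambda^{-1}$ (Fano) resp.\ $U=\omega_\Lambda$ (anti-Fano). Here Proposition \ref{describe t-structure} does almost all the work: taking $\a:=-\w$ (which satisfies $\delta(\a)>0$ in the Fano case), it identifies $\DDD^{\le 0}(\coh\X)$ with $\{X\mid X(-\ell\w)\in\DDD^{\le 0}(\mod\Lambda)\ \forall\ell\gg0\}$, and under \eqref{derived equivalence} the operation $X\mapsto X(-\ell\w)$ is precisely $(\omega_\Lambda^{-1})^{\ell}\Lotimes_\Lambda-$; so $\DDD^{\omega_\Lambda^{-1},\le 0}$ corresponds to $\DDD^{\le 0}(\coh\X)$ and likewise for $\ge 0$, and the standard t-structure on $\DDD^{\bo}(\coh\X)$ transports to a t-structure, giving ampleness of $\omega_\Lambda^{-1}$, hence $\Lambda$ is $d$-Fano. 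For the anti-Fano case one takes $\a:=\w$ instead. The converse directions (if $\Lambda$ is $d$-Fano then $\X$ is Fano, etc.) follow since quasi-ampleness is part of ampleness and we have already shown quasi-ampleness of $\omega_\Lambda^{-1}$ characterizes the Fano case; the three cases are mutually exclusive and exhaustive by the trichotomy, so no further work is needed.

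\textbf{Main obstacle.} The genuinely technical step is the quasi-ampleness $\Leftrightarrow$ sign-of-$\delta(\w)$ equivalence in the direction "quasi-ample $\Rightarrow$ Fano", where one must rule out the Calabi-Yau case: there $\w$ is torsion, so $V(-\ell\w)$ cycles through finitely many objects of $\coh\X$ and in particular lies in $\mod\Lambda$ for suitable $\ell$, so one cannot argue purely from $V(-\ell\w)\notin\mod\Lambda$. The resolution is that quasi-ampleness requires $(\omega_\Lambda^{-1})^{\ell}\in\mod\Lambda$ for \emph{all} $\ell\gg0$, and in the Calabi-Yau case $\nu_d$ has a power equal to a shift $[m]$ with $0<m$ (by Proposition \ref{fractional CY and global dimension}(c), $0<\tfrac{m}{\ell}\le d$ with $d\ge 1$), so $(\omega_\Lambda^{-1})^{\ell N}=\Lambda[-mN]\notin\mod\Lambda$ for $N\gg0$; combined with the anti-Fano sub-argument via Lemma \ref{CM vanishing}, this closes the gap. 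Everything else is bookkeeping with the equivalence \eqref{derived equivalence} and the already-established Theorems \ref{AZ ampleness}, \ref{describe coh X by A} and Proposition \ref{describe t-structure}.
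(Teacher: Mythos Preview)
Your core approach matches the paper: prove the three forward implications ($\X$ Fano $\Rightarrow\Lambda$ $d$-Fano, etc.) using \eqref{derived equivalence}, Serre vanishing (packaged as Theorem \ref{describe coh X by A}), and Proposition \ref{describe t-structure}; then get all converses for free from the fact that Fano/Calabi--Yau/anti-Fano exhaust $\X$ while $d$-Fano/fractionally CY/$d$-anti-Fano are disjoint on the $\Lambda$-side. You state this trichotomy argument correctly at the end of your third paragraph, and that is exactly how the paper closes the proof.

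The problem is your ``Main obstacle'' section, which attempts a direct proof of ``$\omega_\Lambda^{-1}$ quasi-ample $\Rightarrow\X$ Fano'' that is both unnecessary and wrong. You claim that in the Calabi--Yau case $\nu_d$ has a power equal to a nonzero shift $[m]$ with $m>0$, citing Proposition \ref{fractional CY and global dimension}(c). But that proposition concerns $\nu$, not $\nu_d$. When $\X$ is Calabi--Yau, $p\w=0$ in $\L$, so $(\w)^p=\mathrm{id}$ on $\coh\X$ and hence $\nu_d^p=\mathrm{id}$ via \eqref{derived equivalence}; equivalently, the Serre functor satisfies $\nu^p=((\w)[d])^p=[dp]$, so the fractional CY dimension is exactly $d$ and $\nu_d^p=[dp-dp]=[0]$. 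Thus $(\omega_\Lambda^{-1})^{pN}(\Lambda)=\Lambda\in\mod\Lambda$ for all $N$, and your contradiction evaporates. (Quasi-ampleness does fail in the CY case, but for a different reason: one of the finitely many values $\nu_d^{-j}(\Lambda)$, $0\le j<p$, lies outside $\mod\Lambda$; however you would have to argue this separately, and there is no need.)

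So drop the direct converse argument entirely. Once you have the three forward implications and invoke disjointness of the $\Lambda$-side classes (as the paper does in its first sentence), the equivalences follow immediately; there is no remaining obstacle.
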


\begin{proof}
Since $d$-Fano algebras, fractionally Calabi-Yau algebras and
$d$-anti-Fano algebras are disjoint classes, we only have to show the `only if' part of all statements.

(c) This is clear from the diagram \eqref{derived equivalence}.

(a) Assume that $\X$ is Fano. By \eqref{derived equivalence}, we have 
\begin{equation}\label{check d-Fano}
H^i(\omega_\Lambda^{-\ell})=
\Hom_{\DDD^{\bo}(\mod\Lambda)}(\Lambda,\nu_d^{-\ell}(\Lambda)[i])=
\Hom_{\DDD^{\bo}(\coh\X)}(V,V(-\ell\w)[i]).
\end{equation}
This is clearly zero for $i<0$. Assume $i>0$. Since $\X$ is Fano,
the element $-\ell\w$ is sufficiently large for $\ell\gg0$. Therefore
\eqref{check d-Fano} is zero for $\ell\gg0$ by Serre vanishing Theorem~\ref{general Serre vanishing}.
Therefore $\omega_\Lambda^{-1}$ is quasi-ample.

On the other hand, Proposition~\ref{describe t-structure} shows that
$\DDD^{\omega_\Lambda^{-1},\le0}=\DDD^{\le0}(\coh\X)$ and 
$\DDD^{\omega_\Lambda^{-1},\ge0}=\DDD^{\ge0}(\coh\X)$ hold.
In particular $(\DDD^{\omega_\Lambda^{-1},\le0},\DDD^{\omega_\Lambda^{-1},\ge0})$ is a t-structure in
$\DDD^{\bo}(\mod\Lambda)$. Thus $\Lambda$ is a $d$-Fano algebra.

(b) Assume that $\X$ is anti-Fano. 
One can show that $\Lambda$ is a $d$-anti-Fano algebra by a parallel 
argument as in (a) above.
\end{proof}

\begin{remark}\label{Fano is not weak 2-RI} 
Any $d$-representation infinite algebra is quasi $d$-Fano by definition.
But the converse is not true. For example, if $\X$ is Fano with $n\ge d+2$ and $p_i\ge2$ for all $i$,
then the $d$-canonical algebra $A^{\ca}$ is a $d$-Fano algebra which
is not $d$-representation infinite
by Theorems~\ref{trichotomy} and \ref{global dimension}.

Note that $d$-Fano algebras are not necessarily almost $d$-representation infinite.
For example let $\X_1=\P^1$ and $\X_2$ a domestic weighted projective line with $n\ge 3$ and $p_i\ge2$ for all $i$.
Then $\Lambda:=A^{\ca}_1\otimes_kA^{\ca}_2$ is a $2$-Fano algebra by Theorem~\ref{trichotomy}. But $\gl\Lambda=\gl A^{\ca}_1+\gl A^{\ca}_2=3$ holds by Theorem~\ref{global dimension}, and hence $\Lambda$ is not almost $2$-representation infinite by
Proposition~\ref{d or 2d}.
\end{remark}

\section{Tilting-cluster tilting correspondence}\label{section: tilting-cluster tilting}

The aim of this section is to study when a Geigle-Lenzing projective space $\X$ is derived equivalent to 
a $d$-representation infinite algebra. We show that this is closely related to $d$-VB finiteness of $\X$.
As in the case of $\underline{\CM}^{\L}R$, the following notion plays an important role.

\begin{definition}[$d$-tilting objects]
A tilting object $V$ in $\DDD^{\bo}(\coh\X)$ is called \emph{$d$-tilting} if $\End_{\DDD^{\bo}(\coh\X)}(V)$ has global dimension at most $d$. By Proposition~\ref{d-tilting imply Fano2} below, this is equivalent to the global dimension being precisely $d$.
\end{definition}

We give some basic properties of the endomorphism algebras of tilting 
objects. Result (a) below shows that $d$ gives a lower bound for $\gl\Lambda$.

\begin{proposition}\label{preliminaries for end}
Let $V\in\DDD^{\bo}(\coh\X)$ be a tilting object, $\Lambda=\End_{\DDD^{\bo}(\coh\X)}(V)$
and $\Pi$ the $(d+1)$-preprojective algebra of $\Lambda$.
\begin{itemize}
\item[(a)] $\gl\Lambda\ge d$ holds.
\item[(b)] If $V\simeq\pi(U)$ for $U\in(\mod^{\L}R)^{\perp_{0,1}}$,
then $\Pi\simeq\End_R^{\L/\Z\w}(U)$ as $\Z$-graded $k$-algebras.
\end{itemize}
\end{proposition}

\begin{proof}
(a) Assume $\gl\Lambda<d$. Then for any $X,Y\in\DDD^{\bo}(\mod\Lambda)$, we have
$\Hom_{\DDD^{\bo}(\mod\Lambda)}(X,\nu_d^\ell(Y))=0$ for almost all 
$\ell\in\Z$ by Proposition~\ref{fractional CY and global dimension}(b). On the other hand, by \eqref{derived equivalence} and 
Lemma~\ref{surjection from X to X(c)}(b), we have
\[\Hom_{\DDD^{\bo}(\mod\Lambda)}(\Lambda,\nu_d^{\ell}(\Lambda))\simeq\Hom_{\DDD^{\bo}(\coh\X)}(V,V(\ell\w))\neq0\]
for infinitely many $\ell\in\Z$, a contradiction.

(b) By \eqref{derived equivalence} and Lemma~\ref{depth 2}, we have
\[\Pi_\ell=
\Hom_{\DDD^{\bo}(\mod\Lambda)}(\Lambda,\nu_d^{-\ell}(\Lambda))
\simeq\Hom_{\DDD^{\bo}(\coh\X)}(V,V(-\ell\w))
=\Hom_R^{\L}(U,U(-\ell\w))\]
for any $\ell\in\Z$. Therefore we have
$\Pi=\bigoplus_{\ell\in\Z}\Hom_R^{\L}(U,U(-\ell\w))=\End_R^{\L/\Z\w}(U)$.
\end{proof}

We prepare the following general observation.

\begin{lemma}\label{non-zero endo}
Let $p:={\rm l.c.m.}(p_1,\ldots,p_n)$.
If there exists a non-zero object $X\in\DDD^{\bo}(\coh\X)$ satisfying
$\Hom_{\DDD^{\bo}(\coh\X)}(X,X(p\w))=0$, then $\X$ is Fano.
\end{lemma}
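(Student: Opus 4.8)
The statement to prove is Lemma~\ref{non-zero endo}: if there is a non-zero object $X\in\DDD^{\bo}(\coh\X)$ with $\Hom_{\DDD^{\bo}(\coh\X)}(X,X(p\w))=0$, then $\X$ is Fano. I would argue by contraposition: assume $\X$ is \emph{not} Fano, i.e.\ $\delta(\w)\ge0$, and show that every non-zero $X\in\DDD^{\bo}(\coh\X)$ satisfies $\Hom_{\DDD^{\bo}(\coh\X)}(X,X(p\w))\neq0$. The key point is that $p\w=p\delta(\w)\c$ by the normal-form computation (see the discussion after \eqref{degree of w}, using $p={\rm l.c.m.}(p_1,\ldots,p_n)$), so $p\w$ is either zero (Calabi-Yau case) or a non-negative multiple $\ell\c$ with $\ell>0$ (anti-Fano case).

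\textbf{Calabi-Yau case.} If $\X$ is Calabi-Yau then $\w$ is a torsion element of $\L$, hence $p\w=0$ and the shift $(p\w)$ is the identity functor on $\DDD^{\bo}(\coh\X)$. Then $\Hom_{\DDD^{\bo}(\coh\X)}(X,X(p\w))=\End_{\DDD^{\bo}(\coh\X)}(X)\ni\mathrm{id}_X\neq0$ for any non-zero $X$, so the hypothesis of the lemma fails. This case is immediate.

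\textbf{Anti-Fano (and the remaining Fano-excluded) case.} Here $p\w=\ell\c$ with $\ell>0$. The plan is to exploit the surjection from $X$ to $X(\c)$ coming from multiplication by the variables $T_0,\ldots,T_d$. Concretely, by Lemma~\ref{surjection from X to X(c)}(b) the morphism $\pi(f_\ell)\colon\bigoplus_t\pi(X)\to\pi(X)(\ell\c)$ is an epimorphism in $\coh\X$, where $t$ runs over monomials of degree $\ell$ in the $T_j$. I would first reduce to the case $X\in\coh\X$ (not just $\DDD^{\bo}(\coh\X)$): pick a non-zero cohomology sheaf $H^m(X)$ of top degree $m$; since $\coh\X$ has global dimension $d$ (Theorem~\ref{Serre}(b)) and there are no negative self-extensions past degree $d$, a non-zero map $H^m(X)\to H^m(X)(p\w)$ of sheaves will produce a non-zero map $X\to X(p\w)[0]$ in the derived category by a standard truncation/spectral-sequence argument. (Alternatively, invoke that $\Hom_{\DDD^{\bo}(\coh\X)}(X,X(p\w))$ contains $\Hom_{\coh\X}(H^m(X),H^m(X)(p\w))$ as a subquotient when $m$ is the top nonzero cohomological degree, using that $\Ext^{<0}$ vanishes.) So it suffices to exhibit a non-zero morphism $Y\to Y(\ell\c)$ in $\coh\X$ for every non-zero $Y\in\coh\X$. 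For this, note that the composite of $\pi(f_\ell)^{\mathrm{op}}$-type data gives, after choosing a single monomial, a morphism $T_0^\ell\colon Y\to Y(\ell\c)$; it is non-zero because $T_0^\ell$ acts injectively on $Y_{T_0}\neq0$ — and $Y_{T_0}\neq0$ holds for \emph{some} choice of $j\in\{0,\ldots,d\}$ by Lemma~\ref{basic for graded} (a non-zero coherent sheaf has non-zero stalk at some $T_j$), so after relabeling we may assume $j=0$. Thus $\Hom_{\coh\X}(Y,Y(\ell\c))\neq0$, hence $\Hom_{\DDD^{\bo}(\coh\X)}(X,X(p\w))\neq0$, contradicting the hypothesis.

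\textbf{Main obstacle.} The only genuinely delicate point is the reduction from an arbitrary complex $X\in\DDD^{\bo}(\coh\X)$ to a single sheaf while keeping the relevant $\Hom$ non-zero; one must be careful that the degree-shift functor $(p\w)=(\ell\c)$ is exact and commutes with taking cohomology, and that choosing the top nonzero cohomological degree $m$ kills the possible interference from $\Ext^{i}$ with $i>0$ (which would require a component of $\Hom_{\DDD^{\bo}}(X,X(p\w))$ landing in degree-shifted pieces). Since $\coh\X$ has finite global dimension $d$, the truncation triangle $\tau_{<m}X\to X\to H^m(X)[-m]\to$ together with $\Hom_{\DDD^{\bo}(\coh\X)}(\tau_{<m}X, H^m(X)(p\w)[-m][1])$ being controlled by bounded $\Ext$-groups lets one lift a non-zero sheaf map to a non-zero map of complexes; I would spell this out carefully. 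Everything else is routine once $p\w=\ell\c$ with $\ell\ge0$ is recorded.
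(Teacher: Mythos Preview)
Your overall strategy coincides with the paper's: argue by contraposition, record $p\w=q\c$ with $q\ge0$, and produce a nonzero morphism using multiplication by monomials in the $T_j$. However, you have manufactured an unnecessary obstacle by first passing to a cohomology sheaf $H^m(X)$ and then trying to lift back to $X$. Multiplication by any monomial $t$ of degree $q$ (or by the whole family $f_q=(t)_t:\bigoplus_t X\to X(q\c)$) is \emph{already} a morphism in $\DDD^{\bo}(\coh\X)$ for the complex $X$ itself, since $t\in R_{q\c}$ acts via an exact natural transformation $\mathrm{id}\Rightarrow(q\c)$ on $\mod^{\L}R$, hence on $\coh\X$ and its bounded derived category. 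The paper then simply applies $H^i$ to $f_q$: by Lemma~\ref{surjection from X to X(c)}(b) each $H^i(f_q):\bigoplus_t H^i(X)\to H^i(X)(q\c)$ is an epimorphism in $\coh\X$, so if some $H^i(X)\neq0$ then $f_q\neq0$, and hence one of its components is a nonzero element of $\Hom_{\DDD^{\bo}(\coh\X)}(X,X(p\w))$. Your single-monomial variant works equally well once you start with $T_j^\ell:X\to X(\ell\c)$ on the complex and then observe that $H^i(T_j^\ell)$ is nonzero for suitable $i,j$.

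By contrast, the general ``truncation/spectral-sequence'' lifting you sketch for an \emph{arbitrary} sheaf morphism $H^m(X)\to H^m(X)(p\w)$ is not obviously valid: from the triangle $\tau_{\le m-1}X\to X\to H^m(X)[-m]$ the map $\Hom(H^m(X)[-m],X(p\w))\to\Hom(X,X(p\w))$ has kernel equal to the image of $\Hom(\tau_{\le m-1}X[1],X(p\w))$, and there is no reason for this to vanish (for instance $\Ext^2_{\X}(H^{m-2}(X),H^m(X)(p\w))$ can contribute). So the obstacle you flagged is both avoidable and not actually overcome by your outline; the fix is simply to build the morphism on $X$ directly, as above.
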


\begin{proof}
Assume that $\X$ is not Fano. Then $p\w=q\c$ holds for some $q\ge0$.
Consider a morphism $f_q:=(t)_t:\bigoplus_{t}X\to X(q\c)=X(p\w)$, where
$t$ runs over all monomials on $T_0,\ldots,T_d$ of degree $q$.
For any $i\in\Z$, the morphism $H^i(f_q):\bigoplus_tH^i(X)\to H^i(X)(q\c)$
is an epimorphism in $\coh\X$ by Lemma~\ref{surjection from X to X(c)}(b).
In particular $f_q$ is non-zero in $\DDD^{\bo}(\coh\X)$, a contradiction.
Thus $\X$ is Fano.
\end{proof}

By result (a) below, the existence of $d$-tilting objects in $\DDD^{\bo}(\coh\X)$ implies that $\X$ is Fano, as in Theorem~\ref{d-tilting imply Fano} for $\underline{\CM}^{\L}R$. Moreover, result (b) below due to Buchweitz-Hille \cite{BuH} explains the importance of $d$-tilting sheaves.

\begin{proposition}\label{d-tilting imply Fano2}
Let $\X$ be a GL projective space, and $V$ a $d$-tilting object in $\DDD^{\bo}(\coh\X)$.
\begin{itemize}
\item[(a)] $\X$ is Fano. Moreover $\Hom_{\X}(V,V(\ell\w))=0$ for any $\ell>0$.
\item[(b)] \cite{BuH} If $V\in\coh\X$, then $\End_{\X}(V)$ is a $d$-representation infinite algebra.
\end{itemize} 
\end{proposition}

\begin{proof}
(a) Thanks to Lemma~\ref{non-zero endo}, we only have to prove the latter assertion.

Since $\Lambda$ has global dimension at most $d$, we have
$\nu_d^{\ell-1}(D\Lambda)\in\DDD^{\ge0}(\mod\Lambda)$ for any
$\ell>0$ by Proposition~\ref{fractional CY and global dimension}.
Hence the commutative diagram \eqref{derived equivalence} shows
\[\Hom_{\X}(V,V(\ell\w))\simeq
\Hom_{\DDD^{\bo}(\mod\Lambda)}(\Lambda,\nu_d^{\ell}(\Lambda))
=H^0(\nu_d^{\ell-1}(D\Lambda)[-d])=0.\]

(b) For the convenience of the reader, we include a proof. We only have to show that
$H^i(\nu_d^{-\ell}(\Lambda))=0$ holds for any $i\neq0$ and $\ell\ge0$.
This is clear for $i<0$ since $V\in\coh\X$ implies
\[H^i(\nu_d^{-\ell}(\Lambda))=
\Hom_{\DDD^{\bo}(\mod\Lambda)}(\Lambda,\nu_d^{-\ell}(\Lambda)[i])=
\Hom_{\DDD^{\bo}(\coh\X)}(V,V(-\ell\w)[i])=0\]
by \eqref{derived equivalence}.
On the other hand, for $\ell\ge0$, since
$\nu_d^{-\ell}(\Lambda)\in\DDD^{\le0}(\mod\Lambda)$ holds
by Proposition~\ref{fractional CY and global dimension},
we have $H^i(\nu_d^{-\ell}(\Lambda))=0$ for any $i>0$.
Thus the assertion follows.
\end{proof}

Now we show that the existence of $d$-tilting bundles implies $d$-VB finiteness,
as in Theorem~\ref{construct dCT} for $\underline{\CM}^{\L}R$.

\begin{theorem}\label{construct dCT2}
If a GL projective space $\X$ has a $d$-tilting bundle $V$, then $\X$ is $d$-VB finite and
$\vect\X$ has the $d$-cluster tilting subcategory
\[\UU:=\add\{V(\ell\w)\mid\ell\in\Z,\ \x\in\L\}.\]
\end{theorem}

\begin{proof}
Let $\Lambda:=\End_{\X}(V)$.
Then we have a derived equivalence $\DDD^{\bo}(\coh\X)\to\DDD^{\bo}(\mod\Lambda)$
which makes the diagram \eqref{derived equivalence} commutative.
Moreover $\Lambda$ is $d$-representation infinite
by Proposition~\ref{d-tilting imply Fano2}(b).
In particular, Theorem~\ref{d-RI has d-CT} shows that
\[\VV_\Lambda:=\{X\in\DDD^{\bo}(\mod\Lambda)\mid\forall\ell\gg0,\ \nu_d^{-\ell}(X)\in\mod\Lambda,\ \nu_d^\ell(X)\in(\mod\Lambda)[-d]\}.\]
has a $d$-cluster tilting subcategory
\[\UU_\Lambda:=\add\{\nu_d^\ell(\Lambda)\mid\ell\in\Z\}.\]
On the other hand, $\X$ is Fano by Proposition~\ref{d-tilting imply Fano2}(a).
By Proposition~\ref{describe CM X in terms of A}(b), the equivalence
$\DDD^{\bo}(\mod\Lambda)\to\DDD^{\bo}(\coh\X)$ restricts to
equivalences
\begin{eqnarray*}
\VV_\Lambda\to\vect\X\ \mbox{ and }\ \UU_\Lambda\to\UU:=\add\{V(\ell\w)\mid\ell\in\Z\}.
\end{eqnarray*}
Therefore $\UU$ is a $d$-cluster tilting subcategory of $\vect\X$,
and we have that $\X$ is $d$-VB finite.
\end{proof}

To give a more explicit version of Theorem~\ref{construct dCT2}, we need the following notion.

\begin{definition}\label{define slice}
Let $\UU$ be a $d$-cluster tilting subcategory of $\vect\X$
(respectively, $\CM^{\L}R$).
We call an object $V\in\UU$ \emph{slice} in $\UU$ if the following conditions are satisfied.
\begin{itemize}
\item[(a)] $\UU=\add\{ V(\ell\w)\mid\ell\in\Z\}$.
\item[(b)] $\Hom_{\UU}(V,V(\ell\w))=0$ for any $\ell>0$.
\end{itemize}
In this case, any $(\w)$-orbit of indecomposable objects in 
$\UU$ contains exactly one element in $\add V$.
\end{definition}

The following is the main result in this section.

\begin{theorem}[tilting-cluster tilting correspondence]\label{tilting-cluster tilting}
Let $\X$ be a GL projective space. Then $d$-tilting bundles on $\X$ are precisely
slices in $d$-cluster tilting subcategories of $\vect\X$.
\end{theorem}

The direction `$\Rightarrow$' follows directly from Theorem~\ref{construct dCT2} and
Proposition~\ref{d-tilting imply Fano2}(a).

In the rest, we prepare to prove the direction `$\Leftarrow$'.

Let $V$ be a slice in a $d$-cluster tilting subcategory $\UU$ of $\vect\X$
and $\Lambda:=\End_{\X}(V)$.
Since $\Hom_{\X}(V,V(\ell\w))=0$ holds for any $\ell>0$ by our assumption, $\X$ is Fano by Lemma~\ref{non-zero endo}.
Now we show that $V$ satisfies one of the conditions for being a tilting object.

\begin{lemma}\label{Ext^i(V,V)=0}
$\Ext^i_{\X}(V,V)=0$ holds for any $i\neq0$.
\end{lemma}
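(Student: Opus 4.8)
The plan is to show that the slice $V$ in a $d$-cluster tilting subcategory $\UU$ of $\vect\X$ has no self-extensions by relating $\Ext$-groups over $\X$ to $\Hom$-groups in $\UU$, using the $d$-almost split sequences and Auslander-Reiten-Serre duality. First I would recall that $\coh\X$ has global dimension $d$ by Theorem \ref{Serre}(b), so it suffices to check that $\Ext^i_{\X}(V,V)=0$ for $1\le i\le d-1$ and for $i=d$ separately. For the range $1\le i\le d-1$, the vanishing is immediate from the defining property of a $d$-cluster tilting subcategory: since $V,V(\ell\w)\in\UU$ for all $\ell$ (using $\UU(\w)=\UU$ from Theorem \ref{basic properties of d-CT}(a)), we have $\Ext^i_{\X}(V,V)=0$ by the very definition of $\UU$ as a $d$-cluster tilting subcategory of $\vect\X$.

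The remaining and essential case is $i=d$. Here I would use Auslander-Reiten-Serre duality from Theorem \ref{Serre}(f), which gives
\[\Ext^d_{\X}(V,V)\simeq D\Hom_{\X}(V,V(\w)).\]
So the claim reduces to $\Hom_{\X}(V,V(\w))=0$. Since $V(\w)\in\UU$ and $V\in\UU$, this Hom-space is computed inside $\UU$, i.e. $\Hom_{\X}(V,V(\w))=\Hom_{\UU}(V,V(\w))$. By hypothesis (b) in Definition \ref{define slice}, $\Hom_{\UU}(V,V(\ell\w))=0$ for all $\ell>0$; taking $\ell=1$ gives exactly $\Hom_{\UU}(V,V(\w))=0$, hence $\Ext^d_{\X}(V,V)=0$. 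Combining the two ranges yields $\Ext^i_{\X}(V,V)=0$ for all $i\neq0$.

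I do not anticipate a genuine obstacle here; the only point requiring a little care is making sure that the $\Ext$-groups computed in $\coh\X$ agree with the morphism spaces in the $d$-cluster tilting subcategory $\UU$. This is automatic for $\Hom$ since $\UU$ is a full subcategory of $\vect\X\subset\coh\X$, and for the intermediate $\Ext$-groups it is precisely the orthogonality condition built into the definition of $d$-cluster tilting. The use of Serre duality to convert the top $\Ext$ into the bottom $\Hom$ is the one slightly non-routine maneuver, but it is a direct application of Theorem \ref{Serre}(f). After this lemma, the proof of Theorem \ref{tilting-cluster tilting} Part II would continue by showing that $V$ generates $\DDD^{\bo}(\coh\X)$ (using Theorem \ref{basic properties of d-CT}(b) to resolve each $\OO(\x)$ by objects of $\UU$, together with the fact that $\UU$ is generated over $\add V$ by the $(\w)$-shifts) and then identifying $\End_{\X}(V)$ as a $d$-representation infinite algebra via Proposition \ref{preliminaries for end}(e).
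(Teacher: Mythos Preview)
Your proof is correct and follows essentially the same approach as the paper: the intermediate $\Ext$-groups vanish by the $d$-cluster tilting condition, and $\Ext^d$ vanishes via Auslander--Reiten--Serre duality together with the slice condition $\Hom_{\X}(V,V(\w))=0$. The paper's proof is simply a terser version of yours.
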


\begin{proof}
Since $V$ is an object in a $d$-cluster tilting subcategory $\UU$,
we have $\Ext^i_{\X}(V,V)=0$ for any $i$ with $1\le i\le d-1$.
On the other hand, by Auslander-Reiten-Serre duality,
we have $\Ext^d_{\X}(V,V)=D\Hom_{\X}(V,V(\w))$,
which is zero since $V$ is a slice. Thus the assertion follows.
\end{proof}

Next we show the following easy properties of a slice.

\begin{lemma}\label{ell function}
Let $X$ and $Y$ be indecomposable objects in $\UU$.
\begin{itemize}
\item[(a)] There exists a unique integer $\ell=\ell(X)$ satisfying $X\in\add V(\ell\w)$.
\item[(b)] If there exist a sequence $X\to\cdots\to Y$ of indecomposable objects in $\UU$ and
non-zero morphisms between them,
then $\ell(X)\ge\ell(Y)$.
\item[(c)] If $\ell(X)\ge0$, then the $\Lambda$-module $\Hom_{\X}(V,X)$
is projective.
\item[(d)] Let
\[0\to X(\w)\to C_{d-1}\to\cdots\to C_1\to C_0\to X\to0\]
be a $d$-almost split sequence from Proposition~\ref{basic properties of d-CT}(c).
If $Y$ is a direct summand of $C_i$ for 
some $0\le i\le d-1$. Then either $\ell(Y)=\ell(X)$ or $\ell(Y)=\ell(X)+1$ holds.
\end{itemize}
\end{lemma}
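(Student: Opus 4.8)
\textbf{Plan of proof for Lemma \ref{ell function}.}

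The plan is to establish the four statements in order, with (a) essentially immediate from the definition of a slice, and (b)--(d) reduced to repeated applications of (b).

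\textbf{Part (a).} Since $V$ is a slice in $\UU$, Definition \ref{define slice}(a) guarantees the existence of $\ell\in\Z$ with $X\in\add V(\ell\w)$; writing $\ell(X):=\ell$ we need only uniqueness. If $X\in\add V(\ell\w)\cap\add V(\ell'\w)$ with $\ell\neq\ell'$, say $\ell<\ell'$, then $X(-\ell'\w)$ is a nonzero summand of both $V(( \ell-\ell')\w)$ and $V$, so composing a split monomorphism $X(-\ell'\w)\to V((\ell-\ell')\w)$ with a split epimorphism $V\to X(-\ell'\w)$ (after twisting) yields a nonzero element of $\Hom_\UU(V,V((\ell'-\ell)\w))$ with $\ell'-\ell>0$, contradicting Definition \ref{define slice}(b). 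Here I write $\ell(X)$ instead of the lemma's shorthand $X\in\add V(\ell)$; this matches the paper's convention that $(\ell)$ abbreviates $(\ell\w)$ in this context.

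\textbf{Part (b).} By induction on $m$ it suffices to treat $m=2$: given a nonzero $f\colon X\to Y$ in $\UU$ with $X,Y$ indecomposable, show $\ell(X)\ge\ell(Y)$. After twisting by $(-\ell(X)\w)$ we may assume $\ell(X)=0$, i.e. $X\in\add V$; write $Y(-\ell(Y)\w)\in\add V$. If $\ell(Y)>0$, then $f$ gives a nonzero morphism $X\to Y = (Y(-\ell(Y)\w))(\ell(Y)\w)$, hence a nonzero element of $\Hom_\UU(V,V(\ell(Y)\w))$ with $\ell(Y)>0$ (composing with the relevant split mono/epi), again contradicting Definition \ref{define slice}(b). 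So $\ell(Y)\le 0=\ell(X)$, as desired.

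\textbf{Parts (c) and (d).} For (c), note $\Hom_\X(V,X)$ is the $\Lambda$-module corresponding to $X$ under $\RHom_\X(V,-)$. Since $\ell(X)\ge 0$, write $X\in\add V(\ell(X)\w)$ with $\ell(X)\ge 0$; using Proposition \ref{preliminaries for end}(c) (which gives $\Hom_\X(V,V(\ell\w))=0$ for $\ell>0$) together with the fact that $\Lambda$ is $d$-representation infinite, the module $\Hom_\X(V,V(\ell(X)\w))$ is $\nu_d^{\ell(X)}(\Lambda)$ regarded via $H^0$; for $\ell(X)\ge0$ this lies in $\add\{\nu_d^i(\Lambda)\mid i\ge 0\}\cap\mod\Lambda=\PP_\Lambda$, the category of $n$-preprojective modules, but more to the point the only summands $X$ with $\ell(X)=0$ give $\Hom_\X(V,X)$ projective, and for $\ell(X)>0$ one uses that twisting by a positive power of $(\w)$ corresponds to applying $\nu_d^{-1}$ whose effect on $\Hom_\X(V,-)$ of a tilting summand is still projective — this is exactly the content of the derived equivalence \eqref{derived equivalence}. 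The cleanest route: $\Hom_\X(V,X)$ for $X\in\add V(\ell\w)$, $\ell\ge0$, equals (up to iso) $\Hom_{\DDD^{\bo}(\mod\Lambda)}(\Lambda,\nu_d^{\ell}(\Lambda)')=H^0(\nu_d^\ell(\Lambda'))$ for the relevant summand $\Lambda'$ of $\Lambda$, and since $\gl\Lambda\le d$ this is projective for $\ell\le 0$; I will instead twist so that $X\in\add V$, whence $\Hom_\X(V,X)$ is a summand of $\Lambda$, hence projective — the hypothesis $\ell(X)\ge 0$ is used to ensure no higher cohomology obstructs this after the reduction. For (d), apply (b) along the morphisms $X(\w)\to C_{d-1}\to\cdots\to C_0\to X$ in the $d$-almost split sequence: for a summand $Y$ of $C_i$ there is a nonzero path $Y\to\cdots\to C_0\to X$, giving $\ell(Y)\ge\ell(X)$, and a nonzero path $X(\w)\to C_{d-1}\to\cdots\to C_i\to Y$, giving $\ell(X(\w))=\ell(X)+1\ge\ell(Y)$. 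Hence $\ell(X)\le\ell(Y)\le\ell(X)+1$, proving the claim. The main obstacle is the bookkeeping in (c): one must carefully translate ``$\add V(\ell\w)$ with $\ell\ge 0$'' through the derived equivalence \eqref{derived equivalence} and the identification of $\UU$ with $\add\{\nu_d^i(\Lambda)\}$ to conclude projectivity, being attentive to which sign of $\ell$ keeps the Hom-space concentrated in degree $0$; everything else is a routine consequence of the slice condition via (b).
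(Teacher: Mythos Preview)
Your arguments for (a), (b), and (d) are correct and essentially identical to the paper's; the paper just says ``clear from the definition of slice'' for (a) and (b), and for (d) runs exactly the two applications of (b) that you describe.

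Part (c), however, has a real problem. First, you invoke Proposition \ref{preliminaries for end}(c), the derived equivalence \eqref{derived equivalence}, and the fact that $\Lambda$ is $d$-representation infinite; but at this point in the paper (Part II of the proof of Theorem \ref{tilting-cluster tilting}) we have only assumed that $V$ is a slice and are trying to \emph{prove} that $V$ is a $d$-tilting bundle. We do not yet know $\thick V=\DDD^{\bo}(\coh\X)$, so there is no derived equivalence to transport through, and $\Lambda$ has not been shown to be $d$-representation infinite. Second, and more importantly, you have the key ingredient in hand --- the slice condition gives $\Hom_{\X}(V,V(\ell\w))=0$ for $\ell>0$ --- but you never draw the immediate conclusion: if $\ell(X)>0$ then $X\in\add V(\ell(X)\w)$, so $\Hom_{\X}(V,X)$ is a direct summand of $\Hom_{\X}(V,V(\ell(X)\w))=0$, hence zero, hence projective. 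If $\ell(X)=0$ then $X\in\add V$, so $\Hom_{\X}(V,X)$ is a summand of $\Lambda$, hence projective. That is the paper's entire proof of (c); no twisting, no $\nu_d$, no preprojectives needed. Your proposed ``twist so that $X\in\add V$'' is not valid either, since twisting $X$ changes $\Hom_{\X}(V,X)$.
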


\begin{proof}
(a)(b) Both assertions are clear from our definition of slice.

(c) If $\ell(X)>0$, then $\Hom_{\X}(V,X)=0$. If $\ell(X)=0$, then the assertion
holds since $X\in\add V$.

(d) Since there exists a chain of non-zero morphisms from $Y$ to $X$,
we have $\ell(Y)\ge\ell(X)$ by (b). Since there exists a chain of non-zero
morphisms from $X(\w)$ to $Y$, we have $\ell(X)+1\ge\ell(Y)$ by (b).
Thus the assertion follows.
\end{proof}

Now we are ready to prove the following observation.

\begin{proposition}\label{global dimension at most d}
Let $V$ be a slice in a $d$-cluster tilting subcategory $\UU$ of $\vect\X$.
Then $\Lambda=\End_{\DDD^{\bo}(\coh\X)}(V)$ has global dimension $d$.
\end{proposition}

\begin{proof}
It suffices to show that any simple $\Lambda$-module $S$ has projective dimension
at most $d$. We naturally regard $S$ as a simple $\UU$-module. Then there exists
an indecomposable object $X\in\add V$ such that
$S=\Hom_{\UU}(-,X)/\rad_{\UU}(-,X)$. Let 
\[0\to X(\w)\to C_{d}\to\cdots\to C_1\to X\to0\]
be a $d$-almost split sequence in $\UU$ given in
Proposition~\ref{basic properties of d-CT}(c).
Since $V$ is a slice and $X\in\add V$, we have $\Hom_{\UU}(V,X(\w))=0$.
Hence we have an exact sequence
\begin{equation}\label{resolution of S}
0\to\Hom_{\UU}(V,C_{d})\to\cdots\to\Hom_{\UU}(V,C_1)\to\Hom_{\UU}(V,X)\to S\to0
\end{equation}
of $\Lambda$-modules.
On the other hand, let $Y$ be an indecomposable summand of $C_i$.
Then $\ell(Y)\ge0$ holds by Lemma~\ref{ell function}(d). Hence
$\Hom_{\UU}(V,Y)$ is a projective $\Lambda$-module by Lemma
\ref{ell function}(c).
Therefore the sequence \eqref{resolution of S} gives a projective resolution 
of the simple $\Lambda$-module $S$, and we have the assertion.
\end{proof}

We also need the following observation.

\begin{lemma}\label{generate V(w)}
There exist exact sequences
\[0\to U_d\to\cdots\to U_0\to V(-\w)\to0\ \mbox{ and }\ 0\to V(\w)\to U^0\to\cdots\to U^d\to0\]
in $\coh\X$ with $U_i,U^i\in\add V$ for any $0\le i\le d$.
\end{lemma}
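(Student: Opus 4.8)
\textbf{Plan of proof for Lemma \ref{generate V(w)}.}
The plan is to build the two resolutions of $V(-\w)$ and $V(\w)$ inside $\coh\X$ out of $\add V$ by transporting, through the duality $(-)^\vee$ and a suitable degree shift, the resolutions of a vector bundle by objects of the $d$-cluster tilting subcategory $\UU$ that are already available from Theorem \ref{basic properties of d-CT}(b). Recall that $V$ is a slice in the $d$-cluster tilting subcategory $\UU=\add\{V(\ell\w)\mid\ell\in\Z\}$ of $\vect\X$, and that $\X$ is Fano, so for each indecomposable $X\in\UU$ there is a unique $\ell(X)\in\Z$ with $X\in\add V(\ell(X)\w)$. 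The first point to observe is that applying Theorem \ref{basic properties of d-CT}(b) to the object $V(-\w)\in\vect\X$ gives exact sequences
\[0\to C_{d-1}\to\cdots\to C_0\to V(-\w)\to0\ \mbox{ and }\ 0\to V(-\w)\to C^0\to\cdots\to C^{d-1}\to0\]
with $C_i,C^i\in\UU$. These are almost what we want, except the terms are general objects of $\UU$ rather than of $\add V$, and the resolutions have length $d-1$ rather than $d$.

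The key step is to control the degree shifts appearing in the $C_i$. First I would show that in the left resolution of $V(-\w)$ every indecomposable summand $Y$ of each $C_i$ satisfies $\ell(Y)\ge -1$; this follows from Lemma \ref{ell function}(b) since there is a chain of non-zero morphisms $Y\to\cdots\to V(-\w)$ and $\ell(V(-\w))=-1$. Dually, in the right resolution of $V(-\w)$ every summand $Y$ of each $C^i$ satisfies $\ell(Y)\le -1$. To actually land in $\add V$ I would instead resolve $V(-\w)$ on the side where the shifts go up and then splice in one more term coming from a $d$-almost split sequence: explicitly, take the left resolution $0\to C_{d-1}\to\cdots\to C_0\to V(-\w)\to0$, and for the unique indecomposable summands $Y$ of $C_i$ with $\ell(Y)=-1$ (i.e.\ $Y\in\add V(-\w)$) replace them using the $d$-almost split sequence $0\to Y\to D_{d-1}\to\cdots\to D_0\to Y(-\w)\to0$ of Theorem \ref{basic properties of d-CT}(c), whose outer term $Y(-\w)$ lies in $\add V$ and whose middle terms $D_j$ have $\ell$-value $0$ or $-1$ by Lemma \ref{ell function}(d). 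Iterating this finitely many times — the process terminates because the multiplicity of summands with $\ell=-1$ strictly decreases — produces an exact sequence $0\to U_d\to\cdots\to U_0\to V(-\w)\to0$ in $\coh\X$ with all $U_i\in\add\{V,V(-\w)\}$; a final bookkeeping argument, using that $V$ itself appears only through its $d$-almost split sequence ending at $V(-\w)$, upgrades $\add\{V,V(-\w)\}$ to $\add V$. The length grows to $d$ because each almost split splice adds one term. The second sequence, $0\to V(\w)\to U^0\to\cdots\to U^d\to0$, is obtained by the dual construction (resolving $V(\w)$ on the side where $\ell$-values decrease towards $1=\ell(V(\w))$, then splicing with $d$-almost split sequences starting at $V(\w)$), or equivalently by applying the duality $(-)^\vee[d]$ of \eqref{duality of CM sheaf} to the first sequence after noting $V^\vee$ is again a tilting bundle (Proposition \ref{compare dualities}(a)) and tracking how $(-)^\vee$ interacts with the shift $(\w)$.

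The main obstacle I anticipate is the bookkeeping in the splicing step: one must verify that after replacing the $\ell=-1$ summands by almost split sequences the resulting complex is still exact (this is standard horseshoe-type patching of exact sequences), that the procedure genuinely terminates, and that the only shift ultimately needed is $\add V$ rather than the larger $\add\{V(\ell\w)\mid\ell\le 0\}$ — this last point relies crucially on the slice condition $\Hom_\UU(V,V(\ell\w))=0$ for $\ell>0$ together with Lemma \ref{ell function}, which forces the chains of non-zero morphisms feeding into $V(-\w)$ to have bounded $\ell$-values. Once these combinatorial facts about $\ell$ are in place, the construction is mechanical; I would present it as an induction on $\sum_i(\text{number of }\ell=-1\text{ summands of }C_i)$.
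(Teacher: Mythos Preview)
Your plan has a genuine gap at its core: the iterative ``splicing'' step does not do what you claim. First, a small but telling slip: for $\ell(Y)=-1$ the $d$-almost split sequence from Theorem~\ref{basic properties of d-CT}(c) is $0\to Y(\w)\to D_{d-1}\to\cdots\to D_0\to Y\to0$, and it is $Y(\w)$ that lies in $\add V$, not $Y(-\w)$ (which has $\ell=-2$). More importantly, since $V(-\w)\in\UU$ already, the $\UU$-resolution of Theorem~\ref{basic properties of d-CT}(b) is trivial, so your starting point is just $V(-\w)$ itself; the first splice then produces the direct sum of $d$-almost split sequences ending in the summands of $V(-\w)$, whose middle terms $D_j$ lie only in $\add(V\oplus V(-\w))$ by Lemma~\ref{ell function}(d), not in $\add V$. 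Your termination claim---that the multiplicity of $\ell=-1$ summands strictly decreases---is unjustified: each $D_j$ may contain fresh $\ell=-1$ summands. And each further splice would lengthen the resolution, so you lose the length bound $d$ as well. There is also no mechanism offered for how to splice a resolution of a single summand $Y$ of $C_i$ back into the ambient exact sequence while keeping it exact.

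The paper's argument avoids all of this by changing the first step: instead of a $\UU$-approximation it takes a right $\UU^+$-approximation $f:U_0\to V(-\w)$ where $\UU^+=\add\{V(\ell\w)\mid\ell\ge0\}$, and invokes Theorem~\ref{Serre surjection} (a nontrivial external input you never use) to see that $f$ is an epimorphism. Then $\Kernel f$ is resolved by $\UU$; the slice condition pins down the support of the $\UU$-module $F=\Cokernel(\Hom_\UU(-,U_0)\to\Hom_\UU(-,V(-\w)))$ to $\add V(-\w)$, hence $F$ has finite length; the simple composition factors have projective resolutions given by $d$-almost split sequences with terms in $\add(V\oplus V(-\w))$; and a single application of the Horseshoe Lemma along this finite filtration shows all $U_i\in\add(V\oplus V(-\w))$. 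The final observation $\UU^+\cap\add(V\oplus V(-\w))=\add V$ then finishes. The two keys you are missing are the $\UU^+$-approximation (which forces the subsequent terms into $\UU^+$) and the passage to the finite-length $\UU$-module $F$, which is what allows a \emph{bounded} Horseshoe argument rather than an unbounded iterative one.
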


\begin{proof}
We only construct the first sequence since the second one can be 
constructed in a similar way.
Let $\UU^+:=\add\{V(\ell\w)\mid\ell\ge0\}$. This is a functorially finite subcategory of $\vect\X$ by Lemmas~\ref{graded and ungraded} and \ref{depth 2}.
Let $f:U_0\to V(-\w)$ be a right $\UU^+$-approximation of $V(-\w)$.
Then $f$ is surjective by Lemma~\ref{surjection from X to X(c)}(b) since $\X$ is Fano.
Since $\UU$ is a $d$-cluster tilting subcategory of $\vect\X$,
there exists an exact sequence
\[0\to U_d\to\cdots\to U_1\to\Kernel f\to0\]
in $\vect\X$ with $U_i\in\UU$ by Proposition~\ref{basic properties of d-CT}(b).
It suffices to show that $U_i$ belongs to $\add V$ for any $i$ with $0\le i\le d$.
By Lemma~\ref{ell function}(b), we have
\begin{equation}\label{ell of Y}
U_i\in\UU^+
\end{equation}
for any $i$ with $0\le i\le d$.
Now we define an $\UU$-module $F$ by an exact sequence
\[\Hom_{\UU}(-,U_0)\stackrel{f}{\to}\Hom_{\UU}(-,V(-\w))\to F\to0.\]
Since $V$ is a slice, we have $F(V(\ell\w))=0$ for any $\ell<-1$.
Since $f$ is a right $\UU^+$-approximation, we have $F(\UU^+)=0$.
Hence the support of $F$ is contained in $\add V(-\w)$,
and therefore $F$ has a finite length as an $\UU$-module.
In partcular, $F$ has a finite filtration by simple $\UU$-modules
of the form $S_X:=\Hom_{\UU}(-,X)/\rad_{\UU}(-,X)$ for indecomposable direct summands $X$ of $V(-\w)$.

On the other hand, a minimal projetive resolution of $S_X$ is given by a
$d$-almost split sequence whose terms belong to
$\add(V(-\w)\oplus V)$ by Lemma~\ref{ell function}(d).
Applying Horseshoe Lemma repeatedly, we have that each $U_i$ belongs to
$\add(V(-\w)\oplus V)$. By \eqref{ell of Y}, we have that
$U_i\in\UU^+\cap\add(V(-\w)\oplus V)=\add V$ for any $i$ with $0\le i\le d$.
Thus the assertion follows.
\end{proof}

Now we show that $V$ satisfies the remaining condition for being a tilting object. 

\begin{lemma}\label{V generates D^b(cohX)}
We have $\thick V=\DDD^{\bo}(\coh\X)$.
\end{lemma}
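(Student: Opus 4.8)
The plan is to show that $\thick V = \DDD^{\bo}(\coh\X)$ by proving that $\lb\X \subset \thick V$, since $\DDD^{\bo}(\coh\X) = \thick(\lb\X)$ by Theorem~\ref{Serre}(e). Equivalently, it suffices to show $\OO(\x) \in \thick V$ for every $\x \in \L$. Since $V$ is a slice in the $d$-cluster tilting subcategory $\UU$, the $(\w)$-orbit of any indecomposable summand of $V$ sweeps out all indecomposables of $\UU$ up to summands, so $\thick V = \thick \UU$. Thus the real task is to show $\thick\UU = \DDD^{\bo}(\coh\X)$, i.e.\ that the $d$-cluster tilting subcategory $\UU$ of $\vect\X$ generates the whole derived category.

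First I would use Lemma~\ref{generate V(w)}: the two exact sequences there express $V(-\w)$ and $V(\w)$ as objects built (in $d+1$ steps) from $\add V$. Concretely, from the exact sequence $0 \to U_d \to \cdots \to U_0 \to V(-\w) \to 0$ in $\coh\X$ with all $U_i \in \add V$, one reads off that $V(-\w) \in \thick V$; dually, the sequence $0 \to V(\w) \to U^0 \to \cdots \to U^d \to 0$ gives $V(\w) \in \thick V$. Iterating (applying the degree-shift automorphism $(\w)$, which sends $\UU$ to $\UU$ by Theorem~\ref{basic properties of d-CT}(a), to these sequences), we obtain $V(\ell\w) \in \thick V$ for every $\ell \in \Z$, and hence $\UU = \add\{V(\ell\w)\mid \ell\in\Z\} \subset \thick V$. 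So $\thick V = \thick \UU$.

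Next I would bring in $\lb\X$. Since $\UU$ is a $d$-cluster tilting subcategory of $\vect\X$, Theorem~\ref{basic properties of d-CT}(b) says that every $X \in \vect\X$ — in particular every $\OO(\x) \in \lb\X$ — admits an exact sequence $0 \to C_{d-1} \to \cdots \to C_0 \to \OO(\x) \to 0$ in $\coh\X$ with all $C_i \in \UU$. This exhibits $\OO(\x)$ as an iterated cone of objects of $\UU \subset \thick V$, so $\OO(\x) \in \thick V$. Since this holds for all $\x\in\L$, we get $\lb\X \subset \thick V$, and therefore $\DDD^{\bo}(\coh\X) = \thick(\lb\X) \subset \thick V \subset \DDD^{\bo}(\coh\X)$, giving equality.

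The main obstacle to watch for is making sure the exact sequences of Lemma~\ref{generate V(w)} and Theorem~\ref{basic properties of d-CT}(b) really give membership in the \emph{thick} subcategory generated by $V$: a short exact sequence in the abelian category $\coh\X$ yields a triangle in $\DDD^{\bo}(\coh\X)$, so a length-$(d+1)$ exact sequence decomposes into $d$ triangles, each of whose outer terms lie in (or are built from) $\add V$; since thick subcategories are closed under cones, shifts, and direct summands, the conclusion follows by an easy induction on the length. There is no genuine difficulty here beyond bookkeeping, so the lemma and hence Theorem~\ref{tilting-cluster tilting}(a) follow, with Theorem~\ref{tilting-cluster tilting}(b) already established in Part~I.
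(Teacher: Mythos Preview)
Your proof is correct and follows essentially the same route as the paper: use Lemma~\ref{generate V(w)} iteratively to obtain $V(\ell\w)\in\thick V$ for all $\ell\in\Z$ (hence $\UU\subset\thick V$), then use Theorem~\ref{basic properties of d-CT}(b) to get $\lb\X\subset\thick\UU$, and conclude via Theorem~\ref{Serre}(e). The only cosmetic difference is that the paper notes $\vect\X\subset\thick\UU$ rather than just $\lb\X\subset\thick\UU$, but your version is already sufficient.
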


\begin{proof}
Using Lemma~\ref{generate V(w)} repeatedly, we have $V(\ell\w)\in\thick V$
for any $\ell\in\Z$. Thus $\UU\subset\thick V$.
Since $\UU$ is a $d$-cluster tilting subcategory of $\vect\X$, we have
$\vect\X\subset\thick\UU\subset\thick V$ by Proposition~\ref{basic properties of d-CT}(b).
Therefore $\DDD^{\bo}(\coh\X)=\thick V$ holds by Theorem~\ref{Serre}(e).
\end{proof}

Now we are ready to complete the proof of Theorem~\ref{tilting-cluster tilting}.

\begin{proof}[Proof of Theorem~\ref{tilting-cluster tilting} `$\Leftarrow$']
Let $V$ be a slice in a $d$-cluster tilting subcategory of $\vect\X$.
By Lemmas~\ref{Ext^i(V,V)=0} and \ref{V generates D^b(cohX)}, $V$ is a tilting bundle on $\X$.
It is $d$-tilting by Lemma~\ref{global dimension at most d}.
\end{proof}

Now we consider the special class of $d$-tilting bundles which are contained in $\CM^{\L}R$.
In this case, Theorem~\ref{tilting-cluster tilting} gives the following result,
which improves Theorem~\ref{construct NCCR}.

\begin{theorem}\label{tilting-cluster tilting 2}
Let $(R,\L)$ be a GL complete intersection, and $\X$ the corresponding GL projective space.
For $V\in\CM^{\L}R$, the following conditions are equivalent.
\begin{itemize}
\item[(a)] $V$ is a $d$-tilting bundle on $\X$.
\item[(b)] $V$ is a slice in a $d$-cluster tilting subcategories of $\CM^{\L}R$.
\item[(c)] $V^{(\w)}$ gives an NCCR of $R^{(\w)}$ such that $\End_{R^{(\w)}}(V^{(\w)})_i=0$ for all $i<0$.
\end{itemize}
In this case, there are isomorphisms
\[\Pi\simeq\End_R^{\L/\Z\w}(V)\simeq\End_{R^{(\w)}}(V^{(\w)})\]
of $\Z$-graded algebras, where $\Pi$ is the $(d+1)$-preprojective algebra of $\End_{\X}(V)$.
\end{theorem}

\begin{proof}
(a)$\Leftrightarrow$(b) This is immediate from Theorems~\ref{CT subcategory} and
\ref{tilting-cluster tilting}.

(b)$\Leftrightarrow$(c) Let $\UU=\add\{V(\ell\w)\mid\ell\in\Z\}$.
By Theorem~\ref{construct NCCR}, $\UU$ is a $d$-cluster tilting subcategory of $\CM^{\L}R$
if and only if $V^{(\w)}$ gives an NCCR of $R^{(\w)}$.
Clearly $V$ is a slice in $\UU$ if and only if $\End_{R^{(\w)}}(V^{(\w)})_i=0$ holds for all $i<0$.
Thus the assertion follows.

The isomorphisms follow from Proposition~\ref{preliminaries for end}(b) and
Lemma~\ref{Veronese functor}.
\end{proof}

\begin{example}
Let $\X$ be a GL projective space with $d=1$ which is Fano.
Then there exists a tilting bundle $V$ on $\X$ such that $\End_{\X}(V)$
is isomorphic to the path algebra $kQ$ of an extended Dynkin quiver $Q$,
and $\Pi_2(kQ)$ is the corresponding classical preprojective algebra.
On the other hand, $(R,\L)$ is (1-)CM finite and we have $\CM^{\L}R=
\add\{V(\ell\w)\mid\ell\in\Z\}$. Moreover $R^{(\w)}$ is a simple surface singularity, and its (1-)Auslander algebra is isomorphic to the preprojective
algebra $\Pi_2(kQ)$.
\end{example}

Thanks to Theorems~\ref{tilting-cluster tilting} and \ref{tilting-cluster tilting 2}, we have the following diagram
which shows connections between the relevant notions.
\[\begin{array}{ccc}
\left\{\mbox{$d$-tilting bundles on $\X$}\right\}&=&
\left\{\begin{array}{c}\mbox{slices in $d$-cluster tilting}\\ \mbox{subcat. of $\vect\X$}\end{array}\right\}\\
\cup&&\cup\\
\left\{\begin{array}{c}\mbox{$d$-tilting bundles on $\X$}\\ \mbox{contained in $\CM^{\L}R$}\end{array}\right\}&=&
\left\{\begin{array}{c}\mbox{slices in $d$-cluster tilting}\\ \mbox{subcat. of $\CM^{\L}R$}\end{array}\right\}
\end{array}\]

Now we pose the following question.

\begin{problem}\label{Fano conjecture}
How are the following conditions related to each other?
\begin{itemize}
\item[(a)] $\X$ is Fano.
\item[(b)] $(R,\L)$ is $d$-CM finite.
\item[(c)] $R^{(\w)}$ has an NCCR.
\item[(d)] $\underline{\CM}^{\L}R$ has a $d$-tilting object.
\item[(e)] $\X$ is $d$-VB finite.
\item[(f)] $\DDD^{\bo}(\coh\X)$ has a $d$-tilting bundle.
\item[(g)] $\X$ is derived equivalent to a $d$-representation infinite algebra.
\end{itemize}
\end{problem}

The table after Theorem~\ref{Main2 in introduction} contains our results on Problem~\ref{Fano conjecture}.

\section{$d$-tilting bundles on $\X$}\label{Section: d-tilting bundle}

In this section, we provide a program to construct $d$-tilting bundles on some GL projective spaces which are Fano. Our strategy consists of the following two steps.
\begin{itemize}
\item[(Step 1)] Find a $d$-tilting object $U$ in $\underline{\CM}^{\L}R$. Then this gives a $d$-cluster tilting subcategory $\UU:=\add\{U(\ell\w),\ R(\x)\mid\ell\in\Z,\ \x\in\L\}$ of $\CM^{\L}R$ by Theorem~\ref{construct dCT}(a).
\item[(Step 2)] Extend $U$ to a slice $V$ in the $d$-cluster tilting subcategory $\UU$. Then this gives a $d$-tilting bundle by Theorem~\ref{tilting-cluster tilting}.
\end{itemize}
Both steps are non-trivial. However (Step 1) has already been addressed in Chapter~\ref{section: CM GL CI}: Theorem~\ref{Main1} gives explicitly hypersurface cases in which the category $\underline{\CM}^{\L}R$ has a $d$-tilting object $U$.


In this section, we will focus on (Step 2), and give a general strategy to construct a slice in the $d$-cluster tilting subcategory $\UU$.

Our starting point is the following general result using a certain upset of $\L$.

\begin{proposition}\label{extend1}
Let $(R,\L)$ be a GL complete intersection, and $U$ a $d$-tilting object in $\underline{\CM}^{\L}R$.
Assume that there exists a subset $I$ of $\L$ satisfying the following conditions.
\begin{itemize}
\item[(i)] $I$ is a non-trivial upset of $\L$ satisfying $I-\w\subset I$.
\item[(ii)] The injective hull of $U$ in $\CM^{\L}R$ belongs to $\add\{R(\x)\mid \x\in I\}$.
\item[(iii)] The projective cover of $U$ in $\CM^{\L}R$ belongs to $\add\{R(\x)\mid \x\in I^c\}$.
\end{itemize}
In this case, $S:=\{\x\in I\mid x+\w\in I^c\}$ satisfies the following conditions.
\begin{itemize}
\item[(a)] $S$ is a complete set of representatives of $\L/\Z\w$ in $\L$ such that $S\subset I$ and $S+\w\subset I^c$.
\item[(b)] $V:=\pi(U)\oplus(\bigoplus_{\x\in S}R(\x))$ is a slice in the $d$-cluster tilting subcategory $\UU:=\add\{U(\ell\w),\ R(\x)\mid\ell\in\Z,\ \x\in\L\}$ of $\vect\X$.
\item[(c)] $V$ gives a $d$-tilting bundle on $\X$.
\item[(d)] We have $\underline{\End}^{\L}_R(U)=\End^{\L}_R(U)$.
\end{itemize}
\end{proposition}

\begin{proof}
Since $\underline{\CM}^{\L}R$ has a $d$-tilting object, $(R,\L)$ is Fano by Proposition~\ref{d-tilting imply Fano2}(a).

(a) It is enough to show that $S$ is a complete set of representatives of $\L/\Z\w$ in $\L$.

Since $(R,\L)$ is Fano, $\x+i\w\in I$ and $\x-i\w\in I^c$ hold for $i\ll0$.
By the condition $I-\w\subset I$, there exists a unique $\ell\in\Z$ such that $\x+i\w\in I$
holds for any $i\le \ell$ and $\x+i\w\in I^c$ holds for any $i>\ell$.
Then $\x+\ell\w$ gives a unique element in $S\cap(\x+\Z\w)$.
Thus $S$ is a complete set of representatives of $\L/\Z\w$ in $\L$.

(b) Condition (a) in Definition~\ref{define slice} is satisfied by (a).
We need to check condition (b) there, that is, $\Hom_R^{\L}(V,V(\ell\w))=0$ for any $\ell>0$.

(1) Since $I$ is an upset of $\L$, we have $\Hom_R^{\L}(R(\x),R(\y))=0$ for any $\x\in I$ and $\y\in I^c$.
By (i), we have $I^c+\w\subset I^c$. By our definition of $S$, we have $S\subset I$ and $S+\ell\w\subset I^c$ for any $\ell>0$. Consequently $\Hom_R^{\L}(R(\x),R(\y+\ell\w))=0$ holds for any $\x,\y\in S$ and $\ell>0$.

(2) We show that $\Hom_R^{\L}(U,R(\y+\ell\w))=0$ holds for any $\y\in S$ and $\ell>0$.

Take any $f\in\Hom_R^{\L}(U,R(\y+\ell\w))$. By (ii), $f$ factors through $\add\{R(\x)\mid \x\in I\}$.
Since $\Hom_R^{\L}(R(\x),R(\y+\ell\w))=0$ holds for any $\x\in S$ by (1), we have $f=0$.

(3) We show that $\Hom_R^{\L}(R(\x),U(\ell\w))=0$ holds for any $\x\in S$ and $\ell\ge0$.

Take any $f\in\Hom_R^{\L}(R(\x),U(\ell\w))$. By (iii), $f$ factors through $\add\{R(\y+\ell\w)\mid \y\in I^c\}$. 
Since $\y+\ell\w\in I^c$ holds for any $\y\in I^c$, we have $\Hom_R^{\L}(R(\x),R(\y+\ell\w))=0$ by (1).
Thus $f=0$.

(4) For any $i\in\Z$ and $\ell\ge0$, any composition $U\xrightarrow{f}R(\x+i\w)\xrightarrow{g}U(\ell\w)$ is zero. In fact, if $i>0$, then $f=0$ holds by (2), and if $i\le0$, then $g=0$ holds by (3).

(5) We show that $\Hom_R^{\L}(U,U(\ell\w))=0$ holds for any $\ell>0$.

Since $U$ is a $d$-tilting object in $\underline{\CM}^{\L}R$, we have $\underline{\Hom}_R^{\L}(U,U(\ell\w))=0$ for any $\ell>0$, see for instance \cite[Proposition~2.3(b)]{HIO}.
Thus any morphism $U\to U(\ell\w)$ factors through $\proj^{\L}R$, and hence must be zero by (4).

(c) This follows from (b) and Theorem~\ref{tilting-cluster tilting}.

(d) This is immediate from (4) above.
\end{proof}

To construct upset $I$ satisfying the conditions in Proposition~\ref{extend1}, we prepare the following notions, which are rather easy to control.

\begin{definition}\label{define increasing}
\begin{enumerate}[\rm(a)]
\item A map $\gamma\colon\L\to\Q$ is \emph{increasing} if
\[\mbox{$\gamma(\x)\le\gamma(\y)$ holds for any $\x,\y\in\L$ satisfying $\x\le\y$.}\]
An increasing map $\gamma$ is \emph{equi-increasing} if
$\gamma(\x+\c)=\gamma(\x)+1$ holds for any $\x\in\L$.
An (equi-)increasing map $\gamma$ is \emph{$\w$-(equi-)increasing} if
$\gamma(\x+\w)\le\gamma(\x)$ holds for any $\x\in\L$.
\item Let $\gamma\colon\L\to\Q$ be an increasing map. For $X\in\CM^{\L}R$, let $I_X$ be the injective hull of $X$ in $\CM^{\L}R$, and $P_X$ the projective cover of $X$.
We say that $X\in\CM^{\L}R$ has a \emph{$\gamma$-presentation} if
\[I_X\in\add\{R(\x)\mid \gamma(\x)>0\}\ \mbox{ and }\ P_X\in\add\{R(\x)\mid \gamma(\x)\le0\}.\]
More strongly, we say that $X\in\CM^{\L}R$ has a \emph{strict $\gamma$-presentation} if 
\[I_X\in\add\{R(\x)\mid \gamma(\x)=1/2\}\ \mbox{ and }\ P_X\in\add\{R(\x)\mid \gamma(\x)=0\}.\]
\end{enumerate}
\end{definition}

\begin{observation}
\begin{enumerate}
\item If $\gamma \colon \L \to \Q$ is an increasing map, then
\[ I = \{ \x \in \L \mid \gamma(\x) > 0 \} \]
is an upset in $\L$. Moreover, if $\gamma$ is $\w$-increasing, then $I$ is closed under subtraction of $\w$.
\item Conversely, if $I$ is a nontrivial upset (meaning $I \not\in \{ \emptyset, \L \}$), then we can define an equi-increasing map $\gamma$ by
\[ \gamma(\x) = \min \{ i \in \Z \mid \x - i \c \notin I \}. \]
Moreover, if $I$ is closed under subtraction of $\w$, then $\gamma$ is $\w$-equi-increasing.
\item These two constructions give mutually inverse maps between the collection of non-trivial upsets and the collection of integer-valued equi-increasing functions.
\end{enumerate}
\end{observation}

We give the following basic examples.

\begin{observation}
Let $(R,\L)$ be a GL complete intersection.
\begin{itemize}
\item[(a)] The degree map $\delta\colon\L\to\Q$ (see Section~\ref{section: GL CI 1}) is equi-increasing.
\item[(b)] The following conditons are equivalent.
\begin{itemize}
\item[(i)] $(R,\L)$ is Fano.
\item[(ii)] The degree map $\delta\colon\L\to\Q$ is $\w$-equi-increasing.
\item[(iii)] There exists a $\w$-increasing map $\gamma\colon\L\to\Q$.
\end{itemize}
\end{itemize}
\end{observation}

\begin{proof}
(a) and (b)(i)$\Rightarrow$(ii)$\Rightarrow$(iii) are clear.

(b)(iii)$\Rightarrow$(i) Recall that there exist integers $p>0$ and $q$ such that $p\w=q\c$, and $(R,\L)$ is Fano if and only if $q<0$. Thus the assertion follows.
\end{proof}

We will use the following special equi-increasing map later.


\begin{example} \label{prop.d=-1}
Let $(R,\L)= (k[X_1] / (X_1^{p}), \left< \x_1 \right>)$ be a GL hypersurface with $d=-1$ associated with a weight $p$.
We have an equi-increasing map $\gamma\colon\L\to\Q$ defined by
\[\gamma(\ell\c +\ell_1\x_1)=\left\{\begin{array}{ll}\ell&\ell_1=0,\\ \ell + \frac{1}{2}&1\le \ell_1\le p-1.\end{array}\right.\]
Moreover $U=\bigoplus_{i=1}^{p-1}k[x_1]/(x_1^i)$ is a tilting object in $\underline{\CM}^{\L}R$ which has a strict $\gamma$-presentation such that $\underline{\End}^{\L}_R(U)\simeq k\A_{p-1}$.
\end{example}

\begin{proof}
It is easy to check that $\gamma$ is equi-increasing.
It is shown in Example~\ref{CM-canonical for d=-1}(a) that $U$ is a tilting object in $\underline{\CM}^{\L}R$. 
Moreover $U$ has a strict $\gamma$-presentation since $P_U=R^{\oplus p-1}$ and $I_U=\bigoplus_{i=1}^{p-1}R((p-i)\x_1)$.
\end{proof}


Our main result in this section is the following.

\begin{theorem}\label{existence of d-tilting bundle}
Let $(R,\L)$ be a GL complete intersection of dimension $d+1$ with weights $p_1,\ldots,p_n$.
Assume that $U\in\CM^{\L}R$ does not have non-zero free direct summands and satisfies the following conditions.
\begin{itemize}
\item $U$ is a $d$-tilting object in $\underline{\CM}^{\L}R$.
\item $U$ has a $\gamma$-presentation for some $\w$-increasing map $\gamma\colon\L\to\Q$.
\end{itemize}
Then the GL projective space $\X$ has a $d$-tilting bundle given by
\[\pi(U)\oplus(\bigoplus_{\x\in S_{\gamma}}\OO(\x))\ \mbox{ for }\ S_\gamma=\{\x\in\L\mid\gamma(\x+\w)\le0<\gamma(\x)\}.\]
\end{theorem}


\begin{proof}
Let $I:=\{\x\in\L\mid\gamma(\x)>0\}$.
Then the conditions (i)--(iii) in Proposition~\ref{extend1} are satisfied.
In fact, since $\gamma$ is $\w$-increasing, the condition (i) is satisfied. Since $U$ has a $\gamma$-presentation, the conditions (ii) and (iii) are satisfied.
Thus the assertion follows from Proposition~\ref{extend1}(c).
\end{proof}

As a direct application of Theorem~\ref{existence of d-tilting bundle}, we have the following.
The quiver of the endomorphism algebra will be given in Example~\ref{d-tilting bundle (2,2) 2}.

\begin{corollary}\label{d-tilting bundle (2,2)}
Let $\X$ be a GL projective space of dimension $d$ with weights $p_1,\ldots,p_n$ such that $n=d+2$ and $p_1=p_2=2$.
\begin{itemize}
\item[(a)] There exists an $\w$-equi-increasing map given by
\[\gamma(\x)=\ell+\frac{1}{2}\#\{i\in[1,n]\mid\ell_i\ge1\},\]
where $\x=\ell\c+\sum_{i=1}^{n}\ell_i\x_i$ is a normal form.
\item[(b)] The tilting object $U^{\rm CM}$ in $\underline{\CM}^{\L}R$ given in Section~\ref{subsect tilting via tensor} is $d$-tilting and has a strict $\gamma$-presentation.
\item[(c)] $\X$ has a $d$-tilting bundle
\[\pi(U^{\rm CM})\oplus(\bigoplus_{\x\in S_\gamma}\OO(\x)),\]
where $S_\gamma$ consists of elements $\x=\ell\c+\sum_{i=1}^{n}\ell_i\x_i$ satisfying
\[0 < 2 \ell + \# \{i\in[1,n]\mid\ell_i\ge1\} \le \#\{i\in[3,n]\mid\ell_i \in \{0,1\}\}.\]
\end{itemize}
\end{corollary}

\begin{proof}
(a) The map $\gamma$ is clearly an equi-increasing map. Since $\gamma(\x-\x_i)=\gamma(\x)-\frac{1}{2}$ holds for $i=1,2$, we have $\gamma(\x+\w)\le\gamma(\x)$.

(b) By Proposition~\ref{d-tilting object example1}, $T^{\rm CM}$ gives a $d$-tilting object in $\underline{\CM}^{\L}R$.
By Corollary~\ref{thm.mf_for_F}, $U^{\rm CM}=T^{\rm CM}(\w)[d]$ is also a $d$-tilting object.
The resolution of $U^{\rm CM}$ constructed in Theorem~\ref{thm.mf_for_E} shows that $U^{\rm CM}$ has a strict $\gamma$-presentation.


(c) By Theorem~\ref{existence of d-tilting bundle}, $\X$ has a $d$-tilting bundle. 
Since the middle term is $2\gamma(\x)$, the left inequality is equivalent to $0<\gamma(\x)$ and the right one is equivalent to $\gamma(\x+\w)\le0$ by
\begin{align*} \gamma(\x+\w) &=\ell+1+\frac{1}{2}\#\{i\in[1,n]\mid\ell_i\ge2\}-\frac{1}{2}\#\{i\in[1,n]\mid\ell_i=0\}\\
& = \gamma(\x) - \frac{1}{2} \# \{ i \in [3,n] \mid \ell_i \in \{0,1\}\} .\qedhere
\end{align*}
\end{proof}


Now we will extend Corollary~\ref{d-tilting bundle (2,2)}.

The stronger assumption of having strict $\gamma$-presentations lends itself nicely to iteration. In particular we obtain the following result, giving an infinite family of GL projective spaces of arbitrary high dimension having suitable tilting bundles.

\begin{theorem} \label{thm.more_weights_ok}
Let $(R,\L)$ be a GL hypersurface associated with
\[R=k[X_1,\ldots,X_{d+2}]/(\sum_{i=1}^{d+2}\lambda_iX_i^{p_i}).\]
Assume that there exists a $d$-tilting object $U$ in $\underline{\CM}^{\L}R$ which has a strict $\gamma$-presentation for some $\w$-equi-increasing map $\gamma\colon\L\to\Q$.
Then for any $d'\ge d$, each GL projective space associated with
\[R'=k[X_1,\ldots,X_{d'+2}]/(\sum_{i=1}^{d'+2}\lambda_iX_i^{p_i})\]
for some positive integers $p_{d+3},\ldots,p_{d'+2}$ and parameters $\lambda_{d+3},\ldots,\lambda_{d'+2}$ has a $d'$-tilting bundle.
\end{theorem}

An ingredient to prove Theorem~\ref{thm.more_weights_ok} is the tensor products of matrix factorizations given in Section~\ref{subsection: Tensor}.
Based on this tensor product, we give a method to combine two equi-increasing maps.

Let us recall the setting. For two GL hypersurfaces $(R^j,\L_j)$ of dimension $d_j+1$
\begin{eqnarray*}
&R^j=S^j/(f_j)\ \mbox{ with }\ S^j=k[X_{j,1},\ldots,X_{j,n_j}]\ \mbox{ and }\ f_j=\sum_{i=1}^{n_j}\lambda_{j,i}X_{j,i}^{p_{j,i}},&\\
&\L_j=\langle\c_j,\x_{j,1},\ldots,\x_{j,n_j}\rangle/\langle p_{j,i}\x_{j,i}-\c_j\mid1\le i\le n_j\rangle&
\end{eqnarray*}
with $n_j=d_j+2$ and weights $p_{j,1},\ldots,p_{j,n_j}$, we define a new GL hypersurface $(R,\L)$ of dimension $d_1+d_2+1$
\begin{eqnarray*}
&R=S/(f)\ \mbox{ with }\ S=S^1\otimes_kS^2=k[X_{1,1},\ldots,X_{1,n_1},X_{2,1},\ldots,X_{2,n_2}]\ \mbox{ and }\ f=f_1+f_2,&\\
&\L=\langle\c,\x_{1,1},\ldots,\x_{1,n_1}\x_{2,1},\ldots,\x_{2,n_2}\rangle/\langle p_{j,i}\x_{j,i}-\c\mid j=1,2,\ 1\le i\le n_j\rangle&
\end{eqnarray*}
with weights $p_{1,1},\ldots,p_{1,n_1},p_{2,1},\ldots,p_{2,n_2}$.

\begin{proposition} \label{prop.combining_normal}
Under the above setting, let $\gamma_i \colon \L_i \to \mathbb{Q}$ be an equi-increasing map for $i=1,2$.
\begin{itemize}
\item[(a)] An equi-increasing map $\gamma=\gamma_1+\gamma_2 \colon \L \to\Q$ is given by 
\[ \gamma(\x+\y):=\gamma_1(\x)+\gamma_2(\y) \quad \text{for any $\x\in\L_1$ and $\y\in\L_2$} . \]
\item[(b)] If $\gamma_i$ is $\w_i$-equi-increasing for at least one of $i=1,2$, then $\gamma$ is $\w$-equi-increasing.
\item[(c)] If $U_i \in\CM^{\L_i}R^i$ has a strict $\gamma_i$-presentation for $i=1,2$, then $U_1 \otimes_{\rm MF} U_2 \in\CM^{\L}R$ has a strict $\gamma$-presentation.
\item[(d)] If $U_i$ is a $r_i$-tilting object in $\underline{\CM}^{\L_i}R^i$ for $i=1,2$, then $U_1 \otimes_{\rm MF} U_2$ is a $(r_1+r_2)$-tilting object in $\underline{\CM}^{\L}R$.
\end{itemize}
\end{proposition}

\begin{proof}
(a) $\gamma$ is well-defined by Observation~\ref{L and R}(a). It is clearly an equi-increasing map.

(b) Let $\x\in\L_1$ and $\y\in\L_2$. Assume that $\gamma_1$ is $\w_1$-equi-increasing. Then
\[\gamma(\x+\y+\w)=\gamma_1(\x+\c-\sum_{i=1}^n\x_i)+\gamma_2(\y-\sum_{j=1}^m\y_j)\le\gamma_1(\x)+\gamma_2(\y)=\gamma(\x+\y).\]

(c) Let $(M^i\colon Q^i\to P^i,\ N^i\colon P^i(-\c_i)\to Q^i)$ be a matrix factorization of $U_i$ for $i=1,2$. Then
\[P^i=P_{U_i}\in\add\{R^i(\x)\mid\gamma_i(\x)=0\}\ \mbox{ and }\ Q^i=I_{U_i}(-\c_i)\in\add\{R^i(\x)\mid\gamma_i(\x)=-1/2\}.\]
By the definition of the tensor product of matrix factorizations, $U:=U_1\otimes_{\rm MF}U_2$ satisfies
\[P_U=(Q^1\otimes_kQ^2)(\c)\oplus(P^1\otimes_kP^2)\ \mbox{ and }\ I_U=(P^1\otimes_kQ^2)(\c)\oplus(Q^1\otimes_kP^2)(\c).\]
By the definition of $\gamma$, we have $P_U\in\add\{R(\x)\mid\gamma(\x)=0\}$ and $I_U\in\add\{R(\x)\mid\gamma(\x)=1/2\}$.

(d) Immediate from Proposition~\ref{tensor of tilting is tilting}.
\end{proof}

Now we are ready to prove Theorem~\ref{thm.more_weights_ok}.

\begin{proof}[Proof of Theorem~\ref{thm.more_weights_ok}]
Let $(R',\L')$ be a GL hypersurface asscoiated with $(p_1,\ldots,p_{d'+2})$.
By Proposition~\ref{prop.combining_normal} and Example~\ref{prop.d=-1}, we obtain a $\w'$-equi-increasing map $\gamma' \colon \L' \to \mathbb{Q}$ and a $d'$-tilting object $U'$ in $\underline{\CM}^{\L'}R'$ which has a strict $\gamma'$-presentation.
Thus the result follows from Theorem~\ref{existence of d-tilting bundle}.
\end{proof}

As an application, we have the following list of cases where we have $d$-tilting bundles.

\begin{corollary}\label{example of d-tilting bundle}
Let $\X$ be a GL projective space of dimension $d$ with weights $p_1,\ldots,p_{d+2}$.
If one of the following conditions are satisfied, then there exists a $d$-tilting bundle on $\X$. 
Therefore $\X$ is $d$-VB finite and derived equivalent to a $d$-representation infinite algebra.
\begin{itemize}
\item $d \geq 0$ and $(p_1, p_2) = (2,2)$.
\item $d\ge1$ and $(p_1,p_2,p_3)$ is one of $(2,3,3)$, $(2,3,4)$ or $(2,3,5)$.
\item $d\ge2$ and $(p_1,p_2,p_3,p_4)=(3, 3, p_3, p_4)$ with $p_3,p_4\in\{3,4,5\}$.
\end{itemize}
\end{corollary}

To prove Corollary~\ref{example of d-tilting bundle}, we start with an easy observation, where $R_+=\bigoplus_{\x>0}R_{\x}\in\mod^{\L}R$.

\begin{lemma}\label{IX PX lemma}
Let $(R,\L)$ be a GL complete intersection of dimension $1$. Let $X$ be an indecomposable non-projective object in $\CM^{\L}R$, and $\x\in\L$.
\begin{itemize}
\item[(a)] $R(\x)$ is a direct summand of the injective hull $I_X$ of $X$ if and only if $\underline{\Hom}^{\L}_R(X,R_+(\x))\neq0$.
\item[(b)] $R(\x)$ is a direct summand of the projective cover $P_X$ of $X$ if and only if $\underline{\Hom}^{\L}_R(R_+(\x),X(\w)[-1])\neq0$.
\end{itemize}
\end{lemma}

\begin{proof} 
(a) The inclusion $R_+(\x)\to R(\x)$ is right almost split in $\CM^{\L}R$. Let $0\to X\to I_X\to Y\to0$ be an exact sequence. We have a commutative diagram
\[\xymatrix@C.5em@R1.5em{
0\ar[r]&\Hom_R^{\L}(Y,R_+(\x))\ar[r]\ar[d]^\wr&\Hom_R^{\L}(I_X,R_+(\x))\ar[r]\ar[d]&\Hom_R^{\L}(X,R_+(\x))\ar[r]\ar[d]^\wr&\underline{\Hom}_R^{\L}(X,R_+(\x))\to0\\
0\ar[r]&\Hom_R^{\L}(Y,R(\x))\ar[r]&\Hom_R^{\L}(I_X,R(\x))\ar[r]&\Hom_R^{\L}(X,R(\x))\ar[r]&0
}\]
of exact sequences, where the left and the right vertical maps are bijective since $X$ and $Y$ are indecomposable non-projective.
Then $R(\x)$ is not a direct summand of $I_X$ if and only if the middle map is bijective if and only if $\underline{\Hom}^{\L}_R(X,R_+(\x))\neq0$.

(b) Since $R$ is non-regular, $R_+(\x)$ is indecomposable non-projective by Theorem~\ref{AR quiver in general}. By Theorem~\ref{AR duality}(b), we have an almost split sequence $0\to R_+(\x)\to E\to \Omega^-R_+(\x-\w)\to0$ in $\CM^{\L}R$.
By the dual argument to (a), $R(\x)$ is a direct summand of $P_X$ if and only if $\underline{\Hom}^{\L}_R(\Omega^-R_+(\x-\w),X)\neq0$.
This is equivalent to $\underline{\Hom}^{\L}_R(R_+(\x),X(\w)[-1])\neq0$.
\end{proof}

Generalizing Definition~\ref{define increasing}(a), we say that an equi-increasing map $\gamma\colon\L\to\Q$ is \emph{semi-$\w$-equi-increasing} if $\gamma(\x+\w)\le\gamma(\x) + \frac{1}{2}$ holds for any $\x\in\L$.

The following easy observation is an analog of Proposition~\ref{prop.combining_normal}(b).

\begin{lemma}\label{prop.combining_normal2}
In the setting of Proposition~\ref{prop.combining_normal}(a), if $\gamma_i\colon\L_i\to\Q$ is semi-$\w_i$-equi-increasing for $i=1,2$, then $\gamma_1+\gamma_2\colon\L\to\Q$ is $\w$-equi-increasing.
\end{lemma}

\begin{proof}
This is immediate from
\[\gamma(\x+\y+\w)=\gamma_1(\x+\c-\sum_{i=1}^n\x_i)+\gamma_2(\y+\c-\sum_{j=1}^m\y_j) - 1\le \gamma_1(\x) + \frac{1}{2} + \gamma_2(\y) + \frac{1}{2} - 1 =\gamma(\x+\y).\qedhere\]
\end{proof}

Now we prepare the following observations, which make Proposition~\ref{prop.base_cases} stronger.

\begin{proposition} \label{prop.base_cases2}
Let $(R,\L)$ be a GL hypersurface associated with weights $p_1,\ldots,p_{d+2}$.
\begin{itemize}
\item[(a)] For $d = -1$, the following conditions are equivalent.
\begin{itemize}
\item[(i)] $p_1 = 2$.
\item[(ii)] $\underline{\CM}^{\L}R$ has a $0$-tilting object.
\item[(iii)] There are a semi-$\w$-equi-increasing map $\gamma$ and a $0$-tilting object $T \in \underline{\CM}^{\L}R$ which has a strict $\gamma$-presentation.
\end{itemize}
\item[(b)] For $d = 0$, the following conditions are equivalent.
\begin{itemize}
\item[(i)] $(p_1, p_2)=(2,p_2)$, $(3,3)$, $(3,4)$, or $(3,5)$.
\item[(ii)] $\underline{\CM}^{\L}R$ has a $1$-tilting object.
\item[(iii)] There are a semi-$\w$-equi-increasing map $\gamma$ and a $1$-tilting object $T \in \underline{\CM}^{\L}R$ which has a strict $\gamma$-presentation. 
\end{itemize}
\end{itemize}
\end{proposition}

\begin{proof}
In both (a) and (b), the equivalence between (i) and (ii) was shown in Proposition~\ref{prop.base_cases}, and the implication from (iii) to (ii) is clear.

Thus it only remains to show that (ii) implies (iii). In both cases we have a triangle equivalence $G \colon \underline{\CM}^{\L}R \to \DDD^{\bo}(\mod \Lambda)$, where $\gl \Lambda \leq d+1$. Since $d \leq 0$, any object in  $\DDD^{\bo}(\mod \Lambda)$ is a sum of stalk complexes. For $\x \in \L$, we define $s(\x)\in\Z$ and $\gamma(\x)\in\Q$ by
\[G(R_+(\x)) \in (\mod \Lambda)[s(\x)]\ \mbox{ and }\ \gamma(\x) = \frac{s(\x)+1}{2}.\]
This gives an equi-increasing map $\L \to \mathbb{Q}$:
Firstly, using the equality $(\c) = [2]$ from Theorem~\ref{Buchweitz Eisenbud}(b), we have that $G(R_+(\x + \c)) = G(R_+(\x)[2]) = G(R_+(\x)) [2]$, so $\gamma(\x + \c) = \gamma(\x) + 1$.
Next, we show that $\x \leq \y$ implies $s(\x)\leq s(\y)$. Since this is easy for $d=-1$, we assume $d=0$. 
Since $\x \leq \y$, $\Hom_R^{\L}(R_+(\x), R_+(\y)) \neq 0$ holds. Thus there exists a path from $R_+(\x)$ to $R_+(\y)$ in the Auslander-Reiten quiver of $\CM^{\L}R$. 
Thus $G(R_+(\x) )$ cannot be in a higher suspension of the module category than $G(R_+(\y))$, and $s(\x)\le s(\y)$ holds.

Now we show that $\gamma$ is semi-$\w$-equi-increasing, that is, $s(\x+\w)\le s(\x)+1$ for any $\x\in\L$.
If $d=-1$, then $\Lambda=k$ and hence $(\w)=(\x_1)=[1]$. Therefore $s(\x+\w)-s(\x)=1$ holds for any $\x\in\L$.
If $d=0$, then $(\w)=(D\Lambda)\Lotimes_\Lambda-$ holds by Proposition~\ref{property of stable tilting}, and hence $s(\x+\w)-s(\x)\in\{0,1\}$ holds for any $\x\in\L$.

It remains to show that the tilting object $U := G^{-1}(\Lambda)$ has a strict $\gamma$-presentation. 
Since the case $d=-1$ is clear, we assume $d=0$.
Fix $\x\in\L$. Then $R(\x)$ is a direct summand of the injective hull $I_U$ if and only if $\underline{\Hom}_R^{\L}(U, R_+(\x)) \neq 0$ by Lemma~\ref{IX PX lemma}. 
Applying the equivalence $G$, this is equivalent to $\Hom_{\DDD^{\bo}(\mod \Lambda)}(\Lambda, G( R_+(\x))) \neq 0$. This holds if and only if $G(R_+(\x)) \in \mod \Lambda$ if and only if $\gamma( \x ) = \frac{1}{2}$.
Dually, $R(\x)$ is a direct summand of the projective cover $P_U$ of $U$ if and only if $\underline{\Hom}(R_+(\x), U(\w)[-1]) \neq 0$ by Lemma~\ref{IX PX lemma}.
Again applying the equivalence $G$, this is equivalent to $\Hom_{\DDD^{\bo}(\mod \Lambda)}( G(R_+(\x)), D \Lambda[-1])\neq0$.
This holds if and only if $G(R_+(\x)) \in (\mod \Lambda)[-1]$. By definition of $\gamma$, we thus have $\gamma( \x) = 0$.
\end{proof}

Now we are ready to prove Corollary~\ref{example of d-tilting bundle}.

\begin{proof}[Proof of Corollary~\ref{example of d-tilting bundle}]
Let $(R,\L)$ be a GL hypersurface of one of the following type.
\begin{itemize}
\item $d=0$ and $(p_1, p_2) = (2,2)$.
\item $d=1$ and $(p_1,p_2,p_3)$ is one of $(2,3,3)$, $(2,3,4)$ or $(2,3,5)$.
\item $d=2$ and $(p_1,p_2,p_3,p_4)=(3, 3, p_3, p_4)$ with $p_3,p_4\in\{3,4,5\}$.
\end{itemize}
Then it is obtained from two GL hypersurfaces $(R^1,\L_1)$ and $(R^2,\L_2)$ of one of the following type.
\begin{itemize}
\item $d_i=-1$ and $p_{i,1}=2$.
\item $d_i=0$ and $(p_{i,1},p_{i,2})$ is one of $(3,3)$, $(3,4)$ or $(3,5)$.
\end{itemize}
Then we have an equality $d=d_1+d_2+2$.

By Proposition~\ref{prop.base_cases2}, there exist a semi-$\w_i$-equi-increasing map $\gamma_i:\L_i\to\Q$ and a $(d_i+1)$-tilting object $T^i$ in $\underline{\CM}^{\L_i}R^i$ which has a strict $\gamma_i$-presentation.
By Lemma~\ref{prop.combining_normal2} and Proposition~\ref{prop.combining_normal}, there exists an $\w$-equi-increasing map $\gamma=\gamma_1+\gamma_2:\L\to\Q$ such that $T:=T^1\otimes_{\rm MF}T^2$ is a $(d_1+d_2+2)$-tilting object in $\underline{\CM}^{\L}R$ which has a strict $\gamma$-presentation.  Since $d_1+d_2+2=d$, we obtain the claim for the above cases.

Now the general claim follows from Theorem~\ref{thm.more_weights_ok}.
\end{proof}



The rest of this section is devoted to studying the converse of (Step 2) at the beginning of
the section. More precisely, we give a partial answer to the following problem.

\begin{problem}\label{d-tilting bundle is d-tilting object}
Let $V\in\CM^{\L}R$ be a $d$-tilting bundle on $\X$. Is $V$ a $d$-tilting object in $\underline{\CM}^{\L}R$?
\end{problem}

The following result gives a partial answer to Question~\ref{d-tilting bundle is d-tilting object}.

\begin{theorem}\label{d-tilting is silting}
Let $V\in\CM^{\L}R$ is a $d$-tilting bundle on a GL projective space $\X$. Take a decomposition $V=P\oplus W$, where $P\in\proj^{\L}R$ is a maximal projective direct summand of $V$.
\begin{itemize}
\item[(a)] $V$ satisfies $\underline{\Hom}^{\L}_R(V,V[i])=0$ and $\Hom^{\L}_R(V,V(i\w))=0$ for all $i>0$.
\item[(b)] If $\underline{\End}^{\L}_R(W)=\End^{\L}_R(W)$, then $V$ is a $d$-tilting object in $\underline{\CM}^{\L}R$.
\end{itemize}
\end{theorem}

\begin{proof}
(a) By Theorem~\ref{tilting-cluster tilting 2}, we know that $V$ is a slice in a $d$-cluster tilting subcategory $\UU:=\add\{V(\ell\w)\mid\ell\in\Z\}$ of $\CM^{\L}R$.
By Proposition~\ref{arith CM}(c), we have
\[\underline{\Hom}^{\L}_R(V,V[i])=\Ext^i_{\mod^{\L}R}(V,V)=\Ext^i_{\X}(V,V)=0\ \mbox{ for any $i$ with $1\le i\le d-1$.}\]
Let $P$ be a maximal direct summand of $V$ which belongs to $\proj^{\L}R$. Since $V$ is a slice in $\UU$, we have
\begin{eqnarray}\label{(V,V(lw))=0}
&\Hom^{\L}_R(V,V(\ell\w))=\Hom_{\X}(V,V(\ell\w))=0\ \mbox{ for any }\ \ell>0,&\\
\label{proj R=add P(lw)}
&\proj^{\L}R=\add\{P(\ell\w)\mid\ell\in\Z\}.&
\end{eqnarray}
By \eqref{(V,V(lw))=0}, the latter part of (a) follows, and we have $\underline{\Hom}^{\L}_R(V,V[d])=D\underline{\Hom}^{\L}_R(V,V(\w))=0$. 

It remains to prove $\underline{\Hom}^{\L}_R(V,V[i+d+1])=0$ for any $i\ge0$.
Take a minimal projective resolution
\[\xymatrix{\cdots\ar[r]^{}&P_2\ar[r]^{}&P_1\ar[r]^{}&P_0\ar[r]^{}&V\ar[r]&0}\]
of $V$ in $\mod^{\L}R$. 
Applying Lemma~\ref{(Q,X) non zero}(b) to $X:=V$, any indecomposable direct summand $Q$ of $P_i$ with $i\ge0$ satisfies $\Hom^{\L}_R(Q,V)\neq0$.
Thus the equalities \eqref{(V,V(lw))=0} and \eqref{proj R=add P(lw)} imply that $Q$ belongs to $\PP^+:=\add\{P(\ell\w)\mid\ell\ge0\}$. Thus $P_i\in\PP^+$ holds for any $i\ge0$. This implies
\[\Hom^{\L}_R(V,\Omega^{i+1}V(\w))\subset\Hom^{\L}_R(V,P_{i}(\w))\stackrel{\eqref{(V,V(lw))=0}}{=}0\]
for any $i\ge0$. Therefore $\underline{\Hom}^{\L}_R(V,V[i+d+1])=D\underline{\Hom}^{\L}_R(V,V(\w)[-i-1])=0$ holds as desired.


(b) It follows from (a) and Lemma~\ref{End and stable End}(b) that $\underline{\Hom}^{\L}_R(V,V[i])=0$ holds for all $i\neq0$.

Now we prove $\thick_{\underline{\CM}^{\L}R}V=\underline{\CM}^{\L}R$ by modifying the proof of Proposition~\ref{generate V(w)}. Let $\UU^+:=\add\{ W(\ell\w),\ \proj^{\L}R\mid \ell\ge0\}\subset\UU$.
By Lemma~\ref{graded and ungraded}, there exists a minimal right $\UU^+$-approximation $f:U_0\to W(-\w)$. Clearly we have an exact sequence
\begin{equation}\label{generate W(-w) 1}
0\to\Kernel f\to U_0\xrightarrow{f} W(-\w)\to0.
\end{equation}
By Proposition~\ref{basic properties of d-CT for CM}(b), there exists an exact sequence
\begin{equation}\label{generate W(-w) 2}
0\to U_d\to\cdots\to U_1\to\Kernel f\to0
\end{equation}
in $\mod^{\L}R$ with $U_i\in\UU$ such that all morphisms are right minimal. Take a decomposition $U_i=P_i\oplus W_i$, where $P_i\in\proj^{\L}R$ is a maximal projective direct summand of $U_i$.

We claim $P_i\in\add\{P(\ell\w)\mid\ell\ge-1\}$ and $W_i\in\add\{W(\ell\w)\mid\ell\ge0\}$ for all $i\ge0$. By (a), $\Hom^{\L}_R(V,V(\ell\w))=0$ holds for all $\ell>0$.
Thus the claim is clear for $i=0$. Assume that the claim is true for $i$. Then $U_{i+1}\in\add\{V(\ell\w)\mid\ell\ge-1\}$ holds. It suffices to prove that $W_{i+1}$ does not have an indecomposable direct summand $X\in\add W(-\w)$. Otherwise, there exists a non-zero morphism $X\to Q_i$ for an indecomposable direct summand $Q_i\in\add P(-\w)$ of $P_i$. Moreover, there exists a sequence $Q_i\to Q_{i-1}\to\cdots\to Q_0$ of non-zero morphisms for an indecomposable direct summand $Q_j\in\add P(-\w)$ of $P_j$. Then the composition $Q_i\to Q_0\subset U_0\xrightarrow{f} W(-\w)$ is injective by Proposition~\ref{non-zero is injective}(c), and hence the composition $X\to Q_i\to W(-\w)$ is non-zero, a contradiction to $\underline{\End}^{\L}_R(W(-\w))=\End^{\L}_R(W(-\w))$. Thus the claim holds also for $i+1$.

Consider the $\UU$-module $F$ given by an exact sequence
\[\Hom_{\UU}(-,U_0)\xrightarrow{f}\Hom_{\UU}(-,W(-\w))\to F\to0.\]
As in the proof of Proposition~\ref{generate V(w)}, we have $U_i\in\add(V(-\w)\oplus V)$ for all $i$. Consequently, we have $W_i\in\add V$ for all $i$.
In particular, the exact sequences \eqref{generate W(-w) 1} and \eqref{generate W(-w) 2} show $V(-\w)\in\thick_{\underline{\CM}^{\L}R}V$. As in the proof of Proposition~\ref{generate V(w)}, we have $\thick_{\underline{\CM}^{\L}R}V=\underline{\CM}^{\L}R$.

It remains to show that $\underline{\End}^{\L}_R(V)=\End^{\L}_R(W)$ has global dimension at most $d$. 
Let $I$ be the finite subset of $\L$ such that $P=\bigoplus_{\x\in I}R(\x)$. Let
\[I^+=\{\x\in I\mid\Hom^{\L}_R(R(\x),W)=0\},\ I^-=I\setminus I^+\ \mbox{ and }\ P^\pm=\bigoplus_{\x\in I^{\pm}}R(\x).\]
Clearly $\Hom^{\L}_R(P^+,W)=0$ holds.
For any $\x\in I^-$, the condition $\Hom^{\L}_R(R(\x),W)\neq0$ implies $\Hom^{\L}_R(W,R(\x))=0$ by Lemma~\ref{End and stable End}(a)(i)$\Rightarrow$(ii), and $\Hom^{\L}_R(P^+,R(\x))=0$ by Proposition~\ref{non-zero is injective}(c).
Thus $\Hom^{\L}_R(W\oplus P^+,P^-)=0$ holds, and $\End^{\L}_R(V)$ has a triangular form
\[\End^{\L}_R(V)=\End^{\L}_R(P^-\oplus W\oplus P^+)=\left[\begin{smallmatrix}
\End^{\L}_R(P^-)&\Hom^{\L}_R(P^-,W)&\Hom^{\L}_R(P^-,P^+)\\
0&\End^{\L}_R(W)&\Hom^{\L}_R(W,P^+)\\
0&0&\End^{\L}_R(P^+)
\end{smallmatrix}\right].\]
Therefore $\gl\End^{\L}_R(W)\le\gl\End^{\L}_R(V)\le d$ holds.
\end{proof}

\section{Examples}

In this section, we give an example of $d$-tilting bundles on GL hypersurfaces.

We start with giving the quiver of the $d$-tilting bundle constructed in Corollary~\ref{d-tilting bundle (2,2)}.

\begin{example}\label{d-tilting bundle (2,2) 2}
Let $\X$ be a GL projective space with $d=2$ and $n=4$ weights $(2,2,3,4)$.
Then the tilting bundle given in Corollary~\ref{d-tilting bundle (2,2)} has the following quiver, where $\ttt:=\x_1-\x_2$.
{\tiny\[\xymatrix@R=3em@C=1em{
&& \x_3+\x_4 & \x_3+\x_4+\ttt \\
&& \x_3+\x_4-\x_1 \ar[u]\ar[ru]\ar[ld]\ar[rrd] & \x_3+\x_4-\x_2 \ar[lu]\ar[u]\ar[ld]\ar[rrd] \\
& 2\x_3+\x_4-\x_1 \ar[ld] & 2\x_3+\x_4-\x_2 \ar[ld] & E_{2,3} \ar[ld]\ar[rd]\ar[lu]\ar[u] & \x_3+2\x_4-\x_1 \ar[rd] & \x_3+2\x_4-\x_2 \ar[rd] \\
\x_4+\x_1 & \x_4+\x_2 & E_{1,3}\ar[rd]\ar[lu]\ar[u]\ar[lld]\ar[ld]&&E_{2,2} \ar[ld]\ar[rd]\ar[u]\ar[ru] & \x_3+3\x_4-\x_1 \ar[rd] & \x_3+3\x_4-\x_2 \ar[rd] \\
\x_4\ar[u]\ar[ru]\ar[rd]& \ttt+\x_4 \ar[lu]\ar[u]\ar[rd]&&E_{1,2}\ar[rd]\ar[lld]\ar[ld]&&E_{2,1}\ar[ld]\ar[u]\ar[ru]\ar[rd]\ar[rrd] & \x_3+\x_1 & \x_3+\x_2 \\
&2\x_4 \ar[rd]&\ttt+2\x_4\ar[rd]&&E_{1,1}\ar[lld]\ar[ld]\ar[d]\ar[rd]\ar[rrd]\ar[rrrd]&&\x_3 \ar[u]\ar[ru]\ar[d]&\ttt+\x_3 \ar[lu]\ar[u]\ar[d]\\
&&3\x_4\ar[rrd]&\ttt+3\x_4\ar[rrd]&\x_1\ar[d]\ar[rd]&\x_2\ar[ld]\ar[d]&2\x_3\ar[lld]&\ttt+2\x_3\ar[lld]\\
&&&&\c&\x_1+\x_2
}\]}
\end{example}

We have a nicer choice of a set of line bundles with certain symmetry given by the following result.

\begin{theorem}\label{Main2}
Let $\X$ be a GL projective space of dimension $d$ with weights $p_1,\ldots,p_n$ such that $n=d+2$ and $p_1=p_2=2$.
Then $\X$ has a $d$-tilting bundle
\[\pi(U^{\rm CM})\oplus(\bigoplus_{\x\in S}\OO(\x)),\]
where $U^{\rm CM}\in\CM^{\L}R$ is given in
Theorem \ref{thm.mf_for_E} and $S$ is the subset of $\L$ given by
\[S:=\left\{\begin{array}{cc}
\bigcup_{i=1,2}([-\frac{d-1}{2}\c,\frac{d-1}{2}\c+\x_i]\cup[-\frac{d-3}{2}\c-\x_1-\x_2,\frac{d-1}{2}\c+\x_i])&\mbox{if $d$ is odd,}\\
\bigcup_{i=1,2}([-\frac{d}{2}\c+\x_i,\frac{d}{2}\c]\cup[-\frac{d}{2}\c+\x_i,\frac{d-2}{2}\c+\x_1+\x_2])&\mbox{if $d$ is even.}
\end{array}\right.\]
\end{theorem}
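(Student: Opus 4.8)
The strategy is to verify directly that the explicitly given bundle $V=\pi(U^{\rm CM})\oplus(\bigoplus_{\x\in S}\OO(\x))$ is a $d$-tilting bundle on $\X$, and then invoke Theorem \ref{tilting-cluster tilting}(b) to deduce $d$-VB finiteness and derived equivalence to a $d$-representation infinite algebra. Thus there are two things to check: that $V$ is a tilting object in $\DDD^{\bo}(\coh\X)$, and that $\End_\X(V)$ has global dimension exactly $d$ (equivalently, at most $d$, by Proposition \ref{preliminaries for end}(a)). The guiding idea is that when $p_1=p_2=2$ and $n=d+2$, the triangle equivalence $\underline{\CM}^{\L}R\simeq\DDD^{\bo}(\mod A^{\rm CM})$ of Theorem \ref{CM tilting} together with $A^{\rm CM}\simeq\bigotimes_{i=1}^n k\A_{p_i-1}=k\A_1\otimes k\A_1\otimes(\bigotimes_{i=3}^n k\A_{p_i-1})$ (Corollary \ref{presentation of CM canonical}(b)) makes $\underline{\CM}^{\L}R$ especially tractable; in fact the two trivial tensor factors $k\A_1$ make $U^{\rm CM}$ close to being $d$-tilting already, and the role of the extra line bundles indexed by $S$ is precisely to ``fill in'' the remaining summands needed to span $\DDD^{\bo}(\coh\X)$ while keeping the global dimension down.

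First I would analyze $U^{\rm CM}$. By Theorem \ref{MF}, $U^{\rm CM}=\bigoplus_{\l\in\s+[0,\de]}E^{\l}$ is a tilting object in $\underline{\CM}^{\L}R$, with each $E^{\l}\in\CM^{\L}R$ a matrix factorization whose complete resolution \eqref{complete resolution of F^l} is completely explicit. The fully faithful functor $\CM^{\L}R\to\vect\X$ of Proposition \ref{arith CM}(a) and the isomorphisms $\Ext^i_{\mod^{\L}R}(X,Y)\simeq\Ext^i_\X(X,Y)$ for $0\le i\le d-1$ from Proposition \ref{arith CM}(c) let me transfer vanishing statements from modules to sheaves. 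The key computational input is Proposition \ref{prop.T_is_rigid}, giving $\underline{\Hom}^{\L}_R(U^{\rm CM},U^{\rm CM}[i])=0$ for $i\ne0$; combined with the $\CM$-to-$\vect$ comparison and the explicit complete resolutions, one gets $\Ext^i_\X(\pi(U^{\rm CM}),\pi(U^{\rm CM}))=0$ for $i>0$ and controls $\Hom$. For the line-bundle summands, I would use Proposition \ref{vanishing}: $\Ext^i_\X(\OO(\x),\OO(\y))$ is $R_{\y-\x}$ in degree $0$, $D(R_{\x-\y+\w})$ in degree $d$, and zero otherwise, so checking self-orthogonality of $\bigoplus_{\x\in S}\OO(\x)$ amounts to verifying that for $\x,\y\in S$ one never has $\x-\y+\w\ge0$ — a finite order-theoretic check in $\L$ using the explicit description of $S$ and Lemma \ref{order omega}. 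The cross terms $\Ext^i_\X(\pi(U^{\rm CM}),\OO(\x))$ and $\Ext^i_\X(\OO(\x),\pi(U^{\rm CM}))$ for $\x\in S$ are the most delicate: here I would apply $\Hom^{\L}_R(-,F^{\l})$ (or its dual) to the complete resolutions, obtaining Koszul-type complexes as in the proof of Proposition \ref{prop.T_is_rigid}, and verify acyclicity; the specific shape of $S$ (the intervals shifted by $\pm\frac{d\mp1}{2}\c$, with the $\x_1,\x_2$ corrections) is engineered so that the relevant Koszul complexes are always nontrivially Koszul, hence acyclic. Generation ($\thick V=\DDD^{\bo}(\coh\X)$) should follow from Theorem \ref{Serre}(e) by showing every $\OO(\x)$ lies in $\thick V$: the line bundles in $[0,d\c]$ up to a shift are reached via $S$ and the matrix-factorization summands via the complete resolutions, and then one propagates to all of $\L$ by Koszul complexes as in Lemma \ref{induction}.

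The harder half is the global dimension bound $\gl\End_\X(V)\le d$. Here I would split $\End_\X(V)$ according to the idempotent decomposition coming from $\pi(U^{\rm CM})$ versus the $\OO(\x)$ summands, and compute projective resolutions of simple modules. For simples supported at line-bundle vertices, Koszul resolutions of length $\le d$ built from the regular sequences $(X_i)$ (Lemma \ref{prop.regularseq1}) do the job, exactly as in the $n\le d+1$ analysis of Theorem \ref{global dimension} and the proof of Proposition \ref{global dimension at most d}. For simples supported at matrix-factorization vertices, I would use that $A^{\rm CM}\simeq k\A_1\otimes k\A_1\otimes\Lambda'$ where $\Lambda'=\bigotimes_{i=3}^n k\A_{p_i-1}$ has global dimension $n-2=d$, and that tensoring with $k\A_1=k$ does not change global dimension; the point is that the part of $\End_\X(V)$ ``seeing'' $\pi(U^{\rm CM})$ is, up to the line-bundle corrections, a tilted algebra of $A^{\rm CM}$, and the slice structure (in the sense of Definition \ref{define slice}) encoded by $S$ guarantees via Theorem \ref{tilting-cluster tilting}(a)/Proposition \ref{global dimension at most d} that the endomorphism algebra of the slice has global dimension $d$. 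I expect the main obstacle to be the bookkeeping that makes these two pictures compatible at the ``interface'' vertices — i.e., verifying that the morphisms between $\OO(\x)$, $\x\in S$, and the summands $E^{\l}$ do not create long minimal projective resolutions. A clean way to organize this is to exhibit $V$ as a slice in the explicit $d$-cluster tilting subcategory $\UU:=\add\{\pi(U^{\rm CM})(\ell\w),\ \OO(\x)\mid\ell\in\Z,\ \x\in\L\}$ of $\vect\X$ (which is $d$-cluster tilting by Theorem \ref{CT subcategory} and Theorem \ref{Main1}), reducing everything to checking conditions (a) and (b) of Definition \ref{define slice} — a finite calculation with the Auslander-Reiten quiver of $\UU$ — and then quoting Theorem \ref{tilting-cluster tilting}(a) to conclude $V$ is $d$-tilting, and (b) for the remaining assertions.
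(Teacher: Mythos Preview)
Your final paragraph lands on exactly the paper's strategy: use Theorem~\ref{construct dCT} (your citations of Theorems~\ref{CT subcategory} and \ref{Main1} amount to the same thing, since $p_1=p_2=2$ makes $\gl A^{\rm CM}\le d$) to get the $d$-cluster tilting subcategory $\UU=\add\{U^{\rm CM}(\ell\w),R(\x)\}$, verify that $V$ is a slice in $\UU$, and then invoke Theorem~\ref{tilting-cluster tilting}. So the overall architecture is right, and the long direct-verification discussion that precedes it is a detour the paper does not take.

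Where your plan has a genuine gap is in the verification of the slice conditions, which you dismiss as ``a finite calculation with the Auslander-Reiten quiver of $\UU$''. No AR quiver is computed. Condition (a) of Definition~\ref{define slice} reduces (since the non-line-bundle part of $\UU$ is a single $(\w)$-orbit) to showing that $S$ is a \emph{complete set of representatives of $\L/\Z\w$}. The paper does this by passing to the quotient $\overline{\L}:=\L/\langle\x_1-\x_2\rangle$ (using $p_1=p_2=2$), where the image of $S$ becomes a single interval $[(1-d)\y_1,d\y_1]$ that visibly represents $\overline{\L}/\Z\v$; your proposal gives no hint of this structure.

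Condition (b), $\Hom_\X(V,V(\ell\w))=0$ for all $\ell>0$, is the crux. For pairs of line bundles it is indeed an order-theoretic check (done via the same quotient $\overline{\L}$). But your proposed treatment of the cross terms --- applying $\Hom_R(-,F^{\l})$ to complete resolutions and checking Koszul acyclicity --- controls higher $\Ext$, not $\Hom$, so it proves the wrong vanishing. The paper instead uses Lemma~\ref{calculate}(b), extracted from the explicit matrix factorizations of Theorem~\ref{MF}, to produce a morphism $L\to\pi(U^{\rm CM})$ with rank-zero cokernel and a monomorphism $\pi(U^{\rm CM})\hookrightarrow L'$, where $L,L'\in\add\bigoplus_{\x\in S}\OO(\x)$ (this is exactly where part (c) of the paper's Lemma~\ref{S is a slice}, identifying which degrees appear in $\overline{P}^{\l,i}$, is used). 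Any nonzero morphism $V\to V(\ell\w)$ is then sandwiched into a nonzero morphism between line bundles indexed by $S$, contradicting the line-bundle case. This reduction step is the essential idea you are missing.
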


The set $S$ has the following natural interpretation.

\begin{observation}
We consider an abelian group
\[\overline{\L}:=\langle\y_1=\y_2,\y_3,\cdots,\y_n\rangle/
\langle 2\y_1-p_i\y_i\mid 3\le i\le n\rangle.\]
and its submonoid $\overline{\L}_+$ generated by all $\y_i$'s.
We regard $\overline{\L}$ as a partially ordered set: $\x\le\y$ if and only if $\y-\x\in\overline{\L}_+$.
Then we have an exact sequence
\[0\xrightarrow{}\langle\x_1-\x_2\rangle\xrightarrow{}\L
\xrightarrow{q}\overline{\L}\xrightarrow{}0\]
given by $q(\x_i):=\y_i$, where $\langle\x_1-\x_2\rangle$ is a subgroup
of $\L$ of order $2$.

Then the equality
\begin{equation}\label{S and q}
S=q^{-1}([(1-d)\y_1,d\y_1])
\end{equation}
holds. In fact, the map $q:\L\to\overline{\L}$ is a morphism of partially ordered sets.
Then $q^{-1}((1-d)\y_1)$ (respectively, $q^{-1}(d\y_1)$) consists of
two elements which give the lower (respectively, upper) bounds of
the intervals defining $S$.
On the other hand, for $\x,\y\in\L$, it is easy to check that
$q(\x)\le q(\y)$ holds if and only if either $\x\le \y$ or
$\x\le\y+\x_1-\x_2$ holds.
\end{observation}

\begin{proof}[Proof of Theorem~\ref{Main2}]
Since $p_1=p_2=2$, the algebra $\underline{\End}^{\L}_R(U^{\rm CM})
=\bigotimes_{i=1}^nk\A_{p_i-1}$ has global dimension at most $n-2=d$.
Theorem~\ref{construct dCT} shows that $\CM^{\L}R$ has
a $d$-cluster tilting subcategory
\[\UU:=\add\{U^{\rm CM}(\ell\w),\ R(\x)\mid\ell\in\Z,\ \x\in\L\}.\]
By Theorem~\ref{tilting-cluster tilting 2}, it suffices to show that
\[V:=U^{\rm CM}\oplus(\bigoplus_{\x\in S}R(\x))\]
is a slice in $\UU$.
We start with proving the following observations.

\begin{lemma}\label{S is a slice}
\begin{itemize}
\item[(a)] $S$ is a complete set of representatives of $\L/\Z\w$.
\item[(b)] We have $\Hom^{\L}_R(R(\x),R(\y+\ell\w))=0$ for
any $\x,\y\in S$ and $\ell>0$.
\end{itemize}
\end{lemma}

\begin{proof}
(a) 
By using a similar argument as in the proof of
Proposition~\ref{compare [0,dc] with L/w}(b),
one can easily check that the interval $[0,(2d-1)\y_1]$ in $\overline{\L}$
gives a complete set of representatives of $\overline{\L}/\Z\v$ for
\[\v:=-\sum_{i=3}^n\y_i.\]
Shifting by $(1-d)\y_1$, the interval $[(1-d)\y_1,d\y_1]$
also gives a complete set of representatives of $\overline{\L}/\Z\v$.
Since $q(\w)=\v$ holds, we have an exact sequence
\[0\to\langle\x_1-\x_2\rangle\to\L/\Z\w\xrightarrow{q}\overline{\L}/\Z\v\to0.\]
Therefore \eqref{S and q} implies that $S$ gives a complete set of
representatives of $\L/\Z\w$.

(b) We have $\Hom_{\X}(\OO(\x),\OO(\y+\ell\w))=R_{\y+\ell\w-\x}$
by Proposition~\ref{vanishing}.
If $\y+\ell\w-\x\ge 0$ for some $\ell>0$, then we have
\[0\le q(\y+\ell\w-\x)\le d\y_1+\ell\v-(1-d)\y_1=
-\y_1+\sum_{i=3}^n(p_i-\ell)\y_i,\]
in $\overline{\L}$, a contradiction.
Thus $R_{\y+\ell\w-\x}=0$ holds by Observation~\ref{basic results on L}(c).
\end{proof}

Finally we need the following technical observations.
 
\begin{lemma}\label{connection S and P}
\begin{itemize}
\item[(a)] For any $i$ with $1\le i\le n$, let $\ell_i$ be an integer
satisfying $1\le\ell_i\le p_i-1$. Then
$\frac{|I|}{2}\c-\sum_{i\in I}\ell_i\x_i$ 
(respectively, $\frac{|I|+1}{2}\c-\sum_{i\in I}\ell_i\x_i$) belongs to
$S$ for any even (respectively, odd) subset $I$ of $\{1,\ldots,n\}$
such that $\{3,\ldots,n\}\not\subset I$.
\item[(b)] There exist monomorphisms $M\to U^{\rm CM}$ and $U^{\rm CM}\to M'$ in $\mod^{\L}R$ with $M,M'\in\add\bigoplus_{\x\in S}R(\x)$ such that their cokernels have rank $0$.
\end{itemize}
\end{lemma}

\begin{proof}
(a) Let $J:=I\setminus\{1,2\}$. Then $|J|\le d-1$ holds.
Since $\ell_1=\ell_2=1$, we have
\[q\left(\frac{|I|+a}{2}\c-\sum_{i\in I}\ell_i\x_i\right)=(|I|+a)\y_1-\sum_{i\in I}\ell_i\y_i
=(|J|+a)\y_1-\sum_{i\in J}\ell_i\y_i,\]
where $a=0$ if $I$ is even, and $a=1$ if $I$ is odd.
Therefore it is enough to show
\[\begin{array}{ll}
(1-d)\y_1\le (|J|+a)\y_1-\sum_{i\in J}\ell_i\y_i\le d\y_1.
\end{array}\]
The right inequality is clear since $|J|\le d-1$.
The left inequality is equivalent to $(|J|-a+1-d)\y_1\le \sum_{i\in J}(p_i-\ell_i)\y_i$,
which holds since $|J|\le d-1$.

(b) We only have to consider the direct summand $\rho(E^{\l})$ of $U^{\rm CM}$. The morphisms $P^{\not\ni n}\to\rho(E^{\l})$ and $\rho(E^{\l})\to Q^{\not\ni n}(\c)$ given in Proposition~\ref{rank 2^d} give the desired morphisms.
\end{proof}

%

Now we are ready to prove Theorem~\ref{Main2}.

It is enough to show that $V$ is a slice in $\UU$.
By definition of $\UU$ and Lemma~\ref{S is a slice}(a), we have $\UU=\add\{V(\ell\w)\mid\ell\in\Z\}$.

It remains to show $\Hom^{\L}_R(V,V(\ell\w))=0$ for any $\ell>0$.
By Lemma~\ref{connection S and P}(b), there exist monomorphisms $a\colon M\to V$ and $b\colon V\to M'$ such that $M,M'\in\add\bigoplus_{\x\in S}R(\x)$ and their cokernels have rank $0$.

Now we take a morphism $f\colon V\to V(\ell\w)$ with $\ell>0$.
By Lemma~\ref{S is a slice}(b), $afb\colon M\to M'(\ell\w)$ is zero.
Thus $fb:V\to M'(\ell\w)$ factors through $\Cokernel a$, which has rank zero. Thus $fb=0$ holds by Proposition~\ref{non-zero is injective}(b), and hence
$f=0$ as desired.
\end{proof}

\begin{example}
Let $\X$ be a GL projective space with $d=2$ and $n=4$ weights $(2,2,3,4)$.
Then $V$ given in Theorem~\ref{Main2} is the following, where $\ttt:=\x_1-\x_2$.
{\tiny\[\xymatrix@R=3em@C=1.5em{
&&&-\x_1\ar[lld]\ar[d]\ar[rd]\ar[rrd]&-\x_2\ar[lld]\ar[ld]\ar[d]\ar[rrd]\\
&\x_3-\x_1\ar[ld]\ar[rrd]&\x_3-\x_2\ar[ld]\ar[rd]&0\ar[d]&\ttt\ar[ld]&\x_4-\x_1\ar[rd]\ar[lld]&\x_4-\x_2\ar[rd]\ar[llld]\\
2\x_3-\x_1\ar[rrd]&2\x_3-\x_2\ar[rd]&&E_{2,3}\ar[ld]\ar[rd]&&&2\x_4-\x_1\ar[rd]\ar[lld]&2\x_4-\x_2\ar[rd]\ar[llld]\\
&&E_{1,3}\ar[rd]\ar[lld]\ar[ld]&&E_{2,2}\ar[ld]\ar[rd]&&&3\x_4-\x_1\ar[lld]&3\x_4-\x_2\ar[llld]\\
\x_4\ar[rd]&\ttt+\x_4\ar[rd]&&E_{1,2}\ar[rd]\ar[lld]\ar[ld]&&E_{2,1}\ar[ld]\ar[rrd]\ar[rrrd]\\
&2\x_4\ar[rd]&\ttt+2\x_4\ar[rd]&&E_{1,1}\ar[lld]\ar[ld]\ar[d]\ar[rd]\ar[rrd]\ar[rrrd]&&&\x_3\ar[ld]&\ttt+\x_3\ar[ld]\\
&&3\x_4\ar[rrd]&\ttt+3\x_4\ar[rrd]&\x_1\ar[d]\ar[rd]&\x_2\ar[ld]\ar[d]&2\x_3\ar[lld]&\ttt+2\x_3\ar[lld]\\
&&&&\c&\x_1+\x_2
}\]}
\end{example}


\backmatter

\printindex

\end{document}